\documentclass[11pt,a4paper]{article}
\usepackage[a4paper,margin=20mm]{geometry}
\usepackage[T1]{fontenc}
\usepackage[utf8]{inputenc}
\usepackage{mathpazo}
\usepackage{anyfontsize}
\usepackage{courier}
\usepackage[expansion=false]{microtype}
\usepackage{amsmath,amssymb,amsthm,amscd,mathtools}
\usepackage{mathrsfs}
\usepackage{esint}
\usepackage{graphicx,bm,float}
\usepackage{enumitem}
\usepackage{xcolor}
\usepackage[numbers,sort&compress]{natbib}
\usepackage{xurl}
\usepackage{hyperref}
\usepackage{tikz-cd}
\hypersetup{
    hypertexnames=false,
	colorlinks,
	citecolor=blue,
	filecolor=blue,
	linkcolor=blue,
	urlcolor=black
}

\allowdisplaybreaks[4]
\numberwithin{equation}{section}

\newtheorem{thm}{Theorem}[section]
\newtheorem{prop}{Proposition}[section]
\newtheorem{lem}{Lemma}[section]
\newtheorem{cor}{Corollary}[section]

\theoremstyle{definition}
\newtheorem{defn}[thm]{Definition}

\newtheorem{rem}{Remark}[section]

\newcommand{\la}{\langle}
\newcommand{\ra}{\rangle}
\newcommand{\lam}{\lambda}
\newcommand{\morse}{\mathsf{morse}\,}
\newcommand{\Morse}{\mathsf{Morse}\,}

\newcommand{\sgn}{\mathrm{sgn}\, }

\newcommand{\cri}{\operatorname{Crit}\, }

\newcommand{\FF}{\mathbb{F}}
\newcommand{\rg}{\mathrm{g}}
\newcommand{\cV}{\mathcal{V}}

\newcommand{\cL}{\mathcal{L}}
\newcommand{\cF}{\mathcal{F}}

\newcommand{\N}{\mathbb{N}}
\newcommand{\Z}{\mathbb{Z}}
\newcommand{\R}{\mathbb{R}}
\newcommand{\B}{\mathbb{B}}
\newcommand{\kap}{\kappa}
\newcommand{\ff}{\mathbf{F}_{p,q}^{V}}
\newcommand{\FpqV}{\mathbf{F}_{p,q}^{V}}
\newcommand{\Fv}[2]{\mathbf{F}_{#1,#2}^{V}}
\newcommand{\ii}{\mathrm{i}}
\newcommand{\intS}{\mathring\Si}
\newcommand{\var}{\varepsilon}
\renewcommand{\d}{\mathrm{d}}
\newcommand{\oH}{\mathring H^1}
\newcommand{\Si}{\Sigma}
\newcommand{\al}{\alpha}
\newcommand{\pa}{\partial}

\newcommand{\si}{\sigma}
\newcommand{\om}{\omega}
\newcommand{\Lam}{\Lambda}
\newcommand{\Ga}{\Gamma}
\newcommand{\Sn}{\mathbb{S}^{n}}

\title{\bfseries Morse theory and topological degree for resonant mean field equations on Riemann surfaces with boundary}

\author{
Mohameden Ahmedou\thanks{\raggedright Mathematisches Institut, Justus-Liebig-Universit\"at Gie\ss en,
Arndtstra\ss e 2, D-35392 Gie\ss en, Germany.
E-mail: \href{mailto:Mohameden.Ahmedou@math.uni-giessen.de}{Mohameden.Ahmedou@math.uni-giessen.de}.},\quad ~
Zhengni Hu\thanks{School of Mathematical Sciences, Shanghai Jiao Tong University,
800 Dongchuan Road, Minhang District, Shanghai 200240, China.
E-mail: \href{mailto:zhengni_hu@sjtu.edu.cn}{zhengni\_hu@sjtu.edu.cn}.},\quad~
Heming Wang\thanks{State Key Laboratory of Mathematical Sciences, Academy of Mathematics and Systems Science,
Chinese Academy of Sciences, Beijing 100190, China.
E-mail: \href{mailto:hmwang@amss.ac.cn}{hmwang@amss.ac.cn}.},
\quad~
Lei Zhang\thanks{Department of Mathematics, University of Florida,
Gainesville, FL 32611, USA.
E-mail: \href{mailto:leizhang@ufl.edu}{leizhang@ufl.edu}.}
}

\date{}

\begin{document}

\maketitle

\begin{abstract}
In this paper, we establish Morse theory for the Neumann mean field equation on a compact
Riemann surface at $\rho=4\pi m$, $m\in\mathbb N$. To address the loss of compactness associated with bubbling, we use Bahri's theory of \emph{critical points at infinity}. 
Assume that the reduced functionals are Morse and that the leading scale coefficient does not vanish at their critical points. We classify the critical points at infinity arising from interior, boundary, and mixed concentration, compute their Morse indices and local relative homology, and establish deformation lemmas at resonance.
We  prove that, for $m\ge2$, every sufficiently negative sublevel is homotopy equivalent to the
boundary-weighted barycenter space of order $m-1$.
The Euler–Poincaré identity gives the Leray–Schauder degree at resonance without assuming nondegeneracy of the solutions. If all solutions are nondegenerate, we further obtain strong Morse inequalities with explicit Betti numbers over \(\mathbb Z_2\).
As consequences, we obtain existence and multiplicity results, as well as criteria for solutions with prescribed Morse index. 
\end{abstract}

\medskip
\noindent\textbf{Keywords.}
Mean field equations; Neumann boundary conditions; critical points at infinity;
Morse theory; resonant Leray--Schauder degree.

\noindent\textbf{2020 Mathematics Subject Classification.}
35J60, 35J25, 35R01.

\tableofcontents

\section{Introduction}
	\subsection{Background and motivation}
	Let $(\Si, g)$ be a compact connected Riemann surface with nonempty smooth boundary $\pa\Si$.
	We consider the following mean field type equation with homogeneous Neumann boundary conditions
	\begin{equation*}
		\label{maineq} \tag{$P_{\rho}$}
		\left\{\begin{aligned}
			&-\Delta_g u =\rho \Big( \frac{V e^u}{\int_{\Si } V e^u \,\d v_g} -\frac{1}{|\Si|_g}\Big)  && \text{  in }\, \intS,\\
			&	\pa_{\nu_g} u=0 && \text{ on }\, \pa\Si ,
		\end{aligned}
		\right.
	\end{equation*}
	where   $\intS=\Si \setminus\pa\Si $ denotes the interior of $\Si $, $\Delta_g$ is the Laplace-Beltrami operator, $\nu_g$ is the unit outward normal to the boundary $\pa\Si $, $\d v_g$ is the volume element in $(\Si , g)$,  $|\Si|_g=\int_{\Si}\, \d v_g$, $V\in C^3(\Si)$ is a positive potential function, and $\rho > 0$ is a parameter.

Since \eqref{maineq} is invariant under addition of constants, we consider solutions in  a subspace of $H^1(\Sigma)$ with average $0$:
\[
 \oH=\Big\{u\in H^1(\Sigma):\int_\Sigma u\,\d v_g=0\Big\},
 \quad \langle u,v\rangle=\int_\Sigma\langle\nabla u,\nabla v\rangle_g\,\d v_g.
\]
We use the Dirichlet norm $\|u\|:=\langle u,u\rangle^{1/2}$.
The solutions of \eqref{maineq} are the critical points of
\[
 J_\rho(u)=\frac12\|u\|^2-\rho\ln\int_\Sigma Ve^u\,\d v_g.
\]
Write $J_\rho^a=\{u\in \oH: J_\rho(u)\le a\}$ and
$\operatorname{Crit}(J_\rho)=\{u\in \oH:DJ_\rho(u)=0\}$. 

For $0<\rho<4\pi$, the   Moser--Trudinger inequality yields
coercivity. At $4\pi$ the functional is bounded below but need not attain
its infimum; for $\rho>4\pi$, $J_\rho$ is unbounded below. If a sequence of solutions with parameters $\rho_n\to\rho_*>0$ blows up, then $\rho_*\in4\pi\mathbb N$, with local masses $8\pi$ at interior blow-up points and $4\pi$ at boundary blow-up points. We call a blow-up configuration of type $(p,q)$ if it consists of $p$ interior points and $q$ boundary points. At $\rho_*=4\pi m$, mass quantization  gives $2p+q=m$.

The non-resonant degree theory originates with Li \cite{L1999} and
Chen--Lin \cite{CL2003,CL2015}. For the Neumann problem \eqref{maineq},
Li--Sun--Yang \cite{LSY2023} proved the compactness alternative and existence in non-resonant case;
Ahmedou--Hu--Wang \cite{AHW2025-1} refined the blow-up analysis and derived
the non-resonant degree formula, and Ahmedou--Bartsch--Hu
\cite{ABH20240829} constructed bubbling solutions.
Bahri's theory of critical points at infinity
\cite{Bbook1989,B1996,BC1988,BC1991} was developed for resonant mean field
equations on planar domains by Ahmedou--Ben Ayed--Lucia \cite{ABL2017},
for singular Dirichlet problems by Ahmedou--Ben Ayed \cite{AB2017},
and on closed surfaces by the same authors \cite{AB2023}.

At a resonant parameter, compactness of the solution set alone does not
determine the topology of the sublevels: descending trajectories may
escape to infinite scale with finite limiting energy.  Bahri's theory describes such behavior through critical points at infinity and their associated Morse indices and local relative homology groups. Deformation lemmas then yield a Morse filtration incorporating both genuine critical points and critical points at infinity. The resulting Morse inequalities relate their numbers in each index to the topology of the sublevels, retaining information that is lost in the alternating sum defining the topological degree. They therefore yield existence and multiplicity criteria that depend on the distribution of Morse indices.

The main contribution of this paper is a Morse theory for the
Neumann problem at resonance. We classify all escaping
trajectories of the constructed descending flow with finite
limiting energy, determine their local Morse pairs, and
establish the global deformation lemmas needed to describe
the topology of the sublevels.
For $m\ge2$, we also identify the homotopy type of the sufficiently negative
sublevels with boundary-weighted barycenters; their homology
\cite{AK2026} records the number of boundary components.
At $m=1$, the sufficiently negative sublevels are empty.
The resulting Euler identity determines the resonant degree even when
genuine solutions are degenerate, through a variational perturbation
in a bounded region. Under genuine nondegeneracy, the same filtration
yields strong Morse inequalities with explicit Betti numbers over $\mathbb Z_2$.
Finally, the classification of bubbling branches for non-resonant parameters close to $4\pi m$ gives criteria for solutions satisfying prescribed Morse index bounds.

Compared with the Dirichlet and closed surface problems
\cite{AB2023,ABL2017}, the Neumann problem allows both
interior and boundary concentration. This introduces additional
difficulties in both the local analysis and the global topology.
Locally, interior and boundary concentration have different
numbers of location variables and different leading orders
in the scale expansion: $\lambda^{-1}$ when boundary bubbles
are present and $\lambda^{-2}\ln\lambda$ for purely interior
concentration. The boundary expansion involves an additional coefficient
depending on the potential and the geodesic curvature.
Its nonvanishing determines the leading relation between
the concentration scale and the deviation of $\rho$ from
resonance. These differences require separate local estimates
and a descending field compatible with all concentration
types. A reduction in the amplitude variables, together with
estimates along the flow, yields a complete descending flow
and the corresponding deformation results at resonance. Globally, the different quantized masses of interior and
boundary bubbles lead to boundary-weighted barycenter spaces rather than
ordinary barycenter spaces
as topological models for sufficiently negative sublevels.
We prove the corresponding homotopy equivalence and use the
homology computations of \cite{AK2026} to obtain explicit
Morse inequalities. 

	\subsection{Main results}\label{sec:main-results}

We first define the reduced energy and scale coefficients. Write
$\mathbb N_0=\{0,1,2,\ldots\}$. Proposition~\ref{prop:CPI-main}
classifies the ends; Theorems~\ref{thm:resonant-case}, \ref{thm:1} and
\ref{thm:2} state the degree formula, prescribed-index existence criteria and Morse relations.

	For any $p,q\in \mathbb{N}_0$ with $2p+q\in\N$, we define the thick diagonal set by
	\[
	\FF_{p,q}(\Si):=\{\xi=(\xi_1,\ldots,\xi_{p+q})\in \intS^p \times (\pa\Si)^{q}: \text{$\xi_i=\xi_j$ for some $ i\neq j$}\},
	\]
	and
	\[
		\Xi_{p,q}:=(\intS^p \times (\pa\Si )^{q})\setminus \FF_{p,q}(\Si).
	\]
    Critical points, gradients and Hessians on $\Xi_{p,q}$ are taken on this mixed configuration manifold; boundary coordinates vary tangentially to $\partial\Sigma$.

Define a free action of
$\mathfrak S_{p,q}=\mathfrak S_p\times\mathfrak S_q$ on $\Xi_{p,q}$.
The reduced quantities below are invariant under this action. All bubbling counts and sums use the unordered
orbits $[\xi]=\{\sigma\xi:\sigma\in\mathfrak S_{p,q}\}$ in
$\Xi_{p,q}/\mathfrak S_{p,q}$. A local representative is again denoted by
$\xi$; its reduced index, the sign of $\cL$, and the index at infinity
$\iota_\infty(\xi)=\iota_\infty([\xi])$ defined in
Proposition~\ref{prop:CPI-main} are independent of the representative.

	In the sequel, we use $\sum_{i}$ to denote $\sum_{i=1}^{p+q}$, and $\sum_{i\neq j}$ to denote $\sum_{i=1}^{p+q}\sum_{\substack{j=1\\j\neq i}}^{p+q}$, for simplicity. A sum $\sum_{2p+q=m}$ always runs over $(p,q)\in\mathbb N_0^2$.

	For any  $\xi:=(\xi_1,\ldots,\xi_{p+q})\in \Xi_{p,q}$,   we define  the function
	\begin{equation}\label{eq:def_F}
		\cF_{\xi}^i(x):= V(x) \exp\Big\{\kap( \xi_i)H^g(x,\xi_i)+ \sum_{j\neq i} \kap( \xi_j) G^g(x,\xi_j)\Big\},\quad x\in\Si \setminus\{\xi_j:j\ne i\},
	\end{equation}
	for each $i=1,\ldots,p+q$, where
\begin{equation}\label{eq:kappa-definition}
 \kap(\xi)=\left\{\begin{aligned}&8\pi&&\text{ if }\,\xi\in\intS,\\&4\pi&&\text{ if }\,\xi\in\pa\Si.\end{aligned}\right.
\end{equation} Here,  $G^g(\cdot,\cdot)$ is the Green's function associated with problem \eqref{maineq}, and $H^g(\cdot,\cdot)$ is its regular part, as introduced in Section \ref{sec:2}.  In particular, $\cF_\xi^i$ is of class $C^3$ in a small neighborhood of $\xi_i$. Furthermore, we define a Kirchhoff-Routh-type functional   $\ff:\Xi_{p,q}\to \R $  by
	\begin{equation}\label{eq:reduce_fun_0}
		\ff(\xi):=\sum_{i} 2\kap( \xi_i)\ln  V(\xi_i) +\sum_{i} \kap^2( \xi_i) R^g(\xi_i) + \sum_{i}\sum_{j\neq i} \kap( \xi_i)\kap( \xi_j) G^g(\xi_i,\xi_j),
	\end{equation}
	where  $R^g(\xi_i):=H^g(\xi_i,\xi_i)$ is the Robin function; see also Section \ref{sec:2}.

	Let    $K_g$ be the Gaussian curvature of the surface $\Si $ with respect to $g$ and $k_g$ be the geodesic curvature of $\pa \Si$.  Define
	\begin{equation}\label{eq:def-cL}
	\cL(\xi):=\left\{\begin{aligned}
		&		\cL_1(\xi) & &\text{ if }\, q\neq 0,\\
		&\cL_2(\xi) & & \text{ if }\, q=0,
	\end{aligned}\right.
	\end{equation}
	where
	\begin{equation*}
		\cL_1(\xi)
		:=-\sum_{i=p+1}^{p+q}
		(\pa_{\nu_g}\ln  V+2k_g)	\sqrt{\cF_{\xi}^i}\Big|_{\xi_i},
	\end{equation*}
	and
	\begin{equation*}
		\cL_2(\xi):= \sum_i
		\kap(\xi_i)(\Delta_g\cF^i_{\xi}
		- 2K_g \cF^i_{\xi})\Big|_{\xi_i}.
	\end{equation*}

Write $\operatorname{Crit}(\ff)=\{\xi\in\Xi_{p,q}:D\ff(\xi)=0\}$ and define
\begin{equation*}
 \cV_{p,q}^-=\{[\xi]\in\Xi_{p,q}/\mathfrak S_{p,q}:
                D\ff(\xi)=0,\ \cL(\xi)<0\}.
\end{equation*}
Proposition~\ref{prop:pseudogradient} identifies $\mathcal V_{p,q}^-$ with the
escaping ends of its descending field.

For a nondegenerate critical point $\xi$ of $\ff$,
$\morse(\ff,\xi)$ denotes the number of negative eigenvalues
of its Hessian on $\Xi_{p,q}$. We impose the following conditions
on $\ff$ and $\cL$:

	\begin{enumerate}[
		label=\textup{(C\arabic*)},
		ref=\textup{(C\arabic*)}
		]
		\item\label{item:C1} \(\ff\) is a Morse function;
		\item\label{item:C2} \(\cL(\xi)\neq0\) for every
		\(\xi\in\cri(\ff)\).
	\end{enumerate}

	\begin{rem}
		\begin{enumerate}
			\item  By \cite[Lemma 4.1]{HB2024},
			\begin{equation}
				\label{eq:grad-F-inf} \lim_{\xi\to \partial \Xi_{p,q}} |\nabla \ff|_g =+\infty.
			\end{equation}
			The condition	\ref{item:C1} together with \eqref{eq:grad-F-inf}  implies that the reduced critical set $\operatorname{Crit}(\ff)$ is finite.
\item Positive $V\in C^3(\Sigma)$ suffices for the local second-order
terms and the differentiated estimates used below. Conditions
\ref{item:C1}--\ref{item:C2} are imposed directly; no genericity assumption
or higher regularity is used. Condition~\ref{item:C2} suffices for
compactness at the fixed resonant parameter, as proved in
Remark~\ref{rem:fixed-resonance-compactness}; condition~\ref{item:C1}
is used to obtain isolated nondegenerate reduced configurations
and their Morse indices.
The $C^3$ assumption is sufficient for the differentiated reduction;
minimal regularity is not claimed.
For related Morse genericity results, see, e.g., 
\cite{BMP2019,HB2024,ABF2023,LLWY2018}.
		\end{enumerate}

	\end{rem}

The first result combines the classification of escaping trajectories
with the local and global deformations needed for Morse theory.
A critical point at infinity is an equivalence class of non-compact
descending orbits of the field constructed below with energy bounded below,
as in
Definition~\ref{def:CPI}.
\begin{prop}\label{prop:CPI-main}
Let $\rho_*=4\pi m$, $m\in\mathbb N$, and let $V\in C^3(\Sigma)$ be positive.
Assume \ref{item:C1}--\ref{item:C2} for all $2p+q=m$.

Then the following statements hold.
\begin{enumerate}
\item  There is a bounded locally Lipschitz descending field for $J_{\rho_*}$
with complete positive flow. 
Every forward trajectory along which the energy is bounded below
is either precompact, with its limit set contained in
$\operatorname{Crit}(J_{\rho_*})$, or converges in bubble parameters
to a critical point at infinity.
\item The critical points at infinity are in bijection with
$\bigcup_{2p+q=m}\mathcal V_{p,q}^-$.
The critical point at infinity corresponding to $[\xi]$ has
level and Morse index as follows:
\begin{align*}
C_\infty(\xi)
&=-\rho_*\bigl(1+\ln(\rho_*/8)\bigr)
  -\frac12\FpqV(\xi),\qquad 
\iota_\infty(\xi)
=3p+2q-1-\morse(\FpqV,\xi).
\end{align*}
\item  If $a<b$ and $[a,b]$ contains neither an  genuine  critical value
nor a critical value at infinity, then $J_{\rho_*}^a$ is a
strong deformation retract of $J_{\rho_*}^b$.
\item  Let $c$ be an isolated critical level, and assume that the
genuine  critical points at level $c\in \R$ are nondegenerate. For every sufficiently small $\varepsilon>0$,
the relative homology of
$(J_{\rho_*}^{c+\varepsilon},J_{\rho_*}^{c-\varepsilon})$
is the direct sum of the local Morse groups of the genuine
critical points and critical points at infinity at level $c$. 
\end{enumerate}
\end{prop}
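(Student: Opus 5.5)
The proof follows Bahri's scheme, carried out in three stages.

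The first stage is a blow-up and neighborhood-at-infinity description. Let $u_n$ be a Palais--Smale-type sequence for $J_{\rho_*}$ with bounded energy that is not precompact. Using the concentration-compactness alternative of Li--Sun--Yang together with the refined blow-up analysis of \cite{AHW2025-1}, we write $u_n$ in the neighborhood at infinity $V(p,q,\varepsilon)$. Such a $u_n$ is the projection onto $\oH$ of a sum of $p$ interior bubbles (mass $8\pi$) and $q$ boundary half-bubbles (mass $4\pi$), with $2p+q=m$, concentration scales $\lambda_i\to\infty$ of comparable size, and points $\xi\in\Xi_{p,q}$. The remainder $v$ is orthogonal to the tangent space of the bubble manifold.

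We then prove the following expansion of $J_{\rho_*}$ in $V(p,q,\varepsilon)$:
\[
J_{\rho_*}=-\rho_*\bigl(1+\ln(\rho_*/8)\bigr)-\tfrac12\FpqV(\xi)+c\,\cL(\xi)\,\theta(\lambda)+Q(v)+o(\theta(\lambda)+\|v\|^2).
\]
Here $\theta(\lambda)=\lambda^{-1}$ if $q\neq0$ and $\theta(\lambda)=\lambda^{-2}\ln\lambda$ if $q=0$, $Q$ is a positive definite quadratic form, and the amplitude variables are eliminated by a reduction. We also differentiate this expansion in $\lambda$ and in $\xi$ (tangentially at boundary points). The gradient of the amplitude-reduced functional in $v$ is coercive, so $v$ can be minimized out.

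The second stage constructs a bounded, locally Lipschitz pseudogradient $W$. Away from the $V(p,q,\varepsilon)$ we use $-\nabla J_{\rho_*}$ (normalized). Inside each $V(p,q,\varepsilon)$ we glue, by a partition of unity, three pieces: $-\nabla_\xi\FpqV$ acting on the points; a motion in $\lambda$ in the direction that decreases $\cL\theta$, that is, $\lambda$ increases where $\cL<0$ and decreases where $\cL>0$; and a descent in $v$. Near $\xi$ with $|\nabla\FpqV|$ large (collisions or points approaching the boundary) we use \eqref{eq:grad-F-inf}, so that moving the points dominates. Condition \ref{item:C2} together with the finiteness of $\operatorname{Crit}(\FpqV)$ from \ref{item:C1} guarantees that outside small neighborhoods of critical points the $\xi$-motion controls the $\lambda$-error terms.

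We then check two properties along the flow: $\lambda$ can only grow without bound near some $\xi\in\operatorname{Crit}(\FpqV)$ with $\cL(\xi)<0$, and $v$ stays small. Completeness of the positive flow follows from boundedness of $W$ and from the fact that the flow does not leave $\oH$. Part~1 follows: a trajectory with energy bounded below either satisfies Palais--Smale along a subsequence and is precompact, or eventually remains in some $V(p,q,\varepsilon)$ with $\lambda\to\infty$ and $\xi\to\xi_*\in\mathcal V^-_{p,q}$. Conversely, every such $\xi_*$ is reached by trajectories starting near it with large $\lambda$. This gives the bijection in part~2, and the level formula follows by taking the limit of the expansion.

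For the index in part~2, the unstable directions at infinity are the $\morse(-\FpqV)$ directions in $\xi$ and the amplitude/gluing directions. The space $\Xi_{p,q}$ has dimension $2p+q$. Adding the $m=2p+q$ rescaling and amplitude parameters minus the one constraint coming from the mass normalization, and then counting the descending directions of $-\tfrac12\FpqV$, gives
\[
\bigl(2p+q-\morse(\FpqV,\xi)\bigr)+(p+q-1)=3p+2q-1-\morse(\FpqV,\xi).
\]
The direction $\lambda\to\infty$ is not counted, since it decreases energy but is the escaping direction. Pinning down exactly which amplitude directions are unstable is the delicate bookkeeping step. I would check it against the closed-surface case of \cite{AB2023}, where the analogous index is $3p-1-\morse$.

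The third stage proves the deformation statements. Part~3 is the standard deformation along the flow of $W$. Since $[a,b]$ contains no critical value and no critical value at infinity, every trajectory starting in $J^b_{\rho_*}$ either reaches $J^a_{\rho_*}$ in finite time uniformly, or stays in a region where $|DJ_{\rho_*}|\ge\delta$ or where the expansion forces a definite energy decrease per unit time. A time reparametrization then produces a strong deformation retraction.

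For part~4, we excise small neighborhoods of the genuine critical points, which are treated by the classical Morse lemma, and of each critical point at infinity. In the latter case we use the product structure
\[
(\xi,\lambda,\text{amplitudes},v)
\]
near infinity to show that the local pair is homotopy equivalent to $(D^{\iota_\infty},\partial D^{\iota_\infty})$. Excision and additivity then give the direct sum.

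The main obstacle I expect is the uniform control of the flow in the mixed regime. There, boundary bubbles with error of order $\lambda^{-1}$ coexist with interior ones, and points may drift toward $\partial\Sigma$, so the $\lambda$-motion and the $\xi$-motion must be compared with correct orders. I would first handle this by proving the differentiated expansion with errors $o(\theta(\lambda)\,|\nabla\FpqV|+\theta(\lambda))$, and then choosing cutoff radii depending on $\lambda$.
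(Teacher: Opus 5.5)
The overall architecture you propose (reduction near infinity, a descending field that escapes only at reduced critical points with $\cL<0$, local pairs at infinity, then deformation) is the right one. However, two load-bearing steps do not work as written.

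\textbf{The orbit alternative in Part~1.} Parts~2--4 rest on it, and you derive it from a classification of non-precompact Palais--Smale sequences of $J_{\rho_*}$ that you attribute to \cite{LSY2023} and \cite{AHW2025-1}. Those results concern exact solutions. If $\nabla J_{\rho_*}(u_n)\to0$, the natural substitute $u_n-\nabla J_{\rho_*}(u_n)$ solves \eqref{maineq} only with the potential $Ve^{\nabla J_{\rho_*}(u_n)}$. That potential is $H^1$-close but not $C^0$-close to $V$, so the blow-up and quantization theory does not apply; the paper deliberately avoids any Palais--Smale statement. Even granting such a classification, a Palais--Smale subsequence of times does not make the orbit precompact. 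Your field also gives no mechanism preventing a trajectory from drifting to infinity outside the bubble neighborhoods, or from leaving and re-entering them infinitely often.

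The missing idea is the paper's height argument, which has three parts.
\begin{enumerate}
\item A height $Q_*\asymp1+\|u\|^2$, constant near $\operatorname{Crit}(J)$ and equal to $Q(U)+\|\mathsf z\|^2$ plus a constant on the end charts.
\item Compactness and exhaustion for exact critical points of $J_s^*=sJ+(1-s)Q_*$ with $s$ near $1$. Outside the cutoffs these are solutions at $\rho=s\rho_*<\rho_*$, which by Proposition~\ref{prop:bubbling-branches} occur only near ends with $\cL<0$.
\item Lucia's angle field $-\omega(\|X\|/\|Y\|)X-\|Y\|X-\|X\|Y$, with $X=\nabla J$ and $Y=\nabla(Q_*-J)$.
\end{enumerate}
Together these give $D\ln Q_*[W]\le C(-DJ[W])$ outside the inner end blocks. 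Hence orbit tails with energy bounded below stay bounded, and the choice of energy radii rules out a return to an end after an unstable exit (Lemma~\ref{lem:CPI-height}, Proposition~\ref{prop:pseudogradient}).

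\textbf{The local picture at an end.} On $V(p,q,\varepsilon)$ one only knows $\sqrt{\ln\lambda_i}\,|\alpha_i-1|\le C\varepsilon$. So the amplitude contribution $\approx2t\sum_i\kappa(\xi_i)(\alpha_i-1)^2$ to the energy is only $O(\varepsilon^2)$, far larger than $\theta(\lambda)$. Your expansion with remainder $o(\theta(\lambda)+\|v\|^2)$ therefore fails there. The order-$\theta$ terms are available only on $t|\alpha-\mathbf1|<\delta$, which the paper reaches by the descending amplitude-collar retraction (Lemma~\ref{lem:amplitude-collar-retraction}).

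Your final index is correct, but the reasons given for it are wrong:
\begin{enumerate}
\item There are $p+q$ scales and $p+q$ amplitudes, not $2p+q$ scales.
\item The amplitudes are \emph{stable}: $D^2_{\alpha\alpha}J\approx4t\,\operatorname{diag}(\kappa(\xi_i))$.
\item The $p+q-1$ extra unstable directions are the relative logarithmic masses $v_i$ of \eqref{eq:log-mass-coordinates} (not your remainder $v$), constrained by $\sum_i\kappa(\xi_i)v_i=0$. They enter \eqref{eq:A-logmass-leading-energy} through the term $-\rho_*\ln\bigl(\sum_i\kappa(\xi_i)e^{v_i}/\rho_*\bigr)$, which is strictly concave on that hyperplane.
\item The common scale carries the reduced energy $-\rho_*(1+\ln(\rho_*/8))-\frac12\FpqV(\xi)-a_*\cL(\xi)h(t)$ with $a_*>0$. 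So the coefficient in front of $\cL\theta$ is negative, and your rule ``decrease $\cL\theta$'' contradicts your correct conclusion that $\lambda$ increases where $\cL<0$.
\end{enumerate}
Consequently, your field inside $V(p,q,\varepsilon)$, which moves only $\xi$, the common scale and the remainder, must also contract the amplitudes and expand the relative masses. Without this you obtain neither the exact normal form \eqref{eq:CPI-exact-form} nor the local pair $(D^{\iota},S^{\iota-1})$ needed in Part~4.
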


	Although compactness may fail as $\rho\to 4\pi m$, condition
	\ref{item:C2} yields an a priori estimate at the fixed parameter
	$\rho=4\pi m$ by Remark~\ref{rem:fixed-resonance-compactness}:
	there exists a constant
	$\bar C_m>0$ such that
	\[
	\|u\|<\bar C_m
	\quad
	\text{for every }u\in\operatorname{Crit}(J_{4\pi m}).
	\]
	For $R>0$ and $u_0\in \oH$, set
	\[
	\mathcal B_R^{\oH}(u_0)
	:=
	\{u\in\oH:\|u-u_0\|< R\}, \quad  	\mathcal B_R^{\oH}:= 	\mathcal B_R^{\oH}(0),
	\]
	and let $(-\Delta_g)^{-1}$ denote the inverse Neumann Laplacian on mean-zero functions. Define
	\[
	T_\rho(u)
	:=
	(-\Delta_g)^{-1}
	\Big[
	\rho\Big(
	\frac{Ve^u}{\int_\Sigma Ve^u\,\d v_g}-\frac 1 {|\Sigma|_g}
	\Big)
	\Big],
	\quad u\in\oH.
	\]
	All functional gradients are taken with respect to the Dirichlet
	inner product on $\oH$; in particular,
	$\nabla J_\rho(u)=u-T_\rho(u)$. 
	Since $I-T_{4\pi m}$ has no zeros in
	$\oH \setminus \mathcal B_{\bar C_m}^{\oH}$, the Leray--Schauder degree
	\[
	d_{4\pi m}
	:=
	\deg
	\bigl(
	I-T_{4\pi m},
	\mathcal B_{\bar C_m}^{\oH},
	0
	\bigr)
	\]
	is well defined.

	The following theorem gives our main degree formula at resonance.
The corresponding degree formula in the nonresonant case was
obtained by computing the degree jumps across resonant parameters
in \cite[Theorem~1.1 and Eq.~(4.5)]{AHW2025-1}.
	\begin{thm}\label{thm:resonant-case}
		Let $\rho = 4 \pi m$ with $m\in\N$. Assume that $V$ is a positive $C^{3}$ function on $\Si$ satisfying conditions \ref{item:C1}--\ref{item:C2} for every pair $(p,q)\in\mathbb N_0^2$ with $2p+q=m$. Then the Leray--Schauder degree $d_{4\pi m}$ is well-defined and is given by
\begin{equation}\label{eq:degree-reso}
			d_{4 \pi m} =
			\left\{\begin{aligned}
				&1  + \, \sum_{2p + q = m} \sum_{[\xi^*] \in \cV_{p,q}^- }  (-1)^{p - \morse(\ff,\xi^*)} && \text{ if }\,    m= 1,2, \\
				&\frac{1}{l !}  \prod_{i=1}^{l} (i- \chi(\Si)) \, + \, \sum_{2p + q = m}\sum_{[\xi^*] \in \cV_{p,q}^- }  (-1)^{p - \morse(\ff,\xi^*)}   && \text{ if }\, m \geq 3,
			\end{aligned}\right.
		\end{equation}
		where $l:= \lfloor \frac{m-1}2\rfloor$ denotes the integer part of
		$\frac {m-1} 2$ and  $\chi(\Sigma)$ denotes the Euler characteristic of $\Sigma$.
	\end{thm}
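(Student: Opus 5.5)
The degree will be computed through the Euler--Poincaré identity for the Morse filtration of Proposition~\ref{prop:CPI-main}, comparing a very high sublevel with a very negative one.

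\textbf{Step 1: the high end.} Genuine critical points are bounded by $\bar C_m$, and there are finitely many critical points at infinity (finite $\operatorname{Crit}(\ff)$ by \ref{item:C1} and \eqref{eq:grad-F-inf}). So all critical levels lie in a bounded interval $(a_0,b_0)$. By Proposition~\ref{prop:CPI-main}(3), $J_{4\pi m}^{b}$ deforms onto $J_{4\pi m}^{b_0}$ for $b\ge b_0$. The full space $\oH$ is contractible, and the same deformation argument, applied along the complete descending flow, should show that $J_{4\pi m}^{b}$ is contractible for $b$ large.

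\textbf{Step 2: the low end.} For $m\ge2$, I would invoke the homotopy equivalence announced in the introduction: $J_{4\pi m}^{a}$ is homotopy equivalent to the boundary-weighted barycenter space of order $m-1$. Its Euler characteristic is taken from the homology computation in \cite{AK2026}. For $m=1$ the low sublevel is empty, so its Euler characteristic is $0$. For $m=2$ the space is $\Sigma$ itself, whose Euler characteristic $\chi(\Sigma)$ must combine with the other terms to give $1$. In general, $1-\chi(\text{model})$ should reproduce the term $\frac1{l!}\prod_{i=1}^l(i-\chi(\Sigma))$. This is the familiar closed-surface count of order $l=\lfloor (m-1)/2\rfloor$, and in the boundary setting the weighted barycenters reduce to it after the boundary contributions cancel. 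The matching against \cite{AK2026} is the step to check carefully.

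\textbf{Step 3: perturbation and counting.} Solutions need not be nondegenerate, so I would perturb $J_{4\pi m}$ by a small compact $C^2$ term $\varepsilon\phi$. The perturbation is supported in $\mathcal B^{\oH}_{\bar C_m+1}$, away from the bubbling region, and is chosen so that all genuine critical points become nondegenerate. Then:
\begin{enumerate}
\item Leray--Schauder degree is invariant under this perturbation, and for the perturbed functional it equals $\sum(-1)^{\morse(u)}$ over the genuine critical points.
\item The critical points at infinity are unchanged, since the perturbation vanishes outside the bounded region.
\item Proposition~\ref{prop:CPI-main}(4) localizes relative homology level by level. Each nondegenerate critical point at infinity of index $\iota_\infty$ contributes a local group $\mathbb Z_2$ in degree $\iota_\infty$.
\item Additivity of the Euler characteristic along the filtration gives
\[\chi(J^b)-\chi(J^a)=\sum_u(-1)^{\morse u}+\sum_{[\xi]\in\cup\cV^-_{p,q}}(-1)^{\iota_\infty(\xi)}.\]
\end{enumerate}
With $\chi(J^b)=1$ and $\iota_\infty=3p+2q-1-\morse(\ff,\xi)$, we have $(-1)^{\iota_\infty}=-(-1)^{p-\morse}$, because $(-1)^{3p+2q}=(-1)^p$.

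Solving the identity for $d_{4\pi m}=\sum_u(-1)^{\morse u}$ gives
\[d_{4\pi m}=1-\chi(J^a)+\sum(-1)^{p-\morse(\ff,\xi)}.\]
This is \eqref{eq:degree-reso} once Step 2 is in place.

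\textbf{Main obstacle.} The hardest step is justifying that the perturbation leaves the behaviour at infinity intact. The descending field, the deformation lemmas, and the local Morse pairs at infinity must all survive a perturbation supported in a bounded region. I would first try to construct the field of Proposition~\ref{prop:CPI-main}(1) so that it equals $\nabla J$ inside the ball and is unaltered near the bubbling ends. The remaining risk is the identification of $\chi(J^a)$ in Step 2.
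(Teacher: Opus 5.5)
Your route is the paper's. It uses the Euler--Poincar\'e count for the fixed-resonance filtration between a contractible high sublevel and a very negative sublevel, with one generator in degree $\iota_\infty$ per retained end. A perturbation localized in a bounded ball makes the genuine solutions nondegenerate without touching the ends or the extreme sublevels; what you call the main obstacle is exactly item (iv) of Lemma~\ref{lem:bounded-Morse-perturbation}. The last ingredient is the sign identity $-(-1)^{\iota_\infty}=(-1)^{p-\morse(\ff,\xi)}$.

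There is, however, a concrete error in Step~2 at $m=2$. The model there is $B_1^\partial(\Sigma,\partial\Sigma)=\bigcup_{2p+q\le1}B_q^p$. Since an interior atom carries weight $2$, only single boundary Dirac masses occur, so $B_1^\partial(\Sigma,\partial\Sigma)=\partial\Sigma$. This is a disjoint union of $b$ circles with $\chi=0$; it is not $\Sigma$. Nothing in your identity $d_{4\pi m}=1-\chi(J^a)+\sum(-1)^{p-\morse(\ff,\xi)}$ could absorb $\chi(\Sigma)$. With $J^a\simeq\Sigma$ you would get $1-\chi(\Sigma)+\sum$, which contradicts the statement unless $\chi(\Sigma)=0$. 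With the correct model, $1-\chi(\partial\Sigma)=1$ is the empty product for $l=0$. For general $m\ge2$ the fact to invoke is \eqref{eq:topology-euler-characteristic}: $1-\chi(B^\partial_{2s})=1-\chi(B_s(\Sigma))$ and $1-\chi(B^\partial_{2s-1})=1-\chi(B_{s-1}(\Sigma))$, with $\chi(B_0):=0$. Combine it with $\chi(B_r(\Sigma))=1-\frac1{r!}\prod_{j=1}^r(j-\chi(\Sigma))$; writing $m-1=2s$ or $m-1=2s-1$ then gives exactly $l=\lfloor(m-1)/2\rfloor$.

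Two smaller remarks. First, a $C^1$ perturbation supported in a bounded ball cannot have a gradient mapping bounded sets into relatively compact sets unless it vanishes identically; test it along $u+Re_n$ with $e_n\rightharpoonup0$. So your ``small compact term'' must be modified, because the cutoff necessarily contributes a scalar multiple of $u$. The paper writes $\nabla J_\epsilon=a_\epsilon u-K_\epsilon$ with $a_\epsilon\ge1/2$ and takes the degree of $a_\epsilon^{-1}\nabla J_\epsilon$, which has the same zeros and local indices.

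Second, your Step~1 differs mildly from the paper. You get contractibility of $J^b$ from the orbit alternative of the complete field. An orbit staying above every genuine and infinite critical value would have to be precompact with limit set in $\operatorname{Crit}(J)$ at a level $\ge b$, or converge to an end at a level $<b$; both are impossible, so the stopped flow retracts $\oH$ onto $J^b$. This is valid. The paper instead uses the convex-hull separation of $\{\nabla J,u\}$ together with energy bounds for exact solutions of $u=T_r(u)$, $0\le r\le\rho_*$ (Lemma~\ref{lem:B-exact-energy}, Proposition~\ref{prop:B-resonant-comparison}, Lemma~\ref{lem:first-wall-endpoints}). That argument is independent of the critical-points-at-infinity machinery.
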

The nonvanishing of this degree yields an existence result  without
any nondegeneracy assumption on the solutions.
\begin{cor}\label{c:Euler-Poincare}
Under the hypotheses of Theorem~\ref{thm:resonant-case}, equation
\eqref{maineq} at $\rho=4\pi m$ admits at least one solution if
\begin{equation}\label{eq:Euler-Poincare}
			\left\{\begin{aligned}
				&\sum_{2p + q = m}   \sum_{[\xi^*] \in \cV_{p,q}^- }  (-1)^{p - \morse(\ff,\xi^*)} \neq  -1 && \text{ if }\,  m = 1,2,  \\
				&	\sum_{2p + q = m}  \sum_{[\xi^*] \in \cV_{p,q}^- }  (-1)^{p - 1 - \morse(\ff,\xi^*)}  \neq  \frac{1}{l !}  \prod_{i=1}^{l} (i- \chi(\Si))   && \text{ if }\,  m \geq 3.
			\end{aligned}\right.
		\end{equation}
\end{cor}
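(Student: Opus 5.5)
The corollary follows from Theorem~\ref{thm:resonant-case} together with the basic solution property of the Leray--Schauder degree. The plan is to show that the stated condition is exactly $d_{4\pi m}\neq 0$, and then use the a priori bound to produce a zero of $I-T_{4\pi m}$ inside $\mathcal B_{\bar C_m}^{\oH}$.

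First, by Remark~\ref{rem:fixed-resonance-compactness}, condition \ref{item:C2} gives the a priori bound $\|u\|<\bar C_m$ for every $u\in\operatorname{Crit}(J_{4\pi m})$. Hence $I-T_{4\pi m}$ has no zeros on $\partial\mathcal B_{\bar C_m}^{\oH}$. The map $T_{4\pi m}$ is compact on $\oH$: by Moser--Trudinger, $u\mapsto Ve^u/\int_\Sigma Ve^u\,\d v_g$ is continuous from $\oH$ into $L^r$ for some $r>1$, and $(-\Delta_g)^{-1}$ maps $L^r$ compactly into $\oH$. So $d_{4\pi m}$ is defined, as the paper already notes.

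Second, I invoke the existence property of the degree: if $d_{4\pi m}\neq0$, there is $u\in\mathcal B_{\bar C_m}^{\oH}$ with $u=T_{4\pi m}(u)$. Such a $u$ satisfies $\nabla J_{4\pi m}(u)=0$ and is therefore a weak solution of \eqref{maineq}. Standard elliptic regularity then makes it a classical solution with the Neumann condition.

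Third, I substitute \eqref{eq:degree-reso} into $d_{4\pi m}\neq 0$ and split by $m$.
\begin{itemize}
\item For $m=1,2$ the condition reads $1+\sum(-1)^{p-\morse(\ff,\xi^*)}\neq0$, which is the first line of \eqref{eq:Euler-Poincare}.
\item For $m\ge3$, multiply the sum by $-1$ by writing $(-1)^{p-\morse}=-(-1)^{p-1-\morse}$. Then $d_{4\pi m}\neq0$ becomes $\sum(-1)^{p-1-\morse(\ff,\xi^*)}\neq\frac1{l!}\prod_{i=1}^l(i-\chi(\Sigma))$, which is the second line.
\end{itemize}

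No nondegeneracy of the solutions is needed, because the degree is a homotopy invariant computed on the whole ball and Theorem~\ref{thm:resonant-case} does not assume nondegeneracy. The only real content is that theorem itself. In the corollary, the one point to check is that the zero produced by the degree is a genuine critical point rather than an artifact. This is exactly what the a priori bound guarantees, since the ball $\mathcal B_{\bar C_m}^{\oH}$ is admissible.
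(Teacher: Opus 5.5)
Your proposal is correct and follows the paper's proof. The paper likewise substitutes \eqref{eq:degree-reso} to see that \eqref{eq:Euler-Poincare} is exactly $d_{4\pi m}\neq 0$, and then invokes the existence property of the Leray--Schauder degree on the admissible ball $\mathcal B_{\bar C_m}^{\oH}$. Your sign manipulation for $m\ge 3$, and the check that the $m=1,2$ case matches the empty-product convention, are both right.
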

For a compact space $X$ and $l\ge1$, let $B_l(X)$ denote
the space of probability measures supported on at most $l$
points of $X$. Define the boundary-weighted barycenter space by
\begin{equation}\label{eq:weighted-barycenter-space}
B_l^\partial(\Sigma,\partial\Sigma)
      =\bigcup_{2p+q\le l}B_q^p,\quad l\ge1,
    \end{equation}
    where \begin{equation}\label{eq:barycenter-stratum}
    B_q^p=\Big\{\sum_{i}t_i\delta_{x_i}:
      t_i\ge0,\ \sum_i t_i=1,\
      x_1,\ldots,x_p\in\mathring\Sigma,\
      x_{p+1},\ldots,x_{p+q}\in\partial\Sigma\Big\}.
    \end{equation}

    Both barycenter spaces carry the weak topology of measures; zero
    coefficients are omitted, and coincident atoms are combined.
    
Throughout this paper, $b\ge1$ denotes
    the number of connected components of $\partial\Sigma$.

	We introduce the concept of \emph{generalized Morse index}   of a solution $\om$ to equation \eqref{maineq},  denoted by $\Morse(J_{\rho}, \om)$, defined as the sum of  the classical Morse index $\morse(J_{\rho}, \om)$ and the dimension of the kernel of the associated linearized operator
	\[
	\mathcal{T}_\om (\phi):=-\Delta_g \phi-\rho \Big(
	\frac{V e^\om  \phi}{I_\om}
	-
	\frac{V e^\om}{I_\om^2}
	\int_{\Si}V e^\om \phi\,\d v_g
	\Big),
	\quad
	I_\om:=\int_{\Si}V e^\om\,\d v_g.
	\]
	Here \(\mathcal T_\omega\) is the self-adjoint operator in
	\(L^2_0(\Sigma):=\{\phi\in L^2(\Sigma):\int_\Sigma\phi\,\d v_g=0\}\) with
	domain
	\[
	D(\mathcal T_\omega)=\{\phi\in H^2(\Sigma)\cap L^2_0(\Sigma):
	\partial_{\nu_g}\phi=0\text{ on }\partial\Sigma\}.
	\]
	Its quadratic form is the Hessian of $J_\rho$ on $\oH$.
The Morse index $\morse(J_\rho,\om)$ is the number of
negative eigenvalues of $\mathcal T_\om$, counted with
multiplicity. We define the \emph{generalized Morse index} by
    \[
    \Morse(J_\rho,\omega):=\morse(J_\rho,\omega)+\dim\ker\mathcal T_\omega.
    \]

	\begin{thm}\label{thm:1}
		Let \(\rho_*=4\pi m\) with \(m\in\mathbb N\), and let \(V\in C^3(\Sigma)\) be positive. Assume that, for every pair
		\((p,q)\in\mathbb N_0^2\) with \(2p+q=m\), the reduced energy
		\(\FpqV\) has only nondegenerate critical points and
		\(\mathcal L(\xi)\ne0\) at each of them. Then the following statements hold for equation \eqref{maineq} with $\rho=\rho_*$.
		\begin{itemize}
			\item[(1)] If \(\mathcal L(\xi)<0\) at every local minimum
			\(\xi\in\Xi_{0,m}\) of \(\Fv{0}{m}\), then equation \eqref{maineq}
			admits a solution \(\omega\) such that
			\begin{equation}\label{eq:index-window-upper}
				\morse(J_{\rho_*},\omega)\le 2m
				\le \Morse(J_{\rho_*},\omega).
			\end{equation}
			\item[(2)] Let \(m\ge2\). Assume that \(\mathcal L(\xi)>0\) at every
			critical point \(\xi\in\Xi_{0,m}\) of \(\Fv{0}{m}\) with
			\(\morse(\Fv{0}{m},\xi)=1\), and also at every local minimum
			\(\xi\in\Xi_{1,m-2}\) of \(\Fv{1}{m-2}\). Then equation
			\eqref{maineq} admits a solution \(\omega\) such that
			\begin{equation}\label{eq:index-window-lower}
				\morse(J_{\rho_*},\omega)\le 2m-2
				\le \Morse(J_{\rho_*},\omega).
			\end{equation}
		\end{itemize}
If the solution obtained in \textup{(1)} or \textup{(2)}
is nondegenerate, its Morse index is $2m$ or $2m-2$,
respectively.
	\end{thm}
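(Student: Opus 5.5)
The plan is to argue by contradiction, combining the deformation results of Proposition~\ref{prop:CPI-main} with the homotopy equivalence between very negative sublevels and $B^\partial_{m-1}(\Sigma,\partial\Sigma)$ announced in the introduction.

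\emph{Step 1: reduction to a Morse situation.} Suppose no solution satisfies \eqref{eq:index-window-upper} (resp.\ \eqref{eq:index-window-lower}). By Remark~\ref{rem:fixed-resonance-compactness}, all genuine critical points lie in $\mathcal B^{\oH}_{\bar C_m}$. Following the variational perturbation used for Theorem~\ref{thm:resonant-case}, I perturb $J_{\rho_*}$ by a small $C^2$ term supported in a bounded region. The perturbed functional then has only nondegenerate critical points there, and its critical points at infinity are unchanged. By lower semicontinuity of $\morse$ and upper semicontinuity of $\Morse$ under $C^2$-small perturbations, a perturbed critical point of index exactly $2m$ (resp.\ $2m-2$) would converge to a solution satisfying the window inequality. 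So it suffices to produce such a point for the perturbed functional. If the limit solution is nondegenerate, the two inequalities collapse to equality, which gives the final assertion.

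\emph{Step 2: homological bookkeeping.} Let $L$ be large. Then $J^{L}$ is contractible, and $J^{-L}\simeq B^\partial_{m-1}$ for $m\ge2$ (empty for $m=1$). Hence
\[
H_k(J^{L},J^{-L};\mathbb Z_2)\cong \tilde H_{k-1}(B^\partial_{m-1};\mathbb Z_2).
\]
The stratum $B^p_q$ has dimension $3p+2q-1$. Under $2p+q\le m-1$ this is maximized at $p=0$, $q=m-1$, giving $2m-3$. So the pair has no homology in degrees $\ge 2m-1$. For part (2), the homology computation of \cite{AK2026} gives $\tilde H_{2m-3}(B^\partial_{m-1};\mathbb Z_2)\ne0$: the top class comes from $(\partial\Sigma)^{m-1}$ and carries the boundary components. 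Thus $H_{2m-2}$ of the pair is nonzero. By Proposition~\ref{prop:CPI-main}(3)--(4), the Morse complex of this filtration, over $\mathbb Z_2$, has generators the genuine critical points (in their index) and the critical points at infinity in degree $\iota_\infty=3p+2q-1-\morse(\FpqV,\xi)$. Under $2p+q=m$ these indices are at most $2m-1$. The value $2m-1$ occurs exactly for local minima of $\Fv0m$ with $\cL<0$. The value $2m-2$ occurs exactly for index-one critical points of $\Fv0m$ and for minima of $\Fv1{m-2}$, in both cases with $\cL<0$.

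\emph{Step 3: part (2).} The hypothesis removes every critical point at infinity of degree $2m-2$. Since $H_{2m-2}\neq 0$, there must be a generator in degree $2m-2$, and it can only be a genuine critical point of index $2m-2$.

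\emph{Step 4: part (1), the main obstacle.} Since $G^g\to+\infty$ at collisions, $\Fv0m\to+\infty$ on $\partial\Xi_{0,m}$, so $\Fv0m$ attains a minimum $\xi_0$. By hypothesis $\cL(\xi_0)<0$, so $\xi_0$ produces a critical point at infinity of index $2m-1$ at level $C_\infty(\xi_0)$. Because $H_{2m-1}=0$, this generator must die, and the difficulty is to show it cannot die through its own boundary into degree $2m-2$. My plan is to use the level filtration together with the explicit unstable set at infinity. Near $\xi_0$, the descending directions are the $2m-1$ scale and weight directions. The flow through them moves the measure $\sum t_i\delta_{\xi_i}$ toward $B^\partial_{m-1}$ by killing one atom. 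I would show that the attaching map of this cell into $J^{C_\infty(\xi_0)-\varepsilon}$ is null-homologous in degree $2m-2$. The reason is that the homology of the lower sublevel in degree $\ge 2m-2$ contains no target for it: the only competitors there are index-$(2m-2)$ critical points at infinity, and their cells compose with the boundary of the top $(0,m)$-cell of the barycenter model with even ($\mathbb Z_2$-zero) incidence. Once this is established, the class is a nonzero cycle in $H_{2m-1}$ of the filtration step, and it can only be cancelled by a genuine critical point of index $2m$. Computing this incidence is the step I expect to be hardest. If it fails directly, I would instead compare with the bubbling branches at $\rho\to4\pi m^\pm$, using the nonresonant Morse data of \cite{AHW2025-1}, to locate the index-$2m$ solution.
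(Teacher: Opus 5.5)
Your part (2) is correct, but it takes a different route from the paper. You stay at $\rho_*$ and use Lemma~\ref{lem:bounded-Morse-perturbation} to make the genuine critical points nondegenerate without changing the ends or the sublevel pair. The weak Morse inequality of Theorem~\ref{thm:2}(b) then gives $\nu_{2m-2}(J_\epsilon)\ge\mathfrak b_{m,2m-2}=\binom{b+m-2}{m-1}>0$. This works because the hypothesis gives $\nu^\infty_{2m-2}=0$: since $\iota_\infty=2m-1-p-\morse(\FpqV,\xi)$, the ends of index $2m-2$ are exactly those with $p+\morse(\FpqV,\xi)=1$. Letting $\epsilon\to0$ and using spectral semicontinuity gives \eqref{eq:index-window-lower}. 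This is Corollary~\ref{cor:1.3} combined with the perturbation lemma, and it uses the full critical-points-at-infinity machinery. The paper instead treats both parts uniformly by one-sided nonresonant approximation, which needs only nonresonant Morse theory and the branch index formula \eqref{eq:exact-bubbling-index}.

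Part (1) has a genuine gap. Everything rests on your claim that the attaching map of the $(2m-1)$-cell at infinity of the global minimum $\xi_0$ is $\mathbb Z_2$-null-homologous in $J^{C_\infty(\xi_0)-\varepsilon}$. This is neither proved nor correctly motivated. Degree-$(2m-2)$ relative classes below that level can come from genuine critical points of index $2m-2$, not only from ends of index $2m-2$. Nothing in the paper computes incidences between this flow and cells of a barycenter model. (Also, the unstable directions at $(\xi_0)_\infty$ are the $m$ boundary center directions and the $m-1$ relative-mass directions; the common scale is stable because $\mathcal L(\xi_0)<0$.)

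More decisively, the resonant data cannot yield (1) by themselves. Take $m=1$, let the other critical points of $\Fv{0}{1}$ have $\mathcal L>0$, and suppose there are two nondegenerate minima together with the single end $(\xi_0)_\infty$ of index $1$. This satisfies the resonant Morse relation, $2+t-1=(1+t)\cdot 1$, and the degree formula, $d_{4\pi}=2$, yet has no solution of index $2$. So information beyond the resonant filtration is indispensable, and your proposal does not supply it.

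That information is the topology on the supercritical side; your fallback points the right way, and made precise it is the paper's proof.
\begin{itemize}
\item Take $\rho_k\downarrow\rho_*$ with $\rho_k\in(4\pi m,4\pi(m+1))$, and positive $V_k\to V$ in $C^3$ for which $J_{\rho_k,V_k}$ is Morse (Smale transversality).
\item Since $\widetilde H_{2m-1}(B^\partial_m(\Sigma,\partial\Sigma);\mathbb Z_2)\neq0$, Corollary~\ref{prop:B-nonresonant-Morse} with $l=m$ gives critical points $u_k$ of index $2m$.
\item If the $u_k$ blew up at $[\xi^*]$ of type $(p,q)$, then $\mathcal L(\xi^*)>0$ because $\mu_k>0$. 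Applying \eqref{eq:exact-bubbling-index} uniformly in $V_k$ gives $2m=3p+2q-\morse(\FpqV,\xi^*)$, i.e.\ $p+\morse(\FpqV,\xi^*)=0$.
\item Then $\xi^*$ is a local minimum of $\Fv{0}{m}$, where $\mathcal L<0$ by hypothesis, which is a contradiction.
\item Hence $u_k\to\omega$, and spectral semicontinuity of the linearized operators yields \eqref{eq:index-window-upper}.
\end{itemize}
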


\begin{rem}
For the min--max classes considered here, a generic variational
perturbation yields a nondegenerate critical point of Morse
index $k$. Passing to the limit as the perturbation tends to
zero, spectral semicontinuity gives  $\morse(J_{\rho_*},\omega)\le k\le\Morse(J_{\rho_*},\omega)$. 
Thus the limiting solution has Morse index $k$ if it is
nondegenerate. The values $k=2m$ and $k=2m-2$ arise from
the top-dimensional boundary strata of $B_m^\partial$ and
$B_{m-1}^\partial$, whose dimensions are $2m-1$ and $2m-3$,
respectively, through the corresponding relative min--max
construction.
\end{rem}

Since $\dim\Xi_{p,q}=2p+q$, the index in Proposition~\ref{prop:CPI-main}
satisfies $\iota_\infty([\xi])\ge p+q-1$. The quantity $3p+2q-1$
is the dimension of the barycentric stratum: $2p+q$ location variables
and $p+q-1$ weights.

    We next describe the stronger relations between the numbers of
solutions of each index and the topology of the lower sublevel.
For the remaining statements, let $m\in\mathbb N$ with $m\ge2$. All homology groups
    in these statements have coefficients in $\mathbb Z_2$. For $l\ge1$ and $r\ge0$, write
    \[
    \widetilde\beta_l^r
    :=\dim_{\mathbb Z_2}\widetilde H_r
       (B_l^\partial(\Sigma,\partial\Sigma);\mathbb Z_2),
    \quad
    \widetilde P_l^\partial(t):=\sum_{r\ge0}\widetilde\beta_l^r t^r.
    \]
    Define
    \begin{equation}\label{def:Betti}
    \mathfrak b_{m,0}:=0,\quad
    \mathfrak b_{m,k}:=\widetilde\beta_{m-1}^{k-1}\quad(k\ge1),
    \end{equation}
    and
    \begin{equation}\label{def:P_m}
    P_m(t):=\sum_{k\ge0}\mathfrak b_{m,k}t^k
           =t\widetilde P_{m-1}^\partial(t).
    \end{equation}
    Section~\ref{sec:barycenter-homology} recalls the explicit formulas
    \eqref{eq:topology-P-even}--\eqref{eq:topology-P-odd} and identifies
    $P_m$ as the relative Poincar\'e polynomial of a sufficiently large
    sublevel pair of $J_{4\pi m}$. 

	The following theorem states the strong Morse relation and its consequences.
\begin{thm}\label{thm:2}
		Let \(\rho=4\pi m\) with \(m\geq 2\), and let \(V\in C^3(\Sigma)\) be positive. Assume that \(V\) satisfies
		conditions \ref{item:C1}--\ref{item:C2}  for every pair
		\((p,q)\in\mathbb N_0^2\), \(2p+q=m\).
		Assume furthermore that all critical points of \(J_{4\pi m}\) are non-degenerate, and let
		\[
		\mathfrak M:=\max\Bigl(\{0\}\cup
		\{\morse(J_{4\pi m},u):u\in\operatorname{Crit}(J_{4\pi m})\}\Bigr).
		\]
		For \(k\ge0\), set
		\[
		\nu_k:=\#\{u\in\operatorname{Crit}(J_{4\pi m}):\morse(J_{4\pi m},u)=k\},
		\]
		and
		\[
		\nu_k^{\infty}
		:=
		\sum_{2p+q=m}
		\#\{
		[\xi]\in\cV_{p,q}^{-}:
		\iota_{\infty}([\xi])=k
		\}.
		\]

		Then the following statements hold.
		\begin{itemize}
			\item[(a)] The strong Morse relation is
			\begin{equation*}
				\sum_{k\ge0}(\nu_k+\nu_k^{\infty})t^k
				-
				P_m(t)
				=
				(1+t)Q(t),
			\end{equation*}
			where \(Q(t)\) is a polynomial with nonnegative integer coefficients.
			\item[(b)] Consequently, for every
			\[
			1\leq k\leq \bar N:=\max(2m-1,\mathfrak M),
			\]
			one has the explicit weak Morse inequality
			\begin{equation*}
				\nu_k+\nu_k^{\infty}\geq \mathfrak b_{m,k},
			\end{equation*}
			where \(\mathfrak b_{m,k}\) is defined by \eqref{def:Betti}.
			\item[(c)] The Euler--Poincar\'e identity is
			\[
			\sum_{i=0}^{\mathfrak M}(-1)^i\nu_i
			+
			\sum_{2p+q=m}
			\sum_{[\xi]\in\cV_{p,q}^{-}}
			(-1)^{\iota_{\infty}([\xi])}
			=
			1-\chi(B_{m-1}^{\pa}(\Si,\pa\Si)).
			\]
			In particular,
			\[
			\#\operatorname{Crit}(J_{4\pi m})
			\geq
			\Big|
			1-\chi(B_{m-1}^{\pa}(\Si,\pa\Si))
			-
			\sum_{2p+q=m}
			\sum_{[\xi]\in\cV_{p,q}^{-}}
			(-1)^{\iota_{\infty}([\xi])}
			\Big|.
			\]
		\end{itemize}

	\end{thm}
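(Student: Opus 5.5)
We will derive Theorem~\ref{thm:2} from the Morse filtration provided by Proposition~\ref{prop:CPI-main}, combined with the topological identification of the sublevels. We fix $a\ll0$ below every genuine critical level and every level $C_\infty(\xi)$, and $b\gg0$ above all of them. This is possible because $\operatorname{Crit}(J_{4\pi m})$ is compact, with $\|u\|<\bar C_m$, and hence finite by nondegeneracy, while each $\mathcal V^-_{p,q}$ is finite by \ref{item:C1} and \eqref{eq:grad-F-inf}.

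The first step computes the topology of the pair. By Proposition~\ref{prop:CPI-main}(3), $J^b_{4\pi m}$ is a deformation retract of the whole space $\oH$, hence contractible. For $m\ge2$ we invoke the homotopy equivalence $J^a_{4\pi m}\simeq B^\partial_{m-1}(\Sigma,\partial\Sigma)$ announced in the introduction and proved in the paper. The long exact sequence of the pair with a contractible total space then gives
\[
H_k(J^b,J^a;\mathbb Z_2)\cong\widetilde H_{k-1}(B^\partial_{m-1};\mathbb Z_2).
\]
Therefore the relative Poincar\'e polynomial of the pair is $t\widetilde P^\partial_{m-1}(t)=P_m(t)$.

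The second step is local. Each nondegenerate genuine critical point $u$ of index $k$ has local Morse group $\mathbb Z_2$ in degree $k$. For each $[\xi]\in\mathcal V^-_{p,q}$, the local pair at infinity is, by Proposition~\ref{prop:CPI-main}(2), a cell of dimension $\iota_\infty([\xi])$. Its local homology is $\mathbb Z_2$ in that degree, combining the nondegenerate reduced Hessian with the $p+q-1$ weight directions and the unstable scale direction when $\mathcal L<0$. We order the finitely many critical values $c_1<\dots<c_N$ in $(a,b)$ and choose levels $a=a_0<c_1<a_1<\dots<c_N<a_N=b$.

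Proposition~\ref{prop:CPI-main}(3)--(4) then gives $H_*(J^{a_j},J^{a_{j-1}})$ as the direct sum of the local groups at $c_j$. The standard argument applies subadditivity to the triples $(J^{a_j},J^{a_{j-1}},J^{a_0})$, via the long exact sequences over the field $\mathbb Z_2$. This yields the polynomial identity
\[
\sum_{k}(\nu_k+\nu^\infty_k)t^k=P_m(t)+(1+t)Q(t),
\]
with $Q$ having nonnegative coefficients, which is (a). Part (b) follows by comparing coefficients, since the coefficients of $(1+t)Q(t)$ are nonnegative. Part (c) follows by evaluating the identity at $t=-1$: we get $P_m(-1)=-\widetilde\chi(B^\partial_{m-1})=1-\chi(B^\partial_{m-1})$ and $(-1)^{\iota_\infty}$ for the points at infinity. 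The final inequality follows from the triangle inequality, since $\sum_i(-1)^i\nu_i$ is bounded in absolute value by $\#\operatorname{Crit}$.

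The main obstacle is the global homotopy equivalence $J^a\simeq B^\partial_{m-1}$. If it is not available as a standalone earlier result, we would prove it by building two maps, $\Phi: B^\partial_{m-1}\to J^a$ and $\Psi: J^a\to B^\partial_{m-1}$. The map $\Phi$ sends $\sum t_i\delta_{x_i}$ to $\ln\sum t_i\,\varphi_{\lambda,x_i}$ for large $\lambda$. It uses interior bubbles of mass $8\pi$ and boundary bubbles of mass $4\pi$, so that the weighted count $2p+q\le m-1$ forces the energy to $-\infty$. The map $\Psi$ is a projection of $e^u/\int e^u$ onto $B^\partial_{m-1}$. It relies on an improved Moser--Trudinger inequality with constants $8\pi$ in the interior and $4\pi$ at the boundary: very negative energy forces the normalized measure to concentrate near at most $m-1$ weighted points. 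The delicate part is gluing interior and boundary strata continuously as atoms approach $\partial\Sigma$, and then showing that $\Psi\circ\Phi\simeq\mathrm{id}$ and $\Phi\circ\Psi\simeq\mathrm{id}$ via the deformation of Proposition~\ref{prop:CPI-main}(3) on very negative levels. Below all values $C_\infty$ with $2p+q=m$, no ends of mass $m$ interfere there.
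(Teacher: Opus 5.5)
Your proposal is correct and follows the paper's proof. The paper uses the same four ingredients: the finite filtration of $(J^{D}_{4\pi m},J^{-D}_{4\pi m})$ from Theorem~\ref{thm:global-deformation-infinity}; one $\mathbb Z_2$ generator per genuine critical point and per retained end (Proposition~\ref{prop:complete-cell-infinity}); the identification of the relative Poincar\'e polynomial with $P_m$ through Proposition~\ref{prop:negative-sublevels-barycenters}; and the long-exact-sequence bookkeeping over the triples, evaluated at $t=-1$ for (c).

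A few minor corrections:
\begin{itemize}
\item Contractibility of the upper sublevel is supplied directly by Proposition~\ref{prop:negative-sublevels-barycenters}, since Proposition~\ref{prop:CPI-main}(3) is stated only for finite windows.
\item For $\mathcal L<0$ the common-scale direction is positive ($\epsilon_*=1$), not unstable. Hence $\iota_\infty$ counts only the $2p+q-\morse(\FpqV,\xi)$ center directions and the $p+q-1$ relative-mass directions.
\item Your fallback via the improved Moser--Trudinger inequality fails exactly at $\rho=4\pi m$, where the coefficient $\tfrac12-\tfrac{\rho(1+\varepsilon)}{8\pi m}$ becomes negative. This is why the paper first retracts $J^{-L}_{4\pi m}$ onto a sublevel of $J_{\rho_-}$ with $\rho_-<4\pi m$ (Proposition~\ref{prop:B-resonant-comparison}).
\end{itemize}
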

	\begin{rem}
		By the compactness theorem in the resonant case, together with the nondegeneracy assumption, \(\operatorname{Crit}(J_{4\pi m})\) is finite; hence \(\mathfrak M\) and the numbers \(\nu_k\) are well-defined.
	\end{rem}

\begin{rem}
By \eqref{eq:topology-euler-characteristic},
\[
1-\chi\bigl(B_{m-1}^{\partial}(\Sigma,\partial\Sigma)\bigr)
=\frac{1}{l!}\prod_{j=1}^{l}\bigl(j-\chi(\Sigma)\bigr),
\qquad l=\left\lfloor\frac{m-1}{2}\right\rfloor,
\]
where the empty product is understood to be $1$.
Thus this Euler characteristic depends only on $m$ and
$\chi(\Sigma)$.
\end{rem}

	As an immediate consequence, we obtain the following existence result.
	\begin{cor}\label{cor:1.3}
		Let $ m\geq 2$, and assume that the hypotheses in  Theorem \ref{thm:2} hold.
		Suppose that there exists an integer $ l$  with
		$1\leq l\leq 2m-2$
		such that
		\[
		\nu_l^{\infty}=0
		\quad\text{ and } \quad
		\mathfrak{b}_{m,l}  \neq0.
		\]
		Then  \eqref{maineq} with $ \rho=4\pi m$  admits at least one solution of Morse index  $ l$.  More precisely,
		\[
		\nu_l\ge \mathfrak{b}_{m,l}.
		\]
		In particular, \[
		\mathfrak b_{m,2m-2}
		=
		\widetilde\beta_{m-1}^{2m-3}
		=
		\binom{ b +m-2}{m-1}>0,
		\]
		where \( b \ge1\) is the number of boundary components. If
		there is no critical point at infinity of index \(2m-2\), namely
		$\nu_{2m-2}^{\infty}=0$,
		then equation \eqref{maineq} admits at least one solution whose Morse index is
		\(2m-2\).  Moreover, the number of such solutions is bounded from below by
		\(\binom{ b +m-2}{m-1}\).
	\end{cor}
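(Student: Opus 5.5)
The plan is to read off Corollary~\ref{cor:1.3} from Theorem~\ref{thm:2}(b) and then evaluate the top Betti number of the barycenter model.

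\textbf{Step 1: the general inequality.} Let $1\le l\le 2m-2$. Since $2m-2\le 2m-1\le\bar N$, the integer $l$ lies in the admissible range of Theorem~\ref{thm:2}(b). That statement gives
\[
\nu_l+\nu_l^\infty\ge\mathfrak b_{m,l}.
\]
Under the hypothesis $\nu_l^\infty=0$ this becomes $\nu_l\ge\mathfrak b_{m,l}$. If moreover $\mathfrak b_{m,l}\neq0$, then $\nu_l\ge1$. By the definition of $\nu_l$, this means there is a critical point of $J_{4\pi m}$, that is, a solution of \eqref{maineq} at $\rho=4\pi m$, whose Morse index equals $l$. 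The nondegeneracy assumed in Theorem~\ref{thm:2} makes the count $\nu_l$ meaningful.

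\textbf{Step 2: the top Betti number.} By \eqref{def:Betti}, $\mathfrak b_{m,2m-2}=\widetilde\beta_{m-1}^{2m-3}$. This is the reduced $\mathbb Z_2$-homology of $B_{m-1}^\partial(\Sigma,\partial\Sigma)$ in degree $2m-3$. I would take this value from the explicit formulas \eqref{eq:topology-P-even}--\eqref{eq:topology-P-odd} recalled from \cite{AK2026}, reading off the top coefficient of $\widetilde P^\partial_{m-1}$.

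For a sanity check independent of those formulas, consider the dimensions of the strata $B_q^p$ with $2p+q\le m-1$. Each has dimension $3p+2q-1$, which is at most $2(2p+q)-1\le 2m-3$. Equality holds only for the purely boundary stratum $p=0$, $q=m-1$. So $2m-3$ is the top dimension, and it is attained only on $B_{m-1}(\partial\Sigma)$, the barycenters of $m-1$ points on $\partial\Sigma$, which is a disjoint union of $b$ circles.

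The top homology of the pair $(B_{m-1}(\partial\Sigma),B_{m-2}(\partial\Sigma))$ in $\mathbb Z_2$ coefficients is generated as follows. Distributing the $m-1$ points among the $b$ circles gives one generator per multiset of size $m-1$ from $b$ components. This yields $\binom{b+m-2}{m-1}$ generators, each arising from a product of the top classes of the $B_{k_j}(S^1)\simeq S^{2k_j-1}$ joined together. The lower strata do not kill these classes, since they have dimension at most $2m-4$. Hence $\widetilde\beta_{m-1}^{2m-3}=\binom{b+m-2}{m-1}$.

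\textbf{Step 3: conclusion.} Taking $l=2m-2$ in Step~1 gives the final assertion: if $\nu^\infty_{2m-2}=0$, then $\nu_{2m-2}\ge\binom{b+m-2}{m-1}\ge1$.

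\textbf{Main obstacle.} Steps~1 and~3 are immediate. The only substantive point is the identification of $\widetilde\beta^{2m-3}_{m-1}$ in Step~2, and for that I would rely on the formulas cited from \cite{AK2026}.
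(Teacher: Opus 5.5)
Your proposal is correct and follows the paper's own proof: Theorem~\ref{thm:2}(b) with $\nu_l^\infty=0$ gives $\nu_l\ge\mathfrak b_{m,l}$, and the value $\mathfrak b_{m,2m-2}=\widetilde\beta_{m-1}^{2m-3}=\binom{b+m-2}{m-1}$ is read off from \eqref{eq:topology-top-betti}, i.e.\ the $c=0$ term of $R_{b,m-1}$, applied to $B_{m-1}^{\partial}(\Sigma,\partial\Sigma)$. Your stratum-dimension check is a sound independent confirmation of that Betti number, but the argument does not need it.
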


	\begin{rem}	Since $ \mathfrak{b}_{m,l} =\widetilde{\beta}^{l-1}_{m-1}$  is the coefficient of $ t^{l-1}$  in the explicit polynomial $ \widetilde P_{m-1}^{\pa}(t)$  defined by \eqref{eq:topology-P-even}--\eqref{eq:topology-P-odd}, Corollary \ref{cor:1.3} yields an explicit lower bound for $\nu_l$.
	\end{rem}

	An isolated critical point at infinity also yields a solution when both adjacent infinity indices are absent and the corresponding relative Betti number vanishes.

	\begin{thm}\label{thm:3}
		Let $ \rho=4\pi m$  with $ m\geq2$, and assume the reduced hypotheses of Theorem \ref{thm:resonant-case}. Suppose that there exists $ l_0\in\mathbb N_0$  such that
		\begin{itemize}
			\item[(1)]  for some pair
			$ (p,q)$  with $ 2p+q=m$, there exists a critical point at infinity $ [\xi_0]\in\cV_{p,q}^{-}$  satisfying
			\[
			\iota_{\infty}([\xi_0])=l_0;
			\]
			\item[(2)] for every pair $ (p,q)\in\mathbb N_0^2$  with $ 2p+q=m$, and  every
			$ [\xi]\in\cV_{p,q}^{-}$, one has $\iota_{\infty}([\xi])\notin\{l_0-1,l_0+1\};$
			when $ l_0=0$, the condition at index $ l_0-1$  is void;
			\item[(3)]
			$\mathfrak b_{m,l_0}=0. $
		\end{itemize}
		Then  \eqref{maineq} with $ \rho=4\pi m$  admits at least one solution.
	\end{thm}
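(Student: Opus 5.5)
We argue by contradiction: assume \eqref{maineq} with $\rho=4\pi m$ has no solution. Then $\operatorname{Crit}(J_{4\pi m})=\emptyset$, so the nondegeneracy hypotheses of Theorem~\ref{thm:2} and Proposition~\ref{prop:CPI-main}(4) hold vacuously. The only critical objects of the Morse filtration are then the critical points at infinity, i.e.\ the finitely many classes in $\bigcup_{2p+q=m}\mathcal V_{p,q}^-$. Each has a local Morse group which, by Proposition~\ref{prop:CPI-main}(2), is $\mathbb Z_2$ concentrated in degree $\iota_\infty([\xi])$. We order the finitely many critical values at infinity $c_1<\dots<c_N$. Using Proposition~\ref{prop:CPI-main}(3)--(4) and the identification of the large sublevel pair $(J^{L},J^{-L})$ with relative Poincar\'e polynomial $P_m$, we obtain a filtered chain complex. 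It is the Morse complex at infinity $(C_*,\partial)$ with $C_k=\mathbb Z_2^{\nu_k^\infty}$, and its homology is $H_k(J^L,J^{-L};\mathbb Z_2)$, of dimension $\mathfrak b_{m,k}$.

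Concretely, we use the spectral sequence of the filtration $J^{c_i+\varepsilon}$. Its $E^1$ term is $\bigoplus_i H_*(J^{c_i+\varepsilon},J^{c_i-\varepsilon})=\bigoplus_{[\xi]}\mathbb Z_2[\iota_\infty([\xi])]$, and it converges to $H_*(J^L,J^{-L})$. The differentials $d^r$ lower total degree by exactly one. Hence a generator in degree $l_0$ can only be hit from degree $l_0+1$, or map nontrivially to degree $l_0-1$.

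By hypothesis (2) there are no generators in degrees $l_0\pm1$. So every differential into or out of the $l_0$-column vanishes at all pages, and the $l_0$-part of $E^1$ survives to $E^\infty$. By (1) this part is nonzero, so $\dim H_{l_0}(J^L,J^{-L})\ge 1$. This contradicts (3), which says $\mathfrak b_{m,l_0}=0$. The same conclusion can be read off from the strong Morse relation of Theorem~\ref{thm:2}(a) with all $\nu_k=0$: comparing the coefficients of $t^{l_0}$ gives $\nu^\infty_{l_0}=q_{l_0}+q_{l_0-1}$, while the coefficients of $t^{l_0\pm1}$ give $0=q_{l_0+1}+q_{l_0}$ and $0=q_{l_0-1}+q_{l_0-2}$. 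Since all $q_j\ge0$, this forces $q_{l_0}=q_{l_0-1}=0$ and hence $\nu_{l_0}^\infty=0$, contradicting (1).

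The main obstacle is justifying the use of Theorem~\ref{thm:2}, or of the filtration, when there are no genuine solutions. We must check three things. First, the deformation lemmas of Proposition~\ref{prop:CPI-main}(3)--(4) apply with only critical points at infinity present. Second, the sufficiently negative sublevel is homotopy equivalent to $B_{m-1}^\partial$, which requires $m\ge2$. Third, the top sublevel is contractible, so that $H_*(J^L,J^{-L})$ has Betti numbers $\mathfrak b_{m,k}$. All three are exactly the ingredients behind Theorem~\ref{thm:2}, and the emptiness of $\operatorname{Crit}(J_{4\pi m})$ only simplifies them. We would first check that the local Morse group at each $[\xi]$ is one-dimensional, which follows from the local product structure: a nondegenerate reduced direction times the descending scale direction, since $\mathcal L(\xi)<0$.
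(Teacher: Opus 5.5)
Your proposal is correct. Your second argument (Theorem~\ref{thm:2}(a) with all $\nu_k=0$, followed by nonnegativity of the $q_j$) is exactly the paper's proof, and your spectral-sequence version is an equivalent repackaging of the same finite sublevel filtration. One small slip: comparing coefficients of $t^{l_0+1}$ and $t^{l_0-1}$ actually gives $-\mathfrak b_{m,l_0+1}=q_{l_0+1}+q_{l_0}$ and $-\mathfrak b_{m,l_0-1}=q_{l_0-1}+q_{l_0-2}$ (the latter only for $l_0\ge1$, with $q_{-1}=0$). It is the opposite signs of the two sides that force $q_{l_0}=q_{l_0-1}=0$, so the Betti terms should not be dropped.
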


	Since the indices of critical points at infinity of type $ (p,q)$  are bounded from below by $ p+q-1$, Theorem \ref{thm:3} yields the following consequence.

	\begin{cor}\label{cor:minimal-infinity-index}
		Let $ \rho=4\pi m$  with $ m\geq2$, and assume the reduced hypotheses of Theorem \ref{thm:resonant-case}.
		Suppose that there exist $ p,q\in\mathbb N_0$  with $ 2p+q=m$  and an orbit $[\xi_0]\in\cV_{p,q}^{-}$
		such that
		\[
		\iota_{\infty}([\xi_0])=p+q-1.
		\]
		Assume moreover that $\mathfrak b_{m,p+q-1}=0$
		and
		\[
		\sum_{2p'+q'=m}
		\#\{
		[\xi]\in\cV_{p',q'}^{-}:
		\iota_{\infty}([\xi])\in\{p+q-2,p+q\}
		\}
		=0.
		\]
		Then  \eqref{maineq} with $ \rho=4\pi m$  admits at least one solution.
	\end{cor}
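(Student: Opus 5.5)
Corollary~\ref{cor:minimal-infinity-index} is Theorem~\ref{thm:3} with $l_0:=p+q-1$; the plan is to check the three hypotheses of Theorem~\ref{thm:3} for this $l_0$ and then invoke it. Since $2p+q=m\ge2$, we have $p+q\ge1$, so $l_0\in\mathbb N_0$.

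\emph{Hypothesis (1).} This holds by assumption: $[\xi_0]\in\cV_{p,q}^-$ and $\iota_\infty([\xi_0])=p+q-1=l_0$.

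\emph{Hypothesis (3).} This is the assumption $\mathfrak b_{m,p+q-1}=0$, i.e.\ $\mathfrak b_{m,l_0}=0$.

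\emph{Hypothesis (2).} We need that no $[\xi]\in\cV_{p',q'}^-$ with $2p'+q'=m$ satisfies $\iota_\infty([\xi])\in\{l_0-1,l_0+1\}=\{p+q-2,p+q\}$. The displayed assumption says the total count of such orbits over all admissible $(p',q')$ is zero, so each summand vanishes. When $l_0=0$, the requirement at index $l_0-1=-1$ is void, consistent with Theorem~\ref{thm:3}; that requirement is automatic anyway, since Proposition~\ref{prop:CPI-main} gives $\iota_\infty\ge p'+q'-1\ge0$.

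With (1)--(3) verified, and with the reduced hypotheses of Theorem~\ref{thm:resonant-case} assumed, Theorem~\ref{thm:3} gives a solution of \eqref{maineq} at $\rho=4\pi m$.

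The lower bound $\iota_\infty\ge p+q-1$ is what makes $l_0=p+q-1$ the natural minimal choice, but it plays no role in the deduction itself. The main point to check is purely bookkeeping: the index window $\{p+q-2,p+q\}$ in the corollary must equal $\{l_0-1,l_0+1\}$ for $l_0=p+q-1$, and it does. There is no analytic obstacle; all the work lies in Theorem~\ref{thm:3}.
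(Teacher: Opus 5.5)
Your proposal is correct and matches the paper's proof: both apply Theorem~\ref{thm:3} with $l_0=p+q-1$, reading hypothesis (1) off the orbit $[\xi_0]$, hypothesis (3) off $\mathfrak b_{m,p+q-1}=0$, and hypothesis (2) off the vanishing count over the index window $\{p+q-2,p+q\}=\{l_0-1,l_0+1\}$. Your check that $l_0\in\mathbb N_0$ (since $2p+q=m\ge2$) and your remark on the void condition at $l_0=0$ are harmless additions.
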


	More generally, any violation of the Morse inequalities at infinity in Theorem \ref{thm:2} implies the existence of a solution to \eqref{maineq}. In particular, we have the following consequence.
	\begin{cor}\label{cor:1.5}
		Let \(\rho=4\pi m\) with \(m\geq2\), and assume the reduced hypotheses of Theorem \ref{thm:resonant-case}. If one of the following inequalities is not satisfied:
		\begin{itemize}
			\item[(i)]
			$\nu_{2m-2}^{\infty}-\nu_{2m-1}^{\infty}
			\geq
			\mathfrak b_{m,2m-2}$;
			\item[(ii)]
			$\nu_{2m-3}^{\infty}
			-\mathfrak b_{m,2m-3}
			+\mathfrak b_{m,2m-2}
			\geq
			\nu_{2m-2}^{\infty}-\nu_{2m-1}^{\infty}$;
		\end{itemize}
		then equation \eqref{maineq} with \(\rho=4\pi m\) admits at least one solution.
	\end{cor}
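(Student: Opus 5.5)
We argue by contradiction. Suppose that \eqref{maineq} with $\rho=4\pi m$ has no solution, so $\operatorname{Crit}(J_{4\pi m})=\emptyset$. Then the extra hypothesis of Theorem~\ref{thm:2}, that all critical points of $J_{4\pi m}$ be nondegenerate, holds vacuously. The remaining hypotheses are exactly the reduced hypotheses of Theorem~\ref{thm:resonant-case}, so Theorem~\ref{thm:2}(a) applies with $\nu_k=0$ for every $k$. It gives
\[
\sum_{k\ge0}\nu_k^\infty t^k-P_m(t)=(1+t)Q(t),\qquad Q(t)=\sum_{k\ge0}Q_kt^k,\ Q_k\in\mathbb N_0 .
\]
We will show that this identity forces both (i) and (ii), which is the desired contradiction.

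\emph{Degree bounds.} First, by Proposition~\ref{prop:CPI-main}, $\iota_\infty([\xi])\le 3p+2q-1=m+p+q-1$. Since $2p+q=m$, we have $p+q\le m$, so $\iota_\infty([\xi])\le 2m-1$ and hence $\nu_k^\infty=0$ for $k\ge 2m$. Second, $B_{m-1}^\partial(\Sigma,\partial\Sigma)$ is a finite union of strata $B_q^p$ with $2p+q\le m-1$. Each stratum has dimension $3p+2q-1\le 2(m-1)-1=2m-3$, so $\widetilde\beta_{m-1}^{r}=0$ for $r\ge 2m-2$. By \eqref{def:Betti} this means $\mathfrak b_{m,k}=0$ for $k\ge 2m-1$. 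This dimension count is the only point needing care: one should check that the homology of the stratified space $B_{m-1}^\partial$ vanishes above its top stratum dimension. This holds because it is a finite CW-type union of strata of dimension at most $2m-3$; alternatively, one can read it off the explicit formulas \eqref{eq:topology-P-even}--\eqref{eq:topology-P-odd}.

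\emph{Comparing coefficients.} The left-hand side has degree at most $2m-1$, so $Q_k=0$ for $k\ge 2m-1$. Set $c_k:=\nu_k^\infty-\mathfrak b_{m,k}$; then $c_k=Q_k+Q_{k-1}$ for every $k$. Reading off the coefficients from the top down gives, in turn:
\[
c_{2m-1}=\nu_{2m-1}^\infty=Q_{2m-2},
\]
\[
Q_{2m-3}=c_{2m-2}-Q_{2m-2}=\nu_{2m-2}^\infty-\mathfrak b_{m,2m-2}-\nu_{2m-1}^\infty,
\]
\[
Q_{2m-4}=c_{2m-3}-Q_{2m-3}=\nu_{2m-3}^\infty-\mathfrak b_{m,2m-3}-\bigl(\nu_{2m-2}^\infty-\mathfrak b_{m,2m-2}-\nu_{2m-1}^\infty\bigr).
\]
The index $2m-4$ is $\ge0$ because $m\ge2$.

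\emph{Conclusion.} Nonnegativity $Q_{2m-3}\ge0$ is exactly inequality (i), and $Q_{2m-4}\ge0$ is exactly inequality (ii). Hence, if either (i) or (ii) fails, the assumption $\operatorname{Crit}(J_{4\pi m})=\emptyset$ is impossible, and \eqref{maineq} admits at least one solution.
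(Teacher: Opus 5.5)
Your proof is correct and follows the paper's argument: assuming no solution makes Theorem~\ref{thm:2} applicable with all $\nu_k=0$, and comparing the top coefficients of the strong Morse relation (using $\nu_k^\infty=0$ for $k\ge 2m$ and $\deg P_m\le 2m-2$) identifies (i) and (ii) with $Q_{2m-3}\ge0$ and $Q_{2m-4}\ge0$. The only difference is that the paper reads off $\deg P_m\le 2m-2$ directly from \eqref{eq:topology-top-betti} rather than from a dimension count; you also offer that as an alternative.
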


\subsection{Organization and notation}\label{sec:proof-strategy}
The present paper is organized as follows. Section~\ref{sec:2} develops modulation and fiber reduction. In
Section~\ref{sec:3}, we study the branches of bubbling solutions
for parameters near the resonant value set $4\pi\N$.
Amplitude preparation in Section~\ref{sec:resonant-descent} connects the
natural tubes to the terminal coordinates. Section~\ref{sec:full-pseudogradient}
constructs the complete flow, local pairs and relative deformation, and
Section~\ref{sec:4} proves the main theorems. Appendix~\ref{app:0} establishes the asymptotic estimates,
while Appendix~\ref{app:topology} proves the homotopy equivalence
between sufficiently negative sublevels and the boundary-weighted
barycenter space.

Throughout, $C>0$ denotes a constant that may vary from line to line.
Its dependence on parameters is indicated when relevant.
We write $\mathbb N=\{1,2,\ldots\}$,
$\mathbb N_0=\mathbb N\cup\{0\}$, $\mathbb R_+=(0,\infty)$,
and
\[
\mathbb R_+^2:=\{(y_1,y_2)\in\mathbb R^2:y_2\ge0\}.
\]
We set $\ii(\xi)=2$ for $\xi\in\mathring\Sigma$ and
$\ii(\xi)=1$ for $\xi\in\partial\Sigma$.
The symbols $\#$, $\lfloor\cdot\rfloor$, $\sqcup$, and $\vee$
denote cardinality, the floor function, disjoint union, and
the wedge sum of pointed spaces, respectively.
We write $\bigvee_N X$ for the wedge sum of $N$ copies of $X$.
For integers $n\ge0$ and $k$, we use the convention
\[
\binom nk=
\left\{\begin{aligned}
  &\dfrac{n!}{k!(n-k)!},&&0\le k\le n,\\
&0,&&\text{otherwise}.  
\end{aligned}\right.
\]
The convex hull of $g_1,\ldots,g_N$ is denoted by
\[
\operatorname{co}\{g_1,\ldots,g_N\}
:=
\Big\{
\sum_{j=1}^N\theta_jg_j:
\theta_j\ge0,\ \sum_{j=1}^N\theta_j=1
\Big\}.
\]

	\section{Preliminaries}\label{sec:2}

For later estimates, the sharp Moser--Trudinger inequality and Jensen's
inequality imply, for $u\in\oH$,
\begin{equation}\label{eq:MT-Jensen-explicit}
 \int_\Si e^{s u}\,\d v_g
 \le C_{\Si,g}\exp\Big(\frac{s^2}{8\pi}\|u\|^2\Big),\quad s\in\R,
 \quad
 \int_\Si Ve^u\,\d v_g\ge(\min_\Si V)|\Si|_g.
\end{equation}
Here $C_{\Si,g}$ is independent of $s$ and $u$; see, e.g.,
\cite{LSY2023,YZ2021} and references therein.

	\subsection{Neighborhood at infinity and lack of compactness}\label{subsec:bubble-neighborhoods}
	Every Riemann surface $(\Si,g)$ is locally conformally flat, and the
local coordinates in which $g$ is conformal to the Euclidean metric are
referred to as isothermal coordinates; see, e.g.,
\cite{C1955,HW1955,V1955,L1957} and the references therein. In this paper,
we choose isothermal coordinates as in
\cite[Section~2]{EF2014} and \cite[Lemma~4]{YZ2021}, which map open neighborhoods in $\Si$ onto open
disks or half-disks in $\R^2$.

	For \(y^*=(y_1^*,y_2^*)\in\R^2\) and \(r>0\), we set
	\begin{gather*}
		\B_r(y^*):=\{y=(y_1, y_2) \in \R^2:(y_1-y_1^*)^2+(y_2-y_2^*)^2<r^2 \}, \\ \B_r:=\B_r(0),\quad  \text { and } \quad
		\R_+^2:=\{y=(y_1,y_2)\in\R^2:y_2\ge0\},\quad
\B_r^{+}:=\B_r\cap\R_+^2.
	\end{gather*}
	We denote by $d_g$ the Riemannian distance on $\Si$ and set
	$B_r(\xi):=\{x\in\Si:d_g(x,\xi)<r\}$; the Euclidean balls are denoted by $\B_r(y^*)$.
	Choose the local charts with
	\[
	y_\xi(\xi)=0,\quad
	g=e^{\hat\varphi_\xi}\langle\cdot,\cdot\rangle_{\R^2},
	\quad
	B^\xi:=y_\xi(U(\xi))
	=
	\left\{\begin{aligned}
	&\B_{2r_\xi}&&\text{ if }\,\xi\in\intS,\\
	&\B_{2r_\xi}^{+}&&\text{ if }\, \xi\in\pa\Si,
		\end{aligned}
	\right.
	\]
	with $\overline{U(\xi)}\Subset\intS$ in the interior case and
	\[
	y_\xi(U(\xi)\cap\pa\Si)=\B_{2r_\xi}^{+}\cap\pa\R_+^2
	\]
	in the boundary case.  In both cases, the conformal factor
	$\hat{\varphi}_{\xi}:B^\xi\to\R$ satisfies
	\[	-\Delta \hat{\varphi}_{\xi}(y)=2 K_g(y_{\xi}^{-1}(y)) e^{\hat{\varphi}_{\xi}(y)} \quad \text { for all }\, y \in B^{\xi},\]
	where $K_g$ is the Gaussian curvature  of $\Si$.
	When $\xi\in \pa \Si$, the conformal factor also satisfies the boundary condition
	\[
	\frac{\pa }{\pa y_2}\hat{\varphi}_{\xi}(y)=-2k_g(y_\xi^{-1}(y))e^{\frac{\hat{\varphi}_{\xi}(y)}{2}} \quad \text { for all }\, y \in \pa B^{\xi}\cap \pa \R_+^{2},
	\]
	where	 $k_g$ denotes the geodesic curvature of the boundary $\pa \Si$ with  respect to  $g$.
	Both $y_{\xi}$ and $\hat{\varphi}_{\xi}$ are chosen locally in the center to depend smoothly on $\xi$  as in \cite{EF2014,F2024}. In addition, $\hat{\varphi}_{\xi}$ satisfies
	\[
	\hat{\varphi}_{\xi}(0)=0 \quad \text{ and } \quad \nabla\hat{\varphi}_{\xi}(0)=\left\{\begin{aligned}
		&(0,0) && \text{ if }\, \xi\in \intS,\\
		&(0,-2k_g(\xi)) && \text{ if }\, \xi\in \pa  \Si.
	\end{aligned}
	\right.
	\]

	Recall that, under the conformal change of metric $\tilde{g}=e^{\varphi} g$,   the Laplace-Beltrami operator transforms according to
	\begin{equation}\label{conformalchange}
		\Delta_{\tilde{g}}=e^{-\varphi} \Delta_g,
	\end{equation}
	and   for any $x \in$ $U(\xi)\cap \pa \Si$, we have
	\[
	(y_{\xi})_*(\nu_g(x))= -\exp\Big( -\frac{\hat{\varphi}_{\xi}(y)}2\Big) \frac {\pa} { \pa y_2 }\Big|_{	y=y_{\xi}(x)},
	\]where $\nu_g(x)$ is the outward unit normal vector at $x$ with respect to  $g$.

	For \(\xi\in\Si\) and \(0<r\leq2r_\xi\), define
	\[	B_r^{\xi}:=B^{\xi} \cap\{y \in \R^2:|y|<r\}, \quad   U_r(\xi):=y_{\xi}^{-1}(B_r^{\xi}). \]

	We next introduce the Neumann Green function associated with
	\eqref{maineq}.  For every \(\xi\in\Si\), let \(G^g(\cdot,\xi)\) be the weak solution of
	\begin{equation}\label{Green}
		\left\{\begin{aligned}
			&-\Delta_g G^g(x,\xi)  =\delta_{\xi} -\frac{1}{|\Si |_g}, &&  x\in\intS ,\\
			&\pa_{ \nu_g } G^g(x,\xi) =0, && x\in \pa \Si, \\
			&\int_{\Si } G^g(x,\xi) \,\d v_g(x) =0.&&
		\end{aligned}
		\right.
	\end{equation}
	Here \(\delta_\xi\) is the Dirac mass on the compact surface with
boundary. In particular, at a boundary pole the meaning of
\eqref{Green} is the weak identity
\begin{equation}\label{eq:Green-weak-boundary}
\int_\Si \langle\nabla G^g(\cdot,\xi),\nabla\phi\rangle_g\,\d v_g
=\phi(\xi)-\frac1{|\Si |_g}\int_\Si \phi\,\d v_g,
\quad \phi\in C^\infty(\Si ).
\end{equation}
The Green function belongs to \(W^{1,s}(\Si )\) for \(s\in (1,2)\) and
has mean zero. The homogeneous Neumann condition holds classically away
from the pole and is understood through \eqref{eq:Green-weak-boundary}
at a boundary pole. The normalization \eqref{eq:Green-weak-boundary}
applies to both interior and boundary poles.  We refer to \cite[Lemma 6]{YZ2021} for the basic properties of the Neumann Green function.

	Let $\vartheta$ be a radial cut-off function in  $C^{\infty}(\R,[0,1])$ such that
	\begin{equation}\label{eq:cut_off}
		\vartheta(s)=\left\{\begin{aligned}
			&1 &&\text{ if }\,|s|\leq 1,\\
			&0 && \text{ if }\, |s|\geq 2 .
		\end{aligned}
		\right.
	\end{equation}
	For each fixed $\xi\in\Si$, define
	\[
	\vartheta_\xi(x):=
	\left\{\begin{aligned}
		&\vartheta \Big(\dfrac{4|y_\xi(x)|}{r_\xi}\Big)
		&& \text{ for }\, x\in U(\xi),\\
		&0
		&& \text{ for }\, x\in\Si \setminus U(\xi).
	\end{aligned}\right.
	\]
Products containing $\vartheta_\xi$ or one of its derivatives
are extended by zero outside $U(\xi)$. Coordinate expressions in
such products are evaluated only in $U(\xi)$.
	We decompose the Green function as
	\[
	G^g(x, \xi)=\frac{4\vartheta_{\xi}(x)}{\kap ( \xi)} \ln \frac{1}{|y_{\xi}(x)|} +H^g(x, \xi),\quad x\in\Si,
	\]
	where \(\kap(\xi)\) is given by \eqref{eq:kappa-definition}.
	The function \(H^g\) is the regular part of the Green function.  It follows
	from \eqref{Green} that \(H^g(\cdot,\xi)\) satisfies
	\begin{equation}\label{Hfunction}
		\left\{\begin{aligned}
			&-\Delta_g H^g(x, \xi)  =- \frac{4}{\kap( \xi)}(\Delta_g \vartheta_{\xi}) \ln  |y_{\xi}|-\frac{8}{\kap( \xi)}\langle\nabla \vartheta_{\xi}, \nabla \ln  |y_{\xi}|\rangle_g-\frac{1}{|\Si |_g} &&\text { in }\, \intS,\\
			&\pa_{\nu_g} H^g(x, \xi)=\frac{4}{\kap( \xi)}(\pa_{\nu_g} \vartheta_{\xi}) \ln  |y_{\xi}|+\frac{4}{\kap( \xi)} \vartheta_{\xi} \pa_{\nu_g} \ln  |y_{\xi}| &&\text { on }\, \pa \Si  , \\
			&\int_{\Si } H^g(x, \xi) \,\d v_g  =\frac{4}{\kap( \xi)} \int_{\Si } \vartheta_{\xi} \ln  |y_{\xi}| \, \d v_g.&&
		\end{aligned}\right.
	\end{equation}
	By elliptic regularity, problem \eqref{Hfunction} admits a unique solution $H^g(\cdot,\xi)\in C^{2,\gamma}(\Si)$ for every $\gamma\in(0,1)$;
	see, for instance, \cite{N2014,ABH20240829,ADN1959}.  The associated
	Robin function is defined by $R^g(\xi):=H^g(\xi,\xi)$.

	We now recall the standard family of entire solutions of the Liouville
	equation.  For every $(\lam, z)\in (0,+\infty)\times  \R^2 $, set
	\[
	\tilde{\delta}_{\lam,z}(y)=\ln  \frac{ 8\lam^2 }{(1 +\lam^2|y-z|^2)^2},\quad y\in \R^2.
	\]
	The functions $\tilde{\delta}_{\lambda,z}$ are precisely the finite-mass solutions of
	\[
	-\Delta u= e^u  \quad \text{  in }\, \R^2,\quad
	\int_{\R^2}e^u\,\d y<\infty.
	\]
	Moreover,  $\int_{\R^2}e^{\tilde{\delta}_{\lam,z}}\, \d y=8\pi $ for every  $(\lam, z)\in (0,+\infty)\times  \R^2 $;   see \cite{CL1991}.

	Using the isothermal coordinate chart \((U(\xi),y_\xi)\), we pull back
	\(\widetilde\delta_{\lam,0}\) to \(\Si\) by setting
	\[
	\delta_{\lam,\xi}(x)=	\left\{\begin{aligned}
		&\tilde{\delta}_{\lam,0}(y_{\xi}(x)) &&\text{ if }\, x\in U(\xi),\\
		&	0 &&  \text{ if }\, x\in \Si\setminus U(\xi).
	\end{aligned}\right.
	\]
	By \eqref{conformalchange},  \(\delta_{\lam,\xi}\) satisfies
	\begin{equation*}
		-\Delta_g  \delta_{\lam,\xi}=e^{ \delta_{\lam,\xi}-\varphi_\xi}\quad \text{ in }\, U(\xi),
	\end{equation*}
	where 	$\varphi_{\xi}=\hat{\varphi}_{\xi}\circ y_{\xi}$. For $f\in L^1(\Si)$, write $\bar{f}:=\frac{1}{|\Si|_g}\int_\Si f \,\d v_g$.
	The projected bubble \(P\delta_{\lam,\xi}\in\oH\) is defined as the unique solution of
	\begin{equation}\label{projectedbubble}
		\left\{\begin{aligned}
			&	-\Delta_g P  \delta_{\lam,\xi}=    \vartheta_{\xi} e^{-\varphi_\xi} e^{\delta_{\lam,\xi}}- \overline{  \vartheta_{\xi} e^{-\varphi_\xi}  e^{\delta_{\lam,\xi}}  } &&  \text{ in }\, \intS,\\
			&	\pa_{ \nu_g } P  \delta_{\lam,\xi}=0 && \text{ on }\, \pa \Si , \\
			&\int_{\Si } P  \delta_{\lam,\xi} \,\d v_g=0. &&
		\end{aligned}\right.
	\end{equation}
	By classical elliptic  theory,  problem \eqref{projectedbubble}  admits a unique solution $ P\delta_{\lam,\xi}$ in $\oH$, and standard regularity theory implies that  $ P\delta_{\lam,\xi}\in C^{\infty}({\Si })$; see, e.g., \cite{ADN1959,YZ2021,N2014}.

	For \(p,q\in\N_0\) satisfying \(2p+q\in\N\), define
	$$
	\psi(\al , \xi, \lam )=\sum_i \al _i P \delta_{\lam _i, \xi_i}.
	$$

	For any $\var>0$, we define a neighborhood of potential critical points at infinity.  Given $\rho>0$, $p,q\in \N_0$ with $2p+q\in\N$, and $\eta>0$, we set
	\begin{align}
		V_\rho^{\eta}(p,q,\var):=\Big\{u\in\oH:
&\,\|\nabla J_\rho(u)\|<\var,\
\exists\,\lam_1,\ldots,\lam_{p+q}>\var^{-1},\nonumber\\
&\,\exists\,(\xi_1,\ldots,\xi_{p+q})\in\Xi_{p,q}\text{ with }\nonumber\\
		&
		\,  d_g(\xi_i, \xi_j)>\eta \text{ for } i \neq j, \,  d_g(\xi_i, \pa  \Si )>\frac \eta 2 \text{ for }1 \leq i \leq p  \nonumber\\
		&\,
		\text{s.t. }	\| u-\psi(\mathbf{1}, \xi, \lam ) \|<\var \Big\},  \label{def:Vm}
	\end{align}
	where $   \mathbf{1}=(1, \ldots, 1)\in \R^{p+q}.$

	\begin{prop}\label{prop:u-k-in-V-eta}
		Let $\{u_k\}$ be a sequence of blow-up solutions to \eqref{maineq} with $\rho=\rho_k$.  Assume that $\rho_k\to\rho_*:=4\pi m$ and that the blow-up set is $(\xi_1^*,\ldots,\xi_{p+q}^*)\in\Xi_{p,q}$, where $p,q\in\N_0$ and $m=2p+q$.  Then there exist $\var_0>0$ and $\eta_0>0$ such that, for every $\var\in(0,\var_0)$, there is $k_0\in\N$ for which
		\[
		u_k\in V_{\rho_k}^{\eta_0}(p,q,\var)
		\quad\text{for all }k\ge k_0.
		\]
	\end{prop}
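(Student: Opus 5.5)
Since each $u_k$ solves \eqref{maineq}, we have $\nabla J_{\rho_k}(u_k)=0$, so the gradient condition in \eqref{def:Vm} holds trivially. Fix $\eta_0>0$ such that $d_g(\xi_i^*,\xi_j^*)>2\eta_0$ for $i\neq j$ and $d_g(\xi_i^*,\pa\Si)>\eta_0$ for $i\le p$. This is possible because the blow-up configuration lies in $\Xi_{p,q}$ and the interior points are at positive distance from the boundary. It then remains to produce, for large $k$, parameters $\lam_{i,k}\to\infty$ and $\xi_{i,k}\to\xi_i^*$ with $\xi_{i,k}\in\intS$ for $i\le p$ and $\xi_{i,k}\in\pa\Si$ for $i>p$, such that $\|u_k-\psi(\mathbf 1,\xi_k,\lam_k)\|\to0$. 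The separation and boundary-distance constraints then hold for $k$ large.

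\textbf{Choice of parameters.} We use the refined blow-up analysis for the Neumann problem from \cite{LSY2023,AHW2025-1}. Near each $\xi_i^*$, let $\xi_{i,k}$ be a local maximum point of $u_k$ in $U_{r}(\xi_i^*)$. For boundary blow-up points, the boundary version of the sup+inf / Harnack analysis places this maximum on $\pa\Si$ up to an $O(\lam^{-1})$ shift, and we project it onto $\pa\Si$. Define $\lam_{i,k}$ by the normalization $u_k(\xi_{i,k})-\ln\int_\Si Ve^{u_k}+\ln(\rho_k V(\xi_{i,k}))=\ln(8\lam_{i,k}^2)$ up to a bounded correction from $\cF^i$, so that $\lam_{i,k}\to\infty$. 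The sharp pointwise estimates of Chen--Lin/Zhang type, established for Neumann data in \cite{AHW2025-1}, give
\[
u_k-\ln\int_\Si Ve^{u_k}=\delta_{\lam_{i,k},\xi_{i,k}}+\ln\frac{\rho_k}{?}\dots+O(1)\ \text{locally},
\]
while away from the blow-up points $u_k\to\sum_i\kap(\xi_i^*)G^g(\cdot,\xi_i^*)$ in $C^1_{loc}(\Si\setminus\{\xi^*\})$. Here mass quantization gives local masses $8\pi$ and $4\pi$, matching $\kap$.

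\textbf{Energy estimate.} Write $w_k=u_k-\psi(\mathbf 1,\xi_k,\lam_k)$. Its mean is zero, and $-\Delta_g w_k=\rho_k Ve^{u_k}/\int Ve^{u_k}-\sum_i\vartheta_{\xi_{i,k}}e^{-\varphi}e^{\delta_{\lam_{i,k},\xi_{i,k}}}+$ constants. Then $\|w_k\|^2=\int_\Si(-\Delta_g w_k)w_k$. We estimate this by splitting $\Si$ into the balls $U_{\delta}(\xi_{i,k})$ and their complement. Away from the balls both terms are $o(1)$ in $L^\infty$, since $P\delta_{\lam,\xi}\to\kap(\xi)G^g(\cdot,\xi)$ as well, and $w_k\to0$ there in $C^1$. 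Inside the balls we rescale $y=\lam_{i,k}y_{\xi_{i,k}}(x)$. The rescaled difference of the two densities converges to zero in $L^1$ with uniform decay $O((1+|y|)^{-4})$ by the pointwise estimates, and $w_k$ is uniformly bounded there. This gives $\|w_k\|^2\to0$ by dominated convergence. A cleaner alternative is to show $w_k$ is bounded in $L^\infty$ and $\to0$ pointwise, and that $\Delta w_k\to0$ in $L^1$, then conclude via $\|w_k\|^2\le\|\Delta w_k\|_{L^1}\|w_k\|_{L^\infty}$.

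\textbf{Main obstacle.} The key step is the uniform $L^\infty$ control of $w_k$ near boundary blow-up points, namely the boundary sharp estimate $|u_k-\delta_{\lam_k,\xi_k}-c_k|\le C$ in the half-disk. This must account for the conformal factor with $\nabla\hat\varphi(0)=(0,-2k_g)$ and for the Neumann condition. I would import this from the refined boundary blow-up analysis in \cite{AHW2025-1}, reflecting evenly across $\pa\R^2_+$ in the isothermal chart to reduce it to the interior estimate with an error of order $\lam^{-1}$.
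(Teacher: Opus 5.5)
\textbf{Comparison with the paper.} Your route is correct in outline but genuinely different from the paper's. The paper does not rebuild the $H^1$-approximation. It imports it from \cite[Proposition~2.1]{AHW2025-1}, where it is stated for the projection $\widehat P\delta_{\lam,\xi}$ built from $\vartheta_\xi e^{\delta_{\lam,\xi}}$, i.e.\ without the factor $e^{-\varphi_\xi}$. It then only compares the two projections: $\widehat P\delta_{\lam,\xi}-P\delta_{\lam,\xi}$ solves a mean-zero Neumann problem with datum $f-\overline f$, where $f=\vartheta_\xi(1-e^{-\varphi_\xi})e^{\delta_{\lam,\xi}}$. Since $\varphi_\xi(\xi)=0$, one has $\|f\|_{L^r}\le C\lam^{1-2/r}$ for $1<r<2$, so the difference is $o(1)$ in $H^1$.

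You instead re-derive the approximation from the pointwise theory: local maxima, Liouville classification on $\R^2$ and on the half-plane with Neumann data, and a sharp estimate near each blow-up point. You work directly with the paper's $P\delta_{\lam,\xi}$, whose density satisfies $\vartheta_\xi e^{-\varphi_\xi}e^{\delta_{\lam,\xi}}\,\d v_g=\vartheta_\xi e^{\widetilde\delta_{\lam,0}}\,\d y$, which is exactly the limiting rescaled density; hence no projection comparison is needed. Your closing estimate $\|w_k\|^2=\int_\Si(f_k-g_k)w_k\,\d v_g\le\|f_k-g_k\|_{L^1}\|w_k\|_{L^\infty}$ is the right way to finish. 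The constants drop out because $w_k$ has mean zero and both $u_k$ and $\psi$ satisfy homogeneous Neumann conditions. The paper's proof is short but rests entirely on the cited result. Yours is self-contained modulo the sharp pointwise estimates, which are heavier than what is needed once the $H^1$-approximation is available. Since your rescaling limits are taken along subsequences, finish with the sub-subsequence argument, as the paper does.

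\textbf{Points to sharpen.} Several quantifiers in your sketch are too weak as written. An $O(1)$ but non-vanishing error in $\ln\lam_{i,k}$, or an $O(\lam_{i,k}^{-1})$ error in the center, changes the rescaled limit profile and leaves a Dirichlet discrepancy bounded away from zero (compare Step~2 of the proof of Lemma~\ref{lem:quantitative-bubble-matching}).
\begin{itemize}
\item \emph{Normalization of the scale.} Define $\lam_{i,k}$ exactly by $8\lam_{i,k}^2=\rho_kV(\xi_{i,k})e^{u_k(\xi_{i,k})}/\int_\Si Ve^{u_k}\,\d v_g$, allowing at most an $o(1)$ correction and no ``bounded correction from $\cF^i$''. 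With this choice the densities $\lam_{i,k}^{-2}\rho_kVe^{u_k}e^{\hat\varphi}/\int_\Si Ve^{u_k}\,\d v_g$ converge, in $z=\lam_{i,k}y$, to $8(1+|z|^2)^{-2}$. A bounded correction would instead produce the profile $\widetilde\delta_{s,0}$ with $s\ne1$. The factors $\cF^i$ only relate the different scales to one another and to the total mass, which you do not need here.
\item \emph{Boundary centers.} At a boundary point the maximum must lie at distance $o(\lam_{i,k}^{-1})$ from $\pa\Si$, not merely $O(\lam_{i,k}^{-1})$. This follows from the classification. Mass quantization excludes $\lam_{i,k}d_g(\xi_{i,k},\pa\Si)\to\infty$. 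In the bounded case, evenly reflect the half-plane limit, using that the Neumann condition reads $\pa_{y_2}u=0$ in the isothermal chart. The resulting entire solution has a unique maximum, which is symmetric under $y_2\mapsto-y_2$ and therefore lies on $\{y_2=0\}$.
\item \emph{The displayed sharp estimate.} Replace the ``?'' placeholder by what you actually use: $u_k=2\ln\frac{\lam_{i,k}^2}{1+\lam_{i,k}^2|y|^2}+O(1)$ on the $i$-th chart. The even reflection gives a coefficient $Ve^{\hat\varphi_\xi}$ that is only Lipschitz across the edge, and yields an $O(1)$ error rather than $O(\lam^{-1})$. That $O(1)$ error is all your $L^\infty$ bound on $w_k$ requires.
\end{itemize}
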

	\begin{proof}
The blow-up alternative yields
\begin{equation}\label{eq:bridge-basic-masses}
\rho_k\frac{Ve^{u_k}}{\int_\Si  Ve^{u_k}\,\d v_g}\,\d v_g
\rightharpoonup\sum_i\kap _i\delta_{\xi_i^*},\quad
u_k-\ln\Big(\int_\Si  Ve^{u_k}\,\d v_g\Big)
\to -\infty.
\end{equation}
The second limit in \eqref{eq:bridge-basic-masses} is locally uniform
away from the concentration points, and \(\kap _i=\kap (\xi_i^*)\); see
\cite[Section~3.1, Lemma~14 in the preprint]{LSY2023}.
The \(H^1\)-approximation proved in \cite[Proposition~2.1]{AHW2025-1}
yields centers \(\xi_i^k\to\xi_i^*\) of the prescribed types and
scales \(\lam _i^k\to\infty\). We need only compare the projections.
The local estimates in
\cite[Lemmas~2.1--2.2 and Corollary~2.1]{AHW2025-1} apply to coefficients
with a uniform positive lower bound and uniformly bounded derivatives
through order three, as for the present \(C^3\) data.

For this comparison, denote by \(\widehat P\delta_{\lam ,\xi}\) the
mean-zero projection defined using \(\vartheta_\xi e^{\delta_{\lam ,\xi}}\)
in \cite[(2.18)]{AHW2025-1}. Then
\[
-\Delta_g(\widehat P\delta_{\lam ,\xi}
             -P\delta_{\lam ,\xi})
=f_{\lam ,\xi}-\overline{f_{\lam ,\xi}},
\quad
f_{\lam ,\xi}:=\vartheta_\xi
 (1-e^{-\varphi_\xi})e^{\delta_{\lam ,\xi}},
\]
with homogeneous Neumann data and mean zero. Since
\(\varphi_\xi(\xi)=0\), the rescaling \(z=\lam  y_\xi\) yields, for
any fixed \(1<r<2\),
\(\|f_{\lam ,\xi}\|_{L^r(\Si )} \le C\lam ^{1-2/r}.\)
The mean-zero Neumann estimate therefore yields
\begin{equation}\label{eq:ccm-projection-comparison}
\|\widehat P\delta_{\lam ,\xi}-P\delta_{\lam ,\xi}\|_{H^1(\Si )}
\le C\|f_{\lam ,\xi}\|_{L^r(\Si )}
\le C\lam ^{1-2/r}=o(1),
\end{equation}
uniformly for the centers under consideration. Combining
\eqref{eq:ccm-projection-comparison} with the cited approximation yields
\[
\Big\|u_k-\sum_iP\delta_{\lam _i^k,\xi_i^k}\Big\|\to 0.
\]
Finally, \(\nabla J_{\rho_k}(u_k)=0\), the limiting centers are
separated, and the interior centers stay away from the boundary.
Thus all conditions in \eqref{def:Vm} hold. Applying the same argument
to subsequences yields the conclusion for the original sequence.
\end{proof}

	The notation $V^\eta_{\rho}(p,q,\var)$ is fixed by \eqref{def:Vm},
	with $\rho$ close to $\rho_*=4\pi m$.

	Write $\mathbb R_+:=(0,\infty)$. For $\var>0$ and $\eta>0$, define
	\begin{align}
		\widetilde B_{\var}^{\eta}:=\Big\{(\al ,\xi,\lam )\in
		(\mathbb R_+)^{p+q}\times\Xi_{p,q}\times(\mathbb R_+)^{p+q}:&
		|\al _i-1|<\var,\,  \lam _i>\var^{-1} \text{ for } i=1,\ldots,p+q;\notag\\
		& d_g(\xi_i,\xi_j)>\frac\eta 2 \text{ for }i\ne j,\notag\\
		&d_g(\xi_i,\pa \Si )>\frac{\eta}{4} \text{ for }i=1,\ldots,p\Big\}.
		\label{def:ballB-var-eta}
	\end{align}

	The next lemma estimates parameter differences in terms of the Dirichlet distance between separated bubble sums.
	\begin{lem}
		\label{lem:quantitative-bubble-matching}
		Let \(\mathscr D \Subset\Xi_{p,q}\) be a compact subset, and fix
		\(a\in (0,1)\).  There exist \(\Lam _0>1\), \(c_0>0\), and \(C>0\) such
		that the following holds.

		Let
		\[
		\xi,\widehat\xi\in \mathscr{D},\quad
		a\le\al _i,\widehat\al _i\le  a^{-1},\quad
		\lam _i,\widehat\lam _i\ge\Lam _0.
		\]
		If
		$\|
		\psi(\al ,\xi,\lam )
		-\psi(\widehat\al ,\widehat\xi,\widehat\lam )
		\|<c_0$,
		then   there exists \(\si \in\mathfrak S_{p,q}\) such that, after the relabeling
		\[
		(\widehat\al _i,\widehat\xi_i,\widehat\lam _i)
		\mapsto
		(\widehat \al _{\si (i)},
		\widehat \xi_{\si (i)},
		\widehat \lam _{\si (i)}),
		\]
		we have
		\begin{align}
			&\quad \max_i\Big\{
			\min\{\ln^{\frac 1 2}\lam _i,\ln^{\frac 1 2}\widehat\lam _i\}
			|\al _i-\widehat\al _i|
			+|\ln({\lam _i}/{\widehat\lam _i})|
			+\min\{\lam _i,\widehat\lam _i\}
			d_g(\xi_i,\widehat\xi_i)
			\Big\}\nonumber \\
			&\le
			C\|
			\psi(\al ,\xi,\lam )
			-\psi(\widehat\al ,\widehat\xi,\widehat\lam )
			\|.
			\label{eq:quantitative-bubble-matching}
		\end{align}
		The constants are uniform on \(\mathscr D\).
	\end{lem}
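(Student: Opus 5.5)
The plan has two stages: a qualitative matching of the bubbles, then a linearization of the parameter map around the matched configuration. The basic tools are the standard bubble interaction and self-interaction estimates:
\[
\|P\delta_{\lam,\xi}\|^2=16\pi\,\ii(\xi)^{-1}\cdot\tfrac{\kap(\xi)}{8\pi}\bigl(\ln\lam^2+O(1)\bigr)\sim c_\xi\ln\lam,
\qquad
\la P\delta_{\lam_i,\xi_i},P\delta_{\lam_j,\xi_j}\ra=O(1)\ \ \text{for }d_g(\xi_i,\xi_j)\ge\eta,
\]
together with the fact that $\la P\delta_{\lam,\xi},P\delta_{\widehat\lam,\widehat\xi}\ra$ behaves like $c\ln\min\{\lam,\widehat\lam\}$ plus terms that detect the ratio $\lam/\widehat\lam$ and the rescaled distance $\min\{\lam,\widehat\lam\}\,d_g(\xi,\widehat\xi)$.

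\textbf{Step 1: qualitative matching.} Smallness of $\|\psi-\widehat\psi\|$ forces the two measures $-\Delta_g\psi$ and $-\Delta_g\widehat\psi$ to be close in $H^{-1}$. Each measure is close to $\sum_i\al_i\kap(\xi_i)\delta_{\xi_i}$, smeared at scale $\lam_i^{-1}$.

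1. Test the difference against cutoffs $\vartheta_{\xi_i}$ localized near each $\xi_i$. Since $\mathscr D$ is compact, the separation $\eta$ is uniform.
2. Deduce that every ball $B_{\eta/4}(\xi_i)$ contains exactly one $\widehat\xi_j$, and that this $\widehat\xi_j$ has the same type (interior or boundary). The type must match because the local masses $8\pi\al$ and $4\pi\al$ differ, and $a\le\al\le a^{-1}$ keeps them bounded away from each other once $c_0$ is small.
3. This determines $\si\in\mathfrak S_{p,q}$. Compactness and a contradiction argument give $d_g(\xi_i,\widehat\xi_{\si(i)})\to0$, $\al_i-\widehat\al_{\si(i)}\to0$ and $\ln(\lam_i/\widehat\lam_{\si(i)})\to0$ as $c_0\to0$.

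\textbf{Step 2: quantitative estimate.} With the labels matched, write
\[
\psi(\al,\xi,\lam)-\psi(\widehat\al,\widehat\xi,\widehat\lam)=\sum_i\bigl[(\al_i-\widehat\al_i)P\delta_{\lam_i,\xi_i}+\widehat\al_i(P\delta_{\lam_i,\xi_i}-P\delta_{\widehat\lam_i,\widehat\xi_i})\bigr].
\]
Integrate along the segment in $(\ln\lam,\xi)$ space, using the derivatives $\lam\pa_\lam P\delta$ and $\lam^{-1}\pa_\xi P\delta$.

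1. These $3p+2q$ vector fields, together with the $p+q$ fields $P\delta_{\lam_i,\xi_i}/\|P\delta_{\lam_i,\xi_i}\|$, form an almost orthogonal family. The Gram matrix is a small perturbation of a block-diagonal positive definite matrix:
\[
\|\lam\pa_\lam P\delta\|\sim c,\qquad \|\lam^{-1}\pa_\xi P\delta\|\sim c,
\]
with tangential derivatives only at boundary points. The cross terms are of order $(\ln\lam)^{-1/2}$ or smaller.
2. Uniform invertibility of this Gram matrix gives
\[
\|\psi-\widehat\psi\|\ge c\Bigl(\sum_i\ln^{1/2}\lam_i\,|\al_i-\widehat\al_i|+|\ln(\lam_i/\widehat\lam_i)|+\lam_i\,d_g(\xi_i,\widehat\xi_i)\Bigr)-\text{(second order)}.
\]
3. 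The second-order remainder is absorbed using Step 1, at least when the linearized increments are small.

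\textbf{Main obstacle.} The hardest part is the nonlinear regime where $\lam_i\,d_g(\xi_i,\widehat\xi_i)$ is not small, even though $d_g(\xi_i,\widehat\xi_i)$ itself is. There linearization fails. The plan is to use the exact interaction formula
\[
\|P\delta_{\lam,\xi}-P\delta_{\widehat\lam,\widehat\xi}\|^2\ge c\min\Bigl\{1,\ \bigl|\ln(\lam/\widehat\lam)\bigr|^2+\min\{\lam,\widehat\lam\}^2\,d_g(\xi,\widehat\xi)^2\Bigr\},
\]
which follows from the explicit Liouville bubble computation in the rescaled chart. Choosing $c_0$ below $c$ then forces the linear regime, where the Gram argument applies.

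A second difficulty is the coupling between the amplitude and scale variables through the large norm $\ln^{1/2}\lam$. This is why the weight $\min\{\ln^{1/2}\lam_i,\ln^{1/2}\widehat\lam_i\}$ appears in \eqref{eq:quantitative-bubble-matching}. The plan is to project the difference onto the normalized direction $P\delta_{\lam_i,\xi_i}/\|P\delta_{\lam_i,\xi_i}\|$ after subtracting the scale contribution. The scale contribution is bounded by $O(|\ln(\lam_i/\widehat\lam_i)|)$, since $\la\lam\pa_\lam P\delta,P\delta\ra=O(1)$.
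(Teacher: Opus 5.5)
Your architecture (pair the centres, then run a Gram/Taylor estimate in normalized coordinates) matches the paper's Steps~1 and~3. The gap is the step in between. The absorption of the second-order term in your Step~2 requires the \emph{normalized} differences
\[
A_i=\sqrt{\ln\lambda_i}\,(\widehat\alpha_i-\alpha_i),\quad t_i=\ln(\widehat\lambda_i/\lambda_i),\quad v_i=\lambda_i\,y_{\xi_i}(\widehat\xi_i)
\]
to be small already, and your proposal never establishes this.

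Testing $-\Delta_g(\psi-\widehat\psi)$ against fixed cutoffs does pair the centres, and it gives $d_g(\xi_i,\widehat\xi_{\sigma(i)})\to0$ and $\alpha_i-\widehat\alpha_{\sigma(i)}\to0$. But it is blind to scales. Nothing in it yields $\ln(\lambda_i/\widehat\lambda_{\sigma(i)})\to0$, and ``compactness and a contradiction argument'' supplies no mechanism for that. Separately, types are not distinguished by the masses: $8\pi\alpha=4\pi\widehat\alpha$ is allowed when $a<1/2$, e.g.\ $\alpha=1$, $\widehat\alpha=2$. Types match instead because interior centres of $\mathscr{D}$ stay a fixed distance from $\partial\Sigma$.

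Your fallback in the nonlinear regime is circular. The single-bubble inequality controls $\|P\delta_{\lambda,\xi}-P\delta_{\widehat\lambda,\widehat\xi}\|$. The hypothesis, however, controls
\[
\psi-\widehat\psi=\sum_i(\alpha_i-\widehat\alpha_i)P\delta_i+\sum_i\widehat\alpha_i(P\delta_i-P\widehat\delta_i),
\]
whose first sum has norm comparable to $\max_i|A_i|$, which is not yet known to be small. To bound $A_i$ you project onto $P\delta_i/\|P\delta_i\|$. The error term $\langle P\delta_i-P\widehat\delta_i,P\delta_i\rangle/\|P\delta_i\|$ then contains, besides the scale part you mention, a centre part of size about $\ln(1+|v_i|)/\sqrt{\ln\lambda_i}$. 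This is unbounded when, say, $t_i=0$ and $d_g(\xi_i,\widehat\xi_i)=\lambda_i^{-1/2}$, a configuration consistent with everything your Step~1 gives. So the amplitude bound needs the linear regime, and the single-bubble route to the linear regime needs the amplitude bound.

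The missing idea is the paper's Step~2: localize on fixed balls rescaled by $\lambda_i$. There every projected bubble has $O(1)$ Dirichlet energy, so the $\sqrt{\ln\lambda}$ coupling disappears and the amplitude only multiplies a fixed profile. The argument then runs as follows.
\begin{itemize}
\item If $|z_k|=|\lambda_i y_{\xi_i}(\widehat\xi_i)|\to\infty$, the second gradient vanishes on fixed rescaled balls, a contradiction.
\item If $s_k=\widehat\lambda_i/\lambda_i\to\infty$, the second gradient has the singular limit $-4\widehat\alpha_*(y-z)/|y-z|^2$, which cannot equal the bounded $\alpha_*\nabla U_{1,0}$.
\item The identity $\alpha_*\nabla U_{1,0}=\widehat\alpha_*\nabla U_{s,z}$ then forces $z=0$, $\alpha_*=\widehat\alpha_*$, and $s=1$.
\end{itemize}
Only after $t_i,v_i\to0$ does one integrate along short parameter segments using the Gram bounds, subtract, and obtain $A_i\to0$. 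At that point your Step~2 goes through. A single-bubble lower bound with the amplitudes included would also suffice, but proving it is essentially this rescaling argument.
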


	\begin{proof}
We first compare the parameters when the difference of the bubble sums
tends to zero in the Dirichlet norm. Consider two sequences satisfying the
hypotheses, with all scales tending to infinity and with the difference
of the two bubble sums tending to zero in \(\oH\).

\textbf{Step 1.} Pairing the centers.
Pass to limits of the center configurations in the compact set
\(\mathscr D\). Suppose that a limiting center of the first family is not
a limiting center of the second. Rescale a fixed ball about that center by its own
concentration scale. The gradients of all bubbles of the second family,
and of the other bubbles of the first family, converge uniformly to zero
on each fixed rescaled ball. The remaining gradient converges to a
positive multiple of the gradient of
\(U_{1,0}(y):=-2\ln(1+|y|^2).\)
The Dirichlet integral of $U_{1,0}$ on a fixed ball is positive,
contradicting convergence of the difference to zero. The same argument applies on a
half-ball at a boundary center. Thus the limiting centers can be paired.
By separation in \(\mathscr D\), the pairing is unique for large
indices and respects the interior and boundary types.

\textbf{Step 2.} Relative scales and center positions.
Fix a paired center. For this comparison, interchange the two complete
parameter triples if necessary, and set
\[
s_k:=\widehat\lam _i^k/\lam _i^k\ge1,
\quad z_k:=\lam _i^k y_{\xi_i^k}(\widehat\xi_i^k).
\]
The centers already have the same limit. If \(|z_k|\to\infty\), the
gradient of the second bubble in the coordinates of the first converges
to zero on every fixed rescaled ball. Indeed, its distance from that ball
is comparable to \(|z_k|\), and its gradient there is at most
\(C/|z_k|+o(1)\). This again contradicts the nonzero limiting gradient
of the first bubble. Hence \(\{z_k\}\) is bounded.

Suppose next that \(s_k\to\infty\), and pass to a subsequence with
\(z_k\to z\), \(\al _i^k\to\al _*\), and
\(\widehat\al _i^k\to\widehat\al _*\). On compact sets away
from \(z\), the second rescaled gradient converges to
\(-4\widehat\al _*(y-z)/|y-z|^2\). Vanishing of the difference in
Dirichlet norm yields
\[
\al _*\nabla U_{1,0}(y)
=-4\widehat\al _*\frac{y-z}{|y-z|^2},\quad y\ne z.
\]
The left side is bounded near \(z\); the right side is not, since
\(\widehat\al _*\ge a>0\). The contradiction is unchanged on a
half-plane, where \(z\) belongs to the boundary line. Thus
\(\{s_k\}\) is also bounded.

Pass to limits once more. The normalized charts have an orthogonal
linear transition at the limiting center, which does not change a
radial profile. The rescaled gradient identity is therefore
\[
\al _*\nabla U_{1,0}
=\widehat\al _*\nabla U_{s,z},
\quad U_{s,z}(y):=-2\ln(1+s^2|y-z|^2).
\]
The gradients vanish at the same point, so \(z=0\). Comparison as
\(|y|\to\infty\) yields \(\al _* =\widehat\al _*\); evaluation at
one nonzero point then implies \(s=1\). Consequently,
\begin{equation}\label{eq:qualitative-parameter-matching}
\max_i\Big\{
|\ln(\widehat\lam _i^k/\lam _i^k)|
+\min\{\lam _i^k,\widehat\lam _i^k\}
 d_g(\xi_i^k,\widehat\xi_i^k)
\Big\}\to 0.
\end{equation}

The original labeling can be restored, since
\eqref{eq:qualitative-parameter-matching} is symmetric in the parameters.
The normalized scale and center derivatives have uniformly bounded
Dirichlet norms by Lemma~\ref{lem:Gram} in Appendix~\ref{app:0}.
Integration over the short parameter segments in
\eqref{eq:qualitative-parameter-matching} yields
\[
\Big\|\sum_i\widehat\al _i^k
 (P\delta_{\widehat\lam _i^k,\widehat\xi_i^k}
 -P\delta_{\lam _i^k,\xi_i^k})\Big\|\to 0.
\]
Subtract this difference from the original one and apply the amplitude
part of the Gram estimate. We obtain
\[
\max_i\sqrt{\ln\lam _i^k}\,
 |\al _i^k-\widehat\al _i^k|\to 0.
\]

\textbf{Step 3.} The quantitative estimate.
Set
\[
A_i=\sqrt{\ln\lam _i}(\widehat\al _i-\al _i),\quad
 t_i=\ln(\widehat\lam _i/\lam _i),\quad
 v_i=\lam _i y_{\xi_i}(\widehat\xi_i).
\]
Steps 1--2 imply that these differences are uniformly small after
choosing \(\Lam _0\) large and \(c_0\) small. Otherwise one could
choose \(\Lam _0=k\) and \(c_0=1/k\) and contradict those steps.
Let \(\Psi\) denote the bubble sum in these normalized coordinates,
with the first set of parameters at the origin. Lemma~\ref{lem:Gram}, together with
\(a\le\al _i\le a^{-1}\), yields uniform constants \(c,C>0\) such
that
\[
\|D\Psi(0)[h]\|\ge c|h|,\quad
\|D^2\Psi(\zeta)[h,h]\|\le C|h|^2
\]
in a fixed small normalized parameter neighborhood. Taylor's formula
therefore yields
\[
\|\Psi(\zeta)-\Psi(0)\|
\ge c|\zeta|-C|\zeta|^2\ge\frac c2|\zeta|.
\]
Uniform equivalence of chart and Riemannian distances, and of
\(\ln\lam _i\) and \(\ln\widehat\lam _i\) in this neighborhood,
yields \eqref{eq:quantitative-bubble-matching}.
\end{proof}

	Let \(\ii(\xi)=2\) for an interior center and \(\ii(\xi)=1\)
for a boundary center; only tangential center coordinates are admissible
on the boundary.

If \(u\in V_\rho^\eta(p,q,\var)\), the next proposition defines its
	unique bubble parameters.
	\begin{prop}
		\label{prop:unique}
		There exist \(\var_0>0\) and \(C>0\), independent of \(\rho\), such
		that, for every \(\var\in(0,\var_0)\) and every
		\(u\in V_\rho^\eta(p,q,\var)\), the following conclusions hold:

		Let $\theta^0=(\mathbf 1,\xi^0,\lam ^0)$ be a
		representation from   \eqref{def:Vm} such that
		\(\| u-\psi(\theta^0)\|<\var .\)

		After labeling the
		centers by the disjoint balls \(B_{\eta/4}(\xi_i^0)\), define
		\begin{align}
			\mathscr P_u(\theta^0)
			:=\Big\{(\al ,\xi,\lam )\in
			(\mathbb R_+)^{p+q}\times\Xi_{p,q}\times(\mathbb R_+)^{p+q}:\;&
			|\al _i-1|\le4\var,\,
			d_g(\xi_i,\xi_i^0)\le\frac{\eta}{4},\notag\\
			&|\ln (\lam _i/\lam _i^0)|\le1,
			\, i=1,\ldots,p+q,\notag\\
			&\|u-\psi(\al ,\xi,\lam )\|\le2\var
			\Big\}.
			\label{eq:localized-parameter-box}
		\end{align}
		Then
		\begin{equation}
			\min_{(\al ,\xi,\lam )\in\mathscr P_u(\theta^0)}
			\|u-\psi(\al ,\xi,\lam )\|
			\label{eq:min}
		\end{equation}
		has a unique minimizer modulo the action of $\mathfrak S_{p,q}$.  The minimizer is independent of \(\theta^0\) and belongs to
		\(\widetilde B_{2\var}^{\eta}\), where $\widetilde{B}_{2\var}^\eta$ is defined by \eqref{def:ballB-var-eta}.

		Moreover, within this local parameter neighborhood and for a sufficiently small orthogonal remainder, $u$ admits a unique decomposition
		\[
		u=\sum_i
		\al _iP\delta_{\lam _i,\xi_i}+w,
		\]  where  $w\in\oH$ satisfies
		\begin{equation}
			\label{orthogonality}
			\Big\langle w,P\delta_{\lam _i,\xi_i}\Big\rangle
			=
			\Big\langle w,
			\frac{\pa  P\delta_{\lam _i,\xi_i}}{\pa \lam _i}
			\Big\rangle
			=
			\Big\langle w,
			\frac{\pa  P\delta_{\lam _i,\xi_i}}
			{\pa (\xi_i)_j}
			\Big\rangle
			=0
		\end{equation}
		for \(i=1,\ldots,p+q\) and
		\(j=1,\ldots,\ii(\xi_i)\).  Moreover,
		\begin{equation}
			\label{eq:apha_i_estimate}
			\max_i
			\sqrt{\ln\lam _i}\,|\al _i-1|
			\le C\var.
		\end{equation}
		In particular, as \(\var\to0^+\),
		\[
		\max_i|\al _i-1|
		=o\Big(
		\sum_i\frac{1}{\sqrt{\ln\lam _i}}
		\Big)
		=o\Big(
		\frac{1}{\sqrt{|\ln\var|}}
		\Big).
		\]
	\end{prop}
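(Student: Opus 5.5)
We treat the minimization \eqref{eq:min} as a finite-dimensional problem on the compact parameter box $\mathscr P_u(\theta^0)$. We then use Lemma~\ref{lem:quantitative-bubble-matching} to force all near-minimizers into a small normalized neighborhood of a single parameter point. Inside that neighborhood, the minimization is locally strictly convex.

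\textbf{Step 1: existence and localization.} The set $\mathscr P_u(\theta^0)$ contains $\theta^0$ and is closed. In the normalized coordinates it is also bounded, since $\lam_i$ ranges over $[e^{-1}\lam_i^0,e\lam_i^0]$, the centers lie in closed balls, and $|\al_i-1|\le4\var$. The map $\theta\mapsto\|u-\psi(\theta)\|$ is continuous, so a minimizer $\bar\theta$ exists, and $\|u-\psi(\bar\theta)\|\le\|u-\psi(\theta^0)\|<\var$. Take any two points $\theta,\hat\theta$ of $\mathscr P_u(\theta^0)$ with distance at most $2\var$ to $u$. Then $\|\psi(\theta)-\psi(\hat\theta)\|\le4\var<c_0$ once $\var_0$ is small. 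The centers lie in a compact $\mathscr D\Subset\Xi_{p,q}$ determined by $\eta$, and $\lam_i\ge e^{-1}\var^{-1}\ge\Lam_0$. Lemma~\ref{lem:quantitative-bubble-matching} therefore applies. The labeling by the disjoint balls $B_{\eta/4}(\xi_i^0)$ forces $\si=\mathrm{id}$, and \eqref{eq:quantitative-bubble-matching} gives
\[
\sqrt{\ln\lam_i}\,|\al_i-\hat\al_i|+|\ln(\lam_i/\hat\lam_i)|+\lam_i d_g(\xi_i,\hat\xi_i)\le C\var .
\]
Applied with $\hat\theta=\theta^0$, this yields \eqref{eq:apha_i_estimate}. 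It also shows that $\bar\theta$ lies strictly inside the box constraints, so $\bar\theta\in\widetilde B^{\eta}_{2\var}$ and the minimizer is an interior critical point. The same estimate with two different representations $\theta^0,\theta^{0\prime}$ shows that the boxes overlap and that their minimizers must coincide. This gives independence of $\theta^0$, up to relabeling in $\mathfrak S_{p,q}$.

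\textbf{Step 2: uniqueness and orthogonality.} Write $f(\theta)=\frac12\|u-\psi(\theta)\|^2$ in the normalized coordinates $A_i=\sqrt{\ln\lam_i^0}(\al_i-1)$, $t_i=\ln(\lam_i/\lam_i^0)$, $v_i=\lam_i^0y_{\xi_i^0}(\xi_i)$. We have
\[
D^2f=\langle D\psi,D\psi\rangle-\langle u-\psi,D^2\psi\rangle .
\]
By Lemma~\ref{lem:Gram}, the Gram matrix of the normalized derivatives is uniformly positive definite. The second term is bounded by $C\var$, because $\|D^2\psi\|\le C$ in normalized variables. So $f$ is uniformly strictly convex on the $C\var$-neighborhood from Step~1, and that neighborhood contains every admissible near-minimizer. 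Hence the critical point there is unique. The first-order conditions $Df(\bar\theta)=0$ say precisely that $w=u-\psi(\bar\theta)$ is orthogonal to $\pa_{\al_i}\psi=P\delta_{\lam_i,\xi_i}$, to $\al_i\pa_{\lam_i}P\delta$ and to $\al_i\pa_{(\xi_i)_j}P\delta$; this is \eqref{orthogonality}, since $\al_i\ne0$. Conversely, any decomposition with \eqref{orthogonality} in the local neighborhood and a small remainder is a critical point of $f$ there, and so coincides with $\bar\theta$. This is the uniqueness of the decomposition. The last asymptotic claim follows from \eqref{eq:apha_i_estimate} and $\lam_i>(2\var)^{-1}$, which give $\sqrt{\ln\lam_i}\ge c\sqrt{|\ln\var|}$.

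\textbf{Main obstacle.} The delicate point is the uniform Hessian control. The amplitude direction has norm of order $\sqrt{\ln\lam}$ while the scale and center directions have norm of order one, so the normalization must be chosen so that the Gram matrix is uniformly nondegenerate and $\|D^2\psi\|$ stays bounded. I would rely on Lemma~\ref{lem:Gram} for both facts. The cross terms between bubbles are of order $\ln\lam/\lam$ or smaller by separation, and therefore do not affect the bounds.
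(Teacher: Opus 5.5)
Your proposal is correct and follows essentially the same route as the paper's proof. Both arguments use compactness of the box for existence, Lemma~\ref{lem:quantitative-bubble-matching} for localization and interiority (which also yields \eqref{eq:apha_i_estimate}), strict convexity of $\frac12\|u-\Psi\|^2$ in the normalized coordinates via Lemma~\ref{lem:Gram}, first-order conditions for \eqref{orthogonality}, and the same convexity to identify minimizers coming from different initial representations and any other orthogonal decomposition. One small correction to your closing remark, which does not affect the argument since you take the frame bounds from Lemma~\ref{lem:Gram}: the normalized amplitude--amplitude cross pairings between distinct bubbles are only $O(1/\ln\lambda)$, not $O(\ln\lambda/\lambda)$.
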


	\begin{proof}
By \eqref{def:Vm}, the localized parameter set
\(\mathscr P_u(\theta^0)\) is nonempty and compact by the separation
conditions in \eqref{def:Vm} and the closed constraints in
\eqref{eq:localized-parameter-box}. Thus \eqref{eq:min} has a minimizer
\(\theta=(\al ,\xi,\lam )\).
Lemma~\ref{lem:quantitative-bubble-matching} yields
\begin{equation}\label{eq:local-parameter-estimate}
\max_i\Big\{
\min\{\sqrt{\ln\lam _i},\sqrt{\ln\lam _i^0}\}\,|\al _i-1|
+|\ln(\lam _i/\lam _i^0)|
+\min\{\lam _i,\lam _i^0\}d_g(\xi_i,\xi_i^0)
\Big\}
\le C\var .
\end{equation}
For sufficiently small \(\var \), the minimizer is therefore an
interior point of the parameter set and belongs to
\(\widetilde B_{2\var }^{\eta}\).

Introduce normalized local coordinates
\[
\al _i=1+\frac{a_i}{\sqrt{\ln\lam _i^0}},
\quad
\lam _i=\lam _i^0e^{s_i},
\quad
\xi_i=y_{\xi_i^0}^{-1}\Big(\frac{z_i}{\lam _i^0}\Big),
\quad
\zeta=(a,s,z).
\]
By \eqref{eq:local-parameter-estimate}, every possible minimizer belongs to a
ball \(|\zeta|\le C\var \). Lemma~\ref{lem:Gram} yields, uniformly in this ball,
\[
c|h|^2\le\|D\Psi(\zeta)[h]\|^2\le C|h|^2,
\quad
\|D^2\Psi(\zeta)[h,h]\|\le C|h|^2,
\]
where \(\Psi(\zeta)=\psi(\al (\zeta),\xi(\zeta),\lam (\zeta))\).
Hence the distance function
\(\mathscr F_u(\zeta) =\frac12\|u-\Psi(\zeta)\|^2\)
satisfies
\[
D^2\mathscr F_u(\zeta)[h,h]\ge \frac c2|h|^2
\]
for \(\var \) small. Thus \(\mathscr F_u\) is strictly convex on
\(\{|\zeta|\le C\var \}\), so the minimizer is unique. Differentiation at the interior minimum
yields \eqref{orthogonality}. The same strict convexity shows that any other
orthogonal representation in the local parameter box, and any minimizer obtained from a different initial representation, are equal after a permutation that preserves the types of the centers.

Finally, \eqref{eq:local-parameter-estimate} yields
$\sqrt{\ln\lam _i}|\al _i-1|\le C\var $,
which is \eqref{eq:apha_i_estimate} and proves the stated \(o\)-estimates.
\end{proof}

	\subsection{Gradient expansions in the neighborhood at infinity}

With \eqref{orthogonality} understood, set
	\[
	\mathcal E_{\lam,\xi}^{p,q}
	:=
	\{w\in\oH:w\text{ satisfies \eqref{orthogonality}}\},
	\quad
	E_{\lam,\xi}^{p,q}(\var)
	:=
	\{w\in\mathcal E_{\lam,\xi}^{p,q}:\|w\|\le C\var\}.
	\]
	In what follows, we also write \(E_{\lam ,\xi}^{p,q}\) for
the full closed subspace \(\mathcal E_{\lam ,\xi}^{p,q}\). A norm restriction is always written explicitly as
\(E_{\lam ,\xi}^{p,q}(\var )\). Unless stated otherwise,
\(u=u_0+w\in V_\rho^\eta(p,q,\var )\) denotes the decomposition
of Proposition~\ref{prop:unique}; in particular, \(\|w\|<\var \).

	Using the decomposition in Proposition~\ref{prop:unique}, we compute
the derivatives of \(J_\rho\) in the amplitude, location, scale, and
orthogonal directions. The expansions yield the equations for the bubble
parameters and the estimates used in the Morse index calculation.

	Throughout this subsection, we fix \(p,q\in\N_0\) with $m:=2p+q\in\N$, $\rho_*:=4\pi m$, and write
	$\rho=(1+\mu)\rho_*$,
	where \(\mu\in\R\) is sufficiently small.  We also fix \(\eta>0\) and assume
	that \(\var>0\) is sufficiently small.  Unless otherwise stated, all
	\(O(\cdot)\)-estimates are uniform in
	\(V_\rho^\eta(p,q,\var)\); \(o(1)\) denotes a quantity tending to zero
	as \(\var\to0^+\).

The unit-area convention used in \cite{AHW2025-1} extends to the
present equation by replacing $g$ by $g/|\Sigma|_g$. The Dirichlet
norm and the mean-zero space are unchanged, the normalized equation
has the same parameter $\rho$, and the two functionals differ by
the additive constant $\rho\ln |\Sigma|_g$. In the calculations below
we retain the area in every mean subtraction and recompute the
projection coefficients from \eqref{projectedbubble}. No parameter
or index is rescaled in this normalization.

The local calculations follow \cite[Section~3.2 and Appendix~C]{AHW2025-1}.
The measure used in integrations by parts with \eqref{projectedbubble} is
\(\vartheta_\xi e^{\delta_{\lam ,\xi}-\varphi_\xi}\,\d v_g\),
which becomes
\(\vartheta_\xi e^{\widetilde\delta_{\lam ,0}}\,\d y\)
in the center chart. The same local measure appears in the proofs of
\cite[Lemmas~A.4--A.5]{AHW2025-1}, although the factor
\(e^{-\varphi_\xi}\) is omitted from the definition in
\cite[(2.18)]{AHW2025-1}. Those local integral calculations are combined
with the exact expansion established in Appendix~\ref{app:0} for
\eqref{projectedbubble}. The comparison
\eqref{eq:ccm-projection-comparison} yields an \(H^1\)-approximation;
the higher-order coefficients and their parameter derivatives are obtained from the exact expansion in Appendix~\ref{app:0}.

For any $\xi_i\in\Si$, write
\[
\d\nu_i:=\vartheta_{\xi_i}
 e^{\delta_{\lam _i,\xi_i}-\varphi_{\xi_i}}\,\d v_g.
\]
For a fixed sufficiently small $\epsilon_0>0$, the weighted estimate is
\begin{equation}\label{eq:A-uniform-weighted-Sobolev}
\int_\Si  |w|^r e^{b|w|}\,\d\nu_i\le C_{r,b}\|w\|^r,
\quad 1\le r<\infty,\quad b\ge0,
\end{equation}
for \(w\in\mathcal E_{\lam ,\xi}^{p,q}\) with
\(\|w\|\le\epsilon_0\), uniformly in the scales. Its proof is given in
Step~1 of the proof of Lemma~\ref{lem:terminal-reduction} in Appendix~\ref{app:0}.

\begin{lem} \label{lem:lamexpansion}
		Let $u=u_0+w \in V_{\rho}^\eta(p,q,\var)$ with $u_0=\sum_{i} \al_i P\delta_{\lam_i,\xi_i}$ and  $w\in E_{\lam,\xi }^{p,q}$. Then, for each $i=1,\ldots,p+q$,  the following expansion holds as  $\var\to  0^+$:
		\begin{align*}
			\Big\langle \nabla J_{\rho }(u),\lam_i\frac{\pa P\delta_{\lam_i,\xi_i}}{\pa \lam_i}\Big\rangle =&\,2\al _i \kap(\xi_i)   (\tau_i-\mu+\mu\tau_i)-\frac \pi 2   \frac{\rho\mathfrak{f}_2(\xi_i)  }{\int_{\Si} V e^u \, \d v_g} \lam_i^{4 \al _i-3}\\
			&\,+\mu\rho_*\frac{\kap(\xi_i)}{|\Si|_g}\frac{\ln\lam_i}{\lam_i^2}- (1+\mu)(1-\tau_i)\sum_{j}\frac{\kap(\xi_i)\kap(\xi_j)}{|\Si|_g}\frac{\ln\lam_j}{\lam_j^2} \\
			&\,+O\Big(\sum_j |\al _j-1|^2+\|w\|^2 \Big)+o\Big(\sum_j\frac{\ln\lam_j}{\lam_j^2}\Big),
		\end{align*}
		where
		\begin{gather}
			\label{def:tau_i}
			\tau_i:= 1-\frac{\rho_*}{8(2 \al _i-1)} \frac{(\mathcal{F}_{\xi}^i \mathrm{g}_{\xi}^i )(\xi_i)}{\int_{\Si} V e^u\, \d v_g} \lam_i^{4 \al _i-2},\\	\label{eq:def_g}
			\rg_{\xi}^i(x) := \exp\Big\{(\al_i-1) \kap( \xi_i)H^g(x,\xi_i)+ \sum_{j\neq i}(\al_j-1) \kap( \xi_j) G^g(x,\xi_j) \Big\},
		\end{gather}
		and \begin{equation}
			\label{def:frak_f2} 	\mathfrak{f}_2(\xi_i):=\left\{
			\begin{aligned}
				&	0&& \text{ if }\,\xi_i\in \intS,\\
				&	-(\pa_{\nu_g}  \ln  (\cF^i_{\xi}\rg^i_{\xi})+2k_g )\cF^i_{\xi}\rg^i_{\xi}|_{\xi_i}&& \text{  if }\,	\xi_i\in \pa\Si.
			\end{aligned}
			\right.
		\end{equation}
		Moreover,
		\[
		|\tau_i|=O\Big(\var+|\mu|+\sum_j |\al _j-1|^2\Big).
		\]
	\end{lem}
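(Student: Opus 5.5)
We split the pairing into the quadratic part and the nonlinear part. Since $\nabla J_\rho(u)=u-T_\rho(u)$ in the Dirichlet inner product, for $\phi_i:=\lam_i\pa_{\lam_i}P\delta_{\lam_i,\xi_i}$ we have
\[
\langle\nabla J_\rho(u),\phi_i\rangle=\langle u_0,\phi_i\rangle+\langle w,\phi_i\rangle-\frac{\rho}{\int_\Si Ve^u\,\d v_g}\int_\Si Ve^u\phi_i\,\d v_g,
\]
where $\int_\Si\phi_i\,\d v_g=0$ removes the mean term. The middle term vanishes by \eqref{orthogonality}. The term $\langle u_0,\phi_i\rangle=\sum_j\al_j\langle P\delta_{\lam_j,\xi_j},\phi_i\rangle$ is computed by testing \eqref{projectedbubble} against $\phi_i$, using the expansion of $P\delta_{\lam,\xi}$ from Appendix~\ref{app:0}: $P\delta_{\lam,\xi}=\delta_{\lam,\xi}\vartheta_\xi-\ln(8\lam^2)+\kap(\xi)H^g(\cdot,\xi)+(\text{mean correction})+O(\lam^{-2}\ln\lam)$ locally, and $P\delta_{\lam,\xi}=\kap(\xi)G^g(\cdot,\xi)+O(\lam^{-2}\ln\lam)$ away from $\xi$. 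Here $\phi_i$ equals $\lam\pa_\lam\delta_{\lam,\xi_i}=2-\frac{4\lam^2|y|^2}{1+\lam^2|y|^2}$ near $\xi_i$, up to constants of order $\lam_i^{-2}\ln\lam_i$ (coming from the mean subtraction $\overline{\vartheta e^{-\varphi}e^\delta}$), and $\phi_i=O(\lam_i^{-2})$ away from $\xi_i$. The diagonal product gives $\al_i\cdot 2\kap(\xi_i)$ plus the $\ln\lam_i/\lam_i^2$ area corrections. The cross products $j\neq i$ give the $\kap_i\kap_j\ln\lam_j/(|\Si|_g\lam_j^2)$ corrections, which produce the sum with coefficient $(1+\mu)(1-\tau_i)$ once combined with the nonlinear term.

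For the nonlinear term, we first expand $e^w$ to second order. The linear part $\int Ve^{u_0}w\phi_i$ is reduced, via \eqref{orthogonality} and the bubble equation, to $O(|\al-1|^2+\|w\|^2)$ plus negligible terms, with control from \eqref{eq:A-uniform-weighted-Sobolev}. The quadratic part is bounded directly by \eqref{eq:A-uniform-weighted-Sobolev}. Next we write, in the chart at $\xi_i$,
\[
Ve^{u_0}=\cF^i_\xi\rg^i_\xi\,e^{\hat\varphi}\,\Big(\frac{8\lam_i^2}{(1+\lam_i^2|y|^2)^2}\Big)^{\al_i}(8\lam_i^2)^{-\al_i}e^{\text{const}}\,(1+O(\lam^{-2}\ln\lam)).
\]
Away from all centers, $e^{u_0}=O(1)$, which contributes $O(\lam_i^{-2})$. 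We then change variables $z=\lam_i y$. The zeroth-order Taylor term of $h:=\cF^i_\xi\rg^i_\xi e^{\hat\varphi}$ gives $\int(1+|z|^2)^{-2\al_i}\frac{2(1-|z|^2)}{1+|z|^2}\,dz$ on the full or half plane. Computing this in closed form yields $\frac{\pi\,\kappa\text{-factor}}{2\al_i-1}$-type values, and the combination matches $\tau_i$ as in \eqref{def:tau_i} multiplied by $2\al_i\kap(\xi_i)$, together with the constant $-\mu$ from $\rho=(1+\mu)\rho_*$. On the half plane, the first-order term of the Taylor expansion survives only in the normal direction $y_2$. Using $\nabla\hat\varphi(0)=(0,-2k_g)$ and $\pa_{y_2}=-\pa_{\nu_g}$, it produces $-\frac\pi2\mathfrak f_2(\xi_i)\lam_i^{4\al_i-3}$. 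The exponent is computed from $\lam^{2\al_i}\cdot\lam^{2\al_i-2}\cdot\lam^{-1}$ after normalizing by $\int Ve^u$. In the interior, odd terms vanish by symmetry, and second-order terms are $o(\ln\lam/\lam^2)$ after using $\int\phi$-weights. This yields the stated expansion. The bound on $|\tau_i|$ follows by plugging the expansion into $\|\nabla J_\rho(u)\|<\var$: the pairing is $O(\var)$ because $\|\phi_i\|$ is bounded (Lemma~\ref{lem:Gram}), and all other terms are small.

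The main obstacle is bookkeeping the precise $\ln\lam_j/\lam_j^2$ coefficients coming from the mean subtractions, together with the normalization $\int Ve^u$. In particular, obtaining the exact factor $(1+\mu)(1-\tau_i)$ requires expanding $\int_\Si Ve^u$ itself. For this we first use the normalization $\rho\,\cdot\,(\text{local mass})/\int Ve^u\approx$ expressed through $\tau_j$, and then substitute it back; the errors are controlled by the exact expansion of Appendix~\ref{app:0} rather than only the $H^1$ comparison \eqref{eq:ccm-projection-comparison}.
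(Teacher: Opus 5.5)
Your outline follows the paper's route: orthogonality removes $\langle w,\phi_i\rangle$, you test \eqref{projectedbubble} for the quadratic part, use the local mass factorization and radial integrals for the nonlinear part, weighted estimates for $w$, and get the $\tau_i$ bound from $\|\nabla J_\rho(u)\|<\var$. However, the computation that carries the lemma fails as written, in two places.

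\textbf{The local profile of $\phi_i=\lam_i\pa_{\lam_i}P\delta_{\lam_i,\xi_i}$ is wrong.} By Lemma~\ref{lem:projbubble_lamderivative__pointwise},
\[
\phi_i=\vartheta_{\xi_i}(\lam_i\pa_{\lam_i}\delta_{\lam_i,\xi_i}+2)+h^1_{\lam_i,\xi_i}+O(\lam_i^{-2})=\frac{4\vartheta_{\xi_i}}{1+\lam_i^2|y|^2}+h^1_{\lam_i,\xi_i}+O(\lam_i^{-2}).
\]
\begin{itemize}
\item The $+2$ is $\lam\pa_\lam$ of $\vartheta_\xi\ln(\lam^2/8)$. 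Your expansion carries $-\ln(8\lam^2)$ and then drops its derivative.
\item $h^1_{\lam_i,\xi_i}=-\frac{\kap_i}{|\Si|_g}\frac{\ln\lam_i}{\lam_i^2}+O(\lam_i^{-2})$ is a global constant, so $\phi_i$ is not $O(\lam_i^{-2})$ away from $\xi_i$.
\end{itemize}

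With your profile $\lam\pa_\lam\delta$ the leading terms vanish:
\begin{itemize}
\item $\int_\Si\phi_i\,\d\nu_i\approx\lam\pa_\lam\int e^{\tilde\delta_{\lam,0}}=0$.
\item $\int_{\R^2}(1+|z|^2)^{-2\al_i}\frac{2(1-|z|^2)}{1+|z|^2}\,\d z=\frac{2\pi}{\al_i}-\frac{2\pi}{2\al_i-1}$, which vanishes at $\al_i=1$.
\end{itemize}
So neither the diagonal value $2\al_i\kap_i$ you assert nor the cancellation producing $\tau_i-\mu+\mu\tau_i$ can come out. In addition, the half-plane first moment changes sign. The second moment diverges logarithmically, contradicting your claim that second-order terms are $o(\ln\lam_i/\lam_i^2)$.

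With the correct profile everything works:
\begin{itemize}
\item $\int\frac{4\,\d z}{(1+|z|^2)^{2\al_i+1}}=\frac{2\pi}{\al_i}$ (halved on $\R^2_+$). This gives the numerator $\frac{2(2\al_i-1)}{\al_i}M_i=2\al_iM_i+O(|\al_i-1|^2)$, where $M=\int_\Si Ve^u\,\d v_g$ and $M_i=\frac{\kap_i(\cF^i_\xi\rg^i_\xi)(\xi_i)}{8(2\al_i-1)}\lam_i^{4\al_i-2}$.
\item $\int_{\R^2_+}\frac{4z_2\,\d z}{(1+|z|^2)^3}=\frac\pi2$ gives the $\mathfrak f_2$ term with the right sign.
\item The second moment converges.
\end{itemize}

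\textbf{The $\ln\lam/\lam^2$ bookkeeping, which is most of the content of the statement, is misassigned.} By Lemma~\ref{lem:projbubble_derivative_integral}, the cross pairings are
\[
\al_j\langle P\delta_{\lam_j,\xi_j},\phi_i\rangle=-\frac{\kap_i\kap_j}{|\Si|_g}\frac{\ln\lam_i}{\lam_i^2}+\cdots,
\]
with the scale $\lam_i$ of the differentiated bubble. Together with the diagonal they give $\langle u_0,\phi_i\rangle=2\al_i\kap_i-\frac{\kap_i\rho_*}{|\Si|_g}\frac{\ln\lam_i}{\lam_i^2}+o(\cdot)$. Since no scale comparability is assumed, these terms cannot become $\sum_j\frac{\kap_i\kap_j\ln\lam_j}{|\Si|_g\lam_j^2}$. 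Also, nothing from the quadratic part carries a factor $(1-\tau_i)$.

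That sum comes from the nonlinear part. The mean-normalization constants $S=\sum_j\al_j\widehat h_{\lam_j,\xi_j}$ in $u_0$ give a common factor $e^S\approx1+S_0$ in \eqref{eq:A-exact-local-mass-factor}, where $S_0=\sum_j\frac{\kap_j}{2|\Si|_g}\frac{\ln\lam_j}{\lam_j^2}$. This factor must not be absorbed into a $1+O(\lam^{-2}\ln\lam)$ factor. One gets
\[
\frac1M\int_\Si Ve^u\phi_i\,\d v_g=\frac{2\al_iM_i}{M}(1+S_0)+\frac\pi2\frac{\mathfrak f_2(\xi_i)\lam_i^{4\al_i-3}}{M}+h^1_{\lam_i,\xi_i}+O\Big(\sum_j|\al_j-1|^2+\|w\|^2\Big)+o\Big(\sum_j\frac{\ln\lam_j}{\lam_j^2}\Big).
\]
Now use $M_i/M=\kap_i(1-\tau_i)/\rho_*$. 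This is just the definition of $\tau_i$, so no expansion of $\int_\Si Ve^u$ is needed, only the bound $M_i/M\le C$. It turns $-\rho\frac{2\al_iM_i}{M}S_0$ into the $(1+\mu)(1-\tau_i)$ sum. Separately, $-\rho h^1_{\lam_i,\xi_i}$ combined with $-\frac{\kap_i\rho_*}{|\Si|_g}\frac{\ln\lam_i}{\lam_i^2}$ produces the term $\mu\rho_*\frac{\kap_i}{|\Si|_g}\frac{\ln\lam_i}{\lam_i^2}$, which your outline never produces.

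A minor point: these $\ln\lam/\lam^2$ constants come from the normalization $\int_\Si P\delta\,\d v_g=0$. They do not come from the order-one mean $\overline{\vartheta_\xi e^{-\varphi_\xi}e^{\delta_{\lam,\xi}}}$.
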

\begin{proof}
Write \(\kap _i:=\kap (\xi_i)\),
\(D_i:=\lam _i\pa _{\lam _i}\), and set
\[
M:=\int_\Si  Ve^u\,\d v_g,
\quad
M_i:=\frac{\kap _i(\mathcal F_\xi^i\mathrm g_\xi^i)(\xi_i)}
 {8(2\al _i-1)}\lam _i^{4\al _i-2},
\quad
S_0:=\sum_j\frac{\kap _j}{2|\Si |_g}
             \frac{\ln\lam _j}{\lam _j^2}.
\]
The calculation of the scale pairing is given in
\cite[Proposition~3.2 and its proof in Appendix~C]{AHW2025-1}.
For the present projection, Lemmas~\ref{lem:projbubble_derivative_integral}
and~\ref{lem:projbubble_lamderivative__pointwise} yield
\[
\langle u,D_iP\delta_{\lam _i,\xi_i}\rangle
=2\al _i\kap _i-
\frac{\kap _i\rho_*}{|\Si |_g}\frac{\ln\lam _i}{\lam _i^2}
+o\Big(\sum_j\frac{\ln\lam _j}{\lam _j^2}\Big).
\]
The nonlinear pairing follows from the local factorization in
Lemma~\ref{lem:Ve-u-integral} and the constant and boundary-linear
integrals in the cited proof. For the terms containing \(w\), the exact cancellations are
\[
\int_\Si  w\,\d\nu_i=0,
\quad
\int_\Si \frac{4w}{1+\lam _i^2|y_{\xi_i}|^2}\,\d\nu_i=0,
\quad
\d\nu_i:=\vartheta_{\xi_i}
 e^{\delta_{\lam _i,\xi_i}-\varphi_{\xi_i}}\,\d v_g,
\]
which follow from \eqref{orthogonality} and differentiation of
\eqref{projectedbubble}. Weighted H\"older and the weighted exponential
estimate \eqref{eq:A-uniform-weighted-Sobolev} then show that
\begin{align*}
\frac1M\int_\Si  Ve^u D_iP\delta_{\lam _i,\xi_i}\,\d v_g
=&\,\frac{2\al _iM_i}{M}(1+S_0)
+\frac\pi2\frac{\mathfrak f_2(\xi_i)\lam _i^{4\al _i-3}}M
+h^1_{\lam _i,\xi_i}\\
&\,+O\Big(\sum_j|\al _j-1|^2+\|w\|^2\Big)
+o\Big(\sum_j\frac{\ln\lam _j}{\lam _j^2}\Big),
\end{align*}
 where $h^1_{\lam _i,\xi_i}$ is the scalar defined in
Lemma~\ref{lem:projbubble_lamderivative__pointwise} in
Appendix~\ref{app:0}.
The smooth correction terms contribute
\(O(\sum_j\lam _j^{-2})\). The interaction error
\[
O\Big(\sum_{j\ne i}\frac{\ln\lam _i}{\lam _j^2\lam _i^2}\Big)
\]
is included in the remainder in Lemma~\ref{lem:lamexpansion},
without a comparability assumption on the scales.
The normalization is uniform by the leading mass estimate in
Lemma~\ref{lem:Ve-u-integral}. Finally,
\[
\frac{M_i}{M}=\frac{\kap _i}{\rho_*}(1-\tau_i),
\quad
h^1_{\lam _i,\xi_i}
=-\frac{\kap _i}{|\Si |_g}\frac{\ln\lam _i}{\lam _i^2}
+O(\lam _i^{-2}).
\]
The expansion follows by subtracting \(\rho\) times the nonlinear pairing.
Since the scale direction has bounded Dirichlet norm, the stated estimate
for \(\tau_i\) follows from \eqref{def:Vm}.
\end{proof}

	Lemma~\ref{lem:lamexpansion} implies the following relation between the concentration scales.
\begin{cor}\label{cor:lam_i_samespeed}
Under the assumptions of Lemma~\ref{lem:lamexpansion}, assume in addition
that $\sum_s|\al _s-1|\ln\lam _s=o(1)$. Then, for every
$i,j=1,\ldots,p+q$,
\[
 \frac{\mathcal F_\xi^i(\xi_i)\lam _i^2}
      {\mathcal F_\xi^j(\xi_j)\lam _j^2}=1+o(1).
\]
\end{cor}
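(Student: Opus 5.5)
The plan is to read off the ratio directly from the definition \eqref{def:tau_i} of $\tau_i$, using the smallness estimate for $\tau_i$ proved at the end of Lemma~\ref{lem:lamexpansion}. For every $i$, that definition gives
\[
\frac{\rho_*}{8(2\al_i-1)}\,\frac{(\cF_\xi^i\rg_\xi^i)(\xi_i)}{\int_\Si Ve^u\,\d v_g}\,\lam_i^{4\al_i-2}=1-\tau_i .
\]
By Lemma~\ref{lem:lamexpansion} we have $|\tau_i|=O(\var+|\mu|+\sum_j|\al_j-1|^2)=o(1)$, since $\mu\to0$ in the regime under consideration. The denominator $\int_\Si Ve^u\,\d v_g$ is common to all indices. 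Dividing the identity for $i$ by the one for $j$ therefore gives
\[
\frac{2\al_j-1}{2\al_i-1}\cdot
\frac{(\cF_\xi^i\rg_\xi^i)(\xi_i)\,\lam_i^{4\al_i-2}}{(\cF_\xi^j\rg_\xi^j)(\xi_j)\,\lam_j^{4\al_j-2}}
=\frac{1-\tau_i}{1-\tau_j}=1+o(1).
\]

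Next we remove the three correction factors one at a time.

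First, $\al_i\to1$ by \eqref{eq:apha_i_estimate}, so $(2\al_j-1)/(2\al_i-1)=1+o(1)$.

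Second, consider $\rg^i_\xi(\xi_i)$ from \eqref{eq:def_g}. Its exponent is $(\al_i-1)\kap_iR^g(\xi_i)+\sum_{j\ne i}(\al_j-1)\kap_jG^g(\xi_i,\xi_j)$. The centers stay in the separated region of \eqref{def:Vm}, with mutual distance greater than $\eta$, so $R^g$ and $G^g(\xi_i,\xi_j)$ are uniformly bounded there. Hence $\rg_\xi^i(\xi_i)=\exp(O(\sum_s|\al_s-1|))=1+o(1)$.

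Third, write $\lam_i^{4\al_i-2}=\lam_i^2\exp\bigl(4(\al_i-1)\ln\lam_i\bigr)$. This is exactly where the extra hypothesis $\sum_s|\al_s-1|\ln\lam_s=o(1)$ is needed: it gives $\lam_i^{4\al_i-2}=\lam_i^2(1+o(1))$. The estimate \eqref{eq:apha_i_estimate} alone would only give $|\al_i-1|\ln\lam_i\le C\var\sqrt{\ln\lam_i}$, which need not be small.

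Combining the three factors yields $\cF_\xi^i(\xi_i)\lam_i^2/(\cF_\xi^j(\xi_j)\lam_j^2)=1+o(1)$. Note that $\cF_\xi^i(\xi_i)$ is bounded above and below uniformly on separated configurations, since $V>0$ and the Green-function terms are bounded there. Consequently multiplicative $1+o(1)$ errors remain of that form after division.

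The only point needing care is confirming that the $\tau_i$-estimate of Lemma~\ref{lem:lamexpansion} holds uniformly in the neighborhood. The lemma obtains it from the bound $\|\nabla J_\rho(u)\|<\var$ together with the bounded Dirichlet norm of the scale direction; this is exactly where $u\in V_\rho^\eta(p,q,\var)$ is used, and no comparability of scales between different $i$ is assumed.
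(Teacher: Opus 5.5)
Your removal of the three correction factors $(2\al_i-1)$, $\rg_\xi^i(\xi_i)$ and $\lam_i^{4(\al_i-1)}$ matches the paper exactly. The first step, however, has a gap. You assert $|\tau_i|=o(1)$ ``since $\mu\to0$ in the regime under consideration'', but that is not among the hypotheses. In this subsection $\rho=(1+\mu)\rho_*$ with $\mu$ only \emph{sufficiently small}, and $o(1)$ means a quantity tending to zero as $\var\to0^+$. The bound at the end of Lemma~\ref{lem:lamexpansion} therefore gives only $\tau_i=O(|\mu|)+o(1)$. Your quotient then becomes $(1-\tau_i)/(1-\tau_j)=1+O(|\mu|)+o(1)$, which is weaker than the statement.

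The missing observation is that you never need $\tau_i$ itself to be small; you only need $\tau_i-\tau_j\to0$. Use the full expansion of Lemma~\ref{lem:lamexpansion} instead of its final bound:
\begin{itemize}
\item the left-hand side is $O(\var)$, because $\|\nabla J_\rho(u)\|<\var$ and the scale directions have bounded norm (Lemma~\ref{lem:Gram});
\item the boundary term is $O(M_i/(M\lam_i))=O(\var)$, because \eqref{lem:A.8-1} gives $M=(1+o(1))\sum_jM_j$, so $M_i/M$ is bounded without comparing scales;
\item all remaining terms are $o(1)$.
\end{itemize}
This yields $\tau_i-\mu+\mu\tau_i=o(1)$, that is, $(1+\mu)(1-\tau_i)=1+o(1)$ for every $i$. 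The common factor $1+\mu$ cancels in the quotient, so $(1-\tau_i)/(1-\tau_j)=1+o(1)$ whatever the size of $\mu$. This cancellation is the paper's argument. Alternatively, you could prove $\mu=o(1)$ on this neighborhood: sum the same identities weighted by $\kap_i$ and use $\sum_i\kap_i\tau_i=\rho_*\bigl(1-\sum_iM_i/M\bigr)=o(1)$. That is an extra argument you would have to supply, not something you may assume.
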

\begin{proof}
Set
\[
 M:=\int_\Si  Ve^u\,\d v_g,\quad
 M_i:=\frac{\kap (\xi_i)(\mathcal F_\xi^i\mathrm g_\xi^i)(\xi_i)}
 {8(2\al _i-1)}\lam _i^{4\al _i-2}.
\]
By the mass estimate \eqref{lem:A.8-1} in Appendix~\ref{app:0},
$M=(1+o(1))\sum_iM_i$, so $M_i/M$ is bounded without any
comparison of the scales. The scale directions have uniformly bounded
Dirichlet norm by Lemma~\ref{lem:Gram} in Appendix~\ref{app:0}.
Lemma~\ref{lem:lamexpansion} and $\|\nabla J_\rho(u)\|<\var $ imply
\[
 \tau_i-\mu+\mu\tau_i=o(1),\quad
 (1+\mu)(1-\tau_i)=1+o(1).
\]
Indeed, each boundary term is $O(M_i/(M\lam _i))=o(1)$, and
the remaining error terms tend to zero. Dividing the last identity
for $i$ by the corresponding identity for $j$ and using
\eqref{def:tau_i}, we obtain
\[
 \frac{(\mathcal F_\xi^i\mathrm g_\xi^i)(\xi_i)
              \lam _i^{4\al _i-2}/(2\al _i-1)}
      {(\mathcal F_\xi^j\mathrm g_\xi^j)(\xi_j)
              \lam _j^{4\al _j-2}/(2\al _j-1)}
 =\frac{1-\tau_i}{1-\tau_j}=1+o(1).
\]
Now $\mathrm g_\xi^i(\xi_i)=1+o(1)$ and
$\lam _i^{4(\al _i-1)}=1+o(1)$ by the assumed logarithmic
amplitude estimate. Substitution proves the scale comparison.
\end{proof}

Define the second-order coefficient
	\begin{equation}\label{def:frak_f3}
	\mathfrak f_3(\xi_i):=
\left\{\begin{aligned}
&(\Delta_g(\cF^i_\xi\rg^i_\xi)-2K_g\cF^i_\xi\rg^i_\xi)(\xi_i)
	&&\text{ if }\, \xi_i\in\intS,\\
	&(\Delta_g(\cF^i_\xi\rg^i_\xi)-2K_g\cF^i_\xi\rg^i_\xi
	+4k_g\cF^i_\xi\rg^i_\xi(\pa_{\nu_g}\ln V+k_g))(\xi_i)
	&&\text{ if }\, \xi_i\in\pa\Si.
\end{aligned}\right.
	\end{equation}
	\begin{lem}  \label{lem:sum_tau_i}
		Under the assumptions of Lemma \ref{lem:lamexpansion}, assume  further that $\sum_{i}|\al_i-1|\ln\lam_i=o(1) $. Then
		\begin{align*}
			\sum_{i}\kap(\xi_i)\tau_i=&\,\frac{\pi \rho}{2(1+\mu)}\frac{\sum_{i}\mathfrak{f}_2(\xi_i)\lam_i^{4\al_i-3}}{\int_{\Si} V e^u\, \d v_g}+\frac{\rho}{16(1+\mu)}\frac{\sum_{i}
				\kap ( \xi_i) \mathfrak{f}_3(\xi_i)\ln  \lam_i}{\int_{\Si} V e^u\, \d v_g}\\&\,+\frac{\rho}{1+\mu}\sum_{i}\frac{\kap(\xi_i)}{2|\Si|_g}\frac{\ln\lam_i}{\lam_i^2}+O \Big(\sum_{i}  \Big(
			|\al_i-1|^2 +\frac{1}{\lam_i^2}\Big)+\|w\|^2\Big),
		\end{align*}
		where $\tau_i$, $\mathfrak{f}_2$, and $\mathfrak{f}_3$ are defined in \eqref{def:tau_i}, \eqref{def:frak_f2}, and \eqref{def:frak_f3}, respectively.
	\end{lem}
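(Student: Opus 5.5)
The identity for $\sum_i\kap(\xi_i)\tau_i$ should come from a mass balance rather than from the individual scale expansions. The normalization $M:=\int_\Si Ve^u\,\d v_g$ is itself a sum of local contributions, each of which can be expanded to second order.

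\textbf{Step 1: definitional identity.} By \eqref{def:tau_i},
\[
\sum_i\kap(\xi_i)\tau_i=\sum_i\kap(\xi_i)-\frac{\rho_*}{M}\sum_iM_i=\rho_*\Big(1-\frac{\sum_iM_i}{M}\Big),
\]
where
\[
M_i=\frac{\kap_i(\cF^i_\xi\rg^i_\xi)(\xi_i)}{8(2\al_i-1)}\lam_i^{4\al_i-2}
\]
as in the proof of Lemma~\ref{lem:lamexpansion}, and $\sum_i\kap(\xi_i)=8\pi p+4\pi q=\rho_*$. The statement therefore reduces to a second-order expansion of $M-\sum_iM_i$.

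\textbf{Step 2: expand $M$.} I will use the local factorization of Lemma~\ref{lem:Ve-u-integral} (together with the mass estimate \eqref{lem:A.8-1}) and split $\Si$ into the balls $U_{r}(\xi_i)$ and the exterior region. On $U_r(\xi_i)$, write
\[
Ve^{u_0}=\cF^i_\xi\rg^i_\xi\,e^{\al_i\delta_{\lam_i,\xi_i}}\cdot(\text{projection corrections}),
\]
using the exact expansion of $P\delta_{\lam,\xi}$ from Appendix~\ref{app:0}. That expansion includes the correction $h^1$, which produces the term $\frac{\kap_i}{|\Si|_g}\frac{\ln\lam_i}{\lam_i^2}$ through the mean subtraction. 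Then pass to the chart $y=y_{\xi_i}$ and Taylor-expand $\cF^i_\xi\rg^i_\xi e^{\varphi_{\xi_i}}$ to second order about $0$. The orders contribute as follows.
\begin{itemize}
\item The zeroth-order term integrates exactly to $M_i$.
\item At a boundary center, the first-order normal term, integrated over the half-plane against $\lam_i^2(1+\lam_i^2|y|^2)^{-2}$, produces the factor $\lam_i^{4\al_i-3}$ multiplied by $\mathfrak f_2(\xi_i)$. The $2k_g$ in $\mathfrak f_2$ comes from $\nabla\hat\varphi_\xi(0)=(0,-2k_g)$, and the constant is fixed by the integral $\pi/2$ appearing in Lemma~\ref{lem:lamexpansion}. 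Tangential and interior first-order terms vanish by symmetry.
\item The second-order terms give logarithmically divergent integrals $\int_{|y|<r}|y|^2\lam^2(1+\lam^2|y|^2)^{-2}\,\d y\sim \pi\lam^{-2}\ln\lam$. With the Laplacian in conformal coordinates and the curvature relations $-\Delta\hat\varphi=2K_ge^{\hat\varphi}$ and $\partial_{y_2}\hat\varphi=-2k_ge^{\hat\varphi/2}$, they assemble into $\mathfrak f_3(\xi_i)\ln\lam_i$, with weight $\kap_i/16$ after including the half-plane factor at a boundary point.
\end{itemize}
The exterior region contributes $O(\sum_i\lam_i^{-2})$. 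The $w$-terms are handled exactly as in Lemma~\ref{lem:lamexpansion}: the linear contribution is killed by $\int w\,\d\nu_i=0$ from \eqref{orthogonality}, and the quadratic contribution is $O(\|w\|^2)$ by \eqref{eq:A-uniform-weighted-Sobolev}. Replacing $\al_i$ by $1$ inside the correction terms costs $O(|\al_i-1|^2)$ plus errors controlled by the hypothesis $\sum_i|\al_i-1|\ln\lam_i=o(1)$.

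\textbf{Step 3: assembly.} Dividing by $M$ and multiplying by $\rho_*=\rho/(1+\mu)$ produces the prefactors $\rho/(1+\mu)$ in the statement. Corollary~\ref{cor:lam_i_samespeed} guarantees that all scales are comparable, so the mixed errors can be absorbed into $O(\sum_i\lam_i^{-2})$.

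\textbf{Main obstacle.} I expect the hardest part to be the boundary second-order bookkeeping. It requires the exact constant in front of $\mathfrak f_3$, and in particular the extra term $4k_g\cF\rg(\partial_\nu\ln V+k_g)$. That term arises from cross terms between the first-order normal expansion of $V$ and of $\hat\varphi$, combined with the half-plane asymmetry. I would compute it first in the model half-disk with the explicit $\hat\varphi$ expansion, and check the result against the corresponding computation in \cite[Appendix~C]{AHW2025-1}.
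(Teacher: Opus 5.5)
Your route is the paper's. You use the identity $\sum_i\kap(\xi_i)\tau_i=\rho_*\bigl(1-\sum_iM_i/M\bigr)$, the refined expansion of $M=\int_\Si Ve^u\,\d v_g$, and $\rho_*=\rho/(1+\mu)$. You also use Corollary~\ref{cor:lam_i_samespeed} to replace $\sum_iM_i/M$ by one in the $\ln\lam/\lam^2$ term; the cost $O\bigl(\frac{M-\sum_iM_i}{M}\cdot\frac{\ln\lam}{\lam^2}\bigr)$ fits the remainder. The paper simply substitutes the refined formula of Lemma~\ref{lem:Ve-u-integral}, so your Step~2 amounts to re-proving that lemma.

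In that re-proof one identification is wrong, and it affects a term that is not negligible. The $\ln\lam/\lam^2$ term does not come from $h^1_{\lam,\xi}=\lam\pa_\lam h_{\lam,\xi}\approx-\frac{\kap(\xi)}{|\Si|_g}\frac{\ln\lam}{\lam^2}$, which belongs to the scale pairing in Lemma~\ref{lem:lamexpansion}. It comes from the mean-zero constant $h_{\lam,\xi}\approx\frac{\kap(\xi)}{2|\Si|_g}\frac{\ln\lam}{\lam^2}$ of Lemma~\ref{lem:projbubble_pointwise}.

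These constants are global, so all of them enter $e^{u_0}$ on every chart as the common factor $e^{S}$, $S=\sum_j\al_j\widehat h_{\lam_j,\xi_j}$, in \eqref{eq:A-exact-local-mass-factor}. Hence $M=\sum_iM_i\bigl(1+\sum_j\frac{\kap(\xi_j)}{2|\Si|_g}\frac{\ln\lam_j}{\lam_j^2}\bigr)+\cdots$. After the replacement above, $\rho_*S$ gives exactly $\frac{\rho}{1+\mu}\sum_i\frac{\kap(\xi_i)}{2|\Si|_g}\frac{\ln\lam_i}{\lam_i^2}$. Your per-bubble factor $\frac{\kap_i}{|\Si|_g}\frac{\ln\lam_i}{\lam_i^2}$ would instead produce $\sum_i\frac{\kap_i^2}{|\Si|_g}\frac{\ln\lam_i}{\lam_i^2}$.

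The coefficient matters. In Corollary~\ref{cor:Projbubble_sumexpansion} this term must cancel exactly against the interaction term $-(1+\mu)(1-\tau_i)\sum_j\frac{\kap(\xi_i)\kap(\xi_j)}{|\Si|_g}\frac{\ln\lam_j}{\lam_j^2}$ of Lemma~\ref{lem:lamexpansion}. When $q=0$, any leftover would be of the same order as the $\mathfrak f_3$ term and would corrupt the balance $\mu\sim\cL_2\ln\lam/M$.

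Two smaller points. First, the linear $w$-term is not killed exactly by $\int_\Si w\,\d\nu_i=0$. Lemma~\ref{lem:A.7} leaves $O\bigl(M(|\al-\mathbf1|+\lam^{-1}+\lam^{-2}\ln^{3/2}\lam)\|w\|\bigr)$, which enters the remainder only after dividing by $M$ and applying Young's inequality.

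Second, the boundary part of $\mathfrak f_3$, which you single out as the main obstacle, involves no half-plane asymmetry. The half-plane second moments are isotropic, so the second-order contribution is $\frac{\kap_i}{16}\Delta_y\bigl((\cF^i_\xi\rg^i_\xi)e^{\hat\varphi_{\xi_i}}\bigr)(0)\ln\lam_i$. Expanding this Laplacian of a product with $\nabla\hat\varphi_{\xi_i}(0)=(0,-2k_g)$, $\Delta\hat\varphi_{\xi_i}(0)=-2K_g$ and $\pa_{\nu_g}\ln(\cF^i_\xi\rg^i_\xi)=\pa_{\nu_g}\ln V$ gives $\mathfrak f_3(\xi_i)$ at once. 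Also, the value $\pi\lam^{-2}\ln\lam$ you quote for the radial integral is the half-disk value; on a disk it is $2\pi\lam^{-2}\ln\lam$.
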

\begin{proof}
Set $M=\int_\Si Ve^u\,\d v_g$ and
$M_i=\kap(\xi_i)(\cF_\xi^i\rg_\xi^i)(\xi_i)
\lam_i^{4\al_i-2}/[8(2\al_i-1)]$. By \eqref{def:tau_i},
\[
 \sum_i\kap(\xi_i)\tau_i
 =\rho_*\Big(1-\frac{\sum_iM_i}{M}\Big).
\]
Substitute the refined expansion in Lemma~\ref{lem:Ve-u-integral}
and $\rho_*={\rho}/(1+\mu)$. Corollary~\ref{cor:lam_i_samespeed}
permits replacing $\sum_iM_i/M$ by one in the second-order scalar term;
the resulting error is included in the stated remainder.
\end{proof}

	Summing the derivatives in Lemma~\ref{lem:lamexpansion} and using Lemma~\ref{lem:sum_tau_i} yields the derivative of \(J_\rho\) in the direction
	\begin{equation}
		\label{eq:Z-lam} 	Z_\lam
		:=
		\sum_i\frac{\lam_i}{\al_i}
		\frac{\pa P\delta_{\lam_i,\xi_i}}{\pa\lam_i}.
	\end{equation}
	At $\al=\mathbf1$, \eqref{eq:Z-lam} is the common logarithmic scale direction.

	\begin{cor}\label{cor:Projbubble_sumexpansion}
		Under the assumptions of Lemma \ref{lem:sum_tau_i}, the following expansion
		holds as \(\var\to0^+\):	 \begin{align*}
			\la \nabla J_{\rho }(u),  Z_\lam\ra =&\,-\frac{2\mu\rho}{1+\mu}+\frac{\mu \rho}{1+\mu}\sum_{i}\frac{\kap(\xi_i)}{|\Si|_g}\frac{\ln\lam_i}{\lam_i^2}+	\frac{\rho\pi}{2} \frac{\sum_{i}\lam_i^{4\al_i-3}\mathfrak{f}_2(\xi_i)}{\int_{\Si} V e^u\, \d v_g}\\&\,+\frac{\rho}{8}\frac{\sum_{i}
				\kap ( \xi_i) \mathfrak{f}_3(\xi_i)\ln  \lam_i}{\int_{\Si} V e^u\, \d v_g} +O \Big(\sum_{i}
			|\al_i-1|^2 +\|w\|^2\Big)
			+ o\Big (\sum_{i}\frac{\ln \lam_i}{\lam_i^2}\Big).
		\end{align*}
	\end{cor}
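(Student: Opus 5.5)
We prove Corollary~\ref{cor:Projbubble_sumexpansion} by summing the expansions of Lemma~\ref{lem:lamexpansion} with weights $1/\al_i$, since $Z_\lam=\sum_i\al_i^{-1}\lam_i\pa_{\lam_i}P\delta_{\lam_i,\xi_i}$ and the pairing is linear in the test direction. Dividing the $i$-th expansion by $\al_i$ turns its leading term into $2\kap(\xi_i)(\tau_i-\mu+\mu\tau_i)$. Summing over $i$ and using $\sum_i\kap(\xi_i)=8\pi p+4\pi q=\rho_*$ gives
\[
2(1+\mu)\sum_i\kap(\xi_i)\tau_i-2\mu\rho_*.
\]
Because $\rho_*=\rho/(1+\mu)$, the last term is the constant $-2\mu\rho/(1+\mu)$ in the statement.

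Next we substitute Lemma~\ref{lem:sum_tau_i} for $\sum_i\kap(\xi_i)\tau_i$; its hypothesis $\sum_i|\al_i-1|\ln\lam_i=o(1)$ is assumed. Multiplying by $2(1+\mu)$ cancels the factor $1/(1+\mu)$ and produces four contributions.
\begin{itemize}
\item[(i)] The boundary term $\pi\rho\sum_i\mathfrak f_2(\xi_i)\lam_i^{4\al_i-3}/\int_\Si Ve^u\,\d v_g$.
\item[(ii)] The term $\frac{\rho}{8}\sum_i\kap(\xi_i)\mathfrak f_3(\xi_i)\ln\lam_i/\int_\Si Ve^u\,\d v_g$.
\item[(iii)] The term $\rho\sum_i\frac{\kap(\xi_i)}{|\Si|_g}\frac{\ln\lam_i}{\lam_i^2}$.
\item[(iv)] The remainder $O(\sum_i(|\al_i-1|^2+\lam_i^{-2})+\|w\|^2)$.
\end{itemize}
The $\lam_i^{-2}$ part of (iv) is $o(\sum_i\ln\lam_i/\lam_i^2)$.

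We then collect the remaining summed terms of Lemma~\ref{lem:lamexpansion}.
\begin{itemize}
\item[(a)] The boundary term $-\frac{\pi}{2}\rho\sum_i\al_i^{-1}\mathfrak f_2(\xi_i)\lam_i^{4\al_i-3}/\int_\Si Ve^u\,\d v_g$. Combined with (i), and since $\al_i^{-1}=1+O(|\al_i-1|)$, it yields the coefficient $\frac{\pi\rho}{2}$. The error is $O(|\al_i-1|\lam_i^{-1}M_i/M)$. This is absorbed by Young's inequality into $O(|\al_i-1|^2)+O(\lam_i^{-2})$, the latter again being $o(\sum\ln\lam_j/\lam_j^2)$.
\item[(b)] The term $\mu\rho_*\sum_i\al_i^{-1}\frac{\kap(\xi_i)}{|\Si|_g}\frac{\ln\lam_i}{\lam_i^2}$, which equals $\frac{\mu\rho}{1+\mu}\sum_i\frac{\kap(\xi_i)}{|\Si|_g}\frac{\ln\lam_i}{\lam_i^2}$ up to $o(\cdot)$.
\item[(c)] The double sum $-(1+\mu)\sum_i\al_i^{-1}(1-\tau_i)\kap(\xi_i)\sum_j\frac{\kap(\xi_j)}{|\Si|_g}\frac{\ln\lam_j}{\lam_j^2}$. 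Using $\al_i^{-1}(1-\tau_i)=1+o(1)$ and $\sum_i\kap(\xi_i)=\rho_*$, it equals $-\rho\sum_j\frac{\kap(\xi_j)}{|\Si|_g}\frac{\ln\lam_j}{\lam_j^2}+o(\sum_j\ln\lam_j/\lam_j^2)$.
\end{itemize}
Term (c) exactly cancels (iii). This cancellation is the one place where care is needed: (iii) arises with coefficient $\rho/(1+\mu)\cdot\frac{1}{2}\cdot 2(1+\mu)$, and the normalization of $S_0$ in Lemma~\ref{lem:sum_tau_i} must match.

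The main obstacle is to check that all the $o(1)$ multiplicative corrections ($\al_i^{-1}-1$, $\tau_i$, $\mu\tau_i$) multiply only quantities of order $\sum\ln\lam_j/\lam_j^2$ or $\lam_i^{-1}$. Otherwise a leading-order term would be spoiled. Concretely, the corrections $\mu\tau_i$ and $(\al_i^{-1}-1)\tau_i$ in the leading term must be bounded by the remainders, using the bound $|\tau_i|=O(\var+|\mu|+\sum|\al_j-1|^2)$. The cross term $2\kap_i(\al_i^{-1}\al_i-1)$ vanishes identically because $Z_\lam$ carries the exact factor $1/\al_i$. The remaining $\tau_i$-dependent piece is precisely $2(1+\mu)\sum\kap_i\tau_i$, handled above, so no such error survives.
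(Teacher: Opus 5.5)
Your proposal is correct and follows the paper's own argument. You divide each expansion of Lemma~\ref{lem:lamexpansion} by $\alpha_i$, sum using $\sum_i\kappa(\xi_i)=\rho_*$, and substitute Lemma~\ref{lem:sum_tau_i}; the double-sum term then cancels (iii) up to $o(\sum_i\ln\lambda_i/\lambda_i^2)$.

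The only step you leave implicit is $\tau_i=o(1)$ in (c), which the bound $|\tau_i|=O(\varepsilon+|\mu|+\sum_j|\alpha_j-1|^2)$ does not give by itself. It follows because $\|\nabla J_\rho(u)\|<\varepsilon$ forces $\tau_i-\mu+\mu\tau_i=o(1)$, while the mass expansion gives $\sum_i\kappa(\xi_i)\tau_i=o(1)$, so $\mu=o(1)$ on this neighborhood.
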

	\begin{proof}
		The result follows from Lemmas~\ref{lem:lamexpansion}--\ref{lem:sum_tau_i} and
$\rho_*=\sum_i\kap (\xi_i)$.
	\end{proof}
	The amplitude and location calculations are given in
\cite[Propositions~3.3--3.4 and their proofs in Appendix~C]{AHW2025-1}.
The calculation uses $\nu_i$ and the interaction estimates of
Appendix~\ref{app:0}. In the amplitude formula,
the mixed term containing \(H^g(\xi_j,\xi_i)\) includes the factor
\(\kap (\xi_i)\) from the regular part of \(P\delta_{\lam _i,\xi_i}\).
\begin{lem}\label{lem:amplitude-expansion}
		For each \(i=1,\ldots,p+q\), under the assumptions of
Lemma~\ref{lem:lamexpansion}, one has the following expansion as
\(\var\to0^+\):
		\begin{align*}
			\Big\la\nabla J_{\rho} (u), \frac{P \delta_{\lam_i,\xi_i}  }{\ln \lam_i }\Big\ra=&\, 4\kap (\xi_i)( (\al _i-1)+(\tau_i-\mu+\mu\tau_i)) \\&\, - \Big(2\pi-\frac{(1+2\ln 2)\pi}{\ln\lam_i}\Big) \frac{\rho \mathfrak{f}_2(\xi_i)}{\int_\Si  V e^u  \, \d v_g} \lam_i^{4\al_i-3} \\
			&\,-\sum_{j}  \frac{\rho \pi  \kap (\xi_i)H^g(\xi_j,\xi_i)\mathfrak{f}_2(\xi_j) }{2\int_\Si   V e^u \, \d v_g } \frac{\lam_{j}^{4\al _j-3}}{\ln \lam_i}\\
			&
			\,+O \Big(\sum_{j}  \frac{|\al _j-1|}{\ln \lam_i}
 +\sum_{j}  \frac{ |\tau_j-\mu+\mu\tau_j|}{\ln \lam_i}\\
&\quad+\Big(\frac{1}{\ln \lam_{i}}+ |\al _i-1|\Big)\|w\|+\|w\|^2\Big)\\
			&\, + O\Big(\frac 1 {\ln \lam_i}\sum_j \frac{\ln \lam_j}{\lam_j^2}+ \frac{\ln \lam_i}{\lam_i^2} +\sum_j \frac 1 {\lam_j^{4\al _j-2}}\Big),
		\end{align*}
		where \(\cF_\xi^i\), \(\tau_i\), \(\rg_\xi^i\), and \(\mathfrak f_2\) are
		defined in \eqref{eq:def_F}, \eqref{def:tau_i}, \eqref{eq:def_g}, and
		\eqref{def:frak_f2}, respectively.
	\end{lem}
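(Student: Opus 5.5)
We test the gradient against the normalized amplitude direction, following the scheme of Lemma~\ref{lem:lamexpansion}. Write $\kap_i=\kap(\xi_i)$, $M=\int_\Si Ve^u\,\d v_g$, and $\Lambda_i:=P\delta_{\lam_i,\xi_i}/\ln\lam_i$. The pairing splits as
\[
\langle\nabla J_\rho(u),\Lambda_i\rangle=\langle u,\Lambda_i\rangle-\frac{\rho}{M}\int_\Si Ve^u\Lambda_i\,\d v_g .
\]
We expand the linear and nonlinear parts separately. The linear part has no contribution from $w$: orthogonality \eqref{orthogonality} kills $\langle w,P\delta_{\lam_i,\xi_i}\rangle$ exactly.

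\textbf{Linear term.} Use \eqref{projectedbubble} to write
\[
\langle P\delta_{\lam_j,\xi_j},P\delta_{\lam_i,\xi_i}\rangle=\int_\Si P\delta_{\lam_j,\xi_j}\,\d\nu_i ,
\]
since the mean-subtracted constant integrates to zero against a mean-zero function. Next insert the pointwise expansion of the projected bubble from Appendix~\ref{app:0}:
\[
P\delta_{\lam,\xi}=\vartheta_\xi\bigl(\delta_{\lam,\xi}-\ln(8\lam^2)\bigr)+\kap(\xi)H^g(\cdot,\xi)+\ln(8\lam^2)\cdot(\text{mean correction})+O(\lam^{-2}\ln\lam).
\]
For $j=i$ this gives
\[
\|P\delta_{\lam_i,\xi_i}\|^2=2\kap_i\ln\lam_i+\kap_i\bigl(\kap_iR^g(\xi_i)+c_{\kap_i}\bigr)+O(\lam_i^{-2}\ln\lam_i).
\]
The local constant $c_{\kap_i}$ involves the integral of $\ln(1+|y|^2)$ against the bubble measure, over the full or half plane. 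For $j\neq i$ the pairing is $\kap_i\kap_j G^g(\xi_i,\xi_j)+O(\lam^{-2}\ln\lam)$. Dividing by $\ln\lam_i$ gives $2\al_i\kap_i+O(1/\ln\lam_i)$, and the $O(1/\ln\lam_i)$ pieces are bounded combinations of $\kap$, $R^g$ and $G^g$.

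\textbf{Nonlinear term.} Use the local factorization of Lemma~\ref{lem:Ve-u-integral}. Near $\xi_i$ we have
\[
Ve^{u}e^{-\varphi_{\xi_i}}\approx(\cF^i_\xi\rg^i_\xi)\,\lam_i^{4\al_i-2}\,8^{\al_i}(1+\lam_i^2|y|^2)^{-2\al_i}e^{w}.
\]
Integrate this against $\Lambda_i=2-\ln\!\bigl(8(1+\lam_i^2|y|^2)^{2}\bigr)/\ln\lam_i+\dots$.
\begin{itemize}
\item The constant value $2$, together with $\int Ve^u=M_i(\dots)$, produces $2\cdot 2\al_i M_i/M$. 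Since $\rho M_i/M=(1+\mu)\kap_i(1-\tau_i)$, the combination $4\kap_i\bigl((\al_i-1)+(\tau_i-\mu+\mu\tau_i)\bigr)$ comes out after subtraction from the linear term.
\item For a boundary center, the boundary-linear term in the expansion of $\cF^i_\xi\rg^i_\xi$ (normal derivative plus the $\nabla\hat\varphi$ term, which gives $2k_g$) yields $\mathfrak f_2$ times $\lam_i^{4\al_i-3}$. Its coefficient is $2\pi$ at order one. The $1/\ln\lam_i$ correction, with constant $(1+2\ln2)\pi$, comes from integrating $y_2\ln(8(1+|y|^2)^2)/(1+|y|^2)^{2}$ over the half-plane.
\item The boundary-linear contributions of the other bubbles $j$ meet the regular part $\kap_iH^g(\xi_j,\xi_i)$ of $\Lambda_i$ near $\xi_j$. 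Each gives $\tfrac{\pi}{2}\kap_iH^g(\xi_j,\xi_i)\mathfrak f_2(\xi_j)\lam_j^{4\al_j-3}/\ln\lam_i$.
\item The remaining order-$1/\ln\lam_i$ constants from the nonlinear side cancel the corresponding linear constants. The cancellation is exact at $\al=\mathbf1$ and $\tau=\mu=0$, because of the Liouville identity. Away from that point they leave exactly the $O\bigl(\sum_j(|\al_j-1|+|\tau_j-\mu+\mu\tau_j|)/\ln\lam_i\bigr)$ remainders.
\end{itemize}

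\textbf{Terms in $w$.} We expand $e^{w}=1+w+O(w^2e^{|w|})$. The first-order contributions reduce to $\int w\,\d\nu_j$ and $\int w\ln(1+\lam_i^2|y|^2)\,\d\nu_i/\ln\lam_i$. The first vanishes by the exact cancellation recorded in the proof of Lemma~\ref{lem:lamexpansion}. The second is bounded by $\|w\|/\ln\lam_i$ via \eqref{eq:A-uniform-weighted-Sobolev}, and the factor $\lam^{4(\al-1)}-1$ adds the term $|\al_i-1|\|w\|$. The quadratic terms are $O(\|w\|^2)$ by \eqref{eq:A-uniform-weighted-Sobolev}. Finally, the mass outside the bubble cores, $O(\lam_j^{2-4\al_j})$, and the mean corrections of order $\ln\lam/\lam^2$ give the last line of remainders.

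The main obstacle is the bookkeeping of the $1/\ln\lam_i$ constants, which must cancel up to the stated remainders. This requires tracking the exact half-plane versus plane constants with the boundary conformal factor, which is why the boundary chart normalization $\nabla\hat\varphi_\xi(0)=(0,-2k_g)$ is essential. I would first verify this cancellation at $\al=\mathbf1$ on the exact bubble, and then treat deviations by a Taylor expansion in $\al$.
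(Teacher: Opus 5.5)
Your scheme is the one the paper relies on: it defers to \cite[Proposition~3.3]{AHW2025-1}, using the measure $\nu_i$ and the exact projection expansion of Appendix~\ref{app:0}. However, the constants you feed into it are wrong, and as written the central cancellation fails. By Lemma~\ref{lem:projbubble_pointwise}, $P\delta_{\lam,\xi}=\vartheta_\xi\bigl(\delta_{\lam,\xi}+\ln(\lam^2/8)\bigr)+\kap(\xi)H^g(\cdot,\xi)+O(\ln\lam/\lam^2)$. Hence on the core of $\xi_i$
\[
\Lambda_i=4-\frac{2\ln(1+\lam_i^2|y|^2)}{\ln\lam_i}+\frac{\kap_iH^g(\cdot,\xi_i)}{\ln\lam_i}+O(\lam_i^{-2}),
\]
with core value $4$ and no $\ln 8$. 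Lemma~\ref{lem:projbubble_integral_interaction} gives $\|P\delta_{\lam_i,\xi_i}\|^2=4\kap_i\ln\lam_i+\kap_i^2R^g(\xi_i)-2\kap_i+O(\lam_i^{-2}\ln\lam_i)$, not $2\kap_i\ln\lam_i+\cdots$.

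With your values, the Dirichlet part is $2\al_i\kap_i$ and the exponential part is $\rho\cdot2\cdot2\al_iM_i/M=4\al_i(1+\mu)\kap_i(1-\tau_i)$. At $\al=\mathbf 1$, $\tau=\mu=0$ these leave $-2\kap_i\neq0$, so the main term $4\kap_i\bigl((\al_i-1)+(\tau_i-\mu+\mu\tau_i)\bigr)$ is asserted rather than derived. Your $\Lambda_i$ would likewise produce the boundary coefficients $\pi$ and $(1+\tfrac72\ln2)\pi$, not $2\pi$ and $(1+2\ln2)\pi$.

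With the correct input the computation closes. The Dirichlet part is
\[
4\al_i\kap_i+\frac{1}{\ln\lam_i}\Bigl[\al_i\bigl(\kap_i^2R^g(\xi_i)-2\kap_i\bigr)+\sum_{j\ne i}\al_j\kap_i\kap_jG^g(\xi_i,\xi_j)\Bigr],
\]
up to $O\bigl(\frac1{\ln\lam_i}\sum_j\frac{\ln\lam_j}{\lam_j^2}\bigr)$. In the exponential part no factor $\al_i$ appears:
\begin{itemize}
\item the core value $4$ gives $4\rho M_i/M=4(1+\mu)\kap_i(1-\tau_i)$;
\item the logarithm, via $\int(1+|z|^2)^{-2\al}\ln(1+|z|^2)\,\d z=(2\al-1)^{-1}\int(1+|z|^2)^{-2\al}\,\d z$, gives $-2(1+\mu)(1-\tau_i)\kap_i/\bigl((2\al_i-1)\ln\lam_i\bigr)$;
\item the regular parts give $(1+\mu)(1-\tau_i)\kap_i^2R^g(\xi_i)/\ln\lam_i$ and $(1+\mu)(1-\tau_j)\kap_i\kap_jG^g(\xi_j,\xi_i)/\ln\lam_i$ for $j\ne i$.
\end{itemize}
Since $(1+\mu)(1-\tau_j)=1-(\tau_j-\mu+\mu\tau_j)$, subtraction gives exactly the main term and leaves the $1/\ln\lam_i$ constants only in the stated remainder.

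The boundary coefficients are $4\cdot\frac\pi2=2\pi$ and $2\int_{\R^2_+}z_2\ln(1+|z|^2)(1+|z|^2)^{-2}\,\d z=(1+2\ln2)\pi$. The $j=i$ term of the $H^g\mathfrak f_2$ sum comes from $\kap_iH^g(\cdot,\xi_i)/\ln\lam_i$ on the core of $\xi_i$, not from the other bubbles.

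Finally, under the hypotheses of Lemma~\ref{lem:lamexpansion} you only have \eqref{eq:apha_i_estimate}. So $\lam_i^{4(\al_i-1)}-1$ need not be small and must stay inside $M_i$, as it does in $\tau_i$. The $|\al_i-1|\,\|w\|$ error comes from replacing $(1+|z|^2)^{-2\al_i}$ by $(1+|z|^2)^{-2}$ in the pairing with $w$, estimated in weighted $L^s$.
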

	\begin{lem}\label{lem:bubble_expansion3} Under the assumptions of Lemma \ref{lem:lamexpansion}, the following
		expansion holds, for every \(i=1,\ldots,p+q\) and
		\(j=1,\ldots,\ii(\xi_i)\), as \(\var\to0^+\):
		\begin{align}
			\Big\la \nabla J_\rho (u), \frac{1}{\lam_i}\frac{\pa P\delta_{\lam_i , \xi_i}}{\pa(\xi_i)_j} \Big\ra =&\, -\frac{(1+\mu)}{2}\frac{1}{\lam_i}\frac{\pa \ff(\xi)}{\pa(\xi_i)_j} +O\Big( \frac{|\mu|}{\lam_i}+\sum_s\frac{|\tau_s|}{\lam_i}+ \frac{\ln \lam_i}{\lam_i^2}\Big)\nonumber\\
			& \, +O\Big(\sum_s |\al _s-1|^2+\sum_s \frac{1}{\lam_s^2}+ \|w\|^2\Big),\label{eq:gradient_pa_xi_i}
		\end{align}
		where $j=1,\ldots,\ii(\xi_i)$ and  $\ff$ is defined in \eqref{eq:reduce_fun_0}.
	\end{lem}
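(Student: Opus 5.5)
We plan to compute the pairing directly by splitting $\nabla J_\rho(u)$ into its linear part and its nonlinear part:
\[
\Big\langle \nabla J_\rho(u),\frac1{\lam_i}\pa_{(\xi_i)_j}P\delta_{\lam_i,\xi_i}\Big\rangle
=\Big\langle u,\frac1{\lam_i}\pa_{(\xi_i)_j}P\delta_{\lam_i,\xi_i}\Big\rangle-\frac{\rho}{M}\int_\Si Ve^u\,\frac1{\lam_i}\pa_{(\xi_i)_j}P\delta_{\lam_i,\xi_i}\,\d v_g ,
\]
with $M=\int_\Si Ve^u\,\d v_g$. The $w$-contribution to the linear part vanishes by \eqref{orthogonality}. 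The terms $\langle P\delta_{\lam_s,\xi_s},\lam_i^{-1}\pa_{(\xi_i)_j}P\delta_{\lam_i,\xi_i}\rangle$ are obtained by integrating \eqref{projectedbubble} against the derivative and using the expansions of Lemmas~\ref{lem:projbubble_derivative_integral}--\ref{lem:projbubble_lamderivative__pointwise} in Appendix~\ref{app:0}. For $s\neq i$ this gives $\lam_i^{-1}\kap_i\kap_s\pa_{(\xi_i)_j}G^g(\xi_i,\xi_s)/(\text{normalization})$ plus $O(\lam^{-2})$. For $s=i$ the pairing is $\frac12\kap_i^2\lam_i^{-1}\pa_j R^g(\xi_i)$-type terms plus $O(\ln\lam_i/\lam_i^2)$. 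As in \cite[Proposition~3.4, Appendix~C]{AHW2025-1}, the Robin contribution appears with the correct factor because the derivative of $H^g(x,\xi_i)$ in $\xi_i$ and in $x$ combine.

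For the nonlinear part we use the local factorization of Lemma~\ref{lem:Ve-u-integral}. Near $\xi_i$, in the chart $y_{\xi_i}$, we have $Ve^{u}e^{\varphi}\approx \lam_i^{4\al_i-2}(\cF^i_\xi \rg^i_\xi)\,e^{\tilde\delta_{1,0}}$ in rescaled variables, and $\lam_i^{-1}\pa_{(\xi_i)_j}P\delta$ is asymptotically $-\pa_{z_j}\tilde\delta_{1,z}|_{z=0}(\lam_i y)=\frac{4\lam_i y_j}{1+\lam_i^2|y|^2}$ plus a smooth regular-part correction of size $O(\lam_i^{-1})$. Taylor-expanding $\cF^i_\xi\rg^i_\xi$ at $\xi_i$, the zeroth-order term integrates to zero by oddness in $y_j$. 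For boundary centers, $j=1$ is tangential, so the half-plane is symmetric in $y_1$ and oddness still holds. The first-order term gives $c\,\lam_i^{4\al_i-3}\pa_j(\cF^i_\xi\rg^i_\xi)(\xi_i)$ with an explicit constant. We replace $\lam_i^{4\al_i-2}/M$ by $\kap_i(1-\tau_i)/(\rho_*\cF^i_\xi(\xi_i))\cdot 8(2\al_i-1)/\kap_i$ using \eqref{def:tau_i}. This converts the term into $\frac{\rho}{\rho_*}\lam_i^{-1}\pa_j\ln\cF^i_\xi(\xi_i)$ times a universal constant, up to $O(|\tau_i|/\lam_i+|\al-1|^2)$. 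Here $\rg^i_\xi$ contributes $O(|\al-1|)$ derivatives, which are absorbed into the stated remainders after Young's inequality. The contributions of the other bubbles and of the constant mean subtraction are $O(\lam^{-2})$. The $w$-terms are handled with the weighted estimate \eqref{eq:A-uniform-weighted-Sobolev}: the linear $w$-term reduces to $\int w\,\frac{4\lam_iy_j}{1+\lam_i^2|y|^2}\d\nu_i$, which vanishes by the orthogonality cancellation as in Lemma~\ref{lem:lamexpansion}, leaving $O(\|w\|^2)$.

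Finally we collect terms. Since $\nabla_{\xi_i}\ff=2\kap_i\nabla\ln V+\kap_i^2\nabla R^g+2\kap_i\sum_{s\neq i}\kap_s\nabla_1G^g(\xi_i,\xi_s)$, and $\pa_j\ln\cF^i_\xi(\xi_i)=\pa_j\ln V+\kap_i\pa_x H^g+\sum\kap_s\pa G^g$, the linear and nonlinear pieces combine to $-\frac{1+\mu}{2}\lam_i^{-1}\pa_{(\xi_i)_j}\ff$. The factor $1+\mu$ comes from $\rho/\rho_*$, and the difference between the linear and nonlinear coefficients contributes $O(|\mu|/\lam_i)$. The main obstacle is this bookkeeping of constants. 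In particular, for boundary centers with $\kap=4\pi$ the half-plane integrals and the conformal factor gradient $\nabla\hat\varphi_\xi(0)=(0,-2k_g)$ could produce spurious tangential terms. We expect them to vanish because that gradient is purely normal, so we will verify this carefully, together with the matching of the Robin term $\kap_i^2\nabla R^g$, whose coefficient arises half from the self-pairing and half from the nonlinear term.
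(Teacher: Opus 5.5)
\textbf{There is a genuine gap: you miss order-$\lambda_i^{-1}$ terms in the nonlinear part.} Your overall route matches the direct computation the paper imports from \cite[Proposition~3.4]{AHW2025-1}: the Dirichlet pairing with $u$, plus the normalized nonlinear integral, with the odd profile tested against the Taylor expansion of the local density. But your bookkeeping of the nonlinear term drops these terms, and as set up the pieces cannot add up to the stated formula. By Lemma~\ref{lem:projbubble_xi_pointwise}, the test function $\lambda_i^{-1}\partial_{(\xi_i)_j}P\delta_{\lambda_i,\xi_i}$ is:
\begin{itemize}
\item near $\xi_i$: the odd profile $4\lambda_i y_j/(1+\lambda_i^2|y|^2)$ plus $\lambda_i^{-1}\kappa_i\partial_{(\xi_i)_j}H^g(\cdot,\xi_i)$;
\item near each $\xi_s$, $s\neq i$: $\lambda_i^{-1}\kappa_i\partial_{(\xi_i)_j}G^g(\cdot,\xi_i)+O(\ln\lambda_i/\lambda_i^3)$.
\end{itemize}
The measure $\rho M^{-1}Ve^u\,\mathrm{d}v_g$ has mass $\frac{\rho M_s}{M}(1+o(1))$ near $\xi_s$, where $M_s$ is the leading local mass. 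By \eqref{def:tau_i}, exactly $\rho M_s/M=(1+\mu)\kappa_s(1-\tau_s)$. Hence these pieces contribute $-(1+\mu)(1-\tau_i)\kappa_i^2\lambda_i^{-1}\partial_{(\xi_i)_j}H^g(x,\xi_i)|_{x=\xi_i}$ and $-(1+\mu)(1-\tau_s)\kappa_s\kappa_i\lambda_i^{-1}\partial_{(\xi_i)_j}G^g(\xi_s,\xi_i)$. These are of order $\lambda_i^{-1}$, not $O(\lambda^{-2})$ as you assert. Likewise, the ``smooth correction of size $O(\lambda_i^{-1})$'' in the test function is not negligible, because it is integrated against a measure of mass $\asymp1$.

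\textbf{What these terms do.} They cancel the Dirichlet pairings of Lemma~\ref{lem:projbubble_derivative_integral}, using $\tfrac12\nabla R^g(\xi)=\nabla_\xi H^g(x,\xi)|_{x=\xi}$, up to $O\bigl((|\mu|+|\tau_s|+|\alpha_s-1|)/\lambda_i\bigr)$. This cancellation is exactly where the remainder $|\mu|/\lambda_i+\sum_s|\tau_s|/\lambda_i$ comes from, with the sum over \emph{all} $s$. Your accounting, which yields only $|\tau_i|/\lambda_i$, cannot produce it.

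\textbf{Where the main term actually comes from.} After this cancellation, the whole main term comes from the odd profile tested against the first-order Taylor term of $\mathcal A_i=(\mathcal F^i_\xi\mathrm g^i_\xi)e^{\hat\varphi_{\xi_i}}$. The first moment $\frac{\kappa_i}{8}\lambda_i\partial_j\mathcal A_i(0)$, divided by the local mass $\frac{\kappa_i}{8}\lambda_i^2\mathcal A_i(0)$ and multiplied by $\rho M_i/M$, gives $-(1+\mu)\kappa_i\lambda_i^{-1}\partial_j\ln\mathcal F^i_\xi(\xi_i)$, up to $O((|\alpha-\mathbf 1|+|\tau_i|)/\lambda_i)$. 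This equals $-\frac{1+\mu}{2}\lambda_i^{-1}\partial_{(\xi_i)_j}\mathbf F^V_{p,q}(\xi)$. The full Robin term enters through $\kappa_i\partial_xH^g$. Your tangential worry is indeed settled, since $\nabla\hat\varphi_\xi(0)$ is normal.

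\textbf{Consequences for your final collection step.} The Robin coefficient does not split ``half and half''. In fact no value of your universal constant makes ``Dirichlet pairings plus leading odd term'' equal the claimed expression. The coefficient of $\partial_j\ln V$ forces the constant to be $\kappa_i$. With that constant, $\lambda_i^{-1}\bigl(\tfrac12\kappa_i^2\partial_jR^g(\xi_i)+\kappa_i\sum_{s\ne i}\kappa_s\partial_{(\xi_i)_j}G^g(\xi_s,\xi_i)\bigr)$ is left over.

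\textbf{Minor point.} The term linear in $w$ does not vanish exactly. Differentiating \eqref{projectedbubble} in $\xi_i$ also produces $\partial_\xi\varphi_\xi$ and chart-motion terms, so this term is $O(\|w\|/\lambda_i+|\alpha_i-1|\,\|w\|)$. It is still absorbable into the stated remainder.
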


	Subtracting twice the scale expansion in Lemma~\ref{lem:lamexpansion},
divided by $\al _i$, from Lemma~\ref{lem:amplitude-expansion}
yields the following estimate.
\begin{cor}\label{cor:1}
		Under the assumptions of Lemma~\ref{lem:lamexpansion}, the following expansion holds for $i=1,\ldots,p+q$ as $\var\to0^+$:
		\[
		\Big\langle\nabla J_{\rho}(u), \frac{P\delta_{\lam_i,\xi_i}}{\ln \lam_i}-2\frac{\lam_i}{\al _i}  \frac{\pa  P\delta_{\lam_i,\xi_i}}{\pa  \lam_i}\Big\rangle
		=  4\kap(\xi_i)(\al _i-1)+ \epsilon_i(\al ,\xi,\lam,w),
		\]
		where
\[
 \epsilon_i(\alpha,\xi,\lambda,w)=
 \begin{cases}
 O\left(\begin{aligned}
 &\|w\|^2+\frac{\|w\|}{\ln\lambda_i}\\
 &+\sum_j\Big(\frac{|\tau_j-\mu+\mu\tau_j|+|\alpha_j-1|}{\ln\lambda_i}
       +|\alpha_j-1|^2+\frac1{\lambda_j}\Big)
 \end{aligned}\right),&q\ne0,\\[8pt]
 O\left(\begin{aligned}
 &\|w\|^2+\frac{\|w\|}{\ln\lambda_i}\\
 &+\sum_j\Big(\frac{|\tau_j-\mu+\mu\tau_j|+|\alpha_j-1|}{\ln\lambda_i}
       +|\alpha_j-1|^2\\
 &\quad+\frac{\ln\lambda_j}{\lambda_j^2}
       +\frac1{\lambda_j^{4\alpha_j-2}}\Big)
 \end{aligned}\right),&q=0.
 \end{cases}
\]
	\end{cor}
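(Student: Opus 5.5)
The identity comes from linearity of the pairing: subtract $2/\al_i$ times the scale expansion of Lemma~\ref{lem:lamexpansion} from the amplitude expansion of Lemma~\ref{lem:amplitude-expansion}. Then we check that everything other than $4\kap(\xi_i)(\al_i-1)$ falls into $\epsilon_i$.

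\textbf{Step 1: the cancellation.} Since $\nabla J_\rho(u)$ pairs linearly with the direction, the left-hand side equals
\[
\Big\la\nabla J_\rho(u),\frac{P\delta_{\lam_i,\xi_i}}{\ln\lam_i}\Big\ra-\frac{2}{\al_i}\Big\la\nabla J_\rho(u),\lam_i\frac{\pa P\delta_{\lam_i,\xi_i}}{\pa\lam_i}\Big\ra.
\]
The leading term of the amplitude expansion is $4\kap(\xi_i)(\tau_i-\mu+\mu\tau_i)$. The scale expansion contributes $2\al_i\kap(\xi_i)(\tau_i-\mu+\mu\tau_i)$, and after multiplication by $2/\al_i$ this becomes exactly $4\kap(\xi_i)(\tau_i-\mu+\mu\tau_i)$. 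These two terms therefore cancel, leaving $4\kap(\xi_i)(\al_i-1)$.

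\textbf{Step 2: the boundary terms $\mathfrak f_2$.} The amplitude expansion contains $-2\pi\rho\mathfrak f_2(\xi_i)\lam_i^{4\al_i-3}/M$, where $M=\int_\Si Ve^u\,\d v_g$. Subtracting $2/\al_i$ times the scale term $-\frac\pi2\rho\mathfrak f_2\lam_i^{4\al_i-3}/M$ adds $\frac{\pi}{\al_i}\rho\mathfrak f_2\lam_i^{4\al_i-3}/M$. This does not cancel exactly; the surviving coefficient is $O(1)$.

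To see that it is nevertheless small, use $M_i/M$ bounded (the mass estimate of Lemma~\ref{lem:Ve-u-integral}) together with $M_i\sim\lam_i^{4\al_i-2}$. This gives $\lam_i^{4\al_i-3}/M=O(\lam_i^{-1})$. The same bound handles the $H^g$-interaction terms over $j$, which are $O(\lam_j^{-1}/\ln\lam_i)$.

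\begin{itemize}
\item When $q\neq0$, all of these terms are absorbed into $\sum_j1/\lam_j$.
\item When $q=0$, every $\mathfrak f_2$ vanishes identically, so these terms are absent.
\end{itemize}

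\textbf{Step 3: the remaining scale terms.} The terms $\mu\rho_*\kap\ln\lam_i/\lam_i^2$ and $(1+\mu)(1-\tau_i)\sum_j\kap_i\kap_j\ln\lam_j/\lam_j^2$ carry the factor $2/\al_i=O(1)$.

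\begin{itemize}
\item For $q=0$ they are $O(\sum_j\ln\lam_j/\lam_j^2)$, which is an allowed error.
\item For $q\neq0$ they are dominated by $\sum_j\lam_j^{-1}$.
\end{itemize}

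\textbf{Step 4: the error terms.} The errors $O(\sum_j|\al_j-1|^2+\|w\|^2)$ and $o(\sum_j\ln\lam_j/\lam_j^2)$ fit directly into $\epsilon_i$.

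The amplitude errors fit as follows:
\begin{itemize}
\item $|\al_i-1|\,\|w\|$ is bounded by $|\al_i-1|^2+\|w\|^2$.
\item $\lam_j^{-(4\al_j-2)}$ appears explicitly in the $q=0$ case. In the $q\neq0$ case it is $O(\lam_j^{-1})$, since $\al_j$ is close to $1$ by \eqref{eq:apha_i_estimate}.
\item $\ln\lam_i/\lam_i^2$ and $(\ln\lam_i)^{-1}\sum_j\ln\lam_j/\lam_j^2$ are dominated in both cases.
\end{itemize}

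\textbf{Main obstacle.} The only delicate point is Step 2. The boundary terms do not cancel, so we must justify that $\lam_i^{4\al_i-3}/M=O(\lam_i^{-1})$ uniformly, without assuming comparable scales. The boundedness of $M_i/M$ from the mass estimate supplies exactly this.
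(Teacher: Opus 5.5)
Your proposal is correct and matches the paper's own proof, which consists of subtracting $2/\alpha_i$ times the scale expansion of Lemma~\ref{lem:lamexpansion} from the amplitude expansion of Lemma~\ref{lem:amplitude-expansion}. Your handling of the boundary term that does not cancel uses $\lambda_i^{4\alpha_i-3}/\int_\Sigma Ve^u\,\mathrm{d}v_g=O(\lambda_i^{-1})$, obtained from the boundedness of $M_i/M$, and this is the same bound the paper applies to such terms elsewhere (e.g.\ in the proof of Proposition~\ref{prop:estimates_rho_k}).
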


	For exact solutions, Lemma~\ref{lem:lamexpansion} and
Corollary~\ref{cor:1}, together with orthogonal coercivity, yield
the following estimates; compare \cite[Proposition~3.7]{AHW2025-1}.
The proof uses the projection in \eqref{projectedbubble}.
The sharper estimate needed when \(q=0\) is proved separately in
Lemma~\ref{lem:interior-isolating-bootstrap}.
\begin{prop}\label{prop:estimates_rho_k}
Let $u_k$ be a sequence of blowing-up solutions of \eqref{maineq}
with $\rho_k\to\rho_*:=4\pi m$, and suppose its blow-up set is
$\xi^*\in\Xi_{p,q}$, where $2p+q=m$. By Propositions
\ref{prop:u-k-in-V-eta} and \ref{prop:unique}, for all sufficiently
large $k$ there is a unique decomposition, up to permutations,
\[
u_k=\sum_i\al _i^kP\delta_{\lam _i^k,\xi_i^k}+w^k,
\quad w^k\in E^{p,q}_{\lam ^k,\xi^k}.
\]
Suppress the sequence index and set
\[
\Lam :=\min_i\lam _i,\quad
\mu:=\frac{\rho-\rho_*}{\rho_*},\quad
M:=\int_\Si  Ve^u\,\d v_g.
\]
Without assuming \ref{item:C1} or \ref{item:C2}, one has
\[
\|w\|+\sum_i|\al _i-1|
+\sum_i|\tau_i-\mu+\mu\tau_i|=O(\Lam ^{-1}),
\quad \lam _i\asymp\Lam ,
\]
and
\[
M\asymp\Lam ^2,\quad
\frac{\lam _i}{\lam _1}
=\sqrt{\frac{\mathcal F_\xi^1(\xi_1)}
{\mathcal F_\xi^i(\xi_i)}}+o(1),\quad
|\nabla\mathbf F_{p,q}^V(\xi)|
=O(\Lam ^{-1}\ln\Lam ).
\]
In particular, $\xi^*$ is a critical point of
$\mathbf F_{p,q}^V$. Moreover,
\[
\mu=
\left\{\begin{aligned}
&\frac\pi4
\frac{\mathcal L_1(\xi^*)
\sqrt{\mathcal F_{\xi^*}^1(\xi_1^*)}\lam _1}{M}
+o(\lam _1^{-1}) &&\text{ if }\, q\neq 0,\\&
\frac1{16}
\frac{\mathcal L_2(\xi^*)\ln\lam _1}{M}
+o(\lam _1^{-2}\ln\lam _1)&&\text{ if }\, q=0.
\end{aligned}	\right.
\]
Consequently $\mu=O(\Lam ^{-1})$ if $q>0$ and
$\mu=O(\Lam ^{-2}\ln\Lam )$ if $q=0$, and in either case
$\mu\ln\Lam \to0$.
If, in addition, \ref{item:C1} holds, then
\[
\sum_i d_g(\xi_i,\xi_i^*)
=O(\Lam ^{-1}\ln\Lam ).
\]
\end{prop}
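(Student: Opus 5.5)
The plan is to test the exact equation $\nabla J_{\rho_k}(u_k)=0$ against each direction whose pairing was expanded above, and to control $w$ through the orthogonal part of the gradient.

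\textbf{Step 1: a priori bounds.} By Propositions~\ref{prop:u-k-in-V-eta} and~\ref{prop:unique}, $u_k\in V^{\eta_0}_{\rho_k}(p,q,\var)$ with $\var\to0$. This gives $\sqrt{\ln\lam_i}\,|\al_i-1|=o(1)$ and $\|w\|=o(1)$.

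\textbf{Step 2: estimate for $w$.} Pair the equation with $w$ itself. The quadratic form $\langle D^2J_\rho(u_0)w,w\rangle$ is coercive on $E^{p,q}_{\lam,\xi}$, which is the standard orthogonal coercivity for sums of separated bubbles. The linear term $\langle\nabla J_\rho(u_0),w\rangle$ is controlled by the projection error of $P\delta$ and by the boundary and second-order expansions of $Ve^{u_0}$. I expect
\[
\|w\|\le C\Big(\sum_i|\al_i-1|+\sum_i\lam_i^{-1}+|\mu|\Big).
\]
When $q=0$ the last term is sharpened in Lemma~\ref{lem:interior-isolating-bootstrap}; here only $O(\Lam^{-1})$ is needed.

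\textbf{Step 3: scales, amplitudes and mass.} Since every pairing vanishes, Lemma~\ref{lem:lamexpansion} gives $|\tau_i-\mu+\mu\tau_i|\le C(\lam_i^{-1}+\sum_j|\al_j-1|^2+\|w\|^2)+o(\sum_j\ln\lam_j/\lam_j^2)$. Here the boundary term is $O(\lam_i^{4\al_i-3}/M)$, and this is where the mass normalization enters. Corollary~\ref{cor:1} then gives $|\al_i-1|\le C(\|w\|^2+\|w\|/\ln\lam_i+\cdots+\lam_j^{-1})$.

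These inequalities couple $\al_i-1$, $\tau_i$ and $\|w\|$ with small coefficients, so closing the loop with Step 2 yields the $O(\Lam^{-1})$ bound. The closing requires knowing $\lam_i\asymp\Lam$. This comes from Corollary~\ref{cor:lam_i_samespeed}, whose hypothesis $\sum|\al_s-1|\ln\lam_s=o(1)$ must first be obtained by a preliminary bootstrap. That bootstrap uses Step 1 together with the fact that $\lam^{4(\al-1)}$ stays bounded, which follows from $\tau_i=o(1)$ and \eqref{def:tau_i}.

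The same corollary yields the ratio $\lam_i/\lam_1=\sqrt{\mathcal F^1_\xi(\xi_1)/\mathcal F^i_\xi(\xi_i)}+o(1)$. Then \eqref{def:tau_i} with $\tau_i\to0$ gives $M\asymp\lam_i^2$.

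\textbf{Step 4: locations.} Lemma~\ref{lem:bubble_expansion3} gives $\lam_i^{-1}|\nabla\ff|\le C(\ln\Lam/\Lam^2)$, that is, $|\nabla\ff(\xi)|=O(\Lam^{-1}\ln\Lam)$. Passing to the limit, $\xi^*$ is critical. Under \ref{item:C1} the Hessian is invertible at $\xi^*$, so $d(\xi,\xi^*)\le C|\nabla\ff(\xi)|$.

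\textbf{Step 5: formula for $\mu$.} Set $\langle\nabla J_\rho,Z_\lam\rangle=0$ in Corollary~\ref{cor:Projbubble_sumexpansion}. This gives $2\mu=\frac{\pi}{2}\sum\lam_i^{4\al_i-3}\mathfrak f_2/M+\frac18\sum\kap\mathfrak f_3\ln\lam_i/M+$ errors. Now replace $\lam_i^{4\al_i-3}$ by $\lam_i^{-1}\cdot\lam_i^2$ and $\mathrm g\to1$, and express $\lam_i=\lam_1\sqrt{\mathcal F^1/\mathcal F^i}$. This turns $\sum\mathfrak f_2(\xi_i)\lam_i$ into $\lam_1\sqrt{\mathcal F^1}\,\mathcal L_1$, using $\mathfrak f_2=-(\pa_\nu\ln V+2k_g)\mathcal F$ at the limit; note $\pa_\nu H$-terms vanish by the Neumann condition. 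When $q>0$ the logarithmic term is of lower order. When $q=0$ one has $\mathfrak f_2=0$, and $\sum\kap\mathfrak f_3\ln\lam_i=\mathcal L_2\ln\lam_1+O(1)$.

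The main obstacle is the error bookkeeping. One must verify that the squared errors $|\al-1|^2+\|w\|^2=O(\Lam^{-2})$ are $o(\lam_1/M)=o(\Lam^{-1})$, which is fine. In the interior case, however, $O(\Lam^{-2})$ is not $o(\Lam^{-2}\ln\Lam)$, and that is precisely why the sharper interior estimate is deferred. Finally, $\mu\ln\Lam\to0$ is immediate from these formulas.
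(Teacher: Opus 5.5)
Your overall scheme matches the paper's: coercivity for $w$, the scale and combined amplitude--scale pairings, the common-scale identity for $\mu$, and the center pairings for the locations. But the order in which you close the estimates does not work.

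\textbf{The comparability bootstrap.} You say the closing needs $\lam_i\asymp\Lam$, to be obtained from Corollary~\ref{cor:lam_i_samespeed} after a preliminary bootstrap giving $\sum_s|\al_s-1|\ln\lam_s=o(1)$. You justify the bootstrap by claiming that $\lam_i^{4(\al_i-1)}$ stays bounded because $\tau_i=o(1)$. That inference is false.
\begin{itemize}
\item By \eqref{def:tau_i}, $\tau_i=o(1)$ only gives $\lam_i^{4\al_i-2}\asymp M$ for each $i$, i.e.\ mutual comparability of the numbers $\lam_i^{4\al_i-2}$. It says nothing about $\lam_i^{4(\al_i-1)}$ unless you already know $M\asymp\lam_i^2$, which is what you are trying to prove.
\item Step~1 gives only $|\al_i-1|\sqrt{\ln\lam_i}=o(1)$. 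Even boundedness of $\lam_i^{4(\al_i-1)}$ would give $|\al_i-1|\ln\lam_i=O(1)$, not $o(1)$.
\item Hence the hypothesis of Corollary~\ref{cor:lam_i_samespeed} is never established. The same hypothesis is needed for Lemma~\ref{lem:sum_tau_i} and Corollary~\ref{cor:Projbubble_sumexpansion}, which you use in Step~5.
\item The $|\mu|$ in your Step~2 bound makes things worse. The closing then yields only $A+W+T=O(\Lam^{-1}+|\mu|)$, and your only control of $\mu$ is the common-scale identity, which needs that same logarithmic hypothesis. As written the argument is circular.
\end{itemize}

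\textbf{What the paper does instead.} No comparability is needed to close.
\begin{itemize}
\item The fiber term $f(w)$ contains no $\mu$: apply Lemma~\ref{lem:A.7} and normalize by $M_0\ge cM_i\ge c\lam_i^{4\al_i-2}$.
\item The boundary term in the scale pairing is $O(\lam_i^{4\al_i-3}/M)=O(\lam_i^{-1})$, by the coarse balance $M=(1+o(1))\sum_iM_i$.
\item All other errors are bounded in terms of $\Lam=\min_i\lam_i$ and $\ell=\ln\Lam$.
\end{itemize}
This gives $W\le C(A+\Lam^{-1})$, $T\le C(A^2+W^2+\Lam^{-1})$ and $A\le C(W^2+W/\ell+(T+A)/\ell+A^2+\Lam^{-1})$, hence $A+W+T=O(\Lam^{-1})$ directly. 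Only afterwards does $\tau_i=o(1)$ give $(4\al_i-2)\ln\lam_i-(4\al_j-2)\ln\lam_j=O(1)$. Taking $\lam_j=\Lam$ yields $\ln\lam_i=O(\ell)$, so $A\max_i\ln\lam_i=O(\ell\Lam^{-1})=o(1)$. Then $\lam_i\asymp\Lam$, $M\asymp\Lam^2$ and the ratio formula follow.

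\textbf{On coercivity.} Coercivity on $\mathcal E^{p,q}_{\lam,\xi}$ is not a consequence of separation alone. It requires the mass fractions $\rho M_i/M_0\to\kap(\xi_i)$. The paper obtains this from quantization for exact solutions; you could also use the scale equations, but you must say so.

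\textbf{The $q=0$ formula for $\mu$.} Your closing remark is wrong and leaves part of the statement unproved. $O(\Lam^{-2})$ is $o(\Lam^{-2}\ln\Lam)$, since $\ln\Lam\to\infty$. So once $A+W=O(\Lam^{-1})$ and $M\asymp\Lam^2$, the quadratic errors in Corollary~\ref{cor:Projbubble_sumexpansion} are negligible in the pure-interior case too. The $q=0$ formula for $\mu$ follows here, exactly like the $q>0$ one.

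Deferring it to Lemma~\ref{lem:interior-isolating-bootstrap} would be circular. That lemma takes $|\mu|\asymp\ell\Lam^{-2}$ and $\sgn\mu=\sgn\cL_2(\xi^*)$ from the present proposition. Its sharper bounds serve only to get $A=O(|\mu|)$ and $\|w\|=O(|\mu|\sqrt\ell)$, i.e.\ membership in $V(\mu,\xi^*)$.

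\textbf{Order in Step~4.} When you multiply \eqref{eq:gradient_pa_xi_i} by $\lam_i$, the terms $|\mu|$ and $\sum_s|\tau_s|$ must already be $O(\Lam^{-1}\ln\Lam)$. So the bound $\mu=O(\Lam^{-1})$ (resp.\ $O(\ell\Lam^{-2})$) from the common-scale identity, and the resulting $\sum_s|\tau_s|=O(\Lam^{-1})$, must come before the location estimate, not after it.
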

\begin{proof}
Consider the decomposition from Proposition~\ref{prop:unique}, with the sequence
index omitted. Set
\[
 \Lam :=\min_i\lam _i,\quad \ell:=\ln\Lam ,\quad
 A:=\sum_i|\al _i-1|,\quad W:=\|w\|,\quad
 T:=\sum_i|\tau_i-\mu+\mu\tau_i|.
\]
Write \(\kap _i:=\kap (\xi_i)\) and
\(P\delta_i:=P\delta_{\lam _i,\xi_i}\). Thus \(A+W=o(1)\) and
\(\Lam \to\infty\). Define the leading local masses by
\[
 M_i:=\frac{\kap _i(\mathcal F_\xi^i\mathrm g_\xi^i)(\xi_i)}
 {8(2\al _i-1)}\lam _i^{4\al _i-2},
\]
and set
\[
 u_0:=\sum_i\al _iP\delta_i,\quad
 M:=\int_\Si  Ve^u\,\d v_g,\quad
 M_0:=\int_\Si  Ve^{u_0}\,\d v_g,\quad
 \d\nu_i:=\vartheta_{\xi_i}
 e^{\delta_{\lam _i,\xi_i}-\varphi_{\xi_i}}\,\d v_g.
\]
We first compute the limiting mass ratios, then prove coercivity. Let $D_i$ be fixed disjoint coordinate neighborhoods of the limiting
centers, small enough that $\vartheta_{\xi_i}=1$ on $D_i$ for all
sufficiently large sequence indices. In the
\(i\)-th rescaled chart,
\(M_i^{-1}Ve^{u_0}\,\d v_g\le CK_i\,\d\nu_i\), where
\(K_i(z):=(1+|z|^2)^{-2(\al _i-1)}.\)
For $|\al _i-1|\le1/8$, the $L^2(\d\nu_i)$ norms of $K_i$ are
uniformly bounded: the full-plane integrable upper bound is a constant
times $(1+|z|^2)^{-3/2}\,\d z$. The same holds on a half-plane.
The weighted estimate \eqref{eq:A-uniform-weighted-Sobolev}, proved in
Appendix~\ref{app:0}, uses only
$\int_\Si  z\,\d\nu_i=0$ for $z\in\mathcal E_{\lam ,\xi}^{p,q}$
and the Dirichlet energy, independently of coercivity of the quadratic
form $Q$ defined in Proposition~\ref{prop:J-u-expansion} in
Subsection~\ref{subsec:finite-dimensional-reduction}. Weighted H\"older and $|e^w-1|\le |w|e^{|w|}$ therefore yield
\[
 \int_{D_i}Ve^{u_0}|e^w-1|\,\d v_g\le CM_iW.
\]
Direct integration of the local profile, by dominated convergence
uniformly as $\al _i\to1$, yields
\[
 \int_{D_i}Ve^{u_0}\,\d v_g=(1+o(1))M_i,\quad
 \int_{D_i}Ve^u\,\d v_g=(1+o(1))M_i.
\]
The integrals on $\Si\setminus\bigcup_iD_i$ are $O(1)$ by \eqref{eq:MT-Jensen-explicit}. Since $\sum_iM_i\to\infty$, we obtain
\begin{equation}\label{eq:coarse-total-mass-balance}
 M=(1+o(1))\sum_iM_i,\quad
 M_0=(1+o(1))\sum_iM_i.
\end{equation}
Quantization for the given exact solutions, already used in
\eqref{eq:bridge-basic-masses}, states
$\rho M^{-1}\int_{D_i}Ve^u\,\d v_g\to\kap _i$. Consequently, by \eqref{eq:coarse-total-mass-balance},
\begin{equation}\label{eq:coarse-quantized-mass-weights}
 \frac{\rho M_i}{M_0}\to \kap _i.
\end{equation}
Equation~\eqref{eq:coarse-quantized-mass-weights} holds without scale
comparability or the condition $A\max_i\ln\lam _i=o(1)$.

The coercivity estimate follows from the mass ratios in \eqref{eq:coarse-quantized-mass-weights}. We have $K_i\to1$ in $L^2(\d\nu_i)$ by the same dominated
convergence, and the smooth local coefficients converge to their center values
in the same weighted norm. Thus
\eqref{eq:coarse-quantized-mass-weights}, weighted H\"older, and
\eqref{eq:A-uniform-weighted-Sobolev} imply, uniformly for
$z\in\mathcal E_{\lam ,\xi}^{p,q}$,
\[
 \frac\rho{M_0}\int_\Si  Ve^{u_0}z^2\,\d v_g
 =\sum_i\int_\Si  z^2\,\d\nu_i+o(\|z\|^2),\quad
 \frac1{M_0}\int_\Si  Ve^{u_0}z\,\d v_g=o(\|z\|).
\]
Weighted H\"older estimates apply to the integrals against $\nu_i$
outside the neighborhoods $D_i$. On $\Si \setminus\bigcup_iD_i$,
\(M_0^{-1}Ve^{u_0}\le CM_0^{-1}\).
Therefore $Q(z,z)=Q_0(z,z)+o(\|z\|^2)$, where
\[Q_0(z,z):=\|z\|^2-\sum_i\int_\Si  z^2\,\d\nu_i.\] The spectral estimate for $Q_0$ in the proof of Lemma~\ref{lem:terminal-reduction}
(Appendix~\ref{app:0}, Step~1) therefore proves uniform coercivity. The
limit \eqref{eq:coarse-quantized-mass-weights}, rather than two-sided
comparability of the mass fractions, is used here.

With $f$ as defined in Proposition~\ref{prop:J-u-expansion}, the Euler
equation in the orthogonal direction implies
\(cW^2\le |f(w)|+CW^3.\)
The cubic error follows from Taylor expansion of the normalized exponential pairing: combine the uniform $L^2(\d\nu_i)$ bound for $K_i$ with
\eqref{eq:A-uniform-weighted-Sobolev} and H\"older's inequality. Equivalently, differentiate
\(M_0^{-1}\int_\Si  e^{sw}Ve^{u_0}\,\d v_g\) for \(s\in [0,1]\); the third
derivative of its logarithm is an integral of a centered cubic power, with absolute value at most \(CW^3\).
No scale comparability or prior smallness of $A\ln\lam _i$ is used.
For large sequence indices, $CW\le c/2$. Moving $CW^3$ to the left
and applying Lemma~\ref{lem:A.7} yields
\begin{equation}\label{eq:coarse-fiber-direct}
 W\le C(A+\Lam ^{-1}).
\end{equation}
Here $\sum_i\ln^{3/2}\lam _i/\lam _i^2=O(\Lam ^{-1})$,
and the remaining region contributes at most $C\Lam ^{-3/2}$
after taking, for example, $\al _i\ge7/8$.

The scale equations in Lemma~\ref{lem:lamexpansion} imply
\(T\le C(A^2+W^2+\Lam ^{-1}).\)
Indeed, $M_i/M$ is uniformly bounded, so every boundary-linear
term is $O(\lam _i^{-1})$; also
$\sum_i\ln\lam _i/\lam _i^2=O(\ell\Lam ^{-2})$.
Corollary~\ref{cor:1}, in either case $q>0$ or $q=0$, yields
\[
 A\le C\Big(W^2+\frac W\ell+\frac{T+A}{\ell}
                 +A^2+\Lam ^{-1}\Big).
\]
For $q=0$ we have used
$\sum_i\lam _i^{-(4\al _i-2)}=O(\Lam ^{-3/2})$.
Substitute \eqref{eq:coarse-fiber-direct} and the estimate for $T$.
Since $A=o(1)$ and $\ell\to\infty$, the coefficients of $A$ on the right tend to zero. Moving these terms to the left yields
\begin{equation}\label{eq:coarse-parameter-smallness}
 A+W+T=O(\Lam ^{-1}).
\end{equation}

The definition \eqref{def:tau_i} and
\eqref{eq:coarse-parameter-smallness} imply
$\tau_i=o(1)$ and
\begin{equation}\label{eq:coarse-scale-ratio-log}
 (4\al _i-2)\ln\lam _i
 -(4\al _j-2)\ln\lam _j=O(1).
\end{equation}
Choose \(j\) with \(\lam _j=\Lam \). Equation~\eqref{eq:coarse-scale-ratio-log}
implies \(\ln\lam _i=O(\ell)\), and hence
\(A\max_i\ln\lam _i=O(\ell\Lam ^{-1})=o(1)\).
Equation~\eqref{eq:coarse-scale-ratio-log} now implies
\(\lam _i\asymp\Lam \). Consequently,
\[
 M:=\int_\Si  Ve^u\,\d v_g\asymp\Lam ^2,
 \quad
 \frac{\lam _i}{\lam _1}
 =\sqrt{\frac{\mathcal F_\xi^1(\xi_1)}{
                  \mathcal F_\xi^i(\xi_i)}}+o(1).
\]

We can now apply the common-scale identity in
Corollary~\ref{cor:Projbubble_sumexpansion}. For an
exact solution, it yields $\mu=O(\Lam ^{-1})$ (and
$\mu=O(\ell\Lam ^{-2})$ when $q=0$).
Consequently $\sum_i|\tau_i|=O(\Lam ^{-1})$.
Multiplying \eqref{eq:gradient_pa_xi_i} in Lemma~\ref{lem:bubble_expansion3}
by $\lam _i$ yields
\(|\nabla\ff(\xi)|=O(\ell\Lam ^{-1}).\)
Thus $\xi^*$ is a critical point. The preceding mass, scale, and gradient
estimates hold independently of \ref{item:C1}--\ref{item:C2}.
For the quantitative location estimate, \ref{item:C1} and the inverse
function theorem for $\nabla\ff$ near $\xi^*$ imply
\(\sum_i d_g(\xi_i,\xi_i^*)=O(\ell\Lam ^{-1}).\)
The quantitative location estimate in the proposition follows.

The parameter asymptotics follow from Corollary~\ref{cor:Projbubble_sumexpansion}, with the direction defined in \eqref{eq:Z-lam}. Neither the quantitative location estimate nor \ref{item:C1}--\ref{item:C2} is needed here. If $q>0$, all quadratic errors are $O(\Lam ^{-2})=o(\Lam ^{-1})$, and
\[
 \sum_i\mathfrak f_2(\xi_i)\lam _i^{4\al _i-3}
 =\mathcal L_1(\xi^*)
   \sqrt{\mathcal F_{\xi^*}^1(\xi_1^*)}\lam _1+o(\Lam ).
\]
Here the Neumann conditions imply
$\pa _{\nu_g}\ln(\mathcal F_\xi^i\mathrm g_\xi^i)
=\pa _{\nu_g}\ln V$ at a boundary center. Hence
\[
 \mu=\frac\pi4\,
 \frac{\mathcal L_1(\xi^*)
       \sqrt{\mathcal F_{\xi^*}^1(\xi_1^*)}\lam _1}{M}
       +o\Big(\frac{1}{\lam _1}\Big).
\]
If $q=0$, the boundary-linear term vanishes. Moreover,
\[
 \sum_i\kap _i\mathfrak f_3(\xi_i)\ln\lam _i
 =\mathcal L_2(\xi^*)\ln\lam _1+o(\ell),
\]
and \eqref{eq:coarse-parameter-smallness} yields
$A^2+W^2=O(\Lam ^{-2})=o(\ell\Lam ^{-2})$. Substituting the
$\mathfrak f_3$ expansion and the quadratic error estimate into
Corollary~\ref{cor:Projbubble_sumexpansion} yields
\[
 \mu=\frac1{16}\,
 \frac{\mathcal L_2(\xi^*)\ln\lam _1}{M}
       +o\Big(\frac{\ln\lam _1}{\lam _1^{2}}\Big).
\]
The parameter estimates have thus been proved for the projection defined in \eqref{projectedbubble}.
\end{proof}

\begin{rem}\label{rem:fixed-resonance-compactness}\upshape
Proposition~\ref{prop:estimates_rho_k} shows that \ref{item:C2}, without
\ref{item:C1}, for all types at
$\rho_*=4\pi m$, implies compactness of the exact solution set at
$\rho=\rho_*$. Indeed, an unbounded sequence has a
limiting reduced critical configuration $\xi^*$ and
$M\asymp\lam _1^2$. Since $\mu=0$, multiplying the parameter
asymptotic by $\lam _1$ when $q>0$, or by
$\lam _1^2/\ln\lam _1$ when $q=0$, yields
$\mathcal L_1(\xi^*)=0$ or $\mathcal L_2(\xi^*)=0$, respectively:
the ratio $\lam _1^2/M$ is bounded above and below by positive
constants, contradicting \ref{item:C2}. For $\|u\|\le R$ and $1<p<\infty$, \eqref{eq:MT-Jensen-explicit} implies
\[
 \Big\|\frac{Ve^u}{\int_\Si Ve^u\,\d v_g}\Big\|_{L^p}
 \le\frac{\max_\Si V}{(\min_\Si V)|\Si|_g}
 C_{\Si,g}^{1/p}e^{pR^2/(8\pi)}.
\]
The mean-zero Neumann estimate first yields a uniform $W^{2,p}$
estimate. Schauder regularity then proves compactness in
$C^{2,\gamma}(\Si)$ for every $\gamma\in(0,1)$.

The compactness argument applies to exact solutions at the fixed
parameter. A Palais--Smale condition and the count of blow-up solutions
associated with degenerate reduced critical points do not follow.
\end{rem}

	\subsection{Finite-dimensional reduction}\label{subsec:finite-dimensional-reduction}
	We now consider the orthogonal remainder \(w\). The first step is to
	expand \(J_\rho\) in this variable and establish uniform coercivity of
	the corresponding quadratic form; compare
\cite[Propositions~3.5--3.6]{AHW2025-1}.

	\begin{prop}\label{prop:J-u-expansion}
		Let $u=u_0+w \in V^\eta_{\rho}(p,q,\var)$ with  $\rho >0$, where $u_0=\sum_{i} \al_i P \delta_{\lam_i,\xi_i}$ and  $w\in E_{\lam,\xi}^{p,q}$. Then
		\begin{equation*}
			J_{\rho }(u)=J_{\rho }(u_0)+f(w)+\frac{1}{2}Q (w,w)+O(\var\|w\|^2),
		\end{equation*}
		where  the linear term  $f(w)$ is given by
		\[
		f(w):=-\rho  \frac{\int_{\Si } V e^{u_0} w \,\d v_g}{\int_{\Si } V e^{u_0}\,\d v_g}
		\]
		and the bilinear form $Q(\cdot, \cdot)$  is defined as
		\[
		Q (w, w'):=\la w, w'\ra -\rho  \frac{\int_{\Si } V e^{u_0} w w' \,\d v_g}{\int_{\Si } V e^{u_0} \,\d v_g}+\rho \Big(\frac{\int_{\Si } V e^{u_0} w  \,\d v_g }{\int_{\Si } V e^{u_0} \,\d v_g }\Big)\Big(\frac{\int_{\Si } V e^{u_0} w' \,\d v_g }{\int_{\Si } V e^{u_0} \,\d v_g }\Big)
		\]
		for any $w,w'\in \mathcal E_{\lam,\xi}^{p,q}$.
	\end{prop}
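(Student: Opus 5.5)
The plan is a direct Taylor expansion of $J_\rho$ in the variable $w$ around $u_0$. The quadratic part $\frac12\|u_0+w\|^2$ splits exactly as $\frac12\|u_0\|^2+\langle u_0,w\rangle+\frac12\|w\|^2$. The cross term vanishes: each $P\delta_{\lam_i,\xi_i}$ is one of the orthogonality directions in \eqref{orthogonality}, so $\langle u_0,w\rangle=\sum_i\al_i\langle P\delta_{\lam_i,\xi_i},w\rangle=0$. Everything nontrivial is therefore in the logarithmic term. With $M_0:=\int_\Si Ve^{u_0}\,\d v_g$ and the probability measure $\d m:=M_0^{-1}Ve^{u_0}\,\d v_g$, write
\[
\ln\int_\Si Ve^{u_0+w}\,\d v_g=\ln M_0+\Phi(1),\qquad \Phi(s):=\ln\int_\Si e^{sw}\,\d m .
\]

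I would then apply Taylor's formula to $\Phi$ on $[0,1]$. The derivatives at zero are $\Phi(0)=0$, $\Phi'(0)=\int w\,\d m$, and $\Phi''(0)=\int w^2\,\d m-(\int w\,\d m)^2$. Multiplying by $-\rho$, the linear and quadratic terms give exactly $f(w)$ and the nonlocal part of $\frac12 Q(w,w)$. The remainder is $\frac16\Phi'''(s)$ for some $s\in(0,1)$. Here $\Phi'''(s)$ is the third centered moment of $w$ with respect to the tilted probability measure $\d m_s:=e^{sw}\,\d m/\int e^{sw}\,\d m$, so
\[
|\Phi'''(s)|\le C\Big(\int|w|^3\,\d m_s+\int|w|\,\d m_s\int w^2\,\d m_s+\Big(\int|w|\,\d m_s\Big)^3\Big).
\]

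The main obstacle is showing that these moments are bounded by $C\|w\|^3$ uniformly in the scales. I would do this region by region, as in the proof of Proposition~\ref{prop:estimates_rho_k}. Fix neighborhoods $D_i$ of the centers on which $\vartheta_{\xi_i}=1$.
\begin{itemize}
\item On each $D_i$, the local profile gives $M_0^{-1}Ve^{u_0}\le C(M_i/M_0)K_i\,\d\nu_i$ with $K_i$ bounded in $L^2(\d\nu_i)$. By \eqref{eq:coarse-total-mass-balance} the ratio $M_i/M_0$ is bounded. Weighted H\"older together with \eqref{eq:A-uniform-weighted-Sobolev}, with $r=6$ and $b=2$, bounds $\int_{D_i}|w|^3e^{s|w|}\,\d m$ by $C\|w\|^3$.
\item Away from $\bigcup_iD_i$, the density $M_0^{-1}Ve^{u_0}$ is $O(M_0^{-1})=O(1)$. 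The bound there follows from \eqref{eq:MT-Jensen-explicit} and Sobolev embedding.
\item The normalizer is controlled from below: $\int e^{sw}\,\d m\ge e^{s\int w\,\d m}\ge e^{-C\|w\|}$ by Jensen's inequality, and it is bounded above in the same way.
\end{itemize}
Together these give $|\Phi'''(s)|\le C\|w\|^3$.

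Since $\|w\|<\var$ on $V^\eta_\rho(p,q,\var)$, the cubic remainder is $\rho\,O(\|w\|^3)=O(\var\|w\|^2)$. Collecting the exact quadratic part of $\frac12\|u\|^2$, the vanishing cross term, and the Taylor terms of the logarithm gives $J_\rho(u)=J_\rho(u_0)+f(w)+\frac12Q(w,w)+O(\var\|w\|^2)$. The constants depend only on the uniform bounds on $V$ and on the local profiles, as required.
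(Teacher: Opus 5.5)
Your proposal is correct and is essentially the paper's argument. The paper defers this proposition to \cite[Propositions~3.5--3.6]{AHW2025-1}, but in the proofs of Proposition~\ref{prop:estimates_rho_k} and Corollary~\ref{cor:mini_w} it uses exactly your route: the cross term vanishes by orthogonality, one Taylor-expands $s\mapsto\ln\int_\Si e^{sw}Ve^{u_0}\,\d v_g$, and the third derivative (a centered cubic moment for the tilted measure) is controlled via \eqref{eq:A-uniform-weighted-Sobolev}, weighted H\"older, and Jensen's lower bound on the normalizer. Your region-by-region bound using $K_i\in L^2(\d\nu_i)$ is simply a more explicit version of the paper's one-line claim $\int_\Si|z|^r\,\d\mu_0\le C_r\|z\|^r$.
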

	The quadratic form is uniformly positive on $\mathcal E_{\lam,\xi}^{p,q}$.
	\begin{prop}\label{prop:Q_0positive}
		Under the same assumptions as in Proposition \ref{prop:J-u-expansion},  there exists  a constant $c_0>0$ such that
		\[
		Q (w, w)= Q _0(w,w) +o(\|w\|^2)\geq c_0 \|w\|^2, \quad \forall\, w \in \mathcal E_{\lam,\xi}^{p,q},
		\]
		where the bilinear form $Q _0(\cdot,\cdot)$ is given by
		\[
		Q _0(w,w'):= \la w, w'\ra  - \sum_{i} \int_{\Si } \vartheta_{\xi_i} e^{\delta_{\lam_i,\xi_i}-\varphi_{\xi_i}} ww' \,\d v_g,\quad \forall\, w,w' \in \mathcal E_{\lam,\xi}^{p,q}.
		\]
		In particular, $Q _0(\cdot,\cdot)$ is symmetric and positive definite.
	\end{prop}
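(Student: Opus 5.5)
We must prove Proposition~\ref{prop:Q_0positive}: $Q=Q_0+o(\|w\|^2)$ and $Q_0$ is uniformly coercive on $\mathcal E_{\lam,\xi}^{p,q}$. The argument has two parts, and the spectral limit in the second part is the real content.

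\emph{Step 1: replacing $Q$ by $Q_0$.} We compare the weight $\rho\,Ve^{u_0}/\int_\Si Ve^{u_0}$ with $\sum_i\d\nu_i$, exactly as in the coercivity part of the proof of Proposition~\ref{prop:estimates_rho_k}. On fixed disjoint neighborhoods $D_i$ of the centers, the local factorization of Lemma~\ref{lem:Ve-u-integral} gives
\[
\frac{Ve^{u_0}\,\d v_g}{M_i}=K_i\,\frac{(\cF_\xi^i\rg_\xi^i)(x)}{(\cF_\xi^i\rg_\xi^i)(\xi_i)}\,\d\nu_i ,
\]
with $K_i\to1$ in $L^2(\d\nu_i)$ and the smooth ratio equal to $1+O(|y_{\xi_i}|)$. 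In $V_\rho^\eta(p,q,\var)$ we have $\tau_i=o(1)$ and $\mu=o(1)$. Hence $\rho M_i/M_0\to\kap_i\cdot(\rho/\rho_*)\cdot\rho_*/\kap_i$, and more precisely $\rho M_i/M_0=1+o(1)$ relative to the normalization of $\d\nu_i$, whose mass is $8\pi$ or $4\pi$. The needed ratio identities are already contained in \eqref{def:tau_i} and the mass balance \eqref{eq:coarse-total-mass-balance}.

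Weighted H\"older together with \eqref{eq:A-uniform-weighted-Sobolev} then gives
\[
\frac{\rho}{M_0}\int_\Si Ve^{u_0}ww'\,\d v_g=\sum_i\int_\Si ww'\,\d\nu_i+o(\|w\|\|w'\|).
\]
Outside $\bigcup D_i$ the weight is $O(M_0^{-1})=o(1)$. The rank-one term is $o(\|w\|^2)$ because $\int_\Si w\,\d\nu_i=0$, which follows from orthogonality to $P\delta_{\lam_i,\xi_i}$ via \eqref{projectedbubble}. The same weighted H\"older argument gives $M_0^{-1}\int_\Si Ve^{u_0}w\,\d v_g=o(\|w\|)$. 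This proves $Q=Q_0+o(\|w\|^2)$.

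\emph{Step 2: coercivity of $Q_0$ by contradiction and blow-up.} Suppose there are $w_k\in\mathcal E^{p,q}_{\lam^k,\xi^k}$ with $\|w_k\|=1$, $\lam^k\to\infty$ and $Q_0(w_k,w_k)\le o(1)$.

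Rescale around each center, $\tilde w_k^i(z)=w_k(y_{\xi_i}^{-1}(z/\lam_i))$. These are bounded in $D^{1,2}$ of the plane or half-plane, so up to a subsequence $\tilde w_k^i\rightharpoonup W_i$. Pass to the limit in the orthogonality conditions \eqref{orthogonality}, rewritten after integration by parts against $e^{\tilde\delta}$-weights. The limit $W_i$ is then orthogonal, in $L^2(e^{U}\,\d z)$ with $e^U=8(1+|z|^2)^{-2}$, to $1$, to $\frac{1-|z|^2}{1+|z|^2}$, and to $\frac{z_j}{1+|z|^2}$; only the tangential $j$ is used in the boundary case, where Neumann evenness removes $z_2$.

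Via stereographic projection, $\int|\nabla W|^2\ge\int e^UW^2$ restricted to this orthogonal complement has the improved constant $3$, coming from the eigenvalues $\ell(\ell+1)/2$ on $S^2$. For the half-plane we use even reflection, and the eigenvalues are the same.

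By compactness of the weights, $\int w_k^2\,\d\nu_i\to\int e^UW_i^2$. The limiting inequality $1\le\sum_i\int e^UW_i^2+o(1)$ contradicts $\sum_i\int|\nabla W_i|^2\le1$ combined with the constant-$3$ bound, unless all $W_i=0$. If all $W_i=0$, then $\sum_i\int w_k^2\,\d\nu_i\to0$, and so $Q_0(w_k,w_k)\to1$, again a contradiction.

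\emph{Main obstacle.} The delicate point is the weak limit of the weighted $L^2$ terms near infinity in rescaled variables: we need $\int w_k^2\,\d\nu_i\to\int e^UW_i^2$ with no mass lost. The approach is to use the decay $(1+|z|^2)^{-2}$ with a Hardy/Sobolev-type weighted bound, the same one behind \eqref{eq:A-uniform-weighted-Sobolev}, to obtain uniform tightness. The cross-interaction of distinct bubbles is negligible by the separation $\eta$. Positivity then follows with $c_0=\frac12\min(1,1-\frac13)$ for $\var$ small.
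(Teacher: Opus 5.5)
Your proposal is correct and follows essentially the same route as the paper. The paper likewise replaces $Q$ by $Q_0$ using the mass ratios, the weighted estimate \eqref{eq:A-uniform-weighted-Sobolev} and $\int_\Si w\,\d\nu_i=0$, and then proves coercivity of $Q_0$ by the contradiction argument of Step~1 in the proof of Lemma~\ref{lem:terminal-reduction}: rescaled limits orthogonal to the $\ell\le1$ spherical harmonics, next eigenvalue $6$ on $\mathbb{S}^2$ giving your constant $3$, even reflection at boundary centers, and no weighted mass lost at the cap thanks to a uniform $L^r(\nu_i)$ bound with $r>2$.

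There are only two bookkeeping points. First, the correct limit is $\rho M_i/\int_\Si Ve^{u_0}\,\d v_g=\kap(\xi_i)+o(1)$, and this cancels the factor $(2\al_i-1)/\kap(\xi_i)$ that is missing from your displayed local factorization. Second, an explicit $c_0$ (any $c<2/3$) requires running the contradiction with $Q_0(w_k,w_k)\le c\|w_k\|^2$ for that fixed $c$, rather than with $Q_0(w_k,w_k)\le o(1)$.
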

\begin{proof}
With $M_i$ as in the proof of Lemma~\ref{lem:sum_tau_i}, the leading
mass expansion and the scale equations imply
$\rho M_i/\int_\Si Ve^{u_0}\,\d v_g=\kap(\xi_i)+o(1)$.
The weighted estimate \eqref{eq:A-uniform-weighted-Sobolev} and
amplitude orthogonality therefore imply, uniformly for
$z\in\mathcal E_{\lam,\xi}^{p,q}$,
\[
 \frac{\rho\int_\Si Ve^{u_0}z^2\,\d v_g}
      {\int_\Si Ve^{u_0}\,\d v_g}
 =\sum_i\int_\Si z^2\,\d\nu_i+o(\|z\|^2),\quad
 \frac{\int_\Si Ve^{u_0}z\,\d v_g}
      {\int_\Si Ve^{u_0}\,\d v_g}=o(\|z\|).
\]
The estimate for $Q_0$ in Step~1 of the proof of Lemma~\ref{lem:terminal-reduction}
proves the result. That spectral argument does not require comparable
scales or the solution of the finite-dimensional equations.
\end{proof}

	As a consequence, for fixed bubbling parameters the functional can be
	uniquely minimized with respect to the orthogonal remainder.
	\begin{cor}\label{cor:mini_w}
		Let
		$
		\theta=(\al ,\xi,\lam )\in\widetilde B_{2\var}^{\eta},
		\,
		u_0(\theta):=
		\sum_i\al _iP\delta_{\lam _i,\xi_i},
		$
		and assume that \(u_0(\theta)\in V_\rho^\eta(p,q,\var)\).
		For \(\var>0\) sufficiently small, the restricted functional
		\[
		w\mapsto J_\rho(u_0(\theta)+w),
		\quad
		w\in E_{\lam ,\xi}^{p,q},
		\]
		has a unique critical point \(\bar w=\bar w(\theta)\) in a fixed
		neighborhood of the origin, and it is the unique minimizer in that
		neighborhood. Moreover, the map
		$
		\theta\mapsto\bar w(\theta)
		$
		is of class \(C^1\), and
		\begin{equation}\label{eq:barw_error}
			\|\bar w\|\le C\mathscr E,
		\end{equation}
		where
		\begin{equation}
			\label{def:scr-E} \mathscr E:=
			\sum_i\Big(
			|\al _i-1|
			+
			\frac{
				|\nabla_x\mathcal F_\xi^i(\xi_i)|
				+
				|\mathfrak f_2(\xi_i)|
			}{\lam _i}
			+
			\frac{\ln ^{3/2}\lam _i}{\lam _i^2}
			\Big).
		\end{equation}
		Here \(\mathcal F_\xi^i\) and \(\mathfrak f_2\) are defined in
		\eqref{eq:def_F} and \eqref{def:frak_f2}, respectively.
	\end{cor}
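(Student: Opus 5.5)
The plan is the standard Lyapunov--Schmidt argument on the closed subspace $\mathcal E_{\lam,\xi}^{p,q}$, using the expansion of Proposition~\ref{prop:J-u-expansion} and the coercivity of Proposition~\ref{prop:Q_0positive}. For fixed $\theta\in\widetilde B_{2\var}^{\eta}$, let $\Pi_\theta$ denote the orthogonal projection of $\oH$ onto $\mathcal E_{\lam,\xi}^{p,q}$. We look for zeros of
\[
\Phi(\theta,w):=\Pi_\theta\nabla J_\rho(u_0(\theta)+w),\qquad w\in\mathcal E_{\lam,\xi}^{p,q}.
\]
The second derivative of $w\mapsto J_\rho(u_0+w)$ at $w$ equals $Q$ plus a perturbation of norm $O(\var+\|w\|)$. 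This follows by differentiating the normalized exponential pairing twice and bounding the error by the weighted estimate \eqref{eq:A-uniform-weighted-Sobolev} and H\"older, exactly as in the cubic error in the proof of Proposition~\ref{prop:estimates_rho_k}. Together with Proposition~\ref{prop:Q_0positive}, this shows that the restricted functional is uniformly strictly convex, with constant $c_0/2$, on a fixed ball $\{\|w\|\le r_0\}$ once $\var$ is small.

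\textbf{Existence and uniqueness.} Write $\Phi(\theta,w)=\Pi_\theta\nabla J_\rho(u_0)+L_\theta w+N_\theta(w)$, where $L_\theta$ is the operator associated with $Q$ and $\|N_\theta(w)\|\le C(\var\|w\|+\|w\|^2)$. Solving $w=-L_\theta^{-1}\bigl(\Pi_\theta\nabla J_\rho(u_0)+N_\theta(w)\bigr)$ by the contraction principle on a ball of radius $2c_0^{-1}\|\Pi_\theta\nabla J_\rho(u_0)\|$ gives a fixed point $\bar w$. Strict convexity on the fixed ball then shows that $\bar w$ is the only critical point there and that it is the minimizer.

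\textbf{Regularity in $\theta$.} The subspace $\mathcal E_{\lam,\xi}^{p,q}$ moves with $\theta$. To handle this, I will transport it to a fixed reference space $\mathcal E_{\theta_0}$ through the $C^1$ family of isomorphisms $\Pi_\theta|_{\mathcal E_{\theta_0}}$. This family is $C^1$ because the spanning directions $P\delta$, $\pa_\lam P\delta$, $\pa_\xi P\delta$ depend $C^1$ on $\theta$ and have a uniformly invertible normalized Gram matrix, by Lemma~\ref{lem:Gram}. The implicit function theorem applied to the transported equation, whose linearization in $w$ is invertible by coercivity, then yields $\theta\mapsto\bar w(\theta)\in C^1$.

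\textbf{The estimate \eqref{eq:barw_error}.} Since $\|\bar w\|\le C\|\Pi_\theta\nabla J_\rho(u_0)\|=C\sup_{\|z\|=1,\,z\in\mathcal E}|f(z)|$, this is the main obstacle: we need $|f(z)|\le C\mathscr E\|z\|$ for $z\in\mathcal E_{\lam,\xi}^{p,q}$. We have $\langle u_0,z\rangle=0$ because $z$ is orthogonal to every $P\delta_i$. In $f(z)$, on each chart region I will write $Ve^{u_0}/\int Ve^{u_0}$ as $\rho^{-1}\kap_i(1+o(1))$ times $\d\nu_i$ times the factor $\mathcal F^i_\xi\rg^i_\xi(x)/\mathcal F^i_\xi\rg^i_\xi(\xi_i)\cdot K_i$, using Lemma~\ref{lem:Ve-u-integral}. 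Then:
\begin{itemize}
\item The constant part vanishes by the orthogonality $\int z\,\d\nu_i=0$.
\item The deviation $K_i-1$ contributes $O(|\al_i-1|)$.
\item The first-order Taylor term of the local coefficient contributes $(|\nabla\mathcal F^i_\xi(\xi_i)|+|\mathfrak f_2(\xi_i)|)/\lam_i$. On the boundary, the normal derivative and the conformal factor gradient $(0,-2k_g)$ produce precisely $\mathfrak f_2$. The tangential part is also $O(\lam_i^{-1})$ and is not cancelled, since translations orthogonality only removes the pairing against $\pa_\xi P\delta$.
\item Second-order terms and the projection corrections from Lemma~\ref{lem:A.7} give $\ln^{3/2}\lam_i/\lam_i^2$; the extra $\ln^{1/2}$ comes from the $L^2(\d\nu_i)$ norm of $\ln|y|$-type corrections.
\item The region outside the charts gives $O(\lam_i^{-2})$.
\end{itemize}
Each of these terms is bounded via weighted H\"older and \eqref{eq:A-uniform-weighted-Sobolev}, which completes the bound $|f(z)|\le C\mathscr E\|z\|$.
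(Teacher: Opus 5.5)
Your proposal follows the paper's proof: the same Lyapunov--Schmidt scheme (coercivity of $Q$ from Proposition~\ref{prop:Q_0positive}, the centered-moment bound on the nonlinear remainder via the weighted exponential estimate, contraction plus strict convexity on a fixed fiber ball, and the implicit function theorem after transporting the fibers to a fixed reference space by $\Pi_\theta$), with the key input $|f(z)|\le C\mathscr E\|z\|$, which the paper reads off directly from Lemma~\ref{lem:A.7} after dividing by $\int_\Si Ve^{u_0}\,\d v_g$. Your local derivation of that bound is a sketch of Lemma~\ref{lem:A.7} itself; if you carry it out yourself, note that the term $\ln^{3/2}\lam_i/\lam_i^2$ comes from the second-order Taylor remainder $O(|y|^2)$ of the local coefficient, and getting it requires H\"older with the conjugate exponents $1+1/\ln\lam_i$ and $1+\ln\lam_i$ together with $\|w\|_{L^r(\nu_i)}\le C\sqrt r\,\|w\|$ from Step~1 of the proof of Lemma~\ref{lem:terminal-reduction}, since a fixed exponent only gives $\lam_i^{-2+\epsilon}$.
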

	\begin{proof}
Write $E_\theta=\mathcal E_{\lam,\xi}^{p,q}$. Proposition~\ref{prop:J-u-expansion}
and Lemma~\ref{lem:A.7} imply
\begin{equation}\label{eq:fiber-expansion-for-wbar}
 J_\rho(u_0+w)-J_\rho(u_0)=f(w)+\tfrac12Q(w,w)+R(w),
\end{equation}
with
\begin{equation}\label{eq:f-estimate-wbar}
 |f(z)|\le C\mathscr E\|z\|,\quad z\in E_\theta.
\end{equation}
Indeed, for $\Lam=\min_i\lam_i$ the additional normalized terms
in Lemma~\ref{lem:A.7} are at most
$C\Lam^{-1}\sum_j|\al_j-1|+C\Lam^{-1}\sum_j\lam_j^{-2}$,
which are included in \eqref{def:scr-E}. No scale comparison is needed.
Proposition~\ref{prop:Q_0positive} states
\begin{equation}\label{eq:Q-coercivity-wbar}
 Q(z,z)\ge c_0\|z\|^2,\quad z\in E_\theta.
\end{equation}

For $\|w\|\le\epsilon_0$, set
$\d\mu_w=Ve^{u_0+w}\,\d v_g/\int_\Si Ve^{u_0+w}\,\d v_g$.
The weighted estimate \eqref{eq:A-uniform-weighted-Sobolev} implies
$\int_\Si|z|^r\,\d\mu_0\le C_r\|z\|^r$. Jensen's inequality yields
\[
 \frac{\int_\Si Ve^{u_0+w}\,\d v_g}{\int_\Si Ve^{u_0}\,\d v_g}
 =\int_\Si e^w\,\d\mu_0
 \ge\exp\Big(\int_\Si w\,\d\mu_0\Big)
 \ge e^{-C\|w\|}.
\]
The weighted H\"older inequality therefore yields the same integral estimates for $\mu_w$.
To estimate the remainder in \eqref{eq:fiber-expansion-for-wbar}, the third derivative of the logarithmic integral is
\[
 D^3\ln\Big(\int_\Si Ve^{u_0+w}\,\d v_g\Big)[z_1,z_2,z_3]
 =\int_\Si\prod_{j=1}^3\Big(z_j-\int_\Si z_j\,\d\mu_w\Big)\,\d\mu_w,
\]
whose absolute value is at most $C\prod_j\|z_j\|$. Taylor's formula implies
\begin{equation}\label{eq:R-derivative-estimates-wbar}
 |DR(w)[z]|\le C\|w\|^2\|z\|,\quad
 |D^2R(w)[z,z']|\le C\|w\|\|z\|\|z'\|.
\end{equation}
Combining \eqref{eq:Q-coercivity-wbar} and
\eqref{eq:R-derivative-estimates-wbar}, for small $\epsilon_0$ we obtain
\begin{equation}\label{eq:uniform-Hessian}
 D_w^2J_\rho(u_0+w)[z,z]\ge\tfrac12c_0\|z\|^2.
\end{equation}

The projected Euler equation
\begin{equation*}
 DJ_\rho(u_0+\bar w)[z]=0\quad(z\in E_\theta)
\end{equation*}
is equivalently
\begin{equation}\label{eq:projected-Euler}
 f(z)+Q(\bar w,z)+DR(\bar w)[z]=0\quad(z\in E_\theta).
\end{equation}
The inverse of $Q$ and \eqref{eq:R-derivative-estimates-wbar}
define a contraction on a ball of radius $C\mathscr E$.
Strict convexity \eqref{eq:uniform-Hessian} proves uniqueness in the
fixed ball and the minimizing property. Testing
\eqref{eq:projected-Euler} with $\bar w$ and applying
\eqref{eq:f-estimate-wbar} proves \eqref{eq:barw_error}.

For the parameter dependence, fix $\theta^*$ and set $E^*=E_{\theta^*}$.
Lemma~\ref{lem:Gram} implies that the orthogonal projections
$\Pi_\theta^\perp$ depend $C^1$ on the parameters. Locally,
$T_\theta=\Pi_\theta^\perp|_{E^*}:E^*\to E_\theta$ is an isomorphism.
On $E^*$ define
\[
 \langle\Phi(\theta,v),z\rangle
 =DJ_\rho(u_0(\theta)+T_\theta v)[T_\theta z].
\]
At a zero, $D_v\Phi$ is invertible by \eqref{eq:uniform-Hessian}.
The implicit function theorem proves $C^1$ dependence of
$\bar w=T_\theta\bar v$; uniqueness identifies the local maps on overlaps.
\end{proof}

	Next, we introduce local neighborhoods
	that isolate a bubbling configuration near a fixed critical point
	\(\xi^*\) of \(\FpqV\).  Set
	$ \mu=(\rho-\rho_*)/\rho_*$.  If $ q\ne0$, define $ V(\mu,\xi^*)$  as the set
	of all
	\[
	u=\sum_i\al_iP\delta_{\lam_i,\xi_i}+w\in\oH
	\]
	in $ V_{\rho_*}^{\eta}(p,q,C_0|\mu|^{1/2})$, with
	$ w\in E_{\lam,\xi}^{p,q}$, and satisfying
	\begin{equation}
		\label{eq:condi-V-1}
 \begin{gathered}
 \|w\|<C|\mu|,\quad
 \sum_i d_g(\xi_i,\xi_i^*)<C|\mu|\ln\frac1{|\mu|},\\
 \sum_i(|\alpha_i-1|+|\tau_i|)<C|\mu|,\quad
 \frac{|\mu|}{C}<\frac1{\lambda_i}<C|\mu|.
 \end{gathered}
 \end{equation}
	for $i=1,\ldots,p+q.$
	If $ q=0$, define $ V(\mu,\xi^*)$  as the set of all
	\[ u=\sum_i \al _i P\delta_{\lam _i ,\xi_i }+w\in \oH\]
	in $V_{\rho_*}^{\eta}(p,q,C_0|\mu|^{1/3}), $
	\begin{equation}  	\label{eq:condi-V-2}
		\|w\|<C|\mu|^{\frac 3 4}, \quad
		\sum_i d_g(\xi_i,\xi_i^*)<C|\mu|^{\frac3 8},\quad 	\sum_i(|\al_i-1|+|\tau_i|)<C|\mu|,
		\quad
		\frac{|\mu|}{C}<\frac{\ln \lam_i}{\lam_i^2}<C|\mu|,
	\end{equation}
	for $i=1,\ldots,p.$
	Here $C_0,C>0$ are fixed large constants. The scale and amplitude
conditions in these definitions imply
$\sum_i|\al _i-1|\ln\lam _i=o(1)$ as $\mu\to0$.

For \(q=0\), we refine the estimates in
Proposition~\ref{prop:estimates_rho_k} to verify \eqref{eq:condi-V-2}.

\begin{lem}\label{lem:interior-isolating-bootstrap}
Let $p\ge1$, $\rho_*=8\pi p$, and let $u_k$ be a sequence of blowing-up
solutions of \eqref{maineq} with
$\rho_k=(1+\mu_k)\rho_*\to\rho_*$. Suppose that the limiting
configuration $\xi^*\in\Xi_{p,0}$ is a nondegenerate critical point of
\(\Fv{p}{0}\) and that \(\cL_2(\xi^*)\ne0\). For the unique
decomposition from Propositions~\ref{prop:u-k-in-V-eta}
and~\ref{prop:unique}, set
\(\Lam _k:=\min_{1\le i\le p}\lam _i^k, \quad \ell_k:=\ln \Lam _k.\)
Then the scales are comparable, $|\mu_k|\asymp\ell_k\Lam _k^{-2}$,
and
\begin{equation}\label{eq:interior-bootstrap-bounds}
\begin{gathered}
\sum_i|\al _i^k-1|+\sum_i|\tau_i^k|
 =O(|\mu_k|),\quad
\|w^k\|=O(|\mu_k|\sqrt{\ell_k}),\\
\sum_i d_g(\xi_i^k,\xi_i^*)=O(\ell_k\Lam _k^{-1}),\quad
\operatorname{sgn}\mu_k=\operatorname{sgn}\cL_2(\xi^*)
\quad\text{for all sufficiently large }k.
\end{gathered}
\end{equation}
Consequently, for fixed sufficiently large constants $C_0$ and $C$ in
\eqref{eq:condi-V-2}, one has
$u_k\in V(\mu_k,\xi^*)$ for all sufficiently large $k$.
\end{lem}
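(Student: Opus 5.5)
The plan is to bootstrap the coarse bounds of Proposition~\ref{prop:estimates_rho_k}. Since $q=0$, there are no boundary-linear terms ($\mathfrak f_2\equiv0$), so every $O(\Lambda^{-1})$ loss that the general case had to absorb comes only from the boundary and can now be removed.

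\textbf{Step 1: starting point.} Proposition~\ref{prop:estimates_rho_k} gives
\[
A+W+T=O(\Lambda_k^{-1}),\qquad \lambda_i\asymp\Lambda_k,\qquad M\asymp\Lambda_k^2 .
\]
With $\mathcal L_2(\xi^*)\neq0$, its parameter asymptotic
\[
\mu_k=\tfrac1{16}\,\mathcal L_2(\xi^*)\,\ln\lambda_1/M+o(\ell_k\Lambda_k^{-2})
\]
immediately yields $|\mu_k|\asymp\ell_k\Lambda_k^{-2}$ and $\operatorname{sgn}\mu_k=\operatorname{sgn}\mathcal L_2(\xi^*)$ for large $k$. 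The location bound $\sum_i d_g(\xi_i^k,\xi_i^*)=O(\ell_k\Lambda_k^{-1})$ is the \ref{item:C1} part of that proposition: nondegeneracy of $\xi^*$ plus the inverse function theorem applied to $|\nabla\Fv p0(\xi)|=O(\ell_k\Lambda_k^{-1})$.

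\textbf{Step 2: sharpen the fiber bound.} I would redo the orthogonal estimate $cW^2\le|f(w)|+CW^3$ using Lemma~\ref{lem:A.7}. For $q=0$, I expect $|f(z)|\le C(\mathscr E+\ldots)\|z\|$ with
\[
\mathscr E\le C\Big(A+\sum_i|\nabla_x\mathcal F^i_\xi(\xi_i)|/\lambda_i+\ell_k^{3/2}\Lambda_k^{-2}\Big),
\]
the boundary coefficient being zero. The gradient term is controlled because $|\nabla\mathcal F^i_\xi(\xi_i)|$ is comparable to $|\nabla\Fv p0(\xi)|=O(\ell_k\Lambda_k^{-1})$, so that term is $O(\ell_k\Lambda_k^{-2})=O(|\mu_k|)$. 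This gives
\[
W\le C\big(A+|\mu_k|\sqrt{\ell_k}\big).
\]
The $\sqrt{\ell_k}$ is exactly the loss from $\ell_k^{3/2}\Lambda_k^{-2}=|\mu_k|\sqrt{\ell_k}$.

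\textbf{Step 3: close on $A$ and $T$.} The $i$-th scale identity of Lemma~\ref{lem:lamexpansion} (exact solution, $\mathfrak f_2=0$) gives
\[
|\tau_i-\mu+\mu\tau_i|\le C\big(A^2+W^2+\ell_k\Lambda_k^{-2}\big)=O\big(A^2+W^2+|\mu_k|\big),
\]
because the Green interaction terms are $O(\ell\Lambda^{-2})$. Hence $T=O(A^2+W^2+|\mu_k|)$, and then $\sum_i|\tau_i|\le T+C|\mu_k|$. Next I would apply Corollary~\ref{cor:1} in the case $q=0$:
\[
A\le C\Big(W^2+W/\ell+(T+A)/\ell+A^2+\ell\Lambda^{-2}+\textstyle\sum_j\lambda_j^{-(4\alpha_j-2)}\Big).
\]
Since $A=O(\Lambda^{-1})$, we have $\lambda_j^{-(4\alpha_j-2)}=\lambda_j^{-2}e^{O(\ell/\Lambda)}=O(\Lambda^{-2})$. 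Substituting the bounds from Step 2 and absorbing $A/\ell$ and $A^2$ on the left gives
\[
A\le C\big(|\mu_k|+|\mu_k|\sqrt{\ell_k}/\ell_k+\mu_k^2\ell_k\big)=O(|\mu_k|),
\]
and therefore $T,\ \sum_i|\tau_i|=O(|\mu_k|)$ and $W=O(|\mu_k|\sqrt{\ell_k})$.

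\textbf{Step 4: membership in $V(\mu_k,\xi^*)$.} I would check each condition of \eqref{eq:condi-V-2}:
\begin{itemize}
\item $W=O(|\mu|\sqrt{\ell})=o(|\mu|^{3/4})$;
\item $\ell\Lambda^{-1}\asymp(|\mu|\ell)^{1/2}=o(|\mu|^{3/8})$;
\item $\ln\lambda_i/\lambda_i^2\asymp|\mu|$ by comparability of scales;
\item $\|u-\psi(\mathbf1,\xi,\lambda)\|\le W+CA\sqrt{\ell}=o(|\mu|^{1/3})$;
\item $\nabla J=0$ holds trivially.
\end{itemize}

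I expect the main obstacle to be Step 2. The key issue is verifying that the $f(w)$ estimate in Lemma~\ref{lem:A.7} carries no $\ln$-weighted $A$ term or $\lambda^{-1}$ term when $q=0$, so that the bound really has the size $|\mu|\sqrt{\ell}$ rather than $\Lambda^{-1}$. I would check this first by listing the terms of $\mathscr E$ and confirming that the non-boundary ones are exactly of the stated orders.
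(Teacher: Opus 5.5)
Your bootstrap is the paper's argument. You take the coarse bounds from Proposition~\ref{prop:estimates_rho_k}, bound $T$ by the scale equations of Lemma~\ref{lem:lamexpansion}, and bound $A$ by Corollary~\ref{cor:1}. For $W$ you use Lemma~\ref{lem:A.7} with $\mathfrak f_2\equiv0$ and $|\nabla_x\mathcal F^i_\xi(\xi_i)|=O(\ell\Lambda^{-1})$. The paper only orders these differently: first $T$, then $A\le C(W/\ell+\ell\Lambda^{-2})$, then $W$. The obstacle you anticipate in Step~2 does not arise. Lemma~\ref{lem:A.7} contains only the unweighted amplitude term $|\alpha_i-1|$, and its boundary coefficient vanishes identically for interior centers.

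There is, however, one incorrect step in Step~4. The set $V(\mu,\xi^*)$ is defined inside $V^{\eta}_{\rho_*}(p,0,C_0|\mu|^{1/3})$, so the gradient condition to verify is $\|\nabla J_{\rho_*}(u_k)\|<C_0|\mu_k|^{1/3}$. This does not hold trivially, because $u_k$ solves the equation at $\rho_k$, not at $\rho_*$, so $\nabla J_{\rho_*}(u_k)\neq0$ in general. You need the following computation. Since $u_k=T_{\rho_k}(u_k)$ and $T_{\rho_*}=(\rho_*/\rho_k)T_{\rho_k}$,
\[
\nabla J_{\rho_*}(u_k)=u_k-\frac{1}{1+\mu_k}\,u_k=\frac{\mu_k}{1+\mu_k}\,u_k .
\]
The decomposition gives $\|u_k\|\le C\sqrt{\ell_k}$, because each $\|P\delta_{\lambda_i,\xi_i}\|\asymp\sqrt{\ln\lambda_i}$. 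Hence $\|\nabla J_{\rho_*}(u_k)\|=O(|\mu_k|\sqrt{\ell_k})$, which is $o(|\mu_k|^{1/3})$ since $|\mu_k|\asymp\ell_k\Lambda_k^{-2}$.

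Your checklist also omits two items. The first is the scale condition $\lambda_i>(C_0|\mu_k|^{1/3})^{-1}$; it holds because $\lambda_i|\mu_k|^{1/3}\asymp(\ell_k\Lambda_k)^{1/3}\to\infty$. The second is the separation conditions in \eqref{def:Vm}, which follow from the separated interior limit configuration. With these additions the argument is complete.
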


\begin{proof}
Omit the sequence index and set
\[
 \begin{gathered}
 \Lam:=\min_i\lam_i,\quad \ell:=\ln\Lam,\quad
 A:=\sum_i|\al_i-1|,\quad W:=\|w\|,\\
 T:=\sum_i|\tau_i-\mu+\mu\tau_i|,\quad
 D:=\sum_i d_g(\xi_i,\xi_i^*).
 \end{gathered}
\]
Proposition~\ref{prop:estimates_rho_k} implies
$A+W+T=O(\Lam^{-1})$, $\lam_i\asymp\Lam$, and
$|\nabla\Fv{p}{0}(\xi)|=O(\ell\Lam^{-1})$.
Nondegeneracy of $\xi^*$ implies $D=O(\ell\Lam^{-1})$ and
\begin{equation}\label{eq:interior-bootstrap-log-balance}
 \sum_i|\al_i-1|\ln\lam_i=O(\ell\Lam^{-1})=o(1).
\end{equation}
With $u_0=\sum_i\al_iP\delta_{\lam_i,\xi_i}$, the mass expansion
and the parameter asymptotic in Proposition~\ref{prop:estimates_rho_k}
therefore imply
\[
 \int_\Si Ve^u\,\d v_g\asymp\int_\Si Ve^{u_0}\,\d v_g\asymp\Lam^2,
 \quad |\mu|\asymp\ell\Lam^{-2},\quad
 \sgn\mu=\sgn\cL_2(\xi^*).
\]
The scale equations in Lemma~\ref{lem:lamexpansion}, whose boundary
term vanishes for $q=0$, imply
$T\le C(A^2+W^2+\ell\Lam^{-2})=O(\ell\Lam^{-2})$.
For sufficiently large $\ell$, Corollary~\ref{cor:1},
\eqref{eq:interior-bootstrap-log-balance}, and the coarse estimates
for $A,W$ yield
\begin{equation}\label{eq:interior-bootstrap-amplitude}
 A\le C\bigl(W/\ell+\ell\Lam^{-2}\bigr).
\end{equation}

The weighted Taylor estimate used in
\eqref{eq:coarse-fiber-direct} applies to the exact orthogonal equation:
\[
 0=DJ_\rho(u_0+w)[w]=f(w)+Q(w,w)+O(W^3).
\]
In the admissible interior coordinates,
\[
 \nabla_x\cF_\xi^i(\xi_i)
 =\frac{\cF_\xi^i(\xi_i)}{2\kap(\xi_i)}
       \nabla_{\xi_i}\Fv{p}{0}(\xi),
 \quad |\nabla_x\cF_\xi^i(\xi_i)|\le CD.
\]
Lemma~\ref{lem:A.7} and Proposition~\ref{prop:Q_0positive}
then imply, for sufficiently small $W$,
\begin{equation}\label{eq:interior-bootstrap-fiber}
 W\le C\Big(A+\frac D\Lam+\frac{\ell^{3/2}}{\Lam^2}\Big)
 \le C\Big(A+\frac{\ell^{3/2}}{\Lam^2}\Big).
\end{equation}
Substitution of \eqref{eq:interior-bootstrap-fiber} into
\eqref{eq:interior-bootstrap-amplitude}, followed
by absorption of $CA/\ell$, proves
\[
 A=O(\ell\Lam^{-2}),\quad W=O(\ell^{3/2}\Lam^{-2})
   =O(|\mu|\sqrt\ell).
\]
The identity
$\tau_i=(\mu+(\tau_i-\mu+\mu\tau_i))/(1+\mu)$ implies
$\sum_i|\tau_i|=O(|\mu|)$, completing
\eqref{eq:interior-bootstrap-bounds}.

Finally,
 \begin{gather*}
 W=o(|\mu|^{3/4}),\quad D=o(|\mu|^{3/8}),\quad
 \lam_i|\mu|^{1/3}\to\infty,\\
 \Big\|u-\sum_iP\delta_{\lam_i,\xi_i} \Big\|
 \le CA\sqrt\ell+W=o(|\mu|^{1/3}).
 \end{gather*}
Since $T_\rho(u)=u$ and $T_{\rho_*}=(\rho_*/\rho)T_\rho$,
\begin{equation}\label{eq:interior-bootstrap-gradient}
 \nabla J_{\rho_*}(u)=\frac\mu{1+\mu}u,\quad
 \|\nabla J_{\rho_*}(u)\|=O(|\mu|\sqrt\ell)=o(|\mu|^{1/3}).
\end{equation}
The separation of the centers and the preceding estimates verify
\eqref{def:Vm} and \eqref{eq:condi-V-2}.
\end{proof}

We next specify the neighborhoods in the blow-up alternative of \cite[Theorem~4.1]{AHW2025-1}.
\begin{thm}\label{thm:alter_blowup_sols}
		Let $\rho_*=4\pi m$ with $m\in  \N$,  and let  $V$ be a  positive  $C^{3}$-function on $\Si$    satisfying conditions  \ref{item:C1}--\ref{item:C2} for all $p,q\in \N_0$ such that $m=2p+q$. There exist $\mu_m>0$ and $\bar C_m>0$ such that  any solution $u$ of \eqref{maineq}   with $u\in\oH$ and   $\frac{|\rho-\rho_*|}{\rho_*} \leq  \mu_{m}$ satisfies the following:
		\begin{itemize}
			\item[(1)] If $\rho  =\rho_*$, then $\|u\|< \bar{C}_{m}$;
			\item[(2)] If $\rho=(1+\mu)\rho_*$ with $0<|\mu|\leq\mu_m$, then either
			\begin{itemize}
				\item[i)] $\|u\| < \bar{C}_{m}$ or
				\item[ii)] $\|u\| \geq  \bar{C}_{m}$  and  $u\in V(\mu ,\xi^*)$ for some critical point $\xi^*:=(\xi_1^*,\ldots,\xi^*_{p+q})$ of $\ff$.
			\end{itemize}
		\end{itemize}
	\end{thm}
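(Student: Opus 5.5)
The plan is to argue by contradiction in both parts, using the blow-up alternative together with Propositions~\ref{prop:u-k-in-V-eta}, \ref{prop:estimates_rho_k} and Lemma~\ref{lem:interior-isolating-bootstrap}. The constant $\bar C_m$ is fixed first, and $\mu_m$ afterwards.

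\emph{Part (1).} This is Remark~\ref{rem:fixed-resonance-compactness}. If a sequence of solutions at $\rho=\rho_*$ had unbounded norm, then by the compactness alternative of \cite{LSY2023} it blows up. Its limiting configuration is of some type $(p,q)$ with $2p+q=m$. Proposition~\ref{prop:estimates_rho_k} then makes $\xi^*$ a critical point of $\ff$. Setting $\mu=0$ in the parameter asymptotics, and using $M\asymp\lam_1^2$, forces $\cL(\xi^*)=0$, which contradicts \ref{item:C2}. Hence there is a bound $\|u\|<C_1$ at the resonant parameter.

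\emph{Part (2).} I would first choose $\bar C_m\ge C_1$ large, then argue by contradiction on $\mu_m$. Suppose there are solutions $u_k$ with $\rho_k=(1+\mu_k)\rho_*$, $\mu_k\to0$, $\mu_k\neq0$, $\|u_k\|\ge\bar C_m$ and $u_k\notin V(\mu_k,\xi^*)$ for every critical point $\xi^*$.

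\textbf{Step 1.} If $\{u_k\}$ stays bounded in $\oH$, the Moser--Trudinger bound \eqref{eq:MT-Jensen-explicit} and elliptic regularity give convergence to a solution at $\rho_*$. Its norm is then at least $\bar C_m>C_1$, contradicting Part (1). So $u_k$ blows up along a subsequence.

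\textbf{Step 2.} By quantization the blow-up set has type $(p,q)$ with $2p+q=m$. Proposition~\ref{prop:estimates_rho_k} shows that the limit $\xi^*$ is a critical point of $\ff$; by \ref{item:C1} it is nondegenerate, and \ref{item:C2} gives $\cL(\xi^*)\ne0$.

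\textbf{Step 3.} Show that $u_k\in V(\mu_k,\xi^*)$ for large $k$, which gives the contradiction. The case $q=0$ is exactly Lemma~\ref{lem:interior-isolating-bootstrap}. For $q\ne0$, Proposition~\ref{prop:estimates_rho_k} gives $\mu_k=\frac\pi4\cL_1(\xi^*)\sqrt{\cF^1}\lam_1/M+o(\lam_1^{-1})$ with $M\asymp\Lam^2$. Since $\cL_1(\xi^*)\ne0$, this yields $|\mu_k|\asymp\Lam^{-1}$ and, by scale comparability, $|\mu_k|/C<\lam_i^{-1}<C|\mu_k|$. The same proposition bounds $\|w\|$, $\sum_i|\al_i-1|$ and $T$ by $O(\Lam^{-1})=O(|\mu|)$. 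From $\tau_i=(\mu+(\tau_i-\mu+\mu\tau_i))/(1+\mu)$ we get $\sum_i|\tau_i|=O(|\mu|)$. The location estimate gives $D=O(\Lam^{-1}\ln\Lam)=O(|\mu|\ln(1/|\mu|))$.

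For membership in $V_{\rho_*}^{\eta}(p,q,C_0|\mu|^{1/2})$, use $\nabla J_{\rho_*}(u)=\frac{\mu}{1+\mu}u$, as in \eqref{eq:interior-bootstrap-gradient}, to get $\|\nabla J_{\rho_*}(u)\|\le C|\mu|\|u\|$. Since $\|u\|^2\sim\sum_i\kap_i\ln\lam_i$, this is $O(|\mu|\sqrt{\ln(1/|\mu|)})=o(|\mu|^{1/2})$. Also $\|u-\sum_iP\delta_{\lam_i,\xi_i}\|\le CA\sqrt{\ln\Lam}+W=o(|\mu|^{1/2})$, $\lam_i|\mu|^{1/2}\to\infty$, and separation of the centers follows from $\xi^*\in\Xi_{p,q}$.

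The main obstacle is the uniformity of the constants $C$ and $C_0$ in \eqref{eq:condi-V-1}--\eqref{eq:condi-V-2}. They must be chosen independently of the sequence. This works because $\cri(\ff)$ is finite for each of the finitely many types $(p,q)$, by \ref{item:C1} and \eqref{eq:grad-F-inf}, so one can take the maximum of the $O$-constants over all $\xi^*$. A subsequence argument then upgrades the contradiction to the existence of $\mu_m$.
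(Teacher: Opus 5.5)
Your proposal is correct and follows essentially the same route as the paper's proof: Part (1) is Remark~\ref{rem:fixed-resonance-compactness}, and Part (2) is a contradiction argument. In it, bounded subsequences converge to a resonant solution whose norm is too large, while blowing-up subsequences are placed in $V(\mu_k,\xi^*)$ via Lemma~\ref{lem:interior-isolating-bootstrap} when $q=0$, and via Proposition~\ref{prop:estimates_rho_k} together with $\nabla J_{\rho_*}(u)=\frac{\mu}{1+\mu}u$ when $q>0$.
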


	\begin{proof}
Remark~\ref{rem:fixed-resonance-compactness} proves \textup{(1)}.
Choose $\bar C_m$ strictly larger than the norms of all solutions at
$\rho_*$ and decrease $\mu_m$ so that the punctured interval contains
no other multiple of $4\pi$.
If \textup{(2)} fails for every such $\mu_m$, there are solutions
$u_k$ at $\rho_k=(1+\mu_k)\rho_*$, with $0<|\mu_k|\to0$ and
$\|u_k\|\ge\bar C_m$, such that
\begin{equation}\label{eq:not-in-bubbling-neighborhood}
 u_k\notin\bigcup_{2p+q=m}\ \bigcup_{\xi^*\in\cri(\FpqV)}
                 V(\mu_k,\xi^*).
\end{equation}
A bounded subsequence converges strongly to a solution at $\rho_*$,
contrary to the choice of $\bar C_m$.
For an unbounded subsequence, Propositions~\ref{prop:u-k-in-V-eta}
and~\ref{prop:estimates_rho_k} identify a type $(p,q)$ and a reduced
critical configuration $\xi^*$, with
\[
 |\mu_k|\asymp
\left\{\begin{aligned}
    &(\lam_1^k)^{-1}&& \text{ if }\,q>0,\\
 &(\lam_1^k)^{-2}\ln\lam_1^k&&\text{ if }\,q=0, 
\end{aligned}\right.
 \quad \sgn\mu_k=\sgn\cL(\xi^*).
\]
For $q=0$, Lemma~\ref{lem:interior-isolating-bootstrap} contradicts
\eqref{eq:not-in-bubbling-neighborhood}.
For $q>0$, Proposition~\ref{prop:estimates_rho_k} implies
\[
 \|w^k\|+\sum_i(|\al_i^k-1|+|\tau_i^k|)=O(|\mu_k|),\quad
 \sum_i d_g(\xi_i^k,\xi_i^*)=O(|\mu_k|\ln(1/|\mu_k|)).
\]
The distance from $u_k$ to its unit-amplitude bubble sum and
$\|\nabla J_{\rho_*}(u_k)\|$ are both
$$O(|\mu_k|\sqrt{\ln(1/|\mu_k|)})=o(|\mu_k|^{1/2});$$
for the gradient, apply \eqref{eq:interior-bootstrap-gradient}.
Also $\lam_i^k|\mu_k|^{1/2}\to\infty$.
Thus \eqref{def:Vm} and \eqref{eq:condi-V-1} hold, again
contradicting \eqref{eq:not-in-bubbling-neighborhood}.
\end{proof}

	\subsection{Homology of boundary-weighted barycenter spaces}
\label{sec:barycenter-homology}

We recall the homological formulas of \cite[Section~7]{AK2026} for the
spaces defined in \eqref{eq:barycenter-stratum}--\eqref{eq:weighted-barycenter-space}.
All homology groups in this subsection have coefficients in
\(\mathbb Z_2\), and all Poincar\'e polynomials have integer coefficients.
The comparison with the sublevels of \(J_\rho\) is proved in
Appendix~\ref{app:topology}.

Let \(\mathfrak g\) be the genus of \(\Si\), and let \(b\) be the number
of its boundary components, as in the introduction. Label these
components \(\Ga _1,\ldots,\Ga _b\), and write \(\sqcup\) for disjoint
union and \(\mathbb{S}^d\) for the \(d\)-dimensional sphere. Set
\[
\pa\Si=\Ga _1\sqcup\cdots\sqcup\Ga _b,
\quad \Ga _a\simeq \mathbb{S}^1\quad(1\le a\le b),
\quad N:=2\mathfrak g+b-1,
\]
In \(\Si/\pa\Si\), the entire boundary is identified with one point.
Let \(\Si _{\mathfrak g}\) be the closed orientable surface of genus
\(\mathfrak g\). We write \(\vee\) for the wedge sum of pointed spaces
and \(\operatorname{Susp}\) for reduced suspension; an empty wedge is
a point. Then
\begin{equation}\label{eq:topology-quotient}
\Si/\pa\Si\simeq\Si _{\mathfrak g}\vee\bigvee_{a=1}^{b-1}\mathbb{S}^1,
\quad
\operatorname{Susp}(\Si/\pa\Si)\simeq \mathbb{S}^3\vee\bigvee_{a=1}^{N}\mathbb{S}^2.
\end{equation}
The notation \(B_n(\Si/\pa\Si)\) is the genuine barycenter
space of this quotient, with the identifications fixed in the introduction.

For \(n\ge1\), denote the reduced Poincar\'e polynomials of
\(B_n(\pa\Si)\) and \(B_n(\Si/\pa\Si)\) by \(R_{b,n}\) and
\(Q^{(2)}_{N,n}\), respectively; the superscript \((2)\) indicates the
coefficient field. Thus
\[
R_{b,n}(t):=\sum_{r\ge0}\dim_{\mathbb Z_2}
 \widetilde H_r(B_n(\pa\Si);\mathbb Z_2)t^r,
 \]
 and
 \[
Q^{(2)}_{N,n}(t):=\sum_{r\ge0}\dim_{\mathbb Z_2}
 \widetilde H_r(B_n(\Si/\pa\Si);\mathbb Z_2)t^r.
 \]
For a formal power series in \(x\), \([x^n]\) denotes the coefficient of \(x^n\). The mod-two calculation in \cite[Lemma~7.2]{AK2026},
applied to \eqref{eq:topology-quotient}, yields
\begin{equation}\label{eq:topology-Q-explicit}
Q^{(2)}_{N,n}(t)
=t^{-1}[x^n]\frac{1}{(1-xt^2)^N}
 \prod_{r\ge0}\frac{1}{1-x^{2^r}t^{2^{r+1}+1}}.
\end{equation}
Only factors with \(2^r\le n\) contribute in
\eqref{eq:topology-Q-explicit}. In particular,
\[
Q^{(2)}_{N,1}(t)=Nt+t^2,
\quad
Q^{(2)}_{N,2}(t)=\binom{N+1}{2}t^3+(N+1)t^4+t^5.
\]
For the boundary, \cite[Proposition~7.5]{AK2026} yields
\begin{equation}\label{eq:topology-R-explicit}
R_{b,n}(t)=\sum_{c=0}^{\min\{b-1,n\}}
 \binom{b-1}{c}\binom{b+n-c-1}{n-c}t^{2n-c-1}.
\end{equation}
Thus \eqref{eq:topology-R-explicit} yields
\(R_{1,n}(t)=t^{2n-1}\) and \(R_{b,1}(t)=(b-1)+bt\).

For nonempty spaces \(X,Y\), let \(X*Y\) denote their topological join.
The symbols $\oplus$ and $\otimes_{\mathbb Z_2}$ denote the direct sum and
tensor product over $\mathbb Z_2$; $\cong$ denotes an isomorphism.
The reduced homology of the join satisfies
\begin{equation}\label{eq:topology-join-homology}
\widetilde H_k(X*Y;\mathbb Z_2)
\cong\bigoplus_{r+s=k-1}
\widetilde H_r(X;\mathbb Z_2)
 \otimes_{\mathbb Z_2}\widetilde H_s(Y;\mathbb Z_2).
\end{equation}
Omitting the coefficient field from the notation, the homology
decomposition in \cite[Theorem~1.2 and Corollary~5.5]{AK2026} yields, for \(s\ge1\),
\begin{equation}\label{eq:topology-even-homology}
\widetilde H_k(B_{2s}^{\pa}(\Si,\pa\Si))
\cong\widetilde H_k(B_{2s}(\pa\Si))
 \oplus\widetilde H_k(B_s(\Si/\pa\Si))\oplus\bigoplus_{i=1}^{s-1}
 \widetilde H_k\Big(B_i(\Si/\pa\Si)*B_{2s-2i}(\pa\Si)\Big).
\end{equation}
For odd weights,
\begin{equation}\label{eq:topology-odd-homology}
\widetilde H_k(B_{2s-1}^{\pa}(\Si,\pa\Si))
\cong\widetilde H_k(B_{2s-1}(\pa\Si))\oplus\bigoplus_{i=1}^{s-1}
 \widetilde H_k\Big(B_i(\Si/\pa\Si)*B_{2s-2i-1}(\pa\Si)\Big).
\end{equation}
The decompositions \eqref{eq:topology-even-homology}--\eqref{eq:topology-odd-homology}
follow from the triangular rank filtration in
\cite[Theorem~1.2 and Corollary~5.5]{AK2026}; only the topological results of that reference are invoked, not its
application to the present equation.

Taking dimensions in
\eqref{eq:topology-even-homology}--\eqref{eq:topology-odd-homology} and
using \eqref{eq:topology-join-homology}, we obtain
\begin{equation}\label{eq:topology-P-even}
\widetilde P_{2s}^{\pa}(t)=R_{b,2s}(t)+Q^{(2)}_{N,s}(t)
 +t\sum_{i=1}^{s-1}Q^{(2)}_{N,i}(t)R_{b,2s-2i}(t),
\end{equation}
and
 \begin{equation}\label{eq:topology-P-odd}
\widetilde P_{2s-1}^{\pa}(t)
=R_{b,2s-1}(t)
 +t\sum_{i=1}^{s-1}Q^{(2)}_{N,i}(t)R_{b,2s-2i-1}(t).
\end{equation}
The sums are empty when \(s=1\). Equations
\eqref{eq:topology-Q-explicit}--\eqref{eq:topology-R-explicit} and
\eqref{eq:topology-P-even}--\eqref{eq:topology-P-odd} determine all the
numbers \(\widetilde\beta_l^r\) introduced in Subsection~\ref{sec:main-results}. For genuine
homology, the degree-zero dimension is \(\widetilde\beta_l^0+1\), and
the positive-degree dimensions are \(\widetilde\beta_l^r\).

The highest-degree term in
\eqref{eq:topology-P-even}--\eqref{eq:topology-P-odd} comes from
\(R_{b,l}\). By \eqref{eq:topology-R-explicit},
\begin{equation}\label{eq:topology-top-betti}
\widetilde\beta_l^{2l-1}=\binom{b+l-1}{l}>0,
\quad \widetilde\beta_l^r=0\quad(r>2l-1).
\end{equation}
Proposition~\ref{prop:negative-sublevels-barycenters} in
Appendix~\ref{app:topology} justifies
\eqref{def:Betti}--\eqref{def:P_m}.

For a polynomial in \(t\), write \([t^{k-1}]\) for its coefficient
of \(t^{k-1}\). By \eqref{def:Betti}--\eqref{def:P_m}, the relative
Betti numbers in the Morse relations are therefore determined by
\[
\mathfrak b_{m,k}=[t^{k-1}]\widetilde P_{m-1}^{\pa}(t),\quad
\forall\,k\ge1,
\quad \mathfrak b_{m,0}=0.
\]
Taking \(l=m-1\) in \eqref{eq:topology-top-betti} yields the coefficient
in Corollary~\ref{cor:1.3}. Reduced homology also accounts for the
contribution of a disconnected boundary: \(B_1^{\pa}(\Si,\pa\Si)=\pa\Si\)
and \(R_{b,1}(t)=(b-1)+bt\) yield
\(\mathfrak b_{2,1}=\widetilde\beta_1^0=b-1\).

\begin{rem}\upshape
For the Euler characteristic, adopt the conventions
\(B_0(X):=\emptyset\) and \(\chi(B_0(X)):=0\).
Then \cite[Remark~7.8]{AK2026} yields
\begin{equation}\label{eq:topology-euler-characteristic}
1-\chi(B_{2s}^{\pa}(\Si,\pa\Si))=1-\chi(B_s(\Si)),\quad
1-\chi(B_{2s-1}^{\pa}(\Si,\pa\Si))=1-\chi(B_{s-1}(\Si)),
\quad s\ge1.
\end{equation}
Together with \eqref{eq:topology-euler-characteristic}, the genuine barycenter formula
below expresses the Euler--Poincar\'e quantity in Theorem~\ref{thm:2}
in terms of \(\chi(\Si)\):
\[
\chi(B_r(\Si))
=1-\frac1{r!}\prod_{j=1}^{r}(j-\chi(\Si)),
\quad r\ge1.
\]
\end{rem}

\section{Bubbling solutions and Morse indices}\label{sec:3}

Fix $p,q\in\mathbb N_0$ with $m:=2p+q\ge1$, set $\rho_*=4\pi m$,
and write $\rho=(1+\mu)\rho_*$ for $|\mu|$ sufficiently small.
We compare the tangent spaces of the bubble manifold and the graph of
the minimizing correction, then apply the parameter-dependent reduction
to compute the Morse indices of nearby blow-up solutions. The branch indices are used only for prescribed-index existence.
The resonant Morse relations and degree in Section~\ref{sec:4} use
the fixed-functional filtration of Section~\ref{sec:full-pseudogradient}.

For $\theta=(\al ,\xi,\lam )$, write
$u_0=\psi_\theta=\sum_i\al _iP\delta_{\lam _i,\xi_i}$,
$\mathscr T_\theta=\operatorname{range}D\psi_\theta$, and
$\kap _i:=\kap (\xi_i)$, which is constant on the chosen stratum.
For the local comparison, the centers range in a fixed compact subset
of $\Xi_{p,q}$, the amplitudes remain in a fixed small neighborhood
of $\mathbf1$, and the scales are sufficiently large.
Let $\bar w_\theta\in E_{\lam ,\xi}^{p,q}$ be a $C^1$ family,
and let $e_a$ be the normalized tangent frame of
Lemma~\ref{lem:Gram} in Appendix~\ref{app:0}. Differentiating
$\langle\bar w,e_a\rangle=0$ in a parameter direction $\dot\theta$ yields
\[
\langle D\bar w[\dot\theta],e_a\rangle
=-\langle\bar w,De_a[\dot\theta]\rangle.
\]
Lemma~\ref{lem:Gram} implies
$\|De_a[\dot\theta]\|\le C\|D\psi[\dot\theta]\|$. Hence
\begin{equation}\label{eq:graph-tangent-comparison}
\|P_{\mathscr T_\theta}D\bar w[\dot\theta]\|
\le C\|\bar w\|\,\|D\psi[\dot\theta]\|,
\quad
P_{\mathscr T_\theta}D(\psi+\bar w)
=D\psi+O(\|\bar w\|)D\psi.
\end{equation}
The map $P_{\mathscr T_\theta}D(\psi+\bar w)$ is invertible when
$\|\bar w\|$ is sufficiently small.
In particular, \eqref{eq:graph-tangent-comparison} applies to the
minimizing corrections in Corollary~\ref{cor:mini_w} and
Lemma~\ref{lem:terminal-reduction} below.

For the index calculation, introduce logarithmic local masses. Set
$n:=p+q$ and $d:=2p+q$. Introduce $t\in\R$ and
$v\in\R^n$ with $\sum_i\kap _i v_i=0$ by
\begin{equation}\label{eq:log-mass-coordinates}
\ln \lam _i=
\frac{2t+v_i-\ln ((\cF_\xi^i\rg_\xi^i)(\xi_i))
 +\ln (2\al _i-1)}{4\al _i-2}.
\end{equation}
Equation~\eqref{eq:log-mass-coordinates} defines an exact change of
variables for $\al _i>1/2$. In particular,
\[
\frac{(\cF_\xi^i\rg_\xi^i)(\xi_i)\lam _i^{4\al _i-2}}
 {2\al _i-1}=e^{2t+v_i}.
\]
At $(\al ,v)=(\mathbf1,0)$, one has
$\lam _i=e^t/\sqrt{\cF_\xi^i(\xi_i)}$. In this section set
\[
h(t):=\left\{\begin{aligned}&e^{-t}&&\text{ if }\, q>0,\\&t e^{-2t}&&\text{ if }\, q=0,\end{aligned}\right.
\quad c(\rho):=-\rho_*-\rho\ln \frac{\rho_*}{8}.
\]
We carry out the reduction for finite \(t\) and estimate the parameter
derivatives uniformly for large \(t\). No smooth extension to \(t=\infty\) is used. Write
$C^3_+(\Si ):=\{V\in C^3(\Si ):V>0\}$.
Appendix~\ref{app:0} proves the following reduction from exact integral estimates.
For $q=0$, only one center derivative of $\cL_2$ is used.
Its amplitude domain is $t|\alpha-\mathbf1|<\delta$.
The passage from the natural radius
$\sqrt t\,|\alpha-\mathbf1|<r$ to that smaller domain is the
sublevel-preserving deformation of
Lemma~\ref{lem:amplitude-collar-retraction}.
\begin{lem}\label{lem:terminal-reduction}
Let \(K\Subset\Xi_{p,q}\) be a compact set contained in admissible
coordinate neighborhoods. There are $\delta>0$, $\mu_0>0$,
and $T>0$ such that, for $t>T$, $|\mu|<\mu_0$,
$\rho=(1+\mu)\rho_*$, and
\[
\xi\in K,\quad t|\al -\mathbf1|+|v|<\delta,
\quad\sum_i\kap _i v_i=0,
\]
there is a unique small minimizing correction
$\bar w=\bar w(\al,v,\xi,t)$ in $\mathcal E_{\lam,\xi}^{p,q}$, with
$\|\bar w\|\le C(|\al-\mathbf1|+e^{-t})$.
Write $j_\rho(\al ,v,\xi,t):=J_\rho(u_0+\bar w)$ with the scales
specified by \eqref{eq:log-mass-coordinates}. For each $(\xi,t)$,  the
$(\al ,v)$ equations have a unique solution in this parameter region,
denoted by $(\al _\rho,v_\rho)$, and their Hessian has $n$ positive
and $n-1$ negative eigenvalues. At
$(\al,v)=(\al_\rho(\xi,t),v_\rho(\xi,t))$, the correction and the
finite-dimensional variables satisfy
\begin{equation}\label{eq:fast-variable-bounds}
|\al _\rho-\mathbf1|\le C(h(t)+e^{-t}/t),\quad
 |v_\rho|\le Ch(t),\quad \|\bar w\|\le Ce^{-t}.
\end{equation}
The remaining function
$K_\rho(\xi,t):=j_\rho(\al _\rho,v_\rho,\xi,t)$ satisfies
\begin{equation}\label{eq:terminal-value}
K_\rho=c(\rho)-\tfrac12\FpqV(\xi)-2(\rho-\rho_*)t
-\left\{\begin{aligned}
     &\dfrac{4\pi\rho}{\rho_*}\cL_1(\xi)e^{-t}&&\text{ if }\, q>0,\\
& \dfrac{\rho}{2\rho_*}\cL_2(\xi)t e^{-2t}&&\text{ if }\,q=0,
\end{aligned}\right.
+R_\rho(\xi,t).
\end{equation}
There is $\om (t)\to0$ such that
\begin{equation}\label{eq:terminal-scale-derivatives}
|\pa _t^j R_\rho(\xi,t)|
\le(\om (t)+C|\mu|)h(t),\quad j=0,1,2.
\end{equation}
In admissible center coordinates,
\begin{equation}\label{eq:terminal-location-derivatives}
|D_\xi K_\rho+\tfrac12D_\xi\FpqV|\le Ch(t),\quad
 |D_tD_\xi K_\rho|\le Ch(t),\quad
\|D_\xi^2K_\rho+\tfrac12D_\xi^2\FpqV\|=o(1).
\end{equation}
The estimates are uniform for $V$ in compact subsets of $C^3_+(\Si )$.
In particular, the estimates are uniform for every sequence
$V_k\to V$ in $C^3$.
\end{lem}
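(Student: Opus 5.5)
The plan is a two-stage Lyapunov--Schmidt reduction: first eliminate the orthogonal remainder $w$, then the fast finite-dimensional variables $(\al,v)$, and finally read off $K_\rho$ by expanding $j_\rho$ in the log-mass coordinates \eqref{eq:log-mass-coordinates}.

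\textbf{Step 1: the correction $\bar w$.} Construct $\bar w(\al,v,\xi,t)$ exactly as in Corollary~\ref{cor:mini_w}. In the region $t|\al-\mathbf 1|+|v|<\delta$ the scales satisfy $\ln\lam_i=t/(2\al_i-1)+O(1)$. Since $t|\al_i-1|<\delta$, this gives $\lam_i\asymp e^t$, so all scales are comparable and $\sum_i|\al_i-1|\ln\lam_i$ is small. The weighted estimate \eqref{eq:A-uniform-weighted-Sobolev}, coercivity (Proposition~\ref{prop:Q_0positive}) and the linear estimate \eqref{def:scr-E} then give $\|\bar w\|\le C(|\al-\mathbf 1|+e^{-t})$. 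The ingredient I must supply here is the proof of \eqref{eq:A-uniform-weighted-Sobolev} and the spectral bound for $Q_0$. For these I would rescale each bubble to $\R^2$ or $\R^2_+$ and use the nondegeneracy of the standard bubble: the kernel of $-\Delta-e^{U}$ is spanned by the dilation and translation modes (only the tangential translation on the half-plane). Orthogonality to those modes and to $P\delta$ itself, which kills constants in the weighted $L^2$, yields a spectral gap by a compactness--contradiction argument.

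\textbf{Step 2: the fast variables $(\al,v)$.} Expand $j_\rho$ using Lemmas~\ref{lem:lamexpansion}, \ref{lem:amplitude-expansion} and Corollary~\ref{cor:1}, rewritten in the variables $(\al,v,t)$.
\begin{itemize}
\item \emph{Amplitude directions.} With the normalization $P\delta_i/\ln\lam_i$, Corollary~\ref{cor:1} gives $\pa_{\al_i}j\approx 4\kap_i t(\al_i-1)$ up to $O(h+e^{-t}/t)$. This is a positive definite block of size $n$.
\item \emph{Mass-distribution directions.} Along $\sum_i\kap_iv_i=0$, the term $-\rho\ln\int Ve^u$ behaves like $-\rho\ln\sum_i\kap_ie^{v_i}$ plus constants. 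Its Hessian on this hyperplane is negative definite, giving the $n-1$ negative directions.
\end{itemize}
The mixed terms are small after the rescaling $a=t(\al-\mathbf 1)$. The implicit function theorem therefore gives a unique nondegenerate critical point $(\al_\rho,v_\rho)$ of signature $(n,n-1)$. The bounds \eqref{eq:fast-variable-bounds} follow by inverting this Hessian against the forcing terms. The $v$-equation is forced by the next-order scale terms of size $h(t)$: boundary-linear $e^{-t}$ when $q>0$, and $te^{-2t}$ when $q=0$. The $\al$-equation is forced by $O(h+e^{-t}/t)$. Reinserting these values into Step~1 upgrades the correction bound to $\|\bar w\|\le Ce^{-t}$.

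\textbf{Step 3: the energy expansion and derivative bounds.} Evaluate $J_\rho(u_0)$ at $\al=\mathbf 1$, $v=0$ with the exact projection expansion of Appendix~\ref{app:0}. The constant $c(\rho)$ and the term $-\tfrac12\FpqV(\xi)$ come from $\tfrac12\|u_0\|^2-\rho\ln M$ with $M\approx(\rho_*/8)e^{2t}$. The contribution of $\rho\ln e^{2t}$, combined with the quadratic term, leaves $-2(\rho-\rho_*)t$. The next-order mass corrections come from Lemma~\ref{lem:sum_tau_i} and produce the $\cL_1e^{-t}$ or $\cL_2te^{-2t}$ term. Because $(\al_\rho,v_\rho)$ is a critical point, the fast-variable and $\bar w$ contributions enter only quadratically, i.e.\ as $O(h^2+e^{-2t})=o(h)$. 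This gives $R_\rho$ with $j=0$ in \eqref{eq:terminal-scale-derivatives}. For the $t$-derivatives I use the envelope identity $\pa_tK_\rho=(\pa_tj_\rho)|_{(\al_\rho,v_\rho)}$, which reduces $\pa_tK_\rho$ to the common scale pairing $\la\nabla J_\rho,Z_\lam\ra$ of Corollary~\ref{cor:Projbubble_sumexpansion}. Differentiating once more requires derivative bounds on $\al_\rho,v_\rho,\bar w$, obtained from the implicit function theorem together with the $C^1$ dependence in Corollary~\ref{cor:mini_w}. The $\xi$-derivatives \eqref{eq:terminal-location-derivatives} follow the same way from Lemma~\ref{lem:bubble_expansion3}. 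I would not regard $D_\xi$ applied to the $O(h)$ corrections as routine, but the only new ingredient there is a single center derivative of $\cL$, and $V\in C^3$ supplies it. Uniformity in $V$ holds because every constant depends only on $C^3$ bounds of $V$ and its positive lower bound.

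\textbf{Main obstacle.} The hardest point is the second $t$-derivative of $R_\rho$ at precision $o(h(t))$ in the purely interior case, where $h=te^{-2t}$. There the $\al$-forcing of order $e^{-t}/t$ couples to scale errors through $\lam_i^{4\al_i-2}$. Controlling this requires the differentiated expansions of Appendix~\ref{app:0} to hold with $o(te^{-2t})$ remainders. That is precisely why the smaller domain $t|\al-\mathbf 1|<\delta$, rather than $\sqrt t|\al-\mathbf 1|<r$, is imposed.
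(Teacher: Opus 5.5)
Your Step~1 and the overall two-stage architecture agree with the paper. Steps~2--3, however, have a genuine gap. You derive the $(\al,v)$-equations, $\pa_tK_\rho$ and $D_\xi K_\rho$ from the gradient-pairing expansions (Lemmas~\ref{lem:lamexpansion}, \ref{lem:amplitude-expansion}, \ref{lem:bubble_expansion3}, Corollaries~\ref{cor:1}, \ref{cor:Projbubble_sumexpansion}). Their remainders are too coarse for bounds that must hold uniformly for $|\mu|<\mu_0$ and all $t>T$ independently. (These lemmas are also stated on the small-gradient set $V_\rho^\eta(p,q,\var)$. That set does not contain the lemma's region once $|\mu|\sqrt t$ is large, since $\nabla J_\rho=\nabla J_{\rho_*}-\mu T_{\rho_*}$ and $\|T_{\rho_*}(u_0)\|\asymp\sqrt t$.)

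Concretely, the error $\epsilon_i$ of Corollary~\ref{cor:1} contains $\sum_j|\tau_j-\mu+\mu\tau_j|/\ln\lam_i$. Converting $\pa_{\al_i}$ at fixed $(v,\xi,t)$ into pairings adds $O(1)$ multiples of $\la\nabla J_\rho,\lam_j\pa_{\lam_j}P\delta_{\lam_j,\xi_j}\ra=2\al_j\kap_j(\tau_j-\mu+\mu\tau_j)+\dots$. The $(\al,v)$-equations do not contain the balance $-2\rho_*\mu+\dots=0$ of the $t$-equation. Since $\sum_j\kap_j\tau_j=O(h)$ by the mass expansion, you get $\sum_j\kap_j(\tau_j-\mu+\mu\tau_j)=-\rho_*\mu+O(h)$.

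Your route therefore yields only $|\al_\rho-\mathbf1|\le C(h+e^{-t}/t+|\mu|/t)$. Likewise, the $O(|\mu|/\lam_i)$ remainder in Lemma~\ref{lem:bubble_expansion3} gives only $|D_\xi K_\rho+\frac12D_\xi\FpqV|\le C(h+|\mu|)$. The amplitude Hessian is $\approx4t\operatorname{diag}(\kap_i)$, so an uncontrolled shift of size $|\mu|/t$ costs $\mu^2/t$ in $K_\rho$. It also costs $\mu^2/t^2$ in the $O(\sum_j|\al_j-1|^2)$ remainder of $\la\nabla J_\rho,Z_\lam\ra$. Neither is $\le(\om(t)+C|\mu|)h(t)$ as $t\to\infty$ at fixed $\mu$. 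That is exactly the regime used in Proposition~\ref{prop:bubbling-branches}, where $k_\rho'$ must be monotone on all of $(T,\infty)$.

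The $\mu$-terms do cancel, but $O(\cdot)$ remainders cannot show it. The missing idea is the exact leading energy in log-mass coordinates, $\mathcal E_\rho$ in \eqref{eq:A-logmass-leading-energy}. Its leading mass $\sum_i\kap_ie^{2t+v_i}/8$ does not depend on $\al$, so $\rho$ enters only through the $(\al,\xi)$-free term $-\rho[2t+\ln(\frac18\sum_i\kap_ie^{v_i})]$. A direct computation then gives:
\begin{itemize}
\item $D_\al\mathcal E_\rho(\mathbf1,v,\xi,t)=0$ and $D_{\al v}\mathcal E_\rho(\mathbf1,v,\xi,t)=0$ for every $\rho$ and $v$;
\item $D_v\mathcal E_\rho(\mathbf1,0,\xi,t)=0$ on the hyperplane $\sum_i\kap_iv_i=0$;
\item at $\al=\mathbf1$, the $\xi$-dependence is exactly $-\frac12\FpqV$.
\end{itemize}
Consequently the fast equations are forced only by $J_\rho(u_0)-\mathcal E_\rho$ (amplitude derivative $O(th)$, $v$-derivative $O(h)$) and by the fiber correction.

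There is a second gap. The $C^1$ dependence in Corollary~\ref{cor:mini_w} is qualitative. Yet $\pa_t^2R_\rho=o(h)$ and the $D_\xi^2$ estimate need the bounds \eqref{eq:A-fiber-value-derivatives}, $|D^j_{(\xi,t,v)}(j_\rho-J_\rho(u_0))|\le C(|\al-\mathbf1|+e^{-t})^2$ for $j\le2$, with each amplitude derivative costing at most a factor $t$. The space $\mathcal E_{\lam,\xi}^{p,q}$ moves with the parameters and $\nabla J_\rho(u_0+\bar w)$ is not small. So terms such as $DJ_\rho(u_0+\bar w)[\pa_t\bar w]$ do not vanish, and the bound $C\|\bar w\|\,\|\nabla J_\rho\|$ is far too weak.

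The paper pulls the fiber problem back to the fixed space $\mathcal E^0$ via $\Phi_\xi$ and $T_\theta$, and differentiates only the coefficient forms and the density $R_\theta$. It then obtains the second derivatives of $K_\rho$ from the Schur formula \eqref{eq:A-reduced-chain-rule}, using $|j_{\rho,t\al}|+|j_{\rho,\xi\al}|\le C(th+e^{-t})$ and $|j_{\rho,tv}|+|j_{\rho,\xi v}|\le Ch$. These mixed bounds again rest on the exact identities for $\mathcal E_\rho$.
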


\begin{prop}\label{prop:bubbling-branches}
Let $\xi^*$ be a nondegenerate critical configuration of $\FpqV$ and
suppose $\cL(\xi^*)\ne0$. For sufficiently small $0<|\mu|$, there is
exactly one solution in \(V(\mu,\xi^*)\) when
\(\sgn\mu=\sgn\cL(\xi^*)\), and no such solution when
\(\sgn\mu=-\sgn\cL(\xi^*)\).
If \(\sgn\mu=\sgn\cL(\xi^*)\), this solution is nondegenerate and satisfies
\begin{equation}\label{eq:exact-bubbling-index}
\morse(J_{\rho},u_\rho)
=3p+2q-1-\morse(\FpqV,\xi^*)+\mathbf1_{\{\mu>0\}}.
\end{equation}
Here each unordered critical configuration is counted once. In
particular, for \( [\xi^*]\in\cV_{p,q}^{-}\), the Morse index of the
corresponding blow-up solution for \(\rho<\rho_*\) is
\begin{equation}\label{def:l_inf}
\iota_\infty(\xi^*)=3p+2q-1-\morse(\FpqV,\xi^*),
\end{equation}
and the limiting energy is
\[
C_\infty(\xi^*)=-\rho_*(1+\ln (\rho_*/8))
-\tfrac12\FpqV(\xi^*).
\]
The conclusions hold uniformly under convergent \(C^3\) perturbations
of \(V\) and for the nearby nondegenerate critical configurations obtained
by continuation.
\end{prop}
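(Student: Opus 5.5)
The plan is to work entirely through the finite-dimensional reduction of Lemma~\ref{lem:terminal-reduction}, applied on a compact neighbourhood $K$ of $\xi^*$. By Theorem~\ref{thm:alter_blowup_sols} and the bounds \eqref{eq:condi-V-1}--\eqref{eq:condi-V-2}, every solution in $V(\mu,\xi^*)$ has parameters in the reduction domain. Indeed, $|\al-\mathbf 1|\le C|\mu|$ and $h(t)\asymp|\mu|$ give $t|\al-\mathbf1|\to0$, and $v$ is small by Corollary~\ref{cor:lam_i_samespeed} and $\tau_i=O(|\mu|)$. Its orthogonal part is the minimizing correction $\bar w$ by the uniqueness in Corollary~\ref{cor:mini_w}.

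Using \eqref{eq:graph-tangent-comparison}, critical points of $J_\rho$ in $V(\mu,\xi^*)$ correspond bijectively to critical points of $j_\rho$. Since $(\al_\rho,v_\rho)$ is the unique solution of the fast equations, these in turn correspond to critical points of $K_\rho(\xi,t)$.

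\textbf{Counting solutions.} Differentiating \eqref{eq:terminal-value} in $t$ and using \eqref{eq:terminal-scale-derivatives} gives, for $q>0$,
\[
\pa_tK_\rho=-2\mu\rho_*+\tfrac{4\pi\rho}{\rho_*}\cL_1(\xi)e^{-t}+o(e^{-t})+O(|\mu|e^{-t}).
\]
For $q=0$ the analogous formula holds with $\tfrac{\rho}{\rho_*}\cL_2(\xi)te^{-2t}(1+o(1))$ in place of the $\cL_1$ term. Near $\xi^*$ the coefficient $\cL(\xi)$ has a fixed sign, so $\pa_tK_\rho=0$ forces $\sgn\mu=\sgn\cL(\xi^*)$. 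Conversely, when the signs agree, the equation $\pa_tK_\rho=0$ has a unique root $t=t_\mu(\xi)$ with $h(t_\mu)\asymp|\mu|$. Moreover $\pa_t^2K_\rho$ has sign $-\sgn\cL$ times a quantity $\asymp h$; here I use \eqref{eq:terminal-scale-derivatives} for $j=2$ and note that $\pa_t^2 h\approx -c\,\pa_t h$.

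The $\xi$-equation reads $D_\xi K_\rho=-\tfrac12D_\xi\FpqV+O(h)$, with Hessian $-\tfrac12D^2\FpqV+o(1)$ and mixed derivatives $O(h)$ by \eqref{eq:terminal-location-derivatives}. I then apply the implicit function theorem to the full system $(\pa_tK_\rho,D_\xi K_\rho)=0$ in rescaled variables $(\xi,s)$ with $t=t_\mu+s$. The mixed block is $O(h)$ while the diagonal blocks are of order $1$ and $h$, so the Schur complement is nondegenerate. This should be the main obstacle: checking that the $O(h)$ cross terms do not spoil invertibility, which amounts to showing their product $O(h^2)$ is small compared with $h\cdot 1$. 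The result is exactly one critical point, nondegenerate, with $\xi$ within $O(h)$ of $\xi^*$.

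\textbf{Index.} Since $\bar w$ is a nondegenerate minimum in the orthogonal directions, the index of the full solution equals the index of $j_\rho$. By the Hessian statement in Lemma~\ref{lem:terminal-reduction}, the $(\al,v)$ block contributes $n-1=p+q-1$ negative eigenvalues. The $\xi$ block contributes the number of negative eigenvalues of $-\tfrac12D^2\FpqV$, namely $(2p+q)-\morse(\FpqV,\xi^*)$. The $t$ direction contributes $1$ if $\pa_t^2K_\rho<0$, i.e.\ if $\cL>0$, equivalently $\mu>0$, and $0$ otherwise. Justifying that the index is additive over these blocks uses the same block-diagonal dominance argument as in the counting step. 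Adding the contributions gives \eqref{eq:exact-bubbling-index}.

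For $[\xi^*]\in\cV^-_{p,q}$ a solution exists only when $\mu<0$, where the index equals $\iota_\infty$. Evaluating \eqref{eq:terminal-value} at $\mu\to0^-$, where $h(t_\mu)\to0$ and $\mu t_\mu\to0$, gives $K_\rho\to c(\rho_*)-\tfrac12\FpqV(\xi^*)$. This equals $C_\infty$ since $c(\rho_*)=-\rho_*(1+\ln(\rho_*/8))$. Uniformity in $V$ follows from the uniform statement in Lemma~\ref{lem:terminal-reduction} together with continuity of nondegenerate critical points of $\FpqV$ under $C^3$ perturbation.
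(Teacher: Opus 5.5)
The reduction, the sign analysis of $\pa_tK_\rho$, and the Schur-complement index count are essentially the paper's argument. Solving the $(\xi,t)$ system at once, rather than first solving $D_\xi K_\rho=0$ for $\xi_\rho(t)$ and then studying the scalar $k_\rho(t)=K_\rho(\xi_\rho(t),t)$, is only a reorganization. It rests on the same bound $K_{\rho,t\xi}K_{\rho,\xi\xi}^{-1}K_{\rho,\xi t}=O(h^2)=o(h)$. The implicit function theorem alone is local, so for global uniqueness combine your fibrewise uniqueness of $t_\mu(\xi)$ with injectivity of $\xi\mapsto D_\xi K_\rho(\xi,t_\mu(\xi))$ near $\xi^*$.

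The genuine gap is in the existence half of ``exactly one solution in $V(\mu,\xi^*)$''. You show that every solution in $V(\mu,\xi^*)$ lies in the reduction domain; for that direction the definition of $V(\mu,\xi^*)$ already suffices. You never check the converse: that the critical point built from the reduced equations actually belongs to $V(\mu,\xi^*)$. Equation \eqref{eq:graph-tangent-comparison} only shows that it is a critical point of $J_\rho$.

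For $q>0$ this is harmless. With $h=e^{-t}\asymp|\mu|$, the bounds \eqref{eq:fast-variable-bounds} give $|\al-\mathbf1|+\|\bar w\|=O(|\mu|)$. The other conditions in \eqref{eq:condi-V-1} then follow, using Proposition~\ref{prop:estimates_rho_k} for $\tau$.

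For $q=0$ this fails. From $h=te^{-2t}\asymp|\mu|$ you get $e^{-t}\asymp(|\mu|/t)^{1/2}$. Hence \eqref{eq:fast-variable-bounds} only yields $|\al-\mathbf1|\lesssim e^{-t}/t\asymp|\mu|^{1/2}t^{-3/2}$ and $\|\bar w\|\lesssim e^{-t}\asymp|\mu|^{1/2}t^{-1/2}$. These are far above the thresholds $C|\mu|$ and $C|\mu|^{3/4}$ imposed in \eqref{eq:condi-V-2}, so the reduction's a priori bounds cannot place the solution in $V(\mu,\xi^*)$.

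The paper closes this step as follows. The constructed critical points form a blowing-up family of exact solutions with limiting configuration $\xi^*$, so Lemma~\ref{lem:interior-isolating-bootstrap} applies. Its bootstrap uses the exact orthogonal Euler equation together with Corollary~\ref{cor:1}, and improves the bounds to $\sum_i|\al_i-1|+\sum_i|\tau_i|=O(|\mu|)$ and $\|w\|=O(|\mu|\sqrt{\ell})$; this is also why the constants in \eqref{eq:condi-V-2} must be chosen larger than those in that lemma. You need this step before the count is justified, or an equivalent one: apply Theorem~\ref{thm:alter_blowup_sols} to the constructed family and identify its limit configuration with $\xi^*$.
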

\begin{proof}
By \eqref{eq:terminal-location-derivatives} and the implicit function
theorem, the center equations have a unique solution
$\xi=\xi_\rho(t)$ near $\xi^*$ for all sufficiently large $t$, with
\(|\xi_\rho(t)-\xi^*|+|\pa _t\xi_\rho(t)|\le Ch(t).\)
The Hessian in the location variables has $d-\morse(\FpqV,\xi^*)$ negative eigenvalues.
Set $k_\rho(t):=K_\rho(\xi_\rho(t),t)$. Differentiating
$D_\xi K_\rho(\xi_\rho(t),t)=0$ and applying the chain rule yields
\begin{equation}\label{eq:branch-chain-rule}
k_\rho'=K_{\rho,t},\quad
k_\rho''=K_{\rho,tt}
-K_{\rho,t\xi}(K_{\rho,\xi\xi})^{-1}K_{\rho,\xi t}.
\end{equation}
By \eqref{eq:terminal-location-derivatives}, the Schur correction in
\eqref{eq:branch-chain-rule} is $O(h(t)^2)=o(h(t))$. Thus
\eqref{eq:terminal-value}--\eqref{eq:terminal-location-derivatives} yield
\begin{equation}\label{eq:scalar-branch-equation}
\begin{aligned}
k_\rho'(t)&=-2\rho_*\mu+
\left\{\begin{aligned}
 &4\pi(1+\mu)\cL_1(\xi^*)e^{-t}&&\text{ if }\, q>0,\\
 &(1+\mu)\cL_2(\xi^*)t e^{-2t}&&\text{ if }\, q=0,
\end{aligned}\right.
+o(h(t))+O(|\mu|h(t)),\\
k_\rho''(t)&=
\left\{\begin{aligned}
 &-4\pi(1+\mu)\cL_1(\xi^*)e^{-t}&&\text{ if }\,q>0,\\
 &-2(1+\mu)\cL_2(\xi^*)t e^{-2t}&&\text{ if }\,q=0,
\end{aligned}\right.
+o(h(t))+O(|\mu|h(t)).
\end{aligned}
\end{equation}
By \eqref{eq:scalar-branch-equation}, choose $T$ large and then $\mu_0$ small. Since $\cL(\xi^*)\ne0$,
$k_\rho'$ is strictly monotone on $(T,\infty)$, with monotonicity sign
$-\sgn\cL(\xi^*)$. Moreover,
$\lim_{t\to\infty}k_\rho'(t)=-2\rho_*\mu$ and
$\sgn k_\rho'(T)=\sgn\cL(\xi^*)$ for $|\mu|$ sufficiently small.
Thus $k_\rho'(t)=0$ has exactly one solution \(t_\rho>T\) precisely when
\(\sgn\mu=\sgn\cL(\xi^*)\). At $t=t_\rho$,
\begin{equation}\label{eq:scale-hessian-at-branch}
h(t_\rho)=
\left\{\begin{aligned}
&\dfrac{\rho_*\mu}{2\pi\cL_1(\xi^*)}(1+o(1))&&\text{ if }\, q>0,\\
&\dfrac{2\rho_*\mu}{\cL_2(\xi^*)}(1+o(1))&&\text{ if }\,q=0,
\end{aligned}\right.\quad
k_\rho''(t_\rho)=
\left\{\begin{aligned}&-2\rho_*\mu+o(|\mu|)&&\text{ if }\,q>0,\\
&-4\rho_*\mu+o(|\mu|)&&\text{ if }\,q=0.
\end{aligned}\right.
\end{equation}
Thus the sign condition in Proposition~\ref{prop:bubbling-branches} is necessary
and sufficient for a bubbling branch near $\xi^*$ when $\cL(\xi^*)\ne0$.
Since $\lam _i\sqrt{\cF_{\xi^*}^i(\xi_i^*)}=e^{t_\rho}(1+o(1))$,
the corresponding scale orders are
\[
\lam _i\asymp
\left\{\begin{aligned}
&|\mu|^{-1}&&\text{ if }\,q>0,\\
&(|\ln|\mu||/|\mu|)^{1/2}&&\text{ if }\,q=0.
\end{aligned}\right.
\]
The orders are exact when $\cL(\xi^*)\ne0$; \eqref{eq:scale-hessian-at-branch}
does not determine the scale orders when $\cL(\xi^*)=0$.
The reduced critical point corresponds to a critical point of the full
functional: the equation projected onto $\mathcal E_{\lam,\xi}^{p,q}$ holds, and the
tangential derivative of the graph of the minimizing correction is
invertible by
\eqref{eq:graph-tangent-comparison}. Conversely, every solution in $V(\mu,\xi^*)$, defined by
\eqref{eq:condi-V-1}--\eqref{eq:condi-V-2}, has this parametrization
by Proposition~\ref{prop:unique} and the estimates in
Proposition~\ref{prop:estimates_rho_k} and
Lemma~\ref{lem:interior-isolating-bootstrap}.
Since $t_\rho\to\infty$, the constructed critical points form a
blowing-up family of exact solutions with limiting configuration
$\xi^*$. Lemma~\ref{lem:interior-isolating-bootstrap} for $q=0$ and the boundary
estimates in Proposition~\ref{prop:estimates_rho_k} for $q>0$ imply
$u_\rho\in V(\mu,\xi^*)$ in both cases. Choose the constants in
\eqref{eq:condi-V-1}--\eqref{eq:condi-V-2} larger than the constants
in Proposition~\ref{prop:estimates_rho_k} and
Lemma~\ref{lem:interior-isolating-bootstrap}.

We next compare the reduced Morse index with the Morse index of the full
Hilbert-space Hessian. At a critical point, let \(\mathscr T\) be the
tangent space generated by the bubble parameters and let
\(\mathscr E=\mathscr T^\perp\). With respect to
$\oH=\mathscr T\oplus\mathscr E$, write
\[
\mathcal H=D^2J_\rho(u_\rho)=
\begin{pmatrix}A&B\\B^*&C\end{pmatrix},\quad C\ge cI>0.
\]
For $z\in\mathscr T$ and $w\in\mathscr E$, completing the square yields
\begin{equation}\label{eq:exact-schur-identity}
\langle\mathcal H(z,w),(z,w)\rangle
=\langle(A-BC^{-1}B^*)z,z\rangle+\langle C(w+C^{-1}B^*z),w+C^{-1}B^*z\rangle.
\end{equation}
Consequently, the negative index and nullity of $\mathcal H$ equal
those of $S:=A-BC^{-1}B^*$. Differentiating the projected Euler
equation at this critical point shows that the Hessian of the reduced
functional is
$M^*SM$, where
$M=P_{\mathscr T}D(u_0+\bar w)$ is invertible. Since
$DJ_\rho(u_\rho)=0$, the chain rule has no term involving a second
derivative of the parametrization.

Apply \eqref{eq:exact-schur-identity} with respect to $\oH=\mathscr T\oplus\mathscr E$,
then complete the square in the $(\al ,v)$ variables and
the location variables. The corresponding Hessians are invertible.
By Lemma~\ref{lem:terminal-reduction}, the first has $n-1$ negative eigenvalues; the
second has $d-\morse(\FpqV,\xi^*)$ negative eigenvalues. Equation
\eqref{eq:scale-hessian-at-branch} shows that the remaining scalar
second derivative is nonzero and is negative exactly when $\mu>0$. This proves
\eqref{eq:exact-bubbling-index}, including nondegeneracy, and then
\eqref{def:l_inf}. Since $\mu t_\rho\to0$, the limiting energy follows
from \eqref{eq:terminal-value}. The estimates in Lemma~\ref{lem:terminal-reduction} are uniform under convergent
$C^3$ perturbations. The implicit function theorem and continuity of
$D^2\FpqV$ and $\cL$ preserve the reduced Morse index and the sign
of $\cL$ on the corresponding critical branches.
\end{proof}

\begin{rem}\upshape
When $\cL(\xi^*)\ne0$, equations
\eqref{eq:scalar-branch-equation}--\eqref{eq:scale-hessian-at-branch}
show that the orders $\lambda^{-1}$ for $q>0$ and
$\lambda^{-2}\ln\lambda$ for $q=0$ are exact, with
$\sgn(\rho-\rho_*)=\sgn\cL(\xi^*)$.
If $\cL(\xi^*)=0$, the first retained scale term vanishes, and
higher-order terms are needed to decide compactness and the branch index.
Thus condition~\ref{item:C2} is a sufficient hypothesis for the present
classification; its necessity under condition~\ref{item:C1} is not asserted.
\end{rem}

\begin{rem}\upshape
The spatial and configuration gradients satisfy the exact identity
\[
\tfrac12\nabla_{\xi_i}\FpqV(\xi)
=\kap _i\nabla_x\ln \cF_\xi^i(x)|_{x=\xi_i}
=\frac{\kap _i}{\cF_\xi^i(\xi_i)}
 \nabla_x\cF_\xi^i(x)|_{x=\xi_i},
\]
with tangential derivatives for boundary centers. On a compact center
set, $\cF_\xi^i(\xi_i)^{-1}$ is bounded above and below by positive
constants. Thus the factor $\cF_\xi^i(\xi_i)^{-1}$ can be included
in comparison constants, but must be kept in an exact identity.

For an interior configuration, \eqref{Green} yields
\(\Delta_g\ln\cF_\xi^i=\Delta_g\ln V+\rho_*/|\Si |_g\)
near \(\xi_i\). At a critical point of \(\Fv{p}{0}\), differentiating
\eqref{eq:reduce_fun_0} yields \(\nabla_x\ln\cF_\xi^i(\xi_i)=0\). Hence
\begin{equation}\label{eq:interior-L2-critical-identity}
\cL_2(\xi)=\sum_i\kap _i\cF_\xi^i(\xi_i)
\Big(\Delta_g\ln V(\xi_i)+\frac{\rho_*}{|\Si |_g}-2K_g(\xi_i)\Big),
\quad \xi\in\operatorname{Crit}(\Fv{p}{0}).
\end{equation}
Indeed, substitute
\(\Delta_g\cF=\cF(\Delta_g\ln\cF+|\nabla\ln\cF|_g^2)\)
into \eqref{eq:def-cL}. Formula~\eqref{eq:interior-L2-critical-identity}
is  sufficient to determine the sign without expanding
\(\Delta_g\cF_\xi^i\) term by term.
\end{rem}

\section{Geometric reduction and amplitude preparation}\label{sec:resonant-descent}

Fix $\rho_*=4\pi m$ and set $J=J_{\rho_*}$. We distinguish
contraction of the orthogonal remainder and amplitude preparation
on the minimizing graph. The reductions below specify the bubble directions
used in Section~\ref{sec:full-pseudogradient}.

\subsection{Contraction of the orthogonal remainder}

The positive orthogonal directions contract to the minimizing graph.
On a geometric modulation chart where Corollary~\ref{cor:mini_w}
applies, write $\theta=(\alpha,\xi,\lambda)$ and choose $r>0$ with
$\|\bar w_\theta\|<r/2$ and, for some $c>0$,
\[
 D_w^2J(\psi_\theta+w)[z,z]\ge c\|z\|^2,
 \quad \|w\|<r,\quad z\in E_\theta.
\]
For $z=w-\bar w_\theta$, the maps
\[
 H_s(\psi_\theta+w)
 =\psi_\theta+\bar w_\theta+(1-s)z,\quad 0\le s\le1,
\]
define a continuous deformation onto the graph and fix the graph
pointwise. Indeed, the segment stays in the radius-$r$ ball, and the
projected Euler equation implies
\[
 \frac{\d}{\d s}J(H_s(\psi_\theta+w))
 =-\int_0^{1-s}D_w^2J(\psi_\theta+\bar w_\theta+az)[z,z]\,\d a
 \le-c(1-s)\|z\|^2.
\]
Thus the deformation preserves every sublevel. Continuity follows from
the local modulation coordinates and Corollary~\ref{cor:mini_w}.

This contraction is defined on the geometric parameter tube. An additional
cutoff condition $\|\nabla J\|<\varepsilon$ need not be preserved by
the segment homotopy. The deformation is asserted on the geometric tube. Also, the natural amplitude estimate
\eqref{eq:apha_i_estimate} cannot be substituted for the hypothesis of
Lemma~\ref{lem:terminal-reduction}: if $\ell\to\infty$ and
$\alpha-1=\ell^{-3/4}$, then
$|\alpha-1|\sqrt\ell\to0$, whereas
$|\alpha-1|\ell\to\infty$. This calculation concerns the two amplitude
estimates; it does not assert that every such parameter sequence is an
orbit of a chosen field.
Lemma~\ref{lem:amplitude-collar-retraction} establishes the required
descending deformation between the natural and terminal amplitude domains.

\subsection{Geometric charts on fixed parameter domains}

Set $n=p+q$, $d=2p+q$, and $\rho_*=\sum_i\kappa_i$.
The logarithmic mass coordinates \eqref{eq:log-mass-coordinates}
are also defined outside the small-gradient neighborhood. We use them on
\begin{equation}\label{eq:resonant-geometric-domain}
 \xi\in K\Subset\Xi_{p,q},\quad t>T,\quad
 \alpha_i=1+b_i/\sqrt t,\quad |b|<r,\quad |v|<r,
 \quad\sum_i\kappa_i v_i=0.
\end{equation}
There is no gradient cutoff in this definition. The scales are the
positive scales specified by \eqref{eq:log-mass-coordinates}.
All statements below are local in the center charts and invariant under
permutations of centers of the same type.

We first establish uniform reduction on a fixed geometric parameter domain.
\begin{prop}
\label{prop:resonant-geometric-reduction}
For sufficiently small fixed $r>0$ and sufficiently large $T$, the
domain \eqref{eq:resonant-geometric-domain} has a unique small fiber
minimizer $\bar w\in E_{\lambda,\xi}^{p,q}$, with
\begin{equation}\label{eq:geometric-fiber-estimate}
 \|\bar w\|\le C(|\alpha-\mathbf1|+e^{-t/4}).
\end{equation}
The fiber Hessian is uniformly positive in a fixed ball about $\bar w$.
The minimizing graph and that ball define actual geometric modulation
charts. Write $j(b,v,\xi,t)=J(\psi+\bar w)$ and
$c_*=c(\rho_*)$. On every smaller parameter domain,
\begin{equation}\label{eq:geometric-fixed-base}
 j(b,v,\xi,t)=F_\infty(b,v,\xi)+R(b,v,\xi,t),\quad
 \|R(\cdot,t)\|_{C^1}\to 0,
\end{equation}
where
\[
 F_\infty(b,v,\xi)=c_*-\frac12\mathbf F_{p,q}^V(\xi)
 +2\sum_i\kappa_i b_i^2
 -\rho_*\ln \Big(\frac{\sum_i\kappa_i e^{v_i}}{\rho_*}\Big).
\]
The retained second derivatives are uniformly bounded. For the $bb$
submatrix, their difference from those of $F_\infty$ is at most
$Cr^2+o(1)$; for every other retained Hessian submatrix the difference is $o(1)$.
The derivatives $j_{tz}$ are bounded, where $z=(b,v,\xi)$ and $t$ is
differentiated with $b$ fixed.

The scale and combined amplitude--scale pairings used in
\eqref{eq:graph-preparation-pairings} hold on this graph, with the
same form of error bound and with $H$ replaced by $e^{-t/4}$.
In particular their validity here does not require
$\|\nabla J\|<\epsilon$ or $t|\alpha-\mathbf1|<r$.
\end{prop}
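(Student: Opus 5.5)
The plan is to repeat the fiber reduction of Corollary~\ref{cor:mini_w} on the larger domain \eqref{eq:resonant-geometric-domain}, where only $|\al-\mathbf1|\lesssim r/\sqrt t$ is known, and then expand the reduced energy with the local expansions of Section~\ref{sec:2}. That larger domain is exactly where the natural estimate \eqref{eq:apha_i_estimate} fails to give $t|\al-\mathbf1|=o(1)$.

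\textbf{Step 1: scale comparability.} From \eqref{eq:log-mass-coordinates} we get $(4\al_i-2)\ln\lam_i=2t+O(1)$, hence
\[
\ln\lam_i=\frac{t}{1+2b_i/\sqrt t}+O(1)=t-2b_i\sqrt t+O(r^2).
\]
Thus every scale lies between $e^{t/2}$ and $e^{2t}$, and the ratios $\lam_i/\lam_j$ are at most $e^{C r\sqrt t}$. Also $|\al_i-1|\ln\lam_i\le Cr\sqrt t$, which need not be small. So Corollary~\ref{cor:lam_i_samespeed} and Lemma~\ref{lem:sum_tau_i} cannot be quoted directly. I will instead use only the mass estimates that require no comparability: \eqref{eq:coarse-total-mass-balance} and Lemma~\ref{lem:Ve-u-integral}. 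In these, each local mass $M_i$ equals $\kap_i e^{2t+v_i}/8$ exactly, by \eqref{eq:log-mass-coordinates}.

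\textbf{Step 2: fiber reduction.} The coercivity of $Q$ in Proposition~\ref{prop:Q_0positive} only needs the mass ratios $\rho M_i/M_0\to\kap_i$. These fail on this domain: they equal $\rho_*\kap_ie^{v_i}/\sum_j\kap_je^{v_j}+o(1)$. The proof of Proposition~\ref{prop:estimates_rho_k} uses the $Q_0$ spectral argument. I expect that argument still to give uniform coercivity when the weights are perturbed by factors $e^{v_i}$ with $|v|<r$, since this is an $O(r)$ perturbation of a positive form. The contraction argument of Corollary~\ref{cor:mini_w} then gives $\bar w$ and the positive fiber Hessian.

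For \eqref{eq:geometric-fiber-estimate}, I bound $f$ using Lemma~\ref{lem:A.7}. With $\mathscr E$ as in \eqref{def:scr-E}, the terms $1/\lam_i$ and $\ln^{3/2}\lam_i/\lam_i^2$ are at most $Ce^{-t/4}$ by the bound $\lam_i\ge e^{t/2}$. The only worry is that $\lam_i^{4\al_i-2}$ is replaced by $e^{2t+v_i}$. The chart and minimizing property then follow exactly as in the contraction subsection.

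\textbf{Step 3: energy expansion.} I compute $J(\psi)$ from the mass expansion. The Dirichlet part is
\[
\tfrac12\|\psi\|^2=\sum_i\al_i^2\kap_i\cdot 2\ln\lam_i+\text{(interaction and Robin terms)}+o(1).
\]
The logarithmic part is $-\rho_*\ln\sum_iM_i$. Inserting $\ln\lam_i=t-2b_i\sqrt t+O(r^2)$ and $\al_i^2=1+2b_i/\sqrt t+b_i^2/t$, the $\sqrt t$ terms cancel. This cancellation is precisely why the variable $b$ uses the $\sqrt t$ normalization. The surviving $O(1)$ terms are $2\sum_i\kap_ib_i^2$, the term $-\rho_*\ln(\sum_i\kap_ie^{v_i}/\rho_*)$, the constants combining into $c_*$, and $-\tfrac12\FpqV$. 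The $t$-linear terms cancel because $2\sum_i\kap_i=2\rho_*$. The $\bar w$ contribution is $O(\|\bar w\|^2)$, which yields the $Cr^2$ error in the $bb$ block. The $C^1$ and second-derivative statements come from differentiating the same expansions, using the parameter derivatives from Lemma~\ref{lem:Gram} and Appendix~\ref{app:0}. The pairings are handled by the same computation as in Lemma~\ref{lem:lamexpansion} and Corollary~\ref{cor:1}, with the error $H$ replaced by $e^{-t/4}$.

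\textbf{Main obstacle.} The main obstacle is making all expansions uniform when $|\al-1|\ln\lam_i\sim r\sqrt t$ is unbounded. The factors $\lam_i^{4(\al_i-1)}$ are then not close to $1$ and must be absorbed exactly into the coordinates \eqref{eq:log-mass-coordinates}, not treated as errors. I would first verify this absorption for the mass integral in Lemma~\ref{lem:Ve-u-integral}, and then propagate it to the derivative estimates.
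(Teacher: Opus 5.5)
Your overall route is the paper's. The log-mass coordinates make the local masses exactly $\kap_ie^{2t+v_i}/8$; fiber coercivity survives a small change of density; the fiber functional has size $C(|\al-\mathbf1|+e^{-t/4})$; and the value comes from the explicit leading energy at $\al-\mathbf1=b/\sqrt t$. However, Step~3 cannot be carried out from Step~1 as written. The remainder in $\ln\lam_i=t-2b_i\sqrt t+O(r^2)$ is really $O(1)$. It contains $\tfrac12X_i$, where $X_i:=v_i-\ln C_i+\ln(2\al_i-1)$ and $C_i=(\cF^i_\xi\rg^i_\xi)(\xi_i)$, and this term is needed for $-\tfrac12\FpqV$. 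It also contains $4b_i^2$, which is exactly the order that fixes the $b$-coefficient. Multiplying your two expansions leaves $\al_i^2\ln\lam_i=t-3b_i^2+O(r^2)$, so the $b_i^2$-coefficient is lost in the remainder, and its visible part even has the wrong sign. The true value is $t+b_i^2+\tfrac12X_i+o(1)$. Use the exact identity $2\kap_i\al_i^2\ln\lam_i=\frac{\kap_i\al_i^2}{2\al_i-1}(2t+X_i)$ with $\al^2/(2\al-1)=1+a^2/(1+2a)$, as in \eqref{eq:A-logmass-leading-energy}. Then $2t\,a_i^2/(1+2a_i)=2b_i^2/(1+2b_i/\sqrt t)$, and all terms linear in $a$ cancel exactly.

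The more serious gap is in the derivative statements: $\|R(\cdot,t)\|_{C^1}\to0$, the $bb$ bound $Cr^2+o(1)$, $o(1)$ for the other blocks, and bounded $j_{tz}$. You leave these to ``differentiating the same expansions'', and for the $bb$ block you attribute the error to the fiber value being $O(\|\bar w\|^2)$. That size bound is only $O(|b|^2/t+e^{-t/2})$ and controls no $b$-derivatives. The missing point is that at fixed $(v,\xi,t)$ an amplitude derivative moves the scales, $D_\al\ln\lam_i=O(t)$, so $\pa_b=t^{-1/2}D_\al$ moves every bubble at rate $\sqrt t$. Work on fixed pulled-back fiber spaces, as in Step~2 of the proof of Lemma~\ref{lem:terminal-reduction}, with $f_\theta,C_\theta$ the fiber linear and quadratic terms and $e=|\al-\mathbf1|+e^{-t/4}$. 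Then $\|D_\al f_\theta\|\le C(1+te)$, $\|D_\al^2f_\theta\|\le C(t+t^2e)$, $\|D_\al C_\theta\|\le Ct$ and $\|D_\al^2C_\theta\|\le Ct^2$. Hence $\pa_b\bar w=O(|b|+o(1))$ is \emph{not} small, and two $b$-derivatives of the fiber value $-\tfrac12C_\theta^{-1}[f_\theta,f_\theta]$ produce the $C|b|^2$ term. These $t$-weighted bounds, which yield \eqref{eq:geometric-differentiated-errors}, are what prove the $C^1$ and Hessian claims. For $j_{tz}$ you also need the chain rule $\pa_t|_b=\pa_t|_\al-\sum_i(a_i/2t)\pa_{\al_i}$.

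Two smaller points. First, the contraction subsection presupposes a modulation chart, so you must obtain it from Lemmas~\ref{lem:Gram} and~\ref{lem:quantitative-bubble-matching}; neither uses a gradient cutoff. Second, Lemma~\ref{lem:lamexpansion} and Corollary~\ref{cor:1} are stated under the gradient cutoff, which there only serves to make $\tau_i$ small. You should therefore note that on this graph $|\tau|\le C|v|+o(1)$. You should also note that the core integrals are exact for every $\al$, via $\mathbb E_\al[(1+|z|^2)^{-1}]=(2\al-1)/(2\al)$ and $\mathbb E_\al[\ln(1+|z|^2)]=1/(2\al-1)$.
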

\begin{proof}
Set $a=\alpha-\mathbf1$, $\zeta=(v,\xi,t)$ and $E(t)=e^{-t/4}$.
The exact change of scales yields, uniformly on the fixed parameter domain,
\begin{equation}\label{eq:geometric-scale-comparison}
 \ln \lambda_i=t+O(\sqrt t),\quad
 D_\zeta^j\ln \lambda_i=O(1),\quad
 D_\alpha^j\ln \lambda_i=O(t^j),\quad j=1,2.
\end{equation}
Here the $\zeta$ derivatives keep $\alpha$ fixed. In particular all
logarithms are comparable, though their exponentials need not be.
Every polynomial in $t$ times a positive-power scale error needed below
is bounded by $CE(t)$ after increasing $T$.

We establish the estimates required on this larger domain.
For a fixed $1<s<3/2$, and $|a_i|$ sufficiently small, the mean-value
theorem yields
\[
 \|(1+|z|^2)^{-2a_i}-1\|_{L^s((1+|z|^2)^{-2}dz)}
 \le C_s|a_i|.
\]
The same estimate holds with any of the finitely many logarithmic
factors produced by the two parameter derivatives, with the
corresponding fixed constant. To see finiteness, dominate the
integrands by
$C(1+|z|^2)^{-2+2s a_0}(1+\ln (1+|z|^2))^N$ and choose
$a_0$ satisfying $2s a_0<1/4$. The radial integrals converge at infinity.
On a half-plane, the integral estimates are unchanged except for fixed factors.
A spatial Taylor error is bounded by $C|z|/\lambda_i$.
Its weighted $L^s$ norm has order $\lambda_i^{-1}$ after choosing $a_0$
still smaller so that $s(1+4a_0)<2$. The cutoff-annulus errors and
the exact projection errors in
\eqref{eq:A1-exact-error-equation} are exponentially small under
\eqref{eq:geometric-scale-comparison}, also after the indicated
parameter derivatives.

Integrate \eqref{eq:A-exact-local-mass-factor} with the preceding integral estimates.
The leading masses are exactly
$\kappa_i e^{2t+v_i}/8$. Their relative weights are
$p_i=\kappa_i e^{v_i}/\sum_j\kappa_j e^{v_j}$.
The normalized density differs, in the weighted dual norms used in
Step 2 of the proof of Lemma~\ref{lem:terminal-reduction}, from the sum of these
weights times the normalized bubble measures by $O(|a|+E)$.
This estimate includes the total-mass normalization: the denominator
is bounded below by a fixed multiple of $e^{2t}$, and differentiating
its reciprocal produces the same weighted estimates.
The weighted Sobolev and exponential estimates in Step 1 of that proof
use orthogonality and rescaling of each individual bubble; they do
not use $t|a|<r$ or a gradient cutoff. They therefore apply here.

At $a=v=0$ the fiber quadratic form has the spectral gap proved in
that Step 1. The changes of density have operator norm
$O(|a|+|v|+E)$ on the fiber by weighted H\"older. The fiber Hessian
thus remains bounded below by a positive constant. Its upper bound
and the bound for its third fiber derivative follow from the same
weighted integral estimates. The fiber linear functional has norm at most
$C(|a|+E)$, since the leading normalized bubble measures annihilate
the fiber. The contraction proof of Corollary~\ref{cor:mini_w} now
applies with those estimates and proves
\eqref{eq:geometric-fiber-estimate}, uniform uniqueness and positivity.
The normalized Gram estimates and the quantitative bubble-matching
argument prove local invertibility and uniqueness of the geometric
modulation chart for a fixed small remainder radius. These arguments
use the geometric parameter and remainder estimates, rather than the
small-gradient condition.

For the differentiated estimates, use the fixed-center pullbacks,
Dirichlet forms and finite-rank projections constructed in Step 2 of
the proof of Lemma~\ref{lem:terminal-reduction}. Their $\zeta$ derivatives through
order two are uniformly bounded by
\eqref{eq:geometric-scale-comparison}. An amplitude derivative costs
at most a factor $Ct$, and two cost at most $Ct^2$.
The preceding weighted kernel estimates, including their logarithmic
factors, yield on the fixed fiber space
\[
 \|D_\zeta^j f\|\le C e,\quad
 \|D_\zeta^j C\|\le C\quad(j\le2),\quad e=|a|+E,
\]
\[
 \|D_\alpha f\|+\|D_\zeta D_\alpha f\|\le C(1+te),\quad
 \|D_\alpha^2 f\|\le C(t+t^2e),\quad
 \|D_\alpha C\|\le Ct,\quad \|D_\alpha^2 C\|\le Ct^2.
\]
Here $f,C$ are the linear and quadratic fiber terms, and their
nonlinear remainders have the uniform cubic estimates from the
normalized exponential integral. The projected inverse has uniformly
bounded norm. Differentiating its equation yields
$\|D_\zeta^j\bar w\|\le Ce$ in those pullbacks for $j\le2$.
The subtracted means cancel because the density errors have total
mass zero. No composition operator on all of $H^1$ is differentiated.

Let $\mathcal E=\mathcal E_{\rho_*}$ be the explicit expression in
\eqref{eq:A-logmass-leading-energy}. Subtract its exact constant-core
integrals from the Dirichlet and mass terms. The spatial-coefficient,
cutoff and projection errors just bounded, and the removal of
the common scalar $S$ by normalization, yield bare remainders
bounded by $CE$ after two $\zeta$ or two amplitude derivatives.
The fiber value correction is quadratic in its linear functional.
The projected equation and the preceding derivative estimates therefore
yield, for $B=j-\mathcal E$,
\begin{equation}\label{eq:geometric-differentiated-errors}
 \begin{split}
 |D_\zeta^jB|&\le C(e^2+E)\quad(j=0,1,2),\\
 |D_\alpha B|+|D_\alpha D_\zeta B|
    &\le Ce(1+te)+CE,\\
 \|D_\alpha^2 B\|&\le C(1+te)^2+CE.
 \end{split}
\end{equation}
The estimates \eqref{eq:geometric-differentiated-errors} are coarser than
the terminal estimates and suffice on a fixed $b$-domain and are not used to determine the
terminal radial coefficient.

Now $\alpha_i^2/(2\alpha_i-1)=1+a_i^2/(1+2a_i)$.
Inserting $a=b/\sqrt t$ into $\mathcal E$ yields its limit exactly as
in \eqref{eq:geometric-fixed-base}. The term involving $2t$ contributes
$2\sum_i\kappa_i b_i^2/(1+2b_i/\sqrt t)$; the remaining smooth terms
converge with their retained derivatives to
$c_*-\tfrac12\mathbf F-\rho_*\ln (\sum_i\kappa_i e^{v_i}/\rho_*)$.
For the correction $B$, \eqref{eq:geometric-differentiated-errors}
yields $B\to0$, $D_\zeta B\to0$ and
\[
 |D_bB|\le C e(1+te)/\sqrt t+CE\to 0,
 \quad \|D_b^2B\|\le C/t+C|b|/\sqrt t+C|b|^2+CE.
\]
Every other retained Hessian submatrix tends to zero. The chain rule
$\partial_t|_b=\partial_t|_\alpha
-\sum_i(a_i/2t)\partial_{\alpha_i}$ also proves that $j_{tz}$ is bounded.
This proves the asserted value and derivative estimates.

For later control of the graph lift one also needs the admissible
center directions $C_{ij}=\lambda_i^{-1}\partial_{(\xi_i)_j}P\delta_i$.
On this geometric graph,
\begin{equation}\label{eq:geometric-normalized-center-bound}
 |DJ(U)[C_{ij}]|\le C(E+AW+W^2)\le C(E+A^2),
 \quad A=\sum_i|a_i|,\quad W=\|\bar w\|.
\end{equation}
The leading integral vanishes by symmetry. On the rescaled core
the leading $C_{ij}$ profile is a fixed linear combination of
$z_k/(1+|z|^2)$, with admissible tangential components at boundary
centers. Its integral against every radial density
$(1+|z|^2)^{-2\alpha_i}$ is zero, also on the half-plane for these
tangential components. Its coefficients and the other bubbles vary
by $O(|z|/\lambda_i)$; the weighted linear integrals above estimate their
normalized integrals by $CE$. The Dirichlet pairing has the same
bound by the exact differentiated projection formula. In the term
linear in $w$, center orthogonality removes the leading odd profile
against $\nu_i$. Replacing its radial weight by the amplitude-modified
one costs $CAW$ by the weighted H\"older estimate; spatial variations
and cutoff terms cost $CEW$. The nonlinear term is bounded by
$CW^2$ using $|e^w-1-w|\le\tfrac12w^2e^{|w|}$ and the uniform
weighted exponential bound. These contributions prove
\eqref{eq:geometric-normalized-center-bound}. No center estimate is
imported from a gradient-cutoff neighborhood.

Finally the amplitude and scale pairings can be evaluated before
assuming the gradient is small. The core radial integrals used in
this calculation satisfy the normalized integral identities
\[
 \mathbb E_\alpha[(1+|z|^2)^{-1}]
 =\frac{2\alpha-1}{2\alpha},\quad
 \mathbb E_\alpha[\ln (1+|z|^2)]=\frac1{2\alpha-1}.
\]
The formulas follow by integrating $r(1+r^2)^{-2\alpha}$ and its logarithmic
derivative. Insert these identities and the exact projected-bubble
expansion into $DJ[P\delta_i/l_i]$ and $DJ[S_i]$.
The scale pairing is $2\kappa_i\tau_i+O(A^2+W^2+E)$.
Subtracting twice this scale pairing from the amplitude pairing cancels
its $4\kappa_i\tau_i$ term. The smooth interaction terms cancel at
$a=0,\tau=0$; their remaining terms are bounded by
$C(A+T)/l$. Terms linear in $w$ are bounded by
$C(W/l+AW)$ using amplitude and scale orthogonality, and quadratic
terms by $CW^2$. The coefficient variation and the cutoff and
projection tails are bounded by the radial integral estimates above and by $CE$.
Thus the combined amplitude--scale pairing is
$4\kappa_i a_i+O(A^2+W^2+(A+T+W)/l+E)$.
The minimizing graph has $W\le C(A+E)$ and small $\tau$ because its
leading weights are $p_i$ and $|v|<r$. These are precisely the
pairings and estimates claimed, with no inference from gradient smallness.
\end{proof}

\subsection{Amplitude preparation}

The natural amplitude radius is of order $(\ln \lambda)^{-1/2}$,
whereas the terminal expansions are used on a radius of order
$(\ln \lambda)^{-1}$. We first remove the intervening amplitude
collar by a descending deformation. The logarithmic mass coordinates
are kept fixed in this step. The coupled field described below
also yields explicit amplitude and relative-mass directions on its
stated domain. The retraction is supplied by the following lemma.

The next deformation reduces the amplitude radius without changing the sublevel pairs.
\begin{lem}
\label{lem:amplitude-collar-retraction}
Fix $K\Subset\Xi_{p,q}$ in admissible center charts and
$\rho_*=4\pi(2p+q)$. There are positive radii
$r_a,r_v,r_z,\delta_0$ such that the following holds.
For every $0<\delta<\delta_0$ there is $T_\delta$ with
these properties.
The geometric tube over
\[
 \xi\in K,\quad t>T_\delta,\quad
 \sqrt t\,|\alpha-\mathbf1|<r_a,\quad |v|<r_v,\quad
 \sum_i\kappa_i v_i=0
\]
has positive fiber coordinates $\mathsf z\in E_\theta$,
$\|\mathsf z\|<r_z$, with
\begin{equation}\label{eq:amplitude-collar-fiber}
 J(u)=j(\alpha,v,\xi,t)+\|\mathsf z\|^2,\quad
 \theta=(\alpha,v,\xi,t).
\end{equation}
Write $\mathcal T$ for this tube, set
$M(u)=t|\alpha-\mathbf1|$, and set
$\mathcal T_\delta=\{u\in\mathcal T:M(u)\le\delta\}$.
There exists a strong deformation retraction
\[
 \mathscr H:[0,1]\times\mathcal T\to \mathcal T
 \quad\text{ onto }\,\mathcal T_\delta
\]
which fixes $(v,\xi,t)$ and does not increase $J$.
The target is contained in the amplitude and relative-mass domain
of Lemma~\ref{lem:terminal-reduction}.
For every $A<B$ the same maps yield a strong deformation of pairs
\begin{equation}\label{eq:amplitude-collar-pair}
 (\mathcal T\cap J^B,\mathcal T\cap J^A)
 \to 
 (\mathcal T_\delta\cap J^B,\mathcal T_\delta\cap J^A).
\end{equation}
The statement remains valid after restricting $(v,\xi,t)$ to any
subset of the stated base domain. No gradient cutoff is imposed.
All constants may be chosen uniformly on the compact center and
potential families of Proposition~\ref{prop:resonant-geometric-reduction}.
\end{lem}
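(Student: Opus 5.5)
The proof combines three ingredients: the fiber normal form, the convexity of the reduced energy in the amplitude variables, and a radial flow in those variables.

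\textbf{Fiber coordinates.} First construct the fiber coordinates in \eqref{eq:amplitude-collar-fiber}. On the geometric tube, Proposition~\ref{prop:resonant-geometric-reduction} provides the minimizing graph $\bar w_\theta$ and a uniformly positive fiber Hessian in a fixed ball. Write $u=\psi_\theta+\bar w_\theta+z$ with $z\in E_\theta$. The function
\[
g_\theta(z):=J(\psi_\theta+\bar w_\theta+z)-j(\theta)
\]
has a nondegenerate minimum at $z=0$. We apply the parametrized Morse lemma (Palais' Hilbert-space version, using the $C^1$ parameter dependence from Corollary~\ref{cor:mini_w} and the uniform Hessian bounds). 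This gives a fiber-preserving homeomorphism $z\mapsto\mathsf z$ with $g_\theta=\|\mathsf z\|^2$. A simpler alternative is radial: set
\[
\mathsf z:=\sqrt{g_\theta(z)}\,z/\|z\|.
\]
This is a homeomorphism because $\partial_s g_\theta(sz)>0$ for $s>0$, by the strict convexity used in the contraction subsection. It also depends continuously on $\theta$, which is all that is needed.

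\textbf{Deformation.} With $J=j(\alpha,v,\xi,t)+\|\mathsf z\|^2$, it suffices to deform in the base variable $\alpha$ alone, keeping $(v,\xi,t)$ and $\mathsf z$ fixed. The key claim is that, for fixed $(v,\xi,t)$ and $t>T_\delta$, the function $a\mapsto j(\mathbf1+a,v,\xi,t)$ is nondecreasing along rays $s\mapsto sa$ whenever $t|a|\ge\delta$ and $\sqrt t|a|<r_a$. Given the claim, define the radial homotopy
\[
\mathscr H_s(u):\quad a\mapsto\bigl(1-s+s\,\min\{1,\delta/(t|a|)\}\bigr)a,
\]
with $(v,\xi,t,\mathsf z)$ unchanged. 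This is continuous and fixes $\mathcal T_\delta$ pointwise. It does not increase $J$, since it moves each point radially inward along its ray. It preserves every sublevel, which gives the pair statement \eqref{eq:amplitude-collar-pair}. Restricting the base preserves everything because the base is fixed. Finally, $t|a|\le\delta$ together with $|v|<r_v$ lands in the domain of Lemma~\ref{lem:terminal-reduction} once $\delta+r_v$ is below that lemma's $\delta$.

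\textbf{Monotonicity claim (main obstacle).} In the variable $b=\sqrt t\,a$, Proposition~\ref{prop:resonant-geometric-reduction} gives
\[
j=F_\infty+R,\qquad F_\infty \text{ containing } 2\sum_i\kappa_i b_i^2,
\]
and the $bb$ Hessian of $j$ differs from $4\,\mathrm{diag}(\kappa)$ by at most $Cr^2+o(1)$. Hence $j$ is uniformly strictly convex in $b$ on $|b|<r_a$ for small $r_a$. Convexity alone only gives monotonicity outside a neighborhood of the minimizer, so we also need the location of the minimizer $b_*(v,\xi,t)$. Here I would use the combined amplitude--scale pairing of Proposition~\ref{prop:resonant-geometric-reduction}:
\[
\partial_{a_i}j=4\kappa_i a_i+O\bigl(A^2+(A+T)/l+E\bigr),
\qquad l\sim t,\ E=e^{-t/4}.
\]
The point to check is that the $\tau$ and $v$ contributions either cancel in this combination or are controlled once $|v|<r_v$. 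For that, convert $\partial_\alpha$ at fixed $(v,t)$ into the combination of amplitude and scale directions appearing in Corollary~\ref{cor:1}. Then $\langle\nabla_a j,a\rangle\ge 2\min_i\kappa_i|a|^2-C|a|(|a|^2+|a|/t+T/t+E)$, with $T$ bounded by $C(A^2+E+|v|)$ on the graph. The term $C|a||v|/t$ is dominated by $|a|^2$ precisely when $t|a|\gtrsim|v|$. So choose $r_v\le c\,\delta$, or rather shrink the relative-mass radius to be comparable to $\delta_0$; equivalently, assert monotonicity for $t|a|\ge\delta\ge\delta_0^{-1}r_v$. If the $v$-dependence of $\tau$ spoils this, the fallback is to replace the straight ray toward $a=0$ by the ray toward the fiberwise minimizer $b_*$, which lies at $|a|\le C(|v|+e^{-t/4})/t$ by the estimate \eqref{eq:fast-variable-bounds} type bound. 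Strict convexity then makes $j$ monotone along segments toward $b_*$, and we retract onto $\{t|a-a_*|\le\delta/2\}\subset\mathcal T_\delta$. All constants are uniform on $K$ and the potential family, since every estimate used is.
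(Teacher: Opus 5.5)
Your architecture matches the paper's: an exact positive fiber coordinate, uniform convexity $D^2_{\alpha\alpha}j\ge ct$ from the $bb$-Hessian in Proposition~\ref{prop:resonant-geometric-reduction}, and a radial contraction of $a=\alpha-\mathbf1$ at fixed $(v,\xi,t)$. The paper's field $\dot\alpha=-a/t$ moves along exactly your rays, with stopping time $t\ln(M/\delta)$.

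\textbf{The gap.} It lies in the monotonicity claim, and it is the quantifier issue you flag yourself. The lemma fixes $r_v$ before $\delta$; only $T_\delta$ may depend on $\delta$. Your pairing-based estimate leaves a forcing error of size $T/t\asymp|v|/t$ in the normalized derivative at $a=0$. One source is the $T/l$ term in $R$. Another is the $O(\tau)$ terms produced when you convert $\partial_\alpha$ at fixed $(v,t)$ into amplitude and scale directions, via $\partial_\alpha\ln C_j$ and $2/(2\alpha_i-1)$. So you get descent on $t|a|\ge\delta$ only when $r_v\lesssim\delta$. That is a weaker statement, not the lemma.

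The fallback does not repair this, for three reasons:
\begin{itemize}
\item Your bound $|a_*|\le C(|v|+e^{-t/4})/t$ again needs $Cr_v<\delta/2$ to place $a_*$ inside $\mathcal T_\delta$.
\item \eqref{eq:fast-variable-bounds} concerns the joint $(\alpha,v)$ critical point in the terminal domain. It says nothing about the amplitude minimizer at an arbitrary fixed $v$ on the collar.
\item A retraction onto $\{t|a-a_*|\le\delta/2\}$ moves points of $\mathcal T_\delta$, so it is not a strong deformation retraction onto $\mathcal T_\delta$.
\end{itemize}

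\textbf{The missing idea.} The $v$-dependence cancels exactly at $a=0$. In the explicit leading energy \eqref{eq:A-logmass-leading-energy}, the quantities $2t+v_i$ appear only multiplied by $\kappa_i\alpha_i^2/(2\alpha_i-1)$, whose $\alpha_i$-derivative vanishes at $\alpha_i=1$. The remaining $\ln C_i$, $\ln(2\alpha_i-1)$, $-\kappa_i\alpha_i^2$, Robin and Green contributions cancel. Hence $D_\alpha\mathcal E_{\rho_*}(\mathbf1,v,\xi,t)=0$ for every $(v,\xi,t)$.

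Now evaluate the geometric error bound \eqref{eq:geometric-differentiated-errors} at $a=0$, where $e=E(t)=e^{-t/4}$. This gives $|D_\alpha j(\mathbf1,v,\xi,t)|\le Ce^{-t/4}=:\eta(t)$, uniformly for $|v|<r_v$. With convexity,
$\langle D_\alpha j(\mathbf1+a,v,\xi,t),a\rangle\ge ct|a|^2-\eta(t)|a|>0$
whenever $t|a|\ge\delta$ and $\eta(t)<c\delta$. This condition fixes $T_\delta$, while $r_v<\delta_{\rm ter}/4$ and $\delta_0<\delta_{\rm ter}/4$ stay independent of $\delta$. With this estimate your straight-line radial homotopy works unchanged.

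\textbf{A smaller point.} Keeping ``$\mathsf z$ unchanged'' is not meaningful as stated, because $E_\theta$ moves with $\alpha$. The scales depend on $\alpha$ at fixed $(v,\xi,t)$ through \eqref{eq:log-mass-coordinates}. You need a norm-preserving transport between fibers, for instance the parallel transport $\dot{\mathsf z}=\dot P_\theta\mathsf z$ used in the paper. It preserves $\|\mathsf z\|$ because $P_\theta\dot P_\theta P_\theta=0$.
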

\begin{proof}
All amplitude derivatives in this proof keep $(v,\xi,t)$ fixed.
Write $a=\alpha-\mathbf1$ and use $b=\sqrt t\,a$ for the normalized
amplitude. Proposition~\ref{prop:resonant-geometric-reduction} yields
\[
 D_{bb}^2j=4\operatorname{diag}(\kappa_i)+O(r_a^2)+o(1).
\]
Choose $r_a$ sufficiently small, then increase $T$, to obtain
\begin{equation}\label{eq:amplitude-collar-convexity}
 D_{\alpha\alpha}^2j\ge c t I
 \quad\text{on }\sqrt t\,|a|<r_a
\end{equation}
with a fixed $c>0$. Let $\delta_{\rm ter}$ be an admissible
radius in Lemma~\ref{lem:terminal-reduction}, and choose
$r_v<\delta_{\rm ter}/4$ and
$\delta_0<\delta_{\rm ter}/4$.
At $a=0$, the exact leading energy
\eqref{eq:A-logmass-leading-energy} has
$D_\alpha\mathcal E_{\rho_*}(\mathbf1,v,\xi,t)=0$.
The larger-domain estimate
\eqref{eq:geometric-differentiated-errors}, evaluated at $a=0$,
has $e=E(t)=e^{-t/4}$ and therefore yields
\begin{equation}\label{eq:amplitude-collar-forcing}
 |D_\alpha j(\mathbf1,v,\xi,t)|
 \le CE(t)(1+tE(t))+CE(t)\le C'E(t)=:\eta(t),
 \quad \eta(t)\to 0,
\end{equation}
uniformly for $|v|<r_v$ and $\xi\in K$.
This uses only the geometric estimates. No terminal expansion on
the larger amplitude tube is asserted.

We construct the positive fiber coordinate on that larger tube.
Set $U=\psi_\theta+\bar w_\theta$ and $z=w-\bar w_\theta$.
The uniform fiber convexity and third fiber derivative bound in
Proposition~\ref{prop:resonant-geometric-reduction} yield
\[
 J(U+z)-J(U)=\langle C_\theta(z)z,z\rangle,\quad
 C_\theta(z)=\int_0^1(1-s)D_w^2J(U+sz)\,\d s,
\]
where $c_1I\le C_\theta(z)\le C_1I$ and
$\|D_zC_\theta(z)\|\le C_1$ on a fixed small fiber ball.
Thus $\mathsf z=C_\theta(z)^{1/2}z$ has derivative
$C_\theta(0)^{1/2}+O(\|z\|)$. The inverses of
$C_\theta(0)^{1/2}$ are uniformly bounded; the contraction mapping
argument applied to this derivative estimate yields an inverse
on a fixed ball $\|\mathsf z\|<r_z$.
This proves \eqref{eq:amplitude-collar-fiber}.
For the differentiability used here, the convergent resolvent formula
\[
 C^{1/2}=\frac1\pi\int_0^\infty
 q^{-1/2}C(q+C)^{-1}\,\d q
\]
has derivative in direction $D$ equal to the integral of
$\pi^{-1}q^{1/2}(q+C)^{-1}D(q+C)^{-1}$.
Its norm is bounded by a constant depending only on the positive
lower spectral bound times $\|D\|$; further derivatives follow by
differentiating the resolvents, whose integrals have the same
convergence at zero and infinity.
At every finite scale
the coordinate and its inverse depend smoothly on the raw bubble
parameters, and have the finite differentiability of the mass
coordinates after the change of parameters. The unique modulation
charts of Proposition~\ref{prop:resonant-geometric-reduction} identify
$(\alpha,v,\xi,t,\mathsf z)$ as genuine coordinates on the tube.

In parameter time $s$ define
\begin{equation}\label{eq:amplitude-collar-field}
 \dot\alpha=-a/t,\quad \dot v=\dot\xi=\dot t=0,\quad
 \dot{\mathsf z}=\dot P_\theta\mathsf z-\mathsf z/t,
\end{equation}
where $P_\theta$ is the orthogonal projection onto $E_\theta$.
Differentiating $P_\theta^2=P_\theta$ yields
$P_\theta\dot P_\theta P_\theta=0$. The last equation is therefore
tangent to $P_\theta\mathsf z=\mathsf z$, and
\begin{equation}\label{eq:amplitude-collar-clock}
 a(s)=e^{-s/t}a(0),\quad
 M(s)=e^{-s/t}M(0),\quad
 \|\mathsf z(s)\|^2=e^{-2s/t}\|\mathsf z(0)\|^2.
\end{equation}
In particular, no amplitude, relative-mass, center or fiber face
can cause exit during this preparation.
The scales are defined throughout by \eqref{eq:log-mass-coordinates};
they are not frozen. More explicitly, with
$\ell_i=\ln \lambda_i$ and
$C_i=(\mathcal F_\xi^i\mathrm g_\xi^i)(\xi_i)$, their velocities are
\begin{equation}\label{eq:amplitude-collar-scale-lift}
 \dot\ell_i=
 \frac{-4\ell_i\dot\alpha_i
       -\sum_j\partial_{\alpha_j}\ln  C_i\,\dot\alpha_j
       +2\dot\alpha_i/(2\alpha_i-1)}
      {4\alpha_i-2}.
\end{equation}
This is obtained by differentiating
$(4\alpha_i-2)\ell_i+\ln  C_i-\ln (2\alpha_i-1)=2t+v_i$.
In particular, $|\dot\ell_i|\le C|a|$ and
$\ell_i=t+O(\sqrt t)$ throughout the path, so all scales stay
large after increasing $T$. The graph component of the ambient
field includes $D\bar w_\theta[\dot\theta]$; the fiber equation is
pushed forward by the inverse of the exact positive coordinate.
These definitions include the motion of the orthogonal space.

By \eqref{eq:amplitude-collar-convexity} and
\eqref{eq:amplitude-collar-forcing},
\[
 \langle D_\alpha j(\alpha,v,\xi,t),a\rangle
 \ge ct|a|^2-\eta(t)|a|.
\]
Choose $T_\delta$ with
$\eta(t)\le c\delta/2$ and $\delta/\sqrt t<r_a$ for
$t>T_\delta$. Until $M=\delta$ is reached,
\begin{equation}\label{eq:amplitude-collar-descent}
 -\frac{\d}{\d s}J(u(s))
 =\frac1t\langle D_\alpha j,a\rangle
       +\frac2t\|\mathsf z\|^2
 \ge\frac c2|a|^2+\frac2t\|\mathsf z\|^2.
\end{equation}
Thus the path is descending on the entire collar to be removed.
The pushed-forward field is locally Lipschitz at finite scales.
Division by one plus the ambient norm yields a bounded field with the same oriented paths.

The first-arrival time in parameter time is explicit:
\begin{equation*}
 S(u)=
 \left\{\begin{aligned}
 &t\ln (M(u)/\delta)&& \text{ if }\, M(u)>\delta,\\
 &0&&\text{ if }\, M(u)\le\delta.
\end{aligned}\right.
\end{equation*}
The stopping time is finite and continuous on $\mathcal T$, including the target.
For each starting point the corresponding base segment is compact
and has finite scales. The projections and the inverse fiber
coordinate have bounded derivatives there; parallel transport and
\eqref{eq:amplitude-collar-clock} consequently define the path
throughout $[0,S(u)]$. Its bounded normalization takes this finite
segment to a finite ambient-time segment as well.
If $\eta_\alpha(s,u)$ denotes the unnormalized path, set
\(\mathscr H(\sigma,u)=\eta_\alpha(\sigma S(u),u), \quad 0\le\sigma\le1.\)
Local continuous dependence and the continuous stopping formula prove
joint continuity. The maps fix $\mathcal T_\delta$ pointwise,
and their final image lies there. They preserve the base and
decrease $J$ by \eqref{eq:amplitude-collar-descent}; hence they
preserve each of the two sublevels in \eqref{eq:amplitude-collar-pair}.
The stopping time is locally bounded at every finite-scale point;
no uniform time bound as $t\to\infty$ is needed.
The choices of $r_v,\delta_0$ place the final base in the terminal
domain. Restriction to any subset of $(v,\xi,t)$ is allowed because
those coordinates never move. All choices used uniform estimates on
the stipulated compact families. The formulas commute with
permutations among centers of the same type and therefore descend
to the corresponding unordered configuration charts.
This proves the lemma.
\end{proof}

The deformation is defined on a geometric tube and preserves its
sublevel pairs. Its conclusion does not require preservation of
$\|\nabla J\|<\epsilon$. The smaller amplitude neighborhood is reached
before the terminal Morse coordinates are used. This implements the
amplitude preparation preceding the critical-points-at-infinity
normal form in \cite{ABL2017}.
Since $\ln \lambda_i\asymp t$, its source and target have respectively
the natural square-root logarithmic and the smaller logarithmic
amplitude radii, with adjusted fixed constants.
In particular, for every $k$ and every field $\mathbb F$,
\[
 H_k(\mathcal T\cap J^A,\mathcal T_\delta\cap J^A;\mathbb F)=0.
\]
The removed amplitude collar therefore supplies no additional
relative homology generator.

The explicit coupled bubble field below satisfies an entry-or-exit
alternative on its stated domain;
Lemma~\ref{lem:amplitude-collar-retraction} supplies the relative
deformation across the amplitude collar.

The coupled field satisfies a finite entry-or-exit alternative.
\begin{prop}
\label{prop:resonant-graph-preparation}
Fix a compact separated center set $K\Subset\Xi_{p,q}$. Write
$U(\theta)=\psi_\theta+\bar w_\theta$ on an open parameter domain
supplied by Proposition~\ref{prop:resonant-geometric-reduction}. Set
\[
 a_i=\alpha_i-1,\quad l_i=\ln \lambda_i,\quad
 l=\min_i l_i,\quad A=\sum_i|a_i|,\quad
 T=\sum_i|\tau_i|,\quad H=e^{-t/4},
\]
where $\tau_i$ is evaluated at $U(\theta)$.
There is $\epsilon_0>0$ such that the following holds for
$0<\epsilon<\epsilon_0$. Impose also
\begin{equation*}
 \xi\in K,\quad l>L,\quad
 \max_i|a_i|\sqrt{l_i}<\epsilon,\quad T<\epsilon.
\end{equation*}
For every fixed $\delta>0$, $L$ can be chosen so that the parameter field
\begin{equation}\label{eq:graph-preparation-parameter-field}
 \frac{\d \alpha_i}{\d s}=-\frac{a_i}{l_i},\quad
 \frac{\d l_i}{\d s}=\frac{2a_i-\tau_i}{\alpha_i^2},\quad
 \frac{\d \xi_i}{\d s}=0
\end{equation}
has, on $\max_i|a_i|l_i\ge\delta$, the descent estimate
\begin{equation}\label{eq:graph-preparation-descent}
 -\frac{\d}{\d s}J(U(\theta(s)))\ge c(A^2+T^2).
\end{equation}
Its lift $\mathcal V=DU[\dot\theta]$ can be normalized to the bounded
locally Lipschitz field $\mathcal V/(1+\|\mathcal V\|)$ without
changing its oriented trajectories. Every trajectory either reaches
$\max_i|a_i|l_i<\delta$ in finite time, or leaves the stated open
parameter domain first. On its whole interval in that domain,
\begin{equation}\label{eq:graph-preparation-amplitude-clock}
 \frac{\d}{\d s}(|a_i|l_i)\le-\frac{|a_i|}{2},\quad
 \frac{\d}{\d s}(|a_i|\sqrt{l_i})\le0.
\end{equation}
For sufficiently small $\epsilon,\delta$, entry places the parameters in
the amplitude and relative-mass region of Lemma~\ref{lem:terminal-reduction}.
This statement does not say that every exit from the stated domain is
an exit from all bubbling neighborhoods.
\end{prop}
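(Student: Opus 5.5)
The plan is to compute $\frac{\d}{\d s}J(U(\theta(s)))=\sum_i DJ(U)[\partial_{\alpha_i}U]\dot\alpha_i+\sum_i DJ(U)[\partial_{l_i}U]\dot l_i$. The center terms vanish because $\dot\xi=0$. The two surviving pairings are rewritten through the combined amplitude--scale pairing and the scale pairing, whose graph versions are supplied by Proposition~\ref{prop:resonant-geometric-reduction}.

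\textbf{Rewriting the derivative.} Since $\bar w_\theta$ is the fiber critical point, $DJ(U)[D\bar w[\dot\theta]]$ vanishes up to the tangential part. By \eqref{eq:graph-tangent-comparison}, that tangential part is $O(\|\bar w\|)$ times the bubble directions, which is $O(A+H)$ relative error. Hence it suffices to pair $\nabla J(U)$ with $D\psi[\dot\theta]$. Here $\partial_{\alpha_i}\psi=P\delta_i$ and $\partial_{l_i}\psi=\alpha_i\lambda_i\partial_{\lambda_i}P\delta_i$. Decompose $\dot\alpha_iP\delta_i+\dot l_i\alpha_i\lambda_i\partial_{\lambda_i}P\delta_i$ as follows:
\[
-a_i\Big(\frac{P\delta_i}{l_i}-\frac{2\lambda_i}{\alpha_i}\partial_{\lambda_i}P\delta_i\Big)-\frac{\tau_i}{\alpha_i}\lambda_i\partial_{\lambda_i}P\delta_i .
\]
The first bracket is exactly the combination in Corollary~\ref{cor:1}, whose graph version gives $4\kappa_ia_i+O(A^2+W^2+(A+T+W)/l+H)$. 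The scale pairing (Lemma~\ref{lem:lamexpansion} at $\mu=0$, graph version) gives $2\kappa_i\tau_i+O(A^2+W^2+H)$. Together with $W\le C(A+H)$ this yields
\[
-\frac{\d J}{\d s}=\sum_i\big(4\kappa_ia_i^2+2\kappa_i\tau_i^2\big)+O\big((A+T)(A^2+T^2)^{1/2}\epsilon+(A+T)^2/l+(A+T)H\big).
\]
The quadratic errors are absorbed because $A,T<\epsilon$ and $l>L$.

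\textbf{The main obstacle: the $H$-term.} The term $(A+T)H$ is not absorbed unless $A$ dominates $H$, and this is exactly where the restriction $\max_i|a_i|l_i\ge\delta$ is used. On that set, $A\ge\delta/\max l_i$. Since $l_i=t+O(\sqrt t)$ and $H=e^{-t/4}$, we get $H\le\delta e^{-l/8}\ll A$ once $L=L(\delta)$ is large, so $(A+T)H\le\frac c4(A^2+T^2)$. Checking this bookkeeping uniformly, including the $o(1)$ error terms in the graph pairings, is the step to verify carefully. This gives \eqref{eq:graph-preparation-descent}.

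\textbf{Amplitude clock, entry, and normalization.} The clock estimates \eqref{eq:graph-preparation-amplitude-clock} follow by direct differentiation. First, $\frac{\d}{\d s}|a_i|=-|a_i|/l_i$. Second, $|\dot l_i|\le C(A+T)\le C\epsilon$. Therefore $\frac{\d}{\d s}(|a_i|l_i)=-|a_i|+|a_i|\dot l_i\le-|a_i|/2$ for $\epsilon$ small. Similarly, $\frac{\d}{\d s}(|a_i|\sqrt{l_i})=|a_i|\big(-l_i^{-1/2}+\dot l_i/(2\sqrt{l_i})\big)\le0$.

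For entry, suppose a trajectory stays in the domain and keeps $\max_i|a_i|l_i\ge\delta$. Since $l_i$ grows at most linearly, some $|a_i|$ stays $\ge\delta/(2l_i)$ on long intervals. Then $|a_i|l_i$ decreases by an amount of order $\int\delta/l_i\,\d s$, which diverges. This contradicts nonnegativity, so either entry into $\max_i|a_i|l_i<\delta$ happens in finite time or the trajectory exits the domain first. Once inside, $t|\alpha-\mathbf 1|\lesssim\delta$. Also $|v|$ is controlled by $T$ via \eqref{def:tau_i} and \eqref{eq:log-mass-coordinates}, giving $|v|\lesssim T+A$. So for small $\epsilon,\delta$ the parameters lie in the domain of Lemma~\ref{lem:terminal-reduction}.

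Finally, $\mathcal V=DU[\dot\theta]$ is locally Lipschitz because $U$ is $C^1$ with Lipschitz derivative at finite scales, by Corollary~\ref{cor:mini_w} and Proposition~\ref{prop:resonant-geometric-reduction}. Dividing by $1+\|\mathcal V\|$ only reparametrizes time, so it preserves the oriented trajectories and the descent sign.
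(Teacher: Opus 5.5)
The overall structure matches the paper's proof: the splitting $D\psi[\dot\theta]=-\sum_i a_iB_i-\sum_i\tau_i\widehat S_i$, the graph pairings from Proposition~\ref{prop:resonant-geometric-reduction}, absorbing $H$ on $\max_i|a_i|l_i\ge\delta$, the clocks, entry, and the terminal coordinates. The gap is in how you discard the graph lift $DJ(U)[D\bar w[\dot\theta]]$.

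You correctly reduce that term to $\langle\nabla J(U),P_{\mathscr T_\theta}D\bar w[\dot\theta]\rangle$ with $\|P_{\mathscr T_\theta}D\bar w[\dot\theta]\|\le CW(A+T)$. But calling it an ``$O(A+H)$ relative error'' tacitly assumes $\|\nabla J(U)\|\lesssim A+T+H$, and that is false. Since $\nabla J(U)\in\mathscr T_\theta$, its size is read off from pairings with the \emph{normalized} frame of Lemma~\ref{lem:Gram}. The normalized amplitude vector is $P\delta_i/\sqrt{l_i}$, not $P\delta_i/l_i$. Adding your two pairings gives $DJ(U)[P\delta_i/l_i]=4\kappa_i(a_i+\tau_i)+O(R)$, hence
\[
DJ(U)[P\delta_i/\sqrt{l_i}]=4\kappa_i\sqrt{l_i}\,(a_i+\tau_i)+O(\sqrt l\,R).
\]
So $\|\nabla J(U)\|$ is genuinely of order $\sqrt l\,(A+T+H)$, for example at $a=0$, $\tau\ne0$; compare the remark after Corollary~\ref{cor:resonant-geometric-coverage}. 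The Cauchy--Schwarz bound on the lift term is therefore $C\sqrt l\,W(A+T+H)(A+T)$, a factor $\sqrt l\to\infty$ larger than the error you claimed.

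This is exactly where the hypothesis $\max_i|a_i|\sqrt{l_i}<\epsilon$ is needed. Your argument never uses it, only $A<\epsilon$, and with $A<\epsilon$ alone the bound above is not $o(A^2+T^2)$.

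The repair is the paper's. With $W\le C(A+H)$ you get $\sqrt l\,W\le C(\epsilon+\sqrt l\,H)$, and $\sqrt l\,H\to0$. Hence the lift contributes at most $\tfrac12c_1(A^2+T^2)+CH^2$. The leftover $CH^2$ is then absorbed on $\max_i|a_i|l_i\ge\delta$ by the same $A\ge\delta/(Cl)$ argument you already give.

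Two minor points. For entry, run your divergence argument on $\sum_i|a_i|l_i$, since the maximizing index may change. Also note that the normalization keeps finite parameter-time segments finite, because $\|\mathcal V\|$ is bounded on compact parameter segments.
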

\begin{proof}
First the logarithms of the scales are comparable on this domain,
although the scales themselves need not be comparable. In fact, set
\[
 L_i=(4\alpha_i-2)l_i+
 \ln ((\mathcal F_\xi^i\mathrm g_\xi^i)(\xi_i))
 -\ln (2\alpha_i-1),\quad M=\int_\Sigma Ve^{U}\,\d v_g.
\]
The exact definition of $\tau_i$ yields
$e^{L_i}=8M(1-\tau_i)/\rho_*$. Thus $L_i-L_j=O(\epsilon)$.
The center coefficients and their amplitude derivatives are bounded on
$K$, and $\alpha_i$ stays close to one. Increasing $L$ therefore yields
\begin{equation}\label{eq:graph-preparation-log-comparison}
 C^{-1}l\le l_i\le Cl.
\end{equation}
The geometric estimate \eqref{eq:geometric-fiber-estimate} yields
$W:=\|\bar w\|\le C(A+H)$; all center derivatives in its right side
are bounded on $K$ and $\ln ^{3/2}\lambda_i/\lambda_i^2\le CH$.

Write $D_iP_i=\lambda_i\partial_{\lambda_i}P\delta_i$ and define
\[
 B_i=\frac{P\delta_i}{l_i}-\frac{2}{\alpha_i}D_iP_i,
 \quad \widehat S_i=\frac{1}{\alpha_i}D_iP_i.
\]
The bubble part of the lifted velocity is exactly
\begin{equation}\label{eq:graph-preparation-bubble-velocity}
 V_0=D\psi[\dot\theta]=-
 \sum_i a_i B_i-\sum_i\tau_i\widehat S_i,\quad
 \|V_0\|\le C(A+T).
\end{equation}
The geometric pairing extension in
Proposition~\ref{prop:resonant-geometric-reduction}, at $\mu=0$, yields
\begin{equation}\label{eq:graph-preparation-pairings}
 DJ(U)[B_i]=4\kappa_i a_i+O(R),\quad
 DJ(U)[\widehat S_i]=2\kappa_i\tau_i+O(R),
\end{equation}
where one common bound is
\[
 R=C\Big(A^2+W^2+\frac{A+T+W}{l}+H\Big).
\]
For the boundary terms, $\lambda_i^{4\alpha_i-3}/M\le C/\lambda_i$
follows directly from $|\tau_i|<\epsilon$. For the other tail terms,
$\alpha_i>3/4$ and \eqref{eq:graph-preparation-log-comparison} yield
$\lambda_i^{-(4\alpha_i-2)}\le H$ and
$\ln \lambda_i/\lambda_i^2\le CH$. These facts justify the common
bound in both the boundary and pure-interior cases. The estimate is used only
for preparation and does not specify the terminal radial
coefficient.

Since $W\le C(A+H)$, pairing
\eqref{eq:graph-preparation-bubble-velocity} with $DJ$ and applying
Young's inequality yields, after choosing $\epsilon$ small and $L$ large,
\begin{equation}\label{eq:graph-preparation-raw-descent}
 -DJ(U)[V_0]\ge c_1(A^2+T^2)-C_1H^2.
\end{equation}
Here $(A+T)A^2+(A+T)^2/l=o(A^2+T^2)$ as
$A+T\to0$ and $l\to\infty$.

The graph lift also contributes to the descent estimate.
The projected Euler equation says $\nabla J(U)\in\mathscr T_\theta$.
Pair the gradient with the normalized frame of
Lemma~\ref{lem:Gram}. The geometric amplitude and scale pairings and
\eqref{eq:geometric-normalized-center-bound} yield
\[
 \|\nabla J(U)\|\le C\sqrt l\,(A+T+H).
\]
For example, amplitude normalization multiplies the pairing with
$P\delta_i/l_i$ by $\sqrt{l_i}$; the normalized center pairing is
$O(H+A^2+W^2)$. The frame Gram matrices and their inverses are uniformly
bounded. Along \eqref{eq:graph-preparation-parameter-field} the centers
are fixed, $|\dot l_i|\le C(A+T)$, and differentiating the normalized
frame yields $\|\dot e_r\|\le C(A+T)$ by
\eqref{eq:normalized-second-derivatives}.
Differentiation of $\langle\bar w,e_r\rangle=0$ now yields
\[
 \|P_{\mathscr T_\theta}D\bar w[\dot\theta]\|
 \le CW(A+T).
\]
Therefore
\[
 |DJ(U)[D\bar w[\dot\theta]]|
 \le C W\sqrt l\,(A+T+H)(A+T).
\]
Since $A\sqrt l\le C\epsilon$ and $H\sqrt l\to0$, this contribution
is at most $\frac12c_1(A^2+T^2)+CH^2$ in
\eqref{eq:graph-preparation-raw-descent}.
Finally $\max_i|a_i|l_i\ge\delta$ implies
$A\ge\delta/(Cl)$, whereas $H$ decays exponentially in $l$. Increasing $L$ depending on
$\delta$ ensures that $CH^2$ is at most a fixed small multiple of $A^2$, which proves
\eqref{eq:graph-preparation-descent}.

The amplitude clock is exact. Away from $a_i=0$,
\[
 \frac{\d}{\d s}(|a_i|l_i)
 =-|a_i|\Big(1-\frac{2a_i-\tau_i}{\alpha_i^2}\Big),\quad
 \frac{\d}{\d s}(|a_i|\sqrt{l_i})
 =\frac{|a_i|}{\sqrt{l_i}}
      \Big(-1+\frac{2a_i-\tau_i}{2\alpha_i^2}\Big).
\]
At $a_i=0$ both quantities remain zero. Smallness proves
\eqref{eq:graph-preparation-amplitude-clock}. We can also arrange
$|\dot l_i|\le1/2$. As long as the orbit stays in the domain,
$l_i(s)\le l_i(0)+s/2$, hence
\[
 |a_i(s)|l_i(s)
 \le |a_i(0)|l_i(0)\,
       \frac{l_i(0)}{l_i(0)+s/2}.
\]
If there is no earlier chart exit, all these quantities eventually
become smaller than $\delta$. No finite parameter-time divergence of
the scales is possible. Finite-scale smoothness of the projected
Euler equation yields $U\in C^2$ locally, so the lifted field is locally
Lipschitz. Its bounded normalization is a positive change of time;
the time integral over each compact parameter segment is finite.
The time change therefore preserves the finite-entry conclusion when there is no prior
exit.

To identify the terminal coordinates, define
$2t=\rho_*^{-1}\sum_i\kappa_i L_i$ and $v_i=L_i-2t$.
Then $\sum_i\kappa_i v_i=0$ and
\[
 v_i=\ln (1-\tau_i)
 -\rho_*^{-1}\sum_j\kappa_j\ln (1-\tau_j),\quad |v|\le C\epsilon.
\]
Equation \eqref{eq:log-mass-coordinates} holds exactly, $t/l_i$ is
bounded above and below, and $t|a|\le C\delta$ after entry.
Choose $C(\epsilon+\delta)$ smaller than the terminal radius in
Lemma~\ref{lem:terminal-reduction}. This proves the last statement.
\end{proof}

The relative-mass boundary of the preparation domain has the following orientation.
Use the coordinates $t,v$ just defined and the parameter time $s$ in
\eqref{eq:graph-preparation-parameter-field}. Set
$|v|_\kappa^2=\sum_i\kappa_i v_i^2$. Throughout the same domain,
\begin{equation}\label{eq:resonant-preparation-mass-face}
 \frac12\frac{\d}{\d s}|v|_\kappa^2
 \ge c_0|v|_\kappa^2
 -C|v|_\kappa\Big(A^2T+A^3+\frac{A}{l}\Big).
\end{equation}
Consequently, for every fixed sufficiently small $r>0$, for sufficiently large $L$
the field is strictly outward on the face $|v|_\kappa=r$, wherever
that face belongs to its stated domain.

Keeping the centers fixed, the amplitude derivatives of
$\ln ((\mathcal F_\xi^i\mathrm g_\xi^i)(\xi_i))$ are bounded. Thus
exact differentiation of $L_i$ yields
\[
 \dot L_i=-2\tau_i+E_i,\quad
 |E_i|\le C\Big(A^2T+A^3+\frac{A}{l}\Big).
\]
Indeed the coefficients of the two principal terms are
\[
 \frac{(4\alpha_i-2)(2a_i-\tau_i)}{\alpha_i^2}-4a_i
 =-2\tau_i+\frac{2a_i^2\tau_i-4a_i^3}{\alpha_i^2};
\]
the derivative of $-\ln (2\alpha_i-1)$ and of the smooth coefficient
is $O(A/l)$. Set $c=\rho_*e^{2t}/(8M)$, so that
$\tau_i=1-ce^{v_i}$. The preceding weighted mean identity shows
$\ln  c=\rho_*^{-1}\sum_i\kappa_i\ln (1-\tau_i)$, hence $c$ is
bounded above and below by positive constants. Subtracting the weighted
mean of $\dot L_i$ yields
\[
 \dot v_i=2c\Big(e^{v_i}
       -\rho_*^{-1}\sum_j\kappa_j e^{v_j}\Big)
       +E_i-\rho_*^{-1}\sum_j\kappa_jE_j.
\]
Pair with $\kappa_i v_i$ and use $\sum_i\kappa_i v_i=0$.
For $|v_i|\le C\epsilon$, the mean-value theorem yields
$v_i(e^{v_i}-1)\ge e^{-C\epsilon}v_i^2$.
This proves \eqref{eq:resonant-preparation-mass-face}.
On the fixed face, $A\le C\epsilon/\sqrt l$ and $T<\epsilon$ imply
uniform convergence of the error to zero as $L\to\infty$. For $p+q=1$ the
relative-mass space is zero-dimensional and there is no such face.

The preparation field also extends to the positive orthogonal directions.
Over the graph domain of Proposition~\ref{prop:resonant-graph-preparation}
there are uniform $r,r_+>0$ and fiber coordinates
\[
 y=\Phi_\theta(w-\bar w_\theta)\in E_\theta,\quad \|y\|<r_+,
\]
whose inverses take values in $\|w-\bar w_\theta\|<r$, such that
\begin{equation}\label{eq:resonant-exact-positive-fiber}
 J(\psi_\theta+w)=J(U(\theta))+\|y\|^2.
\end{equation}
They define a geometric tube in the Hilbert space by unique modulation.
On this tube the preparation field has a bounded locally Lipschitz
extension whose oriented trajectories satisfy, in parameter time,
\begin{equation}\label{eq:resonant-preparation-fiber-decay}
 -\frac{\d}{\d s}J(u(s))\ge c(A^2+T^2)+2\|y\|^2,
 \quad \frac{\d}{\d s}\|y\|^2=-2\|y\|^2,
\end{equation}
as long as $\max_i|a_i|l_i\ge\delta$ and the base remains in the
stated graph domain. The amplitude clocks and finite entry or exit
alternative of Proposition~\ref{prop:resonant-graph-preparation} are
unchanged. Here $\tau_i$ in the base field is evaluated at $U(\theta)$,
not at $u=\psi_\theta+w$.

Use the positive fiber coordinate constructed in the proof of
Lemma~\ref{lem:amplitude-collar-retraction}, with its radius reduced
if necessary on the present geometric domain. Denote that coordinate
by $y$ and its fiber map by $\Phi_\theta$.
The exact identity \eqref{eq:amplitude-collar-fiber} is precisely
\eqref{eq:resonant-exact-positive-fiber}, and its uniform inverse
and unique-modulation properties yield the asserted tube.

Let $P_\theta$ denote the orthogonal projection onto $E_\theta$.
Use the base equation \eqref{eq:graph-preparation-parameter-field}
and the following explicit transport in the total positive bundle:
\(\dot y=\dot P_\theta y-y, \quad \dot P_\theta=DP_\theta[\dot\theta].\)
Differentiating $P_\theta^2=P_\theta$ yields
$P_\theta\dot P_\theta P_\theta=0$.
Consequently the transport is tangent to the constraint
$P_\theta y=y$, and $\langle y,\dot P_\theta y\rangle=0$.
This proves the second identity in
\eqref{eq:resonant-preparation-fiber-decay}. The first follows by
exact differentiation of \eqref{eq:resonant-exact-positive-fiber}
and \eqref{eq:graph-preparation-descent}; there is no unestimated
moving-fiber error.

Push this bundle field forward by the inverse fiber coordinate and
modulation map, then divide its ambient value by one plus its norm.
The result is bounded and locally Lipschitz. The fiber norm decreases,
so it cannot cause exit through $\|y\|=r_+$. The base clocks are
unchanged in parameter time. On each finite base segment contained
in its domain the projection coefficients and coordinate derivatives
are bounded; the transported fiber stays bounded as well. The time
change therefore takes finite parameter segments to finite ambient
time segments. This proves the asserted entry or exit alternative.

The natural bubbling neighborhoods are contained in the geometric tubes.
\begin{cor}
\label{cor:resonant-geometric-coverage}
Fix $K_0\Subset\operatorname{int}K\Subset\Xi_{p,q}$ and fixed
geometric retained and positive-fiber radii as above. There is
$\epsilon_1>0$ such that every
$u\in V_{\rho_*}^{\eta}(p,q,\epsilon)$, $0<\epsilon<\epsilon_1$,
whose unique centers belong to $K_0$, belongs to the geometric tube
over $K$. In its coordinates,
\[
 |b|+|v|+\|y\|\le C\epsilon,
 \quad t\ge c|\ln \epsilon|.
\]
Fiber contraction keeps $(b,v,\xi,t)$ fixed and stays in that tube.
This statement does not require its contracted points to satisfy the
original gradient cutoff.
For every fixed $0<\delta<\delta_0$ in
Lemma~\ref{lem:amplitude-collar-retraction}, $\epsilon_1$ may be
chosen so that its deformation applies to every such point and
prepares it for the smaller terminal amplitude domain.
\end{cor}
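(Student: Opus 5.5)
The plan has four steps: identify the coordinates, bound the fiber, bound the base, and invoke the collar retraction.

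\textbf{Step 1: coordinates of $u$.} Let $u\in V_{\rho_*}^{\eta}(p,q,\epsilon)$. Proposition~\ref{prop:unique} gives a unique decomposition $u=\sum_i\alpha_iP\delta_{\lambda_i,\xi_i}+w$ with $w\in\mathcal E_{\lambda,\xi}^{p,q}$, $\|w\|<\epsilon$, and
\[
\max_i\sqrt{\ln\lambda_i}\,|\alpha_i-1|\le C\epsilon .
\]
I would then define $(t,v)$ exactly as at the end of the proof of Proposition~\ref{prop:resonant-graph-preparation}: set $L_i=(4\alpha_i-2)\ln\lambda_i+\ln(\mathcal F^i_\xi\mathrm g^i_\xi)(\xi_i)-\ln(2\alpha_i-1)$, $2t=\rho_*^{-1}\sum_i\kappa_iL_i$ and $v_i=L_i-2t$. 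With this choice \eqref{eq:log-mass-coordinates} holds exactly.

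\textbf{Step 2: the base estimates.} The condition $\|\nabla J\|<\epsilon$ combined with Lemma~\ref{lem:lamexpansion} (at $\mu=0$) gives $\sum_i|\tau_i|\le C\epsilon$. The boundary term is $O(\lambda_i^{-1})\le C\epsilon$ because $\lambda_i>\epsilon^{-1}$, and the quadratic errors are $O(\epsilon^2)$. Then the identity $v_i=\ln(1-\tau_i)-\rho_*^{-1}\sum_j\kappa_j\ln(1-\tau_j)$ gives $|v|\le C\epsilon$.

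Next, the relation $e^{L_i}=8M(1-\tau_i)/\rho_*$ shows that the $\ln\lambda_i$ are mutually comparable. Since every $\lambda_i>\epsilon^{-1}$, this yields $t\ge c|\ln\epsilon|$ and $\ln\lambda_i\asymp t$. Consequently
\[
|b_i|=\sqrt t\,|\alpha_i-1|\le C\sqrt{\ln\lambda_i}\,|\alpha_i-1|\le C\epsilon .
\]
With $\xi\in K_0\subset K$, $t>T$ for small $\epsilon$, and $|b|,|v|<r$, the parameters lie in the domain \eqref{eq:resonant-geometric-domain}.

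\textbf{Step 3: the fiber estimate.} This is the step I expect to be the main obstacle. The tube is described by the minimizing graph, whereas the decomposition of Step 1 uses the raw remainder $w$; the bound must hold for the $y$-coordinate. Proposition~\ref{prop:resonant-geometric-reduction} gives $\|\bar w_\theta\|\le C(|\alpha-\mathbf 1|+e^{-t/4})$. The first term is at most $C\epsilon/\sqrt t$ and the second at most $C\epsilon^{c/4}$.

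The second term is the delicate one: $e^{-t/4}$ is comparable to $\epsilon^{c/4}$, which may be larger than $\epsilon$. For the coverage claim itself this is harmless, since I only need $\|w-\bar w_\theta\|<r$, and that holds once $\epsilon$ is small. To obtain the sharper bound $\|y\|\le C\epsilon$, I would use the fiber Euler equation instead. On $E_\theta$ we have $P_{E_\theta}\nabla J(u)=\nabla_w J$, so $\|\nabla_w J(\psi_\theta+w)\|\le\|\nabla J(u)\|<\epsilon$. Uniform fiber convexity then gives $\|w-\bar w_\theta\|\le C\epsilon$. Finally, since $\Phi_\theta$ is bi-Lipschitz near $0$, $\|y\|\le C\|w-\bar w_\theta\|\le C\epsilon$. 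Unique modulation identifies these coordinates with those of the tube.

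\textbf{Step 4: the remaining claims.} The fiber contraction of the previous subsection moves only $w$ along the segment toward $\bar w_\theta$. It therefore keeps $(b,v,\xi,t)$ fixed and stays inside the radius-$r$ ball, hence inside the tube; no gradient cutoff is involved. Now fix $0<\delta<\delta_0$. Shrinking $\epsilon_1$ gives $t>T_\delta$, $\sqrt t|\alpha-\mathbf 1|\le C\epsilon<r_a$ and $|v|<r_v$. The point thus lies in the domain of Lemma~\ref{lem:amplitude-collar-retraction}, whose retraction $\mathscr H$ carries it into $\mathcal T_\delta$, which is contained in the terminal domain of Lemma~\ref{lem:terminal-reduction}.
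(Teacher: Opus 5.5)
Your proposal is correct and follows the paper's proof essentially step for step. You obtain $|\tau_i|\le C\epsilon$ from Lemma~\ref{lem:lamexpansion}, use the logarithmic-mass identity to get $|v|\le C\epsilon$ and comparability of the $\ln\lambda_i$, and then test fiber convexity against $w-\bar w_\theta$ to get $\|w-\bar w_\theta\|\le C\epsilon$ even though $\|\bar w_\theta\|$ is only $o(1)$. The remaining differences are cosmetic: the paper contracts via $y\mapsto(1-s)y$ rather than along the linear segment in $w$, and when you shrink $\epsilon_1$ you should also place $\|y\|\le C\epsilon$ below the fiber radius $r_z$ of Lemma~\ref{lem:amplitude-collar-retraction}.
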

\begin{proof}
Proposition~\ref{prop:unique} yields $\|w\|\le\epsilon$ and
$|a_i|\sqrt{l_i}\le C\epsilon$, where $l_i=\ln \lambda_i$.
Lemma~\ref{lem:lamexpansion}, applied to the original point $u$,
yields $|\tau_i(u)|\le C\epsilon$. Define $L_i$ from its parameters
as in the preparation proof. The identity
$e^{L_i}=8M(u)(1-\tau_i(u))/\rho_*$ implies
\[
 v_i=L_i-\rho_*^{-1}\sum_j\kappa_jL_j
 =\ln (1-\tau_i(u))-
   \rho_*^{-1}\sum_j\kappa_j\ln (1-\tau_j(u)).
\]
Thus $|v|\le C\epsilon$. The smooth center coefficients are bounded
on $K$, and $|a_i|\sqrt{l_i}\le C\epsilon$ implies
$L_i=2l_i+O(\epsilon\sqrt{l_i}+1)$. The preceding differences
$L_i-L_j=O(\epsilon)$ first yield comparability of all $l_i$, and
then comparability of $t=(2\rho_*)^{-1}\sum\kappa_iL_i$ and each
$l_i$. Consequently $b_i=a_i\sqrt t$ satisfies $|b|\le C\epsilon$.
The scale lower estimates in Proposition~\ref{prop:unique} yield
$t\ge c|\ln \epsilon|$. These choices place the parameters strictly
inside the fixed geometric domain when $\epsilon_1$ is small.

On those parameters, Proposition~\ref{prop:resonant-geometric-reduction}
yields $\|\bar w\|\le C(|a|+e^{-t/4})=o(1)$. Both $w$ and
$\bar w$ therefore lie in its fixed fiber ball. Strong fiber
convexity and $P_{E_\theta}\nabla J(\psi+\bar w)=0$ yield
\[
 c\|w-\bar w\|^2
 \le \langle P_{E_\theta}\nabla J(u),w-\bar w\rangle
 \le\epsilon\|w-\bar w\|.
\]
Thus $\|w-\bar w\|\le C\epsilon$, without assuming that
$\psi$ or $\psi+\bar w$ satisfies a gradient cutoff.
The uniform upper bound of the integral fiber Hessian then yields
$\|y\|\le C\epsilon$.
The exact fiber identity and the contraction
$y\mapsto(1-s)y$ prove the fiber-contraction statement.
For the preparation statement, choose the retained radii inside
those of Lemma~\ref{lem:amplitude-collar-retraction}.
Decrease $\epsilon_1$ until $C\epsilon_1$ is below each of these
radii and $c|\ln \epsilon_1|>T_\delta$.
Lemma~\ref{lem:amplitude-collar-retraction} then supplies the finite stopped deformation on the
geometric tube, with all $(v,\xi,t)$ fixed.
\end{proof}

The deformations in Corollary~\ref{cor:resonant-geometric-coverage}
are defined on the geometric tube and need not preserve its small-gradient
subset. The complete field in Section~\ref{sec:full-pseudogradient}
supplies the global orbit alternative.

The distinction between geometric boundaries and the gradient cutoff is useful here.
\begin{rem}
Fix $r>0$. The small-gradient set
$V_{\rho_*}^{\eta}(p,q,\epsilon)$ cannot contain the
whole fixed face $|v|_\kappa=r$ at arbitrary depth. In fact,
$|v|_\kappa=r$ implies $\max_i|\tau_i|\ge c r$ by the exact logarithmic
mass identity. Since $\max_i|a_i|=O(\epsilon/\sqrt l)$, some index has
$|a_i+\tau_i|\ge c r/2$ for large $l$. Combining
\eqref{eq:graph-preparation-pairings} yields
\[
 DJ(U)\Big[\frac{P\delta_i}{\sqrt{l_i}}\Big]
 =4\kappa_i\sqrt{l_i}(a_i+\tau_i)+O(\sqrt l R).
\]
Here $\sqrt l R\to0$ on the stated domain for this fixed face.
The normalized amplitude direction has bounded norm. Consequently
$\|\nabla J(U)\|\ge c_r\sqrt l$ for large $l$, contrary to
$U\in V_{\rho_*}^{\eta}(p,q,\epsilon)$.
Therefore the geometric domain
\eqref{eq:resonant-geometric-domain} omits the gradient cutoff.
Proposition~\ref{prop:resonant-geometric-reduction} verifies its
first-derivative and fiber estimates on that larger domain. Fixed guards
in that domain are therefore distinct from boundaries of the
small-gradient localization set.
\end{rem}

\section[Pseudogradient and deformation at infinity]{Pseudogradient near infinity and global deformation}
\label{sec:full-pseudogradient}

We work on the full Hilbert space at fixed resonance. The field
\eqref{eq:CPI-full-field} continues trajectories across bubble-chart exits;
Proposition~\ref{prop:pseudogradient} proves its orbit alternative.
Lemma~\ref{lem:amplitude-collar-retraction} supplies the relative
comparison of natural tubes and terminal neighborhoods.

\subsection{Bubble directions and the graph lift}
Write $a_i=\alpha_i-1$, $l_i=\ln \lambda_i$ and
$S_i=\lambda_i\partial_{\lambda_i}P\delta_{\lambda_i,\xi_i}$.
The amplitude and relative-mass directions on the bubble manifold are
\[
 A_i=\frac{P\delta_{\lambda_i,\xi_i}}{l_i}
                 -\frac{2}{\alpha_i}S_i,\quad
 V_{\rm am}=-\sum_i a_iA_i-\sum_i\frac{\tau_i}{\alpha_i}S_i .
\]
Thus $\alpha_i^{-1}S_i$ is the normalized direction
$\widehat S_i$ used in the proof of
Proposition~\ref{prop:resonant-graph-preparation}.
Their parameter expression is
\[
 \alpha_i'=-a_i/l_i,\quad
 (\ln \lambda_i)'=(2a_i-\tau_i)/\alpha_i^2,\quad \xi_i'=0.
\]
Thus these are the directions in
Proposition~\ref{prop:resonant-graph-preparation}, with its stated
domain and descent estimate. The correction is lifted as
$D\bar w[\alpha',\xi',\lambda']$; the derivative of the moving
orthogonal space is included in \eqref{eq:graph-tangent-comparison}.

For a center direction, write $\nabla_i^\top$ for the full surface
gradient at an interior center and the tangential gradient at a
boundary center. The corresponding normalized center and common-scale
directions are
\[
 C_i=\lambda_i^{-1}D_{\xi_i}P\delta_{\lambda_i,\xi_i},
 \quad
 V_{\rm sc}=-\operatorname{sgn}\mathcal L(\xi^*)
                    \sum_i\alpha_i^{-1}S_i .
\]
Center vectors are paired with $\nabla_i^\top\ln \mathcal F_\xi^i$.
Only tangential components are used on the boundary. The
common-scale direction is used near a reduced critical configuration;
its sign is determined by the differentiated scale expansion.
The scale function $h$ is defined in Section~\ref{sec:3}.
Section~\ref{sec:resonant-descent} supplies the estimates before entry;
the following construction concerns the end neighborhoods and their ambient complement.
\subsection{Finite-scale normal forms and the local descending field}

In the rest of this section the parameter is fixed at $\rho_*=4\pi m$,
and $J=J_{\rho_*}$. Conditions \ref{item:C1}--\ref{item:C2} are imposed
for every type $2p+q=m$. The separation of the critical configurations
in \cite[Lemma~4.1]{HB2024}, together with \ref{item:C1}, implies that their
unordered set is finite. All center neighborhoods below have compact
closure in their separated configuration stratum. A boundary center
has only its tangential coordinate. We use $Q(u)=\|u\|^2/2$.

The local construction follows \cite[Proposition~8]{ABL2017} and
\cite[Propositions~5.2--5.3 and Lemma~5.4]{AB2023}.
The height argument below is proved for this Neumann functional;
\cite{AKN2026} serves as an organizational comparison.

We construct the coordinates near each candidate end.
\begin{lem}\label{lem:CPI-coordinates}
Let $\xi^*$ be a nondegenerate critical configuration of $\FpqV$,
$n=p+q$, $d=2p+q$, and $k=\morse(\FpqV,\xi^*)$. Set
\[
 c_*=-\rho_*(1+\ln(\rho_*/8))-\tfrac12\FpqV(\xi^*),
 \quad \epsilon_*=-\sgn\cL(\xi^*).
\]
For sufficiently large finite $t$, there are coordinates
$(X_-,X_+,B,y,b,\mathsf z)$ in which
\begin{equation}\label{eq:CPI-exact-form}
 J=c_*-|X_-|^2+|X_+|^2+|B|^2-|y|^2
       +\epsilon_*b^2+\|\mathsf z\|^2,
 \quad b>0.
\end{equation}
Here $\dim X_-=d-k$, $\dim X_+=k$, $\dim B=n$, and
$\dim y=n-1$. The fiber is a Hilbert space with its positive norm.
The coordinates are smooth at every finite scale and are defined on
coordinate neighborhoods satisfying
\begin{equation}\label{eq:CPI-weighted-domain}
 |X_-|+|X_+|+|y|<r,\quad
 \sqrt{1+t}\,|B|<r,\quad \|\mathsf z\|<r,\quad t>T.
\end{equation}
The scalar $b=b(t)$ is strictly decreasing, and
\begin{equation}\label{eq:CPI-scale-comparison}
 b^2\asymp h(t),\quad
 -\frac{b}{b'(t)}\to 
  \left\{\begin{aligned}
  &2&&\text{ if }\,q>0,\\&1&& \text{ if }\,q=0.
\end{aligned}\right.
\end{equation}
No differentiable extension of these coordinates to $b=0$ is asserted.
If $\theta=(\alpha,v,\xi,t)$ denotes the original finite parameters,
then the finite coordinate vector
\begin{equation}\label{eq:CPI-finite-vector}
 Z_f=(X_-,-X_+,-B,y,-\epsilon_*b)
\end{equation}
satisfies
\begin{equation}\label{eq:CPI-parameter-speed}
 |Z_ft|+|Z_f\xi|+|Z_fv|\le C,
 \quad |\alpha-\mathbf1|+|Z_f\alpha|\le C/(1+t).
\end{equation}
Moreover, for $U=\psi(\alpha,\xi,\lambda)$,
\begin{equation}\label{eq:CPI-norm-scalar}
 c(1+t)\le1+Q(U)\le C(1+t),\quad |Z_f Q(U)|\le C.
\end{equation}
\end{lem}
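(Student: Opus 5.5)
The coordinates will be built in successive layers, peeling off one block of variables at a time by Morse-lemma-type changes of variables. We start from the geometric tube of Proposition~\ref{prop:resonant-geometric-reduction}, shrunk by Lemma~\ref{lem:amplitude-collar-retraction} to the terminal amplitude domain $t|\alpha-\mathbf1|<\delta$.

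\textbf{Step 1: the fiber.} On this tube, \eqref{eq:amplitude-collar-fiber} already gives $J=j(\alpha,v,\xi,t)+\|\mathsf z\|^2$. Here $\mathsf z$ is the positive fiber coordinate, smooth at finite scale through the resolvent formula. This produces the Hilbert block, and everything else reduces to putting the finite-dimensional function $j$ into normal form.

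\textbf{Step 2: the amplitude and relative-mass variables.} By Lemma~\ref{lem:terminal-reduction}, for each $(\xi,t)$ the map $(\alpha,v)\mapsto j$ has a unique nondegenerate critical point $(\alpha_\rho,v_\rho)$, with $n$ positive and $n-1$ negative eigenvalues. I would apply the parametrized Morse lemma in $(\alpha,v)$, using the normalized amplitude $b_i=\sqrt{1+t}\,a_i$. From $F_\infty$ in \eqref{eq:geometric-fixed-base}, the $bb$-block is $\approx4\operatorname{diag}(\kappa_i)$. The $v$-block is the Hessian of $-\rho_*\ln(\sum\kappa_ie^{v_i}/\rho_*)$ on $\sum\kappa_iv_i=0$, which is negative definite. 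This yields $j=K(\xi,t)+|B|^2-|y|^2$ with $B\approx$ const$\cdot b$. That scaling explains the weight $\sqrt{1+t}$ in \eqref{eq:CPI-weighted-domain} and the bound $|\alpha-\mathbf1|\le C/(1+t)$. The Morse lemma must be applied with parameters $(\xi,t)$, and uniformity in $t$ is needed; for that I would use the $C^1$ closeness of $j$ to $F_\infty$ together with bounded $j_{tz}$.

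\textbf{Step 3: the centers.} Next, $K=K_\rho(\xi,t)$ at $\mu=0$. By \eqref{eq:terminal-location-derivatives}, $D_\xi^2K=-\tfrac12D^2\FpqV(\xi^*)+o(1)$, which is nondegenerate with $d-k$ negative and $k$ positive directions. The critical point $\xi_*(t)$ satisfies $|\xi_*-\xi^*|\le Ch(t)$. A second parametrized Morse lemma in $\xi$ then gives $K=k(t)-|X_-|^2+|X_+|^2$ with $k(t)=K(\xi_*(t),t)$.

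\textbf{Step 4: the scale.} By \eqref{eq:terminal-value}–\eqref{eq:terminal-scale-derivatives} and the Schur argument of \eqref{eq:branch-chain-rule}, at $\mu=0$ we get $k(t)=c_*+\epsilon_*\,\mathrm{const}\cdot h(t)(1+o(1))$. The derivative $k'$ has sign $-\epsilon_*$ and is comparable to $h$. I would then set $b=\sqrt{\epsilon_*(k(t)-c_*)}$. This $b$ is strictly decreasing, with $b^2\asymp h$, and $-b/b'=2(k-c_*)/(-k')\cdot(\ldots)\to2$ for $h=e^{-t}$ and $\to1$ for $h=te^{-2t}$, which gives \eqref{eq:CPI-scale-comparison}.

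\textbf{Step 5: the speed estimates.} For \eqref{eq:CPI-parameter-speed}, the vector field $Z_f$ is the Euclidean-descent-type field in the new coordinates. Its speed in $t$ is $|b/b'|$, which is bounded. Its speeds in $\xi$ and $v$ are bounded because the Morse charts have bounded derivatives, and its speed in $\alpha$ is $O(1/(1+t))$ from the $B$-scaling. Finally, \eqref{eq:CPI-norm-scalar} follows from $\|P\delta_{\lambda,\xi}\|^2\approx2\kappa\ln\lambda$ and $\ln\lambda_i=t+O(1)$, and $Z_fQ(U)=\sum_i O(\dot\ell_i+|\dot\alpha_i|\ell_i)=O(1)$.

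\textbf{Main obstacle.} The hardest part is the uniformity of the parametrized Morse lemmas as $t\to\infty$. The remainders are only controlled in $C^2$ to order $o(1)$, so the $t$-derivatives of the changes of variables must be kept at order $O(h)$ in order not to destroy the scalar normal form in $b$. I would address this by performing the $(\alpha,v)$ and $\xi$ reductions before defining $b$. I would also invoke the mixed bounds $|D_tD_\xi K|\le Ch$ and bounded $j_{tz}$, so that the coordinate changes contribute only $O(h)$ errors to $k'$.
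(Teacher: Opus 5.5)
Your layered scheme matches the paper's: fiber first, then the $(\alpha,v)$ block at fixed $(\xi,t)$, then completion of squares in $\xi$ about $\xi(t)$, then $b=\sqrt{\epsilon_*(k(t)-c_*)}$ with $k'=K_t(\xi(t),t)$. Steps 1, 3, 4 and the final norm estimate are fine. Two ingredients are missing, however.

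\textbf{Smoothness at finite scale.} The lemma asserts the coordinates are smooth at every finite scale, and your construction does not deliver this. The mass coordinates \eqref{eq:log-mass-coordinates} contain the point value $\ln\bigl((\mathcal F^i_\xi\mathrm g^i_\xi)(\xi_i)\bigr)$, hence $\ln V(\xi_i)$ with $V$ only $C^3$. So in your variables $j$ and $K$ are only $C^3$ in $\xi$. Morse charts built from $\int_0^1(1-s)D^2(\cdot)\,\d s$, whether by square roots or by completion of squares, lose two derivatives. Your $X_\pm$, $B$, $y$ would therefore be merely $C^1$. The field obtained by pushing $Z_f$ forward through the inverse coordinate differential would then only be continuous, not locally Lipschitz. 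The paper fixes this by changing only the parametrization. It extends $V$ to the double, mollifies at radius $e^{-8t}$, and uses the resulting $V_t$ in \eqref{eq:CPI-smooth-mass-coordinates}. The induced change of $(v,t)$ is $\alpha$-independent and $O(e^{-8t})$. It therefore perturbs the value and the first and second derivative estimates of Lemma~\ref{lem:terminal-reduction} by at most $C(1+t)^3e^{-8t}=o(h(t))$.

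\textbf{Uniformity of the Morse charts.} The uniformity you single out as the main obstacle is not settled by the tools you cite. These are $C^1$ closeness of $j$ to $F_\infty$, bounded $j_{tz}$, $|D_tD_\xi K|\le Ch$, and $D_\xi^2K\to-\tfrac12D^2\FpqV(\xi^*)$, and all are second-order information. The derivatives of a square-root or completion-of-squares chart involve $D_zA(z)$ and $\partial_tA(z)$, where $A(z)=\int_0^1(1-s)D^2j(sz)\,\d s$; that is, they involve third derivatives. This includes the $t$-derivative at fixed new coordinates, which enters $|Z_f\xi|$ and $|Z_fv|$ in \eqref{eq:CPI-parameter-speed}. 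You therefore need bounded $D_\xi^3K$ and $D_tD_\xi^2K$, and bounded $\sqrt t$-normalized third derivatives of $j$ in $(\alpha,v,\xi,t)$. The paper obtains these by differentiating the exact kernel formulas. It then uses $j_{abc}=F_{abc}+\dots+F_{\eta\eta\eta}[\eta_a,\eta_b,\eta_c]$ at the fiber zero, which avoids any third derivative of the correction.

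Conversely, the $O(h)$ control of $t$-derivatives you ask for is unnecessary. Since $b$ is defined from the exact critical value $k(t)$, \eqref{eq:CPI-exact-form} holds identically, and only $O(1)$ bounds on the other changes are needed.

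\textbf{Joint versus sequential elimination.} A joint Morse lemma in $(\alpha,v)$ lets $\alpha$ depend on $y$. Then $|\alpha-\mathbf1|+|Z_f\alpha|\le C/(1+t)$ on $|y|<r$ requires two things: the normalized $bv$ coupling must be $O(t^{-1/2})$, and $D_{(\xi,t)}\alpha_\rho=O(1/t)$. Both follow from \eqref{eq:CPI-splitting-estimates} together with the third-derivative bounds, but not from $o(1)$ closeness to $F_\infty$. The paper's sequential route first solves $j_\alpha=0$ at fixed $(v,\xi,t)$, with $|\alpha^\sharp-\mathbf1|+|D_\zeta\alpha^\sharp|\le C(h+e^{-t}/t)$, and then treats the $v$-block on that graph. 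This gives $|B|\asymp\sqrt t\,|\alpha-\alpha^\sharp|$ directly. It also keeps $(X_\pm,y,b)$ independent of $\alpha$, which Proposition~\ref{prop:complete-cell-infinity} later uses.
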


\begin{proof}
We first ensure the regularity needed for a locally Lipschitz lifted
field. The functional $J$ and the projected bubbles are
smooth in their Hilbert and finite bubble variables at every finite
scale. In contrast, the point evaluation of a $C^3$ potential in
\eqref{eq:log-mass-coordinates} need not be smooth. We therefore use a
smooth finite-scale parametrization for this construction only.
Extend $V$ as a positive $C^3$ function to a neighborhood of the
double of the surface, and smooth it in fixed coordinate charts
at radius $\delta(t)=e^{-8t}$, with a partition of unity. Such an
extension is obtained in a boundary chart by a linear combination
of four reflected values, with coefficients satisfying
$\sum_{j=1}^4c_j(-j)^k=1$ for $k=0,1,2,3$; this matches the first
three boundary jets. Let $V_t$ denote the resulting smoothing.
Convolution and differentiation of its kernel yield
\[
 \|V_t-V\|_{C^2}+\sum_{j=1}^3\|\partial_t^jV_t\|_{C^2}
       \le Ce^{-8t},\quad \|V_t\|_{C^3}\le C.
\]
For the first estimate subtract the value of each derivative of
order at most two at the center of convolution; its increment is
bounded by $C\delta(t)$. A differentiated convolution kernel has
zero integral, giving the same estimate for each listed $t$ derivative.
The smoothing is positive for large $t$ and smooth in $(t,\xi)$.
In this construction replace \eqref{eq:log-mass-coordinates} by
\begin{equation}\label{eq:CPI-smooth-mass-coordinates}
 \ln\lambda_i=
 \frac{2t+v_i-\ln((\mathcal F^{i,V_t}_\xi
                         \mathrm g^i_\xi)(\xi_i))
                  +\ln(2\alpha_i-1)}{4\alpha_i-2},
 \quad \sum_i\kappa_i v_i=0.
\end{equation}
The functional still uses the original $V$; only its parametrization
has changed. Relative to the original mass coordinates the changes
in $t$ and $v$ are $O(e^{-8t})$, including two center derivatives
and the preceding scale derivatives. Indeed the logarithmic mass
increments are $\ln(V/V_t)(\xi_i)$; their weighted average yields
twice the change of $t$, and subtracting that average yields the
change of $v_i$. The derivative in the common scale is $1+O(e^{-8t})$,
so the change is invertible for large $t$.
This change of $(v,\xi,t)$ is independent of $\alpha$:
the increments $\ln (V/V_t)(\xi_i)$ and their weighted mean
contain no amplitude variable. Hence the fixed-mass preparation
in Lemma~\ref{lem:amplitude-collar-retraction} fixes the smoothed
base coordinates as well. After increasing its threshold, its target
lies strictly inside the terminal domain in either choice of coordinates.
We distinguish the approximation estimates from the additional
third-order estimates used for the coordinate changes. For the value,
first two common-scale derivatives, first center and mixed
common-scale/center derivatives, and second center derivatives in
\eqref{eq:terminal-value}--\eqref{eq:terminal-location-derivatives},
the change of parametrization contributes at most
$C(1+t)^3e^{-8t}=o(h(t))$. To obtain these comparisons,
differentiate the exact integral formulas before estimating their
remainders. Each difference contains a mass-coordinate increment
or one of its derivatives through two center orders; these have
size $O(e^{-8t})$. The remaining differentiated factors have the
polynomial estimates in $t$ from Appendix~\ref{app:0}. The inverse
fiber Hessian is uniformly bounded, as are the inverses of the
normalized fast Hessians. Differentiating their defining equations
therefore preserves these orders when the fiber and fast variables
are eliminated. Thus Lemma~\ref{lem:terminal-reduction} applies in
the smoothed coordinates with the same leading coefficients.

For three center derivatives only uniform estimates are required below.
They follow directly from $\|V_t\|_{C^3}\le C$ and the fixed-coordinate
integral estimates, rather than from an exponential $C^3$ approximation
rate. More precisely, if $\omega_3(r)$ is a common modulus of continuity
of the third derivatives of the extended $V$ in the finitely many
charts, convolution yields
\[
 \|V_t-V\|_{C^3}\le C\bigl(\omega_3(Ce^{-8t})+e^{-8t}\bigr)=o(1).
\]
Indeed, at third order subtract the third derivative at the center
inside the convolution; all terms where a derivative falls on a
fixed chart coefficient or partition of unity are controlled by the
lower-order estimate. No rate for $\omega_3$ is needed. On a compact subset of
$C^3_+(\Si)$ the modulus and constants may be chosen uniformly.
The bounded third-order estimates are proved directly below. All coordinate
changes can now be chosen smooth at finite scale for the stated
$C^3$ potential.

First minimize the
fiber as in Lemma~\ref{lem:terminal-reduction}. For $z=w-\bar w$,
Taylor's formula on the fixed orthogonal fiber yields
\[
 J(U+\bar w+z)-J(U+\bar w)
 =\langle A_\theta(z)z,z\rangle,
 \quad
 A_\theta(z)=\int_0^1(1-s)D_w^2J(U+\bar w+sz)\,\d s.
\]
Uniform coercivity and the cubic remainder estimate yield
$cI\le A_\theta(z)\le CI$. Thus
$\mathsf z=A_\theta(z)^{1/2}z$ is a local diffeomorphism, uniformly in its
fiber radius: its derivative is $A_\theta(0)^{1/2}+O(\|z\|)$.
This proves the exact positive fiber term and
$\|z\|\asymp\|\mathsf z\|$. The construction is made on the original
orthogonal fibers, not by differentiating composition by a moving
surface diffeomorphism on $H^1$.

For clarity, record the derivative information used for the remaining
changes. In the variables of \eqref{eq:log-mass-coordinates}, set
$a=\alpha-\mathbf1$ and $\zeta=(v,\xi,t)$. The integral formulas in
Appendix~\ref{app:0} yield, on $t|a|+|v|<\delta$,
\begin{equation}\label{eq:CPI-splitting-estimates}
\begin{gathered}
 j_{\alpha\alpha}=4t\operatorname{diag}(\kappa_i)+O(1+t^2h(t)),
 \quad
 j_\alpha(\mathbf1,v,\xi,t)=O(th(t)+e^{-t}),\\
 D_\zeta j_\alpha(\mathbf1,v,\xi,t)=O(th(t)+e^{-t}),
 \quad j_{vv}|_{\sum\kappa_i v_i=0}\le-cI,
\end{gathered}
\end{equation}
after taking the amplitude Schur complement in the last inequality.
The normalized finite-variable derivatives needed to differentiate
these splittings once are bounded. Here is an explicit way to check
the additional differentiation, rather than differentiating an
undifferentiated error term. Differentiate
\eqref{eq:A-logmass-leading-energy} and the exact local mass factors
before integration. A logarithmic scale derivative replaces $K_\lambda$
by $K_\lambda q_j(\lambda^2|y|^2)$ with bounded $q_j$.
An amplitude derivative introduces $O(t)$ and
$\ln(1+\lambda^2|y|^2)$. Through three derivatives their integrals
are bounded by \eqref{eq:A-logarithmic-kernel-bounds}, with $k=3$
proved by the same radial substitution. After rescaling amplitude
increments by $\sqrt t$, the third derivatives of the leading
amplitude term are $O(t^{-1/2})$, and the error terms are bounded by
$C(1+t^{3/2}h(t))$. Center derivatives of total order at most three
fall on the smooth coefficients in the fixed coordinate integrals;
their estimates require only $V\in C^3$. For the fiber value, differentiate
its projected equation on the fixed pulled-back forms used in
\eqref{eq:A-pulledback-correction-bounds}; each differentiated equation
has the same uniformly invertible fiber Hessian. The chain rule at
its zero eliminates the highest derivative of the minimizing
correction. More explicitly, write the pulled-back functional as
$F(\theta,\eta)$, its minimizing correction as $\eta(\theta)$,
and $C=F_{\eta\eta}$. At that correction
$C\eta_a=-F_{\eta a}$, and its reduced value satisfies
\[
\begin{aligned}
j_{abc}={}&F_{abc}
 +F_{ab\eta}\eta_c+F_{ac\eta}\eta_b+F_{bc\eta}\eta_a\\
&+F_{a\eta\eta}[\eta_b,\eta_c]
 +F_{b\eta\eta}[\eta_a,\eta_c]
 +F_{c\eta\eta}[\eta_a,\eta_b]
 +F_{\eta\eta\eta}[\eta_a,\eta_b,\eta_c].
\end{aligned}
\]
Thus no third derivative of $\eta$ is needed. For three center
derivatives of the coefficients, including the smoothed mass
parametrization, use the uniform $C^3$ bound without a spatial Taylor
remainder. The corresponding third derivatives of the residual
have bounded norm; neither an $O(e^{-t})$ residual bound nor an
$O(e^{-8t})$ comparison with the original third derivatives is used.
They are multiplied by the small correction in the scalar value. Amplitude derivatives introduce at most the
powers of $t$ just recorded; the correction and its first two
derivatives have the estimates in
\eqref{eq:A-pulledback-correction-bounds} and the amplitude estimates
preceding it. The terms in this identity remain bounded on $t|a|<\delta$
after division by $\sqrt t$ for each amplitude index.
This proves the stated third-derivative estimates. Applying the same
identity to the finite amplitude and relative-scale eliminations
yields bounded $D_tD_\xi^2K$ and $D_\xi^3K$.
Only scalar derivatives are estimated on the pulled-back forms.

For each $(v,\xi,t)$ solve $j_\alpha=0$ first, obtaining
$\alpha^\sharp(v,\xi,t)$. The first three estimates in \eqref{eq:CPI-splitting-estimates}
and the inverse amplitude Hessian yield, after moving the
$o(1)|a|$ term to the left-hand side,
\begin{equation}\label{eq:CPI-amplitude-graph}
 |\alpha^\sharp-\mathbf1|+|D_\zeta\alpha^\sharp|
 \le C(h(t)+e^{-t}/t).
\end{equation}
The segment integral of $j_{\alpha\alpha}$ about this graph is
positive; its positive square root defines $B$, exactly as in the
fiber construction. Thus
$j=j(\alpha^\sharp,v,\xi,t)+|B|^2$ and
$|B|\asymp\sqrt t\,|\alpha-\alpha^\sharp|$.
Next minimize the negative of $j(\alpha^\sharp,v,\xi,t)$ in the
constrained relative-scale space. Its Hessian is uniformly positive.
Its square-root construction yields
\[
 j=K_{\rho_*}(\xi,t)+|B|^2-|y|^2.
\]
Both constructions keep $(\xi,t)$ fixed. Their derivatives and inverse
derivatives in the normalized variables are bounded by
\eqref{eq:CPI-splitting-estimates}. In the amplitude inverse its
$\zeta$-derivatives are bounded by
$C(h(t)+e^{-t}/t+|B|/\sqrt t)$.

The center equation has a unique zero $\xi(t)$ near $\xi^*$, with
$|\xi(t)-\xi^*|+|\xi'(t)|\le Ch(t)$, by
\eqref{eq:terminal-location-derivatives}. Set $k(t)=K_{\rho_*}(\xi(t),t)$.
Taylor's integral formula about $\xi(t)$ writes
$K_{\rho_*}-k(t)$ as a quadratic form with coefficient matrix
converging to $-D^2\FpqV(\xi^*)/4$.
Choose a fixed basis diagonalizing this limiting matrix. Successive
completion of squares, with all pivots bounded away from zero,
defines a smooth congruence to $\operatorname{diag}(-I,I)$.
The resulting map from the center displacement to $(X_-,X_+)$
has derivative uniformly close to its invertible derivative at zero,
after shrinking the center neighborhood. It is a local diffeomorphism,
and its $t$-derivative is bounded by the derivative estimates above.
There are $d-k$ negative and $k$ positive center directions.

The chain rule at $\xi(t)$ and Lemma~\ref{lem:terminal-reduction} yield
\[
 k(t)-c_*=-a_*\cL(\xi^*)h(t)+o(h(t)),\quad
 k'(t)=-a_*\cL(\xi^*)h'(t)+o(h(t)),
\]
where $a_*=4\pi$ if $q>0$ and $a_*=1/2$ if $q=0$.
The second derivative has the corresponding estimate, since its
center Schur term is $O(h(t)^2)$. Define
$b(t)=\sqrt{\epsilon_*(k(t)-c_*)}$. Since
$h'/h\to-1$ or $-2$, respectively, this proves
\eqref{eq:CPI-scale-comparison} and the exact form.
The amplitude radius in \eqref{eq:CPI-weighted-domain} is essential:
it ensures $t|\alpha-\mathbf1|<\delta$ throughout the construction.
The inverse changes, \eqref{eq:CPI-amplitude-graph}, and
$|Z_fb|=b$ yield \eqref{eq:CPI-parameter-speed}.

Finally, the Dirichlet part of \eqref{eq:A-logmass-leading-energy}
and the differentiated projection estimates yield
\[
 Q(U)=2t\sum_i\kappa_i\frac{\alpha_i^2}{2\alpha_i-1}
           +Q_0(\alpha,v,\xi)+o(1),
\]
with bounded $Q_0$ and bounded derivatives in the directions under
consideration. The remainder has bounded derivative along $Z_f$ by
the same exact kernel estimates. Since
$\alpha^2/(2\alpha-1)=1+(\alpha-1)^2/(2\alpha-1)$,
\eqref{eq:CPI-parameter-speed} proves both statements in
\eqref{eq:CPI-norm-scalar}.
\end{proof}

The exact coordinates determine the local descending field.
\begin{prop}\label{prop:CPI-local-pseudogradient}
At every candidate end there is a bounded locally Lipschitz descending
field with the following properties. A forward orbit trapped in a
sufficiently small end neighborhood escapes to infinite scale if and only if
$\mathcal L(\xi^*)<0$ and its negative coordinates vanish. Each such
configuration has a nonempty local stable set. For
$\mathcal L(\xi^*)>0$ every orbit leaves the neighborhood.
\end{prop}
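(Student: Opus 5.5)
The plan is to define the field directly in the exact coordinates of Lemma~\ref{lem:CPI-coordinates} and read off the orbit behaviour from the normal form \eqref{eq:CPI-exact-form}. On the finite coordinate block I take the vector $Z_f$ of \eqref{eq:CPI-finite-vector}, together with $-\mathsf z$ in the fiber. I push this forward to the Hilbert space by the inverse of the coordinate map, which is smooth at every finite scale. Then I divide by $1+\|\cdot\|$ to get a bounded locally Lipschitz field with the same oriented trajectories.

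Differentiating \eqref{eq:CPI-exact-form} along this parameter field gives
\[
\frac{\d}{\d s}J=-2|X_-|^2-2|X_+|^2-2|B|^2-2|y|^2-2b^2-2\|\mathsf z\|^2 .
\]
This is strictly negative away from the points already excluded by $b>0$, so the field is descending. By \eqref{eq:CPI-parameter-speed} and \eqref{eq:CPI-norm-scalar}, the ambient speed along $Z_f$ is controlled by bounded parameter speeds, so the time change is finite on finite parameter segments.

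Along the flow, $X_-$ and $y$ grow exponentially. The components $X_+$, $B$ and $\mathsf z$ decay exponentially, since their equations are $\dot X_+=-X_+$ and so on. The scalar satisfies $\dot b=-\epsilon_*b$.

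If $\mathcal L(\xi^*)>0$, then $\epsilon_*=-1$ and $b$ grows. Because $b$ is strictly decreasing in $t$ by \eqref{eq:CPI-scale-comparison}, $t$ decreases, and the orbit reaches the face $t=T$ in finite time; equivalently, $b$ reaches its maximal value $b(T)$. So every orbit leaves the neighborhood.

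If $\mathcal L(\xi^*)<0$, then $\epsilon_*=1$, $b\to0$ exponentially, and $t\to\infty$. A trapped orbit must then have $X_-=0$ and $y=0$ initially, because otherwise these coordinates reach the radius $r$ and the orbit exits. Conversely, if $X_-=0$ and $y=0$, the remaining coordinates stay in \eqref{eq:CPI-weighted-domain}. The weighted amplitude condition $\sqrt{1+t}\,|B|<r$ needs checking along the flow. $B$ decays like $e^{-s}$, while by \eqref{eq:CPI-scale-comparison} the time $t$ grows only linearly, at rate about $2$ or $1$. Hence $\sqrt{1+t}\,|B|$ eventually decreases, and it stays below $r$ after shrinking the initial data. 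Thus these orbits escape to infinite scale with $J\to c_*$.

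The local stable set is then the set $\{X_-=0,\ y=0\}$, intersected with the neighborhood. Its dimension is infinite, with finite codimension $(d-k)+(n-1)$, and it is nonempty. Its codimension matches $\iota_\infty=3p+2q-1-k$ of Proposition~\ref{prop:bubbling-branches}.

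The main obstacle is the gap between parameter space and the ambient Hilbert space. First, the pushed-forward field must be locally Lipschitz: the coordinates are only smooth at finite scale, and the smoothed mass coordinates \eqref{eq:CPI-smooth-mass-coordinates} are what make this work. Second, an orbit trapped in the end neighborhood must really correspond to $b\to0$, and not to drifting within bounded $t$. For the second point I would use that $J$ is bounded below on the trapped orbit while $\frac{\d}{\d s}J\le-2b^2$ with $b$ monotone, together with the exact parameter ODE.

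For $q=0$, the constant in $-b/b'$ is $1$ instead of $2$. This only changes the rate comparison in the weighted $B$-condition, which still holds.
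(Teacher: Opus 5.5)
Your proposal is correct and is essentially the paper's proof. The field is the normalized push-forward of the linear coordinate field $Z_f$ together with fiber contraction, descent comes from differentiating the exact normal form, and the orbit dichotomy is read off from $\dot b=-\epsilon_* b$ and the exponential growth of $(X_-,y)$. The one point you should state explicitly is that the fiber contraction needs a norm-preserving transport on the moving orthogonal bundle; the paper takes $\dot{\mathsf z}=DP_\theta[\dot\theta]\mathsf z-\mathsf z$, which is tangent to $P_\theta\mathsf z=\mathsf z$ because $P_\theta\,DP_\theta\,P_\theta=0$, since otherwise $\|\mathsf z\|^2$ does not decay exactly and your descent identity acquires a moving-fiber error. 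Your remark that finite coordinate-time segments take finite Hilbert time is exactly what the paper uses to show that coordinate time, and hence $t$, is unbounded on a trapped orbit.
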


\begin{proof}
Use the finite vector \eqref{eq:CPI-finite-vector} and fiber contraction
$\nabla_{Z_f}\mathsf z=-\mathsf z$ in an isometric trivialization of the fiber bundle.
Equivalently, one may use the metric connection induced by its
orthogonal projections. Its parallel transport preserves the fiber
norm. These are smooth operations at finite scale; no differentiability
of a translation operator on $H^1$ is required. Denote the lifted
field by $Z$ and set
\[
 W_* =\Theta Z,\quad
 \Theta=(1+\|Z\|+|Z_f|+\|\mathsf z\|)^{-1}.
\]
The locally Lipschitz field $W_*$ satisfies $\|W_*\|\le1$ and
\begin{equation}\label{eq:CPI-descent}
 -DJ[W_*]=2\Theta
 (|X_-|^2+|X_+|^2+|B|^2+|y|^2+b^2+\|\mathsf z\|^2)>0.
\end{equation}
In the coordinate time $\sigma$, defined by $\d\sigma/\d s=\Theta$,
the finite equations are precisely \eqref{eq:CPI-finite-vector},
and $\|\mathsf z(\sigma)\|=e^{-\sigma}\|\mathsf z(0)\|$.
For $\epsilon_*=1$, trapping implies $X_-=y=0$; all other
coordinates then tend to zero and $b\to0$. This supplies actual
orbits, including those with $B=\mathsf z=X_+=0$. If $\epsilon_*=-1$,
$b$ increases and the finite-scale face is reached unless an unstable
face is reached earlier. Infinite scale cannot be reached in finite
Hilbert time because $|\dot b|\le b$. On a trapped orbit coordinate
time is unbounded: a finite limit of coordinate time implies a positive
limit of $b$ and finite limits of all coordinates and their fiber
transports. The normalization remains positive at the limiting point,
so the coordinate flow extends further. Thus the asserted limits are limits of the complete flow.

Along a trapped orbit $t=O(1+\sigma)$ and
$|B|+\|\mathsf z\|+|X_+|=O(e^{-\sigma})$.
The original parameters consequently satisfy
$\alpha\to\mathbf1$, $v\to0$, $\xi\to\xi^*$,
$\lambda_i\to\infty$, and $w\to0$.
The normalized frame estimates, the projected fiber equation, and
the exponential-integral expansion yield $\|\nabla J\|\to0$:
the finite pairings tend to zero by the gradient expansions, while
the fiber component is bounded by $C(\|\mathsf z\|+e^{-t})$; the possible
amplitude-normalization factors grow only polynomially in $t$ and
have product $o(1)$ with $e^{-\sigma}$. The energy tends to $c_*$.
\end{proof}

\subsection{The field in the original bubble parameters}

We express the local field in the original bubble parameters. This also identifies
the combined amplitude--scale direction and the separate scale and tangential-center directions
used in the preparation construction. In this subsection $b$ is the
positive radial coordinate in \eqref{eq:CPI-exact-form}; the vector
$B$ comprises the positive amplitude coordinates. Write $P_i=
P\delta_{\lambda_i,\xi_i}$ and $l_i=\ln \lambda_i$.
Let $\mathcal N$ denote the finite-dimensional coordinate map
constructed by the three integral quadratic forms in
Lemma~\ref{lem:CPI-coordinates}. Its input is
$\theta=(\alpha,v,\xi,t)$ and its output is
$(X_-,X_+,B,y,b)$. Define its parameter velocity by
\begin{equation}\label{eq:explicit-inverse-coordinate-field}
 \dot\theta=(D\mathcal N(\theta))^{-1}
           (X_-,-X_+,-B,y,-\epsilon_*b).
\end{equation}
The inverse in this formula is a finite matrix inverse at a positive
finite scale. The construction of $\mathcal N$ fixes this matrix;
there is no unspecified choice of a descent direction.

Set
\[
 A_i(\alpha,\xi,t)=
 \ln \bigl((\mathcal F_\xi^{i,V_t}\mathrm g_\xi^i)(\xi_i)\bigr).
\]
The dependence on $t$ here is solely the smoothing used in
\eqref{eq:CPI-smooth-mass-coordinates}. At finite scale the following
formulas determine every original parameter velocity:
\begin{equation}\label{eq:explicit-original-scale-velocity}
\begin{split}
 \dot l_i=\frac{1}{4\alpha_i-2}\Big[
 2\dot t+\dot v_i-D_\alpha A_i[\dot\alpha]
 -D_\xi A_i[\dot\xi]-(\partial_t A_i)\dot t
 +\Big(\frac{2}{2\alpha_i-1}-4l_i\Big)\dot\alpha_i
 \Big],\quad
 \dot\lambda_i=\lambda_i\dot l_i.
\end{split}
\end{equation}
For a boundary center $\dot\xi_i$ is tangential to $\partial\Sigma$.
The combined amplitude--scale and normalized scale directions are
\begin{equation*}
 \mathscr A_i=\frac{P_i}{l_i}
       -\frac{2}{\alpha_i}\lambda_i\partial_{\lambda_i}P_i,
 \quad
 \mathscr S_i=\frac{1}{\alpha_i}
                    \lambda_i\partial_{\lambda_i}P_i,
 \quad
 \mathscr C_{ik}=\frac{1}{\lambda_i}\partial_{(\xi_i)_k}P_i.
\end{equation*}
In the last formula $k$ ranges over two interior coordinates or
one boundary coordinate. Define
\begin{equation}\label{eq:explicit-bubble-velocity}
 Z_{\rm bub}=\sum_i l_i\dot\alpha_i\mathscr A_i
 +\sum_i(\alpha_i^2\dot l_i+2l_i\dot\alpha_i)\mathscr S_i
 +\sum_{i,k}\alpha_i\lambda_i\dot{(\xi_i)_k}\mathscr C_{ik}.
\end{equation}
Thus all three types of bubble directions occur with specified
coefficients. For the coupled preparation field the parameter equations
\eqref{eq:graph-preparation-parameter-field} instead yield the particularly
simple identity
\[
 Z_{\rm bub}=-\sum_i(\alpha_i-1)\mathscr A_i
                         -\sum_i\tau_i\mathscr S_i.
\]
The descent and the amplitude clock for this latter field were proved
in Proposition~\ref{prop:resonant-graph-preparation}.
For the fixed-mass preparation whose stopped flow retracts the
amplitude collar, use instead
\eqref{eq:amplitude-collar-field} and
\eqref{eq:amplitude-collar-scale-lift}.
They determine the same bubble-direction coefficients in
\eqref{eq:explicit-bubble-velocity}. Its positive-fiber velocity is
$DP_\theta[\dot\theta]\mathsf z-\mathsf z/t$;
Lemma~\ref{lem:amplitude-collar-retraction} proves its full ambient
descent and the continuous finite stopping rule.

To specify the normal part of the local end field, let
$P_\theta$ be the orthogonal projection onto $E_\theta$ and
$\Phi_\theta$ the exact positive-fiber map from the proof of
Lemma~\ref{lem:CPI-coordinates}. For
$\mathsf z=\Phi_\theta(w-\bar w_\theta)$, set
\begin{equation}\label{eq:explicit-normal-velocity}
 \dot{\mathsf z}=DP_\theta[\dot\theta]\mathsf z-\mathsf z,
 \quad
 Z=Z_{\rm bub}+D\bar w_\theta[\dot\theta]
       +D\bigl[(\theta,\mathsf z)\mapsto
                    \Phi_\theta^{-1}(\mathsf z)\bigr]
                     [\dot\theta,\dot{\mathsf z}].
\end{equation}
The final derivative is taken on the total normal bundle.
Equations \eqref{eq:explicit-inverse-coordinate-field}--
\eqref{eq:explicit-normal-velocity}, followed by the positive
normalization in Proposition~\ref{prop:CPI-local-pseudogradient}, define $W_*$
in the original Hilbert space.

We verify the exact descent of the field in the original parameters.
On every local end neighborhood, these formulas yield the field of
Proposition~\ref{prop:CPI-local-pseudogradient}. It is bounded and locally
Lipschitz, and its descent is exactly \eqref{eq:CPI-descent}.
For a retained end, in coordinate time,
\[
 \dot X_-=X_-,\quad \dot y=y,\quad
 \dot X_+=-X_+,\quad \dot B=-B,\quad
 \dot b=-b,
 \quad \frac{\d}{\d\sigma}\|\mathsf z\|^2=-2\|\mathsf z\|^2.
\]
For an end with $\mathcal L>0$, the equation for $b$ is
$\dot b=b$. No estimate discards the derivative of $\bar w$
or the derivative of the normal projection.

Differentiate
$(4\alpha_i-2)l_i=2t+v_i-A_i+\ln (2\alpha_i-1)$.
This yields \eqref{eq:explicit-original-scale-velocity}, including
the term $-(\partial_tA_i)\dot t$. Expanding the first two sums
of \eqref{eq:explicit-bubble-velocity} cancels the terms
$2l_i\dot\alpha_i\lambda_i\partial_{\lambda_i}P_i/\alpha_i$,
leaving
\[
 Z_{\rm bub}=\sum_i\dot\alpha_i P_i
 +\sum_i\alpha_i\dot l_i\lambda_i\partial_{\lambda_i}P_i
 +\sum_{i,k}\alpha_i\dot{(\xi_i)_k}\partial_{(\xi_i)_k}P_i
 =D\psi_\theta[\dot\theta].
\]
Since $P_\theta(DP_\theta)P_\theta=0$, the first equation in
\eqref{eq:explicit-normal-velocity} preserves
$P_\theta\mathsf z=\mathsf z$ and yields the asserted norm identity.
The second equation is consequently the derivative of
$u=\psi_\theta+\bar w_\theta+\Phi_\theta^{-1}(\mathsf z)$.
By \eqref{eq:explicit-inverse-coordinate-field} all finite-coordinate
equations are precisely the preceding formulas. Differentiating the exact
identity \eqref{eq:CPI-exact-form} yields
\[
 -DJ[Z]=2\bigl(|X_-|^2+|X_+|^2+|B|^2+|y|^2
                             +b^2+\|\mathsf z\|^2\bigr).
\]
Multiplication by the positive normalization proves
\eqref{eq:CPI-descent}. Smoothness of the coordinate maps and
orthogonal normal projections at finite scale proves local Lipschitz
regularity; the chosen normalization keeps the Hilbert norm at most one.
The sign and scale orders are therefore $b^2\asymp e^{-t}$
in the boundary case and $b^2\asymp te^{-2t}$ in the pure-interior
case, as already established in Lemma~\ref{lem:CPI-coordinates}.

\subsection{Auxiliary height and exhaustion of its exact critical points}

Only the ends with $\mathcal L<0$ will be retained in the global
construction. On one such end, set
\[
 \xi_-=(X_-,y),\quad \eta_0=(X_+,\mathsf z),\quad
 H=|\xi_-|^2,\quad G=\|\eta_0\|^2+(1+t)|B|^2+b^2.
\]
In coordinate time the local field satisfies
\begin{equation*}
 H'=2H,\quad G'\le-G,\quad J'=-2D,
 \quad D=H+\|\eta_0\|^2+|B|^2+b^2,
\end{equation*}
after increasing $T$: the derivative of $(1+t)|B|^2$ is
$(-2+t'/(1+t))(1+t)|B|^2$, and $t'$ is bounded.
Choose nested coordinate neighborhoods
\begin{equation}\label{eq:CPI-blocks}
 \mathscr B_i^j=\{H_i\le R_j^2,\ G_i\le r_j^2,\ b_i>0\},
 \quad j=0,-,+,
\end{equation}
with strictly increasing radii and positive coordinate margins.
Choose the outer radii sufficiently small that the finite-coordinate
neighborhoods have closure in the domains of Lemma~\ref{lem:CPI-coordinates}.
At the same time increase their scale thresholds until $b<r_+$
implies $t>T$. All patches are disjoint after passing to unordered
configurations. Unique modulation and the separated center
neighborhoods ensure this also as subsets of $\oH$.

We next construct an auxiliary family adapted to these end neighborhoods.
\begin{lem}\label{lem:CPI-height}
The neighborhoods can be chosen with a smooth height $Q_*\ge1$, comparable
to $1+\|u\|^2$, constant on a ball containing $\operatorname{Crit}(J)$,
such that the following properties hold. Write
$J_s^*=sJ+(1-s)Q_*$.
\begin{enumerate}[label=\textup{(\roman*)}]
\item For some $\varepsilon>0$, every bounded sequence with
$s_n\in[1-2\varepsilon,1]$ and $\nabla J_{s_n}^*(u_n)\to0$
has a strongly convergent subsequence.
\item For the same interval of parameters,
\[
 \operatorname{Crit}(J_s^*)\subset
 \operatorname{Crit}(J)\cup\bigcup_i\mathscr B_i^0.
\]
\item On each collar $\mathscr B_i^+\setminus\mathscr B_i^0$,
\begin{equation}\label{eq:CPI-local-growth}
 D\ln  Q_*[W_*]\le C(-DJ[W_*]).
\end{equation}
The normalized local descent has a positive lower bound on bounded
closed portions of that collar.
\end{enumerate}
No nondegeneracy of the genuine critical points is required.
\end{lem}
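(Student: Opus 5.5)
I would build $Q_*$ by patching a modified Dirichlet energy. Outside the end neighborhoods it is essentially $1+Q(u)=1+\|u\|^2/2$. Inside each retained block $\mathscr B_i^+$ it is a function of the end coordinates alone. On a large ball $\mathcal B_{R_0}^{\oH}\supset\operatorname{Crit}(J)$, whose existence follows from Remark~\ref{rem:fixed-resonance-compactness}, it is a constant.

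Concretely, let $\chi_0$ be a smooth radial cutoff that equals $1$ on $\mathcal B_{R_0}^{\oH}$ and vanishes outside $\mathcal B_{2R_0}^{\oH}$. On the blocks I use the coordinates of Lemma~\ref{lem:CPI-coordinates}. There I set $Q_*=1+Q(\psi_\theta)$ with $\theta$ frozen along fibers, so that by \eqref{eq:CPI-norm-scalar} it is comparable to $1+t\asymp 1+\|u\|^2$. I glue with cutoffs in $H_i$ and $G_i$ supported in $\mathscr B_i^+$. Comparability to $1+\|u\|^2$ comes from \eqref{eq:CPI-norm-scalar} together with $\|u-\psi_\theta\|\le C$ on the tube. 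Constancy on the ball holds after choosing the blocks at scales $t$ so large that they lie outside $\mathcal B_{2R_0}^{\oH}$; this only requires increasing $T$.

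Property (iii) is then a direct computation on the collar. By \eqref{eq:CPI-norm-scalar}, $|D Q_*[Z]|\le C$ while $Q_*\ge c(1+t)$, so $|D\ln Q_*[W_*]|\le C\Theta/(1+t)$. On the collar $\mathscr B_i^+\setminus\mathscr B_i^0$ either $H\ge R_0^2$ or $G\ge r_0^2$. Since $G\le \|\eta_0\|^2+(1+t)|B|^2+b^2$, in either case $D\ge c/(1+t)$. By \eqref{eq:CPI-descent} this gives $-DJ[W_*]=2\Theta D\ge c\Theta/(1+t)$, which is \eqref{eq:CPI-local-growth}. The positive lower bound on bounded closed portions follows because $t$ and $\Theta$ are bounded there, so $D$ and $\Theta$ are bounded below.

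For (i), I write $\nabla J^*_s=s(u-T_{\rho_*}(u))+(1-s)\nabla Q_*$. Here $T_{\rho_*}$ is compact on bounded sets by Moser--Trudinger \eqref{eq:MT-Jensen-explicit}. I then arrange that $\nabla Q_*=u+K(u)$ with $K$ compact on bounded sets. Away from the blocks this holds because $\nabla Q=u$. Inside the blocks $Q_*$ depends only on finitely many parameters, so its gradient has finite rank on bounded sets. Hence $\nabla J^*_s=(s+(1-s))u+\text{compact}$, and the Palais--Smale property on bounded sets follows for $s$ near $1$.

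For (ii), the main obstacle, I argue by contradiction. Suppose there are $s_n\to1$ and critical points $u_n\notin\operatorname{Crit}(J)\cup\bigcup_i\mathscr B_i^0$.

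\emph{Bounded case.} If $(u_n)$ is bounded, (i) gives a limit in $\operatorname{Crit}(J)$. Near that limit $Q_*$ is constant, so $u_n\in\operatorname{Crit}(J)$, a contradiction.

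\emph{Unbounded case.} If $\|u_n\|\to\infty$, then $\|\nabla J(u_n)\|=\frac{1-s_n}{s_n}\|\nabla Q_*(u_n)\|\le C(1-s_n)\|u_n\|$. I would use $\varepsilon$ small together with the blow-up analysis of Proposition~\ref{prop:u-k-in-V-eta}, \ref{prop:estimates_rho_k}, applied to the perturbed equation $-\Delta u=\frac{s}{s+(1-s)c}\rho_*(\dots)$, noting that $\nabla Q_*\approx u$ off the blocks. These results place $u_n$ in a bubbling neighborhood near a reduced critical configuration. The effective parameter $\rho_n<\rho_*$ then forces $\mathcal L<0$, by Proposition~\ref{prop:bubbling-branches}. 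So the point lies in a retained end, and I must exclude the collar.

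On the collar, $\langle\nabla J^*_s,W_*\rangle=s\,DJ[W_*]+(1-s)DQ_*[W_*]$. By (iii), $(1-s)DQ_*[W_*]\le C(1-s)Q_*(-DJ[W_*])$, which is smaller than $-s\,DJ[W_*]$ as long as $(1-s)Q_*$ is small. This is the delicate point, since $Q_*\asymp 1+t$ is unbounded. I would handle it by making $\varepsilon$ depend on the maximal scale of the collar, which is bounded because the outer blocks have $t>T$ but are limited by $b<r_+$. If that fails, I would instead replace $Q_*$ on blocks by $\ln$-type growth, so that the factor $Q_*$ is absorbed. Failing both, I would check directly that $\dot t$ is bounded, so that $DQ_*[W_*]$ carries the decaying factor $\Theta/(1+t)$.
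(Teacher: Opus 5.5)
Your construction of $Q_*$ (frozen-fiber $1+Q(\psi_\theta)$ on the blocks, glued to $1+Q(u)$) is fine, and so are comparability with $1+\|u\|^2$, part (i) and part (iii). In (i), the coefficient of $u$ inside the blocks is $s$, not $1$, but it stays bounded below, so the conclusion holds.

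The gap is the collar part of (ii), and it is a real failure, not just a delicate point. Take collar points with $H=0$, $\eta_0=0$, $(1+t)|B|^2=r^2$ for some $r_0<r<r_+$, and $t$ large. There $-DJ[W_*]=2\Theta(|B|^2+b^2)\approx 2\Theta r^2/(1+t)$. On the other hand $DQ_*[W_*]=\Theta\bigl(Z_fQ(U)+o(1)\bigr)$ and $Z_fQ(U)=2\rho_*Z_ft+o(1)$, where $Z_ft=-b/b'(t)\to 2$ or $1$. Hence
$DJ_s^*[W_*]\approx\Theta\bigl(-2sr^2/(1+t)+2(1-s)\rho_*Z_ft\bigr)$.
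This changes sign inside the collar near $t\sim(1-s)^{-1}$, and where it vanishes the pairing says nothing about $\nabla J_s^*$.

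None of your three fallbacks removes this.
\begin{itemize}
\item The condition $b<r_+$ is a \emph{lower} bound on $t$. The collar $\mathscr B_i^+\setminus\mathscr B_i^0$ contains points of arbitrarily large scale, so no $\varepsilon$ can depend on a ``maximal scale''.
\item Logarithmic growth on the blocks destroys $Q_*\asymp 1+\|u\|^2$. The lemma asserts this comparability, and \eqref{eq:CPI-orbit-bound} needs it to bound orbits.
\item Boundedness of $\dot t$ only makes $D\ln Q_*[W_*]$ decay like $\Theta/(1+t)$. The quantity $DQ_*[W_*]$ itself stays of order $\Theta$, precisely because $\dot t$ is bounded away from zero along $W_*$.
\end{itemize}

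What is missing is an argument that exact critical points of $J_s^*$ in an end chart have all end coordinates tending to zero. The paper obtains this by reducing $J_s^*$ inside the chart. It places the cutoff transitions outside $\mathscr B_i^+$, so that $Q_*=q_i+\mathrm{const}$ on the whole collar, and runs the reduction through the transition regions as well. With $c=1-s$:
\begin{itemize}
\item the fiber equation gives $\|w_s-\bar w\|\le Cce$;
\item the fast equations give $|\alpha-\mathbf 1|\le C(h+e^{-t}/t)$ and $|v|\le Ch$;
\item the center equation tends to $D\FpqV(\xi)=0$;
\item the common-scale equation becomes $0=2\rho_*c-a_*\mathcal L(\xi^*)h'(t)+o(h(t))+O(ch(t))$.
\end{itemize}
Since $h'/h$ has a negative limit and $\mathcal L(\xi^*)<0$ on retained ends, the last equation forces $0\le c\le Ch(t)$. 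Hence $b\to 0$ and the critical points enter $\mathscr B_i^0$, exactly as for the genuine bubbling branches.

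If you prefer to keep a pointwise pairing argument, you would at least have to test $\nabla J_s^*$ against directions with $t$ frozen. The only order-one variation of $Q(U)$ comes from $\partial_tQ(U)\approx 2\rho_*$. Choose the direction according to which of $H$, $\|\eta_0\|^2$, $(1+t)|B|^2$ is bounded below at the given collar point. You would then also need to estimate the amplitude derivatives of $Q(\psi_\theta)$ and the gluing terms. Your proposal does not contain this step.
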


\begin{proof}
The exact resonant critical set is compact by
Remark~\ref{rem:fixed-resonance-compactness} and
Lemma~\ref{lem:B-compact-gradient}. Choose $R$ containing the critical set, and
choose a smooth nondecreasing $f$ with $f'=0$ on $[0,2R^2]$,
$f'=1$ on $[4R^2,\infty)$, $0\le f'\le1$, and $f(q)\ge1+q$.
The last condition holds after increasing the additive constant.
Take the end thresholds beyond this ball. On an end set
\[
 q_i=Q(U)+\|\mathsf z\|^2,\quad
 \Delta_i=q_i-Q(u)=\|\mathsf z\|^2-Q(w).
\]
The orthogonality $\langle U,w\rangle=0$ is used here.
Choose smooth cutoffs $\chi_i$ equal to one on neighborhoods of
$\mathscr B_i^+$, with disjoint larger supports in the coordinate
patches, and define
\begin{equation*}
 Q_*=f(Q)+C_*+\sum_i\chi_i\Delta_i.
\end{equation*}
Cutoffs in finite variables use $t^2|\alpha-\mathbf1|^2$, $|v|^2$,
and center coordinates, together with a large-scale cutoff.
The fiber cutoff uses $\|w\|^2$ and equals one on a fixed smaller
ball. The radii can be chosen so all neighborhoods and their collars are
inside the region where $\chi_i=1$.
Coercivity yields a uniform estimate for $\Delta_i$; choose
$C_*>1+\sum_i\|\Delta_i\|_\infty$. Then
$c(1+\|u\|^2)\le Q_*\le C(1+\|u\|^2)$, and on the neighborhoods
$Q_*=q_i+$ a constant.

We first verify compactness on bounded sets for this actual family.
The exact fiber identity yields
\begin{equation}\label{eq:CPI-height-compact-identity}
 \Delta_i=Q(U)-j_{\rho_*}(\theta)
             -\rho_*\ln\int_\Sigma Ve^u\,\d v_g.
\end{equation}
On bounded parts of a separated end chart the scales are bounded
by \eqref{eq:CPI-norm-scalar}. Gradients of the finite parameter
functions lie in finite spans of bubbles and their parameter
derivatives, with bounded coefficients obtained by differentiating
the modulation equations. The normalized Gram inverse is uniformly
bounded on those bounded parts. These spans form relatively compact
families. The last term in
\eqref{eq:CPI-height-compact-identity} has compact gradient by
Lemma~\ref{lem:B-compact-gradient}. Thus $\nabla\Delta_i$ is compact
on bounded sets. The fiber-radius cutoff can contribute a multiple
of $u$, because $Q(w)=Q(u)-Q(U)$. Its coefficient is uniformly
bounded; every other cutoff-gradient term is compact on bounded sets.
Consequently,
\[
 \nabla Q_*(u)=(f'(Q(u))+a(u))u+K_0(u),\quad |a(u)|\le C_0,
\]
where $K_0$ maps bounded sets to relatively compact sets. Since
$\nabla J=u-T_{\rho_*}(u)$, we have
\[
 \nabla J_s^*=[s+(1-s)(f'(Q)+a)]u+K_s(u),
\]
with $K_s$ collectively compact on bounded sets. Take
$0<\varepsilon<1/4$ and $2\varepsilon(1+C_0)<1/2$. Extracting the scalar coefficient and
the compact term in this identity proves (i).

We next prove that every unbounded sequence of exact critical points
of $J_s^*$ with $s\uparrow1$ enters every fixed inner neighborhood belonging
to its limiting end. Outside the height cutoffs, $f'=1$ at large
norm and the equation is exactly $\nabla J_{s\rho_*}=0$.
Theorem~\ref{thm:alter_blowup_sols} and
Proposition~\ref{prop:bubbling-branches} place such a sequence in a balanced
chart with $\mathcal L<0$ and all its end coordinates tending to zero.
The cutoff transition regions need a separate verification;
the PDE theorem is not applied to a modified equation there.

Set $c=1-s$, $a=\alpha-\mathbf1$, $e=|a|+e^{-t}$ and
$\zeta=(\xi,t,v)$. All constants below are uniform for
$0\le c\le c_0$ on the fixed cutoff supports, after choosing
$c_0>0$ small and then the scale threshold large.
The fiber radial equation and uniform coercivity first yield
$\|w\|\le Ce$ at a fiber critical point. To see this also in the
fiber cutoff collar, pair the equation with $w$: the $J$ term is
at least $c_{\rm f}\|w\|^2-Ce\|w\|$ for a uniform $c_{\rm f}>0$,
whereas the height perturbation is bounded by
$Cc\|w\|^2+Cc e\|w\|$ there. Choose $c$ so small that $Cc\le c_{\rm f}/2$.
The critical equation then implies
$(c_{\rm f}/2)\|w\|^2\le Ce\|w\|$, hence $\|w\|\le Ce$.
Thus an unbounded critical sequence eventually lies where the
fiber cutoff is identically one. Its fiber equation is
\[
 (s+c\chi_i)D_wJ+c(1-\chi_i)\langle w,\cdot\rangle=0.
\]
Its Hessian is uniformly positive, and its unique solution $w_s$
satisfies $\|w_s-\bar w\|\le Cce$.
Evaluation at that solution yields the reduced scalar function
\begin{equation}\label{eq:CPI-modified-reduction}
 j_s^*=s j_{\rho_*}+c\{Q(U)+(1-\chi_i)S\}+E_s,
 \quad S=Q(\bar w),\quad |E_s|\le Cc^2e^2.
\end{equation}
The harmless additive constant in the height has been omitted.

All derivatives needed in this equation can be estimated as scalar
contractions on the fixed forms of Appendix~\ref{app:0}. In particular,
\begin{equation}\label{eq:CPI-height-error-derivatives}
\begin{gathered}
 |D_\zeta^jS|\le Ce^2\quad(0\le j\le2),\quad
 |D_\alpha S|+|D_\zeta D_\alpha S|\le Ce(1+te),\quad
 \|D_\alpha^2S\|\le C(1+te)^2,\\
 |D_\zeta^jE_s|\le Cc^2e^2\quad(0\le j\le2),\quad
 |D_\alpha E_s|\le Cc^2e(1+te),\quad
 \|D_\alpha^2E_s\|\le Cc^2(1+te)^2.
\end{gathered}
\end{equation}
For $S$, these follow by differentiating
$a_\xi(T_\theta\bar\eta,T_\theta\bar\eta)/2$ and applying
\eqref{eq:A-pulledback-correction-bounds}; the amplitude estimates
in Step 2 of that proof supply the second bound.
For the second amplitude derivative, the fixed-space Euler equation yields
$\|D_\alpha\bar\eta\|\le C(1+te)$ and
$\|D_\alpha^2\bar\eta\|\le C(t+t^2e)$; the product rule for
$S=a_\xi(T_\theta\bar\eta,T_\theta\bar\eta)/2$ yields the stated estimate.
For $E_s$, Taylor's integral formula in $w_s-\bar w$ and the
differentiated fiber equation yield the same estimates with the factor
$c^2$. Finite cutoff derivatives cost $Ct$ per amplitude derivative,
$C$ per $(v,\xi)$ derivative and $C/t$ per scale derivative on their
supports. Thus multiplication by $1-\chi_i$ preserves these orders.

To compare the full reductions, the minimizing fiber for $J_{s\rho_*}$
satisfies $sD_wJ+c\langle w,\cdot\rangle=0$. Coercivity and the same
differentiation yield $\|\bar w_{s\rho_*}-\bar w\|\le Cce$ and
\[
 j_{s\rho_*}=s j_{\rho_*}+cQ(U)+cS+\widehat E_s,
 \quad j_s^*=j_{s\rho_*}-c\chi_iS+\widetilde E_s,
\]
where $\widehat E_s$ and $\widetilde E_s=E_s-\widehat E_s$ satisfy
the estimates for $E_s$ in \eqref{eq:CPI-height-error-derivatives}.
In variables $(\sqrt t\,a,v)$, the perturbation of the fast Hessian
has amplitude submatrix $O(c(1+te)^2/t)$, mixed submatrix
$O(ce(1+te)/\sqrt t)$ and mass submatrix $O(ce^2)$.
Since $t|a|<\delta$, all tend to zero uniformly for $c\le c_0$.
The uniform invertibility used in Step~4 of Lemma~\ref{lem:terminal-reduction}
therefore persists through the cutoff transitions.
For the fast equations, divide each amplitude equation by $t$ as
in Step~4 of Lemma~\ref{lem:terminal-reduction}.
The additional amplitude error is bounded by
\[
 Cc\,e(1+te)/t=Cc(e/t+e^2).
\]
On $t|a|<\delta$,
$e/t+e^2\le(1+2\delta+2te^{-t})|a|/t+e^{-t}/t+2e^{-2t}$.
Choose $T$ so the coefficient of $|a|$, including the original
$o(1)|a|$, is below $1/2$ uniformly for $c\le c_0$.
The amplitude equation then yields $|a|\le C(h+e^{-t}/t)$.
The relative-mass equations have the additional error $O(ce^2)$;
their uniform negative Hessian yields $|v|\le Ch$.
Coercivity yields $\|w_s\|\le Ce\le Ce^{-t}$.
These estimates are obtained before any comparison between $c$ and $h$.
At a fast critical point the scale derivative is the partial derivative,
so the additional terms in \eqref{eq:CPI-modified-reduction} contribute
$O(ce^{-2t})$. Indeed, \eqref{eq:CPI-height-error-derivatives} and
$|\partial_t\chi_i|\le C/t$ apply uniformly through the transition region, and
\[
 \frac{ce^{-2t}}{h(t)}=
\left\{\begin{aligned}
    &ce^{-t}&&\text{ if }\, q>0,\\ &c/t&& \text{ if }\,q=0. 
\end{aligned}\right.
\]
Together with the original $O(ch)$ parameter error, this yields
$o(h)+O(ch)$ uniformly for $c\le c_0$.
The center equation tends to $D\FpqV=0$, so its limit is the unique
critical configuration in the chosen patch. The common-scale equation is
\begin{equation*}
 0=2\rho_*c-a_*\mathcal L(\xi^*)h'(t)+o(h(t))+O(ch(t)).
\end{equation*}
Since $h'/h$ has a strictly negative limit and $\mathcal L<0$
on these patches, this implies $0\le c\le Ch(t)$.
All weighted amplitude, relative-scale, center, and fiber coordinates
therefore tend to zero, and $b\to0$. The critical sequence enters
$\mathscr B_i^0$ as asserted.

If (ii) fails for every smaller $\varepsilon$, a sequence of exact
auxiliary critical points with $s\to1$ outside the stated union
is either unbounded, contradicting the preceding exhaustion,
or bounded. In the latter case (i) yields a limit in
$\operatorname{Crit}(J)$, inside the ball where $Q_*$ is constant.
There $\nabla J_s^*=s\nabla J$, so the sequence itself eventually
belongs to $\operatorname{Crit}(J)$, again a contradiction.

Finally, on an end collar
$Q_*\asymp1+t+\|\mathsf z\|^2$, and
\eqref{eq:CPI-norm-scalar} yields
$|Dq_i[W_*]|\le C\Theta(1+\|\mathsf z\|^2)$.
Outside the inner neighborhood, $H+G\ge d_i>0$, hence
$D\ge c d_i/(1+t)$. This and \eqref{eq:CPI-descent} imply (iii).
On a bounded part of the collar, $t$ is bounded, the Hilbert-space
coordinate maps have bounded derivatives, and $\Theta$ has a
positive lower bound. The same inequality proves the last statement.
\end{proof}

\subsection[Complete resonant flow and orbit classification]{A complete resonant flow and its critical points at infinity}

The local fields extend to a complete descending field on the whole space.
\begin{prop}\label{prop:pseudogradient}
There is a bounded locally Lipschitz field $W$ on $\oH$, zero exactly
on $\operatorname{Crit}(J)$ and strictly descending elsewhere, which
agrees with $W_*$ near each retained end. Its positive flow is complete.
In particular it is defined on the entire natural neighborhood
$V_{\rho_*}^{\eta}(p,q,\varepsilon_0)$ of every type $2p+q=m$,
for every fixed admissible $\eta>0$ and sufficiently small
$\varepsilon_0>0$, including points outside the balanced end neighborhoods.
Every positive orbit whose energy is bounded below is either
precompact, with its limit set contained in $\operatorname{Crit}(J)$,
or eventually stays in one retained end and has the asymptotics of
Proposition~\ref{prop:CPI-local-pseudogradient}.
\end{prop}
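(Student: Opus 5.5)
The plan is to glue the local end fields $W_*$ of Proposition~\ref{prop:CPI-local-pseudogradient} to the negative gradient away from the ends, and to use the auxiliary height $Q_*$ of Lemma~\ref{lem:CPI-height} to control escape to infinity.

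\textbf{Gluing.} First I take a locally Lipschitz partition of unity subordinate to three open sets: the retained end neighborhoods $\mathscr B_i^+$ (with $\mathcal L<0$), small end neighborhoods for ends with $\mathcal L>0$, and the complement of the closed inner blocks $\mathscr B_i^-$. On the complement I use the bounded pseudogradient $-\nabla J/(1+\|\nabla J\|)$. Near the $\mathcal L>0$ ends I use $W_*$ as well; every orbit leaves those neighborhoods, so they produce no trapped orbits. On each collar $\mathscr B_i^+\setminus\mathscr B_i^-$ I take convex combinations. Descent is preserved if $-DJ[W_*]>0$ there and the two fields make a nonobtuse pairing with $\nabla J$; both hold because $W_*$ is descending by \eqref{eq:CPI-descent}. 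The glued field $W$ vanishes exactly on $\operatorname{Crit}(J)$, since $W_*$ never vanishes at finite scale ($b>0$). It is bounded and locally Lipschitz. Because it is bounded, the positive flow cannot blow up in finite time, so completeness follows. Proposition~\ref{prop:unique} together with Corollary~\ref{cor:resonant-geometric-coverage} shows that the field is defined on every natural neighborhood $V^\eta_{\rho_*}(p,q,\varepsilon_0)$, since it is defined globally.

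\textbf{Orbit alternative.} Take a positive orbit with $J$ bounded below. Then $\int_0^\infty(-DJ[W])\,ds<\infty$. I consider two cases.

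\emph{Bounded orbit.} Suppose first that $\|u(s)\|$ stays bounded on a sequence of times where $-DJ[W]\to0$. Away from the ends, $-DJ[W]\gtrsim \|\nabla J\|^2/(1+\|\nabla J\|)$, which produces Palais--Smale sequences. Lemma~\ref{lem:CPI-height}(i) with $s=1$ gives strong convergence to $\operatorname{Crit}(J)$. The standard argument then upgrades this to the whole limit set: between near-critical times the energy drops by a definite amount on annuli, so an orbit with finite total energy loss cannot cross annuli around $\operatorname{Crit}(J)$ infinitely often. This gives precompactness.

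\emph{Unbounded orbit.} This is the main obstacle. I expect to show that an unbounded orbit is eventually trapped in one retained end. The tool is $Q_*$, via the differential inequality \eqref{eq:CPI-local-growth} on the collars, $D\ln Q_*[W]\le C(-DJ[W])$. Integrating it gives that, along any portion spent in collars and ends, $\ln Q_*$ grows at most by $C$ times the energy drop. Hence $\|u\|$ can become large only inside the inner blocks or outside all ends.

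\emph{Excluding escape outside the ends.} To rule out unbounded excursions outside the ends, I would argue by contradiction with Lemma~\ref{lem:CPI-height}(ii). Suppose $\|u(s_n)\|\to\infty$ while $-DJ[W]\to0$ outside $\bigcup\mathscr B_i^0$. Near the minimizing time I want to produce approximate critical points of $J^*_{s}$ for some $s\in[1-2\varepsilon,1]$. The idea is to pick $s$ so that the radial component $\langle\nabla J,u\rangle$ is balanced by the height term. That places these approximate critical points in $\bigcup\mathscr B_i^0$, a contradiction. This deformation/homotopy-to-height argument is the step I am least sure of. If it fails, I would instead rely on the bubbling decomposition (Proposition~\ref{prop:u-k-in-V-eta} applied to Palais--Smale sequences at fixed resonance) combined with the preparation flows of Proposition~\ref{prop:resonant-graph-preparation} and Lemma~\ref{lem:amplitude-collar-retraction}. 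These show that any unbounded low-gradient sequence lies in a geometric tube where the field is explicit, and that the amplitude clocks force entry into terminal coordinates.

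\textbf{Trapped orbit.} Once the orbit is trapped in some $\mathscr B_i^+$ with $H$ bounded, the identity $H'=2H$ forces $X_-=y=0$. Proposition~\ref{prop:CPI-local-pseudogradient} then gives the asymptotics: convergence in bubble parameters to the critical point at infinity, with energy tending to $c_*$. Finally, the monotonicity $G'\le-G$ on the collar shows that an orbit cannot reenter and leave the end infinitely often, because each exit costs a definite amount of energy.
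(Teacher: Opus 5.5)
\textbf{The gap is the choice of ambient field.} Outside $\bigcup_i\mathscr B_i^-$ you use $-\nabla J/(1+\|\nabla J\|)$ and then try to exclude escape there. For this field the orbit alternative fails in general, so this step cannot be closed by any argument about its orbits. At $\rho_*$ the Palais--Smale condition fails along whole families. Take a balanced bubble sum at an \emph{arbitrary} configuration $\xi$. By Lemma~\ref{lem:bubble_expansion3} the normalized center pairings are only $O(\lambda^{-1}|\nabla\FpqV(\xi)|)$, and the scale pairing is $O(\lambda^{-1})$ (resp.\ $O(\lambda^{-2}\ln\lambda)$). Hence $\|\nabla J\|\to0$ while $\|u\|^2\asymp t\to\infty$, at the finite levels $c(\rho_*)-\frac12\FpqV(\xi)$.

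Along the gradient flow the induced metric is of order $\lambda^2$ in $\xi$, so $|\dot\xi|=O(\lambda^{-2})$. For $q>0$ and $\mathcal L(\xi)<0$, the reduced energy $c(\rho_*)-\frac12\FpqV-4\pi\mathcal L_1e^{-t}$ decreases in $t$ and gives $\frac{\mathrm{d}}{\mathrm{d}s}e^t\approx\text{const}>0$. Hence $\int|\dot\xi|\,\mathrm{d}s<\infty$. Start at large scale from a noncritical $\xi_0$ with $\mathcal L(\xi_0)<0$ lying outside the (small) end neighborhoods, e.g.\ in a fixed punctured neighborhood of a retained end with $q>0$. The centers freeze near $\xi_0$, and the orbit runs to infinite scale with energy bounded below without ever entering a retained end. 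This happens for an open set of orbits when $(p,q)=(0,1)$, and otherwise for those on the stable manifold of the relative-mass directions.

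This is also why your proposed contradiction cannot be set up. Lemma~\ref{lem:CPI-height}(i) concerns bounded sequences and (ii) concerns exact critical points of $J_s^*$, whereas unbounded approximate critical points outside $\bigcup_i\mathscr B_i^0$ genuinely exist. Your fallback through Proposition~\ref{prop:resonant-graph-preparation} and Lemma~\ref{lem:amplitude-collar-retraction} concerns different fields, with frozen centers, and says nothing about orbits of the field you glued.

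\textbf{The missing idea} is to make the ambient field control the height everywhere, not only on the end collars. The paper sets $X=\nabla J$, $Y=\nabla(Q_*-J)$ and uses Lucia's angle field $Z_0=-\omega(\|X\|/\|Y\|)X-\|Y\|X-\|X\|Y$. It satisfies $-DJ[Z_0]\ge0$ and $D(Q_*-J)[Z_0]\le\varepsilon^{-1}(-DJ[Z_0])$. Glued with the local fields via \eqref{eq:CPI-local-growth}, this yields the single estimate $D\ln Q_*[W]\le C(-DJ[W])$ outside $\bigcup_i\mathscr B_i^0$.

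The descent of $Z_0$ vanishes only where $X=-cY$ with $0\le c\le\varepsilon$, i.e.\ at critical points of $J^*_{1-c}$. Away from the cutoffs these are exact subcritical solutions $u=T_{(1-c)\rho_*}(u)$. Lemma~\ref{lem:CPI-height}(ii) places them in $\operatorname{Crit}(J)$ or in the inner blocks of the $\mathcal L<0$ ends, where the local field has full weight. In the escape region above, $\|X\|/\|Y\|\to0$, so $\omega=0$ and $Q_*$ itself decreases; such orbits are pushed back to finite scale.

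Integrating the growth estimate bounds the tail of every noncaptured orbit of bounded energy. The bounded-set descent lower bound from Lemma~\ref{lem:CPI-height}(i)--(ii) then gives precompactness. Your bounded-orbit and trapped-orbit steps can be kept, with the energy windows \eqref{eq:CPI-no-return-radii} making the no-return claim precise.
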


\begin{proof}
Choose the neighborhoods in \eqref{eq:CPI-blocks} and numbers $e_i>0$ with
\begin{equation}\label{eq:CPI-no-return-radii}
 R_-^2>2e_i+r_+^2,\quad R_0^2+r_0^2<e_i/2.
\end{equation}
First choose the outer radii, then $e_i$, and finally the inner
radii. Thus an unstable exit from $\mathscr B_i^-$ is below $c_i-2e_i$,
whereas every point of $\mathscr B_i^0$ has energy in
$(c_i-e_i/2,c_i+e_i/2)$.

Take $Q_*$ and $\varepsilon$ from Lemma~\ref{lem:CPI-height}, set
$K_*=Q_*-J$, $X=\nabla J$, and $Y=\nabla K_*$.
Let $\omega$ be smooth, zero on $[0,\varepsilon]$, positive above
$\varepsilon$, and one on $[2\varepsilon,\infty)$. For $Y\ne0$ set
\begin{equation*}
 Z_0=-\omega(\|X\|/\|Y\|)X-\|Y\|X-\|X\|Y,
 \quad W_0=Z_0/(1+\|Z_0\|).
\end{equation*}
At $Y=0$, $X\ne0$, use its continuous extension $Z_0=-X$.
Near $\operatorname{Crit}(J)$ the height is constant and $Y=-X$;
there the formula is exactly $Z_0=-X$, including its zero extension.
Thus the field is locally Lipschitz everywhere. This is the angle
construction underlying \cite[Proposition~1.1]{L2007}; we verify
its required properties directly.

Cauchy--Schwarz yields
\begin{equation}\label{eq:CPI-angle-identities}
\begin{aligned}
 -DJ[Z_0]&=\omega\|X\|^2+
       \|X\|(\|X\|\|Y\|+\langle X,Y\rangle)\ge0,\\
 DK_*[Z_0]&\le\omega\|X\|\|Y\|
       \le\varepsilon^{-1}(-DJ[Z_0]).
\end{aligned}
\end{equation}
Equality in the first line implies $X=-cY$ for
$0\le c\le\varepsilon$, hence $DJ_{1-c}^*=0$.
By Lemma~\ref{lem:CPI-height}, these zeros lie in the genuine
critical set or in the inner end neighborhoods. Since $Q_*=J+K_*\ge1$,
\begin{equation}\label{eq:CPI-ambient-growth}
 D\ln  Q_*[W_0]\le(\varepsilon^{-1}-1)(-DJ[W_0]).
\end{equation}

Choose disjoint cutoffs $\zeta_i$ supported in $\mathscr B_i^+$
and equal to one near $\mathscr B_i^-$. Define
\begin{equation}\label{eq:CPI-full-field}
 W=(1-\sum_i\zeta_i)W_0+\sum_i\zeta_i W_i .
\end{equation}
At a point where a local term is present, let
$D_i=|X_-|^2+|X_+|^2+|B|^2+|y|^2+b^2+\|\mathsf z\|^2$,
and write $D_0=-DJ[Z_0]$. The exact descent identity on the
whole natural neighborhood, and indeed on $\oH$, is
\begin{equation*}
 -DJ[W]=(1-\sum_i\zeta_i)\frac{D_0}{1+\|Z_0\|}
                 +2\sum_i\zeta_i\Theta_iD_i .
\end{equation*}
Terms supported on an end chart are extended by zero. Their supports
have positive margins in all finite parameter and fiber faces.
A strongly convergent sequence in such a support cannot have
$t\to\infty$, by \eqref{eq:CPI-norm-scalar} and orthogonality.
At bounded scale its parameters have a convergent subsequence, and
its fiber remainders then converge strongly. The support therefore
has no accumulation on the exterior of the containing chart.
Consequently these zero extensions are locally Lipschitz.

Here $W_i$ is the local field. The field has norm at most one. It is strictly descending
off the genuine critical set, because every extra zero of $W_0$
is in a region where the local field has full weight. Bounded speed
and local existence imply completeness on the Hilbert space: an
orbit with a finite maximal time is Cauchy and extends
from its limit.

Linearity in the vector field, \eqref{eq:CPI-local-growth}, and
\eqref{eq:CPI-ambient-growth} yield the single estimate
\begin{equation}\label{eq:CPI-global-growth}
 D\ln  Q_*[W]\le C(-DJ[W])
 \quad\text{outside }\bigcup_i\mathscr B_i^0.
\end{equation}
No derivative of a gluing cutoff enters this inequality.
On every bounded set outside the inner neighborhoods and at positive
distance from $\operatorname{Crit}(J)$, $-DJ[W]$ has a positive
lower bound. To verify this for the ambient field, suppose its
descent tends to zero along a bounded sequence. The gradients
$X,Y$ are bounded there. If $X\to0$, bounded-sequence compactness
yields a forbidden genuine critical point. Otherwise the two
nonnegative terms in \eqref{eq:CPI-angle-identities} imply
\[
 X_n+c_nY_n\to0,\quad
 c_n=\|X_n\|/\|Y_n\|\le\varepsilon+o(1).
\]
After extraction this is a bounded approximate critical sequence
for $J_{1-c_n}^*$. Lemma~\ref{lem:CPI-height}(i)--(ii) yields a strong
limit in the genuine critical set or an inner neighborhood, contrary to
the coordinate margins wherever $W_0$ has nonzero weight.
For the local part the last statement of that lemma yields the
lower bound. Their convex combination preserves it.

After entering $\mathscr B_i^0$, an orbit follows the local field
until it leaves $\mathscr B_i^-$. If $\xi_-=0$ it remains there
and tends to the end. Otherwise $|\xi_-|$ grows exponentially in
coordinate time and it exits the unstable face. The exit occurs
in finite Hilbert time: before the finite coordinate exit time
all coordinates have finite-scale limits, where the normalization
is bounded below. By \eqref{eq:CPI-no-return-radii} the exit energy
excludes any return to $\mathscr B_i^0$. There are only finitely
many ends, so a noncaptured orbit eventually avoids every inner neighborhood.

If its energy tends to a finite number $c$, integration of
\eqref{eq:CPI-global-growth} on this tail yields
\begin{equation}\label{eq:CPI-orbit-bound}
 Q_*(u(s))\le Q_*(u(s_0))
       \exp\{C(J(u(s_0))-c)\},\quad s\ge s_0.
\end{equation}
Thus the tail is bounded. For every $\delta>0$, infinitely many
visits at distance at least $\delta$ from the genuine critical
set entail an infinite energy decrease: bounded speed requires time at
least $\delta/2$ to reach its $\delta/2$ neighborhood, and the
preceding lower descent bound applies during each such passage.
Therefore $\operatorname{dist}(u(s),\operatorname{Crit}(J))\to0$.
If that set is empty, the positive slope on the bounded tail yields
an immediate contradiction. Otherwise its compactness and compactness
of finite orbit segments imply precompactness of the orbit.
Its limit set is contained in the critical set. This does not
assert convergence to a single degenerate critical point.
\end{proof}

\begin{defn}\label{def:CPI}
A critical point at infinity of the field $W$ is an equivalence class
of its nonprecompact forward orbits with energy bounded below.
Two such orbits are equivalent when their limiting unordered bubble
configurations agree. Proposition~\ref{prop:pseudogradient} proves the existence of
these limits; the definition does not assume an exhaustion property.
\end{defn}

The preceding flow identifies the escaping ends and their indices.
\begin{thm}\label{thm:CPI-classification}
The critical points at infinity in Definition~\ref{def:CPI}
are in bijection with
$\bigcup_{2p+q=m}\mathcal V_{p,q}^-$.
The point $(\xi^*)_\infty$ has level
\[
 C_\infty(\xi^*)=-\rho_*(1+\ln(\rho_*/8))
                   -\tfrac12\FpqV(\xi^*)
\]
and index
\begin{equation*}
 \iota_\infty(\xi^*)=3p+2q-1-\morse(\FpqV,\xi^*).
\end{equation*}
\end{thm}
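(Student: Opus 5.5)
The plan is to read the classification off the orbit alternative in Proposition~\ref{prop:pseudogradient} together with the local end analysis in Proposition~\ref{prop:CPI-local-pseudogradient} and the normal form of Lemma~\ref{lem:CPI-coordinates}. Start with a nonprecompact forward orbit of $W$ whose energy is bounded below. By Proposition~\ref{prop:pseudogradient}, such an orbit eventually stays in one retained end neighborhood $\mathscr B_i^0$. That end is attached to a nondegenerate critical configuration $\xi^*$ of $\FpqV$ with $2p+q=m$, and only ends with $\cL(\xi^*)<0$ are retained. Its limiting unordered bubble configuration is therefore $[\xi^*]$, giving a map from critical points at infinity to $\bigcup_{2p+q=m}\cV_{p,q}^-$. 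This map is well defined by Definition~\ref{def:CPI}, since equivalence is by limiting configuration. It is injective because different ends have disjoint patches after passing to unordered configurations, using unique modulation (Proposition~\ref{prop:unique}) and the finiteness of $\operatorname{Crit}(\FpqV)$ from \ref{item:C1} and \eqref{eq:grad-F-inf}.

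For surjectivity, take $[\xi^*]\in\cV_{p,q}^-$. In the coordinates \eqref{eq:CPI-exact-form} we have $\epsilon_*=1$. The orbits starting at $X_-=y=0$ with small $X_+,B,b,\mathsf z$ stay in the block and satisfy $b\to0$, so $t\to\infty$. Proposition~\ref{prop:CPI-local-pseudogradient} shows this local stable set is nonempty and that its orbits are genuinely nonprecompact, with $\lambda_i\to\infty$ and $\|\nabla J\|\to0$. Since $W=W_*$ near the retained end, these orbits are orbits of the complete field. Ends with $\cL>0$ contribute nothing, because every orbit leaves their neighborhoods, and Proposition~\ref{prop:pseudogradient} places no ends outside the listed ones.

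For the level, note that along a trapped orbit $J\to c_*$, and $c_*=-\rho_*(1+\ln(\rho_*/8))-\tfrac12\FpqV(\xi^*)$ by Lemma~\ref{lem:CPI-coordinates}. This is consistent with \eqref{eq:terminal-value} at $\mu=0$ as $t\to\infty$, since $c(\rho_*)=-\rho_*-\rho_*\ln(\rho_*/8)$. For the index, define $\iota_\infty$ as the number of descending (unstable) directions of the end, that is, the dimension of the coordinates that grow under the flow in \eqref{eq:CPI-exact-form}. These are $X_-$, of dimension $d-k$ with $k=\morse(\FpqV,\xi^*)$, and $y$, of dimension $n-1$. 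This gives $(2p+q-k)+(p+q-1)=3p+2q-1-\morse(\FpqV,\xi^*)$. The radial coordinate $b$ is stable because $\epsilon_*=+1$, and $B$, $X_+$ and $\mathsf z$ are stable as well. As a consistency check, this count matches \eqref{def:l_inf} from Proposition~\ref{prop:bubbling-branches}: for $\mu<0$ the branch index is the same number, since the scalar $t$-direction contributes a negative eigenvalue exactly when $\mu>0$.

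The main obstacle is the identification of the index with the local Morse data: one must justify that the negative-definite part of \eqref{eq:CPI-exact-form} accounts for the entire homologically relevant unstable set, with no contribution from the noncompact $b$-direction. I would handle this by exhibiting the pair $(\mathscr B_i^-\cap J^{c_*+\varepsilon},\mathscr B_i^-\cap J^{c_*-\varepsilon})$. After contracting $X_+,B,\mathsf z$ linearly and $b$ toward $0^+$ along the descending flow, which is allowed by the exact form and the sign $\epsilon_*=1$, this pair retracts onto a disk of dimension $\dim X_-+\dim y$ relative to its boundary sphere. The open condition $b>0$ is handled because the half-line $b\in(0,r)$ is contractible and $J$ increases in $b$, so removing it does not change the relative homology.
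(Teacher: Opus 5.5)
Your proposal is correct and follows the paper's own argument: Proposition~\ref{prop:pseudogradient} sends every nonprecompact orbit with energy bounded below into a retained end, and Proposition~\ref{prop:CPI-local-pseudogradient} supplies a nonempty stable family at each retained end. The normal form \eqref{eq:CPI-exact-form} then gives the level and the count $(d-k)+(n-1)$ of negative directions. Injectivity is in fact immediate from Definition~\ref{def:CPI}, and your last paragraph is not needed for this statement; that paragraph is the content of Proposition~\ref{prop:complete-cell-infinity}, where $b>0$ is handled by sliding to a fixed slice $b=b_0$ with a compensating radial expansion of $\xi_-$, rather than by letting $b\to0^+$.
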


\begin{proof}
Proposition~\ref{prop:pseudogradient} places every nonprecompact orbit
of bounded energy in a retained end, and
Proposition~\ref{prop:CPI-local-pseudogradient} constructs a nonempty stable
family at every retained end. This proves both directions of the
bijection. The exact form \eqref{eq:CPI-exact-form} determines the level.
The negative directions are $d-k$ center directions and $n-1$
relative-scale directions. The amplitude and fiber directions
are positive, and $\epsilon_*=1$ corresponds to a positive common-scale direction. Their count is $(d-k)+(n-1)=3p+2q-1-k$.
\end{proof}

Near an escaping end $(\xi^*)_\infty$, Lemma~\ref{lem:CPI-coordinates}
yields the exact finite-scale form
\begin{equation}\label{eq:morse-normal-form}
 J=C_\infty(\xi^*)-|\xi_-|^2+\|\eta_0\|^2+|B|^2+b^2,
 \quad b>0,
\end{equation}
with $\dim\xi_-=\iota_\infty(\xi^*)$ and with the weighted
amplitude domain \eqref{eq:CPI-weighted-domain}.
In particular $b^2\asymp\lambda_1^{-1}$ for $q>0$ and
$b^2\asymp\lambda_1^{-2}\ln\lambda_1$ for $q=0$.

Apply Lemma~\ref{lem:CPI-coordinates} with $\mathcal L<0$,
combine $X_-$ and $y$ into $\xi_-$, and combine $X_+$ and $\mathsf z$
into $\eta_0$. The balanced mass coordinates yield
$\lambda_1\asymp e^t$; hence \eqref{eq:CPI-scale-comparison}
establishes the asserted orders. All changes are made at $b>0$.

We now determine the local relative homology of an escaping end.
\begin{prop}
\label{prop:complete-cell-infinity}
Let $c=C_\infty(\xi^*)$, $\iota=\iota_\infty(\xi^*)$, and choose
an end neighborhood disjoint from the other objects at level $c$.
For sufficiently small $e>0$ its local pair of sublevels has
relative homology, with coefficients in any field $\mathbb F$,
\begin{equation*}
 H_r(J^{c+e},J^{c-e};\mathbb F)_{\xi^*}
 \cong\left\{\begin{aligned}
 &\mathbb F&& \text{ if }\, r=\iota,\\&0&& \text{ if }\, r\ne\iota.
 \end{aligned}\right.
\end{equation*}
The local relative pair has the homotopy type of
$(D^\iota,S^{\iota-1})$, with the second member empty for $\iota=0$.
The same local contribution is obtained if its amplitude domain is
enlarged to a fixed natural radius
$\sqrt t\,|\alpha-\mathbf1|<r_a$ in a geometric tube over the
same sufficiently small $(v,\xi,t)$ base domain.
\end{prop}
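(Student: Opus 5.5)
My plan is to compute the local pair directly in the exact normal form \eqref{eq:morse-normal-form} of Lemma~\ref{lem:CPI-coordinates}. I would work on the end neighborhood $\mathscr N$ given by \eqref{eq:CPI-weighted-domain} with $\epsilon_*=1$, in coordinates $(\xi_-,\eta_0,B,b,\mathsf z)$, where $\xi_-\in\R^\iota$. There
\[
J=c-|\xi_-|^2+P,\qquad P:=\|\eta_0\|^2+|B|^2+b^2\ge0,\qquad b\in(0,b_0).
\]
The key observation is that the set of positive variables $\mathcal P=\{(\eta_0,B,b):b>0\}$ is a convex set in a Hilbert space (a product of balls with a half-interval). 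It is therefore contractible. The degeneracy at $b=0$ does not matter, because $b=0$ is never attained in the domain. Compared with an ordinary nondegenerate critical point, the only difference is that the apex $b=0$ of the positive cone is missing.

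The first step is to deform the positive variables. I would use the straight-line homotopy $(\eta_0,B,b)\mapsto(1-s)(\eta_0,B)$, $b\mapsto(1-s)b+s\beta$. Here $\beta>0$ is a fixed small number with $\beta^2<e$. This homotopy lowers $P$ wherever $P\ge\beta^2$. To make it monotone everywhere, I would instead flow by the local field $W_*$ on its stable variables, as in Proposition~\ref{prop:CPI-local-pseudogradient}. That flow decreases $\|\eta_0\|,|B|,b$ exponentially in coordinate time. I would stop it on the hypersurface $P=\beta^2/2$, using a continuous first-arrival time, exactly as in the proof of Lemma~\ref{lem:amplitude-collar-retraction}. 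Points already satisfying $P\le\beta^2/2$ I would deform within the slice $\{P\le\beta^2/2,\ b>0\}$. This slice is star-shaped with respect to $(\eta_0,B,b)=(0,0,\beta/2)$ after a radial reparametrization, so it retracts onto a point. The outcome is that the pair $(J^{c+e}\cap\mathscr N,\,J^{c-e}\cap\mathscr N)$ deforms, without raising $J$, onto the finite-dimensional pair
\[
(\{|\xi_-|^2\le e+\beta^2/2\},\ \{|\xi_-|^2\ge e+\beta^2/2\}\cap\{|\xi_-|<r\}),
\]
which has the homotopy type of $(D^\iota,S^{\iota-1})$.

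Next I would justify the excision. The inclusion of the local pair into the global sublevel pair near level $c$ is handled by the usual isolating-neighborhood argument. By \eqref{eq:CPI-no-return-radii} and Proposition~\ref{prop:pseudogradient}, orbits leaving $\mathscr B^-$ exit through the unstable face below $c-e$, and no other object lies at level $c$ near this end. The unstable face $|\xi_-|=r$ lies in $J^{c-e}$ once $e<r^2/2$. The other faces, namely fiber, amplitude, center-positive and large-$b$, are outgoing only downward: the positive coordinates decrease along $W_*$, so those faces are inflowing. The large-scale face $b\to b_0$ is inflowing because $\dot b=-b$. Hence the Morse group is $H_*(D^\iota,S^{\iota-1};\mathbb F)$, as claimed, and it is empty-boundary when $\iota=0$.

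For the final assertion I would use Lemma~\ref{lem:amplitude-collar-retraction}. Applied with $(v,\xi,t)$ restricted to the same base, its pair deformation \eqref{eq:amplitude-collar-pair} shows that enlarging the amplitude radius to $\sqrt t|\alpha-\mathbf1|<r_a$ changes the pair only by a strong deformation. In particular the relative groups with the collar have $H_*=0$, as recorded after that lemma, and the long exact sequence of the triple gives the same contribution. I expect the main obstacle to be the compatibility of the continuous stopping times with the sublevel faces in the first deformation, especially near $b\to0$. There the coordinate time is unbounded and the stopped homotopy must stay jointly continuous. I would handle this by stopping at $P=\beta^2/2$, which is reached in finite Hilbert time by the argument in Proposition~\ref{prop:CPI-local-pseudogradient}.
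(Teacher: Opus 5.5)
Your proposal has a genuine gap in the first step, at the point you dismiss as harmless: the missing apex $b=0$. After stopping on $P=\beta^2/2$, you retract the convex slice $\{P\le\beta^2/2,\ b>0\}$ onto $(0,0,\beta/2)$ while keeping $\xi_-$ fixed. This sends every point to $P=\beta^2/4$. It therefore raises $J=c-|\xi_-|^2+P$ at all points with $P<\beta^2/4$, contrary to your claim that $J$ does not increase. It also fails to preserve $J^{c-e}$. A point with $\eta_0=0$, $B=0$, $b$ small and $e+b^2\le|\xi_-|^2<e+\beta^2/4$ lies in $J^{c-e}$, but its image has $J=c-|\xi_-|^2+\beta^2/4>c-e$. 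Since $b=0$ is not in the domain, no retraction of the positive variables onto a slice bounded away from $b=0$ can avoid raising $b$ somewhere. A compensating motion of the unstable variable is therefore unavoidable. This is the real difficulty, not the continuity of the stopping time, which is explicit because $P$ is multiplied by $\exp(-2\sigma)$.

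The paper handles this in two stages. It first contracts $(\eta_0,B)$ at fixed $(\xi_-,b)$, which is descending. It then works on the pair
\[
A=\{|\xi|\le R,\ 0<b\le r,\ b^2-|\xi|^2\le e\},\qquad A_-=\{(\xi,b)\in A: b^2-|\xi|^2\le-e\},
\]
where $R^2>2e+r^2$. On this pair it uses $b_s=(1-s)b+sb_0$ with $b_0<\min(r,\sqrt e/2)$, together with the radial expansion
\[
\xi_s=\min\{a_s,R/|\xi|\}\,\xi,\qquad a_s=\max\Big(1,\sqrt{(e+b_s^2)/(e+b^2)}\Big).
\]
Whenever $b$ is raised one has $b_s^2<e$, so the upper constraint holds automatically. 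The expansion keeps $|\xi_s|^2\ge e+b_s^2$ on $A_-$. The homotopy is not descending, but it is a deformation of pairs onto the slice $b=b_0$.

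Two smaller slips. The pair you write down is not a pair: for $P<e$ the condition $J\le c+e$ holds for every $\xi_-$, so the upper member must be the full disk. Also, \eqref{eq:CPI-weighted-domain} uses a single radius $r$ for both stable and unstable coordinates, so $e<r^2/2$ does not place the unstable face in $J^{c-e}$. That is why separate radii with $R^2>2e+r^2$ are needed.

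Your treatment of the enlarged amplitude tube is also incomplete. Lemma~\ref{lem:amplitude-collar-retraction} retracts the natural-tube pair onto the pair over $\mathcal T_\delta=\{t|\alpha-\mathbf 1|\le\delta\}$. That set is not the normal-form neighborhood, whose amplitude constraint $\sqrt{1+t}\,|B|<r$ is centered at $\alpha^\sharp$. The vanishing of $H_*(\mathcal T\cap J^A,\mathcal T_\delta\cap J^A)$ does not by itself identify the pair over $\mathcal T_\delta$ with the normal-form pair.

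The paper closes this using the fact that $(X_-,X_+,y,b)$ depend only on $(v,\xi,t)$, so the base domain does not depend on $\alpha$. It then proceeds in three moves:
\begin{itemize}
\item contract the positive fiber;
\item move $\alpha$ along the segment to $\alpha^\sharp$, which does not increase $J$ by amplitude convexity and stays in $t|\alpha-\mathbf 1|\le\delta$ because \eqref{eq:CPI-amplitude-graph} puts $\alpha^\sharp$ in $t|\alpha^\sharp-\mathbf 1|<\delta/2$;
\item contract $X_+$.
\end{itemize}
This lands on the same pair $(A,A_-)$, with $r_0$ in place of $r$.
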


\begin{proof}
Use \eqref{eq:morse-normal-form} on
\[
 \mathscr B=\{|\xi_-|\le R,
    \ \|\eta_0\|^2+(1+t(b))|B|^2+b^2\le r^2,\ b>0\},
 \quad R^2>2e+r^2.
\]
The unstable boundary lies below $c-e$.
Contract $(\eta_0,B)$ to zero at fixed $(\xi_-,b)$; both the
weighted radius and $J$ decrease, so this preserves the pair.
It remains to analyze
\[
 A=\{(\xi,b):|\xi|\le R,\ 0<b\le r,\ b^2-|\xi|^2\le e\},
 \quad A_- =\{(\xi,b)\in A:b^2-|\xi|^2\le-e\}.
\]
Fix $0<b_0<\min(r,\sqrt e/2)$. For $s\in [0,1]$, set
\[
 b_s=(1-s)b+sb_0,\quad
 a_s=\max\Big(1,\sqrt{\frac{e+b_s^2}{e+b^2}}\Big),\quad
 \xi_s=\left\{\begin{aligned}
 &\min\{a_s,R/|\xi|\}\xi&&\text{ if }\,\xi\ne0,\\&0&& \text{ if }\,\xi=0.
 \end{aligned}\right.
\]
If $b_s\le b$, the energy cannot increase. If $b_s>b$, then
$b_s\le b_0<\sqrt e$, so the upper constraint is automatic.
For a point of $A_-$, radial expansion preserves
$|\xi_s|^2\ge e+b_s^2$; truncation at $R$ preserves it because
$R^2>e+r^2$. This is therefore a deformation of pairs onto
\[
 \Big(D_R^\iota,\{\xi:\sqrt{e+b_0^2}\le|\xi|\le R\}\Big)
\]
at the finite scale $b=b_0$, fixing that slice.
The annulus is a collar of the sphere, so this pair has the
homotopy type of $(D^\iota,S^{\iota-1})$ and the stated homology.
For $\iota=0$ the lower member is empty and the upper member is
contractible. No point at $b=0$ has been adjoined. In particular,
the positive center factor has been contracted together with the
positive amplitude and fiber factors.

We verify explicitly the comparison with the larger amplitude tube.
In the construction of Lemma~\ref{lem:CPI-coordinates},
$(X_-,X_+,y,b)$ depend only on $(v,\xi,t)$:
the amplitude minimization keeps that base fixed, the relative-mass
splitting is performed after amplitude elimination, and the center
and common-scale changes use only $(\xi,t)$.
This is true in both mass parametrizations, by the
amplitude-independent change of base proved there.
Thus, with $\xi_-=(X_-,y)$, choose the base domain
\[
 |\xi_-|\le R,\quad |X_+|^2+b^2\le r_0^2,\quad b>0,
\]
where $r_0$ is smaller than $b(T')$ for a common threshold $T'$
large enough for both parametrizations and for the preparation.
Take over this base the geometric tube
$\sqrt t\,|\alpha-\mathbf1|<r_a$, $\|\mathsf z\|<r_z$.
Lemma~\ref{lem:amplitude-collar-retraction}, restricted to this
base, retracts each of its local sublevel pairs onto
$t|\alpha-\mathbf1|\le\delta$ without changing the base.
Choose $\delta$ inside the terminal radius.
By \eqref{eq:CPI-amplitude-graph}, increasing the scale threshold
places $\alpha^\sharp(v,\xi,t)$ in
$t|\alpha^\sharp-\mathbf1|<\delta/2$ uniformly on this base.

First contract the positive fiber coordinate to zero.
Then, at fixed $(v,\xi,t)$, contract $\alpha$ along the segment
to $\alpha^\sharp$. The latter segment remains in
$t|\alpha-\mathbf1|\le\delta$ by convexity of this ball.
Uniform amplitude convexity implies that its energy is nonincreasing:
the derivative along a ray out of its minimum is nonnegative.
These two contractions preserve both sublevels and fix the
amplitude-minimizing graph with zero positive fiber.
On that graph the energy is
$c-|\xi_-|^2+|X_+|^2+b^2$.
Contracting $X_+$ to zero preserves the chosen base domain and
again decreases the energy. The remaining local pair is exactly
$(A,A_-)$ above, with $r_0$ in place of $r$.
Choose $R^2>2e+r_0^2$ and apply its finite-$b_0$ deformation.
This proves the asserted comparison of local groups and pair
homotopy types. In particular, no full terminal normal form on
the larger natural amplitude radius has been assumed.
\end{proof}

\subsection{Relative deformation at the resonant parameter}

The local deformations combine into the following global result.
\begin{thm}
\label{thm:global-deformation-infinity}
Let $a<b$ be levels which are neither genuine critical values nor critical values at infinity.
If $[a,b]$ contains no genuine critical value and no critical value
at infinity, $J^a$ is a strong deformation retract of $J^b$.
Across an isolated critical level whose genuine critical points are
finite in number and nondegenerate, the upper sublevel is, up to homotopy
relative to the lower sublevel, obtained by attaching one cell of the
corresponding index for each genuine critical point and each critical
point at infinity at that level. Its relative homology is the direct sum of their local Morse groups.
In particular, if the interval contains only $(\xi^*)_\infty$,
then
\begin{equation*}
 H_r(J^b,J^a;\mathbb F)
 \cong\left\{\begin{aligned}
     &\mathbb F&&\text{ if }\,r=\iota_\infty(\xi^*),\\
                      &0&&\text{ if }\, r\ne\iota_\infty(\xi^*).
 \end{aligned}\right.
\end{equation*}
\end{thm}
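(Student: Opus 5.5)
The plan is to run the standard deformation argument along the complete field $W$ of Proposition~\ref{prop:pseudogradient}. The only new ingredient compared with the compact case is the treatment of the retained ends, and there we use the local pairs of Proposition~\ref{prop:complete-cell-infinity}.

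\textbf{First statement.} Suppose $[a,b]$ contains no genuine critical value and no value $C_\infty(\xi^*)$. Define the hitting time $S(u)=\inf\{s\ge0:J(\eta(s,u))\le a\}$ for $u\in J^b$, where $\eta$ is the flow of $W$. We show $S$ is finite on $J^b$ by contradiction. If some orbit stays in $J^{-1}([a,b])$ forever, its energy is bounded below. Proposition~\ref{prop:pseudogradient} then forces one of two behaviours. Either the orbit is precompact with limit set in $\operatorname{Crit}(J)$, so its limit energy is a genuine critical value in $[a,b]$. Or it is captured by a retained end, and its energy tends to some $C_\infty(\xi^*)\in[a,b]$. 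Both are excluded.

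Next we need continuity of $S$. At points with $J(u)>a$ it follows because $-DJ[W]>0$ at the hitting point: that point is not critical, since $a$ is regular. We then set $\mathscr H(\sigma,u)=\eta(\sigma S(u),u)$. This map fixes $J^a$ and yields the strong deformation retraction.

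\textbf{Second statement.} Let $c$ be an isolated level and choose $e$ small. Outside small disjoint neighbourhoods of the finitely many nondegenerate genuine critical points and of the end blocks $\mathscr B_i$ at level $c$, the same stopping-time argument pushes $J^{c+e}$ into $J^{c-e}\cup\bigcup(\text{neighbourhoods})$. Finiteness of stopping times again comes from the orbit alternative. The exit-energy choice \eqref{eq:CPI-no-return-radii} prevents an orbit that leaves an end block through its unstable face from re-entering the inner block. Excision then splits $H_*(J^{c+e},J^{c-e})$ into a direct sum of local groups. For genuine points these are the classical Morse groups (Gromoll--Meyer or the Morse lemma on Hilbert space). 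For the ends they are given by Proposition~\ref{prop:complete-cell-infinity}.

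The cell-attachment statement follows from the explicit pair deformation onto $(D^\iota,S^{\iota-1})$ constructed there. That deformation is glued in through the cutoffs $\zeta_i$ and yields the homotopy type of $J^{c-e}$ with cells attached. The last display is the special case with a single end.

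\textbf{Main obstacle.} The delicate step is gluing the local end deformation, which is defined only on coordinate blocks and never adjoins $b=0$, to the ambient flow. Concretely, we must show that an orbit entering an end block but not captured exits it in finite time and below $c-e$, uniformly enough for the stopping time to be continuous. For this we use the coordinate-time estimates of Proposition~\ref{prop:CPI-local-pseudogradient}: exponential growth of $|\xi_-|$ and positive normalization $\Theta$ at finite scale. We combine these with the radius choices in \eqref{eq:CPI-blocks}. Points of $J^{c+e}$ lying on the stable set of the end are not moved below $c-e$. They are instead retained inside the local pair, which is exactly where Proposition~\ref{prop:complete-cell-infinity} applies.
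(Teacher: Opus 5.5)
Your first statement is fine, and a bit leaner than the paper's argument. Finiteness of the hitting time follows directly from the orbit alternative of Proposition~\ref{prop:pseudogradient}: an orbit trapped in $J^{-1}([a,b])$ would produce a genuine or infinite critical value in $[a,b]$. Continuity then comes from transversality at the regular level $a$. So you do not need to re-choose the inner end neighbourhoods and rerun the height estimate \eqref{eq:CPI-global-growth} with the bounded-set slope, as the paper does.

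For the second statement your overall scheme is the paper's, but continuity of the first-contact time is not justified. It needs strict transversality on every face through which an orbit can enter the target $J^{c-e}\cup\bigl(J^{c+e}\cap\bigcup_i N_i\bigr)$.
\begin{itemize}
\item At an end, the stable face $G=r_-^2$ is strictly incoming because $G'\le -G$. The face $H=R_-^2$ cannot be crossed inward, since $H'=2H$.
\item At a genuine critical point, $W$ is just $-\nabla J/(1+\|\nabla J\|)$, because $Q_*$ is constant there. This field need not be transverse to the faces of a Morse box. You must either replace $W$ inside these neighbourhoods by local Morse fields glued in collars, which is the paper's choice, or build the neighbourhoods as Gromoll--Meyer pairs for $W$ with exit set in $J^{c-e}$.
\end{itemize}
The obstacle you single out, uniform finite-time exit of non-captured orbits from an end block, is not what continuity requires. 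It is already contained in the orbit alternative, and orbits are stopped on entering $\mathscr B^-$. If you kept flowing through the blocks instead, the stopping time would blow up near the stable sets. Likewise, \eqref{eq:CPI-no-return-radii} is needed here to place the unstable faces inside $J^{c-e}$, not to prevent re-entry. That placement makes the local pair exactly the one in Proposition~\ref{prop:complete-cell-infinity}, so excision applies.

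The cell-attachment step is a genuine gap. The cutoffs $\zeta_i$ interpolate vector fields, not deformations of pairs. Moreover, the deformation of Proposition~\ref{prop:complete-cell-infinity} moves points of $J^{c-e}\cap\mathscr B$: it contracts $(\eta_0,B)$ and slides $b$ to $b_0$. So it cannot be extended by the identity over $J^{c-e}$. The missing argument is homotopy-theoretic:
\begin{itemize}
\item Take the negative disk $\{|\xi_-|\le R\}$ at $b=b_0$ with all positive coordinates zero. Its boundary sphere lies in $J^{c-e}$.
\item The local deformation shows that the inclusions of this disk into $J^{c+e}\cap\mathscr B$, and of its boundary sphere into $J^{c-e}\cap\mathscr B$, are homotopy equivalences. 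For the sphere, first retract the final annulus onto its outer sphere.
\item The strict face signs give collars, so the relevant inclusions are cofibrations.
\item The homotopy extension property then identifies the target, relative to $J^{c-e}$, with $J^{c-e}$ with one cell of the corresponding index attached for each genuine critical point and each critical point at infinity at level $c$.
\end{itemize}
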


\begin{proof}
The end neighborhoods in this proof lie in the smaller terminal domain.
Corollary~\ref{cor:resonant-geometric-coverage} first places the
localized natural bubble neighborhoods in geometric tubes.
Lemma~\ref{lem:amplitude-collar-retraction} yields the relative
deformation from their natural amplitude radii to the terminal
amplitude domain. Proposition~\ref{prop:complete-cell-infinity}
therefore computes the contribution for the larger tubes as well.
This is a comparison of local sublevel pairs.
The ambient relative deformation below uses the terminal neighborhoods
and the complete field of Proposition~\ref{prop:pseudogradient};
it does not require extending the local preparation by the identity
across an outer chart face.

Choose a buffered window $[c-2e,c+2e]$ containing only the critical
level in question. Take disjoint inner and outer genuine Morse
neighborhoods and end neighborhoods at that level. Choose the inner neighborhoods of
all other ends sufficiently small that their energy ranges miss
the buffered window. genuine neighborhoods lie in the ball on which
$Q_*$ is constant. At an end choose the radii in
\eqref{eq:CPI-no-return-radii}; the stable boundary $G=r_-^2$
is strictly incoming, and every unstable exit $H=R_-^2$ is below
$c-2e$. The same statements hold in an genuine Morse neighborhood, with
no $b$ or weighted amplitude factor.

Use the global field of Proposition~\ref{prop:pseudogradient},
replacing it inside the genuine neighborhoods by the bounded local Morse
fields and gluing in their collars. The height is constant there,
so \eqref{eq:CPI-global-growth} remains valid outside the inner
neighborhoods. Before first contact with
\begin{equation}\label{eq:CPI-relative-target}
 J^{c-e}\cup
 \bigl(J^{c+e}\cap\textstyle\bigcup\mathscr B^-\bigr),
\end{equation}
an orbit has energy between $c-e$ and $c+e$. Integration of the
growth estimate yields a norm estimate depending on its initial point,
as in \eqref{eq:CPI-orbit-bound}. The bounded-set descent argument
in the proof of that proposition yields a positive lower slope outside
the target, since all possible auxiliary zeros are inside the
inner neighborhoods. Thus first contact occurs in finite time.

This first-contact time is continuous, including extension by zero
on the target. At a level face the derivative of $J-(c-e)$ is
strictly negative; at a stable boundary face the derivative of its
defining function is strictly negative. The exit faces lie strictly
below the target level. These signs hold on collars. Local
continuous dependence and transverse crossing prove continuity of
contact times; at a corner take the minimum of the finitely many
face contact times. Stopping the flow at $sT(u)$ therefore yields
a strong deformation onto \eqref{eq:CPI-relative-target}.

The genuine local pairs are Morse pairs. For an end the local
pair is that of Proposition~\ref{prop:complete-cell-infinity}.
The exit faces are already in the lower sublevel; slightly enlarging
the neighborhoods yields open collars with the same pair deformations.
Excision therefore identifies the relative homology of the target
and the lower sublevel with the direct sum of the local relative
groups. This also treats several objects at the same value.

The cell statement uses the pair homotopy, not only the homology
calculation. At an end take the negative disk $|\xi_-|\le R$ at
$b=b_0$, with the positive coordinates zero. Its boundary lies in
the lower sublevel. The deformation in Proposition
\ref{prop:complete-cell-infinity} identifies its inclusion as a homotopy
equivalence onto the upper local neighborhood, and identifies its boundary
inclusion as a homotopy equivalence onto the lower local part (contract
the final annulus onto its outer sphere). The level and neighborhood
faces have collars by the strict signs already proved. Their
inclusions are therefore cofibrations. The homotopy extension
property for these collars identifies the pushout with the lower
sublevel with the attachment of this negative disk. Carrying this
out in disjoint neighborhoods proves the asserted simultaneous cell
attachments, relative to the lower sublevel.

On an interval with no critical level the same argument has no
target neighborhood in that interval. For completeness, choose the
inner neighborhoods at all other ends small enough that their energy
miss $[a,b]$. Genuine critical values miss a buffered enlargement
of this interval, by compactness of the genuine critical set.
The height estimate then holds until the lower level is reached,
and the bounded-set slope is positive throughout. The continuous
stopped flow defines the stated strong deformation retraction.
Applying these deformations to the intervals on either side of
$[c-e,c+e]$ proves the statement for general $a,b$.
One height and one glued field can be used for a finite collection
of windows, by choosing all the neighborhoods first.
\end{proof}

The auxiliary-family compactness and the common height estimate
exclude escaping trajectories outside the retained end neighborhoods. Thus
the same complete field applies to initial data outside the balanced
charts and to trajectories passing between coordinate neighborhoods.

\begin{proof}[Proof of Proposition~\ref{prop:CPI-main}]
The proof follows from Proposition~\ref{prop:pseudogradient},
Theorem~\ref{thm:CPI-classification},
Proposition~\ref{prop:complete-cell-infinity}, and
Theorem~\ref{thm:global-deformation-infinity}.
\end{proof}

\section{Proof of the main theorems}\label{sec:4}

All degree and Morse computations below concern $J_{4\pi m}$.
The nearby branches of Proposition~\ref{prop:bubbling-branches} are
used separately for the prescribed-index existence theorem.
Appendix~\ref{app:topology} determines the topology of the sufficiently high and low sublevels.

\subsection{A variational perturbation in a bounded region}

Let $\rho_*=4\pi m$ be fixed. The following lemma removes the
nondegeneracy assumption on the genuine solutions from the degree
calculation. Compactness in its proof concerns bounded subsets of
$\oH$; it does not assert a Palais--Smale condition at infinity.

\begin{lem}\label{lem:bounded-Morse-perturbation}
Assume \ref{item:C1}--\ref{item:C2}, and write $J=J_{\rho_*}$.
There are arbitrarily small $C^1$ variational perturbations $J_\epsilon$
on a fixed bounded ball such that:
\begin{enumerate}[label=\textup{(\roman*)}]
\item $J_\epsilon=J$ outside that ball, all its genuine critical
points are nondegenerate, and their set is finite;
\item $\nabla J_\epsilon=a_\epsilon(u)u-K_\epsilon(u)$,
where $a_\epsilon\ge1/2$ and $K_\epsilon$ maps bounded sets into
relatively compact sets;
\item for a ball $\mathcal B_R^{\oH}$,
\[
 d_{\rho_*}=\deg(a_\epsilon^{-1}\nabla J_\epsilon,
          \mathcal B_R^{\oH},0)
 =\sum_{u\in\operatorname{Crit}(J_\epsilon)}
                  (-1)^{\morse(J_\epsilon,u)};
\]
\item the critical points at infinity, their local pairs, and
sufficiently high and low sublevels are unchanged. The
fixed-resonance filtration of Theorem~\ref{thm:global-deformation-infinity}
applies to $J_\epsilon$ with those same end neighborhoods.
\end{enumerate}
\end{lem}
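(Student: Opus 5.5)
The idea is to perturb $J$ only on a bounded ball that contains all genuine critical points, and to leave everything untouched outside. By Remark~\ref{rem:fixed-resonance-compactness}, $\operatorname{Crit}(J)\subset\mathcal B_{\bar C_m}^{\oH}$. The end neighborhoods $\mathscr B_i^{\pm,0}$ of Section~\ref{sec:full-pseudogradient} sit at large scale, so by \eqref{eq:CPI-norm-scalar} they lie outside a fixed ball $\mathcal B_{R_1}^{\oH}$ with $R_1>\bar C_m$. I would fix a smooth radial cutoff $\chi$ equal to one on $\mathcal B_{R_0}^{\oH}$, $\bar C_m<R_0<R_1$, and supported in $\mathcal B_{R_1}^{\oH}$. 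Then I set
\[
J_\epsilon(u)=J(u)-\chi(u)\langle h_\epsilon,u\rangle,
\]
with $h_\epsilon$ taken from a fixed finite-dimensional subspace $F\subset\oH$ of smooth functions and $\|h_\epsilon\|$ small. Better still, following the standard Marino--Prodi or Sard--Smale argument, I would take $h_\epsilon=T_{\rho_*}$-compatible perturbations of the form $(-\Delta_g)^{-1}f$ with $f$ smooth. Then
\[
\nabla J_\epsilon=u-T_{\rho_*}(u)-\chi h_\epsilon-\langle h_\epsilon,u\rangle\nabla\chi .
\]
This has the form in (ii) with $a_\epsilon\equiv1$ and a compact $K_\epsilon$, since $\nabla\chi(u)$ is a scalar multiple of $u$ that can be absorbed into $a_\epsilon$. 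This scalar is $O(\|h_\epsilon\|)$, so $a_\epsilon\ge1/2$.

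\textbf{Step 1 (no new critical points on the transition shell).} On $\overline{\mathcal B_{R_1}}\setminus\mathcal B_{R_0}$ there are no critical points of $J$. By the compactness of $\nabla J=I-T_{\rho_*}$ on bounded sets, $\|\nabla J\|\ge\gamma>0$ there, because otherwise a bounded Palais--Smale sequence would converge to a critical point in the shell. Choosing $\|h_\epsilon\|<\gamma/(4C)$ keeps $\nabla J_\epsilon\ne0$ on the shell. Critical points of $J_\epsilon$ are therefore those of $J$ outside $\mathcal B_{R_1}$, of which there are none, together with those inside $\mathcal B_{R_0}$, where $J_\epsilon=J-\langle h,u\rangle$.

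\textbf{Step 2 (genericity).} Inside $\mathcal B_{R_0}$ the critical points solve $u-T_{\rho_*}(u)=h$. The map $u\mapsto u-T_{\rho_*}(u)$ is a $C^2$ Fredholm map of index zero, because $T_{\rho_*}$ is compact and smooth. By the Sard--Smale theorem, almost every small $h$, in the sense of a residual set, is a regular value. At such $h$ all critical points are nondegenerate. They are isolated and lie in the compact set given by Step 1 compactness, so they are finite in number. This gives (i).

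\textbf{Step 3 (degree).} The homotopy $s\mapsto\nabla J_{s\epsilon}$, divided by $a_{s\epsilon}$, is admissible on $\mathcal B_R^{\oH}$ with $R\in(\bar C_m,R_0)$, by Step 1. Hence the degree equals $d_{\rho_*}$. At a nondegenerate zero the local Leray--Schauder index of $I-(\text{compact})$ is $(-1)^{\morse}$, because the negative eigenvalues of the Hessian are exactly the real eigenvalues $>1$ of the compact part. Summing over the zeros gives (iii).

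\textbf{Step 4 (item (iv)).} $J_\epsilon=J$ outside $\mathcal B_{R_1}$, which contains the end neighborhoods, so the local coordinates of Lemma~\ref{lem:CPI-coordinates} and the pairs of Proposition~\ref{prop:complete-cell-infinity} are untouched. For the global filtration I would rerun Lemma~\ref{lem:CPI-height} and Proposition~\ref{prop:pseudogradient} for $J_\epsilon$. The height $Q_*$ is constant on a ball containing $\mathcal B_{R_1}$, after enlarging the constant region beyond the support of $\chi$. The compactness statement (i) there persists by (ii). The exhaustion of unbounded auxiliary critical points only uses the region where $J_\epsilon=J$.

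High and low sublevels are also unchanged up to homotopy. Since $|J_\epsilon-J|\le C\|h_\epsilon\|$ on the ball, for levels far from the range of $J$ on $\mathcal B_{R_1}$ we have exact equality $J_\epsilon^a=J^a$ when $a$ is very negative. For very high $a$, both $J^a$ and $J_\epsilon^a$ deform onto $J^{b}$ for a fixed $b$ by Theorem~\ref{thm:global-deformation-infinity}, because there are no critical values above $b$.

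The step I expect to be the main obstacle is this last verification. Specifically, I must check that the auxiliary height construction of Lemma~\ref{lem:CPI-height} and the angle field still exclude spurious escaping trajectories for $J_\epsilon$. The issue is that the constancy region of $Q_*$ must contain the perturbation support while remaining disjoint from the end neighborhoods; I would settle this by choosing $R_1$ first and then all end thresholds beyond it.
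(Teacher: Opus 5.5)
Your argument is correct, but it reaches nondegeneracy by a different route from the paper.

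\textbf{Your route.} You perturb by the cut-off linear term $-\chi(u)\langle h,u\rangle$, with $\chi=\tilde\chi(Q)$. This gives $a_\epsilon=1-\tilde\chi'(Q)\langle h,u\rangle$ (not $\equiv1$) and $K_\epsilon=T_{\rho_*}+\chi h$. You then apply the Sard--Smale theorem to the index-zero Fredholm map $\Phi=I-T_{\rho_*}$ on the separable space $\oH$. This is legitimate, since $\Phi$ is smooth and $DT_{\rho_*}(u)$ is compact.

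\textbf{The paper's route.} The paper avoids infinite-dimensional transversality. Inside the ball it replaces $L(u)=\rho_*\ln\int_\Si Ve^u\,\d v_g$ by $L(P_Nu)+\langle v,P_Nu\rangle$, where $P_N$ is the Galerkin projection onto the first $N$ Neumann eigenfunctions. The critical equation then splits into $(I-P_N)u=0$ and $\nabla g_N(u)=v$ on $E_N$, and classical Sard in finite dimensions supplies small regular values $v$, with complementary Hessian $I$. The price is the uniform approximation estimate \eqref{eq:uniform-variational-approximation} and the bookkeeping of the cutoff term $\vartheta_0'(Q)H_{N,v}u$ inside $a_\epsilon$. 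The gain is that only finite-dimensional Sard is used. Your version is shorter and needs no approximation lemma.

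\textbf{Item (iv).} Your treatment matches the paper's. You take a constant-height ball containing the perturbation support, obtain the auxiliary compactness from (ii), and choose all end thresholds beyond $R_1$. Those thresholds should explicitly include the tube of Lemma~\ref{lem:amplitude-collar-retraction}, which the paper pushes out of the ball via $\|u\|^2\ge ct$.

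\textbf{Three points need repair.}

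First, discard the initial choice of $h_\epsilon$ in a fixed finite-dimensional subspace $F$. Sard--Smale gives a residual set of regular values in all of $\oH$, and this set need not meet a prescribed $F$. Parametric transversality within $F$ would need $F+\operatorname{range}D\Phi(u)=\oH$ at every critical point, which a fixed $F$ need not satisfy. Smoothness of $h$ is unnecessary. If you want it, note that $\Phi$ is proper on bounded closed sets, so the regular values for critical points in $\overline{\mathcal B}_{R_0}^{\oH}$ form an open dense set. They therefore meet the dense set $\{(-\Delta_g)^{-1}f:f\in C^\infty(\Si),\ \int_\Si f\,\d v_g=0\}$.

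Second, in Step 3 with $R<R_0$ you need the gradient gap on the whole shell $R\le\|u\|\le R_1$, not only on $[R_0,R_1]$. This excludes zeros of $\nabla J-sh$ on $\partial\mathcal B_R^{\oH}$ and in $R\le\|u\|<R_0$. The same compactness argument gives it.

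Third, for high levels no deformation theorem is needed. $J$ and $J_\epsilon$ are bounded on $\mathcal B_{R_1}^{\oH}$ and coincide outside it. Hence $J_\epsilon^{\pm D}=J^{\pm D}$ as sets for large $D$, exactly as in the paper; this is the same argument you already used for the low levels.
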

\begin{proof}
Set $Q(u)=\|u\|^2/2$ and
$L(u)=\rho_*\ln \int_\Sigma Ve^u\,\d v_g$, so
$J=Q-L$ and $\nabla L=T_{\rho_*}=:K$.
Lemma~\ref{lem:B-compact-gradient} and resonant compactness show that
the genuine critical set is compact. Choose
$R_0<R_1<R_2$ with that set in $\mathcal B_{R_0}^{\oH}$.
There is $\gamma>0$ such that
\begin{equation}\label{eq:bounded-annular-gap}
 \|\nabla J(u)\|\ge\gamma,\quad R_0\le\|u\|\le R_2.
\end{equation}
Indeed, a contrary sequence is bounded and approximate-critical;
Lemma~\ref{lem:B-compact-gradient} yields a strongly convergent
subsequence to a forbidden critical point in this closed annulus.

Let $P_N$ be the orthogonal projections onto the first $N$ members
of a smooth Neumann eigenfunction basis of $\oH$, and set $E_N=P_N\oH$.
On $\overline{\mathcal B}_{R_2}^{\oH}$ one has
\begin{equation}\label{eq:uniform-variational-approximation}
 \sup\bigl(|L(u)-L(P_Nu)|
       +\|K(u)-P_NK(P_Nu)\|\bigr)\to 0.
\end{equation}
Here is a direct proof of the uniformity. A bounded sequence has a
weakly convergent subsequence, $u_N\rightharpoonup u$. Since
$P_N\to I$ strongly, also $P_Nu_N\rightharpoonup u$.
Compact Sobolev embedding implies convergence in measure for both
sequences. The exponential bound \eqref{eq:B-normalized-density-estimate}
and uniform integrability imply $e^{u_N},e^{P_Nu_N}\to e^u$ in
every finite $L^p$. Thus both logarithmic integrals converge to
$L(u)$, while both $K(u_N)$ and $K(P_Nu_N)$ converge strongly to
$K(u)$. Finally $P_NK(P_Nu_N)\to K(u)$. A sequence contradicting
\eqref{eq:uniform-variational-approximation} contradicts these
conclusions.

Choose a smooth cutoff $\vartheta_0(Q)$, equal to one for $\|u\|\le R_1$
and zero for $\|u\|$ in a neighborhood of $R_2$ and beyond.
For $v\in E_N$ define
\[
 H_{N,v}(u)=L(u)-L(P_Nu)-\langle v,P_Nu\rangle,
 \quad J_{N,v}=J+\vartheta_0(Q)H_{N,v}.
\]
First take $N$ large and then $\|v\|$ small. By \eqref{eq:uniform-variational-approximation},
$\|J_{N,v}-J\|_{C^1(\mathcal B_{R_2}^{\oH})}$
can be chosen arbitrarily small.
In particular, the gradient difference has norm less than $\gamma/2$.
There are no critical points in the annulus in
\eqref{eq:bounded-annular-gap}, or outside $R_2$.
Inside $R_0$ the functional is
\[
 Q(u)-L(P_Nu)-\langle v,P_Nu\rangle.
\]
Its complementary critical equation is $(I-P_N)u=0$; its equation
on $E_N$ is $\nabla g_N(u)=v$, where $g_N=Q|_{E_N}-L|_{E_N}$.
This finite-dimensional function is smooth: the basis functions are
smooth and bounded, and differentiation of its exponential integral
on compact subsets is justified by domination. Sard's theorem yields
arbitrarily small regular values $v$. Every resulting critical point
is nondegenerate, with complementary Hessian $I$. All zeros lie in
the closed finite-dimensional ball of radius $R_0$, and no zero is
on its boundary. A compact set of isolated nondegenerate zeros is
finite. Fix such $N,v$ and denote this functional by $J_\epsilon$.

Direct differentiation, including the norm cutoff, yields
\begin{equation}\label{eq:normalized-perturbation-gradient}
\begin{split}
 \nabla J_\epsilon&=a_\epsilon(u)u-K_\epsilon(u),\quad
 a_\epsilon=1+\vartheta_0'(Q)H_{N,v},\\
 K_\epsilon&=(1-\vartheta_0(Q))K(u)
            +\vartheta_0(Q)P_NK(P_Nu)+\vartheta_0(Q)v.
\end{split}
\end{equation}
Smallness ensures $a_\epsilon\ge1/2$. Scalar multiplication by
bounded real coefficients preserves relative compactness of a
bounded-set image, so $K_\epsilon$ and $a_\epsilon^{-1}K_\epsilon$
are compact on bounded sets. In particular the cutoff term
$\vartheta_0'(Q)H_{N,v}u$ has not been treated as a compact map.

For $0\le\tau\le1$, use $J+\tau\vartheta_0(Q)H_{N,v}$.
The same positive scalar representation holds and its normalized
gradient equals $\nabla J$ near the sphere $\|u\|=R_2$.
The stability property of degree therefore identifies the degrees
on this ball. At a zero, the derivative of the normalized gradient
is $a_\epsilon(u)^{-1}D^2J_\epsilon(u)$; the derivative of the scalar
factor multiplies the vanishing gradient and contributes nothing.
Its sign is $(-1)^{\morse(J_\epsilon,u)}$. Excision over the finite
critical set proves (iii).

For completeness, the deformation argument is unchanged at infinity.
In constructing Lemma~\ref{lem:CPI-height}, choose its constant-height
ball to contain $\overline{\mathcal B}_{R_2}^{\oH}$ and place all end
supports outside a larger ball. Inside the former ball the auxiliary
gradient is exactly $s\nabla J_\epsilon$; outside the support of the
perturbation it is the original auxiliary gradient. Bounded
approximate-critical compactness follows from
\eqref{eq:normalized-perturbation-gradient}, whose scalar coefficient
is uniformly positive, also for the small auxiliary interval.
The unbounded exact auxiliary sequences are unchanged. Thus the
proofs of the common height estimate, orbit exhaustion, no-return,
and the global deformation theorem apply with precisely the old
end neighborhoods and their local groups. Near the new genuine critical
points use their genuine Morse charts and patch descending fields
in the bounded annulus; the positive lower bound
\eqref{eq:bounded-annular-gap} permits this patching.
The amplitude preparation in
Lemma~\ref{lem:amplitude-collar-retraction} is also unchanged:
increase $T_\delta$ until its entire tube lies outside the
perturbation ball. This is possible because
$\ln \lambda_i=t+O(\sqrt t)$ and the Dirichlet bubble estimate
yield $\|u\|^2\ge ct$ there. Hence $J_\epsilon=J$ throughout
that deformation, including its stopping target.

Finally $J$ and $J_\epsilon$ are bounded above and below on
$\mathcal B_{R_2}^{\oH}$ by Moser--Trudinger and Jensen's inequality.
For $D$ larger than the constants in these estimates, their sublevels at $D$ and $-D$
coincide as sets. This proves (iv) and the lemma.
\end{proof}

\subsection{Degree from the resonant Morse--Euler identity}

\begin{lem}\label{lem:first-wall-endpoints}
For $m=1$, for sufficiently large $D$, one has $J_{4\pi}^{-D}=\varnothing$
and $J_{4\pi}^{D}\simeq\oH$.
\end{lem}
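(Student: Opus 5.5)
The plan is to treat the two endpoints separately. The lower endpoint follows from a coercivity-type lower bound at $\rho=4\pi$. The upper endpoint follows from the complete descending flow of Proposition~\ref{prop:pseudogradient} together with the fact that only finitely many critical levels occur.

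For $J_{4\pi}^{-D}=\varnothing$, I use \eqref{eq:MT-Jensen-explicit} with $s=1$. It gives, for $u\in\oH$,
\[
 \ln\int_\Sigma Ve^u\,\d v_g\le \ln\max_\Sigma V+\ln C_{\Sigma,g}+\frac{1}{8\pi}\|u\|^2 .
\]
Multiplying by $\rho=4\pi$ yields
\[
 J_{4\pi}(u)\ge \tfrac12\|u\|^2-\tfrac12\|u\|^2-4\pi\ln\big(C_{\Sigma,g}\max_\Sigma V\big)=-C_1 .
\]
Hence $J_{4\pi}^{-D}=\varnothing$ for every $D>C_1$. This is the boundary-sensitive constant $8\pi$ in the exponent, which is exactly what makes $4\pi$ the first wall.

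For the upper sublevel, I fix $D$ above all relevant critical levels. By Remark~\ref{rem:fixed-resonance-compactness}, $\operatorname{Crit}(J_{4\pi})$ is compact, so its energies are bounded. The critical points at infinity are the finitely many $[\xi]\in\mathcal V_{0,1}^-$; finiteness comes from \ref{item:C1} and \eqref{eq:grad-F-inf}, and their levels $C_\infty(\xi)$ are finite. Choose $D$ larger than all these values. Let $\Phi_s$ be the complete flow of the field $W$ from Proposition~\ref{prop:pseudogradient}, and define the first hitting time
\[
 T(u)=\inf\{s\ge0:J(\Phi_s(u))\le D\}.
\]
I claim $T(u)<\infty$ for every $u$. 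If not, then $J(\Phi_s(u))>D$ for all $s$, so the energy along the orbit is bounded below. By Proposition~\ref{prop:pseudogradient}, the orbit is then either precompact with limit set in $\operatorname{Crit}(J)$, or it converges to a critical point at infinity. In the first case the limit set consists of critical points at level $\lim_s J(\Phi_s(u))\ge D$, which contradicts the choice of $D$. In the second case, Proposition~\ref{prop:CPI-local-pseudogradient} shows that the energy tends to some $C_\infty(\xi)<D$, again a contradiction.

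The step needing the most care is the continuity of $T$. Since $D$ is a regular value of $J$ and $W$ is strictly descending off $\operatorname{Crit}(J)$, one has $DJ[W]<0$ on a neighbourhood of $\{J=D\}$. The orbit therefore crosses this level transversally. Continuous dependence of the locally Lipschitz flow on initial data, combined with the implicit function theorem applied to $s\mapsto J(\Phi_s(u))-D$, then gives continuity of $T$ on $\{J>D\}$ and $T\equiv0$ on $J^D$. Once continuity holds, the map
\[
 H(\sigma,u)=\Phi_{\sigma T(u)}(u),\qquad \sigma\in[0,1],
\]
is a strong deformation retraction of $\oH$ onto $J_{4\pi}^D$. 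Since $\oH$ is contractible, this gives $J_{4\pi}^D\simeq\oH$.
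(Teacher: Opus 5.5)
Your proof is correct. The lower endpoint is the paper's argument: the sharp Neumann Moser--Trudinger bound with constant $8\pi$ gives $J_{4\pi}\ge -C$.

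For the upper endpoint you take a genuinely different route. The paper never uses the resonant flow of Proposition~\ref{prop:pseudogradient}. Its argument runs as follows.
\begin{enumerate}
\item A zero of a convex combination of $\nabla J_{4\pi}(u)$ and $u$ is a solution of $u=T_r(u)$ with $0\le r\le4\pi$.
\item Lemma~\ref{lem:B-exact-energy} and Lemma~\ref{lem:B-compact-gradient} give a uniform upper bound for $J_{4\pi}$ on all such solutions. Bounded families are compact, and unbounded families can only concentrate as $r\to4\pi$, where the limit energy \eqref{eq:B-endpoint-energy} is finite.
\item The one-functional part of Lemma~\ref{lem:B-bounded-descent} then retracts $\oH$ onto $J_{4\pi}^{D}$. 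Its radial component keeps trajectories bounded, so only bounded-set compactness is needed.
\end{enumerate}
This is elementary, is the same argument as the $m\ge2$ case in Proposition~\ref{prop:B-resonant-comparison}, and needs neither \ref{item:C1}--\ref{item:C2} nor any information about critical points at infinity.

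Your argument is shorter because it invokes the orbit classification of the complete field $W$. The price is that it proves the lemma only under \ref{item:C1}--\ref{item:C2}. You use these for compactness of $\operatorname{Crit}(J_{4\pi})$ via Remark~\ref{rem:fixed-resonance-compactness}, and for the existence of $W$ and its orbit alternative. This is harmless where the lemma is applied, in the proof of Theorem~\ref{thm:resonant-case}, but it is more than the statement requires.

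On continuity of $T$: $\Phi_s(u)$ is only continuous in $u$, so the implicit function theorem must be read in its continuous (Dini) form. Equivalently, argue directly. Continuous dependence on the compact interval $[0,T(u)-\epsilon]$ gives $J(\Phi_s(u'))>D$ there for $u'$ near $u$, which is what makes $T(u')$ a first hitting time. Strict descent at $\Phi_{T(u)}(u)$ gives $J(\Phi_{T(u)+\epsilon}(u'))<D$. The same reasoning applies at points with $J(u)=D$.
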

\begin{proof}
The sharp Neumann Moser--Trudinger inequality yields a lower bound
for $J_{4\pi}$, so $J_{4\pi}^{-D}=\varnothing$ for sufficiently large $D$.
The upper-sublevel argument in
Proposition~\ref{prop:B-resonant-comparison} holds for every $m\ge1$:
Lemma~\ref{lem:B-exact-energy} establishes an upper bound for $J_{4\pi m}$ on all exact
solutions $u=T_r(u)$, $0\le r\le4\pi m$.
For $m=1$, the only possible unbounded sequences in this interval
concentrate at $4\pi$ and have the finite limiting energy in
\eqref{eq:B-endpoint-energy}; a limit at zero is excluded in that
lemma. Hence the upper bound is uniform. For $D$ above it, any zero
of a convex combination of $\nabla J_{4\pi}(u)$ and $u$ is a
forbidden exact solution in $\{J_{4\pi}\ge D\}$.
On bounded parts, compactness of $T_r$ turns absence of such zeros
into positive separation. The one-functional part of
Lemma~\ref{lem:B-bounded-descent} retracts $\oH$ onto $J_{4\pi}^{D}$.
\end{proof}

\begin{proof}[Proof of Theorem~\ref{thm:resonant-case}]
First suppose all genuine critical points are nondegenerate.
Fix $\rho_*=4\pi m$ and write
\[
 S_\infty=\sum_{2p+q=m}\sum_{[\xi]\in\mathcal V_{p,q}^-}
                          (-1)^{\iota_\infty([\xi])}.
\]
For $m\ge2$, the fixed-functional Morse relation of
Theorem~\ref{thm:2}, evaluated at $t=-1$, yields
\begin{equation*}
 \sum_{u\in\operatorname{Crit}(J_{\rho_*})}
       (-1)^{\morse(J_{\rho_*},u)}+S_\infty
 =1-\chi(B_{m-1}^\partial(\Sigma,\partial\Sigma)).
\end{equation*}
For $m=1$, the identical finite-filtration argument uses the sublevel
pair in Lemma~\ref{lem:first-wall-endpoints}, whose Euler
characteristic is $1$. It yields the same identity with right-hand
side $1$. This also accounts for critical groups in degree zero.

At each genuine critical point the Hessian is identity minus a
compact self-adjoint operator. The local degree of $I-T_{\rho_*}$
is therefore its genuine Morse sign. The genuine critical set is
compact and finite; additivity yields
\begin{equation}\label{eq:fixed-resonance-degree}
 d_{\rho_*}=\left\{\begin{aligned}
    &1-S_\infty&&\text{ if }\, m=1,\\
 &1-\chi(B_{m-1}^\partial(\Sigma,\partial\Sigma))-S_\infty&&\text{ if }\, m\ge2.  
 \end{aligned}\right.
\end{equation}
If genuine solutions are degenerate, apply
Lemma~\ref{lem:bounded-Morse-perturbation}. The pair of high and low sublevels and
all infinite local groups are unchanged. Apply the same filtration
and Euler calculation to $J_\epsilon$, then identify its genuine
signed count with $d_{\rho_*}$ by that lemma. Thus
\eqref{eq:fixed-resonance-degree} holds under exactly the stated
reduced assumptions.

Finally, $\iota_\infty=3p+2q-1-\morse(\FpqV,\xi)$, so
$-(-1)^{\iota_\infty}=(-1)^{p-\morse(\FpqV,\xi)}$.
For $m\ge2$, \eqref{eq:topology-euler-characteristic} yields
\[
 1-\chi(B_{m-1}^\partial(\Sigma,\partial\Sigma))
 =\frac1{l!}\prod_{j=1}^l(j-\chi(\Sigma)),
 \quad l=\lfloor(m-1)/2\rfloor,
\]
with the empty product equal to one. Substitution proves
\eqref{eq:degree-reso}, including $m=1,2$.
\end{proof}

\begin{proof}[Proof of Corollary~\ref{c:Euler-Poincare}]
By \eqref{eq:degree-reso}, condition \eqref{eq:Euler-Poincare} is exactly
\(d_{4\pi m}\ne0\). The existence property of the Leray--Schauder degree
then yields a solution of \eqref{maineq} at \(\rho=4\pi m\).
\end{proof}

		\subsection{Existence with prescribed Morse index}
		\begin{proof}[Proof of Theorem \ref{thm:1}]
			Set \(\rho_*=4\pi m\).  The same argument for choosing generic perturbations of \(V\) applies in both parts. Given a sequence \(\rho_k\to\rho_*\) approaching from one side,
			choose positive potentials \(V_k\to V\) in \(C^3(\Si )\) such that
			\(J_{\rho_k,V_k}\) is Morse. The perturbations follow from the following transversality argument.
			Fix \(r>2\) and set
			\[
			\mathcal X=\{u\in W^{2,r}(\Si ):\pa _{\nu_g}u=0,
			\ \textstyle\int_\Si  u\,\d v_g=0\},\quad
			\mathcal Y=\{f\in L^r(\Si ):\textstyle\int_\Si  f\,\d v_g=0\}.
			\]
			On the open cone \(\mathcal P=C^3_+(\Si )\) consider the map
			\[
			\mathscr E_k:\mathcal P\times\mathcal X\to \mathcal Y,
			\quad
			\mathscr E_k(V,u)=-\Delta_gu-\rho_k
			\Big(\frac{Ve^u}{\int_\Si  Ve^u\,\d v_g}-\frac1{|\Si |_g}\Big).
			\]
			The map is \(C^1\) on the stated \(C^3\) potential space, since \(W^{2,r}\hookrightarrow C^0\) for \(r>2\), and \(D_u\mathscr E_k\) is an elliptic Neumann Fredholm
			operator of index zero.  If \(\psi\) represents a cokernel vector for the
			full differential at a zero, elliptic regularity applies to \(\psi\).  Its
			pairing with every potential variation \(\dot V\in C^3(\Si )\) is
			\[
			-\frac{\rho_k}{I}\int_\Si
			e^u(\psi-c_\psi)\dot V\,\d v_g,
			\quad
			I=\int_\Si  Ve^u\,\d v_g,
			\quad
			c_\psi=I^{-1}\int_\Si \psi Ve^u\,\d v_g.
			\]
			Since the pairing vanishes for every \(\dot V\), one has
\(\psi=c_\psi\) pointwise.  A representative of the cokernel on the space of mean-zero functions may be chosen with zero mean, hence
			\(\psi=0\).

The universal differential is therefore onto and split,
since \(D_u\mathscr E_k\) has closed range and finite-dimensional
kernel and cokernel. Thus \(\mathcal Z_k:=\mathscr E_k^{-1}(0)\)
is a \(C^1\) Banach submanifold, and the projection
\(\pi_k:\mathcal Z_k\to\mathcal P,\quad (V,u)\mapsto V,\)
is Fredholm of index zero. Its regular values are exactly the potentials
for which every zero is nondegenerate. The spaces \(C^3(\Si )\)
and \(\mathcal X\) are separable, so \(\mathcal Z_k\) has a countable
base. Smale's theorem \cite[Theorem~1.3, p.~862]{Smale1965}
therefore applies with \(C^1\) regularity. We may choose regular
potentials \(V_k\) with \(\|V_k-V\|_{C^3}<1/k\).
By nonresonant compactness and nondegeneracy, the critical set of
\(J_{\rho_k,V_k}\) is finite.

			We next verify that the reduced critical points, their indices, and the signs of the scale coefficients persist under these perturbations.  The separation estimate in \cite[Lemma~4.1]{HB2024}, with constants
			chosen uniformly for \(V'\) in a sufficiently small \(C^3\)-neighborhood of
			\(V\), shows that every critical point of \(\mathbf F_{p,q}^{V'}\) belongs to a
			fixed compact subset \(K_{p,q}\Subset\Xi_{p,q}\).  On that set
			\(\mathbf F_{p,q}^{V'}\to\mathbf F_{p,q}^{V}\) in \(C^2\) and
			\(\mathcal L_{V'}\to\mathcal L_V\) uniformly.

Choose disjoint neighborhoods
			of the finitely many nondegenerate critical orbits of
			\(\mathbf F_{p,q}^{V}\).  The gradient has a positive lower bound on the
			part of \(K_{p,q}\) outside these neighborhoods; the implicit-function
			theorem yields exactly one nearby critical orbit in each neighborhood,
			with the same Morse index. The uniform lower bound excludes all other critical orbits, and the
			separation lemma excludes approach to the collision set \(\mathbb{F}_{p,q}(\Si )\) or to the boundary of the interior stratum.  Since \(\mathcal L_V\ne0\) on the finite critical
			set, all strict signs also persist for \(V_k\) when \(k\) is large.

			We first prove (1). Choose $\rho_k>\rho_*$ with
$\rho_k\downarrow\rho_*$. For large $k$,
$\rho_k\in(4\pi m,4\pi(m+1))$.
Corollary~\ref{prop:B-nonresonant-Morse}, applied to the Morse
functional $J_{\rho_k,V_k}$ with $l=m$, yields a critical point $u_k$
of Morse index $2m$. The sublevel depth in
Corollary~\ref{prop:B-nonresonant-Morse} may depend on $k$.

Suppose that \(\{u_k\}\) blows up. By the compactness alternative and the
finite-dimensional reduction, there are a type \((p,q)\), \(2p+q=m\), and a critical
orbit \([\xi^*]\) of \(\FpqV\), with \(\cL(\xi^*)>0\) by the blow-up sign condition.
Proposition~\ref{prop:bubbling-branches}, applied uniformly to the nearby reduced
critical points associated with \(V_k\), implies
			\begin{equation}\label{eq:supercritical-bubbling-index}
				\morse(J_{\rho_k,V_k},u_k)
				=3p+2q-\morse(\FpqV,\xi^*)
			\end{equation}
			for large \(k\); the implicit function theorem identifies these nearby
			reduced critical points with the corresponding critical point for \(V\),
			and their Morse indices are unchanged.  Using \eqref{eq:supercritical-bubbling-index}, since the left-hand side
			is \(2m\) and \(2p+q=m\),
			\(2m=2m-p-\morse(\FpqV,\xi^*).\)
			Thus \(p=0\) and \(\morse(\Fv{0}{m},\xi^*)=0\).  The limit configuration is a
			local minimum of \(\Fv{0}{m}\), where the hypothesis of (1) yields
			\(\cL(\xi^*)<0\), a contradiction.  Therefore \(\{u_k\}\) is compact.

			After passing to a subsequence, \(u_k\to\om \) in \(C^{2,\gamma}(\Si )\) for every \(\gamma\in(0,1)\),
			and \(\om \) solves \eqref{maineq} for \((\rho_*,V)\).  The linearized operators converge in the norm-resolvent sense.  Since every \(u_k\) has Morse index
			\(2m\), spectral semicontinuity yields
			\[
			\morse(J_{\rho_*},\om )\le2m
			\le\morse(J_{\rho_*},\om )+\dim\ker\mathcal T_\om ,
			\]
			which is \eqref{eq:index-window-upper}.

			For (2), choose $\rho_k<\rho_*$ with $\rho_k\uparrow\rho_*$ and
the same type of generic positive potentials $V_k\to V$.
For large $k$, $\rho_k\in(4\pi(m-1),4\pi m)$.
Corollary~\ref{prop:B-nonresonant-Morse}, now with $l=m-1\ge1$,
yields a critical point $u_k$ of Morse index $2m-2$.

If \(\{u_k\}\) blows up, its limiting critical orbit \([\xi^*]\) satisfies
			\(\cL(\xi^*)<0\).  For \(\rho_k<\rho_*\), Proposition~\ref{prop:bubbling-branches} yields
			\begin{equation}\label{eq:subcritical-bubbling-index}
				\morse(J_{\rho_k,V_k},u_k)
				=3p+2q-1-\morse(\FpqV,\xi^*).
			\end{equation}
			Using \eqref{eq:subcritical-bubbling-index}, \(2p+q=m\), and \(\morse(J_{\rho_k,V_k},u_k)=2m-2\), we obtain
			\(\morse(\FpqV,\xi^*)=1-p.\)
			Hence either
			$p=0$, $q=m$, $\morse(\Fv{0}{m},\xi^*)=1$,
			or
			$p=1$, $q=m-2$, $\morse(\Fv{1}{m-2},\xi^*)=0$.
			In both cases the sign assumptions in (2) yield \(\cL(\xi^*)>0\), contradicting
			the subcritical blow-up sign.  Thus \(\{u_k\}\) is compact.  Passing to a limit
			and applying the same spectral argument yields
			\[
			\morse(J_{\rho_*},\om )\le2m-2
			\le\morse(J_{\rho_*},\om )+\dim\ker\mathcal T_\om ,
			\]
			which is \eqref{eq:index-window-lower}. If the limiting solution is
			nondegenerate, then \(\ker\mathcal T_\om =\{0\}\), and the corresponding
			two inequalities are equalities.
		\end{proof}

		\subsection{Morse inequalities and applications}

		\begin{proof}[Proof of Theorem~\ref{thm:2}]
Set $\rho_*=4\pi m$. The genuine critical set is compact and, by nondegeneracy, finite. The reduced critical set is finite
as well. Choose $D$ larger than the absolute values of all genuine
and infinite critical levels and large enough for
Proposition~\ref{prop:negative-sublevels-barycenters}.
Theorem~\ref{thm:global-deformation-infinity} yields a finite filtration
of $(J_{\rho_*}^{D},J_{\rho_*}^{-D})$ by its critical levels.
genuine points contribute their local Morse groups; Proposition
\ref{prop:complete-cell-infinity} contributes one generator in degree
$\iota_\infty$ for each end. Objects at the same value are retained
in disjoint neighborhoods and contribute simultaneously.
By Proposition~\ref{prop:negative-sublevels-barycenters}, the relative
Poincar\'e polynomial of this pair is $P_m(t)$: the upper sublevel
is contractible and the lower one has the homotopy type of
$B_{m-1}^\partial(\Sigma,\partial\Sigma)$. The long exact sequence
identifies the relative Betti numbers with \eqref{def:Betti}.

This filtration has at least one critical level. Otherwise
Theorem~\ref{thm:global-deformation-infinity} implies vanishing relative
homology, whereas \eqref{eq:topology-top-betti} yields
$\dim H_{2m-2}(J_{\rho_*}^{D},J_{\rho_*}^{-D};\mathbb Z_2)
=\binom{b+m-2}{m-1}>0$.

Write the filtration as $M_0\subset\cdots\subset M_s$,
where $M_0=J_{\rho_*}^{-D}$ and $M_s=J_{\rho_*}^{D}$. Let
\[
\begin{gathered}
b_{i,j}:=\dim H_i(M_j,M_0;\Z_2),\quad
\theta_{i,j}:=\dim H_i(M_j,M_{j-1};\Z_2),\\
r_{i,j}:=\dim\ker[H_i(M_{j-1},M_0;\Z_2)\to H_i(M_j,M_0;\Z_2)].
\end{gathered}
\]
The long exact sequence of $(M_j,M_{j-1},M_0)$ yields
\begin{equation}\label{eq:filtration-dimension-identity}
\theta_{i,j}=b_{i,j}-b_{i,j-1}+r_{i,j}+r_{i-1,j},
\quad r_{-1,j}=0.
\end{equation} Here $b_{i,0}=0$, $b_{i,s}=\mathfrak b_{m,i}$, and
the local genuine and infinite Morse groups yield
$\sum_{j=1}^s\theta_{i,j}=\nu_i+\nu_i^\infty$.
Set $q_i:=\sum_{j=1}^sr_{i,j}\ge0$ and $q_{-1}:=0$. Summing
\eqref{eq:filtration-dimension-identity} over $j=1,\ldots,s$ and using
$b_{i,0}=0$ and $b_{i,s}=\mathfrak b_{m,i}$ yields
\begin{equation}\label{eq:morse-proof-identity}
\nu_i+\nu_i^\infty-\mathfrak b_{m,i}=q_i+q_{i-1},\quad i\ge0.
\end{equation}
The relative groups of the finite Morse filtration vanish above its
largest critical-point index.
Consequently $Q(t):=\sum_{i\ge0}q_it^i$ is a polynomial, and
\eqref{eq:morse-proof-identity} is equivalent to
\begin{equation}\label{eq:morse-strong}
\sum_{i\ge0}(\nu_i+\nu_i^\infty)t^i-P_m(t)=(1+t)Q(t).
\end{equation}
Identity~\eqref{eq:morse-strong} proves \textup{(a)}, and the
nonnegativity in \eqref{eq:morse-proof-identity} proves \textup{(b)},
also in degree zero.
Since $\iota_\infty\le3p+2q-1\le2m-1$, the index range in the
statement is valid. Finally, evaluate \eqref{eq:morse-strong} at $t=-1$.
Using \eqref{def:P_m} yields
\[
\sum_{i\ge0}(-1)^i(\nu_i+\nu_i^\infty)
=P_m(-1)=1-\chi(B_{m-1}^\pa (\Si ,\pa \Si )),
\]
which is (c). The Euler characteristic is given by
\eqref{eq:topology-euler-characteristic}. The multiplicity estimate follows
from $|\sum_{i\ge0}(-1)^i\nu_i|\le\sum_{i\ge0}\nu_i$.
\end{proof}

		\begin{proof}[Proof of Corollary~\ref{cor:1.3}]
By Theorem~\ref{thm:2}\textup{(b)} and \(\nu_l^\infty=0\),
\(\nu_l\ge\mathfrak b_{m,l}>0\). The stated value of
\(\mathfrak b_{m,2m-2}\), and hence the multiplicity estimate, follows from
\eqref{eq:topology-top-betti} applied to \(B_{m-1}^{\pa }\).
\end{proof}

		\begin{proof}[Proof of Theorem~\ref{thm:3}]
Suppose, to the contrary, that no solution exists. Then
$\operatorname{Crit}(J_{4\pi m})=\varnothing$, so the nondegeneracy
hypothesis in Theorem~\ref{thm:2} is satisfied. The remaining hypotheses
of Theorem~\ref{thm:2} are part of the present assumptions.
Thus $\nu_i=0$ for every $i$, and \eqref{eq:morse-proof-identity} yields
\[
\nu_i^\infty=\mathfrak b_{m,i}+q_i+q_{i-1},\quad
q_i\ge0,\quad q_{-1}=0.
\]
Nonnegativity and $\nu_{l_0+1}^\infty=0$ imply $q_{l_0}=0$.
If $l_0\ge1$, the same argument with $\nu_{l_0-1}^\infty=0$ implies
$q_{l_0-1}=0$; for $l_0=0$, the convention is $q_{-1}=0$.
The degree-$l_0$ identity, together with $\mathfrak b_{m,l_0}=0$, then yields
$\nu_{l_0}^\infty=0$, contrary to the existence of $\xi_0$.
Thus a solution exists.
\end{proof}

		\begin{proof}[Proof of Corollary \ref{cor:minimal-infinity-index}]
			Apply Theorem \ref{thm:3} with $ l_0=p+q-1$.  The hypothesis $ \iota_\infty(\xi_0)=p+q-1$  yields the required critical point at infinity of index $ l_0$, the vanishing of $ \mathfrak b_{m,p+q-1}$  is precisely condition (3) of Theorem \ref{thm:3}, and the stated absence of indices $ p+q-2$  and $ p+q$  is condition (2).  Hence, Theorem \ref{thm:3} establishes the existence of a genuine solution.
		\end{proof}

\begin{proof}[Proof of Corollary~\ref{cor:1.5}]
Suppose that \eqref{maineq} has no solution at \(\rho=4\pi m\).
The genuine critical set is empty, so the nondegeneracy assumption on
genuine solutions in Theorem~\ref{thm:2} is satisfied. The remaining
hypotheses of Theorem~\ref{thm:2} are assumed. Hence
\[
\sum_{i\ge0}\nu_i^\infty t^i-P_m(t)=(1+t)\sum_{i\ge0}q_it^i,
\quad q_i\ge0.
\]
Set \(\widetilde N:=2m-1\). By \eqref{def:l_inf},
\(\nu_i^\infty=0\) for \(i>\widetilde N\), and
\eqref{def:P_m} and \eqref{eq:topology-top-betti} yield
\(\deg P_m\le\widetilde N-1\). Comparing the four coefficients
of degrees \(\widetilde N+1,\widetilde N,\widetilde N-1,
\widetilde N-2\) yields
\[
\begin{aligned}
q_{\widetilde N}&=0,\\
q_{\widetilde N-1}&=\nu_{\widetilde N}^\infty,\\
q_{\widetilde N-2}
 &=\nu_{\widetilde N-1}^\infty
   -\mathfrak b_{m,\widetilde N-1}-\nu_{\widetilde N}^\infty\ge0,\\
q_{\widetilde N-3}
 &=\nu_{\widetilde N-2}^\infty-\mathfrak b_{m,\widetilde N-2}
   +\mathfrak b_{m,\widetilde N-1}
   -\nu_{\widetilde N-1}^\infty+\nu_{\widetilde N}^\infty\ge0.
\end{aligned}
\]
The last two inequalities are  \textup{(i)} and \textup{(ii)}.
Failure of either inequality contradicts the supposition that
\eqref{maineq} has no solution.
\end{proof}

		\appendix

		\section{Basic estimates}\label{app:0}
The notation is as in Section~\ref{sec:2}. The estimates below are local
and uniform on fixed compact subsets of the corresponding configuration
stratum. We include the projection identities and parameter estimates
needed for the differentiated reduction; the other local estimates are
stated with their references.

On each fixed compact configuration neighborhood, choose a common
$r_0>0$ and restrict the admissible charts so that $r_\xi=4r_0$.
Throughout this appendix the cutoff is as in \eqref{eq:cut_off},
as in \eqref{projectedbubble} and \eqref{Hfunction}:
\(\vartheta_\xi=\vartheta(|y_\xi|/r_0).\)
Its derivatives are supported in a fixed annulus away from the center.
Set $r:=|y_\xi|$ and
\[
 f_\xi:=\frac{\Delta_g\vartheta_\xi}{r^2}
       +2\langle\nabla\vartheta_\xi,\nabla r^{-2}\rangle_g.
\]
Define $F_\xi$ by
		\begin{equation}
			\label{Fxi}
			\left\{\begin{aligned}
				&-\Delta_g F_{\xi}= f_{\xi}-\frac1{|\Si |_g}\int_\Si  f_\xi\,\d v_g&& \text{  in }\,\Si, \\
				&\pa_{\nu_g}F_{\xi}=0 && \text{ on }\,\pa\Si, \\
				&\int_{\Si } F_{\xi} \,\d v_g=0,&&
			\end{aligned}\right.
		\end{equation}
		The right-hand side of \eqref{Fxi} has zero mean, so the problem has a
unique mean-zero solution; see, for instance, \cite{N2014,EF2014}.

We record an exact expansion, including its parameter derivatives.
Set $L:=\lam \pa _\lam $ and
\[
 \widehat h_{\lam ,\xi}
 :=\frac2{|\Si |_g}\int_\Si
       \vartheta_\xi\ln(1+\lam ^{-2}|y_\xi|^{-2})\,\d v_g.
\]
Define $\mathcal R_{\lam ,\xi}$ by the exact identity
\begin{equation*}
 P\delta_{\lam ,\xi}
 =\vartheta_\xi\Big(\delta_{\lam ,\xi}
                       +\ln\frac{\lam ^2}{8}\Big)
                  +\kap (\xi)H^g(\cdot,\xi)
 +\widehat h_{\lam ,\xi}
       -\frac{2F_\xi}{\lam ^2}+\mathcal R_{\lam ,\xi}.
\end{equation*}
The identity
\[
 \vartheta_\xi\Big(\delta_{\lam ,\xi}+\ln\frac{\lam ^2}{8}\Big)
 +\kap (\xi)H^g(\cdot,\xi)
 =\kap (\xi)G^g(\cdot,\xi)
 -2\vartheta_\xi\ln(1+\lam ^{-2}|y_\xi|^{-2})
\]
and the mean-zero normalizations of $G^g$ and $F_\xi$ yield
$\int_\Si \mathcal R_{\lam ,\xi}\,\d v_g=0$.
For $s:=\lam ^{-2}r^{-2}$, define the annular function
\[
 B_{\lam ,\xi}
 :=2(\Delta_g\vartheta_\xi)[\ln(1+s)-s]
     +4\langle\nabla\vartheta_\xi,\nabla[\ln(1+s)-s]\rangle_g.
\]
Both terms in \(B_{\lam ,\xi}\) are extended by zero outside the cutoff annulus.
Direct differentiation of \eqref{projectedbubble} and
\eqref{Hfunction}, after the concentrated terms have been combined exactly, yields the equation
\begin{equation}\label{eq:A1-exact-error-equation}
 \left\{\begin{aligned}
 -\Delta_g\mathcal R_{\lam ,\xi}
   &=-B_{\lam ,\xi}
      +\frac1{|\Si |_g}\int_\Si  B_{\lam ,\xi}\,\d v_g,\\
 \pa _{\nu_g}\mathcal R_{\lam ,\xi}&=0,\quad
 \int_\Si \mathcal R_{\lam ,\xi}\,\d v_g=0.
 \end{aligned}\right.
\end{equation}
All center derivatives below are in fixed admissible center coordinates,
with the interior or boundary stratum kept fixed. On the annulus $r$
is bounded away from zero; therefore, for every fixed finite set of
orders $j,\beta,k$ needed below, with $\beta$ a multi-index in the
admissible center coordinates,
\begin{equation}\label{eq:A-annular-derivative-bounds}
 \|L^j\pa _\xi^\beta B_{\lam ,\xi}\|_{C^k(\Si )}
 \le C_{j,\beta,k}\lam ^{-4}.
\end{equation}
Differentiate \eqref{eq:A1-exact-error-equation} before applying the
mean-zero Neumann inverse. Its coefficients are independent of the
center parameters. Apply \eqref{eq:A-annular-derivative-bounds} with one additional
spatial derivative and the mean-zero Neumann Schauder estimate to obtain
$\|L^j\pa _\xi^\beta\mathcal R_{\lam ,\xi}\|_{C^{k+2}(\Si )}\le C\lam ^{-4}$. In particular, for $j\le3$, $|\beta|\le2$,
\begin{equation}\label{eq:A-joint-remainder-bounds}
 \|L^j\pa _\xi^\beta\mathcal R_{\lam ,\xi}\|_{C^2(\Si )}
 \le C\lam ^{-4},\quad
 |L^j\pa _\xi^\beta(\widehat h_{\lam ,\xi}-h_{\lam ,\xi})|
 \le C\lam ^{-3}\ln\lam .
\end{equation}
Here \(h_{\lam ,\xi}\) is the scalar defined in
Lemma~\ref{lem:projbubble_pointwise} below in this appendix. To verify the second estimate,
write \(r=|y|\) in the fixed center chart. Integration yields
\[
\int_{B_{2r_0}^{\xi}}\vartheta(r/r_0)
 \ln\Big(1+\frac1{\lam ^2r^2}\Big)\,\d y
=\frac{\kap (\xi)}{4\lam ^2}\ln\lam
 +\frac{\kap (\xi)}{8\lam ^2}-\frac1{\lam ^2}\int_{B_{2r_0}^{\xi}}
 \frac{\vartheta'(r/r_0)}{r_0}\frac{\ln r}{r}\,\d y
 +O(\lam ^{-4}).
\]
The term containing \(e^{\hat\varphi_\xi}-1\) is expanded once in
\(\lam ^{-2}\); its error is \(O(\lam ^{-3})\), also after the
listed derivatives. Since \(\d y=e^{-\varphi_\xi}\,\d v_g\), the
annular integral in \(h_{\lam ,\xi}\) contains
\(e^{-\varphi_\xi}\). Multiplication by \(2/|\Si |_g\) proves the
scalar estimate in \eqref{eq:A-joint-remainder-bounds}. The chain rule
also yields \eqref{eq:A-joint-remainder-bounds} after the pullbacks in
Lemma~\ref{lem:terminal-reduction}, and hence the stated \(C^1\) remainder.

On $U_{r_0}(\xi_i)$, all cutoffs except $\vartheta_{\xi_i}$ vanish,
and $\vartheta_{\xi_i}=1$. Write $x=y_{\xi_i}^{-1}(y)$ and set
\[
 \mathcal A_i(y):=(\mathcal F_\xi^i\mathrm g_\xi^i)(x)
                                  e^{\hat\varphi_{\xi_i}(y)},\quad
 S:=\sum_j\al _j\widehat h_{\lam _j,\xi_j},\quad
 q_i(y):=\sum_j\al _j
   [-2\lam _j^{-2}F_{\xi_j}(x)
                         +\mathcal R_{\lam _j,\xi_j}(x)].
\]
The exact local mass factorization is
\begin{equation}\label{eq:A-exact-local-mass-factor}
 Ve^{u_0}\,\d v_g
 =\mathcal A_i(y)\frac{\lam _i^{4\al _i}}
                 {(1+\lam _i^2|y|^2)^{2\al _i}}
                  e^{S+q_i(y)}\,\d y.
\end{equation}
The correction terms in \eqref{eq:A-exact-local-mass-factor} are differentiated
in the center, spatial, and logarithmic scale variables before being estimated. In the logarithmic
mass coordinates \eqref{eq:log-mass-coordinates}, after the moving-chart
identification, $D_{(\xi,t,v)}^j q_i=O(e^{-2t})$ and
$D_{(\xi,t,v)}^jS=O(te^{-2t})$ for $j\le2$; the scalar $S$ cancels exactly from normalized
mass integrals. Amplitude derivatives introduce at most the factors
$Ct$ and $Ct^2$ for first and second derivatives, respectively.

The following expansion is recalled from Lemma A.1 in \cite{AHW2025-1}.
\begin{lem}\label{lem:projbubble_pointwise}
			As $\lam  \to + \infty$, the following expansion  holds in $C^1(\Si )$:
			\begin{align*}
				P  \delta_{\lam,\xi}=&\,\vartheta_{\xi} \Big( \delta_{\lam,\xi}+\ln  \frac{\lam^2}{8} \Big)+\kap( \xi) H^g(\cdot, \xi)+h_{\lam,\xi}-\frac{2F_{\xi}}{\lam^2}  +O\Big(\frac{\ln  \lam }{\lam ^3}\Big)\nonumber\\=&\,2\vartheta_{\xi}\ln \Big( \frac{\lam^2}{1+\lam^2|y_{\xi}|^2}\Big)+ \kap( \xi) H^g(\cdot, \xi) +\frac{\kap( \xi)}{2|\Si |_g}\frac{\ln  \lam }{\lam^2}+O\Big(\frac{1}{\lam^2}\Big) ,
			\end{align*}
			where $F_{\xi}$ is given by \eqref{Fxi} and
			\[
h_{\lam ,\xi}:=
\frac{\kap (\xi)}{2|\Si |_g}\frac{\ln\lam }{\lam ^2}
+\frac{2}{|\Si |_g\lam ^2}\Big(
\frac{\kap (\xi)}8
+\int_\Si \vartheta_\xi
 \frac{1-e^{-\varphi_\xi}}{|y_\xi|^2}\,\d v_g
-\int_\Si \frac1{r_0}\vartheta'\Big(\frac{|y_\xi|}{r_0}\Big)
 \frac{\ln|y_\xi|}{|y_\xi|}e^{-\varphi_\xi}\,\d v_g\Big).
\]
			In particular,  the following  expansion holds in $C_{loc}^1(\Si \setminus\{\xi\})$ as $\lam  \to + \infty$:
			\begin{align*}
				P  \delta_{\lam,\xi}=&\,\kap( \xi)G^g(\cdot, \xi)+h_{\lam,\xi}- \frac {2\vartheta_{\xi}} {|y_{\xi}|^2}\frac {1} {\lam^2}-\frac{2F_{\xi}}{\lam^2}+O\Big(\frac{\ln  \lam}{\lam ^3} \Big)\nonumber\\=&\,\kap( \xi)G^g(\cdot, \xi)+\frac{\kap( \xi)}{2|\Si |_g}\frac{\ln  \lam }{\lam^2}+O\Big(\frac{1}{\lam^2}\Big).
			\end{align*}
		\end{lem}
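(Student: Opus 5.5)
The statement to prove is Lemma~\ref{lem:projbubble_pointwise}: the $C^1$ expansion of $P\delta_{\lam,\xi}$. The plan is to start from the exact identity defining $\mathcal R_{\lam,\xi}$,
\[
 P\delta_{\lam,\xi}
 =\vartheta_\xi\Big(\delta_{\lam,\xi}+\ln\frac{\lam^2}{8}\Big)
 +\kap(\xi)H^g(\cdot,\xi)+\widehat h_{\lam,\xi}-\frac{2F_\xi}{\lam^2}+\mathcal R_{\lam,\xi},
\]
and control the last two error pieces. First, I will verify that $\mathcal R_{\lam,\xi}$ solves \eqref{eq:A1-exact-error-equation}. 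To do so, apply $-\Delta_g$ to the right side, using $-\Delta_g\delta_{\lam,\xi}=e^{\delta_{\lam,\xi}-\varphi_\xi}$ on the support of $\vartheta_\xi$, equation \eqref{Hfunction}, and \eqref{Fxi}. Then combine the concentrated terms through
\[
\vartheta_\xi(\delta+\ln\lam^2/8)+\kap H=\kap G-2\vartheta_\xi\ln(1+s),\qquad s=\lam^{-2}r^{-2}.
\]
What remains is supported in the cutoff annulus, and it is exactly $-B_{\lam,\xi}$ minus its mean. The $-s$ subtraction in $B$ accounts for the $F_\xi/\lam^2$ term.

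Second, on the annulus $r$ is bounded below. Since $\ln(1+s)-s=O(s^2)=O(\lam^{-4})$ with all derivatives, the Neumann $W^{2,p}$/Schauder estimate gives $\|\mathcal R_{\lam,\xi}\|_{C^1}\le C\lam^{-4}$; this is \eqref{eq:A-annular-derivative-bounds}--\eqref{eq:A-joint-remainder-bounds}. Third, I will compute the constant $\widehat h_{\lam,\xi}$ in the chart. The integral of $\vartheta\ln(1+\lam^{-2}r^{-2})$ against $dy$ gives $\frac{\kap(\xi)}{4\lam^2}\ln\lam+\frac{\kap(\xi)}{8\lam^2}$ minus the cutoff-derivative term, up to $O(\lam^{-4})$. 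Converting $\d v_g=e^{\varphi_\xi}dy$ produces the term with $(1-e^{-\varphi_\xi})/|y|^2$, whose expansion error is $O(\lam^{-3}\ln\lam)$. This identifies $h_{\lam,\xi}$ and gives $|\widehat h-h|\le C\lam^{-3}\ln\lam$. Combining these three steps yields the first line of the expansion. For the half-disk boundary case the same computation applies: $\kap=4\pi$ reflects the halved area, and the Neumann condition on the flat boundary holds because $\partial_{y_2}$ of a radial function vanishes there.

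For the second line, I will bound $h_{\lam,\xi}-\frac{\kap}{2|\Si|_g}\frac{\ln\lam}{\lam^2}$ and $F_\xi/\lam^2$ by $O(\lam^{-2})$, and write $\vartheta_\xi(\delta_{\lam,\xi}+\ln\lam^2/8)=2\vartheta_\xi\ln\frac{\lam^2}{1+\lam^2|y_\xi|^2}$.

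For the expansion in $C^1_{loc}(\Si\setminus\{\xi\})$, I will use the $G$-identity above together with the estimate
\[
2\vartheta_\xi\ln(1+s)=2\vartheta_\xi s+O(s^2)=\frac{2\vartheta_\xi}{|y_\xi|^2\lam^2}+O(\lam^{-4}),
\]
in $C^1$ on compact sets away from $\xi$. The expected main obstacle is the bookkeeping of the $e^{-\varphi_\xi}$ factor and of the boundary chart's conformal weight in the constant $h_{\lam,\xi}$: the exact residual must carry no $O(\lam^{-2})$ leftover, which is why $B$ subtracts $s$ explicitly. I will check this by computing $-\Delta_g$ of $-2\vartheta_\xi s$ against $\Delta_g\vartheta_\xi$ and the cross term in $f_\xi$.
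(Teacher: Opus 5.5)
Your proposal is correct and follows essentially the same route as the paper. The paper cites \cite[Lemma~A.1]{AHW2025-1} and records exactly your ingredients: the exact identity defining $\mathcal R_{\lam,\xi}$, the annular error equation \eqref{eq:A1-exact-error-equation} with its $O(\lam^{-4})$ Schauder bound, and the chart computation giving $|\widehat h_{\lam,\xi}-h_{\lam,\xi}|\le C\lam^{-3}\ln\lam$. The one step you should write out is $\int_\Si\mathcal R_{\lam,\xi}\,\d v_g=0$, which follows from the mean-zero normalizations of $P\delta_{\lam,\xi}$, $G^g$ and $F_\xi$ together with the definition of $\widehat h_{\lam,\xi}$; this is what justifies applying the mean-zero Neumann estimate.
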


		The following expansion is recalled from Lemma A.2 in \cite{AHW2025-1}.
\begin{lem} \label{lem:projbubble_xi_pointwise}  As $\lam  \to + \infty$, the following expansion  holds in $C(\Si )$:
			\begin{align*}
				\frac{\pa 	P\delta_{\lam,\xi}}{\pa\xi_j}=&\, \vartheta_{\xi} \frac{\pa  \delta_{\lam,\xi}}{\pa \xi_j}   +\kap( \xi) \frac{\pa H^g(\cdot,\xi)}{\pa \xi_j} -4\ln  (|y_{\xi}|)\frac{\pa \vartheta_{\xi}}{\pa \xi_j } +O \Big(\frac{\ln \lam}{\lam^2}\Big)\\=&\,-\frac{4 \vartheta_{\xi}\lam^2 \langle y_\xi,\pa _{\xi_j}y_\xi\rangle_{\mathbb R^2}}{1+\lam^2|y_{\xi}|^2}    +\kap( \xi) \frac{\pa H^g(\cdot,\xi)}{\pa \xi_j} -4\ln  (|y_{\xi}|)\frac{\pa \vartheta_{\xi}}{\pa \xi_j } +O \Big(\frac{\ln \lam}{\lam^2}\Big),
			\end{align*}
			where \(j=1,\ldots,\ii(\xi)\).  In particular, the following  expansion holds in $C_{loc}(\Si \setminus\{\xi\})$ as $\lam  \to + \infty$:
			\[
			\frac{\pa 	P\delta_{\lam,\xi}}{\pa\xi_j}=\kap( \xi) \frac{\pa G^g(\cdot,\xi)}{\pa \xi_j}  +O\Big(\frac{\ln \lam}{\lam^2}\Big),
			\]
			where \(j=1,\ldots,\ii(\xi)\).
		\end{lem}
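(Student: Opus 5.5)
The derivative is computed directly from the exact identity for $P\delta_{\lam,\xi}$ recorded just before Lemma~\ref{lem:projbubble_pointwise}, rather than from the expansion of that lemma. We write
\[
P\delta_{\lam,\xi}=\kap(\xi)G^g(\cdot,\xi)-2\vartheta_\xi\ln(1+\lam^{-2}|y_\xi|^{-2})+\widehat h_{\lam,\xi}-\frac{2F_\xi}{\lam^2}+\mathcal R_{\lam,\xi},
\]
and differentiate each term in an admissible center coordinate $\xi_j$, keeping the interior or boundary stratum fixed. For the concentrated part we return to the first form $\vartheta_\xi(\delta_{\lam,\xi}+\ln(\lam^2/8))+\kap(\xi)H^g(\cdot,\xi)$. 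The $\xi_j$-derivative of $\vartheta_\xi\delta_{\lam,\xi}$ gives $\vartheta_\xi\pa_{\xi_j}\delta_{\lam,\xi}+\delta_{\lam,\xi}\pa_{\xi_j}\vartheta_\xi$. On the support of $\pa_{\xi_j}\vartheta_\xi$, which is a fixed annulus, we have $\delta_{\lam,\xi}+\ln(\lam^2/8)=-4\ln|y_\xi|+O(\lam^{-2})$. This produces the term $-4\ln|y_\xi|\,\pa_{\xi_j}\vartheta_\xi$ with error $O(\lam^{-2})$. The term $\kap(\xi)\pa_{\xi_j}H^g$ is kept as it is; it is continuous by the $C^{2,\gamma}$ regularity of $H^g$ and the smooth dependence of the charts on $\xi$.

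The remaining terms go into the error. By \eqref{eq:A-joint-remainder-bounds} with $|\beta|=1$, $\|\pa_{\xi_j}\mathcal R_{\lam,\xi}\|_{C^2}=O(\lam^{-4})$, and $\pa_{\xi_j}\widehat h_{\lam,\xi}=\pa_{\xi_j}h_{\lam,\xi}+O(\lam^{-3}\ln\lam)$. From the explicit formula for $h_{\lam,\xi}$, the derivative $\pa_{\xi_j}h_{\lam,\xi}=O(\lam^{-2})$; on a fixed stratum the only $\ln\lam$-term carries the constant $\kap(\xi)$. Since the data of \eqref{Fxi} depend smoothly on $\xi$, we get $\pa_{\xi_j}(F_\xi/\lam^2)=O(\lam^{-2})$. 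All of these are $O(\ln\lam/\lam^2)$ in $C(\Si)$. The second form of the expansion then follows from the explicit derivative
\[
\pa_{\xi_j}\tilde\delta_{\lam,0}(y_\xi(x))=-\frac{4\lam^2\langle y_\xi,\pa_{\xi_j}y_\xi\rangle}{1+\lam^2|y_\xi|^2}.
\]

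For the statement in $C_{loc}(\Si\setminus\{\xi\})$ we use the other grouping, $\kap(\xi)G^g-2\vartheta_\xi\ln(1+\lam^{-2}|y_\xi|^{-2})$. On a compact set away from $\xi$, the quantity $\ln(1+s)$ with $s=\lam^{-2}|y_\xi|^{-2}$ is $O(\lam^{-2})$ together with its $\xi$-derivative, so only $\kap(\xi)\pa_{\xi_j}G^g$ survives up to $O(\ln\lam/\lam^2)$.

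The main obstacle is uniformity near the center, where $\pa_{\xi_j}$ of the logarithmic correction $-2\vartheta_\xi\ln(1+\lam^{-2}|y_\xi|^{-2})$ and $\kap\pa_{\xi_j}G^g$ are each singular. This is why the $C(\Si)$ statement uses the first grouping, whose only singular piece is the bounded bubble derivative, of size $O(\lam)$ but explicit. A second delicate point is the boundary case: the chart $y_\xi$ moves tangentially, and $\pa_{\xi_j}y_\xi$ must stay bounded uniformly. This follows from the smooth dependence of the isothermal charts on $\xi$ as in \cite{EF2014,F2024}.
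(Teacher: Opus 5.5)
Your argument is correct. The paper gives no proof of its own here; it quotes the lemma from \cite[Lemma~A.2]{AHW2025-1}. Your route is therefore genuinely different. You differentiate, in a fixed admissible center coordinate, the exact identity for $P\delta_{\lambda,\xi}$ recorded in Appendix~\ref{app:0}, and you control $\mathcal R_{\lambda,\xi}$ and $\widehat h_{\lambda,\xi}-h_{\lambda,\xi}$ by \eqref{eq:A-joint-remainder-bounds}. You use two groupings:
\begin{itemize}
\item Near the pole you take $\vartheta_\xi(\delta_{\lambda,\xi}+\ln(\lambda^2/8))+\kappa(\xi)H^g(\cdot,\xi)$. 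This avoids the mutually cancelling singular parts of $\kappa(\xi)\partial_{\xi_j}G^g$ and of the derivative of the logarithmic correction.
\item Away from the pole you take $\kappa(\xi)G^g-2\vartheta_\xi\ln(1+\lambda^{-2}|y_\xi|^{-2})$.
\end{itemize}
This choice is exactly what makes both statements uniform.

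What your route buys:
\begin{itemize}
\item It is self-contained.
\item It works directly with the projection \eqref{projectedbubble} used in this paper. The paper itself notes in Section~\ref{sec:2} that this definition differs from \cite[(2.18)]{AHW2025-1} by the factor $e^{-\varphi_\xi}$, so the citation relies on matching conventions with the earlier paper.
\item It gives the sharper error $O(\lambda^{-2})$. Every discarded term (the annular logarithm, $\partial_{\xi_j}\widehat h_{\lambda,\xi}$, $\lambda^{-2}\partial_{\xi_j}F_\xi$, and $\partial_{\xi_j}\mathcal R_{\lambda,\xi}$) is $O(\lambda^{-2})$ or smaller.
\end{itemize}
The citation is simply shorter.

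One step deserves a line more than you give it: the claim $\partial_{\xi_j}h_{\lambda,\xi}=O(\lambda^{-2})$. It requires differentiating the integrals in $h_{\lambda,\xi}$ under the integral sign. After pulling back to the fixed chart domain, $\xi$ enters only through $\hat\varphi_\xi$. Since $\hat\varphi_\xi(0)=0$ for every $\xi$, you have $\partial_{\xi_j}\hat\varphi_\xi(0)=0$, so the differentiated integrand is $O(|y|^{-1})$ and hence integrable.

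Alternatively, differentiate $\widehat h_{\lambda,\xi}=\frac{2}{|\Sigma|_g}\int\vartheta(|y|/r_0)\ln(1+\lambda^{-2}|y|^{-2})e^{\hat\varphi_\xi(y)}\,\mathrm dy$ directly. The same vanishing of $\partial_{\xi_j}\hat\varphi_\xi$ at the origin gives $O(\lambda^{-2})$ without passing through $h_{\lambda,\xi}$.
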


		\begin{lem} \label{lem:projbubble_lamderivative__pointwise} As $\lam  \to + \infty$, the following expansions  hold in $C(\Si )$:
			\begin{align*}
				\lam  \frac{\pa P\delta_{\lam,\xi}}{\pa \lam} =&\vartheta_{\xi}\Big(\lam  \frac{\pa  \delta_{\lam,\xi}}{\pa \lam} +2\Big)+h^1_{\lam,\xi} +\frac{4F_{\xi}}{\lam^{2}}+o\Big(\frac{1}{\lam^{2}}\Big)\notag\\=&\frac{4\vartheta_{\xi}}{1+\lam^2|y_{\xi}|^2}-\frac{\kap ( \xi)}{|\Si |_g}\frac{\ln \lam}{\lam^2}+O \Big(\frac{1}{\lam^2}\Big)
			\end{align*}and
			\begin{align*}
				\lam^2	\frac{\pa^2 P\delta_{\lam,\xi}}{\pa\lam^2}  =&\vartheta_{\xi} \Big(\lam^2	\frac{\pa^2 \delta_{\lam,\xi}}{\pa\lam^2}-2\Big)+h^2_{\lam,\xi}-\frac{12F_{\xi}}{\lam^2}+o\Big(\frac{1}{\lam^2}\Big) \\
				=&-\frac{4 \vartheta_{\xi} (1+3\lam^2|y_{\xi}|^2)}{(1+\lam^2|y_{\xi}|^2)^2}+h^2_{\lam,\xi}-\frac{12F_{\xi}}{\lam^2}+o\Big(\frac{1}{\lam^2}\Big),
			\end{align*}
where the scalar terms are the derivatives of
\(h_{\lam ,\xi}\) in Lemma~\ref{lem:projbubble_pointwise}:
\[
h^1_{\lam ,\xi}:=\lam \pa _\lam  h_{\lam ,\xi},
\quad
h^2_{\lam ,\xi}:=\lam ^2\pa ^2_{\lam \lam }h_{\lam ,\xi}.
\]
\end{lem}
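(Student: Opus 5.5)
Both expansions will come out of the exact projection identity recorded just before Lemma~\ref{lem:projbubble_pointwise},
\[
P\delta_{\lam,\xi}=\vartheta_\xi\Big(\delta_{\lam,\xi}+\ln\frac{\lam^2}{8}\Big)+\kap(\xi)H^g(\cdot,\xi)+\widehat h_{\lam,\xi}-\frac{2F_\xi}{\lam^2}+\mathcal R_{\lam,\xi},
\]
by applying $L=\lam\pa_\lam$ once and then $L^2$. Neither $H^g(\cdot,\xi)$ nor $F_\xi$ depends on $\lam$. Hence $L$ acts only on the concentrated term, on the scalar $\widehat h_{\lam,\xi}$, on $-2F_\xi\lam^{-2}$ (giving $4F_\xi\lam^{-2}$) and on the remainder. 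By \eqref{eq:A-joint-remainder-bounds} with $j\le 2$ and $\beta=0$, $L^j\mathcal R_{\lam,\xi}=O(\lam^{-4})$ in $C^2$. The scalar comparison in \eqref{eq:A-joint-remainder-bounds} gives $L^j(\widehat h_{\lam,\xi}-h_{\lam,\xi})=O(\lam^{-3}\ln\lam)$, and both errors are $o(\lam^{-2})$. So $L$ of the scalar part becomes $h^1_{\lam,\xi}$ up to $o(\lam^{-2})$. The same holds for $L^2$, with $h^2_{\lam,\xi}$.

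For the concentrated part, write $\delta_{\lam,\xi}=\ln 8+2\ln\lam-2\ln(1+\lam^2|y_\xi|^2)$ on the chart. Then
\[
L\delta_{\lam,\xi}=2-\frac{4\lam^2|y|^2}{1+\lam^2|y|^2},\qquad L\ln\frac{\lam^2}{8}=2,
\]
so $L\big[\vartheta_\xi(\delta_{\lam,\xi}+\ln\frac{\lam^2}{8})\big]=\vartheta_\xi(L\delta_{\lam,\xi}+2)=4\vartheta_\xi/(1+\lam^2|y|^2)$. This is the first line of the first expansion. For the second line, I will compute $h^1_{\lam,\xi}$ directly from the formula for $h_{\lam,\xi}$ in Lemma~\ref{lem:projbubble_pointwise}. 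Since $L(\lam^{-2}\ln\lam)=\lam^{-2}-2\lam^{-2}\ln\lam$, the leading coefficient is $-\frac{\kap(\xi)}{|\Si|_g}\frac{\ln\lam}{\lam^2}$, and every other term is $O(\lam^{-2})$, including $4F_\xi/\lam^2$.

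For the second derivative I will use $\lam^2\pa_\lam^2=L^2-L$. Differentiating again,
\[
L^2\delta_{\lam,\xi}=-\frac{8\lam^2|y|^2}{(1+\lam^2|y|^2)^2},\qquad (L^2-L)\ln\lam^2=-2.
\]
Combined with $L\delta_{\lam,\xi}$, this gives
\[
\lam^2\pa_\lam^2\delta_{\lam,\xi}-2=-\frac{4(1+3\lam^2|y|^2)}{(1+\lam^2|y|^2)^2},
\]
which should be checked by a short algebra step. The $F_\xi$ term gives $(L^2-L)(-2\lam^{-2})=-8\lam^{-2}-4\lam^{-2}=-12\lam^{-2}$ times $F_\xi$. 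The scalar part equals $h^2_{\lam,\xi}$ up to $o(\lam^{-2})$, and $(L^2-L)\mathcal R_{\lam,\xi}=O(\lam^{-4})$. Together these give both lines of the second expansion.

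The only delicate point is that the error estimates must hold after differentiation, uniformly in $C(\Si)$. This requires derivative bounds on the remainder rather than just the undifferentiated expansion of Lemma~\ref{lem:projbubble_pointwise}. It is provided exactly by \eqref{eq:A-annular-derivative-bounds}--\eqref{eq:A-joint-remainder-bounds}, which differentiate the exact error equation \eqref{eq:A1-exact-error-equation} before inverting the Neumann Laplacian. I expect checking the $\lam$-derivatives of the scalar comparison $\widehat h-h$ (the $e^{\hat\varphi_\xi}-1$ expansion) to be the main thing to verify carefully.
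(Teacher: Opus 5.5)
Your proposal is correct and follows the route the paper sets up for this lemma: it records the exact identity for $P\delta_{\lam,\xi}$ and the differentiated bounds \eqref{eq:A-joint-remainder-bounds} so that applying $L$ and $L^2-L$ term by term gives both expansions, and your computations of $L\delta_{\lam,\xi}+2$, of $\lam^2\pa_\lam^2\delta_{\lam,\xi}-2$, of the $F_\xi$ coefficients $4$ and $-12$, and of the leading term of $h^1_{\lam,\xi}$ are all right. The scalar comparison you flag as delicate is already covered by the estimate $|L^j(\widehat h_{\lam,\xi}-h_{\lam,\xi})|\le C\lam^{-3}\ln\lam$ for $j\le3$ in \eqref{eq:A-joint-remainder-bounds}, which you cite, so nothing further is needed there.
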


\begin{lem}\label{lem:projbubble_derivative_integral}
			Let $\xi,\zeta\in\Si$ with $\xi\ne\zeta$, and let $\lam $ and $\lam _2$ be the two concentration scales. Then the following expansions hold as $\lam,\lam _2\to+\infty$:
			\begin{align*}
				\Big\langle P\delta_{\lam,\xi}, \frac{1}{\lam}\frac{\pa P\delta_{\lam,\xi}}{\pa \xi_j} \Big\rangle=& \,\frac{\kap^2( \xi)}{2\lam} \frac{\pa R^g(\xi) }{\pa \xi_j}+ O\Big(\frac{\ln \lam }{\lam ^{3} }\Big), \\
				\Big\langle P\delta_{\lam,\xi},\frac{1}{\lam _2} \frac{\pa P\delta_{\lam _2,\zeta}}{\pa \zeta_l} \Big\rangle =&\, \frac{\kap( \xi)\kap( \zeta)}{\lam _2}	\frac{\pa G^g(\xi,\zeta)}{\pa \zeta_l}	+O\Big(\frac{1}{\lam _2\lam ^2}+\frac{\ln\lam _2}{\lam _2^3}\Big), \\
				\Big\langle P\delta_{\lam,\xi}, \lam \frac{\pa P\delta_{\lam,\xi}}{\pa\lam} \Big\rangle=&\,2\kap( \xi)-\frac{\kap^2( \xi)}{|\Si |_g} \frac{\ln \lam }{\lam^2} + O\Big(\frac{1}{\lam^2}\Big), \\
				\Big\langle P\delta_{\lam,\xi}, \lam _2\frac{\pa P\delta_{\lam _2,\zeta}}{\pa \lam _2 } \Big\rangle=& \,-\frac{\kap( \xi)\kap( \zeta)}{|\Si |_g} \frac{\ln \lam _2}{\lam _2^2}+ O\Big(\frac{1}{\lam _2^2}+\frac{\ln\lam _2}{\lam ^2\lam _2^2}\Big),
			\end{align*}
			where \(j=1,\ldots,\ii(\xi)\) and \(l=1,\ldots,\ii(\zeta)\).
		\end{lem}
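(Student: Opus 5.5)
The last statement is Lemma~\ref{lem:projbubble_derivative_integral}. I would prove each of the four pairings the same way. First integrate by parts using the defining equation \eqref{projectedbubble} of the projected bubble that carries the parameter derivative. Since $P\delta_{\lam,\xi}$ has mean zero and satisfies Neumann conditions,
\[
\langle P\delta_{\lam,\xi},\partial P\delta_{\lam_2,\zeta}\rangle
=\int_\Si \partial\big(\vartheta_\zeta e^{\delta_{\lam_2,\zeta}-\varphi_\zeta}\big)\,P\delta_{\lam,\xi}\,\d v_g ,
\]
where $\partial$ is either $\lam_2\partial_{\lam_2}$ or $\lam_2^{-1}\partial_{\zeta_l}$. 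The subtracted mean disappears because $P\delta_{\lam,\xi}$ has mean zero. In the chart $y_\zeta$ the measure becomes $\vartheta_\zeta e^{\widetilde\delta_{\lam_2,0}}\,\d y$, up to the moving-chart terms coming from $\partial_\zeta y_\zeta$ and $\partial_\zeta\varphi_\zeta$. We then insert the pointwise expansions of $P\delta_{\lam,\xi}$ from Lemma~\ref{lem:projbubble_pointwise}.

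\textbf{Same-center pairings.} Here $\zeta=\xi$ and $\lam_2=\lam$. The scale derivative of the measure is
\[
\lam\partial_\lam e^{\widetilde\delta_{\lam,0}}=e^{\widetilde\delta_{\lam,0}}\Big(2-\frac{4\lam^2|y|^2}{1+\lam^2|y|^2}\Big).
\]
This has total integral zero over $\R^2$, and over the half-plane in the boundary case. Pair it with $2\vartheta\ln(\lam^2/(1+\lam^2|y|^2))$ and rescale by $z=\lam y$. The constant $\ln\lam^2$ drops out. The remaining term $-2\ln(1+|z|^2)$ gives the explicit radial integral $2\kappa(\xi)$: one uses $\kappa=8\pi$ or $4\pi$ to account for the full or half plane. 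The term $\kappa H^g$ contributes only $O(\lam^{-2})$, by the same cancellation together with Taylor expansion of $H^g$ to second order. The scalar $\frac{\kappa}{2|\Si|_g}\frac{\ln\lam}{\lam^2}$ from Lemma~\ref{lem:projbubble_pointwise} must be kept exactly. It is paired with the mass of $\lam\partial_\lam$ of the \emph{projected} density. That mass is computed from the mean-zero condition and Lemma~\ref{lem:projbubble_lamderivative__pointwise}, whose scalar part $h^1_{\lam,\xi}=-\frac{\kappa}{|\Si|_g}\frac{\ln\lam}{\lam^2}+O(\lam^{-2})$ produces the term $-\frac{\kappa^2}{|\Si|_g}\frac{\ln\lam}{\lam^2}$. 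It is cleaner to compute this as $\langle P\delta,\lam\partial_\lam P\delta\rangle=\frac12\lam\partial_\lam\|P\delta_{\lam,\xi}\|^2$ and differentiate the known Dirichlet energy expansion $\|P\delta_{\lam,\xi}\|^2=4\kappa\ln\lam+\dots$; I would use this as a cross-check.

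The location derivative $\lam^{-1}\partial_{\xi_j}$ is handled by parity. The leading odd profile $z_j/(1+|z|^2)^3$ integrates to zero against radial functions, and on the half-plane only the tangential component is used, so the integral still vanishes. What survives is the first-order Taylor term $\kappa\nabla H^g(\xi,\xi)\cdot y$ paired with the odd derivative. Since $\partial_{\xi_j}$ acts on both arguments of $H^g$ and $R^g(\xi)=H^g(\xi,\xi)$, this yields $\frac{\kappa^2}{2\lam}\partial_{\xi_j}R^g$. The next Taylor order together with the chart and $\varphi_\xi$ corrections give $O(\ln\lam/\lam^3)$.

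\textbf{Cross pairings.} Near $\zeta$ we use the expansion of $P\delta_{\lam,\xi}$ in $C^1_{loc}(\Si\setminus\{\xi\})$, namely $\kappa(\xi)G^g(\cdot,\xi)+\frac{\kappa(\xi)}{2|\Si|_g}\frac{\ln\lam}{\lam^2}+O(\lam^{-2})$. The differentiated measure at $\zeta$ has mass $O(\ln\lam_2/\lam_2^2)$, again from the projection scalar, and concentrates at $\zeta$. For the scale pairing, the constant $\kappa(\xi)G^g(\zeta,\xi)$ meets zero mass up to the projection correction. The surviving term is $-\kappa(\xi)\kappa(\zeta)|\Si|_g^{-1}\ln\lam_2/\lam_2^2$, obtained from $h^1_{\lam_2,\zeta}$. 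For the location pairing, odd parity makes the gradient term $\kappa(\xi)\kappa(\zeta)\lam_2^{-1}\partial_{\zeta_l}G^g(\zeta,\xi)$ survive, and the symmetry of $G^g$ rewrites it in the stated form. Errors of order $\lam^{-2}$ in $P\delta_{\lam,\xi}$ multiply a measure of bounded mass and produce the cross terms $\lam_2^{-1}\lam^{-2}$.

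\textbf{Main obstacle.} The hardest step is bookkeeping the exact $\ln\lam/\lam^2$ coefficients. These come from the mean subtraction in \eqref{projectedbubble} and the factor $e^{-\varphi_\xi}$. For them I would rely on the exact identity for $P\delta_{\lam,\xi}$ together with the remainder bounds \eqref{eq:A-joint-remainder-bounds}, rather than on the $O(\lam^{-2})$ pointwise expansions.
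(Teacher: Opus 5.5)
\textbf{The $\ln\lambda/\lambda^2$ coefficients in the two scale pairings are credited to a term that is not in your integral, and the term that produces them is discarded.} After your integration by parts, the derivative sits on the explicit density $\vartheta_\zeta e^{\delta_{\lambda_2,\zeta}-\varphi_\zeta}$ and $P\delta_{\lambda,\xi}$ is undifferentiated. So the scalar $h^1$ of Lemma~\ref{lem:projbubble_lamderivative__pointwise}, which belongs to $\lambda\partial_\lambda P\delta$, never appears. The total mass of the differentiated density is $\lambda\partial_\lambda\int\vartheta(|y|/r_0)e^{\widetilde\delta_{\lambda,0}}\,\d y=O(\lambda^{-2})$, and zero for a location derivative; it is not $O(\ln\lambda/\lambda^2)$. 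Hence the constant $h_{\lambda,\xi}$ you propose to keep exactly contributes only $O(\ln\lambda/\lambda^4)$. The quantity of order $\ln\lambda/\lambda^2$ is the second moment $\int|y|^2\,\lambda\partial_\lambda\big(\vartheta(|y|/r_0)e^{\widetilde\delta_{\lambda,0}}\big)\,\d y=-4\kappa\ln\lambda/\lambda^2+O(\lambda^{-2})$ on the disk or half-disk. It pairs with the quadratic Taylor term of the smooth part of $P\delta_{\lambda,\xi}$. In the same-center case that smooth part is $\kappa H^g(\cdot,\xi)$. Its linear term drops by oddness, and because $\partial_{y_2}H^g(\cdot,\xi)=0$ on the flat edge by \eqref{Hfunction}. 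Also $\Delta_yH^g(\cdot,\xi)(0)=|\Sigma|_g^{-1}$. So $\kappa H^g$ contributes $\tfrac14\cdot\tfrac{\kappa}{|\Sigma|_g}\cdot(-4\kappa)\tfrac{\ln\lambda}{\lambda^2}=-\tfrac{\kappa^2}{|\Sigma|_g}\tfrac{\ln\lambda}{\lambda^2}$. This is exactly the claimed term, which you dismiss as $O(\lambda^{-2})$. In the cross case the main term likewise comes from $\tfrac14\Delta_y\big(\kappa(\xi)G^g(\cdot,\xi)\big)(\zeta)=\kappa(\xi)/(4|\Sigma|_g)$ against the same moment at $\zeta$, using $\Delta_gG^g(\cdot,\xi)=|\Sigma|_g^{-1}$ away from $\xi$. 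It does not come from $h^1_{\lambda_2,\zeta}$, nor from the constant $\kappa(\xi)G^g(\zeta,\xi)$. Executed as written, your steps give $2\kappa+O(\lambda^{-2})$ for the third pairing and $O(\lambda_2^{-2})$ for the fourth. You have merged the bookkeeping of the two possible integrations by parts.

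Either repair works.
\begin{itemize}
\item \emph{Keep your formulation and add the Hessian terms.} To get the remainder $\ln\lambda_2/(\lambda^2\lambda_2^2)$ uniformly in the relative scales, you must then control the $\lambda^{-2}$ part of $P\delta_{\lambda,\xi}$ near $\zeta$ in $C^2$. The exact identity $\widehat h_{\lambda,\xi}-2F_\xi/\lambda^2+\mathcal R_{\lambda,\xi}$ provides this; the $C^1_{\rm loc}$ expansion does not.
\item \emph{Integrate by parts the other way}, as the paper does for the mixed pairings: $\langle P\delta_{\lambda,\xi},\partial P\delta_{\lambda_2,\zeta}\rangle=\int_\Sigma\vartheta_\xi e^{\delta_{\lambda,\xi}-\varphi_\xi}\,\partial P\delta_{\lambda_2,\zeta}\,\d v_g$. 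The mean drops since $\partial P\delta_{\lambda_2,\zeta}$ has mean zero. The density has mass $\kappa(\xi)+O(\lambda^{-2})$, and Lemmas~\ref{lem:projbubble_xi_pointwise}--\ref{lem:projbubble_lamderivative__pointwise} give $\partial P\delta_{\lambda_2,\zeta}$ near $\xi$. So only zeroth-order evaluation is needed, and $h^1_{\lambda_2,\zeta}$ really is the source. For the self-pairings the paper cites \cite[Lemma~A.4]{AHW2025-1}; the same reversed integration by parts also gives them directly, e.g.\ $\int\vartheta^2e^{\widetilde\delta_{\lambda,0}}\,4(1+\lambda^2|y|^2)^{-1}\,\d y=2\kappa+O(\lambda^{-4})$, plus $\kappa h^1_{\lambda,\xi}$.
\end{itemize}
Your cross-check via $\tfrac12\lambda\partial_\lambda\|P\delta_{\lambda,\xi}\|^2$ needs a differentiable remainder in Lemma~\ref{lem:projbubble_integral_interaction}, which its $O(\lambda^{-2})$ statement does not supply. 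Finally, in your formulation no derivative falls on the second argument of $H^g$. The factor $\tfrac12$ in the first pairing therefore rests on $\nabla_xH^g(x,\xi)|_{x=\xi}=\tfrac12\nabla R^g(\xi)$. Because the singular part $\vartheta_\xi\ln|y_\xi|$ is chart-dependent, this diagonal symmetry has to be checked rather than read off from $R^g(\xi)=H^g(\xi,\xi)$.
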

		\begin{proof}
The self-interaction estimates follow from
\cite[Lemma~A.4]{AHW2025-1}. The mixed estimates are obtained by
integration by parts using \eqref{projectedbubble} together with
Lemmas~\ref{lem:projbubble_xi_pointwise} and
\ref{lem:projbubble_lamderivative__pointwise}; the separation of the
centers proves the stated uniform error estimates without any restriction on
the relative scales.
\end{proof}

The following expansion is recalled from Lemma A.5 in \cite{AHW2025-1}.
\begin{lem}\label{lem:projbubble_integral_interaction}
			Under the assumptions of Lemma \ref{lem:projbubble_derivative_integral}, the expansions hold as $\lam,\lam _2 \to +\infty$:
			\[
			\langle P\delta_{\lam,\xi}, P\delta_{\lam,\xi}\rangle = 4\kap( \xi) \ln \lam +\kap^2( \xi) R^g(\xi)-2\kap( \xi) +\frac{\kap^2( \xi)}{|\Si |_g}\frac{\ln \lam }{\lam^2} + O\Big(\frac{1}{\lam^2}\Big)
			\]
			and
			\[
			\langle P\delta_{\lam,\xi}, P\delta_{\lam _2,\zeta}\rangle =\kap( \xi)\kap( \zeta) G^g(\xi,\zeta) +\frac{\kap( \xi)\kap( \zeta)}{2|\Si |_g}\frac{\ln \lam }{\lam^2}+\frac{\kap( \xi)\kap( \zeta)}{2|\Si |_g}\frac{\ln \lam _2}{\lam _2^2} + O\Big(\frac{1}{\lam^2}+\frac{1}{\lam _2^2}\Big).
			\]
		\end{lem}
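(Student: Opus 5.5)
Both expansions should follow by integration by parts against the defining equation \eqref{projectedbubble}, followed by insertion of the pointwise expansions of Lemma~\ref{lem:projbubble_pointwise}. For the self-interaction, the plan is to write
\[
\langle P\delta_{\lam,\xi},P\delta_{\lam,\xi}\rangle
=\int_\Si \vartheta_\xi e^{\delta_{\lam,\xi}-\varphi_\xi}P\delta_{\lam,\xi}\,\d v_g ,
\]
where the mean term drops out because $P\delta_{\lam,\xi}$ has zero mean. In the center chart the measure becomes $\vartheta_\xi e^{\tilde\delta_{\lam,0}}\,\d y$, and we substitute the exact decomposition
\[
P\delta_{\lam,\xi}=\vartheta_\xi\Big(\delta_{\lam,\xi}+\ln\frac{\lam^2}{8}\Big)+\kap(\xi)H^g(\cdot,\xi)+\widehat h_{\lam,\xi}-\frac{2F_\xi}{\lam^2}+\mathcal R_{\lam,\xi},
\]
with the remainder bounds \eqref{eq:A-joint-remainder-bounds}.

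The leading term is $\int \vartheta e^{\tilde\delta}\,2\ln\frac{\lam^2}{1+\lam^2|y|^2}\,\d y$. After the rescaling $z=\lam y$, and using $\int_{\R^2}e^{\tilde\delta_{1,0}}=8\pi$ (or $4\pi$ on the half-plane), this gives $\kap(\xi)\cdot 2\ln\lam$. It also gives $-2\int \frac{8}{(1+|z|^2)^2}\ln(1+|z|^2)\,\d z$, which equals $-2\kap(\xi)$ times the normalized mean of $\ln(1+|z|^2)$, and that mean is $1$ by the identity $\mathbb E_\alpha[\ln(1+|z|^2)]=1/(2\alpha-1)$ at $\alpha=1$. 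These two contributions should combine to produce $4\kap(\xi)\ln\lam-2\kap(\xi)$. The precise bookkeeping of the factor $2$ has to be matched against the $\ln(\lam^2)$ normalization, and the constant $-2\kap$ fixes it. The tail of the mass outside the cutoff is $O(\lam^{-2})$. The term $\kap H^g$ contributes $\kap^2 R^g(\xi)+O(\lam^{-2})$, by a Taylor expansion of $H^g(\cdot,\xi)$ at $\xi$; the linear term integrates to zero by symmetry, and in the boundary case the normal derivative of $H$ vanishes to leading order by the Neumann condition. The scalar $\widehat h_{\lam,\xi}$ times the total mass $\kap(\xi)+O(\lam^{-2})$ gives $\frac{\kap^2}{2|\Si|_g}\frac{\ln\lam}{\lam^2}+O(\lam^{-2})$. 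The remaining terms are $O(\lam^{-2})$.

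At this point we have only half of the stated $\frac{\kap^2}{|\Si|_g}\frac{\ln\lam}{\lam^2}$ coefficient. The missing half comes from the mean-subtraction term, so the bookkeeping has to be redone with care. Writing the symmetric form $\langle P\delta,P\delta\rangle=\int(-\Delta P\delta)P\delta$ with the full right-hand side, the subtracted mean is multiplied by $\int P\delta=0$, which kills it. So the second half must instead come from the $\ln\frac{\lam^2}{8}$ constant interacting with the mass defect, $\int \vartheta e^{\delta-\varphi}=\kap-O(\lam^{-2})$. The main obstacle is this second-order accounting of mass defects and of the $\ln\lam/\lam^2$ corrections. To handle it, I would first work with the exact identity $\vartheta(\delta+\ln\frac{\lam^2}8)+\kap H=\kap G-2\vartheta\ln(1+\lam^{-2}|y|^{-2})$, and integrate $\kap G$ against the bubble measure using Green's representation of $P\delta$ itself. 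This is precisely the route of \cite[Lemma~A.5]{AHW2025-1}, so the expansion can in any case be cited after checking that the $e^{-\varphi_\xi}$ factor is consistent.

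For the mixed interaction, integrate by parts with the $\zeta$-bubble equation:
\[
\langle P\delta_{\lam,\xi},P\delta_{\lam_2,\zeta}\rangle=\int_\Si \vartheta_\zeta e^{\delta_{\lam_2,\zeta}-\varphi_\zeta}P\delta_{\lam,\xi}\,\d v_g .
\]
Near $\zeta$, the off-center expansion in Lemma~\ref{lem:projbubble_pointwise} gives $P\delta_{\lam,\xi}=\kap(\xi)G^g(\cdot,\xi)+\frac{\kap(\xi)}{2|\Si|_g}\frac{\ln\lam}{\lam^2}+O(\lam^{-2})$ uniformly. Integrating this against a measure of mass $\kap(\zeta)+O(\lam_2^{-2})$ that concentrates at $\zeta$ gives $\kap(\xi)\kap(\zeta)G^g(\xi,\zeta)$ plus the first $\ln\lam/\lam^2$ term, with a Taylor error of $O(\lam_2^{-2})$ after the symmetric cancellation of the linear term. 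The second $\ln\lam_2/\lam_2^2$ term comes from the mass defect of the $\zeta$-bubble. I would obtain it by symmetry: run the same computation with the roles of $\xi$ and $\zeta$ reversed, and add the two representations using the scalar $\widehat h$ as in the self-interaction case.
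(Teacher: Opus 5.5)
The paper gives no independent proof of this lemma. It simply recalls it from \cite[Lemma~A.5]{AHW2025-1}, and addresses the $e^{-\varphi_\xi}$ issue in Section~\ref{sec:2}. So your fallback to that citation is exactly what the paper does. Your direct derivation, however, contains a genuine error, and the repairs you propose do not work.

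\textbf{The error.} You claim that $\kap(\xi)H^g(\cdot,\xi)$ contributes only $\kap^2(\xi)R^g(\xi)+O(\lam^{-2})$. In the mixed case you likewise claim that the Taylor error of $\kap(\xi)G^g(\cdot,\xi)$ at $\zeta$ is $O(\lam_2^{-2})$. Both claims fail because the second moment of the bubble measure diverges logarithmically:
\[
\int\vartheta_\xi|y|^2e^{\tilde\delta_{\lam,0}}\,\d y=\frac1{\lam^2}\int_{|z|\lesssim\lam}\frac{8|z|^2}{(1+|z|^2)^2}\,\d z=2\kap(\xi)\frac{\ln\lam}{\lam^2}+O(\lam^{-2}).
\]
This holds on the disk and on the half-disk alike; compare \eqref{eq:A-uniform-radial-moments}.

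The measure is radial, so the mixed second moment vanishes and the two diagonal moments are equal. The quadratic Taylor term is therefore $\tfrac14\Delta_yH^g(0,\xi)$ times this moment. Near the pole, \eqref{Hfunction} gives $\Delta_gH^g(\cdot,\xi)=1/|\Si|_g$. Since $\Delta_y=e^{\hat\varphi_\xi}\Delta_g$ and $\hat\varphi_\xi(0)=0$, we get $\Delta_yH^g(0,\xi)=1/|\Si|_g$. Hence the $\kap H^g$ term contributes
\[
\kap^2(\xi)R^g(\xi)+\frac{\kap^2(\xi)}{2|\Si|_g}\frac{\ln\lam}{\lam^2}+O(\lam^{-2}),
\]
which is exactly your missing half. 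The Taylor remainder is $O(\lam^{-2})$.

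The mixed case works the same way. Near $\zeta$ one has $\Delta_gG^g(\cdot,\xi)=1/|\Si|_g$. When $\zeta\in\pa\Si$, the condition $\pa_{\nu_g}G^g(\cdot,\xi)=0$ kills the normal linear term. The quadratic term of $\kap(\xi)G^g(\cdot,\xi)$ then gives $\frac{\kap(\xi)\kap(\zeta)}{2|\Si|_g}\frac{\ln\lam_2}{\lam_2^2}$. Separately, $h_{\lam,\xi}$ times the mass $\kap(\zeta)+O(\lam_2^{-2})$ gives the $\ln\lam/\lam^2$ term. A single integration by parts thus yields each full expansion; no symmetrization is needed.

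\textbf{Why your repairs fail.} The mass defect $O(\lam^{-2})$ multiplies
\[
\vartheta_\xi\bigl(\delta_{\lam,\xi}+\ln\tfrac{\lam^2}{8}\bigr)=2\vartheta_\xi\ln\frac{\lam^2}{1+\lam^2|y_\xi|^2}.
\]
This function is bounded on the cutoff annulus, because the constant $\ln\frac{\lam^2}{8}$ and the tail of $\delta_{\lam,\xi}$ cancel at order $\ln\lam$. So the mass defect only produces $O(\lam^{-2})$. In the mixed case it multiplies a bounded function, with the same result.

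Symmetrization cannot work either. The two integration-by-parts representations compute the same number. With the quadratic term dropped, they contradict each other: one contains only the $\ln\lam/\lam^2$ term, the other only the $\ln\lam_2/\lam_2^2$ term. Adding or averaging them gives half of each coefficient. That contradiction is itself the signal that a term of order $\ln\lam/\lam^2$ was discarded.

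Your leading-term bookkeeping also needs no adjustment. The identity $2\ln\frac{\lam^2}{1+|z|^2}=4\ln\lam-2\ln(1+|z|^2)$ gives $4\kap\ln\lam-2\kap$ directly, not $\kap\cdot2\ln\lam$.
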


		\begin{lem}\label{lem:Gram}
On a compact separated admissible coordinate set, write
$P_i:=P\delta_{\lam_i,\xi_i}$, $d_i:=\ii(\xi_i)$ and $\kap_i:=\kap(\xi_i)$, and set
\[
 A_i:=\frac{P_i}{\sqrt{\ln\lam_i}},\quad
 S_i:=\lam_i\pa_{\lam_i}P_i,\quad
 T_{ij}:=\lam_i^{-1}\pa_{(\xi_i)_j}P_i\quad(1\le j\le d_i).
\]
Let $(e_r)$ enumerate the normalized vectors. For sufficiently large
$\min_i\lam_i$, there are constants $c,C>0$, independent of the relative scales, such that
\begin{equation}\label{eq:normalized-frame-bounds}
 c\sum_r b_r^2\le\Big\|\sum_r b_re_r\Big\|^2
                 \le C\sum_r b_r^2\quad(b_r\in\mathbb R).
\end{equation}
Moreover, for every $i$ and admissible $j,k$,
\begin{equation}\label{eq:normalized-second-derivatives}
 \|\lam_i^2\pa_{\lam_i\lam_i}^2P_i\|
 +\|\pa_{\lam_i(\xi_i)_j}^2P_i\|
 +\|\lam_i^{-2}\pa_{(\xi_i)_j(\xi_i)_k}^2P_i\|\le C.
\end{equation}
\end{lem}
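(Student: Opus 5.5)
The plan is to work in the normalized coordinates $(a_i,\,s_i=\ln\lambda_i,\,\xi_i)$ with basis vectors $e_r\in\{A_i,S_i,T_{ij}\}$, and to reduce \eqref{eq:normalized-frame-bounds} to an explicit computation of the Gram matrix $G_{rs}=\langle e_r,e_s\rangle$. I will show $G=G_\infty+o(1)$, where $G_\infty$ is block diagonal over the centers and each block is positive definite. The needed inner products come from Lemmas~\ref{lem:projbubble_derivative_integral} and \ref{lem:projbubble_integral_interaction}, together with the center and scale versions obtained by integration by parts against \eqref{projectedbubble}, using the pointwise expansions in Lemmas~\ref{lem:projbubble_pointwise}--\ref{lem:projbubble_lamderivative__pointwise}.

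\emph{Diagonal entries.} Lemma~\ref{lem:projbubble_integral_interaction} gives $\|A_i\|^2=4\kappa_i+O(1/\ln\lambda_i)$. Next, $\|S_i\|^2$ is computed by pairing $-\Delta_g S_i$, which is the $\lambda\partial_\lambda$ derivative of the right side of \eqref{projectedbubble}, with $S_i$. After rescaling this becomes $\int 4/(1+|z|^2)\cdot\lambda\partial_\lambda e^{\tilde\delta}$ over the plane or half-plane, a fixed positive constant proportional to $\kappa_i$, plus $o(1)$. The same rescaling gives $\langle T_{ij},T_{ik}\rangle=c\,\kappa_i\delta_{jk}+o(1)$ with $c>0$, since the profiles are $z_j/(1+|z|^2)$. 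For a boundary center only the tangential component is used, so the profile is odd in $z_1$ and the half-plane computation survives.

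\emph{Off-diagonal entries at the same center.} By oddness, $\langle T_{ij},S_i\rangle$ and $\langle T_{ij},T_{ik}\rangle$ for $j\neq k$ vanish up to $o(1)$. This uses that the cutoff and the conformal factor produce only $O(\lambda^{-1})$ corrections; for boundary centers the normal derivative of $\hat\varphi$ enters only at order $\lambda^{-1}$. The first expansion in Lemma~\ref{lem:projbubble_derivative_integral} gives $\langle A_i,T_{ij}\rangle=O(\lambda_i^{-1}(\ln\lambda_i)^{-1/2})$. The third gives $\langle A_i,S_i\rangle=2\kappa_i/\sqrt{\ln\lambda_i}+o(1)\to0$. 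The amplitude normalization by $\sqrt{\ln\lambda}$ is exactly what makes these cross terms vanish.

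\emph{Different centers.} The mixed formulas in Lemmas~\ref{lem:projbubble_derivative_integral} and \ref{lem:projbubble_integral_interaction} give $\langle A_i,A_k\rangle=O((\ln\lambda_i\ln\lambda_k)^{-1/2})$, together with $\langle A_i,S_k\rangle,\ \langle A_i,T_{kl}\rangle=O(\lambda_k^{-1}+\ln\lambda_k/\lambda_k^2)$. For $\langle S_i,S_k\rangle$ and the other cross pairings, I write $\langle S_i,S_k\rangle=\int(-\Delta_g S_i)S_k$. Here $-\Delta_g S_i$ is concentrated near $\xi_i$ with bounded mass, while $S_k$ is $O(\lambda_k^{-2}\ln\lambda_k)$ there by the pointwise expansion away from $\xi_k$. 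The error bounds involve only $\lambda_i^{-2}$ or $\lambda_k^{-2}$ separately and never ratios, so they are uniform in the relative scales. Hence $G\to G_\infty$ uniformly, and both inequalities in \eqref{eq:normalized-frame-bounds} follow.

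\emph{Second derivatives.} For \eqref{eq:normalized-second-derivatives} I will use the exact expansion of $P\delta$ with remainder $\mathcal R$ from \eqref{eq:A-joint-remainder-bounds}. The $\lambda^2\partial^2_\lambda$ term follows from the formula for $h^2$ in Lemma~\ref{lem:projbubble_lamderivative__pointwise}: its main part $\vartheta(1+3\lambda^2|y|^2)/(1+\lambda^2|y|^2)^2$ has Dirichlet energy bounded after rescaling. For the mixed and center-center second derivatives I will differentiate the explicit profile $2\vartheta\ln(\lambda^2/(1+\lambda^2|y_\xi|^2))$. Each center derivative brings a factor $\lambda$, so $\lambda^{-1}\partial_\xi$ has a scale-invariant profile, and the contributions from $H^g$ and the cutoff are smooth and $O(1)$.

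The main obstacle is the boundary case. I must check that the half-plane profiles still give a positive-definite block, and that the $O(\lambda^{-1})$ curvature corrections (through $\nabla\hat\varphi_\xi(0)=(0,-2k_g)$) do not couple $S_i$ with $A_i$ at order one. I will handle this by checking that every such correction enters with an extra factor $\lambda_i^{-1}$ in the rescaled integrals.
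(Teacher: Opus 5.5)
Your proposal is correct and follows essentially the paper's route: the Gram matrix of the normalized frame converges, uniformly in the relative scales, to a block-diagonal matrix. The same-center blocks come from the rescaled profiles $4/(1+|z|^2)$ and $4z\cdot v_{ij}/(1+|z|^2)$ (your integration by parts against \eqref{projectedbubble} is just the weak form of the paper's Dirichlet-integral computation), cross-center pairings vanish by localization, and \eqref{eq:normalized-second-derivatives} follows by differentiating the exact projected-bubble expansion. One small correction: for general admissible center coordinates the center block is $\frac{4\kappa_i}{3}\,g_{\xi_i}(\partial_{(\xi_i)_j},\partial_{(\xi_i)_k})$ rather than a multiple of $\delta_{jk}$, so the off-diagonal center pairings need not vanish by oddness; this block is still uniformly positive definite, so your conclusion is unaffected.
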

\begin{proof}
The amplitude pairings follow from Lemma~\ref{lem:projbubble_integral_interaction}.
For the other same-center pairings, set $z=\lam_i y_{\xi_i}$ and
$v_{ij}:=(\d y_{\xi_i})_{\xi_i}\pa_{(\xi_i)_j}$.
Differentiating the exact projected-bubble expansion and using
\eqref{eq:A-joint-remainder-bounds}, the normalized scale and center
profiles converge in Dirichlet norm to
\[
 s(z)=\frac4{1+|z|^2},\quad
 t_{ij}(z)=\frac{4z\cdot v_{ij}}{1+|z|^2}.
\]
The variable chart coefficients contribute
$O(\lam_i^{-1}\sqrt{\ln\lam_i})$; the smooth and cutoff errors also tend to zero.
Let $D_i=\mathbb R^2$ for an interior center and $D_i=\mathbb R^2_+$
for a boundary center. Direct integration yields
\[
 \int_{D_i}|\nabla s|^2\,\d z=\frac{4\kap_i}{3},\quad
 \int_{D_i}\nabla s\cdot\nabla t_{ij}\,\d z=0,\quad
 \int_{D_i}\nabla t_{ij}\cdot\nabla t_{ik}\,\d z
 =\frac{4\kap_i}{3}\,v_{ij}\cdot v_{ik}.
\]
For tangential vectors the half-plane integrals are half the plane integrals.
The matrix with entries
$v_{ij}\cdot v_{ik}=g_{\xi_i}(\pa_{(\xi_i)_j},\pa_{(\xi_i)_k})$
is uniformly positive definite. The remaining same-center pairings
are $o(1)$ by Lemma~\ref{lem:projbubble_derivative_integral} after amplitude normalization.
Every normalized vector has bounded norm, and its energy outside a fixed
neighborhood of its center tends to zero. By localization,
every different-center pairing is $o(1)$, independently of the relative scales.
Together with $\|A_i\|^2=4\kap_i+o(1)$, this proves \eqref{eq:normalized-frame-bounds}.

For \eqref{eq:normalized-second-derivatives}, differentiate the same exact expansion before
rescaling. The leading profiles are linear combinations of
\[
 \frac{1+3|z|^2}{(1+|z|^2)^2},\quad
 \frac{z_j}{(1+|z|^2)^2},\quad
 \frac1{1+|z|^2},\quad
 \frac{z_jz_k}{(1+|z|^2)^2},
\]
whose gradients are square integrable. Uniform estimates for the charts and
\eqref{eq:A-joint-remainder-bounds} yield the required estimates for the remaining terms.
\end{proof}

The following expansion is recalled from Lemma A.9 in \cite{AHW2025-1}.
\begin{lem}\label{lem:A.7}
			For    $u_0= \sum_{i}\al_i P\delta_{\lam_i,\xi_i}$ and
			$w\in E_{\lam,\xi }^{p,q}$, we have
\begin{align*}
\int_\Si  Ve^{u_0}w\,\d v_g
=&\,O\Big(\sum_i\lam _i^{4\al _i-2}\Big[
 |\al _i-1|+
 \frac{|(\nabla_x\mathcal F_\xi^i)(\xi_i)|
             +|\mathfrak f_2(\xi_i)|}{\lam _i}\\
&\,+\frac1{\lam _i}\sum_{j\ne i}
       (|\al _j-1|+\lam _j^{-2})
 +\frac{\ln^{3/2}\lam _i}{\lam _i^2}\Big]\|w\|\Big)
 +O(\|w\|),
\end{align*}
			where $\cF_{\xi}^i$ and $\mathfrak{f}_2$ are defined in  \eqref{eq:def_F}  and \eqref{def:frak_f2}, respectively. Here $\nabla_x\cF_\xi^i$ denotes the spatial gradient.
		\end{lem}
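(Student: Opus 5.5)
We plan to split $\Si$ into the cores $U_{r_0}(\xi_i)$, $i=1,\ldots,p+q$, and their complement, and to estimate the contribution of each piece separately. On $\Si\setminus\bigcup_iU_{r_0}(\xi_i)$, the expansion in Lemma~\ref{lem:projbubble_pointwise} gives $u_0=\sum_i\al_i\kap(\xi_i)G^g(\cdot,\xi_i)+O(1)$, which is bounded uniformly there. Hence $Ve^{u_0}\le C$ on this set. Since $w$ has mean zero, H\"older and Poincar\'e inequalities bound this contribution by $C\|w\|$, which is the final $O(\|w\|)$ term.

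On the core $U_{r_0}(\xi_i)$ we will use the exact factorization \eqref{eq:A-exact-local-mass-factor} and rescale by $z=\lam_iy$. This writes the density as
\[
\lam_i^{4\al_i-2}\,\frac{\lam_i^2}{(1+\lam_i^2|y|^2)^2}\,K_i(z)\,\mathcal A_i(y)\,e^{S+q_i(y)}\,\d y,\quad K_i(z)=(1+|z|^2)^{-2(\al_i-1)}.
\]
We then Taylor expand the smooth factor at $y=0$:
\[
\mathcal A_i(y)e^{S+q_i}=e^{S}\Big(\mathcal A_i(0)+\nabla\mathcal A_i(0)\cdot y+O(|y|^2)+O(\lam^{-2})\Big).
\]
Here $\mathcal A_i(0)=(\cF^i_\xi\rg^i_\xi)(\xi_i)$. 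The gradient $\nabla\mathcal A_i(0)$ consists of the spatial gradient of $\cF^i_\xi\rg^i_\xi$. At a boundary center it also contains the normal contribution of $e^{\hat\varphi_{\xi_i}}$, since $\nabla\hat\varphi_{\xi_i}(0)=(0,-2k_g)$, combined with the Neumann data of the Green functions. Together these produce exactly $\mathfrak f_2(\xi_i)$. The factor $\rg^i_\xi$ differs from $1$, with gradient of size $O(\sum_{j\ne i}|\al_j-1|)$, and the $q_i$ contribution is $O(\lam_j^{-2})$. These account for the term $\lam_i^{-1}\sum_{j\ne i}(|\al_j-1|+\lam_j^{-2})$.

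The cancellations come from the orthogonality \eqref{orthogonality}, differentiated through \eqref{projectedbubble}. They give $\int w\,\d\nu_i=0$, which removes the constant term against the pure bubble weight. Tangential center orthogonality gives $\int w\,\lam_i^2y_j(1+\lam_i^2|y|^2)^{-1}\,\d\nu_i=0$, which removes the tangential part of the linear term. What survives is the following:
\begin{enumerate}[label=\textup{(\alph*)}]
\item \emph{The amplitude defect $K_i-1$.} It is bounded by $C|\al_i-1|\ln(1+|z|^2)(1+|z|^2)^{c|\al_i-1|}$. By weighted H\"older and \eqref{eq:A-uniform-weighted-Sobolev} it contributes $C|\al_i-1|\,\|w\|$.
\item \emph{The linear term paired with the amplitude-modified weight.} Its size is $(|\nabla_x\cF^i_\xi(\xi_i)|+|\mathfrak f_2(\xi_i)|)\lam_i^{-1}$ times a bounded weighted integral of $|z||w|$.
\item \emph{The quadratic remainder.} It contributes $\lam_i^{-2}\int_{|z|\lesssim\lam_i}|z|^2(1+|z|^2)^{-2}|w|\,\d z$.
\end{enumerate}
The factor $e^S=1+O(t e^{-2t})$ and the $O(\lam^{-2})$ pieces are absorbed into these terms.

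The main obstacle is the sharp logarithmic power in the quadratic term. The weight $|z|^2(1+|z|^2)^{-2}$ is not integrable on $\R^2$, so the bubble-weighted inequality \eqref{eq:A-uniform-weighted-Sobolev} cannot be applied directly. We will first try Cauchy--Schwarz, splitting the weight as $(1+|z|^2)^{-1}$ times $|w|(1+|z|^2)^{-1}|z|^2$. The first factor has $L^2$ mass of order $\ln\lam_i$ over $|z|\lesssim\lam_i$, while $\int|w|^2(1+|z|^2)^{-2}\,\d z\le C\|w\|^2\ln\lam_i$ by the logarithmic Sobolev/Hardy bound for $H^1$ functions on the disk. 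This gives the factor $\ln^{3/2}\lam_i/\lam_i^2$. The same splitting must also be checked on the half-disk for boundary centers, using the even reflection across $\pa\R_+^2$. Finally, summing over the $p+q$ cores and the exterior region yields the stated bound.
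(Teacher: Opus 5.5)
Your overall architecture is sound: cores plus exterior, the exact factorization \eqref{eq:A-exact-local-mass-factor}, Taylor expansion of the smooth factor, $\int_\Si w\,\d\nu_i=0$ against the constant term, and the three surviving pieces. The paper gives no proof of its own here; it only recalls the lemma from \cite[Lemma~A.9]{AHW2025-1}. Your exterior bound and items (a), (b) are fine.

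The gap is in the step you yourself call the main obstacle: as written, the Cauchy--Schwarz argument for (c) does not produce $\ln^{3/2}\lam_i$.
\begin{itemize}
\item The factor $(1+|z|^2)^{-1}$ has $\int_{\R^2}(1+|z|^2)^{-2}\,\d z=\pi$, not a mass of order $\ln\lam_i$.
\item The companion factor $|z|^2(1+|z|^2)^{-1}|w|$ has square integral comparable to $\int_{|z|<\lam_i r_0}|w|^2\,\d z=O(\lam_i^2\|w\|^2)$.
\item So this split yields only $C\lam_i^{-1}\|w\|$ after the prefactor $\lam_i^{-2}$, i.e.\ a bare $\lam_i^{-1}$ in the bracket. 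That would ruin the later uses of the lemma, e.g.\ the pure-interior fiber estimate \eqref{eq:interior-bootstrap-fiber}, which needs exactly the $\ln^{3/2}\lam_i/\lam_i^2$ term.
\item The bound $\int|w|^2(1+|z|^2)^{-2}\,\d z\le C\|w\|^2\ln\lam_i$ that you quote is true (even without the logarithm, by \eqref{eq:A-uniform-weighted-Sobolev}), but it matches no factor of your split.
\item The two sizes you state would in any case combine to $\ln\lam_i$, not $\ln^{3/2}\lam_i$.
\end{itemize}

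There are two correct ways to close this step.

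\emph{First way.} Split the integrand as $\frac{|z|}{1+|z|^2}\cdot\frac{|z|\,|w|}{1+|z|^2}$. The first factor has $L^2$ mass at most $C\ln\lam_i$ on $|z|<\lam_i r_0$. The second requires the logarithmic Hardy bound
\[
\int_{|z|<\lam_i r_0}\frac{|w|^2}{1+|z|^2}\,\d z\le C\ln^2\lam_i\,\|w\|^2 .
\]
This follows from the Poincar\'e inequality on circles together with $|\bar w(r)|\le C(1+\ln r)^{1/2}\|w\|$ for the circular means. The mean near $|z|\sim1$ is controlled by $\int_\Si w\,\d\nu_i=0$ and \eqref{eq:A-uniform-weighted-Sobolev}; reflect evenly at boundary centers. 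The power $\ln^2$ cannot be lowered: a radial tent function of $\ln|z|$ with slopes $\pm(\ln\lam_i)^{-1/2}$, supported in $1\le|z|\le\lam_i r_0$, saturates it.

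\emph{Second way (simpler).} Stay in the chart and apply H\"older with $p=1+1/\ln\lam_i$. The kernel $k(y)=\lam_i^4|y|^2(1+\lam_i^2|y|^2)^{-2}$ satisfies $\|k\|_{L^p}\le C\lam_i^{2-2/p}(p-1)^{-1/p}\le C\ln\lam_i$. Meanwhile \eqref{eq:MT-Jensen-explicit} gives $\|w\|_{L^{p'}}\le C\sqrt{p'}\,\|w\|\le C\sqrt{\ln\lam_i}\,\|w\|$. This uses only the mean-zero condition and is the natural origin of the exponent $3/2$.

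Two smaller points.
\begin{itemize}
\item In (c) you should keep $K_i$. Its deviation $K_i-1$ is absorbed into (a), since $\lam_i^{-2}|z|^2\le r_0^2$ on the core.
\item Center orthogonality removes nothing from the linear term. It pairs $w$ with $\lam_i z_k(1+|z|^2)^{-1}\,\d\nu_i$, whereas the linear Taylor term pairs $w$ with $\lam_i^{-1}z_k\,\d\nu_i$. This mismatch is precisely why $|\nabla_x\mathcal F^i_\xi(\xi_i)|/\lam_i$ survives in the statement. Your bound (b) keeps the full gradient, so the misstatement is harmless, but the claimed cancellation should be dropped.
\end{itemize}
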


		The following expansion is recalled from Lemma A.10 in \cite{AHW2025-1}.
\begin{lem}\label{lem:Ve-u-integral}
			Let $u= u_0+w\in V_{\rho}^\eta(p,q,\var)$ with $\rho>0$, where
			$u_0= \sum_{i}\al_i P\delta_{\lam_i,\xi_i}$ and $w\in E_{\lam,\xi }^{p,q}$.
			Then
			\begin{align}
				\int_{\Si } V e^u \,\d v_g =&\,\sum_{i} \frac{\kap( \xi_i)(\cF^i_{\xi} \rg^i_{\xi})(\xi_i)}{8(2\al_i-1)}\lam_i^{4\al_i-2} + \frac{\pi}{2}\sum_{i} \mathfrak{f}_2(\xi_i)\lam_i^{4\al_i-3} \nonumber\\
				&\,+
				O\Big(\Big(\sum_i\lam _i^{4\al _i-2}\Big)
\Big[\sum_j\frac{\ln\lam _j}{\lam _j^2}
+\sum_j|\al _j-1|^2+\|w\|^2\Big]+1\Big),\label{lem:A.8-1}
			\end{align}
			where $\mathfrak{f}_2$ is defined in \eqref{def:frak_f2}.	Furthermore, if $\sum_{i}|\al_i-1|\ln \lam_i=o(1)$, then \eqref{lem:A.8-1} can be refined as follows:
			\begin{align*}
				\int_{\Si } V e^u \,\d v_g =&\,\sum_{i} \frac{\kap( \xi_i)(\cF^i_{\xi} \rg^i_{\xi})(\xi_i)}{8(2\al_i-1)}\lam_i^{4\al_i-2} \Big(1+\sum_j \frac{\kap(\xi_j)}{2|\Si|_g}\frac{\ln \lam_j}{\lam_j^2}\Big)\nonumber \\
				&\,+\frac{\pi}{2} \sum_{i} \mathfrak{f}_2(\xi_i)\lam_i^{4\al_i-3}+\sum_{i}
				\frac{\kap( \xi_i)}{{16}} \mathfrak{f}_3(\xi_i)\ln \lam_i\nonumber \\
				&\,+O\Big(\sum_{i} \lam_i^{2}(
				|\al_i-1|^2 +\|w\|^2)+1\Big),
			\end{align*}
			where $\mathfrak f_3$ is defined in \eqref{def:frak_f3}.
		\end{lem}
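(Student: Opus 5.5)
The statement is the mass expansion of Lemma~\ref{lem:Ve-u-integral}. I would prove it by localizing the integral near each center, integrating the exact local factorization \eqref{eq:A-exact-local-mass-factor} in the rescaled chart, and handling $w$ through the weighted exponential estimate.

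\textbf{Localization and the $w$-dependence.} Split $\Sigma$ into the balls $U_{r_0}(\xi_i)$ and their complement. On the complement, $u_0$ is bounded by Lemma~\ref{lem:projbubble_pointwise}, so \eqref{eq:MT-Jensen-explicit} bounds that part by $O(1)$. On each ball, write $e^u=e^{u_0}(1+w+(e^w-1-w))$.
\begin{itemize}
\item The linear term $\int Ve^{u_0}w$ is controlled by Lemma~\ref{lem:A.7}. Its bracket times $\|w\|$ is absorbed, via Young's inequality, into $\lambda_i^{4\alpha_i-2}(|\alpha_i-1|^2+\|w\|^2+\ln\lambda_i/\lambda_i^2)$ plus the boundary-linear and gradient terms.
\item The remainder $\int Ve^{u_0}w^2e^{|w|}$ is bounded by $CM_i\|w\|^2$. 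This uses the weighted estimate \eqref{eq:A-uniform-weighted-Sobolev}, together with the $L^2(\d\nu_i)$ bound on $K_i=(1+|z|^2)^{-2(\alpha_i-1)}$ shown in the proof of Proposition~\ref{prop:estimates_rho_k}.
\end{itemize}

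\textbf{The main term.} Rescale $z=\lambda_i y$ and use \eqref{eq:A-exact-local-mass-factor}. This gives
\[
\lambda_i^{4\alpha_i-2}\int \mathcal A_i(z/\lambda_i)(1+|z|^2)^{-2\alpha_i}e^{S+q_i}\,\d z .
\]
Taylor-expand $\mathcal A_i$ to second order at $0$.
\begin{itemize}
\item \textbf{Zeroth order.} The integral $\int_{D_i}(1+|z|^2)^{-2\alpha_i}\,\d z$ equals $\pi/(2\alpha_i-1)$ on the plane and half of that on the half-plane. With $\mathcal A_i(0)=(\mathcal F\mathrm g)(\xi_i)$, this gives the leading mass $\kappa_i(\mathcal F\mathrm g)(\xi_i)\lambda_i^{4\alpha_i-2}/(8(2\alpha_i-1))$.
\item \textbf{First order.} The tangential odd part integrates to zero. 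On the half-plane the normal component survives, and it involves $\partial_{y_2}\ln\mathcal A_i$. Since $\partial_{y_2}\hat\varphi=-2k_g$, and since the Neumann condition removes the normal derivatives of $G$ and $H$, this produces $\frac{\pi}{2}\mathfrak f_2(\xi_i)\lambda_i^{4\alpha_i-3}$. Here $\int_{\mathbb R^2_+}z_2(1+|z|^2)^{-2}\,\d z=\pi/2$, with the $\alpha_i$-corrections being higher order.
\item \textbf{Factor $e^S$.} The scalar $S=\sum_j\alpha_j\widehat h_{\lambda_j,\xi_j}$ yields the multiplicative factor $1+\sum_j\frac{\kappa_j\ln\lambda_j}{2|\Sigma|_g\lambda_j^2}$, by \eqref{eq:A-joint-remainder-bounds}.
\end{itemize}

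\textbf{Second order and the refinement.} The quadratic Taylor term integrates against $|z|^2(1+|z|^2)^{-2\alpha_i}$ up to $|z|\sim\lambda_i$. This produces a logarithm, $\frac{\kappa_i}{16}\Delta\mathcal A_i(0)\ln\lambda_i$, multiplied by $\lambda_i^{4\alpha_i-4}$. Under the assumption $\sum|\alpha_i-1|\ln\lambda_i=o(1)$, this factor is $1+o(1)$.

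The key identification is $\Delta_y\mathcal A_i(0)=\mathfrak f_3(\xi_i)$. The conformal factor contributes $-2K_g\mathcal F\mathrm g$ through $\Delta\hat\varphi=-2Ke^{\hat\varphi}$. At a boundary center there is also the $|\nabla\hat\varphi|^2$ cross term with $\partial_\nu\ln V$, which gives $4k_g(\partial_\nu\ln V+k_g)\mathcal F\mathrm g$.

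In the coarse version without the assumption, the quadratic term is simply bounded by $\lambda_i^{4\alpha_i-2}\ln\lambda_i/\lambda_i^2$. The $q_i$ corrections and the cutoff tails are $O(\lambda_i^{4\alpha_i-4})$.

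\textbf{Main obstacle.} I expect the hardest step to be the boundary second-order computation. One must correctly track how $\hat\varphi$ couples with $V$ and the Neumann Green function on the half-plane, so as to recover exactly the coefficient in $\mathfrak f_3$. One must also confirm that the $\alpha_i$-dependent radial integrals differ from their values at $\alpha_i=1$ only by terms absorbed into $O(\lambda_i^2|\alpha_i-1|^2)$.
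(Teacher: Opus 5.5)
Your route is the paper's: integrate the exact factorization \eqref{eq:A-exact-local-mass-factor} on disjoint charts, treat $w$ with Lemma~\ref{lem:A.7} and \eqref{eq:A-uniform-weighted-Sobolev}, and Taylor-expand $\mathcal A_i$ to second order. Your identifications are correct: the leading mass, the boundary term $\frac{\pi}{2}\mathfrak f_2(\xi_i)\lambda_i^{4\alpha_i-3}$ (from $\partial_{y_2}\hat\varphi(0)=-2k_g$ and $\partial_{y_2}=-\nu_g$ at the center), and $\Delta_y\mathcal A_i(0)=\mathfrak f_3(\xi_i)$. The coarse estimate \eqref{lem:A.8-1} also goes through as you describe, because every correction of relative size $\sum_j\ln\lambda_j/\lambda_j^2$ is absorbed there.

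The gap is in the refined formula, whose error is only $O(\sum_i\lambda_i^2(|\alpha_i-1|^2+\|w\|^2)+1)$. Both $S=\sum_j\alpha_j\widehat h_{\lambda_j,\xi_j}$ and $q_i=\sum_j\alpha_j[-2\lambda_j^{-2}F_{\xi_j}+\mathcal R_{\lambda_j,\xi_j}]$ involve \emph{all} scales. Three contributions are affected: the non-logarithmic $O(\lambda_j^{-2})$ part of $\widehat h_{\lambda_j,\xi_j}$ (Lemma~\ref{lem:projbubble_pointwise}), the terms $(\alpha_j-1)\widehat h_{\lambda_j,\xi_j}$ and $S^2$ in $e^S$, and $e^{q_i}-1=O(\sum_j\lambda_j^{-2})$. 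Together they change the $i$-th local mass by $O(\lambda_i^{4\alpha_i-2}\sum_j\lambda_j^{-2})$, not by $O(\lambda_i^{4\alpha_i-4})$ as you assert. This is $O(1)$ only if $\lambda_i\lesssim\lambda_j$ for all $i,j$. Your argument uses nothing beyond $u=u_0+w$ with $w\in\mathcal E^{p,q}_{\lambda,\xi}$, and for such functions the refined formula fails in general. For instance, with two centers, $\alpha=\mathbf{1}$, $w=0$ and $\lambda_2=\lambda_1^{1/2}$, the discrepancy is generically of order $\lambda_1^2/\lambda_2^2=\lambda_1$.

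The missing ingredient is the scale comparison $\lambda_i\asymp\Lambda:=\min_j\lambda_j$, which the paper takes from Corollary~\ref{cor:lam_i_samespeed} at the start of the refined part. This is where the small-gradient condition $\|\nabla J_\rho(u)\|<\varepsilon$ in \eqref{def:Vm} is indispensable. Combine it with the scale pairings of Lemma~\ref{lem:lamexpansion}, the coarse formula \eqref{lem:A.8-1} you have already proved (so there is no circularity), and the hypothesis $\sum_i|\alpha_i-1|\ln\lambda_i=o(1)$. Together these give $\mathcal F^i_\xi(\xi_i)\lambda_i^2/(\mathcal F^j_\xi(\xi_j)\lambda_j^2)=1+o(1)$, and with this inserted your error claims become correct.

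A smaller point: knowing only that $\lambda_i^{4\alpha_i-4}=1+o(1)$ leaves an $o(\ln\lambda_i)$ error in the $\mathfrak f_3$ term, which is not $O(1)$. You need the quantitative radial-moment error $O(1+|\alpha_i-1|\ln^2\lambda_i)$, followed by $|\alpha_i-1|\ln^2\lambda_i+|\alpha_i-1|\lambda_i\le C(1+\lambda_i^2|\alpha_i-1|^2)$. The same inequality absorbs the amplitude correction to the boundary-linear term.
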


\begin{proof}
Integrate \eqref{eq:A-exact-local-mass-factor} on disjoint center
charts. Lemma~\ref{lem:A.7} and
\eqref{eq:A-uniform-weighted-Sobolev} estimate the terms containing
$w$. The integral outside the charts is $O(1)$; the common factor
$e^S$ multiplies the entire leading mass. This proves
\eqref{lem:A.8-1}; compare \cite[Lemma~A.10]{AHW2025-1}.

For the refined formula, Corollary~\ref{cor:lam_i_samespeed}
implies $\lam_i\asymp\Lam:=\min_j\lam_j$.
Write $a_i=\al_i-1$. Uniformly when
$\max_i|a_i|\ln\lam_i=o(1)$, radial integration yields
\begin{equation}\label{eq:A-uniform-radial-moments}
\begin{aligned}
 \int_{B_{r_0}^{\xi_i}}
 \frac{\lam_i^{4\al_i}|y|^2}{(1+\lam_i^2|y|^2)^{2\al_i}}\,\d y
 &=\frac{\kap_i}{4}\ln\lam_i
     +O(1+|a_i|\ln^2\lam_i),\\
 \int_{B_{r_0}^{\xi_i}}
 \frac{\lam_i^{4\al_i}|y|^3}{(1+\lam_i^2|y|^2)^{2\al_i}}\,\d y
 &=O(1),\quad \kap_i:=\kap(\xi_i).
\end{aligned}
\end{equation}
Indeed, the first radial integral is
\[
 \frac{\kap_i}{4}\lam_i^{4a_i}
 \int_0^{\lam_i r_0}\frac{r^3}{(1+r^2)^{2+2a_i}}\,\d r.
\]
Its value at $a_i=0$ is $(\kap_i/4)\ln\lam_i+O(1)$;
its first $a_i$-derivative is $O(\ln^2\lam_i)$ in the stated
region. The cubic radial integral in \eqref{eq:A-uniform-radial-moments}
is uniformly integrable. At a boundary center, the integral with the normal coordinate is
\[
 \int_{\B_{r_0}^{+}}
 \frac{\lam_i^{4\al_i}y_2}{(1+\lam_i^2|y|^2)^{2\al_i}}\,\d y
 =\frac\pi2\lam_i^{4\al_i-3}
       +O(1+|a_i|\lam_i).
\]
The last estimate follows by $z=\lam_i y$ and the differentiability
near $\al_i=1$ of the convergent integral with the normal coordinate.
Taylor expansion of the smooth factor in
\eqref{eq:A-exact-local-mass-factor}, followed by
\(|a_i|\ln^2\lam_i+|a_i|\lam_i \le C(1+\lam_i^2a_i^2),\)
proves the stated second-order terms. The smooth projection terms
contribute $O(1)$ after scale comparison. The linear and quadratic
terms in $w$ are at most
$C\sum_i\lam_i^2(a_i^2+\|w\|^2)+C$, by
Lemma~\ref{lem:A.7}. Expanding $e^S$ retains the common scalar term
and completes the refined formula.
\end{proof}

\begin{proof}[Proof of Lemma~\ref{lem:terminal-reduction}]
All center configurations remain in a fixed compact subset of
\(\Xi_{p,q}\). Recall $n=p+q$ and $d=2p+q$. Set
\(a:=\al -\mathbf1\), \(a_i=\al _i-1\), with \(t\), \(v\), and \(h(t)\) as in
\eqref{eq:log-mass-coordinates}.

\textbf{Step 1.} Coercivity on the orthogonal subspace.
From \eqref{eq:log-mass-coordinates},
\begin{equation}\label{eq:A-logmass-scales}
\ln\lam _i=t+O(1),\quad
D_{(t,v,\xi)}\ln\lam _i=O(1),\quad
D_\al \ln\lam _i=O(t),
\end{equation}
and the corresponding second derivatives satisfy analogous estimates.
For the measure \(\nu_i\) defined in Section~\ref{sec:2}, amplitude
orthogonality yields \(\int_\Si  w\,\d\nu_i=0\).
In rescaled coordinates, \(\nu_i\) is a fixed multiple of spherical
area times a cutoff. Extend across the missing cap by inversion and
harmonic extension of the trace; the trace estimate on the adjacent
annulus is scale invariant. At a boundary center first reflect evenly.
The extension preserves constants, and its Dirichlet energy is at most
a fixed multiple of the original energy. Since the cutoff measure has mass bounded
below and is bounded above by a fixed multiple of spherical area,
the condition \(\int_\Si  w\,\d\nu_i=0\) and the spherical Poincar\'e
inequality estimate the spherical mean of the extension by its
Dirichlet norm. The spherical Moser--Trudinger inequality implies
\[
 \int_\Si e^{b|w|}\,\d\nu_i
 \le C\exp(Cb^2\|w\|^2),\quad b\ge0,
 \quad \|w\|_{L^r(\nu_i)}\le C\sqrt r\,\|w\|\quad(r\ge2).
\]
H\"older's inequality now yields
\[
 \int_\Si |w|^r e^{b|w|}\,\d\nu_i
 \le\Big(\int_\Si |w|^{2r}\,\d\nu_i\Big)^{1/2}
      \Big(\int_\Si e^{2b|w|}\,\d\nu_i\Big)^{1/2}
 \le C_{r,b}\|w\|^r
\]
for $\|w\|\le\epsilon_0$, proving
\eqref{eq:A-uniform-weighted-Sobolev}.
For arbitrary $w\in H^1(\Si)$, the corresponding $L^r$ estimate is
\[
 \|w\|_{L^r(\nu_i)}\le C\sqrt r\Big(
 \|\nabla w\|_{L^2(\Si)}+
\Big|\int_\Si w\,\d\si_i\Big|\Big),\quad
 \si_i:=\nu_i/\nu_i(\Si),\quad r\ge2.
\]

We next prove uniform coercivity. For a sequence with \(\|w_n\|=1\),
the extensions above are bounded in spherical \(H^1\) and converge,
after passage to a subsequence, strongly in \(L^2\). The orthogonality
conditions pass to the limit: the cutoff derivative terms vanish on the
annulus, and the amplitude, scale, and center conditions remove the
constant and first spherical harmonics. At the boundary, even reflection
also excludes the normal odd harmonic. The next eigenvalue is \(6\).
In stereographic coordinates,
$8(1+|z|^2)^{-2}\,\d z=2\,\d A_{\mathbb S^2}$.
For each limiting profile $W_i$, the spectral inequality is therefore
\[
 2\int_{\mathbb S^2}W_i^2\,\d A
 \le\frac13\int_{\mathbb S^2}|\nabla W_i|^2\,\d A.
\]
For a boundary profile, divide both integrals of the even extension
by two. To compare the limiting energy, first restrict to fixed
rescaled disks (or half-disks), which correspond to disjoint subsets
of $\Si$. Weak lower semicontinuity and then increasing the rescaled
radii show that the sum of the limiting energy is at most
$\lim_n\|w_n\|^2=1$. Consequently,
\[
 \limsup_n\sum_i\int_\Si w_n^2\,\d\nu_i\le\frac13.
\]
Strong \(L^2\) convergence, or the uniform \(L^r\) estimate with \(r>2\),
excludes loss of weighted mass at the cap. A contradiction argument thus
yields, for any fixed \(c<2/3\) and all sufficiently large scales,
\[
\|w\|^2-\sum_i\int_\Si  w^2\,\d\nu_i\ge c\|w\|^2.
\]
The exact local factorization
\eqref{eq:A-exact-local-mass-factor}, \eqref{eq:A-uniform-weighted-Sobolev},
and \eqref{eq:A-logmass-scales} show that the Hessian of \(J_\rho\) on the
subspace $\mathcal E_{\lam,\xi}^{p,q}$ differs from the preceding quadratic form by
\(O(|\mu|+|v|+|a|+e^{-t})\|w\|^2.\)
Thus, after decreasing \(\delta,\mu_0\) and increasing \(T\), it is
uniformly positive. Positivity persists in a fixed small ball of
$\mathcal E_{\lam,\xi}^{p,q}$.

\textbf{Step 2.} The minimizing correction.
Fix a reference configuration \(\xi^0\) and disjoint coordinate
neighborhoods with smaller concentric disks or half-disks.
A cutoff applied to the differences between the local coordinate maps yields
a smooth family of boundary-preserving diffeomorphisms
\(\Phi_\xi:\Si\to\Si\), equal to the identity outside the larger neighborhoods,
such that
\[
 \Phi_\xi=y_{\xi_i}^{-1}\circ y_{\xi_i^0}
 \quad\text{on each smaller neighborhood},\quad
 \Phi_{\xi^0}=\operatorname{id}.
\]
For \(\xi\) near \(\xi^0\), all required derivatives are bounded
independently of the scales.

We first work in \(\mathcal H:=H^1(\Si )/\mathbb R\) and return to mean-zero representatives at the end. The pulled-back Dirichlet form is
\[
 a_\xi(v,z):=
 \int_\Si
 \langle\nabla v,\nabla z\rangle_{\Phi_\xi^*g}
 \,\d v_{\Phi_\xi^*g}.
\]
The forms \(a_\xi\) are uniformly elliptic and depend \(C^2\)-smoothly on
\(\xi\) in the operator norm from \(\mathcal H\) to \(\mathcal H^*\).
On each smaller disk or half-disk they equal the Euclidean Dirichlet form in the fixed chart, by conformal invariance. On the remaining region,
the operator estimates follow by differentiating the smooth uniformly
bounded coefficients. Only the coefficients of
the forms are differentiated; no composition operator on \(H^1\) is differentiated.

Let \(e_r(\theta)\) be the normalized amplitude, scale, and admissible
center frame of Lemma~\ref{lem:Gram}, and set
\(\widehat e_r(\theta):=[e_r(\theta)\circ\Phi_\xi]\in\mathcal H\).
For \(\zeta=(\xi,t,v)\), the exact projected-bubble formulas yield
\begin{equation}\label{eq:A-pulledback-frame-bounds}
 \|D_{\zeta}^j\widehat e_r(\theta)\|_{\mathcal H}\le C,
 \quad j=0,1,2.
\end{equation}
Here the center motion has been removed before differentiation. For
example, on the fixed smaller disk or half-disk, the pulled-back normalized center profile has the form
\[
 -\frac{4z\cdot A_{ij}(\xi_i,z/\lam _i)}{1+|z|^2}
 +O_{\mathcal H}(\lam _i^{-1}),
 \quad
 A_{ij}(\xi_i,y):=
 (\pa _{(\xi_i)_j}y_{\xi_i})(y_{\xi_i}^{-1}(y)),
 \quad z=\lam _i y.
\]
The coefficients \(A_{ij}\) are smooth, and all derivatives of \(\ln\lam _i\) with respect to \(\zeta\) through order two are bounded.
The scale profiles are handled in the same way. The spatially constant part of the amplitude profile represents zero in \(\mathcal H\), and
the annular and global smooth terms satisfy
\eqref{eq:A-pulledback-frame-bounds}.

Define
\[
 G_{rs}(\theta):=a_\xi(\widehat e_r,\widehat e_s),
 \quad
 \mathcal P_\theta z
 :=z-\sum_{r,s}\widehat e_r
       (G^{-1})_{rs}a_\xi(z,\widehat e_s).
\]
The inverse $G^{-1}$ is uniformly bounded by Lemma~\ref{lem:Gram}.
Differentiating the formula for $\mathcal P_\theta$ and using
\eqref{eq:A-pulledback-frame-bounds}, we obtain
\[
\|D_{\zeta}^j\mathcal P_\theta\|_{\mathcal L(\mathcal H)}\le C,
 \quad j=0,1,2.
\]
Amplitude differentiation uses instead
\(D_\al \ln\lam _i=O(t)\). The same frame and Gram formulas yield
\[
 \|D_\al \mathcal P_\theta\|
 +\|D_{\zeta}D_\al \mathcal P_\theta\|\le Ct,
 \quad
 \|D_\al ^2\mathcal P_\theta\|\le Ct^2,
\]
The same estimates hold for the local isomorphism $T_\theta$
below and its inverse.
Let \(\mathcal E^0\) be the image of \(\mathcal P_\theta\) at a fixed
reference parameter. The image at any nearby parameter is the pullback
of $\mathcal E_{\lam,\xi}^{p,q}$. After shrinking the
parameter neighborhood,
\(T_\theta:=\mathcal P_\theta|_{\mathcal E^0} :\mathcal E^0\to \operatorname{im}\mathcal P_\theta\)
is an isomorphism. The maps \(T_\theta\), their inverses, and their first two
\(\zeta\)-derivatives have uniformly bounded operator norms. For
\(\eta\in\mathcal E^0\), let \(z\) be the representative of
\(T_\theta\eta\) with zero mean for the reference metric, and set
\[
 w=z\circ\Phi_\xi^{-1}
   -\frac1{|\Si |_g}\int_\Si  z\,\d v_{\Phi_\xi^*g}.
\]
Then \([w\circ\Phi_\xi]=T_\theta\eta\). The subtracted mean and its
first two parameter derivatives are bounded linear functionals of \(z\).
We include the subtracted mean in the logarithmic integral and differentiate the
resulting functional on \(\mathcal E^0\).

On the fixed space \(\mathcal E^0\), write the functional increment as
\[
 J_\rho(u_0+w)-J_\rho(u_0)
 =f_\theta(\eta)+\tfrac12 C_\theta[\eta,\eta]+N_\theta(\eta),
 \quad C_\theta\ge cI,
\]
where $\theta=(\al ,v,\xi,t)$ and $w$ is reconstructed from $\eta$ as above.
We estimate the parameter derivatives of \(f_\theta\), \(C_\theta\), and \(N_\theta\) on \(\mathcal E^0\).
Set $e:=|a|+e^{-t}$, $\zeta=(\xi,t,v)$, and set
\[
 m_\theta:=\frac{Ve^{u_0}\,\d v_g}{\int_\Si  Ve^{u_0}\,\d v_g},
 \quad p_i:=\frac{\kap _i e^{v_i}}{\sum_j\kap _j e^{v_j}},
 \quad R_\theta:=m_\theta-\sum_i p_i\si _i.
\]
The signed measure $R_\theta$ has total mass zero. If $z_\theta$ represents $T_\theta\eta$ and has zero mean with
respect to the reference metric, then
amplitude orthogonality and the mean-zero normalization yield the identity
\[
 f_\theta(\eta)
 =-\rho\int_\Si  z_\theta\,\d(\Phi_\xi^*R_\theta).
\]
The subtracted mean cancels by the zero total mass of $R_\theta$.

On a fixed smaller disk or half-disk, apply \eqref{eq:A-exact-local-mass-factor}. In the
coordinates \(z=\lam _i y\), the error in freezing the smooth
coefficient at the center consists of the exponent change, the variation
of \(\mathcal A_i\), and the factor \(e^{S+q_i}\). Uniformly for \(j\le2\),
\begin{equation}\label{eq:A-local-coefficient-bounds}
 \begin{split}
 &(1+|z|^2)^{-2a_i}-1
       =O_{L^r((1+|z|^2)^{-2}\d z)}(|a_i|),\\
 &D_{\zeta}^j[\mathcal A_i(y)-\mathcal A_i(0)]=O(|y|),\\
 &D_{\zeta}^j q_i=O(e^{-2t}),\\
 &\int_{\R^2}|y|\,|D_{\zeta}^j K_{\lam _i}(y)|\,\d y=O(e^{-t}),
 \quad K_\lam (y)=\frac{\lam ^2}{(1+\lam ^2|y|^2)^2}.
 \end{split}
\end{equation}
Here any one sufficiently small fixed $r>1$ may be used in the first
line; amplitudes are restricted to their fixed small neighborhood of
$\mathbf1$. All $\zeta$-derivatives of $\ln\lam _i$ through order two
are bounded. The scalar $S$ cancels when the masses on $U_{r_0}(\xi_i)$ are normalized;
one may also include $S$ in the error since
$D_{\zeta}^jS=O(te^{-2t})=O(e^{-t})$. The normalized integral outside the union of the smaller coordinate neighborhoods, and its \(\zeta\)-derivatives through order two, are $O(e^{-2t})$ in the dual norms of the fixed Sobolev spaces. The same estimates hold after differentiating the
cutoffs, whose derivatives are supported away from the centers.
The first two derivatives of \(p_i\) are bounded. Normalization by the
total mass, using \eqref{eq:A-exact-local-mass-factor}, preserves the
orders in \eqref{eq:A-local-coefficient-bounds} for
\(\Phi_\xi^*R_\theta\).

For \(D_{\zeta}^2q_i\), differentiate
\eqref{eq:A1-exact-error-equation} before applying the Neumann inverse.
Equation~\eqref{eq:A-annular-derivative-bounds} and Schauder estimates
yield the joint estimates for the center and spatial derivatives of
\(\mathcal R_{\lam ,\xi}\circ\Phi_\xi\).
The same argument applies to \(F_\xi\) by
\eqref{Fxi}; the remaining terms in \(q_i\) are evaluated
away from their poles.

We estimate the pairing with \(\Phi_\xi^*R_\theta\) in weighted norms,
not its norm on the full \(H^1\) space before pullback. The estimates for
\(T_\theta\) yield \(\|D_{\zeta}^jz_\theta\|_{H^1/\mathbb R}\le C\|\eta\|\)
for \(j\le2\). Moreover,
\[
 \int_\Si  z_\theta\,\d(\Phi_\xi^*\si _i)
 =\frac1{|\Si |_g}\int_\Si  z_\theta\,\d v_{\Phi_\xi^*g}.
\]
Differentiate the preceding mean identity once and twice with respect to \(\zeta\). On each fixed smaller disk or half-disk, every derivative of the normalized kernel is the kernel times a bounded rational function of $\lam _i y$; cutoff errors are bounded on the
fixed annulus. The generalized weighted estimate from Step 1,
applied to $z_\theta,D_{\zeta}z_\theta,D_{\zeta}^2z_\theta$, yields
\[
 \|D_{\zeta}^j z_\theta\|_{L^r(\Phi_\xi^*\nu_i)}
       \le C_r\|\eta\|,\quad j\le2.
\]
Differentiating the pairing with \(\Phi_\xi^*R_\theta\) twice and using
the weighted estimates yields \(\|D_{\zeta}^j f_\theta\|\le Ce\) for \(j\le2\).

The quadratic term is the pulled-back Dirichlet form minus $\rho$
times covariance with respect to $m_\theta$. The form has bounded
first two derivatives by construction. The same kernel and test-function
estimates imply $\|D_{\zeta}^jC_\theta\|\le C$, $j\le2$. For higher derivatives in \(\eta\), differentiate the exact normalized
exponential integral, as in the proof of
\eqref{eq:R-derivative-estimates-wbar}. By H\"older's inequality and
\eqref{eq:A-uniform-weighted-Sobolev}, the estimates remain uniform after up to
two \(\zeta\)-derivatives. Taylor's integral remainder then yields
\(\|D_\eta N\|\le C\|\eta\|^2\) and
\(\|D_\eta^2N\|\le C\|\eta\|\), also after up to two \(\zeta\)-derivatives.

Differentiating the radial factor in an amplitude variable introduces
$\ln(1+|z|^2)$, each fixed power of which is integrable in the same weighted norms; an amplitude derivative of a scale introduces at
most a factor $Ct$. Together with the amplitude estimates for
$T_\theta$, differentiation yields
\[
\begin{aligned}
 \|D_\al f_\theta\|+\|D_{\zeta}D_\al f_\theta\|
       &\le C(1+te),
 &\quad \|D_\al^2f_\theta\|&\le C(t+t^2e),\\
 \|D_\al C_\theta\|+\|D_{\zeta}D_\al C_\theta\|
       &\le Ct,
 &\quad \|D_\al^2C_\theta\|&\le Ct^2.
\end{aligned}
\]
with the corresponding Taylor remainder estimates. Thus the projected
Euler equation and its first two derivatives in the fixed space yield
\begin{equation}\label{eq:A-pulledback-correction-bounds}
 \|\bar\eta\|+\|D_{\zeta}\bar\eta\|+\|D_{\zeta}^2\bar\eta\|\le Ce,
 \quad T_\theta\bar\eta=[\bar w\circ\Phi_\xi].
\end{equation}
Uniform norm equivalence yields \(\|\bar w\|\le Ce\).
Uniqueness implies that the local solutions agree on the intersections of their
parameter domains.

By the preceding estimates and \(C_\theta\ge cI\), the projected equation admits a contraction formulation. At its solution the \(\eta\)-derivative
vanishes, so the chain rule and
\eqref{eq:A-pulledback-correction-bounds} yield
\begin{equation}\label{eq:A-fiber-value-derivatives}
 |D_{\zeta}^j(j_\rho-J_\rho(u_0))|
 \le C(|a|+e^{-t})^2,\quad j=0,1,2,
\end{equation}
and
\begin{equation}\label{eq:A-fiber-fast-derivatives}
\begin{gathered}
 |D_\al (j_\rho-J_\rho(u_0))|
 +|D_\al  D_{(\xi,t,v)}(j_\rho-J_\rho(u_0))|
 \le C(|a|+e^{-t})(1+t|a|+te^{-t}),\\
 \|D_\al ^2(j_\rho-J_\rho(u_0))\|
 \le C(1+t|a|+te^{-t})^2.
\end{gathered}
\end{equation}

\textbf{Step 3.} Expansion before solving the \((\al ,v)\)-equations.
Substitution of \eqref{eq:log-mass-coordinates} into the Dirichlet and mass
terms yields
\begin{equation}\label{eq:A-logmass-leading-energy}
\begin{aligned}
\mathcal E_\rho(\al ,v,\xi,t)
={}&
\sum_i
\frac{\kap _i\al _i^2}{2\al _i-1}
\Big[
2t+v_i-\ln\Big((\cF_\xi^i\rg_\xi^i)(\xi_i)\Big)
+\ln(2\al _i-1)
\Big]\\
&-\sum_i\kap _i\al _i^2
+\frac12\sum_i\al _i^2\kap _i^2R^g(\xi_i)
+\frac12\sum_i\sum_{j\ne i}
\al _i\al _j\kap _i\kap _jG^g(\xi_i,\xi_j)\\
&-\rho\Big[
2t+\ln\Big(\frac18\sum_i\kap _i e^{v_i}\Big)
\Big].
\end{aligned}
\end{equation}
At \((\al ,v)=(\mathbf1,0)\), set
\[
P_\rho(\xi,t)
:=
-\left\{\begin{aligned}
&\dfrac{4\pi\rho}{\rho_*}\cL_1(\xi)e^{-t}&&\text{ if }\,q>0,\\
&\dfrac{\rho}{2\rho_*}\cL_2(\xi)t e^{-2t}&&\text{ if }\,q=0.
\end{aligned}\right.
\]
The local integral expansions in Lemma~\ref{lem:Ve-u-integral}, together
with the projected-bubble estimates above, yield
\begin{equation}\label{eq:A-bare-differentiated-remainder}
\begin{gathered}
|
\pa _t^j
[J_\rho(u_0)-\mathcal E_\rho-P_\rho](\mathbf1,0,\xi,t)
|
\le(\om (t)+C|\mu|)h(t),
\quad j=0,1,2,\\
|D_\xi(J_\rho(u_0)-\mathcal E_\rho)|
+
|D_tD_\xi(J_\rho(u_0)-\mathcal E_\rho)|
\le Ch(t),\\
\|D_\xi^2(J_\rho(u_0)-\mathcal E_\rho)\|=o(1),
\end{gathered}
\end{equation}
and
\begin{equation}\label{eq:A-bare-fast-derivatives}
\begin{gathered}
|D_\al \pa _t^j(J_\rho(u_0)-\mathcal E_\rho)|
\le Cth(t),\quad
|D_v\pa _t^j(J_\rho(u_0)-\mathcal E_\rho)|
\le Ch(t),\\
\|D_{\al \al }(J_\rho(u_0)-\mathcal E_\rho)\|
\le Ct^2h(t),\quad
\|D_{\al  v}(J_\rho(u_0)-\mathcal E_\rho)\|
\le Cth(t),\\
\|D_{vv}(J_\rho(u_0)-\mathcal E_\rho)\|
\le Ch(t),
\end{gathered}
\end{equation}
for \(j\le2\), with one center derivative allowed in the first two estimates.

For each fixed $j\le3$, direct differentiation shows
\[
 L^jK_\lam (y)=K_\lam (y)q_j(\lam ^2|y|^2),
 \quad \sup_{s\ge0}|q_j(s)|<\infty.
\]
For \(k=0,1,2\), radial integration after \(z=\lam  y\) yields
\begin{equation}\label{eq:A-logarithmic-kernel-bounds}
\begin{aligned}
 \int_{|y|<R}|y|\,|L^jK_\lam (y)|
       [1+\ln(1+\lam ^2|y|^2)]^k\,\d y
      &\le C\lam ^{-1},\\
 \int_{|y|<R}|y|^2\,|L^jK_\lam (y)|
       [1+\ln(1+\lam ^2|y|^2)]^k\,\d y
      &\le C\lam ^{-2}(\ln\lam )^{k+1},\\
 \int_{|y|<R}|y|^3\,|L^jK_\lam (y)|
       [1+\ln(1+\lam ^2|y|^2)]^k\,\d y
      &\le C\lam ^{-2}(\ln\lam )^k.
\end{aligned}
\end{equation}
The estimates in \eqref{eq:A-logarithmic-kernel-bounds} also hold on half-disks.
On $t|a|+|v|<\delta$, the factor $(1+|z|^2)^{-2a_i}$ is uniformly
bounded for $|z|<C\lam_i$, by \eqref{eq:A-logmass-scales}.
Each amplitude derivative introduces either $\ln(1+|z|^2)$ or a factor
$O(t)$; two derivatives therefore introduce at most a factor $Ct^2$.
Together with the exact leading masses $\kap_i e^{2t+v_i}/8$,
the kernel estimates yield \eqref{eq:A-bare-fast-derivatives}
throughout this region, including the mixed scale derivatives.

For center derivatives, work in the fixed local coordinates from Step 2.
The coefficients \(\mathcal A_i\) in the fixed coordinates have uniformly bounded
joint derivatives through total order three. One center derivative may
therefore be Taylor-expanded to second spatial order before
integration, yielding estimates of order $Ch(t)$. For two center derivatives
only convergence to the center value is needed: uniformly on the
compact configuration set,
\[
 \Big|\int_{B_{2r_0}^{\xi_i}}\vartheta(|y|/r_0)K_{\lam _i}(y)
   [D_\xi^2\mathcal A_i(y)-D_\xi^2\mathcal A_i(0)]\,\d y\Big|
 \le C\lam _i^{-1}=o(1).
\]
The derivatives of the logarithms of the scales are bounded, so the same estimates for the rational kernels apply. The exact scalar mean
and the annular remainder are differentiated from their integral
and Neumann-equation formulas, respectively. Thus the estimate for
\(D_\xi^2(J_\rho(u_0)-\mathcal E_\rho)\) uses $V\in C^3$
without differentiating $\mathcal L_2$ twice.

\textbf{Step 4.} The \((\al ,v)\)-equations.
From \eqref{eq:A-logmass-leading-energy},
\begin{equation}\label{eq:A-leading-fast-Hessian}
\begin{gathered}
\mathcal E_\rho(\mathbf1,0,\xi,t)
=
c(\rho)-\frac12\FpqV(\xi)-2(\rho-\rho_*)t,\\
D_\al \mathcal E_\rho(\mathbf1,v,\xi,t)=0,
\quad
D_v\mathcal E_\rho(\mathbf1,0,\xi,t)=0,\\
D_{\al \al }\mathcal E_\rho
=
4t\,\operatorname{diag}(\kap _i)+O(1),
\quad
D_{\al  v}\mathcal E_\rho(\mathbf1,0,\xi,t)=0,\\
D_{vv}\mathcal E_\rho(\mathbf1,0,\xi,t)[z,z]
=
-\frac{\rho}{\rho_*}\sum_i\kap _i z_i^2,
\quad
\sum_i\kap _i z_i=0.
\end{gathered}
\end{equation}
Set $b_i=\sqrt t\,a_i$ and
$E_v=\{v\in\mathbb R^n:\sum_i\kap _i v_i=0\}$, equipped with its
Euclidean norm. Write $z=(b,v)$ and
\[
 \Omega_t:=\{(b,v)\in\mathbb R^n\times E_v:
                 \sqrt t\,|b|+|v|<\delta\},\quad
 G_{\xi,t,\rho}(z):=D_zj_\rho.
\]
The set $\Omega_t$ is convex. Let $M_\rho$ be the self-adjoint operator defined by the quadratic form
\[
 \langle M_\rho(b,v),(b,v)\rangle
 =4\sum_i\kap _i b_i^2
   -\frac{\rho}{\rho_*}\sum_i\kap _i v_i^2,
 \quad v\in E_v.
\]
The inverse $M_\rho^{-1}$ is uniformly bounded for $|\mu|<\mu_0$.
Equations \eqref{eq:A-leading-fast-Hessian},
\eqref{eq:A-fiber-fast-derivatives}, and
\eqref{eq:A-bare-fast-derivatives} imply, after decreasing $\delta$
and increasing $T$, that
\[
 \sup_{z\in\Omega_t}
 \|M_\rho^{-1}(D_zG_{\xi,t,\rho}(z)-M_\rho)\|\le\tfrac12,
 \quad
 |M_\rho^{-1}G_{\xi,t,\rho}(0)|
 \le C\Big(\sqrt t\,h(t)+\frac{e^{-t}}{\sqrt t}\Big).
\]
Set
\[
 r_t:=2C\Big(\sqrt t\,h(t)+\frac{e^{-t}}{\sqrt t}\Big).
\]
Since $(\sqrt t+1)r_t\to0$, the closed Euclidean ball
$\overline B_{r_t}(0)$ is contained in $\Omega_t$ for sufficiently
large $t$. The map
\(z\mapsto z-M_\rho^{-1}G_{\xi,t,\rho}(z)\)
is a contraction of \(\overline B_{r_t}(0)\) into itself.
The map therefore has a fixed point. If $z,z'\in\Omega_t$ are any two
zeros of $G_{\xi,t,\rho}$, the fundamental theorem of calculus on the
segment joining \(z\) and \(z'\) yields
$|z-z'|\le\tfrac12|z-z'|$, hence $z=z'$.
Thus the zero is unique in \(\Omega_t\). The uniform Hessian estimate also preserves the numbers of positive and negative eigenvalues of $M_\rho$, namely $n$ and $n-1$.

The sharper componentwise estimates follow from the exact identities
\[
 D_\al \mathcal E_\rho(\mathbf1,v,\xi,t)=0,\quad
 D_{\al v}\mathcal E_\rho(\mathbf1,v,\xi,t)=0
\]
from \eqref{eq:A-logmass-leading-energy}. The amplitude equations, divided
by $t$, together with \eqref{eq:A-fiber-fast-derivatives} and
\eqref{eq:A-bare-fast-derivatives}, yield
\[
 |a|\le C(h(t)+\frac{e^{-t}}t)
        +o(1)|a|.
\]
For large $t$, moving the $o(1)|a|$ term to the left proves the first
estimate in \eqref{eq:fast-variable-bounds}. The relative-scale equations imply
\[
 |v|\le C(h(t)+|a|^2+e^{-2t})+o(1)|v|
       \le Ch(t)+o(1)|v|.
\]
For large $t$, move the $o(1)|v|$ term to the left. Then
\eqref{eq:A-pulledback-correction-bounds} yields
$\|\bar w\|\le C(|a|+e^{-t})\le Ce^{-t}$.
All choices are uniform on the compact center and potential families
in the lemma.

\textbf{Step 5.} The remaining variables.
By \eqref{eq:fast-variable-bounds} and
\eqref{eq:A-fiber-value-derivatives}, at \((\al _\rho,v_\rho)\),
\[
t|a|^2+|v|^2=o(h(t)),
\quad
j_\rho-J_\rho(u_0)=O(e^{-2t})=o(h(t)).
\]
Hence \eqref{eq:A-bare-differentiated-remainder} yields
\eqref{eq:terminal-value}. For the second derivatives, with
\(y=(\al ,v)\), differentiating $D_yj_\rho=0$ on the solution graph
$y=(\al _\rho,v_\rho)$ with respect to \(\xi\) and \(t\) yields
\begin{equation}\label{eq:A-reduced-chain-rule}
K_{\rho,ab}
=
j_{\rho,ab}
-
j_{\rho,ay}(j_{\rho,yy})^{-1}j_{\rho,yb},
\quad
a,b\in\{t,\xi_1,\ldots,\xi_d\}.
\end{equation}
At the critical point of the \((\al ,v)\)-equations,
\eqref{eq:A-logmass-leading-energy} and \eqref{eq:A-fiber-fast-derivatives} and
\eqref{eq:A-bare-fast-derivatives} imply
\[
|j_{\rho,t\al }|+|j_{\rho,\xi\al }|
 \le C(th(t)+e^{-t}),\quad
|j_{\rho,tv}|+|j_{\rho,\xi v}|\le Ch(t).
\]
Here \(D_\al \mathcal E_\rho(\mathbf1,v,\xi,t)=0\), while the
\(v\)-dependent part of \(\mathcal E_\rho(\mathbf1,v,\xi,t)\) is
independent of both \(\xi\) and \(t\). Consequently the preceding mixed
derivatives have no constant leading term. Step 4 yields a uniform inverse after rescaling amplitude
variations by \(\sqrt t\). Therefore, for a scale
or center index \(b\),
\begin{equation}\label{eq:A-Schur-bound}
|j_{\rho,ty}(j_{\rho,yy})^{-1}j_{\rho,yb}|
\le C\Big(\frac{(th(t)+e^{-t})^2}{t}+h(t)^2\Big)
\le C\Big(th(t)^2+\frac{e^{-2t}}t+h(t)^2\Big)=o(h(t)).
\end{equation}
The inverse in \eqref{eq:A-Schur-bound} is defined on the full
$(\al,v)$ space, including the mixed entries.
Substituting \eqref{eq:A-Schur-bound} into
\eqref{eq:A-reduced-chain-rule} proves
\eqref{eq:terminal-scale-derivatives}. Equations
\eqref{eq:A-reduced-chain-rule} and
\eqref{eq:A-bare-differentiated-remainder} yield
\eqref{eq:terminal-location-derivatives}. All estimates are uniform for
\(V\) in compact subsets of \(C^3_+(\Si )\), hence also for every
sequence \(V_k\to V\) in \(C^3\).
\end{proof}

\section{Topology of the very negative sublevels}\label{app:topology}

We prove the sublevel--barycenter comparison used in
Section~\ref{sec:4}, with the notation
\eqref{eq:barycenter-stratum}--\eqref{eq:weighted-barycenter-space}.

We keep the standing assumptions on $(\Si ,g)$ and $V$, and all sublevels
are closed. We also reuse the compact operator $T_\rho$ defined in the
introduction. With the Hilbert structure of $\oH$ fixed there,
\[
 \nabla J_\rho(u)=u-T_\rho(u).
\]
Thus $u=T_\rho(u)$ is exactly the equation at parameter $\rho$.
For a function on the surface, define the normalized probability measure
\begin{equation*}
 \mathfrak m_v:=\frac{Ve^v\,\d v_g}{\int_\Si  Ve^v\,\d v_g},
 \quad v\in H^1(\Si ).
\end{equation*}
The measure $\mathfrak m_v$ is invariant under addition of constants.
For a set $\mathcal U$ of probability measures,
$\mathfrak m^{-1}(\mathcal U)$ denotes its preimage
under $v\mapsto\mathfrak m_v$.

The following comparison is independent of \ref{item:C1}--\ref{item:C2}.
\begin{prop}\label{prop:negative-sublevels-barycenters}
Let $(\Si ,g)$ satisfy the standing geometric assumptions, let
$V\in C^3(\Si )$ be positive, and let $\rho_*=4\pi m$ with $m\ge2$.
For all sufficiently large $L$, there are homotopy equivalences
\begin{equation}\label{eq:B-main-topology}
 J_{\rho_*}^{-L}\simeq B_{m-1}^\pa (\Si ,\pa \Si ),\quad
 J_{\rho_*}^{L}\simeq\oH.
\end{equation}
The upper sublevel is contractible, and, for every field $\mathbb F$,
\begin{equation}\label{eq:B-main-relative}
 H_k(J_{\rho_*}^{L},J_{\rho_*}^{-L};\mathbb F)
 \cong\widetilde H_{k-1}(B_{m-1}^\pa (\Si ,\pa \Si );\mathbb F),\quad k\ge1.
\end{equation}
The relative group in degree zero vanishes. The homotopy types are
unchanged when $L$ is increased further.
\end{prop}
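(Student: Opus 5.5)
The plan has two halves: the upper sublevel, then the lower sublevel. For the upper sublevel I would follow the scheme already sketched in the proof of Lemma~\ref{lem:first-wall-endpoints}. First I would show a uniform upper bound for $J_{\rho_*}$ on all exact solutions of $u=T_r(u)$ with $0\le r\le\rho_*$. Compactness holds away from $r\in4\pi\mathbb N$, and near each $4\pi k\le\rho_*$ blow-up has finite limiting energy by the bubbling asymptotics of Proposition~\ref{prop:estimates_rho_k}, so the bound follows. Then for $L$ above this bound, the convex combinations $s\nabla J_{\rho_*}+(1-s)u$ have no zeros in $\{J_{\rho_*}\ge L\}$. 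A normalized descending field built from these combinations retracts $\oH$ onto $J_{\rho_*}^L$, so $J_{\rho_*}^L$ is contractible. The point to watch here is that a zero of such a combination is an exact solution at some $r\le\rho_*$.

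For the lower sublevel I would use the standard two-map argument of Djadli--Malchiodi type, adapted to boundary weights. I would construct continuous maps
\[
\Phi_\lambda:B_{m-1}^\pa(\Si,\pa\Si)\to J_{\rho_*}^{-L},\qquad \Psi:J_{\rho_*}^{-L}\to B_{m-1}^\pa(\Si,\pa\Si),
\]
with $\Psi\circ\Phi_\lambda\simeq\mathrm{id}$ and $\Phi_\lambda\circ\Psi\simeq$ the inclusion.
\begin{itemize}
\item[(i)] \emph{Test functions.} For $\sigma=\sum t_i\delta_{x_i}\in B_q^p$ with $2p+q\le m-1$, let $\Phi_\lambda(\sigma)$ be the mean-zero part of $\ln\sum_i t_i\bigl(\lambda/(1+\lambda^2d_g(x,x_i)^2)\bigr)^2$. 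Interior atoms carry mass $8\pi$ and boundary atoms carry half-bubbles of mass $4\pi$. Using $\rho_*=4\pi m>4\pi(2p+q)$, I expect $J_{\rho_*}(\Phi_\lambda(\sigma))\le(8\pi(2p+q)-2\rho_*+o(1))\ln\lambda\to-\infty$ uniformly in $\sigma$. Continuity at merging atoms and at interior points approaching $\pa\Si$ needs care. There an interior bubble near the boundary behaves like a boundary bubble of twice the weight, and this matches the topology of $B^\pa$, in which weight $2$ corresponds to one interior point.
\item[(ii)] \emph{Improved Moser--Trudinger.} I would prove an improved inequality with boundary weights: if $\mathfrak m_u$ charges $k$ well-separated regions with $\sum_j\kap(\text{region}_j)/4\pi\ge m$ (interior regions counting $2$, regions touching the boundary counting $1$), then $J_{\rho_*}(u)\ge-C$. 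The proof uses localized Neumann Moser--Trudinger with constant $8\pi$ in the interior and $4\pi$ at the boundary.
\item[(iii)] \emph{Projection.} By contraposition of (ii), on $J^{-L}_{\rho_*}$ the measure $\mathfrak m_u$ is weakly close to $B_{m-1}^\pa$. A continuous retraction of a weak neighborhood of $B_{m-1}^\pa$ in the probability measures onto it then defines $\Psi$. I expect this retraction to be obtainable from the ENR property of these spaces, or by an explicit boundary-weighted projection as in \cite{AK2026}.
\item[(iv)] \emph{Homotopies.} $\Psi\circ\Phi_\lambda\to\mathrm{id}$ as $\lambda\to\infty$, since $\mathfrak m_{\Phi_\lambda(\sigma)}\rightharpoonup\sigma$. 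The reverse composition is handled by deforming $J^{-L}$ into a very deep sublevel, interpolating with $\Phi_\lambda\circ\Psi$, and pulling back. The deformation uses Lemma~\ref{lem:B-bounded-descent}-type flows together with the compactness of solutions below $-L$ for $r$ slightly different from $\rho_*$. Independence of $L$ comes from the same deformation, since the deep sublevels contain no critical points or critical levels at infinity (all $C_\infty$ are finite).
\end{itemize}

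The relative homology formula \eqref{eq:B-main-relative} then follows from the long exact sequence of the pair together with contractibility of $J^L_{\rho_*}$. The main obstacle is step (ii) together with the continuity of $\Psi$ near the boundary. Interior mass that concentrates close to $\pa\Si$ must be counted continuously with weight between $1$ and $2$. I would first try a covering argument with cutoffs adapted to the distance to $\pa\Si$ on scale $\delta$, applying the interior or boundary inequality according to whether the cover piece meets $\pa\Si$, and letting $\delta$ depend on $L$.
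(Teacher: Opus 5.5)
\paragraph{Verdict.} Your upper-sublevel argument is essentially the paper's. Zeros of $\operatorname{co}\{\nabla J_{\rho_*}(u),u\}$ are exact solutions $u=T_r(u)$ with $r\le\rho_*$, and Lemma~\ref{lem:B-exact-energy} bounds $J_{\rho_*}$ above on them. For $r\to4\pi k<\rho_*$ one actually gets $J_{\rho_*}\to-\infty$ rather than a finite limit, but the upper bound follows either way. The lower-sublevel half, however, has a genuine gap at step (ii).

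\paragraph{Step (ii) fails at resonance.} The localized Moser--Trudinger inequality is only available with a loss:
$\ln\int_\Si Ve^u\,\d v_g\le\frac{1+\var}{8\pi\sum_i s_i}\|u\|^2+C_\var$.
This loss is exactly what the cutoffs and the low-frequency splitting produce in Lemma~\ref{lem:B-concentration}. With $\sum_i s_i=m$ and $\rho_*=4\pi m$, it gives only $J_{\rho_*}(u)\ge-\frac{\var}{2}\|u\|^2-C_\var$, which is no lower bound at all. So you do not obtain that $\mathfrak m_u$ is weakly close to $B_{m-1}^\pa(\Si,\pa\Si)$ for every $u\in J_{\rho_*}^{-L}$, and your map $\Psi$ is never constructed. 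Removing the $\var$ is a borderline sharp-constant problem, and a finer covering near $\pa\Si$ does not address it. The boundary-proximity issue you single out as the main obstacle is actually harmless: one argues by compactness over the set of measures $\delta$-far from $B^\pa_{m-1}$, with interior charts kept away from $\pa\Si$.

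\paragraph{The missing idea.} Avoid the concentration analysis at the resonant parameter altogether. Fix $\rho_-\in(4\pi(m-1),4\pi m)$ and set $c_*=\rho_*/\rho_-$.
\begin{itemize}
\item Since $c_*J_{\rho_-}=J_{\rho_*}+\frac{c_*-1}{2}\|u\|^2$, one has $\{c_*J_{\rho_-}\le-L\}\subset J_{\rho_*}^{-L}$.
\item Apply the two-functional deformation of Lemma~\ref{lem:B-bounded-descent} with $F_0=J_{\rho_*}$ and $F_1=c_*J_{\rho_-}$. A zero of a convex combination of the two gradients is an exact solution at some $r\in[\rho_-,\rho_*]$.
\item The only resonant value in $[\rho_-,\rho_*]$ is $\rho_*$, where Lemma~\ref{lem:B-exact-energy} gives the finite limiting energy $C_\infty$. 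Hence such solutions satisfy $J_{\rho_*}>-C_-$ and are excluded below $-L$.
\item Adding $u$ to the hull produces exact solutions with $r\in[0,\rho_*]$, and excluding these keeps the flow bounded.
\item Together these show that $J_{\rho_*}^{-L}$ strongly deformation retracts onto $J_{\rho_-}^{-L/c_*}$ (Proposition~\ref{prop:B-resonant-comparison}).
\end{itemize}
The homotopy type then comes from the nonresonant parameter $\rho_-$ with $l=m-1$, where the strict inequality $\rho_-<4\pi m$ supplies the slack the improved inequality needs (Proposition~\ref{prop:nonresonant-neumann-sublevels}).

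\paragraph{Step (iv).} As written, ``interpolating with $\Phi_\lambda\circ\Psi$'' does not by itself give $\Phi_\lambda\circ\Psi\simeq\mathrm{id}$, because sublevels are not convex. The paper instead shows that the test-function map is a weak homotopy equivalence, using the local contractions of Lemma~\ref{lem:B-local-contraction} and extension over simplices in Lemma~\ref{lem:B-finite-family}. It then concludes with Palais's CW-type result and Whitehead's theorem.
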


The comparison of resonant and nonresonant sublevels uses the energy
estimates in Lemma~\ref{lem:B-exact-energy}. Their proof combines
\cite[Section~3.1, Lemma~14 in the preprint]{LSY2023} with the estimates in
Proposition~\ref{prop:estimates_rho_k} valid independently of
\ref{item:C1}--\ref{item:C2}. For the nonresonant sublevels, the argument starts from the
concentration and test-function construction of
\cite{B2019} and \cite{LSY2023}, followed by local
contractions and relative extensions over finite polyhedra; compare
\cite{M2008}.

\begin{lem}\label{lem:B-compact-gradient}
For every $R>0$, the family $T_r:\oH\to\oH$, $0\le r\le R$, is
locally Lipschitz in $u$ and maps bounded sets into relatively compact sets,
uniformly in $r$. If $r_n$ is bounded, $u_n$ is bounded in $\oH$, and
$u_n-T_{r_n}(u_n)\to0$ in $\oH$, then, after passing to a subsequence,
$r_n\to\bar r$ and $u_n\to u$ strongly in $\oH$, with
$u=T_{\bar r}(u)$.
\end{lem}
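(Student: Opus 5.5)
We first establish the uniform compactness of the map $(r,u)\mapsto T_r(u)$ on bounded sets, and then obtain the Palais--Smale-type statement from the identity $u_n=T_{r_n}(u_n)+o(1)$. Write
\[
T_r(u)=r\,(-\Delta_g)^{-1}\big[\mathfrak m_u-\tfrac1{|\Si|_g}\big],
\]
so that $T_r(u)=r\,T_1(u)$ is linear in $r$. It therefore suffices to treat $T_1$ and then use $r\in[0,R]$.

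For $\|u\|\le M$, inequality \eqref{eq:MT-Jensen-explicit} gives $\int_\Si e^{su}\,\d v_g\le C_{\Si,g}e^{s^2M^2/(8\pi)}$ and $\int_\Si Ve^u\,\d v_g\ge(\min V)|\Si|_g$. Hence the densities $Ve^u/\int_\Si Ve^u\,\d v_g$ are bounded in $L^p(\Si)$ for every $p<\infty$, uniformly on the ball; this is the estimate already written in Remark~\ref{rem:fixed-resonance-compactness}. The mean-zero Neumann $W^{2,p}$ estimate then bounds $T_1(u)$ in $W^{2,p}$, and Rellich's theorem makes the image of the ball relatively compact in $H^1$. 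Since $T_r=rT_1$ and $r$ ranges over a compact interval, the image of $[0,R]\times\{\|u\|\le M\}$ is relatively compact, uniformly in $r$.

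For the local Lipschitz property we use $|e^u-e^{u'}|\le|u-u'|(e^u+e^{u'})$ together with H\"older's inequality and the Sobolev embedding $H^1\hookrightarrow L^q$. This gives $\|e^u-e^{u'}\|_{L^2}\le C_M\|u-u'\|$ on the ball. The normalization $1/\int_\Si Ve^u\,\d v_g$ is Lipschitz there as well, because the denominator is bounded below. The Neumann inverse maps $L^2$ boundedly into $H^1$, so $T_1$, and hence each $T_r$, is locally Lipschitz with constants uniform for $r\le R$.

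For the sequential statement, take $r_n\le R$ and $\|u_n\|\le M$. Pass to a subsequence with $r_n\to\bar r$, and by the compactness above with $T_1(u_n)\to z$ strongly. Then $T_{r_n}(u_n)=r_nT_1(u_n)\to\bar r z$, and the hypothesis $u_n-T_{r_n}(u_n)\to0$ gives $u_n\to u:=\bar rz$ strongly in $\oH$. By continuity, $T_1(u_n)\to T_1(u)$, so $z=T_1(u)$. Therefore $u=\bar rT_1(u)=T_{\bar r}(u)$.

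The only delicate point is uniformity of the exponential integrability constants in $u$ on bounded sets, and this is exactly what \eqref{eq:MT-Jensen-explicit} provides.
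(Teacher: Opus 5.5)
Your proof is correct and follows essentially the same route as the paper: Moser--Trudinger and Jensen give uniform $L^p$ bounds on the normalized densities, the mean-zero Neumann $W^{2,p}$ estimate with compact embedding into $H^1$ gives compactness, and the inequality $|e^u-e^{u'}|\le|u-u'|(e^u+e^{u'})$ with H\"older's inequality gives the local Lipschitz bound. The sequential statement then follows by extracting convergent subsequences of $r_n$ and $T_{r_n}(u_n)$; your observation that $T_r=rT_1$ is a convenient way to make the uniformity in $r$ and the joint continuity in $(r,u)$ explicit, which the paper leaves implicit.
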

\begin{proof}
Let $B\subset\oH$ and $R:=\sup_{u\in B}\|u\|<\infty$.
Applying \eqref{eq:MT-Jensen-explicit} to $s=q$ and $s=-q$, we obtain
\begin{equation}\label{eq:B-normalized-density-estimate}
\begin{gathered}
 \sup_{u\in B}\int_\Si e^{q|u|}\,\d v_g
 \le 2C_{\Si,g}\exp\Big(\frac{q^2R^2}{8\pi}\Big),\quad q>0,\\
 \inf_{u\in B}\int_\Si Ve^u\,\d v_g\ge(\min_\Si V)|\Si|_g,\\
 \sup_{u\in B}\Big\|\frac{Ve^u}{\int_\Si Ve^u\,\d v_g}\Big\|_{L^p}
 \le\frac{(\max_\Si V)C_{\Si,g}^{1/p}}
          {(\min_\Si V)|\Si|_g}
          \exp\Big(\frac{pR^2}{8\pi}\Big),\quad p\ge1.
\end{gathered}
\end{equation}
If $u_n\rightharpoonup u$ in $\oH$ and $u_n\to u$ almost everywhere,
\eqref{eq:B-normalized-density-estimate}, applied with an exponent
larger than $p$, implies
\[
\frac{Ve^{u_n}}{\int_\Si  Ve^{u_n}\,\d v_g}
\to \frac{Ve^u}{\int_\Si  Ve^u\,\d v_g}
\quad\text{ in }\,L^p(\Si ),\quad p<\infty.
\]
The mean-zero Neumann estimate in \(W^{2,p}\), \(p>1\), and compact
embedding into \(H^1\) prove compactness, uniformly for bounded \(r\).
H\"older's inequality and
\(|e^v-e^u|\le|v-u|(e^v+e^u)\) prove local Lipschitz continuity.
Finally, take a subsequence with \(r_n\to\bar r\) and
\(T_{r_n}(u_n)\to u\) in \(\oH\). The assumed difference tends to
zero, so \(u_n\to u=T_{\bar r}(u)\) strongly.
\end{proof}

\begin{lem}\label{lem:B-exact-energy}
Suppose $u_n=T_{\rho_n}(u_n)$, $0\le\rho_n\le\rho_{\max}<\infty$,
and $\|u_n\|\to\infty$. After passing to a subsequence,
$\rho_n\to\bar\rho\in4\pi\mathbb N$ and there is a reduced critical
configuration $\xi^*\in\Xi_{p,q}$ with
$\bar\rho=4\pi(2p+q)$. Set
\[
 h_n:=\ln\int_\Si  Ve^{u_n}\,\d v_g,
 \quad
 C_\infty:=-\bar\rho\Big(1+\ln\frac{\bar\rho}{8}\Big)
              -\frac12\mathbf F_{p,q}^V(\xi^*).
\]
Then $h_n\to\infty$, $(\rho_n-\bar\rho)h_n\to0$, and
\begin{equation}\label{eq:B-energy-height}
 \frac12\|u_n\|^2=\rho_nh_n+C_\infty+o(1),
 \quad
 J_s(u_n)=(\rho_n-s)h_n+C_\infty+o(1)
\end{equation}
for every fixed $s\ge0$. In particular,
$J_s(u_n)\to-\infty$ if $\bar\rho<s$. At $\bar\rho=\rho_*$,
\begin{equation}\label{eq:B-endpoint-energy}
 J_{\rho_*}(u_n)\to
 -\rho_*\Big(1+\ln\frac{\rho_*}{8}\Big)
 -\frac12\mathbf F_{p,q}^V(\xi^*).
\end{equation}
The conclusions of the lemma hold independently of
\ref{item:C1} and \ref{item:C2}.
\end{lem}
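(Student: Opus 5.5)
We prove Lemma~\ref{lem:B-exact-energy} by combining the blow-up analysis already available with an explicit evaluation of the Dirichlet energy of the bubble decomposition. First, since $\rho_n\le\rho_{\max}$ and $\|u_n\|\to\infty$, we pass to a subsequence $\rho_n\to\bar\rho$. If $\bar\rho$ were not a positive multiple of $4\pi$, or if $\bar\rho=0$, the compactness alternative would give a uniform bound on solutions; in particular, for $\bar\rho=0$ the identity $\|u_n\|^2=\rho_n\int(\mathfrak m_{u_n})u_n$ and Moser--Trudinger force boundedness. Hence the blow-up alternative of \cite{LSY2023}, used in \eqref{eq:bridge-basic-masses}, yields $\bar\rho=4\pi(2p+q)$ and concentration at some $\xi^*\in\Xi_{p,q}$. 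Proposition~\ref{prop:u-k-in-V-eta}, Proposition~\ref{prop:unique} and Proposition~\ref{prop:estimates_rho_k} then supply the decomposition $u_n=\sum_i\al_iP\delta_{\lam_i,\xi_i}+w$. They also give $A+W+T=O(\Lam^{-1})$, $\lam_i\asymp\Lam$, $M=e^{h_n}\asymp\Lam^2$, and $\mu\ln\Lam\to0$. Moreover, $\xi^*$ is a critical point of $\FpqV$. None of these conclusions uses \ref{item:C1}--\ref{item:C2}.

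Next, $h_n=\ln M\to\infty$ follows from $M\asymp\Lam^2$. Since $h_n=2\ln\Lam+O(1)$, we get $(\rho_n-\bar\rho)h_n=\bar\rho\,\mu\,h_n\to0$.

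For the energy identity, we expand $\tfrac12\|u_n\|^2$ using Lemma~\ref{lem:projbubble_integral_interaction}. With $\al_i=1+O(\Lam^{-1})$ and $\|w\|=O(\Lam^{-1})$, this gives
\[
\tfrac12\|u_n\|^2=\sum_i\bigl(2\kap_i\ln\lam_i-\kap_i\bigr)+\tfrac12\sum_i\kap_i^2R^g(\xi_i)+\tfrac12\sum_{i\ne j}\kap_i\kap_jG^g(\xi_i,\xi_j)+o(1).
\]
The corrections of order $\al_i-1$ are multiplied by $\ln\lam_i$, and are therefore $O(\Lam^{-1}\ln\Lam)=o(1)$. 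We then use the mass identity from the definition of $\tau_i$, namely $\kap_i\cF^i_\xi(\xi_i)\lam_i^2/8=(\kap_i/\rho_*)M(1+o(1))$, which follows from $\tau_i=o(1)$ and the amplitude bounds. Taking logarithms gives
\[
2\ln\lam_i=h_n+\ln(8/\bar\rho)-\ln\cF^i_\xi(\xi_i)+o(1).
\]
Substituting this and using $\sum_i\kap_i=\bar\rho$, we obtain
\[
\tfrac12\|u_n\|^2=\bar\rho h_n+\bar\rho\ln(8/\bar\rho)-\bar\rho-\sum_i\kap_i\ln\cF^i_\xi(\xi_i)+\tfrac12\bigl(\kap^2R+\kap\kap G\text{ terms}\bigr)+o(1).
\]
By the definition \eqref{eq:def_F} of $\cF^i_\xi$, the quantity $\sum_i\kap_i\ln\cF^i_\xi(\xi_i)$ equals $\sum_i\kap_i\ln V(\xi_i)+\sum_i\kap_i^2R^g+\sum_{i\ne j}\kap_i\kap_jG^g$. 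Combining this with the half terms produces exactly $-\tfrac12\FpqV(\xi)$ from \eqref{eq:reduce_fun_0}. Continuity at $\xi\to\xi^*$ then yields $C_\infty$. Finally, we replace $\bar\rho h_n$ by $\rho_nh_n$ using $(\rho_n-\bar\rho)h_n\to0$.

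The second identity follows at once: $J_s(u_n)=\tfrac12\|u_n\|^2-sh_n$, and $J_s(u_n)\to-\infty$ when $\bar\rho<s$ because $h_n\to\infty$. At $s=\rho_*=\bar\rho$ the first term vanishes in the limit, giving \eqref{eq:B-endpoint-energy}.

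The main obstacle is the $o(1)$ accuracy of the energy expansion. Cross terms such as $(\al_i-1)\ln\lam_i$, as well as the $\tau_i$-dependent mass ratio, must vanish in the limit. This is exactly where the rate $A+T=O(\Lam^{-1})$ from Proposition~\ref{prop:estimates_rho_k} is needed, rather than the weaker natural bound \eqref{eq:apha_i_estimate}. We must also check that that proposition's proof applies along a sequence with $\rho_n$ varying; it is stated for $\rho_k\to\rho_*$, so this holds directly.
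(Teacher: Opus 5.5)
Your proposal is correct and follows essentially the same route as the paper: exclude $\bar\rho=0$, invoke the blow-up alternative and Proposition~\ref{prop:estimates_rho_k} for $A+W=O(\Lam^{-1})$, $\tau_i=o(1)$ and $\mu\ln\Lam\to0$, expand $\tfrac12\|u_n\|^2$ via Lemma~\ref{lem:projbubble_integral_interaction}, rewrite $2\ln\lam_i$ as $h_n+\ln(8/\bar\rho)-\ln\cF^i_\xi(\xi_i)+o(1)$, and recognize $-\tfrac12\FpqV$. There are two minor deviations. First, you rule out $\bar\rho=0$ via $\|u_n\|^2=\rho_n\int u_n\,\d\mathfrak m_{u_n}$ plus Moser--Trudinger, which works once you note $\int u\,\d\mathfrak m_u\le\ln\int Ve^{2u}-\ln\int Ve^{u}$ by convexity of $t\mapsto\ln\int Ve^{tu}$; the paper instead uses the Green representation. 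Second, you read the mass relation directly off the definition of $\tau_i$ rather than through Lemma~\ref{lem:Ve-u-integral}.
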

\begin{proof}
First pass to a subsequence with $\rho_n\to\bar\rho$.
If $\bar\rho=0$, the Green representation and Jensen's inequality yield
uniform integrability of $e^{p u_n}$ for some fixed $p>1$: the
logarithmic Green singularity is integrable after exponentiation with
coefficient $p\rho_n$ sufficiently small. Jensen's inequality yields
$\int_\Si  Ve^{u_n}\,\d v_g\ge(\min V)|\Si |_g$, so the
normalized right-hand sides are bounded in $L^p$. Neumann elliptic
regularity then yields a uniform estimate for $\|u_n\|$, a contradiction. Thus
$\bar\rho>0$.

The basic Neumann blow-up alternative and quantization
\cite[Section~3.1, Lemma~14 in the preprint]{LSY2023}, as used in
\eqref{eq:bridge-basic-masses}, now yield finitely many distinct limiting
points, with masses $\kap _i=8\pi$ in the interior and $4\pi$ on the
boundary, and
\(\bar\rho=\sum_i\kap _i=4\pi(2p+q).\)
Propositions~\ref{prop:u-k-in-V-eta} and~\ref{prop:unique} yield the
unique decomposition
\[
 u_n=\sum_i\al _{n,i}P\delta_{\lam _{n,i},\xi_{n,i}}+w_n,
 \quad w_n\in\mathcal E_{\lam _n,\xi_n}^{p,q},
 \quad \xi_n\to\xi^*.
\]
Set $\Lam _n:=\min_i\lam _{n,i}$, $\ell_n:=\ln\Lam _n$, and
$A_n:=\sum_i|\al _{n,i}-1|$, $W_n:=\|w_n\|$.
The estimates of Proposition~\ref{prop:estimates_rho_k}, valid without
\ref{item:C1}--\ref{item:C2}, imply
\begin{equation}\label{eq:B-unconditional-parameter-bounds}
 \lam _{n,i}\asymp\Lam _n,\quad
 A_n+W_n=O(\Lam _n^{-1}),\quad
 A_n\ell_n=o(1),\quad
 (\rho_n-\bar\rho)\ell_n\to 0.
\end{equation}
Proposition~\ref{prop:estimates_rho_k} also yields $\tau_{n,i}=o(1)$ and
$\nabla\mathbf F_{p,q}^V(\xi_n)\to0$, so the limiting configuration
$\xi^*$ is critical. No nondegeneracy assumption is needed.

Write
\[
 F_{n,i}:=\mathcal F_{\xi_n}^i((\xi_n)_i),\quad
 B_n:=F_{n,1}\lam _{n,1}^2.
\]
The definition of \(\tau_{n,i}\) and
\eqref{eq:B-unconditional-parameter-bounds} imply
\(F_{n,i}\lam _{n,i}^2=B_n(1+o(1)).\)
Here \(\tau_{n,i}=o(1)\),
\(\mathrm g_{\xi_n}^i((\xi_n)_i)=1+o(1)\), and
\(\lam _{n,i}^{4(\al _{n,i}-1)}=1+o(1)\) by
\eqref{eq:B-unconditional-parameter-bounds}. Hence
Lemma~\ref{lem:Ve-u-integral} yields
\begin{equation}\label{eq:B-unconditional-logmass}
 \int_\Si  Ve^{u_n}\,\d v_g
 =\sum_i\frac{\kap _i}{8}F_{n,i}\lam _{n,i}^2(1+o(1))
 =\frac{\bar\rho}{8}B_n(1+o(1)).
\end{equation}
Consequently,
\[
 h_n=\ln B_n+\ln(\bar\rho/8)+o(1)
      =2\ell_n+O(1)\to \infty,
 \quad
 (\rho_n-\bar\rho)h_n\to 0.
\]

Define the interaction expression
\[
 I_n:=\sum_i\kap _i^2R^g((\xi_n)_i)
       +\sum_i\sum_{j\ne i}\kap _i\kap _j
                        G^g((\xi_n)_i,(\xi_n)_j).
\]
By orthogonality and \eqref{eq:B-unconditional-parameter-bounds},
setting \(\al _{n,i}=1\) changes the quadratic energy by
\(O(A_n\ell_n+W_n^2)=o(1)\). Thus
Lemma~\ref{lem:projbubble_integral_interaction}, the mass balance, and
\eqref{eq:def_F} yield
\[
\begin{aligned}
\frac12\|u_n\|^2
 &=2\sum_i\kap _i\ln\lam _{n,i}-\bar\rho+\frac12 I_n+o(1),\\
2\sum_i\kap _i\ln\lam _{n,i}
 &=\bar\rho\ln B_n-\sum_i\kap _i\ln F_{n,i}+o(1),\\
\sum_i\kap _i\ln F_{n,i}
 &=\sum_i\kap _i\ln V((\xi_n)_i)+I_n.
\end{aligned}
\]
Substitution of \eqref{eq:B-unconditional-logmass} now yields
\[
\frac12\|u_n\|^2
=\bar\rho h_n-\bar\rho\Big(1+\ln\frac{\bar\rho}{8}\Big)
-\frac12\mathbf F_{p,q}^V(\xi_n)+o(1)
=\rho_nh_n+C_\infty+o(1),
\]
since \((\rho_n-\bar\rho)h_n\to0\). This proves
\eqref{eq:B-energy-height} and \eqref{eq:B-endpoint-energy}.
\end{proof}

The following deformation lemma keeps trajectories bounded.
\begin{lem}\label{lem:B-bounded-descent}
Let $\mathscr X$ be a separable Hilbert space, and let $F_0,F_1\in C^1(\mathscr X)$ have locally
Lipschitz gradients, and $F_1\ge F_0$. Fix $\ell\in\mathbb R$, and set
\(U_j=\{F_j\le\ell\},\quad S=\{F_0\le\ell\le F_1\}.\)
The convex hull is taken in $\mathscr X$, with the convention in
Subsection~\ref{sec:proof-strategy}.
Suppose that, for each finite $R$, there exists $d_R>0$ such that
\begin{equation}\label{eq:B-two-separation}
 \operatorname{dist}(0,
 \operatorname{co}\{\nabla F_0(u),\nabla F_1(u)\})\ge d_R, \quad \forall\, u\in S,\ \|u\|\le R.
\end{equation}
Assume also the following property for some $R_0$: for each
$R\ge R_0$, there is $e_R>0$ such that
\begin{equation}\label{eq:B-three-separation}
 \operatorname{dist}(0,
 \operatorname{co}\{\nabla F_0(u),\nabla F_1(u),u\})\ge e_R,
\quad \forall\, u\in S,\ R_0\le\|u\|\le R.
\end{equation}
Then $U_1$ is a strong deformation retract of $U_0$.
The same conclusion holds for a single functional. Let
\(F\in C^1(\mathscr X)\)
have a locally Lipschitz gradient. If $\|\nabla F\|$ has a positive lower
bound on every bounded part of $\{F\ge\ell\}$, and, outside a fixed ball,
\begin{equation}\label{eq:B-one-function-separation}
\operatorname{dist}(0,\operatorname{co}\{\nabla F(u),u\})\ge e_R>0,
\quad \forall\, F(u)\ge\ell,\ R_0\le\|u\|\le R,
\end{equation}
then $\{F\le\ell\}$ is a strong deformation retract of $\mathscr X$.
\end{lem}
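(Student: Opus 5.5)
We construct the retraction with a bounded, locally Lipschitz pseudogradient flow that is defined on $\{u: F_0(u)>\ell\}$ and stopped the first time it reaches $U_1$. Since $F_1\ge F_0$, we have $U_1\subset U_0$. A point of $U_0\setminus U_1$ satisfies $F_0\le\ell<F_1$ and therefore lies in $S$. The plan is to move such points along a direction that lowers $F_1$ while not raising $F_0$ above $\ell$.

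\textbf{Step 1: the descent direction.} By the minimal-norm element of a convex hull, \eqref{eq:B-two-separation} gives, on $S\cap\{\|u\|\le R\}$, a unit vector $e(u)$ satisfying $\langle\nabla F_j(u),e\rangle\ge d_R$ for $j=0,1$. For $\|u\|\ge R_0$, \eqref{eq:B-three-separation} yields a vector that additionally satisfies $\langle u,e\rangle\ge e_R$. Moving along $-e$ then strictly decreases both functionals and does not increase $\|u\|^2$.

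\textbf{Step 2: gluing.} These local choices are patched by a locally finite Lipschitz partition of unity on an open neighbourhood of $S$; separability of $\mathscr X$ makes paracompactness routine. The three pairing inequalities are open conditions and are preserved under convex combinations. This produces a locally Lipschitz field $W$ with $\|W\|\le1$ and $\langle\nabla F_j,W\rangle\ge c_R>0$ on bounded parts of a neighbourhood of $S$. Outside $\{\|u\|<R_0\}$ it also satisfies $\langle u,W\rangle\ge0$. The main obstacle is this step, because the constants are uniform only on bounded sets. Since $\|u\|$ never increases once $\|u\|\ge R_0$, every trajectory starting in $\{\|u\|\le R\}$ with $R\ge R_0$ remains in $\{\|u\|\le R\}$. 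We expect this to hold only approximately after gluing, and would absorb the error by a small enlargement of $R$.

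\textbf{Step 3: the flow and the retraction.} Consider the flow of $-W$ on $U_0$. Along a trajectory the energy $F_0$ decreases, so the trajectory stays in $U_0$. For fixed $R$, $F_1$ decreases at rate at least $c_R$ while the trajectory is in $S$. Starting from $u$, the quantity $F_1-\ell$ is at most $F_1(u)-\ell$, so the hitting time $T(u)$ of $\{F_1\le\ell\}$ is at most $(F_1(u)-\ell)/c_R$. On $U_1$ we set $T=0$.

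\textbf{Step 4: continuity and the single-functional case.} The crossing of the level $F_1=\ell$ is transversal, since $\langle\nabla F_1,W\rangle>0$ there. Together with continuous dependence of the flow on initial data, this makes $T$ continuous, so $H(s,u)=\eta(sT(u),u)$ is a strong deformation retraction of $U_0$ onto $U_1$. The single-functional statement is the special case $F_1=F$ with $F_0\equiv-\infty$, formally applied to $\mathscr X=\{F\le+\infty\}$. In that case one flows on $\{F\ge\ell\}$ using the lower bound on $\|\nabla F\|$ together with \eqref{eq:B-one-function-separation} for boundedness, and stops at $\{F\le\ell\}$ by the same hitting-time construction.
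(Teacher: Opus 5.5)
Your proposal is correct and follows essentially the same route as the paper. Minimal-norm elements of the convex hulls give the local directions, a Lipschitz partition of unity glues them into a bounded field near $S$, and the flow stopped at the continuous hitting time of $\{F_1\le\ell\}$ gives the retraction; the single-functional case is handled by the same construction, with $\nabla F$ inside a fixed ball and $\{\nabla F,u\}$ outside it. The gluing issue you flag is settled in the paper by a fixed buffer: use three-vector directions on every patch centred in $\{\|u\|\ge R_0+1\}$ and take all patches of radius less than one, so that $\|u\|$ is nonincreasing wherever $\|u\|\ge R_0+2$ and hence $\|\eta(t,u)\|\le\max\{\|u\|,R_0+2\}$. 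Note also that the flow runs on $U_0\cap\{F_1>\ell\}$ near $S$, not on $\{F_0>\ell\}$ as your opening sentence says.
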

\begin{proof}
For \(G=\{g_1,\ldots,g_k\}\), let \(p_G\) be the projection of
\(0\) onto \(\operatorname{co}G\). If \(p_G\ne0\), then
\[
\langle g_j,p_G\rangle\ge\|p_G\|^2,
\quad
\Big\langle g_j,-\frac{p_G}{\|p_G\|}\Big\rangle\le-\|p_G\|.
\]
Equations \eqref{eq:B-two-separation}--\eqref{eq:B-three-separation},
local continuity, and a locally finite Lipschitz partition of unity
yield a locally Lipschitz field \(W\), defined near \(S\), with
\(\|W\|\le1\). On each bounded part of \(S\),
\[
\langle\nabla F_j,W\rangle\le-c_R\quad(j=0,1),
\quad
\langle u,W(u)\rangle\le-c_R\quad\text{if }\|u\|\ge R_0+2.
\]
Before the trajectory \(\dot\eta=W(\eta)\) reaches \(U_1\), it remains
in \(S\): \(F_0\) decreases and \(F_1>\ell\). Hence
\begin{equation}\label{eq:B-stopped-flow-bounds}
\|\eta(t,u)\|\le\max\{\|u\|,R_0+2\},
\quad
\frac{\d}{\d t}F_1(\eta(t,u))\le-c<0.
\end{equation}
The constant \(c\) is positive on this bounded trajectory. Bounded
speed excludes a finite maximal existence time: at any finite terminal time, the curve is
Cauchy, and local existence at its limit extends it. Equation
\eqref{eq:B-stopped-flow-bounds} therefore yields a finite hitting time
\(T(u)\). Continuous dependence and strict crossing of \(F_1=\ell\)
imply continuity of \(T\) outside \(U_1\). Locally near \(U_1\),
\(T(u)\le C(F_1(u)-\ell)_+,\)
so it extends continuously by zero on \(U_1\). Stopping \(\eta\) at
this time defines the strong deformation retraction
\(\mathcal H(t,u)=\eta(tT(u),u),\quad 0\le t\le1.\)
For one functional, apply the same construction to \(\nabla F\) inside
the fixed ball and to \(\{\nabla F,u\}\) outside it. Equation
\eqref{eq:B-one-function-separation} yields the radial estimate, and the
same hitting-time argument retracts \(\mathscr X\) onto \(\{F\le\ell\}\).
\end{proof}

We next compare the resonant and nonresonant deep sublevels by a deformation argument; see also \cite{L2007}. Fix \(\rho_*=4\pi m\).
\begin{prop}\label{prop:B-resonant-comparison}
Fix $\rho_-\in(4\pi(m-1),\rho_*)$ and set $c_*:=\rho_*/\rho_->1$.
For all sufficiently large $L$, the inclusion
\begin{equation}\label{eq:B-reinforced-set}
 J_{\rho_-}^{-L/c_*}=\{c_*J_{\rho_-}\le-L\}
 \hookrightarrow J_{\rho_*}^{-L}
\end{equation}
is a strong deformation retract. Moreover, $J_{\rho_*}^{L}$ is a strong
deformation retract of $\oH$.
If $L'\ge L$ are sufficiently large, $J_{\rho_*}^{-L'}$ is a strong
deformation retract of $J_{\rho_*}^{-L}$.
\end{prop}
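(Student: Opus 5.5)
The plan is to apply Lemma~\ref{lem:B-bounded-descent} three times, once for each assertion, with the separation hypotheses supplied by Lemma~\ref{lem:B-compact-gradient} and Lemma~\ref{lem:B-exact-energy}.

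\emph{Deep comparison.} Take $F_0=J_{\rho_*}$ and $F_1=c_*J_{\rho_-}$ at level $\ell=-L$. By Moser--Trudinger, $\ln\int_\Sigma Ve^u\,\d v_g\ge\ln((\min V)|\Sigma|_g)$, so
\[
F_1-F_0=\tfrac12(c_*-1)\|u\|^2+\rho_*\ln\int_\Sigma Ve^u\,\d v_g-\rho_*\ln\int_\Sigma Ve^u\,\d v_g=\tfrac12(c_*-1)\|u\|^2\ge0 .
\]
Hence $F_1\ge F_0$ and $U_1=J_{\rho_-}^{-L/c_*}\subset U_0=J_{\rho_*}^{-L}$.

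A convex combination of the gradients is
\[
s\nabla F_0+(1-s)\nabla F_1=\bigl(s+(1-s)c_*\bigr)\bigl(u-T_r(u)\bigr),\qquad r=\frac{s\rho_*+(1-s)\rho_*}{s+(1-s)c_*}\in[\rho_-,\rho_*],
\]
using $T_{\rho}$ linear in $\rho$ and $c_*T_{\rho_-}=T_{\rho_*}$. Failure of \eqref{eq:B-two-separation} on a bounded part of $S$ therefore produces a bounded approximate solution of $u=T_r(u)$ with $r\in[\rho_-,\rho_*]$. Lemma~\ref{lem:B-compact-gradient} upgrades it to an exact solution with $J_{\rho_*}(u)\le-L$.

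Next I claim that exact solutions $u=T_r(u)$ with $r\in[\rho_-,\rho_*]$ have $J_{\rho_*}$ bounded below. For bounded $u$ this is Moser--Trudinger. For unbounded $u$, Lemma~\ref{lem:B-exact-energy} forces $\bar\rho=\rho_*$, since $(4\pi(m-1),4\pi m]$ contains only that resonant value. Then \eqref{eq:B-endpoint-energy} gives $J_{\rho_*}$ converging to finite limits $C_\infty$, and there are finitely many such limits as $\xi^*$ ranges over the compact critical set. So for large $L$ there are no such solutions in $S$, and \eqref{eq:B-two-separation} follows.

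\emph{Radial hypothesis \eqref{eq:B-three-separation}.} Here a combination $a(u-T_r(u))+bu$ with $b>0$ tends to zero. Rewriting, $u\approx T_{r'}(u)$ with $r'=ar/(a+b)\in[0,\rho_*]$, which is again an approximate solution. If such points are bounded, compactness gives an exact solution at a parameter in $[0,\rho_*]$ lying in $S$. The difficulty is that Lemma~\ref{lem:B-exact-energy} then allows $\bar\rho<\rho_-$, where $J_{\rho_*}\to-\infty$ by \eqref{eq:B-energy-height}, so these points are \emph{not} excluded by level alone.

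\textbf{Main obstacle.} I expect this radial step to be the hardest. I would first try to exclude such solutions using the second constraint $F_1=c_*J_{\rho_-}\ge-L$. At parameter $\bar\rho<\rho_-$, \eqref{eq:B-energy-height} gives
\[
c_*J_{\rho_-}(u_n)=c_*(\rho_n-\rho_-)h_n+O(1)\to-\infty,
\]
contradicting membership in $S$. The same asymptotics should supply uniformity in $R$ for the constants $e_R$. Membership in $S$ also rules out $\bar\rho=\rho_*$ for large $L$, because those solutions have bounded energy.

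\emph{Upper sublevel.} Apply the single-functional part of Lemma~\ref{lem:B-bounded-descent} to $F=J_{\rho_*}$ with $\ell=L$. Combinations of $\nabla J_{\rho_*}$ and $u$ again yield approximate solutions at $r\in[0,\rho_*]$. By Lemma~\ref{lem:B-exact-energy}, such solutions have $J_{\rho_*}\le(r-\rho_*)h+C_\infty+o(1)$, which is bounded above, and bounded pieces are compact. So these zeros lie below $L$ for large $L$, which gives \eqref{eq:B-one-function-separation}; the lower bound on $\|\nabla F\|$ holds because genuine solutions have bounded energy.

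\emph{Comparing two deep levels.} For $L'\ge L$, use the two-functional lemma with $F_0=J_{\rho_*}+L'-L$ and $F_1=J_{\rho_*}$ at level $-L$, or equivalently the single flow between the levels $-L'$ and $-L$. The interval $[-L',-L]$ contains no levels of exact solutions, and the radial estimate is obtained exactly as in the radial step above.
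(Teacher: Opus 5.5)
Your proposal is correct and follows the paper's proof: three applications of Lemma~\ref{lem:B-bounded-descent}, where exact solutions $u=T_r(u)$ concentrating at $\bar\rho=\rho_*$ are excluded from $S$ because their energy is bounded below by \eqref{eq:B-endpoint-energy}, and those concentrating at $\bar\rho\le4\pi(m-1)$ are excluded by the lower constraint defining $S$ ($c_*J_{\rho_-}\ge-L$, respectively $J_{\rho_*}\ge-L'$) via \eqref{eq:B-energy-height}. Two small fixes are needed: in the last step the roles must be $F_0=J_{\rho_*}$, $F_1=J_{\rho_*}+L'-L$, since your assignment has $F_1\le F_0$ and would produce the retraction in the wrong direction; and the claim of ``finitely many limits $C_\infty$'' is unjustified without \ref{item:C1} and in any case unnecessary, because a sequential contradiction argument with Lemma~\ref{lem:B-exact-energy} and Lemma~\ref{lem:B-compact-gradient} already gives the uniform energy bounds.
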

\begin{proof}
Lemma~\ref{lem:B-exact-energy} and
Lemma~\ref{lem:B-compact-gradient} yield constants \(C_\pm>0\) such that
\begin{equation}\label{eq:B-lower-exact-bound}
J_{\rho_*}(u)>-C_-
\quad\text{if }\\
u=T_r(u),\quad r\in[\rho_-,\rho_*],
\end{equation}
and
\begin{equation}\label{eq:B-upper-exact-bound}
J_{\rho_*}(u)<C_+
\quad\text{if }\\
u=T_r(u),\quad 0\le r\le\rho_*.
\end{equation}
For bounded sequences of exact solutions, apply
Lemma~\ref{lem:B-compact-gradient}. An unbounded sequence with
\(r\in[\rho_-,\rho_*]\) can concentrate only at \(\rho_*\), where
\eqref{eq:B-endpoint-energy} yields a finite energy limit. For
\(r\in[0,\rho_*]\), any smaller resonant limit yields
\(J_{\rho_*}\to-\infty\) by \eqref{eq:B-energy-height}. These
alternatives prove the lower and upper estimates, respectively, without
assuming finiteness of the reduced critical set.

Take \(L>C_-+1\) and set
\[
F_0:=J_{\rho_*},
\quad
F_1:=c_*J_{\rho_-},
\quad
S_L:=\{F_0\le-L\le F_1\}.
\]
Since
\[
F_1(u)-F_0(u)
=
\frac{c_*-1}{2}\|u\|^2\ge0,
\]
the inclusion \eqref{eq:B-reinforced-set} follows. Moreover,
\begin{equation*}
\{
u\in S_L:
u=T_r(u)\text{ for some }r\in[0,\rho_*]
\}
\end{equation*}
is bounded by \eqref{eq:B-energy-height} and
\eqref{eq:B-lower-exact-bound}. Choose \(R_0\) larger than its norm.

For \(0\le t\le1\),
\[
(1-t)\nabla F_0+t\nabla F_1
=
[1+t(c_*-1)]u-T_{\rho_*}(u),
\]
so any zero satisfies
\[
u=T_r(u),
\quad
r=\frac{\rho_*}{1+t(c_*-1)}\in[\rho_-,\rho_*],
\]
contrary to \eqref{eq:B-lower-exact-bound}. Likewise,
\[
a\nabla F_0+b\nabla F_1+du=0,
\quad
a,b,d\ge0,\quad a+b+d=1,
\]
implies
\[
u=T_r(u),
\quad
r=\frac{\rho_*(a+b)}{a+c_*b+d}\in[0,\rho_*],
\]
which is impossible on \(S_L\cap\{\|u\|\ge R_0\}\).
We verify the positive separation constants required by
Lemma~\ref{lem:B-bounded-descent}. If a sequence in a fixed bounded
part of $S_L$ violates the two-vector separation, there exist
$t_n\in[0,1]$ such that
\[
 [1+t_n(c_*-1)]u_n-T_{\rho_*}(u_n)\to 0.
\]
After extraction $t_n\to t$, and compactness of $T_{\rho_*}$ on
bounded sets implies strong convergence of $u_n$. Its limit belongs to
the closed set $S_L$ and is a forbidden exact solution. For the
three-vector separation the same argument applies to
\[
 (a_n+c_*b_n+d_n)u_n-(a_n+b_n)T_{\rho_*}(u_n)\to 0,
 \quad a_n+b_n+d_n=1.
\]
The coefficient of $u_n$ is at least one, and the three coefficients
have a convergent subsequence. Thus strong convergence again yields a
forbidden exact solution in the closed bounded annulus. These
contradictions establish both separation estimates, and
Lemma~\ref{lem:B-bounded-descent} proves \eqref{eq:B-reinforced-set}.

For the upper sublevel, take \(L>C_++1\) and work on the closed set
$\{u\in\oH:J_{\rho_*}(u)\ge L\}$. The gradient identity yields
\begin{equation*}
\begin{aligned}
 \operatorname{co}\{\nabla J_{\rho_*}(u),u\}
 &=\{u-\theta T_{\rho_*}(u):0\le\theta\le1\},\\
 0\in\operatorname{co}\{\nabla J_{\rho_*}(u),u\}
 &\quad\Longleftrightarrow\quad
 u=T_{\theta\rho_*}(u)\text{ for some }\theta\in[0,1].
\end{aligned}
\end{equation*}
Thus the convex-hull condition on this set contradicts
\eqref{eq:B-upper-exact-bound}. On any bounded part of this closed
set, failure of positive separation yields $\theta_n\in[0,1]$
and bounded $u_n$ with
\(u_n-\theta_nT_{\rho_*}(u_n)\to 0.\)
After extraction, $\theta_n\to\theta$, and
Lemma~\ref{lem:B-compact-gradient} yields strong convergence to
$u=T_{\theta\rho_*}(u)$ with $J_{\rho_*}(u)\ge L$, the same
contradiction. This proves the separation required by the
one-functional part of Lemma~\ref{lem:B-bounded-descent}, including
the gradient lower bound since $\nabla J_{\rho_*}(u)$ belongs to
its convex hull with $u$. That lemma therefore shows that
\(J_{\rho_*}^{L}\) is a strong deformation retract of \(\oH\).
Finally, for \(L'\ge L\), apply Lemma~\ref{lem:B-bounded-descent} to
\[
F_0=J_{\rho_*},
\quad
F_1=J_{\rho_*}+L'-L,
\quad
\ell=-L,
\]
which shows that \(J_{\rho_*}^{-L'}\) is a strong deformation retract
of \(J_{\rho_*}^{-L}\).
\end{proof}

For the nonresonant comparison, fix
\(l\ge1\), \(\rho\in(4\pi l,4\pi(l+1))\), and choose
\begin{equation}\label{eq:B-coefficient-range}
 1<a_{\max}<\frac{\rho}{4\pi l}.
\end{equation}
For $v\in H^1(\Sigma)$, write
$\overline v:=|\Sigma|_g^{-1}\int_\Sigma v\,\d v_g$ for its integral mean.
For $1\le a\le a_{\max}$, introduce the single auxiliary functional
\begin{equation}\label{eq:B-invariant-family}
 \mathscr J_a(v):=\frac a2\int_\Si |\nabla v|^2\,\d v_g
 +\rho\overline v-\rho\ln\Big(\int_\Si  Ve^v\,\d v_g\Big),
 \quad v\in H^1(\Si ).
\end{equation}
The functional \eqref{eq:B-invariant-family} is invariant under addition of constants, and
$\mathscr J_a(u)=aJ_{\rho/a}(u)$ for $u\in\oH$. Define
\begin{equation}\label{eq:B-reinforced-family}
 X_{a,D}:=\{u\in\oH:\mathscr J_a(u)\le-D\}.
\end{equation}
The parameter $a$ is used only in this appendix and imposes no additional
assumption on $V$ or on $g$.

\begin{lem}\label{lem:B-reinforcement}
For the sets in \eqref{eq:B-reinforced-family}, there is $D_0$ such that, whenever
$1\le a\le b\le a_{\max}$ and $D'\ge D\ge D_0$, $X_{b,D'}$ is a strong deformation retract of $X_{a,D}$.
The upper sublevel $J_\rho^D$ is contractible for sufficiently large $D$.
\end{lem}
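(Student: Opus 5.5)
The plan is to prove both assertions with Lemma~\ref{lem:B-bounded-descent}, following the same scheme as Proposition~\ref{prop:B-resonant-comparison}.

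\textbf{The reinforcement retraction.} Fix $1\le a\le b\le a_{\max}$ and $D'\ge D\ge D_0$. Since $X_{b,D'}\subset X_{a,D}$, we take
\[
F_0=\mathscr J_a,\qquad F_1=\mathscr J_b+D'-D,\qquad \ell=-D,
\]
all restricted to $\oH$. We have $F_1\ge F_0$ because
\[
\mathscr J_b-\mathscr J_a=\tfrac{b-a}{2}\|u\|^2\ge0 .
\]
On $\oH$ one has $\nabla\mathscr J_a(u)=au-T_\rho(u)$. Therefore any zero of a convex combination
\[
s\nabla F_0+(1-s)\nabla F_1+d\,u
\]
is an exact solution $u=T_r(u)$, where
\[
r=\frac{\rho(1-d)}{sa+(1-s)b+d}\in\Bigl[0,\tfrac{\rho}{a}\Bigr]\subset[0,\rho].
\]
By \eqref{eq:B-coefficient-range}, whenever $d=0$ the parameter satisfies $r\ge\rho/a_{\max}>4\pi l$. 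Also $r\le\rho<4\pi(l+1)$. So $r$ lies in a compact interval containing no multiple of $4\pi$.

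The key step is to show that such exact solutions cannot lie in the set
\[
S=\{F_0\le -D\le F_1\}
\]
once $D$ is large. We argue by contradiction.
\begin{itemize}
\item For a bounded sequence, Lemma~\ref{lem:B-compact-gradient} gives a strong limit, and $\mathscr J_a$ is bounded on bounded sets. A bounded sequence therefore cannot reach arbitrarily negative levels.
\item For an unbounded sequence, Lemma~\ref{lem:B-exact-energy} forces $r_n\to\bar r\in4\pi\mathbb N$ and gives
\[
\mathscr J_c(u_n)=c\,J_{\rho/c}(u_n)=c\bigl((r_n-\rho/c)h_n+C_\infty\bigr)+o(1).
\]
\end{itemize}
When $d=0$, this is impossible, because $r$ stays away from $4\pi\mathbb N$. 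The two-vector separation \eqref{eq:B-two-separation} then follows from the compactness argument of Proposition~\ref{prop:B-resonant-comparison}.

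For the three-vector separation \eqref{eq:B-three-separation}, unbounded solutions with $d>0$ may occur, with $\bar r\le 4\pi l$. Only large norms matter, which is why $R_0$ is needed. We must show that exact solutions with $r$ near a resonance $\bar r\le4\pi l<\rho/a$ cannot satisfy $F_1\ge -D$. Lemma~\ref{lem:B-exact-energy} gives
\[
\tfrac12\|u\|^2=r h+C_\infty+o(1).
\]
Hence
\[
\mathscr J_b(u)=b\,r h-\rho h+O(1)\le(b\bar r-\rho)h+O(1),
\]
and $b\bar r\le a_{\max}4\pi l<\rho$. So $\mathscr J_b(u)\to-\infty$ uniformly. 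Consequently $F_1<-D$ for large norms, contradicting membership in $S$, provided $D'-D$ is handled. I expect this point to be the main obstacle. $F_1$ contains the shift $D'-D$, so I would first treat $D'=D$ and then compose with a separate deformation in $D$ at fixed $b$, as in the last step of Proposition~\ref{prop:B-resonant-comparison}. In that step $F_1-F_0$ is a constant, and the convex hull reduces to $\{\nabla\mathscr J_b,u\}$, which is covered by the estimate above. With both separations established, Lemma~\ref{lem:B-bounded-descent} yields the retraction.

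\textbf{Contractibility of the upper sublevel.} Apply the one-functional part of Lemma~\ref{lem:B-bounded-descent} to $F=J_\rho$ with $\ell=D$. The convex hull $\operatorname{co}\{\nabla J_\rho,u\}$ vanishes only at solutions $u=T_{\theta\rho}(u)$ with $\theta\in[0,1]$. By Lemma~\ref{lem:B-exact-energy}, unbounded families of such solutions satisfy
\[
J_\rho(u)=(\theta\rho-\rho)h+O(1)\le O(1),
\]
and bounded families have bounded energy. So $J_\rho$ is bounded above on all such solutions. For $D$ above this bound, compactness from Lemma~\ref{lem:B-compact-gradient} gives the required positive separation, exactly as in the proof for $J_{\rho_*}^{L}$. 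Hence $J_\rho^D$ is a deformation retract of $\oH$, and is therefore contractible.
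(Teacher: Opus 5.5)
Your proposal is correct and follows the paper's proof. The first retraction uses $F_0=\mathscr J_a$, $F_1=\mathscr J_b$, $\ell=-D$: the compact nonresonant solution set gives the two-vector separation, and $\mathscr J_b\to-\infty$ along unbounded exact solutions gives the three-vector one. The depth is then lowered at fixed $b$ and the two retractions are composed, while contractibility of $J_\rho^D$ comes from the one-functional part of Lemma~\ref{lem:B-bounded-descent}. The shift $D'-D$ you flag would in fact be harmless even in a one-step argument, since it is a fixed constant that only enlarges $R_0$, but your two-step composition is exactly what the paper does.
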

\begin{proof}
By \eqref{eq:B-invariant-family},
\[
\mathscr J_a(u)=aJ_{\rho/a}(u),
\quad
\frac{\rho}{a}\in
\Big[\frac{\rho}{a_{\max}},\rho\Big]
\Subset(4\pi l,4\pi(l+1)).
\]
Nonresonant compactness and Lemma~\ref{lem:B-compact-gradient} imply
that
\[
 \mathcal K:=\{u\in\oH:u=T_r(u)\text{ for some }
 r\in[\rho/a_{\max},\rho]\}
\]
is compact. If $\mathcal K$ is empty, take any $D_0>0$;
otherwise choose $D_0>0$ so large that
\begin{equation}\label{eq:B-cross-energy-threshold}
 -D_0<\inf\{\mathscr J_s(u):u\in\mathcal K,
                                  \ 1\le s\le a_{\max}\}.
\end{equation}
For \(\mathcal K=\varnothing\), the infimum is interpreted as
\(+\infty\); otherwise it is finite by compactness and continuity.
A zero of
$\operatorname{co}\{\nabla\mathscr J_a(u),\nabla\mathscr J_b(u)\}$
satisfies $u=T_{\rho/c}(u)$ for some $c\in[a,b]$, hence belongs to
$\mathcal K$. Equation~\eqref{eq:B-cross-energy-threshold} then yields
\(\mathscr J_a(u)>-D_0\ge-D\), so no such zero can occur on the set under
consideration.
If \(u\) is included in the convex hull, a zero has the form
\(u=T_r(u)\) with
\(0\le r\le\rho/a\). On
\(\{\mathscr J_a\le-D\le\mathscr J_b\}\), such zeros are bounded:
otherwise \eqref{eq:B-energy-height} yields
\[
\bar r\le4\pi l<\frac{\rho}{b}.
\]
Consequently,
\(\mathscr J_b(u_n)\to-\infty,\)
which is a contradiction. Lemmas~\ref{lem:B-compact-gradient} and
\ref{lem:B-bounded-descent} therefore show that \(X_{b,D}\) is a strong
deformation retract of \(X_{a,D}\). Applying the same argument on
\(-D'\le\mathscr J_b\le-D\) shows that \(X_{b,D'}\) is a strong
deformation retract of \(X_{b,D}\),
and consequently also of \(X_{a,D}\). The upper-sublevel statement is
obtained in the same way from
\(\operatorname{co}\{\nabla J_\rho(u),u\}\), exactly as in
Proposition~\ref{prop:B-resonant-comparison}.
\end{proof}

\begin{rem}\upshape
The strict inequality in \eqref{eq:B-coefficient-range} cannot
in general be replaced by equality. For $l=1$ and
$a_{\max}=\rho/(4\pi)$, the functional
$\mathscr J_{a_{\max}}=a_{\max}J_{4\pi}$ is bounded below by the
sharp Moser--Trudinger inequality. Thus $X_{a_{\max},D}$ is empty for
large $D$. In contrast, unboundedness of $J_\rho$ below for
$\rho>4\pi$ implies $X_{1,D}\ne\varnothing$. The retraction in
Lemma~\ref{lem:B-reinforcement} therefore fails at equality.
\end{rem}

Fix \(\rho\in(4\pi l,4\pi(l+1))\) and assume
\eqref{eq:B-coefficient-range}. For probability measures on \(\Si \), set
\[
d_{\rm BL}(\nu_1,\nu_2):=
\sup\Big\{
\Big|\int_\Si  f\,\d(\nu_1-\nu_2)\Big|:
\|f\|_{L^\infty}+\operatorname{Lip}(f)\le1
\Big\},
\]
where $\operatorname{Lip}(f)$ denotes the Lipschitz seminorm of $f$.
For a set $\mathcal A$ of probability measures, define
\(d_{\rm BL}(\nu,\mathcal A):=\inf_{\si \in\mathcal A} d_{\rm BL}(\nu,\si ).\)
Since $\Si $ is compact, $d_{\rm BL}$ induces the weak topology on probability measures.

\begin{lem}\label{lem:B-concentration}
Let $l\ge1$ and $\rho\in(4\pi l,4\pi(l+1))$ be fixed. For every
$\delta>0$, there exists $D_\delta>0$ such that, whenever $D\ge D_\delta$,
\[
d_{\rm BL}(
\mathfrak m_u,
B_l^\pa (\Si ,\pa \Si )
)<\delta
\quad
\text{for every }u\in J_\rho^{-D}.
\]
Consequently there is a continuous map
$\psi_D:J_\rho^{-D}\to B_l^\pa (\Si ,\pa \Si )$ for all sufficiently large $D$.
\end{lem}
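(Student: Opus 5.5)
We argue by contradiction and use an improved Moser--Trudinger inequality of Chen--Li type in its Neumann form with boundary weights. Suppose there are $\delta>0$ and $u_n\in\oH$ with $J_\rho(u_n)\to-\infty$ and $d_{\rm BL}(\mathfrak m_{u_n},B_l^\pa(\Si,\pa\Si))\ge\delta$. We aim to show that $\mathfrak m_{u_n}$ must then spread a definite amount of mass over too many separated regions, which forces $J_\rho$ to be bounded below along the sequence.

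\textbf{Step 1: a covering lemma.} We prove that if $d_{\rm BL}(\nu,B_l^\pa)\ge\delta$, then there are $r>0$, $\gamma>0$ (depending only on $\delta,l$) and sets $\Omega_1,\dots,\Omega_k$ with mutual distance at least $2r$. Each $\Omega_j$ is an interior ball of radius $r$ away from $\pa\Si$, or a boundary half-ball, and each carries $\nu$-mass at least $\gamma$. The weighted count satisfies $\sum_j \mathrm{w}_j\ge l+1$, with weight $2$ for interior sets and $1$ for boundary sets. This is proved by contradiction and weak compactness: a limit measure not in $B_l^\pa$ would charge at least $l+1$ weighted clusters, while finitely many small balls carrying almost all mass with weighted count $\le l$ would put $\nu$ within $\delta$ of $B_l^\pa$.

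\textbf{Step 2: improved inequality.} Following the localization argument of Chen--Li (cutoffs $g_j$ on the $\Omega_j$, local Moser--Trudinger on each piece), we show the following. If $\int_{\Omega_j}e^u/\int_\Si e^u\ge\gamma$ for such a family, then for every $\varepsilon>0$,
\[
\ln\int_\Si Ve^u\,\d v_g\le \frac{1}{8\pi(\sum_j\mathrm w_j)/2-\varepsilon}\cdot\frac12\|u\|^2+C_{\varepsilon,\gamma,r}.
\]
The local constants are $16\pi$ in the interior (coefficient $1/(16\pi)$ per interior piece) and $8\pi$ on the boundary half-disks, where the Neumann Moser--Trudinger constant is halved. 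The effective constant is therefore $4\pi\sum_j\mathrm w_j\ge4\pi(l+1)$.

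\textbf{Step 3: conclusion.} Choose $\varepsilon$ small so that $4\pi(l+1)-\varepsilon>\rho$. Then $J_\rho(u_n)\ge(\tfrac12-\tfrac{\rho}{2(4\pi(l+1)-\varepsilon)})\|u_n\|^2-C\ge -C$, which contradicts $J_\rho(u_n)\to-\infty$.

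\textbf{Retraction.} For the map $\psi_D$, we show that $B_l^\pa(\Si,\pa\Si)$ has a $\delta$-neighborhood in the $d_{\rm BL}$ metric retracting onto it, as in Bardi--Malchiodi type constructions. We take $\psi_D=\pi\circ\mathfrak m$, where $\mathfrak m$ is continuous from $\oH$ to measures by Lemma~\ref{lem:B-compact-gradient}. The retraction can be built stratum by stratum: $B_l^\pa$ is a compact ENR, being a finite CW-type union of the strata $B_q^p$, and a boundary point near $\pa\Si$ is projected normally when its weight allows.

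The main obstacle is Step 2 at mixed configurations: the cutoffs near $\pa\Si$ must be arranged so that interior pieces close to the boundary are not counted with weight $2$. This is why Step 1 separates interior balls at distance $\ge r$ from $\pa\Si$ and absorbs any mass near $\pa\Si$ into boundary half-balls. The retraction of the neighborhood onto the mixed-weight barycenter space is a secondary, more technical point.
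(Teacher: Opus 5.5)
Your concentration estimate follows the paper's argument. Compactness of $\{\nu:d_{\rm BL}(\nu,B_l^\partial(\Sigma,\partial\Sigma))\ge\delta\}$ gives finitely many families of separated disks and half-disks, with interior ones kept off $\partial\Sigma$, uniform mass lower bounds, and weighted count $\ge l+1$. The localized Moser--Trudinger inequality, with weight $2$ for cut-off interior pieces and weight $1$ for half-disks after even reflection, gives $\ln\int_\Sigma Ve^u\,\d v_g\le\frac{1+\varepsilon}{8\pi(l+1)}\|u\|^2+C_\varepsilon$. Coercivity of $J_\rho$ for $\rho<4\pi(l+1)$ then concludes. That part is fine.

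The part that is not yet a proof is the map $\psi_D$.
\begin{itemize}
\item The measures $\mathfrak m_u$ have densities. A retraction described by moving atoms (stratum by stratum, projecting atoms normally onto $\partial\Sigma$) therefore defines nothing on them. You need a retraction $r_l:\mathcal U\to B_l^\partial(\Sigma,\partial\Sigma)$ on a genuine weak neighborhood $\mathcal U$ in the space of all probability measures.
\item Your abstract route would supply this if $B_l^\partial$ were known to be a compact ENR: a compact ENR is an ANR for metric spaces, and $B_l^\partial$ is closed in $(\mathcal P(\Sigma),d_{\rm BL})$.
\item However, ``finite CW-type union of the strata $B_q^p$'' does not establish the ENR property. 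The strata containing interior atoms are not compact, and their closures meet along collisions, vanishing weights, and interior atoms reaching $\partial\Sigma$.
\end{itemize}

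The paper closes this step concretely.
\begin{itemize}
\item Realize $(\Sigma,\partial\Sigma)$ as a polyhedral pair in $\mathbb R^d$ and use the moment map $\sigma\mapsto(\int_\Sigma x^\beta\,\d\sigma(x))_{|\beta|\le 2l-1}$.
\item This map is weakly continuous on all probability measures. It is injective on $B_l^\partial$: two of its elements have at most $2l$ support points in total, and these are separated by Lagrange-type polynomials of degree at most $2l-1$.
\item Its image is compact and semialgebraic by Tarski--Seidenberg, hence a compact polyhedron with a Euclidean neighborhood retraction.
\item The retraction $r_l$ is the composition of the moment map, that Euclidean retraction, and the inverse embedding, and $\psi_D:=r_l\circ\mathfrak m$.
\end{itemize}
This also gives the polyhedral structure of $B_l^\partial$ that is used later for Whitehead's theorem. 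With this ingredient, or a precise reference for the ENR property, your proof is complete.
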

\begin{proof}
Fix \(\delta>0\). The set of probability measures
\[
 \mathcal K_\delta:=
 \{\nu:d_{\rm BL}(\nu,B_l^\pa (\Si ,\pa \Si ))\ge\delta\}
\]
is compact. For each \(\nu\in\mathcal K_\delta\), its support contains
finitely many points of total weight at least \(l+1\), where interior
points have weight two and boundary points have weight one; otherwise
\(\nu\) itself belongs to \(B_l^\pa \), a contradiction.
Choose small disk or half-disk neighborhoods \(\Omega_i\) of these
points, with pairwise disjoint larger chart neighborhoods, and keep
the interior charts away from \(\pa \Si \).
Each \(\nu(\Omega_i)\) is positive. In a sufficiently small weak
neighborhood of \(\nu\), every measure \(\widetilde\nu\) satisfies
\(\widetilde\nu(\Omega_i)>\nu(\Omega_i)/2\) for every $i$.
A finite subcover of \(\mathcal K_\delta\) therefore yields
\(\gamma>0\) and a finite collection of such chart families, so that
every \(\mathfrak m_u\in\mathcal K_\delta\) satisfies, for one family,
\[
 \mathfrak m_u(\Omega_i)\ge\gamma,
 \quad s_i\in\{1,2\},\quad \sum_i s_i\ge l+1.
\]
All chart separations, interior distances from the boundary, and
estimates for the cutoffs can consequently be chosen uniformly. If
\(\mathcal K_\delta\) is empty the conclusion is immediate.
The improved Moser--Trudinger inequality on disks and half-disks then yields,
for every \(\var >0\),
\[
\ln\Big(\int_\Si  Ve^u\,\d v_g\Big)
\le
\frac{1+\var }{8\pi\sum_i s_i}\|u\|^2+C_\var
\le
\frac{1+\var }{8\pi(l+1)}\|u\|^2+C_\var ;
\]
Here is the mixed disk--half-disk estimate with its weights.
Choose cutoffs $\zeta_i=1$ on $\Omega_i$ with disjoint larger supports.
Split $u=v+w$ into its first $N$ Neumann eigenfunctions and their
orthogonal complement. Then
$\|v\|_\infty\le C_N\|u\|$ and
$\|w\|_2^2\le\lambda_{N+1}^{-1}\|u\|^2$.
The mass lower bound and the upper and lower estimates for $V$ imply
\[
 \ln \int_\Sigma Ve^u\,\d v_g
 \le C+\|v\|_\infty+\ln \int_{\Omega_i}e^w\,\d v_g.
\]
Apply the disk Dirichlet Moser--Trudinger inequality to $\zeta_iw$.
For a half-disk reflect it evenly across the flat edge first; its
Dirichlet energy doubles. With $s_i=2$ for a disk and $s_i=1$ for
a half-disk, multiplication by $s_i$ and summation yield
\[
 \Big(\sum_i s_i\Big)\ln \int_\Sigma Ve^u\,\d v_g
 \le C+\Big(\sum_i s_i\Big)\|v\|_\infty
    +\frac{1+\eta}{8\pi}\|\nabla w\|_2^2
    +C_\eta\|w\|_2^2.
\]
Given the desired $\var>0$, first choose $\eta$ small, then $N$
large, and finally apply Young's inequality to $C_N\|u\|$.
This proves the mixed estimate with coefficient
$(1+\var)/(8\pi\sum_i s_i)$. The constants are uniform for the
finite chart families fixed above; compare
\cite[Section~3.2]{LSY2023} for the cutoff argument.
Choose
\[
\frac12-\frac{\rho(1+\var )}{8\pi(l+1)}>0.
\]
Then
\[
J_\rho(u)
\ge
\Big(
\frac12-\frac{\rho(1+\var )}{8\pi(l+1)}
\Big)\|u\|^2-C_\var ,
\]
Hence the normalized measures of functions in sufficiently negative sublevels belong to the \(\delta\)-neighborhood
of \(B_l^\pa (\Si ,\pa \Si )\).

To construct the continuous map to the barycenter space, triangulate
\((\Si ,\pa \Si )\) as a finite geometric simplicial pair in
\(\mathbb R^d\), and express the next integral in these coordinates.
For a multi-index $\beta\in\mathbb N_0^d$, write
$|\beta|:=\sum_{j=1}^d\beta_j$ and
$x^\beta:=\prod_{j=1}^dx_j^{\beta_j}$. Consider the map
\[
\si
\mapsto
\Big(
\int_\Si  x^\beta\,\d\si (x)
\Big)_{|\beta|\le2l-1}.
\]
This map is injective on \(B_l^\pa (\Si ,\pa \Si )\). Indeed, if two
such measures differ at a support point \(a\), the Lagrange-type polynomial
\[
\prod_{b\ne a}
\frac{\langle x-b,a-b\rangle}{|a-b|^2},
\quad
\deg\le2l-1,
\]
separates the corresponding coefficient. The parameter domains for
finitely many allowed support types are semialgebraic, and the coordinates of the integral map
are polynomial in the weights and support coordinates.
The image is semialgebraic by the Tarski--Seidenberg theorem.
Compactness follows from compactness of $B_l^\pa$; triangulation
identifies the image with a compact polyhedron
\cite[Section~9.2]{BCR1998}; compare \cite[Lemma~3.1]{B2019},
\cite[Lemma~4.10]{N2008}, and \cite{AK2026}. The integral map extends continuously to all probability measures
on $\Si $. Composing this extension with a neighborhood retraction
of the compact semialgebraic image and the inverse of the integral
embedding defines a continuous retraction
\(r_l: \mathcal U_l \to B_l^\pa (\Si ,\pa \Si )\)
from a sufficiently small weak neighborhood \(\mathcal U_l\) of the
barycenter space. For \(D\) large, define
\(\psi_D(u):=r_l(\mathfrak m_u).\)
Continuity follows from Lemma~\ref{lem:B-compact-gradient}.
\end{proof}

\begin{lem}\label{lem:B-test-functions}
For each $l\ge1$, there exist continuous maps
$\phi_\lam :B_l^\pa (\Si ,\pa \Si )\to\oH$, $\lam \ge1$, such that
\begin{equation}\label{eq:B-test-bounds}
 d_{\rm BL}(\mathfrak m_{\phi_\lam (\si )},\si )\le C\lam ^{-1},
 \quad
 J_\rho(\phi_\lam (\si ))
 \le-(2\rho-8\pi l)\ln\lam +C_\rho
\end{equation}
uniformly in $\si \in B_l^\pa (\Si ,\pa \Si )$. In particular, when
$\rho\in (4\pi l,4\pi(l+1))$ and $D$ is sufficiently large, a choice
$\phi_D:=\phi_{\lam (D)}:B_l^\pa (\Si ,\pa \Si )\to J_\rho^{-D}$ satisfies
\begin{equation}\label{eq:B-one-sided-homotopy}
 \psi_D\phi_D\simeq\operatorname{Id}_{B_l^\pa (\Si ,\pa \Si )}.
\end{equation}
\end{lem}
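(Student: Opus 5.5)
We plan to build $\phi_\lam$ as a bubble sum. For $\si=\sum_i t_i\delta_{x_i}\in B_q^p$ with $2p+q\le l$, we set
\[
\phi_\lam(\si):=\ln\Big(\sum_i t_i\,\chi\Big(\frac{\lam^2}{(1+\lam^2 d_g(x,x_i)^2)^2}\Big)\Big)-\text{mean},
\]
where $\chi$ is a fixed smooth truncation making the expression bounded below by $O(1)$. This is the standard Djadli--Malchiodi test function, adapted to surfaces with boundary as in \cite{B2019,LSY2023}.

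Continuity on $B_l^\pa(\Si,\pa\Si)$ comes from the structure of the formula. It depends only on the measure $\si$, since coincident atoms add their weights inside the logarithm and zero weights drop out. It is also continuous in the weak topology, because the map $\si\mapsto\int K_\lam(x,\cdot)\,\d\si$ is continuous into $C^1$ for fixed $\lam$, and the logarithm of a function bounded below is smooth. Subtracting the mean lands the map in $\oH$.

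We then need the two estimates in \eqref{eq:B-test-bounds}.

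\textbf{Concentration estimate.} We will show $\mathfrak m_{\phi_\lam(\si)}$ is, up to $O(\lam^{-1})$ in $d_{\rm BL}$, a convex combination of unit-mass concentrations at the $x_i$. Each such concentration has total mass of order $t_i^2\lam^2$, so the weights are not exactly $t_i$. We therefore either modify the exponent (using $e^{\phi}\approx\sum t_i\,\cdot$ bubble) or compose with a fixed homotopy of weights; the homotopy \eqref{eq:B-one-sided-homotopy} only needs weights that vary continuously. Points within distance $\lam^{-1}$ are first merged via a standard interpolation.

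\textbf{Energy estimate.} This is the main obstacle. We need a gradient bound of the form
\[
\int|\nabla\phi_\lam|^2\le 16\pi\,(\#\text{interior pts})\ln\lam+8\pi\,(\#\text{boundary pts})\ln\lam+C
\le 8\pi(2p+q)\ln\lam+C\le 8\pi l\ln\lam+C,
\]
using $|\nabla\phi|\le C\min(\lam,1/d_{\min})$, where $d_{\min}$ is the distance to the nearest $x_i$ carrying non-negligible weight. On a half-disk near a boundary atom the Dirichlet energy is halved, which gives $8\pi$ rather than $16\pi$. We must also show uniformity when atoms collide or a weight tends to zero. Here we split the surface into Voronoi-type regions and use the pointwise bound on each. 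We must further handle an interior atom approaching $\pa\Si$: there the bound becomes the full disk count, and $16\pi\le 8\pi\cdot 2$ is still dominated by the weight $2$ assigned to interior points. Together with
\[
\ln\int Ve^{\phi}\ge 2\ln\lam-C
\qquad\text{and}\qquad
\overline{\phi}\ge-2\ln\lam-C,
\]
this yields
\[
J_\rho\le\tfrac12\cdot 8\pi l\ln\lam\cdot 2-2\rho\ln\lam+C
=-(2\rho-8\pi l)\ln\lam+C_\rho.
\]

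\textbf{The homotopy.} For $\rho>4\pi l$ we choose $\lam(D)$ with $-(2\rho-8\pi l)\ln\lam+C_\rho\le-D$. Then $\phi_D$ maps into $J_\rho^{-D}$, and $\psi_D\phi_D(\si)=r_l(\mathfrak m_{\phi_D(\si)})$ with $d_{\rm BL}(\mathfrak m_{\phi_D(\si)},\si)\le C\lam^{-1}$. For $\lam$ large, the straight segment $s\mathfrak m_{\phi_D(\si)}+(1-s)\si$ stays inside the neighborhood $\mathcal U_l$ of Lemma~\ref{lem:B-concentration}. Applying $r_l$ to this segment then gives a homotopy from $\psi_D\phi_D$ to $r_l|_{B_l^\pa}=\operatorname{Id}$, which proves \eqref{eq:B-one-sided-homotopy}.
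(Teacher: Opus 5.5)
Your architecture matches the paper's. You take $\phi_\lambda(\sigma)=\ln S_{\lambda,\sigma}-\overline{\ln S_{\lambda,\sigma}}$ with $S_{\lambda,\sigma}$ a convex combination of kernels $q_{z,\lambda}=\lambda^2(1+\lambda^2d(x,z)^2)^{-2}$. Your closing homotopy $r_l((1-s)\mathfrak m_{\phi_D(\sigma)}+s\sigma)$ is exactly the paper's. Two steps, however, fail as written.

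\textbf{Weights.} The mass that $e^{\phi_\lambda(\sigma)}$ places near $x_i$ is $t_iM_{x_i,\lambda}$ up to a common factor, where $M_{z,\lambda}=\int_\Sigma Vq_{z,\lambda}\,\d v_g$. It is not of order $t_i^2\lambda^2$. $M_{z,\lambda}$ is bounded above and below, but it depends on $V(z)$ and is roughly halved when $z\in\partial\Sigma$. Hence $\mathfrak m_{\phi_\lambda(\sigma)}$ is $O(\lambda^{-1})$-close to the measure with weights $t_iM_{x_i,\lambda}/\sum_jt_jM_{x_j,\lambda}$, and $d_{\rm BL}(\mathfrak m_{\phi_\lambda(\sigma)},\sigma)$ is of order one. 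Your remark that the homotopy only needs continuously varying weights rescues \eqref{eq:B-one-sided-homotopy}. It does not rescue the first bound in \eqref{eq:B-test-bounds}, which is used later with the original weights, e.g.\ $d_{\rm BL}(\mathfrak m_{\phi_\Lambda(\sigma)},\sigma)<\delta_*/4$ in Proposition~\ref{prop:nonresonant-neumann-sublevels}.

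The missing step is the normalization $S_{\lambda,\sigma}=\sum_i\gamma_iq_{z_i,\lambda}/M_{z_i,\lambda}$. Then $\int_\Sigma VS_{\lambda,\sigma}\,\d v_g=1$, and $\mathfrak m_{\phi_\lambda(\sigma)}=VS_{\lambda,\sigma}\,\d v_g$ is exactly a $\gamma$-weighted sum of probability measures. The first-moment bound $\int d(x,z)Vq_{z,\lambda}\le C\lambda^{-1}$ then gives the estimate. No merging of nearby atoms is needed, since this formula is already continuous at collisions and at vanishing weights. Continuity holds in $C^0\cap H^1$ by dominated convergence, not in $C^1$, because $d$ is only Lipschitz across the cut locus.

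\textbf{Energy.} Drop the truncation $\chi$: on the compact surface one already has $q_{z,\lambda}\ge c\lambda^{-2}$. A truncation at a $\lambda$-independent level makes $S$ constant outside balls of radius about $\lambda^{-1/2}$. Then $\overline{\ln S}=O(1)$ and $\ln\int_\Sigma Ve^{\phi}\,\d v_g=O(1)$, so $J_\rho(\phi_\lambda(\sigma))\to+\infty$. Your count of $16\pi\ln\lambda$ per interior atom is precisely the Dirichlet energy of such a truncated kernel. For the untruncated kernel, $\int|\nabla\ln q_{z,\lambda}|^2=32\pi\ln\lambda+O(1)$ at an interior point and $16\pi\ln\lambda+O(1)$ at a boundary point. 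So your displayed inequality is off by a factor $2$, and the factor $2$ inserted in your last line compensates for it without justification. The consistent statement is $\frac12\|\phi_\lambda(\sigma)\|^2\le8\pi(2p+q)\ln\lambda+C$.

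Because $2\rho-8\pi l$ can be arbitrarily small, this leading coefficient must be sharp. A bound $|\nabla\phi|\le C\min(\lambda,1/d_{\min})$ with unspecified $C$ therefore does not suffice, and discarding atoms of small weight is not justified. The paper's device handles everything uniformly. Write $\nabla\ln S=\sum_i\omega_i\nabla\ln q_{z_i,\lambda}$ with $\omega_i\ge0$ and $\sum_i\omega_i=1$. Then $|\nabla\ln S|^2\le\sum_i|\nabla\ln q_{z_i,\lambda}|^2$ pointwise, through collisions, vanishing weights, and interior atoms approaching $\partial\Sigma$.

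Finally, with $\int VS=1$ one has $\ln\int_\Sigma Ve^{\phi_\lambda(\sigma)}\,\d v_g=-\overline{\ln S}$. What you need is therefore the \emph{upper} bound $\overline{\ln S}\le-2\ln\lambda+C$. It follows from $\ln S\le-2\ln\lambda+C+4\sum_i\ln^+(1/d(x,z_i))$ and uniform integrability of the logarithms. The lower bound you list is not the relevant direction.
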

\begin{proof}
Extend \(g\) smoothly to a closed surface and denote the restricted
distance by \(d\). Set
\[
q_{z,\lam }(x)
:=
\frac{\lam ^2}{(1+\lam ^2d(x,z)^2)^2},
\quad
M_{z,\lam }
:=
\int_\Si  Vq_{z,\lam }\,\d v_g.
\]
Uniform disk and half-disk estimates yield
\begin{equation}\label{eq:B-kernel-bounds}
0<c\le M_{z,\lam }\le C,
\quad
\int_\Si  d(x,z)Vq_{z,\lam }\,\d v_g
\le C\lam ^{-1}.
\end{equation}
For
\(\si =\sum_{i=1}^N\gamma_i\delta_{z_i}\), define
\begin{equation*}
S_{\lam ,\si }
:=
\sum_{i=1}^N\gamma_i
\frac{q_{z_i,\lam }}{M_{z_i,\lam }},
\quad
\phi_\lam (\si )
:=
\ln S_{\lam ,\si }
-\overline{\ln S_{\lam ,\si }}.
\end{equation*}
Then
\[
\int_\Si  VS_{\lam ,\si }\,\d v_g=1,
\quad
d_{\rm BL}
(\mathfrak m_{\phi_\lam (\si )},\si )
\le C\lam ^{-1}
\]
by \eqref{eq:B-kernel-bounds}. For fixed \(\lam \),
\(z\mapsto q_{z,\lam }\) is continuous in \(C^0\cap H^1\).
Indeed, the distances converge uniformly, their gradients converge
almost everywhere outside the limiting cut locus and pole, and the
kernel gradients are uniformly bounded. Dominated convergence yields
the \(H^1\) continuity, jointly for \(\lam \) in compact intervals.
Since \(M_{z,\lam }>0\) is continuous, the same holds for
\(q_{z,\lam }/M_{z,\lam }\).

A continuous map from a compact space to a Banach space can be uniformly
approximated by finite sums of fixed vectors with continuous scalar
coefficients, using a finite partition of unity. Its integral therefore
depends continuously in norm on a weakly convergent probability measure.
Apply this fact in \(C^0\cap H^1\) to obtain continuity of
\(\si \mapsto S_{\lam ,\si }\). For fixed \(\lam \), these
functions have a common positive lower bound; hence
\(\nabla\ln S=S^{-1}\nabla S\) proves continuity of \(\phi_\lam \)
in \(H^1\), including collisions, vanishing coefficients, and changes
of support type.

At a.e. \(x\), set
\[
\om _i(x)
:=
\frac{\gamma_iq_{z_i,\lam }(x)/M_{z_i,\lam }}
{S_{\lam ,\si }(x)}.
\]
Then
\[
\om _i\ge0,\quad
\sum_i\om _i=1,\quad
\nabla\ln S_{\lam ,\si }
=
\sum_i\om _i\nabla\ln q_{z_i,\lam }.
\]
Consequently,
\[
|\nabla\ln S_{\lam ,\si }|^2
\le
\sum_i\om _i|\nabla\ln q_{z_i,\lam }|^2.
\]
For an interior, respectively boundary, center,
\[
\int_\Si |\nabla\ln q_{z,\lam }|^2\,\d v_g
\le
\left\{\begin{aligned}
&32\pi\ln\lam +C&&\text{ if }\, z\in\mathring\Si ,\\
&16\pi\ln\lam +C&&\text{ if }\, z\in\pa \Si .
\end{aligned}\right.
\]
Thus, for \(2p+q\le l\),
\begin{equation}\label{eq:B-test-dirichlet}
\frac12\|\phi_\lam (\si )\|^2
\le
8\pi(2p+q)\ln\lam +C
\le
8\pi l\ln\lam +C.
\end{equation}
Moreover,
\[
-2\ln\lam -C
\le
\overline{\ln S_{\lam ,\si }}
\le
-2\ln\lam +C,
\]
because
\[
\ln S_{\lam ,\si }(x)
\le
-2\ln\lam +C
+4\sum_i\ln^+\frac1{d(x,z_i)}
\]
and the logarithmic singularities are uniformly integrable. Hence
\[
\ln\Big(
\int_\Si  Ve^{\phi_\lam (\si )}\,\d v_g
\Big)
=
2\ln\lam +O(1),
\]
and \eqref{eq:B-test-dirichlet} yields \eqref{eq:B-test-bounds}.

By compactness of \(B_l^\pa \), choose \(\eta>0\) so that its
\(\eta\)-neighborhood in the probability measures is contained in
\(\mathcal U_l\). Choose \(\lam (D)\) large enough that
\(\phi_D:=\phi_{\lam (D)}\) takes values in \(J_\rho^{-D}\) and
\(C/\lam (D)<\eta\). Then, for all \(\si \in B_l^\pa \)
and \(t\in[0,1]\),
\[
 d_{\rm BL}((1-t)\mathfrak m_{\phi_D(\si )}+t\si ,\si )
 \le C/\lam (D)<\eta.
\]
Thus the whole segment is contained in the domain of \(r_l\), and
\[
H(t,\si )
:=
r_l(
(1-t)\mathfrak m_{\phi_D(\si )}+t\si ),
\quad
t\in[0,1],
\]
proves \eqref{eq:B-one-sided-homotopy}.
\end{proof}

The same computation, uniformly for
\(1\le a\le a_{\max}\), yields
\begin{equation}\label{eq:B-reinforced-test}
 \mathscr J_a(\phi_\lam (\si ))
 \le-(2\rho-8\pi al)\ln\lam +C.
\end{equation}
Thus, for sufficiently large scale, the image of the same map is contained in any prescribed $X_{a,D}$.
Moreover, if $r_l$ is the neighborhood retraction used above, then
\begin{equation}\label{eq:B-projection-closeness}
 \sup_{u\in X_{a,D}}d_{\rm BL}(\mathfrak m_u,r_l(\mathfrak m_u))\to 0
 \quad\text{ as }\, D\to \infty,
\end{equation}
uniformly for $ a \in [1, a_{\max}]$. Indeed,
$X_{a,D}\subset J_\rho^{-D}$, and continuity of $r_l$ near the compact
set $B_l^\pa (\Si ,\pa \Si )$, together with $r_l|_{B_l^\pa (\Si ,\pa \Si )}=\operatorname{Id}$,
yields \eqref{eq:B-projection-closeness} from Lemma~\ref{lem:B-concentration}.

The reverse homotopy follows from the next local contraction.
\begin{lem}\label{lem:B-local-contraction}
Fix $1\le a<b\le a_{\max}$, $D>0$, $\si \in B_l^\pa (\Si ,\pa \Si )$, and a
weak neighborhood $\mathcal U$ of $\si $ in the probability measures.
There are a weak neighborhood $\mathcal V$ of $\si $ and $D'\ge D$
such that the inclusion
\begin{equation}\label{eq:B-local-inclusion}
 X_{b,D'}\cap\mathfrak m^{-1}(\mathcal V)
 \to  X_{a,D}\cap\mathfrak m^{-1}(\mathcal U)
\end{equation}
is homotopic to a constant map through maps with values in the set on the right.
\end{lem}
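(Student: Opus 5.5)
The contraction will be built by pushing every function in the smaller set toward an explicit test function concentrated at the support of $\si$, while keeping the energy below $-D$ and the normalized measure inside $\mathcal U$. Write $\si=\sum_{i}t_i\delta_{x_i}$ with $x_1,\dots,x_p\in\intS$ and $x_{p+1},\dots,x_{p+q}\in\pa\Si$, $2p+q\le l$, and fix the test function $\phi_*:=\phi_\lam(\si)$ from Lemma~\ref{lem:B-test-functions}, with $\lam$ so large that $\phi_*\in X_{b,D'}$ and $\mathfrak m_{\phi_*}$ lies deep inside $\mathcal U$. This is possible by \eqref{eq:B-test-bounds} and \eqref{eq:B-reinforced-test}.

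The homotopy has two stages. In the first stage I use the gap between $a$ and $b$. Since $\mathscr J_a=\mathscr J_b-\frac{b-a}{2}\|\nabla u\|^2$, every $u\in X_{b,D'}$ satisfies
\[
\mathscr J_a(u)\le -D'-\tfrac{b-a}{2}\|u\|^2 .
\]
Moreover $\|u\|^2\to\infty$ uniformly as $D'\to\infty$, because $\mathscr J_b$ is bounded below on bounded sets by \eqref{eq:MT-Jensen-explicit}. This surplus of negative energy is what the homotopy spends.

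In the second stage I join $u$ to $\phi_*$. A linear interpolation $(1-s)u+s\phi_*$ would destroy the measure control, so I interpolate at the level of densities instead. For $s\in[0,1]$ set
\[
u_s:=\ln\big((1-s)e^{u}/\textstyle\int_\Si Ve^u\,\d v_g+s\,e^{\phi_*}/\textstyle\int_\Si Ve^{\phi_*}\,\d v_g\big)
\]
minus its mean. Then $\mathfrak m_{u_s}=(1-s)\mathfrak m_u+s\mathfrak m_{\phi_*}$, a convex combination of two measures in $\mathcal V$. Taking $\mathcal V$ convex, or a small $d_{\rm BL}$-ball around $\si$ with $\mathcal V\subset\mathcal U$, keeps $\mathfrak m_{u_s}$ in $\mathcal U$. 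The mass term of $u_s$ is computed exactly: $\int_\Si Ve^{u_s}\,\d v_g=e^{-\overline{\ln(\cdot)}}$. The Dirichlet term is controlled by the convexity estimate used in the proof of Lemma~\ref{lem:B-test-functions}: with weights $\om$, $|\nabla u_s|^2\le \om|\nabla u|^2+(1-\om)|\nabla\phi_*|^2$. The mean is controlled from below via Jensen, since $\overline{\ln((1-s)A+sB)}\ge(1-s)\overline{\ln A}+s\overline{\ln B}$.

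Combining these three estimates gives $\mathscr J_a(u_s)\le\max\{\mathscr J_a(u),\mathscr J_a(\phi_*)\}+C$, where $C$ depends only on $V$ and $\Si$. By the first stage this is below $-D$ once $D'$ and $\lam$ are large. Continuity of $s\mapsto u_s$ in $H^1$ follows as in Lemma~\ref{lem:B-test-functions}, since the densities are bounded below on compact pieces, or after a small positive regularization. At $s=1$ the homotopy ends at the constant map $\phi_*$.

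I expect the energy estimate along this density interpolation to be the main obstacle. If the pointwise convexity bound does not give a uniform constant $C$, because of the logarithm of a sum of two exponentials, the fallback is to first deform $u$ along $u\mapsto u+\tau\phi_*$, or by a truncation that lowers the energy using the surplus $\frac{b-a}{2}\|u\|^2$, before interpolating. The role of the strict gap $a<b$ is exactly to absorb the loss in this step.
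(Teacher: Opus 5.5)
Your density interpolation breaks down at the energy estimate. The problem is structural, not a matter of constants.

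\textbf{The energy along the path.} Set $A:=e^{u}/\int_\Si Ve^u\,\d v_g$, $B:=e^{\phi_*}/\int_\Si Ve^{\phi_*}\,\d v_g$ and $W_s:=(1-s)A+sB$. Then $\int_\Si VW_s\,\d v_g=1$, and hence
\[
\mathscr J_a(u_s)=\frac a2\int_\Si|\nabla\ln W_s|^2\,\d v_g+\rho\,\overline{\ln W_s}.
\]
The mean enters with a plus sign. So the concavity inequality $\overline{\ln W_s}\ge(1-s)\overline{\ln A}+s\overline{\ln B}$ bounds the energy from below, whereas you need an upper bound.

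\textbf{Why no fixed target works.} The source set $X_{b,D'}\cap\mathfrak m^{-1}(\mathcal V)$ is unbounded in $\oH$. The path keeps the Dirichlet energy of the steep core of $u$ but not its mass. Concretely, take $\si=\delta_x$ with $x\in\pa\Si$ and $u=\phi_\mu(\si)$. By \eqref{eq:B-test-bounds} and \eqref{eq:B-reinforced-test}, these functions lie in the source set for all large $\mu$.
\begin{itemize}
\item Here $A$ and $B$ are the normalized kernels at scales $\mu\gg\lam$, and their logarithmic gradients are parallel.
\item Hence $|\nabla\ln W_{1/2}|\ge\frac12|\nabla\ln A|$ on $\{A\ge B\}\supset\{d(\cdot,x)\le c(\lam\mu)^{-1/2}\}$. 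This gives $\int_\Si|\nabla\ln W_{1/2}|^2\,\d v_g\ge 2\pi\ln(\mu/\lam)-C$.
\item Meanwhile $W_{1/2}\ge B/2$ gives $\overline{\ln W_{1/2}}\ge-2\ln\lam-C$.
\end{itemize}
Therefore $\mathscr J_a(u_{1/2})\ge\pi a\ln(\mu/\lam)-2\rho\ln\lam-C\to+\infty$ as $\mu\to\infty$, for every fixed $\lam$. The claimed bound $\mathscr J_a(u_s)\le\max\{\mathscr J_a(u),\mathscr J_a(\phi_*)\}+C$ is false. The surplus $\frac{b-a}{2}\|u\|^2$ is available only at $s=0$, not along the path. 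Your measure control through $\mathfrak m_{u_s}=(1-s)\mathfrak m_u+s\mathfrak m_{\phi_*}$ is fine; the energy is the obstruction.

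\textbf{The missing idea} is a concentration step whose size depends on $u$. In the paper, the surplus from $b>a$ pays only for the first stage. There $u$ is truncated at height $T=M\sqrt{1+2e}$, with $e=\frac12\|u\|^2$, and localized near the support of $\si$; this is your fallback, and it costs $C_M\sqrt{1+e}$. The remaining steps are:
\begin{enumerate}
\item Each localized piece $v_i$ is dilated in its chart by $s(u)=s_0+C_{\rm dil}(1+e)$, with the logarithmic tail $4\ln(R_i/|y|)$ glued on. This adds $8\pi(2p+q)s$ to the Dirichlet energy and $2s$ to the logarithmic mass. It therefore lowers $\mathscr J_a$ by $(2\rho-8\pi a(2p+q))s$, which is positive by \eqref{eq:B-coefficient-range}.
\item Only inside this dilated profile is $v_i$ shrunk linearly to zero, with mass-preserving offsets $\beta_i(t)h_{i,s}$. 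Their cost $C(1+e)+C(1+e)^2/s$ is absorbed by the dilation gain, precisely because $s$ is proportional to $1+e$.
\item The cell masses are interpolated to canonical values and $s$ is lowered to $s_0$, which yields a final function independent of $u$.
\end{enumerate}
Letting $\lam$ in your target depend on $u$ does not replace this. A general $u$ in the source set has no bubble scale to match. You would also still need a controlled path from that $u$-dependent family to a single function.
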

\begin{proof}
Write
\[
\si =\sum_{i=1}^{N_\si }\gamma_i\delta_{z_i},
\quad
\gamma_i>0,
\quad
\sum_i\gamma_i=1,
\quad
2p+q\le l.
\]
Here $p$ and $q$ count the interior and boundary support points, so
$N_\si =p+q$. Choose pairwise disjoint coordinate neighborhoods
$\Omega_i:=U_{R_i}(z_i)$ and $\Omega_i':=U_{R_i'}(z_i)$, where
$R_i'\in (0,R_i)$. Their coordinate images are the disks or half-disks
$B_{R_i}^{z_i}$ and $B_{R_i'}^{z_i}$ defined in
Subsection~\ref{subsec:bubble-neighborhoods}. Set
\[
W_i(y):=
V(y_{z_i}^{-1}(y))e^{\hat\varphi_{z_i}(y)},
\quad y\in B_{R_i}^{z_i}.
\]
After decreasing $R_i$,
\begin{equation}\label{eq:B-density-oscillation}
e^{-\var _{\rm loc}}
\le
\frac{W_i(y)}{W_i(0)}
\le
e^{\var _{\rm loc}},
\quad\forall\,y\in B_{R_i}^{z_i}.
\end{equation}
Choose \(\mathcal V\Subset\mathcal U\) so that
\[
\nu(\Omega_i')
>
(1-\var _{\rm loc})\gamma_i
\quad
\text{for every }\nu\in\mathcal V
\text{ and every }i.
\]

\textbf{Step 1.} Truncation and localization.
For \(u\in X_{b,D'}\cap\mathfrak m^{-1}(\mathcal V)\), set
\(e:=\frac12\|u\|^2, \quad T:=M\sqrt{1+2e}.\)
Since \(\mathscr J_b(u)\le-D'\),
\begin{equation}\label{eq:B-source-mass}
\ln\Big(\int_\Si  Ve^u\,\d v_g\Big)
\ge
\frac{be+D'}{\rho},
\quad
e^T
\Big(\int_\Si  Ve^u\,\d v_g\Big)^{-1}
\le
C_Me^{-D'/\rho}.
\end{equation}
Set \(I:=\int_\Si  Ve^u\,\d v_g\) and \(K:=e^{-T}I\).
Since
\[
 \inf_{e\ge0}\Big\{\frac{be}{\rho}
       -M\sqrt{1+2e}\Big\}>-\infty,
\]
we have \(K\ge c_Me^{D'/\rho}\). Constants in the local construction
may depend on the fixed configuration \(\si \), the neighborhoods $\Omega_i$, and
parameters already chosen, but not on \(u\) or the homotopy parameters.
Set \(u^+=\max\{u,0\}\), \(u^-=\max\{-u,0\}\),
\(u_T=(u-T)^+\), and \(v=\eta_0u_T\), where
\(\eta_0=\sum_i\eta_i\), \(\eta_i\in [0,1]\),
\(\eta_i=1\) on \(\Omega_i'\), and
\(\operatorname{supp}\eta_i\subset\Omega_i\). The first three
segments, each parametrized by \(s\in [0,1]\), are
\[
u^+-(1-s)u^-,\quad (u-sT)^+,\quad (1-s+s\eta_0)u_T.
\]
They join \(u\) to \(v\) and are strongly continuous in \(H^1\) by
the Sobolev chain rule and approximation.
For the truncation,
\begin{equation*}
Ve^{(u-tT)^+}
=
e^{-tT}Ve^u+R_t,
\quad
0\le R_t\le V.
\end{equation*}
Also
\[
\|u_T\|_{L^2}^2
\le
T^{-2}\int_\Si |u|^4\,\d v_g
\le
CM^{-2}e.
\]
For
\(v_t:=(1-t+t\eta_0)u_T\),
\begin{equation}\label{eq:B-localization-energy}
\frac12\int_\Si |\nabla v_t|^2\,\d v_g
\le
(1+\var )e+C_\var  M^{-2}e.
\end{equation}
Choose \(\var \) and then \(M\) so that
\[
a(\var +C_\var  M^{-2})
<
\frac{b-a}{4}.
\]
Equations
\eqref{eq:B-source-mass}--\eqref{eq:B-localization-energy} yield,
uniformly on the three segments,
\begin{equation*}
\mathscr J_a
\le
\mathscr J_b(u)
-\frac12(b-a)e
+C_M\sqrt{1+e}+C.
\end{equation*}
For \(D'\) large the right-hand side is at most \(-D\).
The mass estimate is also uniform on the three segments.
For nonnegative $f,g\in L^1(\Si ,\d v_g)$ with positive integrals,
\begin{equation}\label{eq:B-normalized-L1-comparison}
 \Big\|\frac f{\int_\Si  f\,\d v_g}
             -\frac g{\int_\Si  g\,\d v_g}\Big\|_{L^1(\Si )}
 \le\frac{2\|f-g\|_{L^1(\Si )}}{\int_\Si  f\,\d v_g}.
\end{equation}
During the first segment, the increase in \(Ve^u\) is at most \(V\).
During truncation, compare with \(e^{-tT}Ve^u\), whose integral is at
least \(K\); the difference is again between zero and \(V\).
During localization, the exponential term equals \(Ve^{u_T}\) on
\(\bigcup_i\Omega_i'\) and only decreases elsewhere. Its mass on the
region outside this union is at most \(\var _{\rm loc}K+C\),
and \(\int_\Si  Ve^{u_T}\,\d v_g\ge K\).
By \eqref{eq:B-normalized-L1-comparison}, every function $w$ on
these three segments satisfies
\[
 d_{\rm BL}(\mathfrak m_w,\mathfrak m_u)
 \le C\var _{\rm loc}+C/K
 \le C\var _{\rm loc}+C_Me^{-D'/\rho}.
\]
Set
\[
M_i^0:=\int_{\Omega_i}Ve^v\,\d v_g
=\int_{B_{R_i}^{z_i}}W_i(y)e^{v(y_{z_i}^{-1}(y))}\,\d y.
\]
Since
\(v=u_T\) on \(\Omega_i'\) and \(0\le v\le u_T\),
\[
 K\mathfrak m_u(\Omega_i')\le M_i^0
 \le K\mathfrak m_u(\Omega_i)+C.
\]
The lower estimates on $\mathfrak m_u(\Omega_j')$, $j\ne i$, also yield
\(\mathfrak m_u(\Omega_i)\le\gamma_i+
\var _{\rm loc}(1-\gamma_i)\). Consequently
\(M_i^0=K\gamma_i+O(\var _{\rm loc}K+1).\)

\textbf{Step 2.} Dilation in local coordinates.
Write $v_i(y):=v(y_{z_i}^{-1}(y))$ for $y\in B_{R_i}^{z_i}$.
For $s\ge0$, define on $B_{R_i}^{z_i}$
\begin{equation}\label{eq:B-dilation}
(\mathcal D_sv_i)(y)
=
\left\{\begin{aligned}
&v_i(e^sy)+4s,
&&|y|\le R_ie^{-s},\\
&4\ln(R_i/|y|),
&&R_ie^{-s}\le|y|\le R_i.
\end{aligned}\right.
\end{equation}
By construction, $v_i$ has zero trace on the circular arc
$|y|=R_i$; the trace on the flat edge of a half-disk need not vanish.
At $|y|=R_ie^{-s}$ both traces in \eqref{eq:B-dilation} equal $4s$,
so the piecewise definition is in $H^1$. The map
$(s,v_i)\mapsto\mathcal D_s v_i$ is strongly continuous in $H^1$:
for a smooth function vanishing on the circular arc, continuity follows
directly from \eqref{eq:B-dilation}; approximation and the scale invariance of
the two-dimensional Dirichlet integral extend continuity to every $v_i$. The change of variables also proves continuity in $L^2$ on bounded
$s$ intervals. Thus the dilation defines a continuous homotopy.
Pull back each $\mathcal D_sv_i$ by $y_{z_i}$ and extend by zero
on $\Si \setminus\bigcup_i\Omega_i$; denote the resulting function
by $\mathcal D_sv$. Then
\begin{equation*}
\frac12\int_\Si |\nabla(\mathcal D_sv)|_g^2\,\d v_g
=
\frac12\int_\Si |\nabla v|_g^2\,\d v_g
+8\pi(2p+q)s.
\end{equation*}
Changing variables on $B_{R_ie^{-s}}^{z_i}$ yields
\[
 \int_{B_{R_ie^{-s}}^{z_i}}W_i(y)
       e^{v_i(e^sy)+4s}\,\d y
 =e^{2s}\int_{B_{R_i}^{z_i}}W_i(e^{-s}y)e^{v_i(y)}\,\d y,
\]
The domain is a disk for $z_i\in\intS$ and a half-disk for
$z_i\in\pa \Si $.
By \eqref{eq:B-density-oscillation}, the integral equals
\(e^{2s}M_i^0(1+O(\var _{\rm loc}))\).
The annular integral is at most $Ce^{2s}$, and the integral on
$\Si \setminus\bigcup_i\Omega_i$ is bounded. Thus, uniformly for
$s\ge0$,
\[
 \mathfrak m_{\mathcal D_sv}(\Omega_i)
 =\gamma_i+O(\var _{\rm loc})+O(K^{-1}).
\]
The estimates for $\mathfrak m_{\mathcal D_sv}(\Omega_i)$ and the
small diameters of $\Omega_i$ yield an estimate for the bounded-Lipschitz distance
throughout the dilation. No total-variation estimate is used on this
segment.
Direct integration also yields
\[
\overline{\mathcal D_sv}
\le C+Ce^{-2s}\sqrt e,
\quad
\ln\Big(\int_\Si  Ve^{\mathcal D_sv}\,\d v_g\Big)
\ge
\ln\Big(\int_\Si  Ve^u\,\d v_g\Big)-T+2s-C.
\]
Therefore
\begin{equation*}
\mathscr J_a(\mathcal D_sv)
\le
\mathscr J_b(u)
-\frac12(b-a)e
+C_M\sqrt{1+e}+C
-(2\rho-8\pi a(2p+q))s.
\end{equation*}
The coefficient is positive by \eqref{eq:B-coefficient-range}; choose
\begin{equation*}
s=s(u):=s_0+C_{\rm dil}(1+e).
\end{equation*}

\textbf{Step 3.} Interpolation of the local profiles.
Let \(h_{i,s}=1\) on \(|y|\le R_ie^{-s}\),
\(h_{i,s}=\ln(R_i/|y|)/s\) on the annulus, and \(h_{i,s}=0\)
outside $B_{R_i}^{z_i}$; the same notation denotes its pullback to
$\Omega_i$, extended by zero on $\Si \setminus\Omega_i$. With
\(I_i:=\int_{B_{R_i}^{z_i}}W_i e^{\mathcal D_sv_i}\,\d y,\)
choose \(\beta_i(t)\ge0\) by
\begin{equation}\label{eq:B-prescribed-cell-mass}
\int_{B_{R_i}^{z_i}}
W_i
e^{\mathcal D_s((1-t)v_i)+\beta_i(t)h_{i,s}}
\,\d y
=
I_i.
\end{equation}
The left side of \eqref{eq:B-prescribed-cell-mass} is continuous in
\(\beta\), with derivative
\[
\int_{B_{R_i}^{z_i}}W_i h_{i,s}
 e^{\mathcal D_s((1-t)v_i)+\beta h_{i,s}}\,\d y>0.
\]
At \(\beta=0\), the left side of \eqref{eq:B-prescribed-cell-mass} is
at most \(I_i\) and tends to infinity as \(\beta\to\infty\).
Thus \(\beta_i(t)\ge0\) exists uniquely.
For \(s\ge1\),
integration over the central disk and the surrounding annulus yields
\[
I_i\le Ce^{2s}\Big(1+\int_{B_{R_i}^{z_i}}e^{v_i}\,\d y\Big),
\quad
\int_{B_{R_i}^{z_i}}
W_i e^{\mathcal D_s((1-t)v_i)+\beta h_{i,s}}\,\d y
\ge ce^{2s+\beta}|B_{R_i}^{z_i}|.
\]
The zero trace on the circular arc permits the disk Moser--Trudinger
inequality (after even reflection for a half-disk):
\[
 \ln\Big(1+\int_{B_{R_i}^{z_i}}e^{v_i}\,\d y\Big)
 \le C+\frac1{8\pi}\int_{B_{R_i}^{z_i}}|\nabla v_i|^2\,\d y
 \le C(1+e).
\] The upper bound for $I_i$ and the lower bound $ce^{2s+\beta}|B_{R_i}^{z_i}|$
therefore imply
\begin{equation}\label{eq:B-core-offset-bound}
0\le\beta_i(t)\le C(1+e).
\end{equation}
The integrals in \eqref{eq:B-prescribed-cell-mass} are continuous on bounded \(H^1\) sets
by Moser--Trudinger.
Dilation continuity, \eqref{eq:B-core-offset-bound}, and the strictly
increasing scalar inverse imply joint continuity of \(\beta_i(t)\). The functions
\(h_{i,s}\) are strongly continuous for \(s>0\).
For the resulting function \(\widetilde v_t\),
\begin{equation*}
\frac12\int_\Si |\nabla\widetilde v_t|^2\,\d v_g
=
(1-t)^2\frac12\int_\Si |\nabla v|^2\,\d v_g
+
\sum_i
\Big(
2\kap (z_i)s
+\kap (z_i)\beta_i(t)
+\frac{\kap (z_i)\beta_i(t)^2}{8s}
\Big).
\end{equation*}
Furthermore,
\[
\overline{\widetilde v_t}
\le
C+Ce^{-2s}\sqrt e
+C(1+e)(e^{-2s}+s^{-1}),
\quad
\ln\int_\Si  Ve^{\widetilde v_t}\,\d v_g
\ge2s-C,
\]
hence
\begin{equation}\label{eq:B-core-interpolation-bound}
\mathscr J_a(\widetilde v_t)
\le
-(2\rho-8\pi a(2p+q))s
+C(1+e)
+\frac{C(1+e)^2}{s}
+C.
\end{equation}
By \eqref{eq:B-coefficient-range},
\(c_0:=2\rho-8\pi a_{\max}l>0\). With
\(s=s_0+C_{\rm dil}(1+e)\), \eqref{eq:B-core-interpolation-bound} is at most
\[
 -c_0s_0+
 \Big(-c_0C_{\rm dil}+C+\frac C{C_{\rm dil}}\Big)(1+e)+C.
\]
Choose \(C_{\rm dil}\) to obtain a negative coefficient of \(1+e\),
and then choose \(s_0\) satisfying \(-c_0s_0+C\le-D\).
Thus $\mathscr J_a(\widetilde v_t)\le-D$ uniformly on the domain of the homotopy.

\textbf{Step 4.} A fixed reference function.
Choose \(A_i\ge0\) and \(C_\si >0\) so that
\[
\frac{\kap (z_i)}4R_i^2W_i(0)e^{A_i}
=
C_\si \gamma_i,
\]
and define
\[
I_i^{\rm can}(s)
:=
\int_{B_{R_i}^{z_i}}
W_i e^{\mathcal D_s(0)+A_ih_{i,s}}\,\d y.
\]
Then
\begin{equation*}
I_i^{\rm can}(s)
=
C_\si \gamma_i e^{2s}(1+o(1)),
\quad
s\to\infty.
\end{equation*}
Indeed, the constant-density core and annular integrals equal
\[
 \frac{\kap(z_i)}4R_i^2W_i(0)
 \Big[\frac{e^{2s+A_i}}2+
       \frac{e^{2s+A_i}-1}{2+A_i/s}\Big].
\]
After division by $e^{2s}$, continuity of $W_i$ changes the limit
by zero. The two terms each contribute one half of the required mass.
Interpolate the integrals over $\Omega_i$ by
\(I_i(t):= I_i^{\,1-t} [I_i^{\rm can}(s)]^t, \quad 0\le t\le1,\)
and choose the unique coefficient of $h_{i,s}$ for which the integral equals $I_i(t)$.
Since \(v_i,A_i\ge0\), both masses are at least
\(\int_{B_{R_i}^{z_i}}W_i e^{\mathcal D_s(0)}\,\d y\). Their interpolated value
is at least this common lower bound and remains between the two masses. The strictly increasing
scalar inverse from Step 3 therefore defines a continuous coefficient in
\([0,\max\{\beta_i(1),A_i\}]\). Consequently,
\eqref{eq:B-core-offset-bound} and \eqref{eq:B-core-interpolation-bound}
hold throughout this segment.

To estimate the normalized integrals over $\Omega_i$ during the interpolation,
let \(\mathbf p=(p_i)\) and \(\mathbf q=(q_i)\) be the normalized initial
and final mass vectors. For sufficiently small errors,
\(p_i,q_i\ge\gamma_i/2\). On this set of pairs of probability vectors,
the map
\[
 (\mathbf p,\mathbf q,t)\mapsto
 \frac{p_i^{1-t}q_i^t}{\sum_jp_j^{1-t}q_j^t},
 \quad 0\le t\le1,
\]
is uniformly Lipschitz in \(\mathbf p,\mathbf q\) and equals \(\gamma_i\) when
\(\mathbf p=\mathbf q=(\gamma_i)_i\). Its Lipschitz constant may depend on the fixed
positive weights \(\gamma_i\). Hence, by taking the initial mass
error sufficiently small and \(s_0\) sufficiently large,
\[
\sup_{0\le t\le1}
\Big|
\frac{I_i^{\,1-t}[I_i^{\rm can}(s)]^t}
{\sum_j I_j^{\,1-t}[I_j^{\rm can}(s)]^t}
-\gamma_i
\Big|
\le
C\var _{\rm loc}+o_{D'}(1)+o_{s_0}(1).
\]
The normalized mass on $\Si \setminus\bigcup_i\Omega_i$ is
$O(e^{-2s})$, and the neighborhoods $\Omega_i$ may be chosen with
arbitrarily small diameter. Hence the whole interpolating path remains
in \(\mathcal U\) in the bounded-Lipschitz topology.
Finally,
\[
\mathscr J_a
\Big(
\mathcal D_s(0)+\sum_iA_ih_{i,s}
\Big)
\le
-(2\rho-8\pi a(2p+q))s+C.
\]
Then decrease \(s\) to \(s_0\). The resulting final function is
independent of \(u\).
We finish by making the neighborhood and parameter choices explicit.
Choose \(r_{\mathcal U}>0\) such that the bounded-Lipschitz ball of
radius \(r_{\mathcal U}\) about \(\si \) is contained in
\(\mathcal U\). For any probability measure \(\nu\),
\[
 d_{\rm BL}(\nu,\si )
 \le \max_i\operatorname{diam}_g(\Omega_i)
 +\sum_i|\nu(\Omega_i)-\gamma_i|
 +\nu\Big(\Si \setminus\bigcup_i\Omega_i\Big).
\]
Indeed, subtract \(f(z_i)\) from a bounded-Lipschitz test function
on each $\Omega_i$ and estimate the integral on
$\Si \setminus\bigcup_i\Omega_i$ directly.
For every function on the seven path segments, the preceding estimates yield
an estimate for the bounded-Lipschitz distance of its normalized measure
from $\si $:
\[
 \max_i\operatorname{diam}_g(\Omega_i)
 +C_\gamma\var _{\rm loc}+C_{\rm loc}K^{-1}
 +\om _\si (s_0),
 \quad \om _\si (s_0)\to 0.
\]
The coefficient \(C_\gamma\) depends only on the fixed weights;
\(C_{\rm loc}\) is fixed after the neighborhoods $\Omega_i$ and truncation parameters
are chosen. The term \(\om _\si (s_0)\to0\) includes the reference
mass error and the mass on $\Si \setminus\bigcup_i\Omega_i$,
uniformly for $s\ge s_0$.
Choose $\var _{\rm loc}$ and the neighborhoods $\Omega_i$ so that
\eqref{eq:B-density-oscillation} holds and the first two terms are less
than \(r_{\mathcal U}/2\). Then choose the cutoffs, \(\var ,M\)
for \eqref{eq:B-localization-energy}, \(A_i,C_\si \), and
\(C_{\rm dil},s_0\), in this order. Increase \(s_0\) so that
\(\om _\si (s_0)<r_{\mathcal U}/4\), preserving the energy estimates.
Finally choose \(D'\ge D\) large enough for Steps 1--2 and for
\(C_{\rm loc}K^{-1}<r_{\mathcal U}/4\), using
\eqref{eq:B-source-mass}. All choices are independent of \(u\).

Apply \(\Pi w=w-\overline w\) and concatenate the seven segments on
fixed subintervals of \([0,1]\). The successive paths agree at their joining times, and the
continuity proved in Steps 1--4 yields a jointly continuous \(H^1\)
homotopy. The projection preserves \(\mathscr J_a\) and \(\mathfrak m\),
fixes the initial mean-zero function, and yields a final reference
function independent of \(u\). By the energy and measure estimates,
the entire homotopy takes values in
\(X_{a,D}\cap\mathfrak m^{-1}(\mathcal U)\), proving
\eqref{eq:B-local-inclusion}.
\end{proof}

Fix $1\le a<b\le a_{\max}$, $D>0$ and $\var >0$.
For each $\tau\in B_l^\pa (\Si ,\pa \Si )$, apply
Lemma~\ref{lem:B-local-contraction} so that the homotopy takes values in
\(\{\nu:d_{\rm BL}(\nu,\tau)<\var /3\}\).
Choose \(0<r_\tau<\var /6\) so that the neighborhood on which the
local contraction is defined contains
\(\{\nu:d_{\rm BL}(\nu,\tau)<2r_\tau\}\).
Choose finitely many $\tau_i$ so that the balls
$\{\si :d_{\rm BL}(\si ,\tau_i)<r_{\tau_i}\}$ cover
$B_l^\pa (\Si ,\pa \Si )$, take
$0<\delta<\min_i r_{\tau_i}$, and let $D'$ dominate all their
thresholds. Given $\si $ in this compact barycenter space, choose
$i$ with $d_{\rm BL}(\si ,\tau_i)<r_{\tau_i}$.
The triangle inequality shows that the ball of radius \(\delta\) about
\(\si \) is contained in the domain of the local contraction centered at
\(\tau_i\), and the contraction has image in the ball of radius \(\var \) about \(\si \).
The constants can therefore be chosen uniformly in \(\si \), without
a continuous choice of null-homotopies. Thus the inclusion
\begin{equation}\label{eq:B-uniform-local-contraction}
X_{b,D'}\cap
\{
d_{\rm BL}(\mathfrak m_u,\si )<\delta
\}
\to
X_{a,D}\cap
\{
d_{\rm BL}(\mathfrak m_u,\si )<\var
\}
\end{equation}
is null-homotopic through maps with values in the set on the right, uniformly in
\(\si \in B_l^\pa (\Si ,\pa \Si )\).

The finite-family argument below combines the preceding local contractions
with extension over simplices; compare the closed-surface low-sublevel
homotopy in \cite[Lemma~4.2]{M2008}. For the boundary-weighted strata, the relevant transitions are
\[
z_i=z_j,\quad \gamma_i=0,\quad z_i\to\pa \Si .
\]
\begin{lem}\label{lem:B-finite-family}
Fix $D_0$ sufficiently large and an integer $n\ge0$. There are
$D_*\ge D_0$ and $\delta_*>0$ with the following property.
Let $(P,Q)$ be a finite polyhedral pair with $\dim P\le n$. Suppose that
$f_0,f_1:P\to X_{a_{\max},D_*}$ and
$\Ga :P\to B_l^\pa (\Si ,\pa \Si )$ are continuous,
$f_0=f_1$ on $Q$, and
\[
 d_{\rm BL}(\mathfrak m_{f_j(x)},\Ga (x))<\delta_*,
 \quad \forall\, x\in P,\ j=0,1.
\]
Then $f_0$ and $f_1$ are homotopic relative to $Q$ through maps into
$X_{1,D_0}$.
\end{lem}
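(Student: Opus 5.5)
We construct the homotopy by induction over the skeleta of a fine triangulation of $P$, in the spirit of \cite[Lemma~4.2]{M2008}. The basic tool is the uniform local contraction \eqref{eq:B-uniform-local-contraction}, applied through a finite decreasing chain of coefficients $1=a_0<a_1<\cdots<a_{n+1}=a_{\max}$ and levels $D_0\le D_1\le\cdots\le D_{n+1}=D_*$. We fix radii $\var_0>\var_1>\cdots>\var_{n+1}=\delta_*$ backwards. Having chosen $\var_k$ at stage $k$, we invoke \eqref{eq:B-uniform-local-contraction} with $(a,b)=(a_k,a_{k+1})$, $D=D_k$, $\var=\var_k/3$. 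This produces $\delta=\delta_k$ and $D'=D_{k+1}$, and we set $\var_{k+1}<\delta_k/3$. Since $n$ is fixed and the constants in \eqref{eq:B-uniform-local-contraction} are uniform in $\si$, the resulting $D_*,\delta_*$ depend only on $n$ and $D_0$, not on $(P,Q)$ or on the maps. We also note that $X_{b,D'}\subset X_{a,D}$ whenever $a\le b$ and $D\le D'$, since $\mathscr J_b\ge\mathscr J_a$. The inclusions along the chain are therefore automatic.

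Next we subdivide $P$, keeping $Q$ a subcomplex. By uniform continuity of $\Ga$ on the compact $P$, we make the subdivision fine enough that $d_{\rm BL}(\Ga(x),\Ga(y))<\delta_*$ for $x,y$ in a common closed simplex. On $P\times\{0,1\}\cup Q\times[0,1]$ we define $H$ as $f_0$, $f_1$, and the constant homotopy $f_0=f_1$ respectively. We then extend $H$ over $\si\times[0,1]$ for the cells of $P\setminus Q$, taken in order of increasing dimension $k$. To make this extension possible we maintain an inductive invariant: on the part already built over the $k$-skeleton, $H$ takes values in $X_{a_{n+1-k},D_{n+1-k}}$ and has measures within $\var_{n+1-k}$ of $\Ga(x_\si)$, where $x_\si$ is the barycenter of the relevant simplex.

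For a $k$-simplex $\si$, the boundary $\pa(\si\times[0,1])\cong S^k$ is already mapped, and by the invariant it lands in
\[
X_{a_{n+1-k},D_{n+1-k}}\cap\{d_{\rm BL}(\mathfrak m_u,\Ga(x_\si))<\delta_{n-k}\}.
\]
Here we need the triangle inequality, combined with the fineness of the subdivision and the choice $\var_{k+1}<\delta_k/3$, to pass from the centers of the faces to $x_\si$. The null-homotopy supplied by \eqref{eq:B-uniform-local-contraction}, centered at $\Ga(x_\si)$, then extends the map over the cone, that is, over $\si\times[0,1]$. The extension takes values in $X_{a_{n-k},D_{n-k}}$ with measures within $\var_{n-k}/3$ of $\Ga(x_\si)$, so the invariant propagates to the next dimension. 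After at most $n+1$ stages the image lies in $X_{a_0,D_0}=X_{1,D_0}$. The homotopy is relative to $Q$ by construction.

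The main obstacle is the bookkeeping in the last step. We must check that the boundary sphere of $\si\times[0,1]$, which is assembled from maps built at several earlier stages centered at different barycenters, lies in a single $\delta$-ball about $\Ga(x_\si)$. This is why the radii must shrink geometrically, $\var_{k+1}\ll\delta_k$, and why the number of stages must be bounded by $n+1$ independently of $P$. A secondary point is continuity of the glued map. Each cell extension is a continuous null-homotopy of a sphere, and the extensions agree on common faces, so gluing over the finite CW structure of $P\times[0,1]$ gives a continuous homotopy.
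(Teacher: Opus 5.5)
Your proposal is correct and follows the paper's proof essentially step by step: you build a chain $1=a_0<\cdots<a_{n+1}=a_{\max}$ of applications of the uniform local contraction with geometrically shrinking radii, and then extend over the prisms $\Delta^k\times[0,1]$ in order of increasing $k$, using each null-homotopy as a cone extension of the already-defined boundary map. Two small points need fixing in a write-up: your invariant is off by one (it is the part over the $(k-1)$-skeleton that lies in $X_{a_{n+1-k},D_{n+1-k}}$, while the new prism over a $k$-simplex lands in $X_{a_{n-k},D_{n-k}}$), and, as the paper does explicitly, you should check the boundary faces over $Q$ and at $t=0,1$, which lie in $X_{a_{\max},D_*}$ within $2\delta_*$ of $\Gamma(x_\sigma)$.
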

\begin{proof}
Choose $1=a_0<a_1<\cdots<a_{n+1}=a_{\max}$ and set
$X_j=X_{a_j,D_j}$. Starting with any $\var _0>0$, apply
\eqref{eq:B-uniform-local-contraction} for
$j=0,\ldots,n$. At stage \(j\), obtain \(D_{j+1}\ge D_j\) and, after decreasing the
radius if necessary, choose \(0<\delta_j<\var _j/8\) such that
\[
 X_{j+1}\cap\{d_{\rm BL}(\mathfrak m_u,\si )<\delta_j\}
 \to
 X_j\cap\{d_{\rm BL}(\mathfrak m_u,\si )<\var _j\}
\]
is null-homotopic through maps with values in the set on the right for every $\si \in B_l^\pa $.
If $j<n$, next choose $0<\var _{j+1}<\delta_j/8$.
Set $D_*=D_{n+1}$ and
$0<\delta_*<\frac1{16}\min_{0\le j\le n}\delta_j$.
Subdivide the finite polyhedral pair $(P,Q)$ so that
$\operatorname{osc}_\Delta\Ga <\delta_*$ on each simplex.

Let $H(x,0)=f_0(x)$, $H(x,1)=f_1(x)$, and
$H(x,t)=f_0(x)$ for $x\in Q$. Choose \(x_\Delta\in\Delta\) for each simplex and set
\(\si _\Delta=\Ga (x_\Delta)\).
We extend by induction over $\Delta^k\times[0,1]$, $k=0,\ldots,n$,
keeping the following containment condition
\[
 H(\Delta^k\times[0,1])\subset
 X_{n-k}\cap
 \{d_{\rm BL}(\mathfrak m_u,\si _\Delta)<\var _{n-k}\}
 \quad\text{ for }\,\Delta^k\not\subset Q.
\]
At the \(k\)-th stage the faces \(t=0\) and \(t=1\), together with every
already prescribed piece over \(Q\), have distance less than
\(2\delta_*\) from
$\si _\Delta$ and take values in $X_{n+1}$.
For $k\ge1$, every other side comes from a $(k-1)$-face and takes values in
$X_{n+1-k}$; its distance from $\si _\Delta$ is less than
\[
 \var _{n+1-k}+\delta_*
 <\frac{\delta_{n-k}}8+\frac{\delta_{n-k}}{16}
 <\delta_{n-k}.
\]
Thus the boundary map takes values in the domain of the
null-homotopy at stage $n-k$. Composing the two maps defines a homotopy
of the boundary map to a constant. Since the prism is homeomorphic to
a closed ball, this homotopy defines a continuous extension over the
prism, with the prescribed boundary values and the stated containment condition.
For \(k=0\), the same construction joins the two prescribed values by a
path.
There are only finitely many prisms, so the pasting lemma proves
continuity. The resulting homotopy takes values in $X_0=X_{1,D_0}$
and is fixed on $Q$.
\end{proof}

\begin{prop}\label{prop:nonresonant-neumann-sublevels}
Let $l\ge1$ and $\rho\in(4\pi l,4\pi(l+1))$. For every sufficiently
large $D$, the maps constructed above, with $\lam _0$ sufficiently
large, are homotopy inverses:
\begin{equation}\label{eq:B-nonresonant-inverses}
 \psi_D\phi_{\lam _0}\simeq\operatorname{Id}_{B_l^\pa (\Si ,\pa \Si )},
 \quad
 \phi_{\lam _0}\psi_D\simeq\operatorname{Id}_{J_\rho^{-D}}.
\end{equation}
Both homotopies take values in the respective spaces in
\eqref{eq:B-nonresonant-inverses}. In particular,
$J_\rho^{-D}\simeq B_l^\pa (\Si ,\pa \Si )$. The upper sublevel $J_\rho^D$
is contractible for large $D$, and the lower homotopy type is stable
under further decreases of the level.
\end{prop}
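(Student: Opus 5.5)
The plan is the standard two-sided argument. We use $\phi_{\lam_0}:B_l^\pa(\Si,\pa\Si)\to J_\rho^{-D}$ from Lemma~\ref{lem:B-test-functions} and $\psi_D=r_l\circ\mathfrak m$ from Lemma~\ref{lem:B-concentration}, and we fix the constants in a definite order. First fix $n=2l-1$, which dominates the dimension of $B_l^\pa(\Si,\pa\Si)$; since $B_l^\pa$ is a compact polyhedron by the semialgebraic triangulation in the proof of Lemma~\ref{lem:B-concentration}, every finite polyhedral pair we meet has dimension at most $n+1$. Apply Lemma~\ref{lem:B-finite-family} with $D_0=D$ and dimension bound $n+1$. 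This gives $D_*$ and $\delta_*$. Then choose $\lam_0$ large, using \eqref{eq:B-test-bounds} and \eqref{eq:B-reinforced-test}, so that $\phi_{\lam_0}$ lands in $X_{a_{\max},D_*}$ and $d_{\rm BL}(\mathfrak m_{\phi_{\lam_0}(\si)},\si)<\delta_*/2$.

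The first homotopy $\psi_D\phi_{\lam_0}\simeq\mathrm{Id}$ is exactly \eqref{eq:B-one-sided-homotopy}: the segment $(1-t)\mathfrak m_{\phi(\si)}+t\si$ stays inside the retraction domain $\mathcal U_l$, and we compose with $r_l$.

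For the second homotopy we cannot apply Lemma~\ref{lem:B-finite-family} directly, because $J_\rho^{-D}$ is not a polyhedron. We proceed in three steps.

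\textbf{Step one.} Use Lemma~\ref{lem:B-reinforcement} to retract $J_\rho^{-D}=X_{1,D}$ onto $X_{a_{\max},D'}$ for $D'\ge D_*$ large. This reduces the claim to a statement on $X_{a_{\max},D'}$. By \eqref{eq:B-projection-closeness}, on this set $d_{\rm BL}(\mathfrak m_u,\psi_D(u))<\delta_*/2$.

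\textbf{Step two.} We need the test maps with polyhedral domain. The standard device is to use that $X_{a_{\max},D'}$ is an open-ish subset of a Hilbert space, hence an ANR. So it suffices to show that $\phi_{\lam_0}\psi_D$ and the inclusion induce homotopic maps on every finite polyhedron $P$ mapping in, with compatible homotopies. Equivalently, we show the inclusion $X_{a_{\max},D'}\hookrightarrow X_{1,D}$ is homotopic to $\phi\psi$, using that ANRs have the homotopy type of CW complexes and a Whitehead-type argument on homotopy groups and relative classes.

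\textbf{Step three.} Concretely, for any polyhedral pair and maps $f_0=g$ and $f_1=\phi_{\lam_0}\psi_D g$ with $\Ga=\psi_D g$, both maps are $\delta_*$-close to $\Ga$ in measure. Lemma~\ref{lem:B-finite-family} then gives a homotopy rel $Q$ in $X_{1,D}$. Applying this to spheres and disks shows that $\phi\psi$ induces the identity on all homotopy groups of each component. Since $\psi\phi\simeq\mathrm{Id}$, the map $\phi$ is then a weak homotopy equivalence between ANRs, hence a homotopy equivalence by Whitehead's theorem. Finally, compose with the retraction from Step one.

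The main obstacle is this passage from the polyhedral family lemma to a genuine homotopy $\phi_{\lam_0}\psi_D\simeq\mathrm{Id}_{J_\rho^{-D}}$ with values in $J_\rho^{-D}$. I would handle it via the ANR/CW structure and Whitehead's theorem, checking that sublevels of a $C^1$ functional on a Hilbert space are ANRs (closed sublevel deformation-retracts onto open ones by the gradient flow near regular levels). Contractibility of $J_\rho^D$ is already in Lemma~\ref{lem:B-reinforcement}. Stability under decreasing the level follows from the retraction $X_{1,D'}\subset X_{1,D}$ of that lemma.
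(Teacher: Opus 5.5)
Your overall architecture is the paper's: the one-sided homotopy \eqref{eq:B-one-sided-homotopy}, the reinforcement retraction of Lemma~\ref{lem:B-reinforcement}, Lemma~\ref{lem:B-finite-family} for surjectivity on homotopy groups, CW homotopy type of $J_\rho^{-D}$, and Whitehead. However, Step three as written does not yet show that $\phi_{\lam_0}$ is a weak homotopy equivalence, for two concrete reasons.

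\textbf{The dimension bookkeeping fails.} You fix $n=2l-1$ once and claim that every polyhedral pair you meet has dimension at most $n+1$. A weak equivalence needs surjectivity on $\pi_k$ for \emph{every} $k$, so you must feed $(\mathbb S^k,\{*\})$ into Lemma~\ref{lem:B-finite-family} for arbitrary $k$. The bound $\dim B_l^\pa(\Si,\pa\Si)=2l-1$ controls nothing here: for large $N$, the inclusion $\mathrm{pt}\to\mathbb S^N$ passes every test in degrees $\le 2l$ but is not an equivalence. Since $D_*$ and $\delta_*$ depend on the dimension, so do the reinforcement depth and the admissible scale. A single $\lam_0$ chosen in advance therefore cannot serve all degrees. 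The paper redoes all choices for each $k$ and proves surjectivity for $(\phi_\Lam)_*$ with $\Lam=\Lam(k)$. It then transfers this to $(\phi_{\lam_0})_*$, because $\lam\mapsto\phi_\lam$, $\lam\in[\lam_0,\Lam]$, is a homotopy inside $J_\rho^{-D}$ by \eqref{eq:B-reinforced-test}.

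\textbf{The base point is not arranged.} Lemma~\ref{lem:B-finite-family} requires $f_0=f_1$ on $Q$. With $f_0=g$, $f_1=\phi\psi_D g$ and $Q=\{*\}$, you have $f_1(*)=\phi\psi_D(g(*))\neq g(*)$ in general, so the ``homotopy rel $Q$'' you invoke is not available. Free homotopies identify classes only up to the $\pi_1$-action (conjugacy in degree one), which does not give surjectivity on $\pi_1$ without a further argument. The paper takes the base point $x_0=\phi_\Lam(\si_0)$. This point lies in $X_{a_{\max},D_*}$, so the reinforcement retraction fixes it. The paper then modifies $\Ga_0=\psi_D\circ f'$ near $*$, using a contraction of a small open star $W\ni\si_0$ and a cutoff, to get $\Ga$ with $\Ga(*)=\si_0$ that is still $\delta_*/2$-close to $\mathfrak m_{f'}$. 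Now $f'$ and $\phi_\Lam\Ga$ agree at $*$, and the relative lemma applies.

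\textbf{A smaller point.} The CW homotopy type of $J_\rho^{-D}$ cannot rest on ``the gradient flow near regular levels'', since there is no Palais--Smale condition. Use the stopped flow of Lemma~\ref{lem:B-reinforcement}: it makes the open set $\{J_\rho<-D+1\}$ strongly deformation retract onto $J_\rho^{-D}$ (you stated the direction backwards), and then cite Palais. Once $\phi_{\lam_0}$ is a homotopy equivalence, $\psi_D\phi_{\lam_0}\simeq\operatorname{Id}$ forces $\psi_D\simeq g$ for any homotopy inverse $g$. This gives the second homotopy in \eqref{eq:B-nonresonant-inverses}; you should state this step explicitly.
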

\begin{proof}
Set
$X:=J_\rho^{-D}$, $B:=B_l^\pa (\Si ,\pa \Si )$.
By \eqref{eq:B-one-sided-homotopy},
\(\psi_D\phi_{\lam _0}\simeq\operatorname{Id}_{B},\)
and \eqref{eq:B-reinforced-test} shows that the maps
\(\phi_\lam \) remain in \(X\) as \(\lam \) increases.

Lemma~\ref{lem:B-reinforcement},
\eqref{eq:B-projection-closeness}, and
Lemma~\ref{lem:B-finite-family} with \(n=0\) yield
\[
(\phi_{\lam _0})_*:\pi_0(B)\xrightarrow{\;\cong\;}\pi_0(X).
\]
Fix $n\ge1$ and $\si _0\in B$. First choose $D_*$ and $\delta_*$
from Lemma~\ref{lem:B-finite-family}. Increase $D_*$, keeping $\delta_*$
fixed, until \eqref{eq:B-projection-closeness} yields
\[
\sup_{u\in X_{a_{\max},D_*}}
 d_{\rm BL}(\mathfrak m_u,\psi_D(u))<\delta_*/4.
\]
Increasing the depth $D_*$ does not affect the conclusion of Lemma~\ref{lem:B-finite-family}.
Choose an open star \(W\) of \(\si _0\) in a sufficiently fine
triangulation of \(B\), after subdividing so that \(\si _0\) is a
vertex, with
\(W\subset\{\si \in B:d_{\rm BL}(\si ,\si _0)<\delta_*/8\}.\)
Let \(K:W\times[0,1]\to W\) contract \(W\) to \(\si _0\), with
\(K(\si ,0)=\si \) and \(K(\si ,1)=\si _0\).
Then choose $\Lam \gg1$ satisfying
\[
\phi_\Lam (B)\subset X_{a_{\max},D_*},
\quad
d_{\rm BL}(
\mathfrak m_{\phi_\Lam (\si )},\si
)<\frac{\delta_*}{4},
 \quad\forall\,\si \in B,
\]
and, in addition,
\[
 \psi_D(\phi_\Lam (\si _0))\in W,
 \quad
 d_{\rm BL}(\mathfrak m_{\phi_\Lam (\si _0)},\si _0)
    <\delta_*/16.
\]
The additional conditions follow from the estimate for
\(\phi_\Lam \) and continuity of the neighborhood retraction at
\(\si _0\).
Set \(x_0:=\phi_\Lam (\si _0)\) as the base point. For a based map
\(f:(\Sn,*)\to(X,x_0)\), the strong retraction of
Lemma~\ref{lem:B-reinforcement} fixes \(x_0\) and yields
$f\simeq f'$, $f'(\Sn)\subset X_{a_{\max},D_*}$.
Set \(\Ga _0:=\psi_D\circ f'\). Since \(f'(*)=x_0\), continuity
yields a neighborhood \(N\) of \(*\) in \(\Sn\) such that
\[
 \Ga _0(N)\subset W,
 \quad
 d_{\rm BL}(\mathfrak m_{f'(x)},\si _0)<\delta_*/8,
 \quad \forall\,x\in N.
\]
Choose a continuous function \(\zeta:\Sn\to[0,1]\), with
\(\zeta(*)=1\) and support contained in \(N\), and set
\[
 \Ga (x):=
\left\{\begin{aligned}
& K(\Ga _0(x),\zeta(x)),&&x\in N,\\
 &\Ga _0(x),&&x\notin N.
 \end{aligned}\right.
\]
The two definitions agree wherever \(\zeta=0\), so \(\Ga \) is
continuous, \(\Ga (*)=\si _0\), and
\[
 d_{\rm BL}(\mathfrak m_{f'(x)},\Ga (x))<\delta_*/2,
 \quad\forall\,x\in \Sn.
\]
Lemma~\ref{lem:B-finite-family}, relative to \((\Sn,\{*\})\), yields
\(f' \simeq \phi_\Lam \circ\Ga \quad\text{ in }\,X.\)
Thus \((\phi_\Lam )_*\), hence \((\phi_{\lam _0})_*\), is surjective
on \(\pi_n\). Injectivity follows from
\(\psi_D\phi_{\lam _0}\simeq\operatorname{Id}_B\), after the standard
base-point change induced by the homotopy in
\eqref{eq:B-one-sided-homotopy}. Therefore \(\phi_{\lam _0}\)
is a weak homotopy equivalence.

The space \(B\) is a compact polyhedron by
Lemma~\ref{lem:B-concentration}. Increase the initial threshold for
\(D\), if necessary, so that \(D-1\) is admissible in
Lemma~\ref{lem:B-reinforcement}. Apply its stopped-flow construction
to \(J_\rho^{-D+1}\) and \(J_\rho^{-D}\). The functional \(J_\rho\)
is nonincreasing during the deformation. Consequently the deformation restricted
to the open set
\(\mathcal O:=\{u\in\oH:J_\rho(u)<-D+1\}\)
is a strong deformation retraction of \(\mathcal O\) onto \(X\).
Since an open subset of a separable Hilbert space has CW homotopy type
\cite{Palais1966}, the same is true of \(X\).
Whitehead's theorem \cite[Theorem~4.5]{Hatcher2002}, applied on each
component after passing to CW models, shows that the specific map
$\phi_{\lam _0}$ is a homotopy equivalence. The already constructed map $\psi_D$ is its homotopy
inverse. Indeed, if $g:X\to B$ is any homotopy inverse, then
\[
 \psi_D\simeq\psi_D\phi_{\lam _0}g\simeq g,
 \quad
 \phi_{\lam _0}\psi_D\simeq\phi_{\lam _0}g
                          \simeq\operatorname{Id}_X.
\]
Thus both identities in \eqref{eq:B-nonresonant-inverses} hold for the
maps specified there. Contractibility of $J_\rho^D$ and stability under
decreasing the lower level follow from Lemma~\ref{lem:B-reinforcement}.
\end{proof}

\begin{proof}[Proof of Proposition~\ref{prop:negative-sublevels-barycenters}]
Fix \(\rho_-\in(4\pi(m-1),\rho_*)\), set \(c_*=\rho_*/\rho_-\), and
write \(X=J_{\rho_*}^{-L}\), \(Y=J_{\rho_-}^{-L/c_*}\), and
\(B=B_{m-1}^\pa (\Si ,\pa \Si )\).
Proposition~\ref{prop:B-resonant-comparison} establishes a strong retraction
\(\mathcal R_L:X\to Y\) with inclusion \(\iota:Y\hookrightarrow X\).
Proposition~\ref{prop:nonresonant-neumann-sublevels} establishes homotopy
inverses \(\phi:B\to Y\) and \(\psi:Y\to B\). Therefore
\(\Phi_L:=\iota\phi,\quad \Psi_L:=\psi\mathcal R_L\)
are homotopy inverses, since
\[
\Psi_L\Phi_L=\psi\phi\simeq\operatorname{Id}_B,
\quad
\Phi_L\Psi_L=\iota\phi\psi\mathcal R_L
\simeq\iota\mathcal R_L\simeq\operatorname{Id}_X.
\]
Together with Proposition~\ref{prop:B-resonant-comparison}, this proves
\eqref{eq:B-main-topology} and stabilization. As \(J_{\rho_*}^L\) is
contractible and \(X\ne\varnothing\), the long exact sequence of the
pair yields
\[
H_k(J_{\rho_*}^{L},X;\mathbb F)
\cong\widetilde H_{k-1}(X;\mathbb F)\quad(k\ge1),
\quad H_0(J_{\rho_*}^{L},X;\mathbb F)=0.
\]
Substitution of \(X\simeq B\) proves \eqref{eq:B-main-relative}.
\end{proof}

The prescribed-index argument also uses the following nonresonant deformation result.
\begin{cor}\label{prop:B-nonresonant-Morse}
Let $\rho\in(4\pi l,4\pi(l+1))$, $l\ge1$, and suppose $J_{\rho,V}$
is Morse. For large $D$, the pair $(J_\rho^D,J_\rho^{-D})$ has a finite
sublevel filtration whose relative homology groups of consecutive pairs are the
direct sums of the local Morse groups of its critical points.
In particular, $J_{\rho,V}$ has a critical point of Morse index \(2l\).
\end{cor}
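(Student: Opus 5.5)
Since $J_{\rho,V}$ is Morse and $\rho$ is nonresonant, the plan is to repeat the finite filtration argument of Theorem~\ref{thm:2}, now without critical points at infinity, and then to read off index $2l$ from the relative homology computed in Proposition~\ref{prop:nonresonant-neumann-sublevels}. The first step is compactness. For $\rho\in(4\pi l,4\pi(l+1))$ the blow-up alternative, through Lemma~\ref{lem:B-exact-energy} with $\bar\rho\in4\pi\mathbb N$, shows that $\operatorname{Crit}(J_\rho)$ is bounded and hence compact by Lemma~\ref{lem:B-compact-gradient}. Nondegeneracy then makes it finite.

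Next I would prove a Palais--Smale-type deformation on every strip $\{a\le J_\rho\le b\}$ that contains no critical value, with $-D\le a<b\le D$. I would use Lemma~\ref{lem:B-bounded-descent} with $F_0=J_\rho$ and $F_1=J_\rho+(b-a)$. Separation on bounded sets comes from Lemma~\ref{lem:B-compact-gradient}. For the three-vector separation I need $\operatorname{co}\{\nabla J_\rho(u),u\}$ to avoid $0$ when $\|u\|$ is large. A zero gives $u=T_r(u)$ with $r\le\rho$, and unbounded sequences of such solutions have $J_\rho(u_n)\to-\infty$ or $+\infty$ by \eqref{eq:B-energy-height}: $\bar r\le4\pi l<\rho$ gives $J_\rho\to-\infty$, and in every case $J_\rho$ is unbounded. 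So on the bounded energy strip they stay bounded. This identifies the level sets away from critical values. Near each critical value I would use standard Morse charts, which exist because the Hessian is identity minus compact, together with the usual Morse lemma in Hilbert space. Stopping the flow at the first contact time, exactly as in Theorem~\ref{thm:global-deformation-infinity} but with no ends, gives the attachment of one cell of index $\morse(J_\rho,u)$ per critical point. Excision then identifies each consecutive relative group with the direct sum of the local Morse groups.

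Finally, Proposition~\ref{prop:nonresonant-neumann-sublevels} gives $J_\rho^D$ contractible and $J_\rho^{-D}\simeq B_l^\pa(\Sigma,\pa\Sigma)$. Hence
\[
H_{2l}(J_\rho^D,J_\rho^{-D};\mathbb Z_2)\cong\widetilde H_{2l-1}(B_l^\pa;\mathbb Z_2),
\]
which has dimension $\binom{b+l-1}{l}>0$ by \eqref{eq:topology-top-betti}. The filtration exact sequences give the weak Morse inequality $\nu_{2l}\ge\dim H_{2l}$, as in \eqref{eq:morse-proof-identity}, so there is a critical point of index $2l$.

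I expect the main obstacle to be the deformation at large norm, namely verifying that solutions of $u=T_r(u)$ for $r\in[0,\rho]$ cannot escape to infinity inside a bounded energy strip, for both $F_0$ and $F_1$ simultaneously. I would handle it by the energy-height identity of Lemma~\ref{lem:B-exact-energy}, noting that $\bar r<\rho$ always and treating $\bar r=0$ by the regularity argument given there.
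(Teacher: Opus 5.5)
Your argument is correct. Its first half is essentially the paper's construction: compactness of solutions of $u=T_r(u)$, $0\le r\le\rho$, in bounded energy strips via Lemma~\ref{lem:B-exact-energy}, then Lemma~\ref{lem:B-bounded-descent} away from critical values and Morse charts at critical levels.

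The difference is in the last step. You invoke the full homotopy equivalence $J_\rho^{-D}\simeq B_l^\pa(\Si,\pa\Si)$ of Proposition~\ref{prop:nonresonant-neumann-sublevels}, which gives the exact value $\dim H_{2l}(J_\rho^D,J_\rho^{-D};\mathbb Z_2)=\binom{b+l-1}{l}$. The paper uses only the one-sided homotopy \eqref{eq:B-one-sided-homotopy} of Lemma~\ref{lem:B-test-functions}. There, $\psi_D\phi_D\simeq\operatorname{Id}$ makes $(\phi_D)_*$ injective on $\widetilde H_{2l-1}$, hence $\dim\widetilde H_{2l-1}(J_\rho^{-D};\mathbb Z_2)\ge\binom{b+l-1}{l}$, and a lower bound is all the weak Morse inequality needs. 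The paper's route therefore makes the index conclusion independent of the reverse homotopy, which is the most delicate part of Appendix~\ref{app:topology}: the local contractions, Lemma~\ref{lem:B-finite-family}, and the Palais and Whitehead arguments. Your route is also legitimate, since that proposition is proved without using the corollary. It gains the exact relative Betti number at the cost of the heavier input.

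Two small points. First, the alternative $J_\rho\to+\infty$ in your compactness step never occurs. Lemma~\ref{lem:B-exact-energy} forces $\bar r\in4\pi\mathbb N$ with $\bar r\le\rho$, hence $\bar r\le4\pi l<\rho$ and $J_\rho\to-\infty$; the case $\bar r=0$ is already excluded there. Second, across a critical level, boundedness of the stopped flow should come from the same radial separation of $\operatorname{co}\{\nabla J_\rho(u),u\}$ outside a fixed ball containing $\operatorname{Crit}(J_\rho)$, glued to the local Morse fields. This is what the paper does, and it is more direct than the resonant height of Lemma~\ref{lem:CPI-height}.
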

\begin{proof}
For every finite interval $[c,d]$, the set
\[
\{u=T_r(u):0\le r\le\rho,\ c\le J_\rho(u)\le d\}
\]
is compact in $\oH$. Indeed, a bounded sequence has a convergent subsequence by Lemma~\ref{lem:B-compact-gradient}. If an unbounded sequence exists,
Lemma~\ref{lem:B-exact-energy} yields a resonant limit
$\bar r\le4\pi l<\rho$ and hence $J_\rho(u)\to-\infty$, a
contradiction.

The set of genuine critical points is compact and nondegenerate,
so it is finite.

Choose disjoint small Morse neighborhoods of these critical points,
all inside a fixed ball. Outside them, the gradient is bounded away
from zero on every bounded part of this energy interval: otherwise
Lemma~\ref{lem:B-compact-gradient} yields another critical point.
Outside a sufficiently large ball, the convex hull of
$\{\nabla J_\rho(u),u\}$ is separated from zero on every bounded
annulus in this energy interval. Indeed, failure on such an annulus
yields $a_n\in[0,1]$ and
\(u_n-a_nT_\rho(u_n)\to 0.\)
Compactness on bounded sets and extraction of $a_n$ yield strong
convergence to $u=T_{a\rho}(u)$ in the same closed annulus and
energy interval, contradicting the compactness just proved. The construction in
Lemma~\ref{lem:B-bounded-descent} thus yields a locally Lipschitz descending
field outside the Morse neighborhoods, radially nonincreasing outside
that ball.

Inside the Morse neighborhoods, choose the standard local
negative-gradient vector fields, rescaled to be bounded. A locally
finite partition of unity preserves strict descent where several
local vector fields are combined. Outside the large ball the same partition
also preserves the nonincreasing norm: all local modifications have bounded support.

Every resulting trajectory in the energy interval stays bounded, and
the bounded speed excludes finite-time loss of existence. On intervals
containing no critical values, the lower bound for the gradient on
bounded subsets permits a deformation obtained by stopping the flow at the prescribed level. Across
a critical value, the relative group is the direct sum of the local groups in
the disjoint critical neighborhoods, by the local Morse lemma and excision.

This produces the stated finite filtration, also when several critical
points have the same value. No global Palais--Smale condition is deduced from compactness of
solutions of the equation.

For the index conclusion, the one-sided homotopy
\eqref{eq:B-one-sided-homotopy} in Lemma~\ref{lem:B-test-functions}
is sufficient: $(\phi_D)_*$ is injective on reduced homology.
Lemma~\ref{lem:B-reinforcement} implies $J_\rho^D\simeq\oH$.
The long exact sequence of the pair and \eqref{eq:topology-top-betti}
therefore imply
\[
 \dim_{\Z_2}H_{2l}(J_\rho^D,J_\rho^{-D};\Z_2)
 =\dim_{\Z_2}\widetilde H_{2l-1}(J_\rho^{-D};\Z_2)
 \ge\binom{b+l-1}{l}>0.
\]
The weak Morse inequality proves the existence of a critical point
of index $2l$, without the opposite homotopy in
\eqref{eq:B-nonresonant-inverses}.
\end{proof}

		\medskip

		\noindent\textbf{Acknowledgements.}~
Part of this work was carried out during the third-named author's visits to
Justus-Liebig-Universität Gießen  from August 2024 to  August 2025, supported
by the China Scholarship Council (CSC) and the National Natural Science
Foundation of China (Grant No.~202406040139). He gratefully acknowledges
the hospitality of the Mathematical Institute at Justus-Liebig-Universität
Gießen.
	M. Ahmedou is partially supported by the
	Deutsche Forschungsgemeinschaft (DFG) under grant AH~156/2-1. Z. Hu is partially supported by the
	National Natural Science Foundation of China
	(Grant No.~12601383) and the China Postdoctoral Science Foundation
	(Grant No.~2026M793392). H. Wang is  partially supported by China Postdoctoral Science Foundation 	(Grant No.~2026M793418). L. Zhang is partially supported by Simons Foundation Grant SFI-MPS-TSM-00013752.

		\medskip

		\noindent\textbf{Data availability statement.}~	 Data sharing is not applicable to this article as no datasets were generated or analysed during the current study.

		\medskip

		\noindent\textbf{Declarations.}~ \textbf{Conflict of interest.} ~ On behalf of all authors, the corresponding author states that there is no conflict of interest.

		\medskip

\noindent\textbf{AI Declaration.}
~
During the preparation of this manuscript, the authors used
ChatGPT (OpenAI, GPT-6.1 Sol) for language editing, consistency
checks, and assistance in developing parts of the arguments
in Appendix~B. The authors reviewed and revised the
AI-assisted material and take full responsibility for
the content of the manuscript.


{\small \begin{thebibliography}{10}

				\bibitem{ADN1959}
				S.~Agmon, A.~Douglis, and L.~Nirenberg.
				\newblock Estimates near the boundary for solutions of elliptic partial
				differential equations satisfying general boundary conditions. {I}.
				\newblock {\em Comm. Pure Appl. Math.}, 12:623--727, 1959.

\bibitem{ABF2023}
				M.~Ahmedou, T.~Bartsch, and T.~Fiernkranz.
				\newblock Equilibria of vortex type {H}amiltonians on closed surfaces.
				\newblock {\em Topol. Methods Nonlinear Anal.}, 61(1):239--256, 2023.

\bibitem{ABH20240829}
M.~Ahmedou, T.~Bartsch, and Z.~Hu.
\newblock Blow-up solutions for mean field equations on {R}iemann surfaces
with boundary {I}.
\newblock {\em Ann. Mat. Pura Appl. (4)}, 2026.

\bibitem{AB2017}
				M.~Ahmedou and M.~Ben~Ayed.
				\newblock Theory of ``critical points at infinity'' and a resonant singular
				{L}iouville-type equation.
				\newblock {\em Adv. Nonlinear Stud.}, 17(1):139--166, 2017.

\bibitem{AB2023}
				M.~Ahmedou and M.~Ben~Ayed.
				\newblock Morse inequalities at infinity for a resonant mean field equation.
				\newblock {\em Commun. Contemp. Math.}, 25(1):Paper No. 2150054, 37, 2023.
\newblock Preprint version: \href{https://arxiv.org/abs/2101.12611v2}{arXiv:2101.12611v2}.
\newblock Proposition and lemma locators used here refer to this preprint version.

\bibitem{ABL2017}
				M.~Ahmedou, M.~Ben~Ayed, and M.~Lucia.
				\newblock On a resonant mean field type equation: a ``critical point at
				infinity'' approach.
				\newblock {\em Discrete Contin. Dyn. Syst.}, 37(4):1789--1818, 2017.

\bibitem{AHW2025-1}
M.~Ahmedou, Z.~Hu, and H.~Wang.
\newblock A degree-counting formula for a {K}eller--{S}egel equation on a
surface with boundary.
\newblock {\em Commun. Contemp. Math.}, Paper No.~2650039, 2026.
\newblock \href{https://doi.org/10.1142/S0219199726500392}
{doi:10.1142/S0219199726500392}.
\newblock Preprint version: \href{https://arxiv.org/abs/2506.12783v2}{arXiv:2506.12783v2}.
\newblock Equation and proposition locators refer to this preprint version.

\bibitem{AK2026}
				M.~Ahmedou and S.~Kallel.
				\newblock Boundary-weighted barycenter spaces, 2026.
				\newblock \href{https://arxiv.org/abs/2608.22040}{arXiv:2608.22040}.

\bibitem{AKN2026}
M.~Ahmedou, S.~Kallel, and C.~B. Ndiaye.
\newblock The resonant boundary $Q$-curvature problem and
boundary-weighted barycenters.
\newblock Manuscript, version of 3 October 2026.

\bibitem{Bbook1989}
				A.~Bahri.
				\newblock {\em Critical points at infinity in some variational problems},
				volume 182 of {\em Pitman Research Notes in Mathematics Series}.
				\newblock Longman Scientific \& Technical, Harlow; copublished in the United
				States with John Wiley \& Sons, Inc., New York, 1989.

\bibitem{B1996}
				A.~Bahri.
				\newblock {An invariant for Yamabe-type flows with applications to
					scalar-curvature problems in high dimension}.
				\newblock {\em Duke Mathematical Journal}, 81(2):323--466, 1996.

\bibitem{BC1988}
				A.~Bahri and J.-M. Coron.
				\newblock On a nonlinear elliptic equation involving the critical {S}obolev
				exponent: the effect of the topology of the domain.
				\newblock {\em Comm. Pure Appl. Math.}, 41(3):253--294, 1988.

\bibitem{BC1991}
				A.~Bahri and J.-M. Coron.
				\newblock The scalar-curvature problem on the standard three-dimensional
				sphere.
				\newblock {\em J. Funct. Anal.}, 95(1):106--172, 1991.

\bibitem{BMP2019}
				T.~Bartsch, A.~M. Micheletti, and A.~Pistoia.
				\newblock The {M}orse property for functions of {K}irchhoff--{R}outh path type.
				\newblock {\em Discrete Contin. Dyn. Syst. Ser. S}, 12(7):1867--1877, 2019.

\bibitem{B2019}
				L.~Battaglia.
				\newblock A general existence result for stationary solutions to the
				{Keller}-{Segel} system.
				\newblock {\em Discrete \& Continuous Dynamical Systems - A}, 39(2):905--926,
				2019.

\bibitem{L1957}
L.~Bers.
\newblock {\em Riemann Surfaces}.
\newblock Lecture notes, Institute of Mathematical Sciences,
New York University, 1957--1958.

\bibitem{BCR1998}
J.~Bochnak, M.~Coste, and M.-F.~Roy.
\newblock {\em Real Algebraic Geometry}.
\newblock Ergebnisse der Mathematik und ihrer Grenzgebiete (3), Vol.~36.
Springer-Verlag, Berlin, 1998.

\bibitem{CL2003}
				C.-C. Chen and C.-S. Lin.
				\newblock Topological degree for a mean field equation on {R}iemann surfaces.
				\newblock {\em Comm. Pure Appl. Math.}, 56(12):1667--1727, 2003.

\bibitem{CL2015}
				C.-C. Chen and C.-S. Lin.
				\newblock Mean field equation of {L}iouville type with singular data:
				topological degree.
				\newblock {\em Comm. Pure Appl. Math.}, 68(6):887--947, 2015.

\bibitem{CL1991}
				W.~Chen and C.~Li.
				\newblock Classification of solutions of some nonlinear elliptic equations.
				\newblock {\em Duke Math. J.}, 63(3):615--622, 1991.

\bibitem{C1955}
				S.-S. Chern.
				\newblock An elementary proof of the existence of isothermal parameters on a
				surface.
				\newblock {\em Proc. Amer. Math. Soc.}, 6:771--782, 1955.

\bibitem{EF2014}
				P.~Esposito and P.~Figueroa.
				\newblock Singular mean field equations on compact {R}iemann surfaces.
				\newblock {\em Nonlinear Anal.}, 111:33--65, 2014.

\bibitem{F2024}
				P.~Figueroa.
				\newblock Bubbling solutions for mean field equations with variable intensities
				on compact {R}iemann surfaces.
				\newblock {\em J. Anal. Math.}, 152(2):507--555, 2024.

\bibitem{HW1955}
				P.~Hartman and A.~Wintner.
				\newblock On uniform {D}ini conditions in the theory of linear partial
				differential equations of elliptic type.
				\newblock {\em Amer. J. Math.}, 77:329--354, 1955.

\bibitem{Hatcher2002}
A.~Hatcher.
\newblock {\em Algebraic Topology}.
\newblock Cambridge University Press, Cambridge, 2002.

\bibitem{HB2024}
				Z.~Hu and T.~Bartsch.
				\newblock The {M}orse property of limit functions appearing in mean field
				equations on surfaces with boundary.
				\newblock {\em J. Geom. Anal.}, 34(7):Paper No. 220, 19, 2024.

\bibitem{LLWY2018}
				Y.~Lee, C.-S. Lin, J.~Wei, and W.~Yang.
				\newblock Degree counting and shadow system for {T}oda system of rank two: one
				bubbling.
				\newblock {\em J. Differential Equations}, 264(7):4343--4401, 2018.

\bibitem{LSY2023}
				J.~Li, L.~Sun, and Y.~Yang.
				\newblock The boundary value problem for the mean field equation on a compact
				{R}iemann surface.
				\newblock {\em Sci. China Math.}, 66(1):115--142, 2023.

\bibitem{L1999}
				Y.~Y. Li.
				\newblock Harnack type inequality: the method of moving planes.
				\newblock {\em Comm. Math. Phys.}, 200(2):421--444, 1999.

\bibitem{L2007}
				M.~Lucia.
				\newblock A deformation lemma with an application to a mean field equation.
				\newblock {\em Topol. Methods Nonlinear Anal.}, 30(1):113--138, 2007.

\bibitem{M2008}
				A.~Malchiodi.
				\newblock Morse theory and a scalar field equation on compact surfaces.
				\newblock {\em Adv. Differential Equations}, 13(11-12):1109--1129, 2008.

\bibitem{N2014}
				G.~Nardi.
				\newblock Schauder estimate for solutions of {P}oisson's equation with
				{N}eumann boundary condition.
				\newblock {\em Enseign. Math.}, 60(3-4):421--435, 2014.

\bibitem{N2008}
				C.~B. Ndiaye.
				\newblock Conformal metrics with constant {$Q$}-curvature for manifolds with
				boundary.
				\newblock {\em Commun. Anal. Geom.}, 16(5):1049--1124, 2008.

\bibitem{Palais1966}
R.~S. Palais.
\newblock Homotopy theory of infinite dimensional manifolds.
\newblock {\em Topology}, 5(1):1--16, 1966.

\bibitem{Smale1965}
S.~Smale.
\newblock An infinite dimensional version of {S}ard's theorem.
\newblock {\em Amer. J. Math.}, 87(4):861--866, 1965.

\bibitem{V1955}
				I.~N. Vekua.
				\newblock The problem of reduction to canonical form of differential forms of
				elliptic type and the generalized {C}auchy--{R}iemann system.
				\newblock {\em Dokl. Akad. Nauk SSSR (N.S.)}, 100:197--200, 1955.

\bibitem{YZ2021}
				Y.~Yang and J.~Zhou.
				\newblock Blow-up analysis involving isothermal coordinates on the boundary of
				compact {R}iemann surface.
				\newblock {\em J. Math. Anal. Appl.}, 504(2):Paper No. 125440, 39, 2021.

\end{thebibliography}}
	\end{document}